\documentclass{article}

\usepackage{bbm}

\usepackage[all]{xy}\usepackage[latin1]{inputenc}  %accents
\usepackage[dvips]{graphics,graphicx}
\usepackage{amsfonts,amssymb,amsmath,color,mathrsfs, amstext,bbm}
\usepackage{amsbsy, amsopn, amscd, amsxtra, amsthm,authblk, enumerate,mathbbol}
\usepackage{upref}
\usepackage{geometry}
\usepackage{bm}
\usepackage{dsfont}
\usepackage{todonotes}
\usepackage{mathtools}
\usepackage{appendix}
\usepackage{float}

\usepackage[all]{xy}\usepackage[latin1]{inputenc}        %accents
\usepackage[dvips]{graphics,graphicx}
\usepackage{amsfonts,amssymb,amsmath,xcolor,mathrsfs, amstext}
\usepackage{amsbsy, amsopn, amscd, amsxtra, amsthm,authblk, enumerate}
\usepackage{upref}
\usepackage{geometry}
\usepackage{float}

\usepackage[colorlinks,
            linkcolor=blue,
            anchorcolor=green,
            citecolor=blue
            ]{hyperref}

\numberwithin{equation}{section}

\def\e{\varepsilon}
\def\epsilon{\varepsilon}

\newcommand{\ol}{\overline}
\newcommand{\wt}{\widetilde}
\newcommand{\wh}{\widehat}
\newcommand{\mr}{\mathring}
\def\alb#1\ale{\begin{align*}#1\end{align*}}
\newcommand{\eqb}{\begin{equation}}
\newcommand{\eqe}{\end{equation}}

\DeclareMathOperator{\supp}{supp}

\newcommand{\tcr}[1]{\textcolor{red}{#1}}

\newcommand{\bbC}{\mathbb{C}}
\newcommand{\bbD}{\mathbb{D}}
\newcommand{\bbE}{\mathbb{E}}
\newcommand{\bbH}{\mathbb{H}}

\newcommand{\bbN}{\mathbb{N}}

\newcommand{\bbR}{\mathbb{R}}
\newcommand{\bbP}{\mathbb{P}}

\newcommand{\bbZ}{\mathbb{Z}}

\newcommand{\cA}{\mathcal{A}}

\newcommand{\cE}{\mathcal{E}}
\newcommand{\cF}{\mathcal{F}}
\newcommand{\cP}{\mathcal{P}}

\newcommand{\cL}{\mathcal{L}}
\newcommand{\cZ}{\mathcal{Z}}
\newcommand{\cT}{\mathcal{T}}
\newcommand{\cJ}{\mathcal{J}}
\newcommand{\cN}{\mathcal{N}}

\newcommand{\cH}{\mathcal{H}}

\newcommand{\cV}{\mathcal{V}}

\newcommand{\cR}{\mathcal{R}}
\newcommand{\cX}{\mathcal{X}}
\newcommand{\cO}{\mathcal{O}}
\newcommand{\cY}{\mathcal{Y}}
\newcommand{\fc}{\mathfrak{c}}
\newcommand{\fR}{\mathfrak{R}}
\newcommand{\fD}{\mathfrak{D}}
\newcommand{\fU}{\mathfrak{U}}

\newcommand{\rmC}{\mathrm{C}}
\newcommand{\rmD}{\mathrm{D}}
\newcommand{\rmL}{\mathrm{L}}
\newcommand{\rmR}{\mathrm{R}}
\newcommand{\rmG}{\mathrm{G}}
\newcommand{\rmT}{\mathrm{T}}
\newcommand{\rmE}{\mathrm{E}}
\newcommand{\rmM}{\mathrm{M}}
\newcommand{\rmQ}{\mathrm{Q}}
\newcommand{\frT}{\mathfrak{T}}
\newcommand{\frt}{\mathfrak{t}}
\newcommand{\fre}{\mathfrak{e}}
\newcommand{\frv}{\mathfrak{v}}
\newcommand{\frM}{\mathfrak{M}}

\newcommand{\ceil}[1]{\left\lceil #1 \right\rceil}
\newcommand{\diam}{\mathrm{diam}}
\newcommand{\area}{\mathrm{area}}
\newcommand{\disk}{\mathrm{disk}}
\newcommand{\sph}{\mathrm{sp}}
\newcommand{\simple}{\mathrm{simple}}
\newcommand{\cell}{\mathrm{cell}}
\newcommand{\lge}{\mathrm{large}}
\newcommand{\inv}{\mathrm{inv}}
\newcommand{\frtr}{\mathfrak{tr}}

\newcommand{\MOT}{\mathrm{MOT}}
\newcommand{\SLE}{\mathrm{SLE}}

\newcommand{\IG}{\mathrm{IG}}

\newtheorem{theorem}{Theorem}[section]
\newtheorem{lemma}[theorem]{Lemma}
\newtheorem{proposition}[theorem]{Proposition}
\newtheorem*{proposition*}{Proposition}
\newtheorem{corollary}[theorem]{Corollary}
\newtheorem*{corollary*}{Corollary}

\theoremstyle{definition}
\newtheorem{definition}[theorem]{Definition}
\newtheorem*{definitions*}{Definitions}

\newtheorem*{example*}{\bf Example}

\numberwithin{equation}{section}
\title{The circle packing and Riemann uniformization embedding of tree-weighted planar maps converge to Liouville quantum gravity}
\author{Nina Holden\thanks{Courant Institute, New York University, nina.holden@nyu.edu} \qquad   Pu Yu\thanks{Courant Institute, New York University, py628@nyu.edu}}
\date{\today}

\begin{document}

\maketitle

\begin{abstract}
    We prove that in the disk, sphere and whole-plane topologies, spanning tree weighted planar maps converge to a $\sqrt{2}$-Liouville quantum gravity disk, sphere and cone, respectively, under the circle packing and Riemann uniformization embeddings. As a byproduct, we also prove that the natural path on faces of the embedded tree-weighted planar maps converge to $\SLE_8$. The proof is based on our earlier work~\cite{HY26} on circle packing and Riemann uniformization embedding for random planar maps in ergodic scale-free environments, comparisons of circle packings in different domains in a companion paper~\cite{MatedCRT}, and the convergence of tree-weighted planar maps  to $\sqrt{2}$-LQG~\cite{Tutte} by the first author.
\end{abstract}

\section{Introduction}

Planar maps are, roughly speaking, planar graphs embedded in the sphere, defined up to continuous deformations of the sphere. Planar maps are studied in many different branches of both mathematics and physics, including combinatorics, probability theory, geometry, string theory, and conformal field theory. In particular, in probability theory and theoretical physics, random planar maps serve as models for random surfaces.

A longstanding belief is that a number of random planar maps converge in the scaling limit to continuum random surfaces called Liouville quantum gravity (LQG) surfaces~\cite{david1988conformal,distler1989conformal,knizhnik1996fractal}.
There are several natural ways to phrase the convergence of (decorated) random planar maps to (SLE decorated) LQG surfaces. For  each of those, there are certain random planar maps that have been proved to converge to LQG surfaces:
\begin{enumerate}
 \item Convergence as a mated pair of trees. As constructed in~\cite{DMS14}, given a pair of Brownian motions, one can construct a pair of continuum random trees and further a space-filling $\SLE$-decorated LQG surface. On the other hand, many planar maps are encoded by a pair of random walks, and this convergence of random walk to Brownian motion is also referred to as  \emph{peanosphere convergence}. Examples include~\cite{She16,LSW17,BHS18,KMSW19}.
   \item Convergence in the sense of metric space (i.e., in Gromov-Hausdorff topology), where we view random planar maps with metric spaces endowed with graph distance. In~\cite{LeGall13,Mie13}, it is shown that uniform  random quadrangulations converge to the Brownian map, which is identified with $\sqrt{8/3}$-LQG by~\cite{MSBrownian1,MSBrownian2,MSBrownian3}. It is further shown in~\cite{GM16avoid} the self-avoiding walk on random quadrangulations converges to $\SLE_{8/3}$.
    \item Convergence under \emph{discrete conformal embeddings}. That is, one draws the planar map on the plane using conformal embeddings (e.g., circle packing, Riemann uniformization, etc.), and counts the number of vertices in each domain $D\subseteq\bbC$. For the mated-CRT map, this is done in~\cite{GMS21tutte} for the Tutte embedding  and~\cite{bertacco2025scaling} for the Smith embedding.  In~\cite{HS19}, it is shown that under the Cardy embedding, the uniform triangulations converges to $\sqrt{8/3}$-LQG.    
\end{enumerate}
 
There are a number of natural probability measures one can put on the set of planar maps. One possibility is to consider the uniform measure, where all planar maps of a given size have the same probability. Another possibility is to consider a planar map with a statistical physics model such that the law of the planar map is reweighted by the partition function of the statistical physics model. In this paper, we will consider the latter setting where the statistical physics model is the spanning tree. We prove  that under circle packing and Riemann uniformization embedding, tree-weighted planar maps and the natural path on its faces converges to $\sqrt2$-LQG surfaces decorated by   $\SLE_8$ as constructed in the mating-of-trees theory by Duplantier, Miller and Sheffield. The proof is based on a concurrent work by the first author~\cite{Tutte} where the tree-weighted planar maps converge to $\sqrt2$-LQG under the Tutte embedding, our recent work~\cite{HY26} on convergence of   circle packing and Riemann uniformization embedding for triangulations from ergodic scale-free environments defined in~\cite{GMSinvariance}, and comparisons over different circle packings in our companion paper~\cite{MatedCRT}.

\subsection{Background and context}

\subsubsection{Planar maps and discrete conformal embeddings}

 A planar map $M$ is a proper (without edge-crossing) embedding of a connected and locally finite (multi-)graph in the two-dimensional sphere $\wh\bbC:=\bbC\cup\{\infty\}$, where we identify two planar maps if one can be obtained from the other via an orientation-preserving homeomorphism of $\wh{\bbC}$. Let $\cV M$, $\cE M$ and $\cF M$ denote the set of vertices, edges and faces, respectively, of $M$. All the planar maps we consider will be rooted, meaning that there is a distinguished oriented edge $\fre\in\cE M$. The tail vertex of $\fre$ is called the root vertex and the face to the right of $\fre$ is called the root face. We also call a finite planar map a \emph{planar map with sphere topology}. A \emph{planar map with disk topology with boundary length $\ell\in\bbN$} is a finite planar map where the root face has simple boundary of length $\ell$. The vertices on the boundary of the root face are called \emph{boundary vertices} and are denoted by $\partial\cV M$ while the other vertices are called \emph{interior vertices}. A face of $M$ is called a triangular face if it has three edges on the boundary. A \emph{planar map with whole-plane topology} is an infinite planar map where all the faces are bounded in $\bbC$. We call a planar map $M$ with whole-plane (resp.\ sphere) topology  an \emph{infinite plane triangulation} (resp.\ a \emph{sphere triangulation}), if all the faces of $M$ are triangular faces. We call a planar map $M$ with disk topology a \emph{disk triangulation}, if it has at least three boundary vertices, and all the faces except for the root face are triangular faces.

A planar map $M$ can be embedded in $\wh{\bbC}$ (as described in the previous paragraph) in a number of different ways. In this paper, we focus on two natural  embeddings: the \emph{circle packing} and the \emph{Riemann uniformization embedding}. These two embeddings are canonical examples of \emph{discrete conformal embeddings}.  {This means in particular that} embeddings of the same planar map in different domains induce (or is believed to induce) a map that approximates the conformal map between the two domains. For circle packing, this was conjectured by Thurston and solved in~\cite{RodinSullivan87,kS90,kS96,HeSchramm}. For Riemann uniformization embedding, this follows from the definition.

Let $M$ be a simple planar map with vertex set $\cV M$. We say that a collection $\cP = (D_v:v\in\cV M)$ of closed disks with disjoint interiors is a \textit{circle packing} for $M$, if two disks $D_v$ and $D_{v'}$ are tangent if and only if the vertices $v$ and $v'$ are connected by an edge. This gives an embedding of the planar map, with the vertices embedded at the centers of the disks and the edges are straight line segments connecting the centers of tangent disks. By the celebrated Koebe-Andreev-Thurston circle packing theorem, if $M$ is a disk (resp.\ sphere) triangulation and is simple, then up to M\"{o}bius transforms, there exists a unique circle packing of $M$ in the unit disk $\bbD$ (resp.\ Riemann sphere $\wh\bbC$). In the case where $M$ is a disk triangulation, we say $\cP$ is a \emph{circle packing of $M$ in $\bbD$}, if all the disks in $\cP$ are in $\ol\bbD$, and the disks for boundary vertices are internally tangent to $\partial\bbD$. We further say $\cP$ is \textbf{centered at a vertex} $v$, if the disk $D_v$ for the vertex $v$ is centered at origin.  For infinite plane triangulations, the existence of a circle packing is a straightforward consequence of the tightness from the Ring Lemma~\cite{RodinSullivan87}, and  the uniqueness up to M\"{o}bius transforms was proved by Schramm~\cite{Schramm91}. In this case, there is a dichotomy: an infinite plane triangulation $T$ is \emph{circle packing parabolic} (resp.\ \emph{circle packing hyperbolic}), if there exists a circle packing $\cP$ for $T$, such that the union of all the faces and edges (a.k.a.\ the \emph{carrier} of $\cP$) is equal to the whole plane (resp.\ unit disk). In~\cite{HeSchramm-hy-pa}, it was further shown that, if the simple random walk on $T$ is recurrent, then $T$ is circle packing parabolic, while if $T$ has uniformly bounded degree and the simple random walk on $T$ is transient, then $T$ is circle packing hyperbolic. See also~\cite{circlepackingbook,Nachmiaslec} for a comprehensive introduction on circle packing and related topics.

A second natural way to embed random planar maps is through the Riemann uniformization theorem. Roughly speaking, for a planar map $M$, one associate each face of degree $p$ with a regular $p$-gon of unit side length and glue them isometrically along their edges to get a Riemann surface $\frM(M)$. To be more precise,
let $M$ be a planar map  with disk, {whole-}plane or sphere topology and has at least 3 vertices. For  each face $f$ of $M$ of degree 1, i.e., $f$ is enclosed by a single edge of $M$, we remove this edge and the face from $M$. For each face $f$ of $M$ of degree 2, i.e., $f$ is formed by a double edge of $M$, we collapse this double edge into a single edge. Now we assume that each face of $M$ has degree at least 3.  We associate to $M$ a metric space $(\frM(M), d_M)$, by regarding each face of $M$ of degree $p$ as regular $p$-gon of side length 1 and defining the distance between two points as the length of the shortest path joining them. Equivalently, $\frM(M)$ is obtained by gluing regular $p$-gons of unit side length according to their adjacency pattern in $M$. The metric space $\frM(M)$ can be equipped with a compatible Riemann surface structure by defining coordinate charts as follows.    For a face $f$, we use the regular polygon associated with $f$, viewed as a subset of the plane,  as the chart, and use the identity map as the projection.  For each edge $e$ of $M$, we define an open neighborhood of the interior of $e$ in $\frM(M)$ by adding to $e$ the two triangles formed by the endpoints of $e$ and the centers of the two faces adjacent to $e$. To define a coordinate chart on this neighborhood, we simply place the two triangles next to each other in the plane. The reason to take only a {subset of the faces} %triangle 
 and not the entire face{s} is that this chart is well defined even if both sides of $e$ are incident to the same face. For each vertex $v$ of $M$, we define an open neighborhood of $v$ in $\frM(M)$ similarly by
intersecting the corners of the faces adjacent to $v$ with open discs of radius 1/2 centered at $v$. We define a chart on this neighborhood by first laying the corners
out sequentially around the origin (with possible overlapping), and then applying the function $z\mapsto z^{2\pi/\theta(v)}$,  suitably interpreted to get an injective map into the plane. The radius is chosen so that this definition remains valid when multiple corners of the same face are located at the same vertex. The case where $M$ is a planar map with disk topology can be treated similarly, where the boundary vertices can be treated similarly using the function $z\mapsto z^{\pi/\theta(v)}$ and $\frM(M)$ is homeomorphic to the closed unit disk.

\subsubsection{Tree-weighted planar maps and the Mullin bijection}

We briefly introduce the spanning tree weighted planar maps; see Section~\ref{subsec:Mullin} for more details. Let $n,\ell\in\bbN$. Let $(M^n,\fre^n)$ be a  planar map with sphere topology.  A collection of edges $\frT^n\subseteq \cE M^n$ is called a \emph{spanning tree} if   
\begin{enumerate}
    \item for all vertices $v\in\cV M^n$ there is an edge in $\frT^n$ with $v$ as end-point;\label{cond:def-spanning-tree-1}
    \item the edges of $\frT^n$ do not form any cycle, and\label{cond:def-spanning-tree-2}
    \item $\frT^n$ is connected.\label{cond:def-spanning-tree-3}
\end{enumerate}
Likewise, for a rooted  planar map $(M^n,\fre^n)$ of disk topology with boundary length $\ell$, a collection of edges $\frT^n\subseteq \cE M^n$ is called a \emph{spanning tree with wired boundary conditions} if it satisfies Conditions~\ref{cond:def-spanning-tree-1} and~\ref{cond:def-spanning-tree-3} above, and the edges of $\frT^n$ form exactly one cycle, which is equal to the boundary of the root face. Given a tuple $(M^n,\frT^n,\fre^n)$ of disk topology and $\frT^n$ is a spanning tree with wired boundary conditions, one can also generate a triangulation $T^n$ as follows. First add vertices of the dual map of $M^n$ (except for the vertex for the root face) to $M^n$ and connect these vertices for faces of $M^n$ to the corresponding vertices of $M^n$ incident to these faces; then add the edges of the dual map of $M_n$ which are dual to edges in $\cE M^n\backslash \frT^n$ and finally erase those edges in $\cE M^n\backslash \frT^n$. This also generates a path $\frtr^n$ on  $\cF T^n$, which starts and ends at the face containing $\fre^n$, and traverses all the faces in $\cF T^n$ and does not cross any edge of the spanning tree and the dual spanning tree. The case where $(M^n,\fre^n)$ has sphere topology can be treated similarly, where we do not exclude the vertex for the root face of $M^n$ in the dual map of $M^n$.

The Mullin bijection~\cite{Mullin} is a bijection between a spanning-tree decorated map and a nearest-neighbor walk in $\bbZ^2$. Consider the set  $\cA^\disk_{\ell}$ of  nearest-neighbor walk   $(\cZ_t^{\disk,n})_{t=0,...,m} = (\cL_t^{\disk,n},\cR_t^{\disk,n})_{t=0,...,m}$  on $\bbZ^2$ such that  
  \eqb\label{eq:Mullin-disk-walk}  \cZ_0^{\disk,n} = (\ell-1,0), \ \ \cZ_m^{\disk,n} = (0,0), \ \ \cL_t^{\disk,n}\geq0,  \ \ \cR_t^{\disk,n}\geq 0\  \text{for } t=0,...,m.\eqe
Given a walk in $\cA_\ell^\disk$, one can uniquely generate a tuple $(M^n,\frT^n,\fre^n)$ where $(M^n,\fre^n)$ is a rooted planar map of disk topology with boundary length $\ell$ and $\frT^n$ is a spanning tree with wired boundary conditions. We denote this map sending walks $\cZ^{\disk,n}$ to $(M^n,\frT^n,\fre^n)$ by $\Phi_\rmM^\disk$, and let $\Phi_\rmT^\disk(\cZ^{\disk,n}) = (T^n,\frtr^n)$ be the associated triangulation and path. Similarly, consider the set $\cA^\sph_n$ of nearest-neighbor walk   $(\cZ_t^{\sph,n})_{t=0,...,m} = (\cL_t^{\sph,n},\cR_t^{\sph,n})_{t=0,...,2n}$  on $\bbZ^2$ such that  
  \eqb\label{eq:Mullin-sph-walk}  \cZ_0^{\sph,n} =   \cZ_{2n}^{\sph,n} = (0,0), \ \ \cL_t^{\sph,n}\geq0,  \ \ \cR_t^{\sph,n}\geq 0\  \text{for } t=0,...,2n.\eqe
One can uniquely generate a tuple $\Phi_\rmM^\sph( \cZ^{\sph,n}) := (M^n,\frT^n,\fre^n)$ where $(M^n,\fre^n)$ is a rooted planar map with sphere topology and $n$ edges and $\frT^n$ is a spanning tree on $M^n$, and we again set $\Phi_\rmT^\sph(\cZ^{\sph,n}) = (T^n,\frtr^n)$ be the associated triangulation and path. See Proposition~\ref{prop:Mullin-sph} and Proposition~\ref{prop:Mullin-bij-disk} for more details.

Finally, the above bijections $\Phi_\rmM$ and $\Phi_\rmT$ can be extended to the setting where $\cZ = (\cL,\cR)$ is a bi-infinite walk in $\bbZ^2$ with nearest neighbor steps, and the corresponding $(M,\fre_0,\frT)$, the \emph{uniform infinite spanning-tree decorated planar map}, is the Benjamini-Schramm~\cite{BenjaminiSchramm01} limit of uniform tuples consisting of a planar map with an oriented root edge and a distinguished spanning tree~\cite{She16,Che17}. See Section~\ref{subsubsec:Mullin-infinite} for further details.

\subsubsection{Liouville quantum gravity, Schramm Loewner evolution and mating of trees}

Liouville quantum gravity is a theory of random surfaces. A $\gamma$-LQG surface for $\gamma\in (0,2)$ is, heuristically speaking, a random surface where the metric can be written on the form $e^{\gamma h}(dx^2+dy^2)$, where $h$ is a variant of the Gaussian free field (GFF) and $dx^2+dy^2$ is the standard Euclidean metric. Following~\cite{DS11}, one can define rigorously the area measure $\mu$ as well as the boundary measure $\nu$, and also the distance function (metric) of this surface thanks to~\cite{DDDf,GMMetric}. One can represent a $\gamma$-LQG surface by a pair $(D,h)$ called an \emph{embedding}, where $D\subseteq \bbC$ and $h$ is a variant of the GFF. 
The \emph{Schramm-Loewner evolution} (SLE)~\cite{Sch00} is a family of random fractal curves $\kappa>0$, and are the scaling limit of interfaces from many  natural  statistical physics models. %The curve is a.s.\ simple for $\kappa\in(0,4]$ and space-filling for $\kappa\geq 8$.
 
One of the central results in the theory of LQG is the mating of trees~\cite{DMS14}, where given a pair of correlated Brownian motion $(L,R)$, one can generate a $\gamma$-LQG surface decorated by a  space-filling $\SLE_{16/\gamma^2}$ curve. We briefly state the result for the $\gamma=\sqrt2$ case. Consider a two dimensional Brownian motion $Z=(L,R)$ with covariance matrix chosen so that %Fix $\gamma\in(0,2)$ and  
\begin{equation}\label{eq:variance}
    \mathrm{Var}(L_t)=\mathrm{Var}(R_t) = 2|t|,  \ \ \ \mathrm{Cov}(L_t,R_t) = 0.
\end{equation}
Following~\cite[Theorem 1.9 and Theorem 1.11]{DMS14},   the Brownian motion $(L,R)$ in~\eqref{eq:variance}  (where the variance was computed in~\cite{ARS21}) a.s.\ determines a pair $(\eta,h)$, where $(\bbC,h,0,\infty)$ is an embedding of a $\gamma$-LQG surface called $\gamma$-quantum cone, and $\eta$ is a space-filling $\SLE_{16/\gamma^2}$ loop in $\wh\bbC$ from $\infty$ to $\infty$ independent from $h$ and parameterized by the $\gamma$-LQG mass induced by $h$, such that $\eta(0)=0$. Similarly, following~\cite{AG19}, one can also define a variant $Z^\disk$ of the Brownian motion as in~\eqref{eq:variance} which starts from $(1,0)$ and ends at $(0,0)$ such that it is conditioned to stay in the first quadrant, and use it to generate a pair $(h^\disk,\eta^\disk)$, where $(\bbD,h^\disk)$ is an embedding of a $\gamma$-LQG disk and $\eta^\disk$ is a space-filling $\SLE_{16/\gamma^2}$ loop independent from $h^\disk$. By~\cite{MS-mot-finite}, the same also holds where the Brownian motion is conditioned to start and end at $(0,0)$ and stay in the first quadrant, and this Brownian excursion $Z^\sph$ encodes a pair $(h^\sph,\eta^\sph)$ where $(\bbC,h^\sph)$ is an embedding of  a quantum sphere and $\eta^\sph$ is a space-filling  $\SLE_{16/\gamma^2}$ loop from $\infty$ to $\infty$.

\subsection{Main results}
The main result of this paper is, roughly speaking, that in various settings, we can couple a nearest-neighbor walk  $\cZ$ on $\bbZ^2$ with (some variant of) a Brownian motion, such that the circle packing or Riemann uniformization embedding of the planar map encoded by $\cZ$ via Mullin bijection is close to the SLE$_8$ decorated $\sqrt{2}$-LQG surface generated by $Z$ via mating of trees. Note that circle packing is defined for simple planar maps whereas the triangulations $\Phi_\rmT(\cZ)$ may have multiple edges. To resolve this, one has to turn   a triangulation $T$ with multiple edges into a triangulation that is simple. There are multiple  ways to do so, and   we are going to use the following natural operator $\Phi_{\rm simple}$ that collapses multiple edges of $T$ as below.   Let $T$ be a disk triangulation or an infinite plane triangulation. For each pair of vertices $(v_1,v_2)$ of $T$ such that $v_1,v_2$ are connected by edges $e_1,...,e_k$, we remove the vertices and edges enclosed by $e_1,...,e_k$, and further remove the edges $e_2,...,e_k$ from $T$. We let $\Phi_{\rm simple}(T)$ be the remaining map. Note that if $T$ is a disk triangulation with at least 3 edges on the boundary, then $\Phi_{\rm simple}(T)$  remains to be a disk triangulation, while if $T$ is an infinite plane triangulation, then $\Phi_{\rm simple}(T)$ is either an infinite plane triangulation or empty. If $T$ is a sphere triangulation with $n$ faces, suppose for each pair of vertices $(v_1,v_2)$ of $T$ such that $v_1,v_2$ are connected by edges $e_1,...,e_k$, one of the connected components of $T\backslash\{e_1,...,e_k\}$ has at least $3n/4$ faces; in this case, we further remove all the vertices and edges in the other connected components as well as $e_2,...,e_k$ and let $\Phi_{\rm simple}$ be the remaining map. If this is not the case, then we set $\Phi_{\rm simple}(T)=\emptyset$. If $T$ is a sphere triangulation, then  $\Phi_{\rm simple}(T)$ is either a sphere triangulation or empty.  Observe that for a disk, sphere or infinite plane triangulation $T$ such that $\Phi_{\rm simple}(T)\neq\emptyset$, if  $v\notin \cV\Phi_{\rm simple}(T)$, then there exists a unique pair of vertices $(v_1,v_2)\in \cV\Phi_{\rm simple}(T) $ such that the vertex $v$ is removed when we collapse multiple edges between $v_1$ and $v_2$ as above. See Figure~\ref{fig:PHIsimple} for an illustration.

\begin{figure}
    \centering
    \includegraphics[scale=0.8]{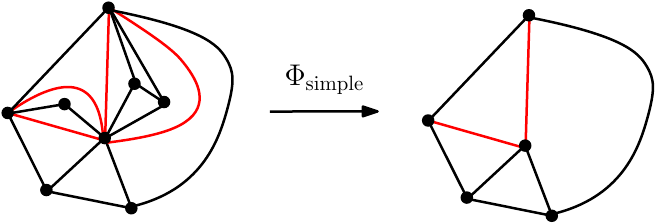}
    \caption{An illustration of the operator $\Phi_{\rm simple}$, where we collapse all the  multi-edges into single edges and remove all the vertices and edges enclosed by the multi-edge.}
    \label{fig:PHIsimple}
\end{figure}

\begin{definition}\label{def:pack-non-simple}
   Let $T$ is a disk, sphere or infinite plane triangulation with  multiple edges. If $\Phi_{\rm simple}(T)$ is empty, then we set the circle packing for $T$ to be empty. If $\Phi_{\rm simple}(T)$ is not empty, we define circle packings for $T$ as below. Let $\wt\cP = \{\wt D_v:v\in \cV\Phi_{\rm simple}(T) \}$ be a circle packing for $\Phi_{\rm simple}(T)$. For a vertex $v\in\cV T$, if $v\in \cV\Phi_{\rm simple}(T) $, then we let $D_v = \wt D_v$. If $v\notin \cV\Phi_{\rm simple}(T)$, let $(v_1,v_2)\in \cV\Phi_{\rm simple}(T) $  be the unique pair of vertices such that the vertex $v$ is removed when we collapse multiple edges between $v_1$ and $v_2$. Then we let $D_v$ be the tangent point between the disks $D_{v_1}$ and $D_{v_2}$, and we view $D_v$ as a degenerate disk. We call $\cP:=\{D_v:v\in\cV T\}$ a circle packing for $T$. If $T$ is an infinite triangulation, we say $T$ is circle packing parabolic (resp.\ hyperbolic) if $\Phi_{\rm simple}(T)$ is circle packing parabolic (resp.\ hyperbolic). 
\end{definition}

We are ready to state our main results. In the rest of the paper, we will always assume $\gamma=\sqrt{2}$ and $\kappa=16/\gamma^2=8$.  For a variant of GFF $h$ defined in $D\subseteq\bbC$, we write $\mu_h$ for the $\gamma$-LQG area measure in $D$ with respect to $h$, and if $D=\bbD$, we write $\nu_h$ for  the $\gamma$-LQG length measure on $\partial \bbD$ with respect to $h$.

\begin{theorem}\label{thm:pack-disk}
  Let $n\in\bbN$ and $\ell = \ceil{\sqrt n}$. Let $(\cZ_t^{\disk,n})_{t=0,...,m}$ be a uniform  discrete-time nearest-neighbor walk on $\bbZ^2$ sampled from $\cA_\ell^\disk$ as in~\eqref{eq:Mullin-disk-walk}
  whose law is weighted by $4^{-m}$.   Consider the disk triangulation $T^{\disk,n}$ and the   path $\mathfrak{tr}^{\disk,n}$ on $\cF T^{\disk,n}$ given by $\Phi_\rmT^{\disk}(\cZ^{\disk,n}) = (T^{\disk,n},\mathfrak{tr}^{\disk,n}).$ Let $Z^\disk=(L_t^\disk,R_t^\disk)_{t\in[0,A]}$ be a Brownian motion from~\eqref{eq:variance} conditioned on  $Z_0^\disk=(2,0)$, $Z_A^\disk=(0,0)$ and $Z_t^\disk\in\bbR_+^2$ with random duration $A$, and let $(\bbD,h^\disk,\eta^\disk)$ be the clockwise space-filling $\SLE_8$ loop decorated quantum disk generated by $Z^\disk$, embedded such that $\eta^\disk(A/2)=0$.  Then there is a coupling $((\cZ^{\disk,n})_{n\in\bbN},Z^\disk)$, such that there exists a     circle packing $\cP^{\disk,n}$ for $T^{\disk,n}$ in $\bbD$ such that as $n\to\infty$, the following holds:
  \begin{enumerate}
      \item the maximal diameter of disks in $\cP^{\disk,n}$ converges to 0 in probability;
      \item $\sup_{t\in[0,A]}|{n}^{-1/2}\cZ_{\lfloor mt/A\rfloor}^{\disk,n}-\frac{1}{2}Z_t^\disk|$ converges to 0 in probability; 
      \item  we extend $\mathfrak{tr}^{\disk,n}$ as a path    $:[0,m]\to \ol\bbD$  on embedded faces via linear interpolation, then    $$\sup_{t\in [0,A]}|\eta^\disk(t)-{\mathfrak{tr}}^{\disk,n}(mt/A)| \to 0 \text{ in probability as } n\to\infty;$$
      \item let $\mu^{\disk,n}$ be the measure on $\bbD$ that assigns mass $\frac1n$ to the center of  each disk, where  duplication is counted such that $k$ duplicated points have mass $\frac{k}{n}$ in total, and $\nu^{\disk,n}$ be the measure on $\partial\bbD$ that assigns mass $n^{-1/2}$ to each tangent point between a disk in $\cP^{\disk,n}$ and $\partial\bbD$. Then the Prokhorov distance between  {$(2\mu^{\disk,n},2\nu^{\disk,n})$} and $(\mu_{h^\disk},\nu_{h^\disk})$ tends to 0 in probability as $n\to\infty$.
  \end{enumerate}
\end{theorem}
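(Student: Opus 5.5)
The plan is to reduce Theorem~\ref{thm:pack-disk} to three inputs: the Tutte embedding convergence of~\cite{Tutte}, the ergodic scale-free framework of~\cite{HY26}, and the circle packing comparison results of~\cite{MatedCRT}.

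\emph{Step 1 (coupling).} First I fix the coupling of $(\cZ^{\disk,n})_n$ with $Z^\disk$ from~\cite{Tutte}, which is a KMT-type strong approximation of the conditioned walk by the conditioned Brownian motion. This gives item 2 directly. The factor $\frac12$ and the start point $(2,0)$ come from $\ell=\ceil{\sqrt n}$ and the variance normalization~\eqref{eq:variance}.

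\emph{Step 2 (local comparison with the mated-CRT map).} Under this coupling, cells of $T^{\disk,n}$ (collections of consecutive faces visited by $\mathfrak{tr}^{\disk,n}$) are matched with cells $\eta^\disk([k\varepsilon,(k+1)\varepsilon])$ of a mated-CRT map with small $\varepsilon$. The infinite-volume version of this comparison, the uniform infinite spanning-tree decorated map of Section~\ref{subsubsec:Mullin-infinite}, should fit the hypotheses of~\cite{HY26}. These hypotheses are translation invariance modulo scaling, ergodicity, and moment bounds on cell degree and diameter. Given them, \cite{HY26} says that the circle packing of the whole-plane map approximates the identity map up to global scaling on macroscopic scales.

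\emph{Step 3 (finite volume).} To pass to the disk, I would use~\cite{MatedCRT}: circle packings of a disk triangulation and of a large subregion of the whole-plane-type map differ by a map close to conformal away from the boundary. Concretely, I would fix a compact set $K\subset\bbD$ and use local absolute continuity between the quantum disk and the quantum cone near $\eta^\disk(A/2)=0$ and near other interior points. I would then show that the circle packing of $T^{\disk,n}$, normalized so that it is centered at the vertex for the face $\mathfrak{tr}^{\disk,n}(m/2)$, is uniformly close on $K$ to the Tutte embedding of~\cite{Tutte} and hence to $\eta^\disk$. Items 1 and 3 on $K$ follow from this, since small cells have small Euclidean diameter. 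I would handle $\Phi_{\rm simple}$ by observing that collapsed pockets are sent to tangent points, and that pockets of macroscopic size are rare because the walk has no macroscopic cone excursions of the relevant type.

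\emph{Step 4 (near the boundary, and measures).} Near $\partial\bbD$ I would use the maximum principle and monotonicity of circle packings (the Schwarz--Pick type lemma for packings) together with the smallness of $\mu_{h^\disk}$ near $\partial\bbD$. This controls the total mass and diameters of disks close to the boundary, giving uniformity up to $\ol\bbD$. Items 1 and 3 then hold globally. Item 4 follows from item 3, since $\eta^\disk$ is parametrized by LQG area and $\mathfrak{tr}^{\disk,n}$ visits faces at unit rate; each vertex corresponds to about two faces, which gives the factor $2$. The boundary measure follows from the boundary-length process $L^\disk$ in item 2 together with boundary continuity.

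I expect the main obstacle to be Step 4: the boundary behavior and the extension from interior-compact control to the whole closed disk. In particular, I would need to rule out disks near $\partial\bbD$ that stay macroscopically large, and boundary degeneracies created by $\Phi_{\rm simple}$. I would first try to use~\cite{MatedCRT}'s domain comparison with slightly enlarged and shrunk Jordan domains filled by $\eta^\disk$, and sandwich the packing between them.
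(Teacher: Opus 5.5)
\textbf{The main gap is in Steps~3--4, at the boundary, and the tools you name cannot close it.}

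Already the interior comparison in Step~3 needs boundary information. A He--Schramm type comparison, even on compacts, requires a reference circle embedding whose boundary disks lie near the boundary of a known Jordan domain. This is Condition (II) of Theorem~\ref{thm:He-Schramm-extension-1}.

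Moreover, $T^{\disk,n}$ is not a subregion of an infinite-volume map. After reweighting, only the part $T_{n,I'}$ explored during $I'\approx[A\e,A(1-\e)]$ is such a subregion. Part of its boundary is a crosscut $\wt\gamma_{n,I'}$ whose location in the disk packing is not known a priori.

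Your proposed tools do not supply this boundary control:
\begin{itemize}
\item Smallness of $\mu_{h^{\disk}}$ near $\partial\bbD$ says nothing about where boundary disks sit or how large they are. Each disk carries mass $1/n$ whatever its radius.
\item The discrete Schwarz--Pick lemma only compares hyperbolic radii, and only in one direction.
\end{itemize}
Neither rules out the embedded boundary oscillating or developing thin protrusions. That is exactly the difference between locally uniform convergence and convergence up to the boundary for conformal maps. Yet item~3 is uniform on all of $[0,A]$, and the $\nu$-part of item~4 is about tangency points on $\partial\bbD$.

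\textbf{The missing input is a boundary statement in infinite volume.} This is parts (iii)--(iv) of Theorem~\ref{thm:conv-0-cone-pack-Hcell}. For $s<t$ in $\mr{\cJ}$, the centers of the boundary disks of $T_{s,t}$ in the whole-plane packing trace a curve close to $\partial\eta([s,t])$ in the metric~\eqref{eq:curve-metric-0}. In addition, combinatorial and quantum boundary lengths agree up to $O(\log r)$.

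This rests on a new property of the cells, Lemma~\ref{lem:construction-embed-cell-0}. It says that the arc of $\partial\eta((-\infty,s])$ between consecutive reference points $z_{s_{k-1,\rmL}}$ and $z_{s_{k,\rmL}}$ lies in the convex hull of a single cell.

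The argument then proceeds in two steps:
\begin{itemize}
\item The boundary statement supplies Conditions (II) and (IV) of Theorem~\ref{thm:He-Schramm-extension-1}. These give uniform convergence up to $\partial\bbD$, boundary measure included, for the packing of $T_{n,I'}$ in $\bbD$ (Proposition~\ref{prop:cZ^n-pack-E}).
\item The full disk is then obtained from Theorem~\ref{thm:He-Schramm-disk}, not by sandwiching. In the packing of $T_{n,I'}$ the crosscut $\wt\gamma_{n,I'}$ has small diameter (Lemma~\ref{lem:disk-pf-2}). Its endpoints are close together, both near $\eta^{\disk}(0)=\eta^{\disk}(A)$. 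It cannot wrap around $\partial\bbD$, because the boundary vertex at left boundary length $\approx\sqrt n/2$ stays macroscopically away.
\end{itemize}

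\textbf{Step~2 also needs repair.} Theorem~\ref{thm:CellSystemPack} concerns the circle packing of the associated map of the cell configuration. That map must be exactly the simple triangulation $\Phi_{\rm simple}(T)$. Mated-CRT cells $\eta([k\e,(k+1)\e])$ have mated-CRT adjacency, so they say nothing about packings of $T$.

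The paper instead builds cells $\mr{H}^{\cell}_t$ from pieces of $\eta(\mr{I}_s)$ with $s\in\mr{\cT}^{\rmC,1}_t$, so that the adjacency is exactly $\Phi_{\rm simple}(T)$. This is done under the $0$-quantum cone, using the coupling of Theorem~\ref{thm:KMT-0-cone}, in which the walk is not the i.i.d.\ walk. The uniform infinite spanning-tree decorated map, with its natural embedding via the $\gamma$-quantum cone, is not translation invariant modulo scaling. Re-rooting at a Lebesgue-typical point yields the $0$-cone, not the $\gamma$-cone.

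The passage to i.i.d.\ walks, and then to the excursion, is carried out afterwards for the encoding processes on time intervals. This is \cite[Section 6.1]{Tutte}, followed by reweighting $Z|_{[A\e,A(1-\e)]}$ in Lemma~\ref{lem:disk-pf-0}. Spatial absolute continuity of fields near interior points, as in your Step~3, would not control the discrete map, which is a non-local function of $(h,\eta,\fU)$.

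Likewise, the non-locality of $\Phi_{\rm simple}$ is not settled by the heuristic ``macroscopic pockets are rare''. Quantitative bounds are needed, both for the moment bound~\eqref{eq:thm-inv-circle} on the modified time sets and for the localization in Lemma~\ref{lem:A2}. These are Propositions~\ref{lem:gap-hat-J} and~\ref{lem:gap-hat-J^-}: with high probability, $\mr{\cJ}\cap[-2^m,2^m]$ has no gap of length $2^{(1-\e_0)m}$, and $\mr{\cZ}$ makes only $O(m)$ consecutive upsteps there.

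Finally, no result compares circle packing with the Tutte embedding. The comparison is with the a priori cell embedding, that is, directly with $\eta$.
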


Note that the tuple $(M^n,\fre^n,\frT^n)$ encoded by the walk $\cZ^{\disk,n}$ as above under the Mullin bijection has weight $c(M^n,\fre^n,\frT^n)=16^{-\#\cV M^n-\#\cF M^n}$. The same also holds if we sample a uniform element from $\mathcal{A}_n^\sph$ and consider the Mullin bijection in the sphere topology setting.

\begin{theorem}\label{thm:pack-sph}
    Let $n\in\bbN$, and $(\cZ^{\sph,n}_t)_{t=0,...,2n}$ be an element uniformly chosen from $\mathcal{A}_n^\sph$ as in~\eqref{eq:Mullin-sph-walk}. Let $(T^{\sph,n},\frtr^{\sph,n}) = \Phi_{\rmT}^\sph(\cZ^{\sph,n})$ be the sphere triangulation and the path on faces of $T^{\sph,n}$. Let $Z^\sph=(L_t^\sph,R_t^\sph)_{t\in[0,1]}$ be a Brownian motion from~\eqref{eq:variance} conditioned on $Z_0^\sph=Z_1^\sph=(0,0)$ and $Z_t^\sph\in\bbR_+^2$ for $t\in[0,1]$, and let $(\wh\bbC,h^\sph,\eta^\sph)$ be the space-filling $\SLE_8$ curve decorated unit area quantum sphere generated by $Z$, embedded such that $\eta^\sph(0)=\infty,\eta(1/4)=1,\eta(1/2)=0$. Then there is a coupling $((\cZ^{\sph,n})_{n\in\bbN},Z^\sph)$ and a  circle packing  $\cP^{\sph,n}$ for $T^{\sph,n}$ in $\wh\bbC$ such that with probability tending to 1 as $n\to\infty$, $\Phi_{\rm simple}(T^{\sph,n})$ is not empty, and under this event, the following holds as $n\to\infty$:
    \begin{enumerate}
        \item  the maximal spherical diameter of disks in $\cP^{\sph,n}$ tends to 0 in probability;
        \item $\sup_{t\in[0,1]}|(2n)^{-1/2}\cZ_{\lfloor 2nt\rfloor}^{\sph,n}-\frac{1}{2}Z_t^\sph|$ converges to 0 in probability;
        \item we extend $\frtr^{\sph,n}$ as a path $:[0,2n]\to\wh\bbC$ on embedded faces via linear interpolation, then 
        $$\sup_{t\in[0,1]}d_{\sph}(\eta^\sph(t),\frtr^{\sph,n}(2nt))\to0 \text{ in probability as } n\to\infty;$$
        \item let $\mu^{\sph,n}$ be the measure that assigns mass $\frac{1}{n}$ to the center of each disk, with duplication counted. Then the Prokhorov distance between   {$\mu^{\sph,n}$} and $\mu_{h^\sph}$ tends to 0 in probability as $n\to\infty$.
    \end{enumerate}
\end{theorem}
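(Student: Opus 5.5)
The plan is to deduce Theorem~\ref{thm:pack-sph} from the disk case, Theorem~\ref{thm:pack-disk}, together with the whole-plane input from~\cite{HY26}. I use~\cite{HY26} for the uniform infinite spanning-tree decorated map, viewed as a triangulation in an ergodic scale-free environment in the sense of~\cite{GMSinvariance}. The key structural fact is that the excursion $\cZ^{\sph,n}$ splits naturally. Fix small $\epsilon>0$ and times $t_1<t_2$. The sub-walk $\cZ^{\sph,n}|_{[2nt_1,2nt_2]}$ encodes a submap of $T^{\sph,n}$. Conditionally on its boundary data, it is a Mullin-type disk map, more precisely a finite union of disk pieces, with law as in~\eqref{eq:Mullin-disk-walk} after reweighting. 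The continuum analogue is the Markov property of $Z^\sph$ and of mating of trees: $\eta^\sph([t_1,t_2])$ is a union of quantum disks. The complement $\eta^\sph([0,t_1]\cup[t_2,1])$ is a small region near $\infty$ with area $t_1+1-t_2$, which is small for suitable choices.

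\textbf{Step 1.} I first build the coupling of walks. Couple $(2n)^{-1/2}\cZ^{\sph,n}_{\lfloor 2n\cdot\rfloor}$ with $\frac12 Z^\sph$ uniformly; this is possible by the invariance principle for cone excursions, via a KMT/Skorokhod-type argument. I choose the coupling so that the pieces of the walk between cone times also converge jointly with their continuum counterparts. This gives item~2 at once.

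\textbf{Step 2.} Next I choose macroscopic pieces. For $\delta>0$, pick finitely many excursion intervals of $Z^\sph$ whose images are quantum disks $\cD_1,\dots,\cD_k$ covering all but $\delta$ of the area. On each piece apply Theorem~\ref{thm:pack-disk}, after rescaling the boundary length and area. This gives circle packings $\cP_i$ of the discrete disks in $\bbD$ that converge to $(\bbD,h_i,\eta_i)$. The main gluing step then compares the restriction of the global packing $\cP^{\sph,n}$ of $\Phi_{\rm simple}(T^{\sph,n})$ to each piece with $\cP_i$ composed with an approximate conformal map. For this I would invoke the comparison of circle packings in different domains from~\cite{MatedCRT}, together with the Rodin--Sullivan / He--Schramm convergence of packings to conformal maps. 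The aim is to show that the global packing, restricted to a piece, is close to $\phi_i\circ\cP_i$. Here $\phi_i$ is the conformal map $\bbD\to\eta^\sph(\text{piece})$ given by the continuum welding. To identify the limit, I use that the welding is conformally removable: SLE$_8$ boundaries are SLE$_2$-type curves. Any subsequential limit of the discrete global embedding is therefore a homeomorphism that is conformal off a removable set, hence Möbius. The normalization $\eta^\sph(0)=\infty$, $\eta(1/4)=1$, $\eta(1/2)=0$ then pins it down. For this I normalize $\cP^{\sph,n}$ by sending the faces $\frtr^{\sph,n}(0),\frtr^{\sph,n}(n/2),\frtr^{\sph,n}(n)$ to $\infty,1,0$.

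\textbf{Step 3.} I then handle the remaining claims. The statement that $\Phi_{\rm simple}(T^{\sph,n})\ne\emptyset$ with high probability follows from the fact that macroscopic multi-edge separating cycles would force cut points in the excursion. Such cut points, where both $\cL$ and $\cR$ have simultaneous near-zeros splitting the mass, have vanishing probability in the limit. Diameters of the small leftover region near $\infty$ are controlled by tightness of packings in the whole-plane setting from~\cite{HY26}. This uses the fact that the complement region has small LQG area and hence small spherical diameter, by continuity of $\eta^\sph$. Items~1, 3 and~4 then follow piecewise from Theorem~\ref{thm:pack-disk}. Item~4 also uses the fact that collapsed vertices carry negligible mass; their duplicates sit at tangent points and are counted with multiplicity. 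Finally I let $\delta\to0$.

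\textbf{Main obstacle.} I expect the main obstacle to be Step~2. It requires showing that the packing of the whole sphere, restricted to a piece, is close to the independently constructed disk packing up to the welding conformal map. The difficulty is that circle packings are not local. My first attempt would be to apply the~\cite{MatedCRT} comparison estimate to annular neighborhoods of each piece. I would combine it with an a priori equicontinuity bound of the discrete embedding, so that any subsequential limit is a homeomorphism, and then conclude by removability.
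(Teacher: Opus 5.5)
Your plan has two genuine gaps, both in Step 2.

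\textbf{Theorem~\ref{thm:pack-disk} does not apply to pieces of the sphere walk.} At $\gamma=\sqrt2$ the pair $(L,R)$ is uncorrelated and has no $\pi/2$-cone excursions. So there are no macroscopic ``excursion intervals of $Z^\sph$ whose images are quantum disks'' covering most of the area. This absence is exactly why $\eta^\sph([a,b])$ is a single Jordan domain, and why $\Phi_{\rm simple}$ erases only microscopic pieces (Lemma~\ref{lem:sph-double-edge-1}). Your argument for $\Phi_{\rm simple}(T^{\sph,n})\neq\emptyset$ can be made to work; the paper does it by direct counting.

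If you instead cut at deterministic times, the law does not match. A piece of $\cZ^{\sph,n}$, given its endpoint values, is a fixed-duration quadrant bridge between general points. Theorem~\ref{thm:pack-disk} is instead about the Boltzmann walk from $(\lceil\sqrt n\rceil-1,0)$ to $(0,0)$ weighted by $4^{-m}$. Passing between the two requires conditioning on events of vanishing probability, which does not preserve convergence in probability. Moreover, Theorem~\ref{thm:pack-disk} only gives \emph{some} coupling for its own law, not a joint coupling of several pieces with a single $Z^\sph$.

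The paper avoids this as follows. It Poissonizes the excursion (Lemma~\ref{lem:417}) and shows that the increments on $[a,1-b]$ have a Radon--Nikodym derivative with respect to a free rate-$2n$ walk, bounded uniformly in $n$. All the singular conditioning lives in the two end pieces. This transfers the i.i.d.\ result, Lemma~\ref{lem:cZ^n-pack-Q-n}, to one middle piece; a Lusin-type continuity argument then passes to a coupling with the excursion $Z$ (Lemma~\ref{lem:sph-pack-pf-1}).

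\textbf{The gluing step has no input.} Restricting the global packing to each piece and concluding by removability requires an a priori equicontinuity of the global sphere embedding. You do not have this, and it is essentially the content of the theorem. The comparison results of~\cite{MatedCRT} need boundary control of both embeddings, which is unavailable for the global packing restricted to a piece.

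The missing idea is that one piece suffices. Take $I'_\varepsilon\subseteq[\varepsilon,1-\varepsilon]$ with endpoints in the retained jump set. By Lemmas~\ref{lem:sph-double-edge-2} and~\ref{lem:A2}, $\Phi_{\rm simple}(T_{n,I'_\varepsilon})$ is then a submap of $\Phi_{\rm simple}(T^{\sph,n})$. Its image $\eta^\sph(I'_\varepsilon)$ covers everything except a small neighborhood of $\infty$.

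Rescale the $\bbD$-packing of $T_{n,I'_\varepsilon}$ so that $\frv^{\mathrm{mid},n}$ and $\frv^{1/4,n}$ go to $0$ and $1$. By Proposition~\ref{prop:unif-sph}, the correspondingly normalized conformal map of $\eta^\sph(I'_\varepsilon)$ is close to the identity, so the rescaled carrier exhausts $\bbC$. Now apply Theorem~\ref{thm:He-Schramm-sphere} to this packing and to the restriction of $\cP^{\sph,n}$. The latter is a circle embedding in $\bbC$ once $\infty$ is placed in the interstice of the face at time $t^{0,n}$. The theorem shows the disk correspondence is uniformly close to the identity in the spherical metric, and forces all remaining disks into a vanishing neighborhood of $\infty$ (Lemma~\ref{lem:sph-pack-pf-4}).

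No welding, removability, or whole-plane tightness from~\cite{HY26} is needed. Items 1, 3 and 4 then follow by reparametrizing by jump counts (Lemma~\ref{lem:sph-pack-pf-5}).
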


\begin{theorem}\label{thm:pack-cone}
    Let $(\check{\cZ_k^n})_{k\in\bbZ}$ be the standard nearest-neighbor walks on $\bbZ^2$, and let $(\cZ^n_t)_{t\in\bbR} = (\check \cZ_{\lfloor nt \rfloor}^n)_{t\in\bbR}$. Let $\Phi_\rmT(\cZ^n) = (T^n,\frtr^n)$ be the triangulation and path on faces of $T^n$ generated through the Mullin bijection. Let $Z=(L_t,R_t)_{t\in\bbR}$ be the Brownian motion from~\eqref{eq:variance}, and $(\bbC,h,\eta)$ be the space-filling $\SLE_8$ decorated  $\gamma$-quantum cone generated by $Z$ such that $\eta(0)=0$. Then $T^n$ is a.s.\ circle packing parabolic and $\Phi_{\rm simple}(T^n)$ is a.s.\ not empty. Furthermore,  there is a coupling between $(Z,\cZ^n)_{n\in\bbN}$ and circle packings $\cP^n$ for $T^n$ in $\bbC$, such that as $n\to\infty$, the following holds:
    \begin{enumerate}
        \item For every $R>0$, the maximal diameter of disks in $\cP^n$ intersecting $B(0;R)$ tends to 0 in probability; \label{result:thm:pack-cone-1}
        \item For every $a>0$, {$\sup_{t\in [-a,a]}|n^{-1/2}\cZ^n_t-\frac{1}{2}Z_t|$} converges to 0 in probability; \label{result:thm:pack-cone-2}
        \item If we view $\frtr^n:\bbR\to\bbC$ as a path on embedded faces, then for every $a>0$,  {$$\sup_{t\in [-a,a]}|\eta(t)-\frtr^n(t)| \to 0 \text{ in probability;}$$} \label{result:thm:pack-cone-3}
        \item Let $\mu^n$ be the measure that assigns mass $\frac{1}{n}$ to the center of each disk, with duplication counted. Then for each $R>0$, the Prokhorov distance between $2\mu^n|_{B(0;R)}$ and $\mu_h|_{B(0;R)}$ tends to 0 in probability.  \label{result:thm:pack-cone-4}
    \end{enumerate}
\end{theorem}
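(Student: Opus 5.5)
The plan is to deduce Theorem~\ref{thm:pack-cone} from the general convergence result of~\cite{HY26} for circle packings of random planar maps in ergodic scale-free environments, with the required input supplied by the Tutte-embedding convergence of~\cite{Tutte}.

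\emph{Step 1: the environment hypotheses.} We use the Mullin bijection for bi-infinite walks (Section~\ref{subsubsec:Mullin-infinite}). The walk $\check\cZ^n$ has i.i.d.\ increments, so the decorated map $(T^n,\frtr^n)$ is invariant in law under re-rooting along $\frtr^n$. We then embed the face sequence into $\bbC$ via $\eta$, for instance by the cells $\eta([k/n,(k+1)/n])$ in the coupling of~\cite{Tutte}. Together with the scale invariance of the $\gamma$-quantum cone, this should realize the map as (a mild variant of) an ergodic, modulo-scaling translation invariant environment in the sense of~\cite{GMSinvariance}. The remaining hypotheses, namely moment bounds on cell degrees and on the diameters of cells relative to their neighbours, would be taken from~\cite{Tutte}. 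That paper proves the quenched invariance principle for random walk on these maps, which is exactly what~\cite{HY26} consumes.

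\emph{Step 2: parabolicity and collapsing multiple edges.} Recurrence of the simple random walk on $T^n$ follows from the invariance principle, or from~\cite{GN13}-type results for unimodular maps with exponential degree tails. By~\cite{HeSchramm-hy-pa}, applied to $\Phi_{\rm simple}(T^n)$, this gives circle packing parabolicity. We must still check that removing the regions enclosed by multiple edges does not affect recurrence; these regions are a.s.\ finite. To see that $\Phi_{\rm simple}(T^n)\neq\emptyset$, note that each multi-edge encloses a finite region. The union of all such regions is a.s.\ not the whole map, because the nested multi-edge regions containing the root face are a.s.\ bounded. This boundedness uses the unimodularity and the fact that no infinite nesting can occur, since that would force a Brownian cone excursion structure of the walk at every scale.

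\emph{Step 3: transferring the conclusions.} We apply the main theorem of~\cite{HY26} to $\Phi_{\rm simple}(T^n)$. It gives that the circle packing, normalized to match the $\eta$-embedding at the root and at one other scale, is close to the identity map on compacts. In other words, each vertex's disk lies near the cell $\eta([k/n,(k+1)/n])$ of that vertex, and the maximal diameter tends to $0$ on $B(0;R)$. This yields item~\ref{result:thm:pack-cone-1}. Item~\ref{result:thm:pack-cone-2} is Donsker's theorem, with the variance matching~\eqref{eq:variance} after the factor $\frac12$. Item~\ref{result:thm:pack-cone-3} follows from the fact that $\frtr^n(t)$ lies in a face adjacent to the disks of the vertices of cell $\lfloor nt\rfloor$, together with uniform continuity of $\eta$ on $[-a,a]$. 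For item~\ref{result:thm:pack-cone-4}, the cells have $\mu_h$-mass $1/n$. The vertices of $T^n$ correspond to roughly half the steps, since vertices and faces of $M^n$ together account for the steps. The LQG mass of the union of cells near a point is therefore approximated by $2\mu^n$, once the counting of degenerate disks for collapsed vertices is included. A law of large numbers along $\eta$, by ergodicity of the increments, justifies the factor $2$.

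\emph{Main obstacle.} I expect the hardest step to be verifying that the degenerate vertices removed by $\Phi_{\rm simple}$ do not spoil the comparison. Their macroscopic mass must vanish, and the cells they belong to must still be mapped close to their $\eta$-locations. I would handle this as follows. Multi-edge regions correspond to subexcursions of $\cZ^n$ that are cut off by a two-vertex separator. Their sizes have heavy but summable tails, so the largest one intersecting $B(0;R)$ has $\mu_h$-mass $o(1)$, because the quantum cone has no atoms. Since every collapsed vertex sits at a tangency point of two adjacent disks, their images are within $o(1)$ of their cells.
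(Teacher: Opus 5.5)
Your Step~1 has a genuine gap, and it is the central one. The hypotheses of Theorem~\ref{thm:CellSystemPack} (from \cite{HY26}) are not a quenched invariance principle. They are conditions on a cell configuration: translation invariance modulo scaling (recentering at a \emph{Lebesgue}-uniform point must reproduce the law), ergodicity and rotation invariance, the moment bound \eqref{eq:thm-inv-circle}, and a companion $\cH^\lge$ with line connectivity.

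A configuration built from the $\gamma$-quantum cone with $\eta(0)=0$ fails the first condition. Invariance under re-rooting along $\frtr^n$ only recenters at quantum-typical points, and the origin carries a $\gamma$-log singularity. Recentering at Lebesgue-typical points instead produces, in the limit, the $0$-quantum cone picture, which has a different law. A fixed time grid $k/n$ is also not preserved when you shift time by $\tau_z$.

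This is why the paper builds its cells under $\bbP_\rmE$ (the $0$-quantum cone), using the random dyadic system $\fD,\fU$ and the translation-equivariant coupling of Theorem~\ref{thm:KMT-0-cone}. It then has to transfer back to the $\gamma$-cone. Since circle packings are non-local, this transfer cannot be done by absolute continuity on the infinite map. The paper's route is:
\begin{enumerate}
\item prove that for $s,t\in\mr\cJ$ the boundary disks of $T_{s,t}$ track $\partial\eta([s,t])$ (Lemma~\ref{lem:construction-embed-cell-0}, Theorem~\ref{thm:conv-0-cone-pack-Hcell}(iii)--(iv));
\item use the boundary-uniform He--Schramm result, Theorem~\ref{thm:He-Schramm-extension-1}, to get convergence of the unit-disk packings of finite pieces;
\item transfer these local statements to $\bbP_\rmQ$ and to rate-$n$ walks via \cite[Section 6.1]{Tutte};
\item reassemble the whole-plane packing by zooming in near $0$, using Theorem~\ref{thm:He-Schramm-sphere} and the scale invariance of Lemma~\ref{lem:Rb}.
\end{enumerate}
Your plan contains none of this, and ``a mild variant'' does not cover it.

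Second, the cells $\eta([k/n,(k+1)/n])$ do not form a cell configuration whose associated map is $\Phi_{\rm simple}(T^n)$. They are indexed by steps rather than vertices. Their intersection pattern is governed by $Z$, while adjacency in $T^n$ is governed by the walk, which differs from $Z$ by logarithmic errors. Definition~\ref{def:cell-config} requires adjacent vertices to have intersecting cells.

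The paper instead defines $\mr H_t^\cell$ as a union of pieces of $\eta(\mr I_s)$ over the time set $\mr\cT_t^{\rmC,1}$, built so that every $\mr\cE_\rmT$-neighbour is included. It then proves new moment and line-connectivity bounds for these cells (Proposition~\ref{prop:moment-bound-1}, Lemma~\ref{lem:line-connectivity}). These rest on new estimates for the erased set $\mr\cJ$ (Propositions~\ref{lem:gap-hat-J} and~\ref{lem:gap-hat-J^-}), needed because collapsing multiple edges is non-local. \cite{Tutte} only treats $\Phi_\rmM(\cZ)$ and does not supply them.

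Relatedly, item~\ref{result:thm:pack-cone-2} is not just Donsker's theorem: items 1--4 must hold in a single coupling in which $\eta$ is the curve encoded by $Z$.

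Your Step~2 is essentially fine and close to the paper's argument:
\begin{itemize}
\item form a simple graph by keeping one edge from each multi-edge class;
\item apply \cite{GNrecurrence}, using the Benjamini--Schramm convergence and exponential degree tails from \cite{GRRiemann};
\item pass to the subgraph $\Phi_{\rm simple}(T)$ by Rayleigh monotonicity, and then invoke He--Schramm.
\end{itemize}
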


In the setting of the Riemann uniformization, the triangulation $T$ encoded by the walk $\cZ$ can naturally embedded on the Riemann surface $\frM(M)$ associated with the planar map $M$ encoded by $\cZ$, and we will continue to view  $\frtr$ also as a path on $\bbC$ by taking a conformal map $\varphi:\frM(M)\to\wh\bbC$.

\begin{theorem}\label{thm:unif-disk}
  Let $n\in\bbN$ and $\ell = \ceil{\sqrt n}$. Let $(\cZ_t^{\disk,n})_{t=0,...,m}$ and $Z^\disk$ be as in  Theorem~\ref{thm:pack-disk}.   Consider the planar map of disk topology $M^{\disk,n}$ and the space-filling path $\mathfrak{tr}^{\disk,n}$   encoded by the walk $\cZ^{\disk,n}$ via the map $\Phi_\rmM^\disk$ and $\Phi_\rmT^\disk$, respectively. Let $\frM(M^{\disk,n})$ be the Riemann surface associated with the map $M^{\disk,n}$. Let $(\bbD,h^\disk,\eta^\disk)$ be the clockwise space-filling $\SLE_8$ loop decorated quantum disk generated by $Z$, embedded such that $\eta^\disk(A/2)=0$.  Then there is a coupling $((\cZ^{\disk,n})_{n\in\bbN},Z)$, such that there exists a  conformal map $\varphi^{\disk,n}: \frM(M^{\disk,n})\to \bbD$ such that as $n\to\infty$, the following holds:
  \begin{enumerate}
      \item the maximal diameter of faces under the map $\varphi^{\disk,n}$ converges to 0 in probability;
      \item $\sup_{t\in[0,A]}|{n}^{-1/2}\cZ_{\lfloor mt/A\rfloor}^{\disk,n}-\frac{1}{2}Z_t^\disk|$ converges to 0 in probability; 
      \item if we view $\mathfrak{tr}^{\disk,n}:[0,m]\to \ol\bbD$ as a path on embedded faces, then    $$\sup_{t\in [0,A]}|\eta^\disk(t)-{\mathfrak{tr}}^{\disk,n}(mt/A)| \to 0 \text{ in probability as } n\to\infty;$$
      \item let $\mu^{\disk,n}$ be the measure on $\bbD$ that assigns mass $\frac1n$ to the $\varphi^{\disk,n}$-image of each vertex,   and $\nu^{\disk,n}$ be any measure on $\partial\bbD$ that assigns mass $n^{-1/2}$ to the $\varphi^{\disk,n}$-image of each edge on $\partial (\frM(M^{\disk,n}))$. Then the Prokhorov distance between  {$(4\mu^{\disk,n},2\nu^{\disk,n})$} and $(\mu_{h^\disk},\nu_{h^\disk})$ tends to 0 in probability as $n\to\infty$. The same also holds if   $\mu^{\disk,n}$ is any measure on $\bbD$ that assigns mass $\frac1n$ to the $\varphi^{\disk,n}$-image of each face.
  \end{enumerate}
\end{theorem}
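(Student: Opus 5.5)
The plan is to deduce Theorem~\ref{thm:unif-disk} from the circle packing version, Theorem~\ref{thm:pack-disk}, via the comparison between Riemann uniformization and circle packing embeddings. The route passes through the whole-plane setting, where the ergodic scale-free machinery of~\cite{HY26} applies. First, in the infinite-volume setting of the uniform infinite spanning-tree decorated map, I will check that the triangulation $T$ together with the Riemann surface $\frM(M)$ fits the framework of ergodic scale-free environments of~\cite{GMSinvariance}. Translation invariance comes from stationarity of the bi-infinite walk $\cZ$. Approximate scale invariance and the required moment bounds come from the Tutte embedding convergence of~\cite{Tutte}. With this in place, the results of~\cite{HY26} say that the Riemann uniformization embedding $\varphi:\frM(M)\to\bbC$ and the circle packing of $\Phi_{\rm simple}(T)$ differ by a map that is uniformly close to a complex affine map on macroscopic scales. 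Combined with the whole-plane statement Theorem~\ref{thm:pack-cone}, this gives the analogue of Theorem~\ref{thm:unif-disk} for the $\gamma$-quantum cone.

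Second, I transfer this to the disk. I will use the Markov property of the Mullin encoding: the disk map $M^{\disk,n}$, conditioned on a macroscopic interior region, agrees in law with a region of the infinite map. More precisely, by absolute continuity of the conditioned walk $\cZ^{\disk,n}$ restricted to time intervals $[\epsilon m, (1-\epsilon)m]$ with respect to the unconditioned walk, the local statements from the previous step transfer to compact subsets of the disk away from the boundary. On such a compact subset $K$, the map $\varphi^{\disk,n}$ and the disk circle packing $\cP^{\disk,n}$ of Theorem~\ref{thm:pack-disk} are both discrete conformal. The comparison results of~\cite{MatedCRT} between circle packings in different domains show that the transition map between them is close to a conformal map of $\bbD$. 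Since both embeddings are normalized so that the face $\frtr^{\disk,n}(m/2)$ is near $0$, and boundary vertices go to $\partial\bbD$, that conformal map must be close to a rotation. I will fix the rotation by choosing the coupling appropriately, as in Theorem~\ref{thm:pack-disk}.

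Third, given closeness of the embeddings, items~1 through~3 follow directly from Theorem~\ref{thm:pack-disk}.
\begin{itemize}
\item Item~2 is literally the same coupling statement.
\item Item~3 holds because the path on embedded faces under the two embeddings differs by $o(1)$ uniformly.
\item For item~1, the interior part follows from the local comparison. Near the boundary, I need an a priori estimate that no face of $\frM(M^{\disk,n})$ has macroscopic image near $\partial\bbD$. I would try to derive it by extremal length: a large face near the boundary would force a macroscopic annulus crossed by few faces, which contradicts the boundary length $\ell\approx\sqrt n$ being spread out by item~2.
\end{itemize}

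For item~4, the constants change from $2\mu$ to $4\mu$ because $T^{\disk,n}$ has both primal and dual vertices, roughly half of them in $M^{\disk,n}$. Equidistribution of primal vertices among the triangulation's vertices on macroscopic sets follows from the ergodic averaging in~\cite{HY26}. For faces the count is the same via Euler's formula. The boundary measure has the same normalization as in Theorem~\ref{thm:pack-disk}, since boundary edges correspond to boundary disks.

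I expect the main obstacle to be the boundary: controlling the discrepancy between the two embeddings uniformly up to $\partial\bbD$. This matters both for the sup-norm in item~3 and for the boundary measure $\nu^{\disk,n}$ in item~4. I would first try to control it by choosing $\epsilon$ small, bounding the LQG mass and boundary length near $\partial\bbD$ using $Z^\disk$ near times $0$ and $A$, and using continuity of $\eta^\disk$ together with harmonic-measure or extremal-length estimates to push the error to zero as $\epsilon\to0$.
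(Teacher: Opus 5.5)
Your route has a genuine gap at the step where you move from infinite volume to the disk. Both $\varphi^{\disk,n}$ and $\cP^{\disk,n}$ are global objects of the whole finite map, and neither is the restriction of a whole-plane embedding. So whole-plane closeness is not an event measurable with respect to $\cZ$ on a finite window, and absolute continuity of the walk on $[\e m,(1-\e)m]$ cannot carry it over to compact subsets of $\bbD$.

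What you are missing is the localization the paper builds for exactly this purpose. Under the whole-plane embedding, the boundary of the submap $T_{s,t}$ encoded by $\cZ|_{[s,t]}$ is $o(r)$-close to the Jordan curve $\partial\eta([s,t])$, as a curve modulo parameterization in the metric \eqref{eq:curve-metric-0}, with matching boundary lengths. This is the second part of Theorem~\ref{thm:conv-0-cone-unif-Hcell}, resting on Lemma~\ref{lem:construction-embed-cell-0}. With it, Proposition~\ref{prop:boundary-perturb} replaces the restricted whole-plane embedding by the uniformization of $\frM(\wh M_{n,I'})$ onto $\bbD$. That uniformization is determined by $\cZ|_{I'}$, and it is close to the conformal image of $\eta|_{I'}$ (Proposition~\ref{prop:cZ^n-unif-E}). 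Only finite-window statements of this kind can be transferred from the $0$-quantum cone to the i.i.d.\ walk and then, by reweighting, to the excursion.

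Your first step has a related problem. The input from \cite{HY26} needs a spatial cell configuration that is translation invariant modulo scaling in $\bbC$. In the paper this is $\cH^{\cell}$, and the invariance comes from the $0$-quantum cone, not from time-stationarity of $\cZ$. Combining it with Theorem~\ref{thm:pack-cone} for the $\gamma$-cone again requires the finite-window localization above.

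Your disk comparison also invokes a tool that is not available. The results of \cite{MatedCRT} (Theorems~\ref{thm:He-Schramm-extension-1}--\ref{thm:He-Schramm-disk}) compare two circle embeddings of the same triangulation. They do not compare a packing of $\Phi_{\rm simple}(T^{\disk,n})$ with a conformal map on $\frM(M^{\disk,n})$, which is a different map. Any chain of comparisons through whole-plane embeddings needs the boundary-curve input above at both ends. Once you have that input, the circle packing is superfluous: the paper never uses Theorem~\ref{thm:pack-disk} here and compares $\varphi$ directly with $\eta$.

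The boundary of $\bbD$, which you rightly flag as the main obstacle, is handled in Lemma~\ref{lem:disk-unif-pf-1} by a concrete mechanism rather than extremal length. For $I'=[A\e,A(1-\e)]$, the submap $\wh M_{n,I'}$ shares with the full disk map a boundary arc whose image under $\varphi_{n,I'}$ has length close to $2\pi$. This follows from the trace convergence in Definition~\ref{def:uniftoMOT} together with the behaviour of the walk near times $0$ and $A$. Proposition~\ref{prop:unif-disk} applied to $\wt\varphi^{\disk,n}\circ\varphi_{n,I'}^{-1}$ then shows this map is uniformly close to the identity. It also shows that everything outside the submap is embedded in a set of vanishing diameter. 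Your extremal-length sketch appeals to item~2, which is a statement about the walk rather than the embedding, and does not obviously give this.

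Similarly, the factor $4$ in item~4 comes from the time-equidistribution of downward jumps of $\cL$ together with the trace convergence, not from ergodic averaging in \cite{HY26}.
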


\begin{theorem}\label{thm:unif-sph}
    Let $n\in\bbN$, and $(\cZ^{\sph,n}_t)_{t=0,...,2n}$ and $Z^\sph$ be as in Theorem~\ref{thm:pack-sph}. Let $M^{\sph,n}$ and $\frtr^{\sph,n}$ be the planar map and space-filling path encoded by the walk $\cZ^{\sph,n}$ via the map $\Phi_\rmM^\sph$ and $\Phi_\rmT^\sph$, respectively. Then with probability tending to 1 as $n\to\infty$, $M^{\sph,n}$ has at least 3 vertices. Let $(\wh\bbC,h^\sph,\eta^\sph)$ be the space-filling $\SLE_8$ curve decorated unit area quantum sphere generated by $Z^\sph$, embedded such that $\eta^\sph(0)=\infty,\eta(1/4)=1,\eta(1/2)=0$. Then there is a coupling $((\cZ^{\sph,n})_{n\in\bbN},Z^\sph)$ and a conformal map $\varphi^{\sph,n}:\frM(M^{\sph,n})\to\wh\bbC$ such that the following holds as $n\to\infty$:
    \begin{enumerate}
        \item The maximal spherical diameter of faces under the map $\varphi^{\sph,n}$ tends to 0 in probability;
        \item $\sup_{t\in[0,1]}|(2n)^{-1/2}\cZ_{\lfloor 2nt\rfloor}^{\sph,n}-\frac{1}{2}Z_t^\sph|$ converges to 0 in probability;
        \item If we view $\frtr^{\sph,n}:[0,2n]\to\wh\bbC$ as a path on embedded faces, then 
        $$\sup_{t\in[0,1]}d_{\sph}(\eta^\sph(t),\frtr^{\sph,n}(2nt))\to0 \text{ in probability as } n\to\infty;$$
        \item Let $\mu^{\sph,n}$ be the measure that assigns mass $\frac{1}{n}$ to the $\varphi^{\sph,n}$ image of each vertex. Then the Prokhorov distance (when viewed under the spherical metric) between  $2\mu^{\sph,n}$ and $\mu_{h^\sph}$ tends to 0 in probability as $n\to\infty$. The same also holds if $\mu^{\sph,n}$ is any measure on $\wh\bbC$ that assigns measure $\frac{1}{n}$ to the $\varphi^{\sph,n}$ image of each face.
    \end{enumerate}
\end{theorem}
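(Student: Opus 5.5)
We derive Theorem~\ref{thm:unif-sph} from the disk case, Theorem~\ref{thm:unif-disk}, combined with the sphere/whole-plane machinery developed for Theorem~\ref{thm:pack-sph}. The first claim is elementary. $M^{\sph,n}$ has fewer than three vertices only if the excursion $\cZ^{\sph,n}$ has a very degenerate shape, for instance one coordinate makes at most one excursion. Under the uniform law on $\cA^\sph_n$ this has probability $o(1)$, for example by the invariance principle for walks in cones.

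For the embedding, we work on the discrete mating-of-trees side. We fix small $\epsilon>0$ and cut the walk at times $0<s<t<2n$ with $s,t$ chosen so that both the initial segment $[0,\epsilon n]$ and the final segment $[2n-\epsilon n,2n]$ are small. The segment $[s,t]$ then encodes a submap $M_\epsilon$ of $M^{\sph,n}$. We choose $s,t$ at suitable record or cut times so that $M_\epsilon$ has disk topology with boundary of order $\sqrt n$. Conditionally on its boundary length, $M_\epsilon$ is a tree-weighted disk map of the type in Theorem~\ref{thm:unif-disk}, and the Brownian analogue gives a quantum disk in $(\wh\bbC,h^\sph,\eta^\sph)$ which is $\eta^\sph([s',t'])$. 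By Theorem~\ref{thm:unif-disk}, with matching coupling via the Markov property of $\cZ^{\sph,n}$ and $Z^\sph$ at the cut times, we get a conformal map from $\frM(M_\epsilon)$ to $\bbD$ under which faces, path and measures converge to those of the quantum disk.

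It remains to pass from the disk uniformization to the uniformization of the whole sphere. The map $\varphi^{\sph,n}$ restricted to $\frM(M_\epsilon)$ is a conformal map onto a simply connected domain $U_n\subset\wh\bbC$. Its composition with the inverse of the disk embedding is a conformal map $\bbD\to U_n$. We normalize $\varphi^{\sph,n}$ by sending the faces visited at times $0,n/2,n$ to $\infty,1,0$. We must show that $U_n$ converges in the Carathéodory sense to $\eta^\sph([s',t'])$, which is the image of the corresponding Riemann map. This is the main obstacle. I would follow the strategy of \cite{MatedCRT,HY26}: the complement $\frM(M^{\sph,n})\setminus\frM(M_\epsilon)$ has area at most $2\epsilon$ and is the union of two pieces of small quantum area. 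The key estimate is that small-area pieces have small Euclidean diameter under the uniformization. I would prove this through an extremal-length comparison: a conformal-modulus argument using the known convergence of the embedding in \cite{HY26} and the ergodic scale-free environment framework, which bounds the diameter of the images. Given that estimate, the restricted maps form a precompact family of univalent maps. Any subsequential limit agrees with the continuum embedding on a set of area $1-2\epsilon$, and the three-point normalization identifies it as the continuum embedding.

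Finally we let $\epsilon\to0$ along a diagonal sequence. This yields the four convergence claims:
\begin{enumerate}
\item face diameters tend to zero;
\item the walk converges, by Donsker for cone excursions, using the normalization $(2n)^{-1/2}$ with variance $2t$ and the factor $\frac12$;
\item the path converges, by uniform convergence on $[s,t]$ and small diameter of the remaining pieces;
\item the measures converge, using the factor $2$ for vertices or faces, which comes from Euler's formula $\#\cV+\#\cF=n+2$ and the vertex/face split between the two trees.
\end{enumerate}
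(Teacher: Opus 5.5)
There is a genuine gap, mainly in your first step. The reduction to Theorem~\ref{thm:unif-disk} is not available. That theorem concerns the $4^{-m}$-weighted ensemble of walks in $\cA^\disk_\ell$ started from $(\ell-1,0)$: a disk whose entire initial boundary lies on the primal tree and whose curve is a loop rooted at a boundary point. For $\gamma=\sqrt2$ the two coordinates of $Z^\sph$ are independent Brownian excursions, so $L_s,R_s>0$ for all $s\in(0,1)$. Consequently, for macroscopic $s<t$ the piece $\eta^\sph([s,t])$ has boundary made of four arcs and carries a chordal curve from $\eta^\sph(s)$ to $\eta^\sph(t)$. On the discrete side, the conditional law of $\cZ^{\sph,n}|_{[s,t]}$ given the rest of the walk is a fixed-length quadrant bridge between endpoints with both coordinates positive. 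It is not the Boltzmann disk ensemble with $\ell=\lceil\sqrt n\rceil$. No choice of record or cut times changes this, so there is no ``matching coupling via the Markov property'' to invoke.

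The paper instead obtains the needed input directly (Lemma~\ref{lem:sph-unif-pf-1}). By Lemma~\ref{lem:417}, the Poissonized sphere walk restricted to $[\delta,1-\delta]$ has Radon--Nikodym derivative bounded uniformly in $n$ with respect to an unconditioned rate-$2n$ walk. Hence the i.i.d.\ result, Proposition~\ref{prop:cZ^n-unif-Q}, transfers to the sphere setting. That result gives convergence in the sense of Definition~\ref{def:uniftoMOT} for arbitrary time segments, working with the largest disk-topology submap $\wh M_{n,I'}$, and ultimately comes from the $0$-quantum cone cell configuration.

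Your second step is also incorrect as written, though the fix is easy. For fixed $\e$, a univalent map on $\eta^\sph([\e,1-\e])$ that fixes $0$ and $1$ and omits $\infty$ is not determined by that normalization. So ``agrees on a set of area $1-2\e$'' identifies nothing; rigidity only appears as $\e\to0$, and then it is exactly Koebe distortion, i.e.\ Proposition~\ref{prop:unif-sph}. Since $\eta^\sph(0)=\eta^\sph(1)=\infty$, the Riemann map $\varphi_\e:\eta^\sph([\e,1-\e])\to\bbD$ fixing $0$ satisfies $\varphi_\e(1)\to0$. So divide the discrete disk embedding $\varphi_{n,I'}$ of $\wh M_{n,[\e,1-\e]}$ by its value at $\frv^{1/4,n}$. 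Once $\e\to0$ slowly enough relative to $n$, its image contains any fixed $B(0;R)$ and has spherically small faces. Then $\wt\varphi^{\sph,n}$ composed with the inverse of this rescaled map is univalent on domains exhausting $\bbC$, fixes $0$ and $1$, and omits $\infty$, because the vertex sent to $\infty$ lies outside $\wh M_{n,I'}$. It is therefore uniformly close to the identity in the spherical metric. This automatically places the discarded pieces in a small neighbourhood of $\infty$, so the extremal-length diameter estimate you single out as the main obstacle is unnecessary.
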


\begin{theorem}\label{thm:unif-cone}
 
    Let $(\check{\cZ_k^n})_{k\in\bbZ}$ be the standard nearest-neighbor walks on $\bbZ^2$, and let $(\cZ^n_t)_{t\in\bbR} = (\check \cZ_{\lfloor nt \rfloor}^n)_{t\in\bbR}$.  Let $M^n$ and $\frtr^n$ be the planar map and space-filling path encoded by the walk $\cZ^{n}$ via the map $\Phi_\rmM$ and $\Phi_\rmT$, respectively. Let $Z=(L_t,R_t)_{t\in\bbR}$ be a 2D   Brownian motion as in~\eqref{eq:variance}, and $(\bbC,h,\eta)$ be the space-filling $\SLE_8$ decorated  $\gamma$-quantum cone generated by $Z$ such that $\eta(0)=0$. Then  the Riemann surface $\frM(M^n)$ associated with $M^n$ is a.s.\ parabolic. Furthermore,  there is a coupling between $(Z,\cZ)$ and a conformal map $\varphi^n:M(M^n)\to\bbC$, such that as $n\to\infty$, the following holds:
    \begin{enumerate}
        \item For every $R>0$, the maximal diameter of faces of $\frM(M^n)$ under the image $\varphi^n$ intersecting $B(0;R)$ tends to 0 in probability; \label{result:thm:unif-cone-1}
        \item For every $a>0$,  {$\sup_{t\in [-a,a]}|n^{-1/2}\cZ^n_t-\frac{1}{2}Z_t|$} converges to 0 in probability;\label{result:thm:unif-cone-2}
        \item If we view $\frtr^n:\bbR\to\bbC$ as a path on embedded faces, then for every $a>0$,  {$$\sup_{t\in [-a,a]}|\eta(t)-\frtr^n(t)| \to 0 \text{ in probability;}$$} \label{result:thm:unif-cone-3}
        \item Let $\mu^n$ be the measure that assigns mass $\frac{1}{n}$ to the image of each vertex under $\varphi^n$. Then for each $R>0$, the Prokhorov distance between $4\mu^n|_{B(0;R)}$ and $\mu_h|_{B(0;R)}$ tends to 0 in probability. The same applies if $\mu^n$ is any measure that assigns mass $\frac{1}{n}$ to the image of each face under $\varphi^n$. \label{result:thm:unif-cone-4}
    \end{enumerate}
 
\end{theorem}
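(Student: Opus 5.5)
We will prove Theorem~\ref{thm:unif-cone} by transferring the circle packing result, Theorem~\ref{thm:pack-cone}, to the Riemann uniformization through the general framework of~\cite{HY26}. The starting point is the whole-plane setting, where the walk $\cZ^n$ is a rescaling of a single bi-infinite walk. The triangulation $T^n$ together with the map $M^n$ (a.s.\ independent of $n$ up to rescaling) is therefore a fixed random rooted planar map.

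\textbf{Step 1: parabolicity.} The uniform infinite spanning-tree decorated map is a Benjamini--Schramm limit of finite maps. Its simple random walk is recurrent by Gurel-Gurevich--Nachmias type arguments, which need a degree tail bound; the root degree in the Mullin encoding has exponential tails. Recurrence then gives parabolicity of $\frM(M^n)$, since a Riemann surface built from unit polygons with recurrent dual/primal random walk, and with exponential degree tails controlling the cone angles, is parabolic. As in~\cite{HY26}, we will invoke the criterion that recurrence plus the ergodic, stationary-under-rerooting structure (stationarity of the walk increments) implies parabolicity.

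\textbf{Step 2: embedding in the ergodic scale-free framework and comparison with the Tutte embedding.} The results of~\cite{HY26} state, roughly, that for triangulations arising from an ergodic scale-free environment in the sense of~\cite{GMSinvariance}, the circle packing, the Riemann uniformization and the Tutte embedding are all close to one another after a common affine normalization on macroscopic scales. Concretely, the random walk on the embedded map converges to Brownian motion modulo time change, and these embeddings are mutually asymptotically affine. By~\cite{Tutte}, the Tutte embedding of $T^n$ already converges to $(\bbC,h,\eta)$ jointly with the walk, i.e.\ items~\ref{result:thm:unif-cone-2} and~\ref{result:thm:unif-cone-3} hold there. We will therefore
\begin{enumerate}
\item couple $Z$ and $\cZ^n$ as in~\cite{Tutte};
\item verify the hypotheses of~\cite{HY26} for $T^n$, namely translation invariance modulo scaling, ergodicity, and moment bounds on cell degrees and diameters, from the mating-of-trees cell structure;
\item deduce that $\varphi^n$, suitably normalized by a complex affine map with $\varphi^n(\text{root})\approx 0$, is uniformly close on compacts to the Tutte embedding.
\end{enumerate}
The normalization is fixed so that $\eta(0)=0$ and the scale matches. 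The affine map is asymptotically a rotation-scaling, and it must tend to the identity, since both limits are normalized the same way and the quantum cone embedding is pinned by $\eta(0)=0$ and the field normalization. Items~\ref{result:thm:unif-cone-1} and~\ref{result:thm:unif-cone-3} then follow from uniform closeness together with the vanishing of the maximal Tutte cell size.

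\textbf{Step 3: measures.} The face measure is the pushforward of counting measure along $\frtr^n$, so by item~\ref{result:thm:unif-cone-3} it converges to the pushforward of Lebesgue measure by $\eta$, which is $\mu_h$. The constant $4$ comes from the fact that $M^n$ has about $n/2$ vertices (and $n/2$ faces) per unit time, with each step of the walk corresponding to an edge. For the vertex measure we compare it with the face measure using ergodicity: the empirical ratio of vertices to time steps along $\frtr^n$ in any macroscopic window converges to $1/2$ by the law of large numbers for the stationary walk increments. Since each vertex lies within a vanishing distance of the faces traced near the same time, the two measures are close in Prokhorov distance.

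\textbf{Main obstacle.} The hardest step will be Step~2, the identification of the scaling limit of the uniformization. Checking the~\cite{HY26} hypotheses for the Mullin-encoded map demands care, since the triangulation may be non-simple, and $\frM(M^n)$ uses polygons of $M^n$ rather than triangles of $T^n$. We would handle this by noting that faces of $M^n$ are unions of triangles in $T^n$ meeting at a dual vertex with degree having exponential tails, so quasiconformal distortion between the two surfaces is locally controlled.
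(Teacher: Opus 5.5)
Your Step 1 is essentially the paper's argument: recurrence from \cite{Che17}, then the equivalence theorem of \cite{AHNR18unimodular} applied to the degree-unbiased unimodular law. Note, though, that recurrence plus degree tails does not by itself give parabolicity of $\frM(M)$; unimodularity is what makes that implication work.

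The gap is in Step 2. The theorem of \cite{HY26} (Theorem~\ref{thm:invariance-uniformization}) needs a cell configuration that is translation invariant and ergodic modulo scaling, and its conclusion is an almost sure large-scale statement about that one configuration. You cannot verify these hypotheses for the standard walk. The coupling of \cite{Tutte} under $\bbP_\rmQ$ gives the correct walk, but the mating-of-trees cells then live on the $\gamma$-quantum cone. That field has a $-\gamma\log|\cdot|$ singularity at the root and is not translation invariant modulo scaling. Stationarity holds only under $\bbP_\rmE$, i.e.\ on the $0$-quantum cone, and there the coupled walk $\cZ=g(h,h^{\IG},\fU)$ is \emph{not} a standard random walk. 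So what \cite{HY26} actually yields (Theorem~\ref{thm:conv-0-cone-unif-Hcell}) concerns a different random map. Going through the Tutte embedding does not avoid this, since the Tutte result for the standard walk is itself proved via the $0$-quantum cone and the same transfer.

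The missing idea is that transfer from $\bbP_\rmE$ to $\bbP_\rmQ$, together with the localization it requires. The available comparison (the last item of Theorem~\ref{thm:KMT-0-cone}) only controls events measurable with respect to $\cZ|_{I(k)}$. The uniformizing map of the whole-plane surface, by contrast, depends on the entire map. The paper therefore proceeds in stages:
\begin{enumerate}
\item It uniformizes the finite piece $\wh M_{n,I'}$ encoded by $\cZ|_{I'}$ onto $\bbD$ (Definition~\ref{def:uniftoMOT}).
\item Under $\bbP_\rmE$ it shows this is close to the conformal map of $\eta(I')$ onto $\bbD$. This needs the embedded boundary of the piece to be close to $\partial\eta(I')$ as a curve (second part of Theorem~\ref{thm:conv-0-cone-unif-Hcell}, which rests on Lemma~\ref{lem:construction-embed-cell-0}), together with Proposition~\ref{prop:boundary-perturb}.
\item It transfers this local statement to Brownian motion and i.i.d.\ walks via \cite[Section 6.1]{Tutte}.
\item It rebuilds the whole-plane statement by zooming in at $0$, using scale invariance of the $\gamma$-cone and Proposition~\ref{prop:unif-sph}.
\end{enumerate}
None of these steps appears in your plan.

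Your proposed quasiconformal comparison between $\frM(M^n)$ and a surface built from $T^n$ is also both unnecessary and unlikely to work. It is unnecessary because the paper's cells have associated map $M$ itself, and \cite{HY26} allows unbounded face degrees through the moment bound involving $d_0^f$. It is unlikely to work because with unbounded face degrees the dilatation is not uniformly controlled.

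Finally, the count in Step 3 is off. Vertices of $M^n$ correspond to downward jumps of $\cL^n$, and faces to downward jumps of $\cR^n$; each occurs at rate $n/4$, not $n/2$, and that is where the factor $4$ comes from. Rate $n/2$ is the vertex count of $T^n$, which gives the factor $2$ in Theorem~\ref{thm:pack-cone}.
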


\subsection{Proof outline}

Our proof of Theorems~\ref{thm:pack-disk}-\ref{thm:unif-cone} is based on three inputs:     a novel coupling between a random walk and Brownian motion in the concurrent work~\cite{Tutte}, and the construction of an ergodic scale-free configuration (see Section~\ref{subsec:cell-config}) whose associated map is encoded by a walk $\cZ$ in the setting of a SLE$_8$ decorated 0-quantum cone; the closeness between the  ergodic scale-free configurations and the circle packing/Riemann uniformization embedding as shown in our earlier work~\cite{HY26}; and the transfer between circle packings and Riemann uniformization embeddings in different domains established in another companion paper~\cite[Section 1.3.2]{MatedCRT}.

In Section~\ref{sec:3}, we start with the setting of a 0-quantum cone and the coupling between its boundary length process $Z$ and  a continuous-time nearest neighbor walk $\cZ$ on $\bbZ^2$ from~\cite[Theorem 4.2]{Tutte}; see Theorem~\ref{thm:KMT-0-cone}. Then in Section~\ref{subsec:time-sets}, we construct the time sets  using the $(Z,\cZ)$ in a similar way as in~\cite[Section 5]{Tutte}, and in Section~\ref{subsec:cellsystem}, we further construct cell configurations $\cH^\cell$ and $\mr\cH^\cell$ whose associated maps are encoded by $\Phi_\rmM(\cZ)$ and $\Phi_{\rm simple}\circ\Phi_\rmT(\cZ)$, respectively. In the setting of Riemann uniformization, the time sets $\cT_t^{\rmC,d}$ and the  cell configuration  $\cH^\cell$ is the same one as in~\cite{Tutte}, whereas for the circle packing, due to the collapse of the multiple-edges in the operation $\Phi_{\rm simple}$, we have to erase the $\frac{\pi}{2}$-cone excursions from the walk $\cZ$ (see the text around~\eqref{eq:double-edge-Mullin}) and modify the time sets $\mr\cT_t^{\rmC,d}$ accordingly as in Section~\ref{subsubsec:3.1.1}. This erasure of double edges will require several new estimates of the walk $\cZ$, where one need to show that not too many steps are erased, and the time sets still satisfy nice moment bounds after the erasure. The estimates for the former is in Section~\ref{subsubsec:3.1.2}, and the moment bounds are shown in Section~\ref{subsubsec:3.1.3}. In Section~\ref{subsec:cellsystem}, we define the cell configurations $\cH^\cell$ and $\mr\cH^\cell$ based on the time sets $\cT_t^{\rmC,d}$ and $\mr\cT_t^{\rmC,d}$ in a similar way as in~\cite{Tutte}, and prove that they fit into Theorem~\ref{thm:CellSystemPack} and Theorem~\ref{thm:invariance-uniformization} from our earlier work~\cite{HY26}. This gives the closeness between the cell configurations  $\mr\cH^\cell$ or  $\cH^\cell$, and  the circle packing of  $\Phi_{\rm simple}\circ\Phi_\rmT(\cZ)$ or the Riemann uniformization embedding of $\Phi_\rmM(\cZ)$, respectively, as well as  the convergence of the embedded trace $\frtr$ to SLE, as in Theorem~\ref{thm:conv-0-cone-pack-Hcell} and Theorem~\ref{thm:conv-0-cone-unif-Hcell}. In Section~\ref{subsec:3.3}, we will further prove that if we restrict to a time slice $[s,t]$, the boundary of the circle packing or Riemann uniformization embedding of the map encoded by the walk $\cZ|_{[s,t]}$ is close to $\partial\eta([s,t])$ when viewed as continuous curves. This relies on a new property of the cell configurations $\cH^\cell$ (Lemma~\ref{lem:construction-embed-cell-0}), which has not been previously shown in~\cite{Tutte}. This is the important condition to allow us to transfer to the finite volume setting in Section~\ref{subsec:pack-pf} and Section~\ref{sec:unif-pf}.

In Section~\ref{subsec:pack-pf}, we prove Theorems~\ref{thm:pack-disk}-\ref{thm:pack-cone}. In Section~\ref{subsec:He-Schramm}, we gather several results (Theorems~\ref{thm:He-Schramm-extension-1}-\ref{thm:He-Schramm-disk}) from our companion paper~\cite{MatedCRT} on the convergence of circle packing maps to conformal maps. The convergence of circle packing maps to conformal maps was originally shown in~\cite{HeSchramm}, where if one has  circle packings $\cP^n$ and $\wt\cP^n$ for triangulations  $T^n$ in  different domains $D$ and $\wt D$ and the maximal diameter of disks converges to 0 in the two packings, then the map between  $\cP^n$ and $\wt\cP^n$ that corresponds disks is close to a conformal map $f:D\to\wt D$. The original result in~\cite{HeSchramm} was stated in the form of uniform convergence on compacts, and in~\cite{MatedCRT}, we further proved that if $D,\wt D$ are Jordan, $\partial \cP^n$ is close to $\partial D$ and $\wt\cP^n$ is close to $\partial\wt D$ when viewed as curves, then the above convergence of circle packing maps can be extended to the global uniform topology. Combined with the aforementioned convergence of the boundary of the circle packing or Riemann uniformization embedding of the map encoded by the walk $\cZ|_{[s,t]}$ to $\partial\eta([s,t])$, we can now apply Theorem~\ref{thm:He-Schramm-extension-1} to prove the convergence of the circle packing or Riemann uniformization embedding of the map encoded by the walk $\cZ|_{[s,t]}$ in 0-quantum cone when embedded in the unit disk, as in Proposition~\ref{prop:cZ^n-pack-E}. To address the non-locality of the operator $\Phi_{\rm simple}$, we further trim the interval $[s,t]$ and restrict to a smaller interval $[s',t']\subseteq[s,t]$ as in Definition~\ref{def:CPtoMOT}, and the localized version of the triangulation encoded by $\cZ|_{[s,t]}$  agree with the triangulation encoded by $\cZ$ when restricted to time $[s',t']$ by our estimates from Section~\ref{subsubsec:3.1.2}. With this localization, using the local transfer from the 0-quantum cone to $\gamma$-quantum cone as in ~\cite[Section 6.1]{Tutte}, we   prove the convergence of the circle packing of the triangulation and trace $\frtr$  encoded by $\cZ^n|_{[s,t]}$ to the SLE-decorated LQG surface encoded by $Z$ in the setting where $\cZ^n$ has i.i.d.\ jumps of Poissonian rate $n$ and $Z$ is a Brownian motion, as in Lemma~\ref{lem:cZ^n-pack-Q-n}. These are done in Section~\ref{subsec:pack-iid}.  Then in Section~\ref{subsec:pack-disk-pf}, in the disk setting, for the Brownian excursion $Z:[0,A]\to\bbR^2$, we  weight the law of $(\cZ^n,Z)$ appropriately where the above convergence continues to hold for $(\cZ^n,Z)|_{[A\e,A(1-\e)]}$. As we further send $\e\to0$ sufficiently slow, by the continuity of the space-filling SLE curve, in the circle packing of the triangulation $T^{\e,n}$ encoded by $\cZ^n|_{[A\e,A(1-\e)]}$, the union of the disks for the boundary vertices which form a crosscut separating $T^{\e,n}$  from  $T^{\disk,n}$ as in Figure~\ref{fig:pack-disk-1} has small Euclidean diameter. This eventually allows us to prove Theorem~\ref{thm:pack-disk} using Theorem~\ref{thm:He-Schramm-disk}. The sphere case (Theorem~\ref{thm:pack-sph}) is treated similarly in Section~\ref{subsec:pack-sph-pf}, where we apply the comparison for circle packing  in the sphere setting as in Theorem~\ref{thm:He-Schramm-sphere}. In Section~\ref{subsec:pack-cone-pf}, we   start with the setting of intervals in Lemma~\ref{lem:cZ^n-pack-Q-n} and   zoom in  near 0, and prove Theorem~\ref{thm:pack-cone} via the comparison in Theorem~\ref{thm:He-Schramm-sphere} and scale invariance of the Brownian motion.

In Section~\ref{sec:unif-pf}, we prove Theorems~\ref{thm:unif-disk}-\ref{thm:unif-cone}. Most of the idea is the same as  Section~\ref{subsec:pack-pf}, where we start with the setting of Theorem~\ref{thm:conv-0-cone-unif-Hcell} and restrict to a time slice $[s,t]$, and further use the boundary perturbation of conformal maps from Proposition~\ref{prop:boundary-perturb} and the local transfer from ~\cite[Section 6.1]{Tutte} to prove a version of Theorems~\ref{thm:unif-disk}-\ref{thm:unif-cone} for walks with i.i.d.\ steps in a compact interval. Theorems~\ref{thm:unif-disk}-\ref{thm:unif-cone} are then proven in a similar way as Theorems~\ref{thm:pack-disk}-\ref{thm:pack-cone} where we use the comparison of conformal maps as in Propositions~\ref{prop:boundary-perturb}-\ref{prop:unif-sph}.

\medskip

\noindent \textbf{Notation.} We write $\bbN_0=\{0,1,2,3,....\}$, and write $\bbR_+ = [0,\infty)$.   For nonnegative functions $f,g:\bbN_0\to \bbR_+$, we write $f(n)\lesssim g(n)$, if there are some constants $M,N>0$ such that $f(n)\leq Mg(n)$ for every $n>N$.  {For $(a,b)\in\bbR^2$, we write $|(a,b)| = \sqrt{a^2+b^2}$ and $\|(a,b)\| = \max\{|a|,|b|\}$.}

\medskip

\noindent \textbf{Statement:} We have worked on this paper for more than one year and are currently in the process of revising it. We were prompted to post the paper today since several closely related results were released by OpenAI yesterday (October 6). To our knowledge, none of the OpenAI results imply the results of our paper, or vice versa.

\medskip

\noindent \textbf{Statement on AI usage:} We used ChatGPT Astra for proofreading and revising the first draft of the paper. AI tools were not used to develop mathematical ideas or in the writing of the manuscript. All suggested changes were reviewed by the authors, who take full responsibility for the content of the paper.

\medskip

\noindent\textbf{Acknowledgements.}   
N.H.\ was supported by grant DMS-2246820 and DMS-2553117 of the National Science Foundation, along with the Simons Collaboration Grant \emph{Probabilistic Paths to Quantum Field Theory}.

\section{Preliminaries}

\subsection{Cell configurations}\label{subsec:cell-config}

One of the main inputs to prove our main results is~\cite[Theorem 5.5]{HY26}, which states that for a certain type of ergodic scale-free cell configurations whose adjacency graph is an infinite plane triangulation, the a priori embedding is close to the circle packing and Riemann uniformization embedding of the adjacency graph. This subsection is devoted to introducing these results.

\begin{definition}\label{def:cell-config}
    A \textit{cell configuration} on $\bbC$ consists of the following objects.
    \begin{enumerate}
        \item A locally finite collection $\cH$ of  compact connected subsets of $\bbC$ (``cells") with non-empty interiors whose union is all of $\bbC$ and such that the intersection of any two elements of $\cH$ has zero Lebesgue measure.
        \item A planar map $M$, which we call the \emph{associated map} of $\cH$, with vertices $\{v_H:H\in\cH\}$. We require that if $v_H$ and $v_{H'}$ are connected by an edge of $M$ then $H\cap H'\neq\emptyset$. 
    \item A function $\fc = \fc_\cH$ (``conductance") on the edges  $M$ with values in $(0,\infty)$. 
    \end{enumerate}
\end{definition}
% \begin{definition}\label{def:cell-config}
%     A \textit{cell configuration} on $\bbC$ consists of the following objects.\footnote{The original definition of cell configurations in~\cite{GMSinvariance} also concerns conductance on edges; in this paper we will always assume that the conductance is equal to 1 on all edges and omit the conductance.}
%     \begin{enumerate}
%         \item A locally finite collection $\cH$ of  compact connected subsets of $\bbC$ (``cells") with non-empty interiors whose union is all of $\bbC$ and such that the intersection of any two elements of $\cH$ has zero Lebesgue measure.
%         \item A symmetric relation $\sim$ on $\cH\times\cH$ (``adjacency") such that if $H\sim H'$, then $H\cap H'\neq\emptyset$ and $H\neq H'$.
%    % \item A function $\fc = \fc_\cH$ (``conductance") from the set of pairs $(H,H')$ with $H\sim H'$ to $(0,\infty)$ such that $\fc(H,H')=\fc(H',H)$. 
%     \end{enumerate}
% \end{definition}
We say $H\sim H'$ if $H$ and $H'$ are connected by  {at least one} edge in $M$. Note that we use a different definition of a cell configuration than in~\cite[Definition 1.1]{GMSinvariance}, where a cell configuration is defined to be a tuple $(\cH,\sim,\fc)$. We need to also include $M$ in the definition since our goal is to prove closeness between $\cH$ and conformal embeddings of the map $M$ and, furthermore, will work with settings where $M$ has multiple edges.  
We will typically slightly abuse notation by making  the associated map $M$ and the function $\fc$ implicit, so we write $\cH$ instead of $(\cH,M,\fc )$.

For $A\subseteq\bbC$, let $\cH(A) = \{H\in\cH:H\cap A\neq\emptyset\}$, and we view $\cH(A)$ as an edge-weighted planar map with edge set $\cE\cH(A)$ consisting of all of the edges in $\cE\cH$ joining elements in $\cH(A)$ and conductances given by the restriction of $\fc$. We also define a metric on the space of cell configurations by
 \begin{equation}\label{eq:metric-cc}
     \begin{split}
         \mathbb{d}^{\rm CC}(\cH,\cH') := \int_0^\infty e^{-r}\wedge \inf\Big\{\sup_{z\in\bbC} |z-f_r(z)|+\sup_{e\in \cE\cH(B(0;r))}\big|\fc(e)-\fc'(f_r(e)) \big|   \Big\}\,dr,
     \end{split}
 \end{equation}
 where the infimum is over all homeomorphisms $f_r:\bbC\to\bbC$ such that $f_r$ takes each cell in $\cH(B(0;r))$ to $\cH'(B(0;r))$   and  preserves the associated map when restricted to cells intersecting $B(0;r)$, and $f_r^{-1}$ does the same with $\cH$ and $\cH'$ reversed. Here $f_r(\fre)$ is understood as the image of the edge $e$ under map isomorphisms. We will typically be working with a random cell configuration, i.e., a random
variable taking values in the space of cell configurations equipped with the Borel $\sigma$-algebra generated
by the above metric. 
 We write $H_0$ for the (a.s.\ unique) cell in $\cH$ containing 0. For $H\in\cH$, we write $c(H) = \frac{1}{\area(H)}\int_H z\,dz$ for the Euclidean center of $H$.

\begin{definition}\label{def:CellConfigConstraint}
    We define  the following conditions  on the cell configuration $\cH$.
    \begin{enumerate}
        \item \textbf{Translation invariance modulo scaling.} There is a (possibly random and $\cH$-dependent) increasing sequence of open sets $U_j\subseteq\bbC$,  each of which is either a square or a disk, whose union is all of $\bbC$, such that the following is true. Conditional on $\cH$ and $U_j$, let $z_j$ for $j\in\bbN$ be sampled uniformly from Lebesgue measure on $U_j$. Then there exist random numbers $C_j>0$ (possibly depending on $\cH$ and $z_j$) such that $C_j(\cH-z_j)\to\cH$ in law with respect to the metric~\eqref{eq:metric-cc}. \label{cond:def:CellConfigConstraint-1}
        \item  \textbf{Ergodicity modulo scaling.} For every real-valued measurable functions $F=F(\cH)$ which is invariant under translation and scaling, i.e., $F(C(\cH-z)) = F(\cH)$ for each $z\in\bbC$ and $C>0$, is a.s.\ equal to a deterministic constant.  \label{cond:def:CellConfigConstraint-2}
      %  \item  \textbf{Connectedness along  {macroscopic} lines.} Almost surely,  {for each given $\e>0$, there exists $R>0$ such that for $r>R$ and each horizontal or vertical line segment $\ell\subseteq B(0;r)$ with length at least $\e r$}, the subgraph of $\cH$ induced by the set of cells which intersect $\ell$ is connected. % {[*This seems to be the condition needed*]}
        \item \textbf{Rotation invariance in law.} For some $\theta_0\in(0,2\pi)\backslash\{\pi\}$, $e^{i\theta}\cH$ has the same law as $\cH$.  \label{cond:def:CellConfigConstraint-3}

       % \item \textbf{Almost planarity.} There exists a collection of points $\{z_H:H\in\cH\}$ and simple curves $\{e_{H,H'}:H\sim H'\}$ in $\bbC$ forming a planar embedding of the adjacency graph of $\cH$, and almost surely as $r\to\infty$, $$%\frac{1}{r}\max_{H\in\cH(B(0;r))}\dist(z_H,H)\to0, \ \ 
       % \frac{1}{r}\max_{\{H,H'\}\in\cE\cH(B(0;r))}\dist(e_{H,H'},H)\to0, $$
      %  where the  distance is the Hausdorff distance.
    \end{enumerate}
\end{definition}

\begin{definition}\label{def:CellConfigConstraint-large}
    Let $\cH$ and $\cH^\lge$ be cell configurations. We define  the following conditions  on the cell configurations $(\cH,\cH^\lge)$.
     \begin{enumerate}[(i)]
        \item The cell configuration  $\cH^{\rm large}$ is isomorphic to $\cH$, in the sense that $\cH$ and $\cH^{\rm large}$ have isomorphic associated maps; below we let $\phi:\cH\to\cH^{\rm large}$ denote  {the} isomorphism.   \label{cond:def:CellConfigConstraint-large-(i)}
        \item The convex hull of $H$ is contained in the convex hull of $\phi(H)$  {for any $H\in\cH$}. \label{cond:def:CellConfigConstraint-large-(ii)}
        \item Almost surely,  {for each given $\e>0$, there exists $R>0$ such that for $r>R$ and each horizontal or vertical line segment $\ell\subseteq B(0;r)$ with length at least $\e r$}, the following holds. For any two different cells $H,H'\in\cH$ intersecting $\ell$, $\phi(H),\phi(H')$ also intersect $\ell$, and we can find a path $H\sim H_1\sim\cdots\sim H_k\sim H'$, such that $\phi(H_j)$ intersects $\ell$ for each $j$. \label{cond:def:CellConfigConstraint-large-(iii)}
    \end{enumerate} 
\end{definition}

The following results are from~\cite[Theorem 5.5]{HY26}.

\begin{theorem}\label{thm:CellSystemPack}
    Let $\cH$ be a random cell configuration such that its associated map is a simple infinite plane triangulation. Suppose $\cH$   satisfies all the conditions in Definition~\ref{def:CellConfigConstraint} and the moment bound
    \begin{equation}\label{eq:thm-inv-circle}
        \bbE\Big[\frac{\diam(H_0)^2}{\area(H_0)}\deg(H_0)^4\Big]<\infty.
    \end{equation}
    Further assume that there exists another cell configuration $\cH^\lge$ satisfying the conditions in Definition~\ref{def:CellConfigConstraint} and~\eqref{eq:thm-inv-circle}, and assume that $(\cH,\cH^\lge)$ satisfies the conditions in Definition~\ref{def:CellConfigConstraint-large}. Then the associated map is a.s.\ circle packing parabolic, and there is a circle packing $\cP$ for $\cH$ such that  {up to a reflection,} the following is true.
    Almost surely, for any $\e>0$, there exists $R_0>0$ such that for every $r>R_0$, the maximal diameter of disks in $\cP$ intersecting $B(0;r)$ is less than $\e r$. Furthermore, for $H\in\cH$,  letting $o(H)$ be the center of the disk for $H$ in $\cP$, almost surely, 
    \begin{equation}\label{eq:thm:CellsystemPack}
        \lim_{r\to\infty}\frac{1}{r}\max_{H\in \cH(B(0;r))}|c(H)-o(H)| = 0.
    \end{equation}
    %Furthermore, if $\cH$ satisfies the rotation invariance in Definition~\ref{def:rotation-inv}, then $\mathbf{A}$ can be taken to be the identity matrix.
\end{theorem}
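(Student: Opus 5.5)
This statement is quoted from \cite[Theorem 5.5]{HY26}, so in the present paper it is an imported input and needs no new argument. Below is how the proof would go if one had to reproduce it.

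\textbf{Step 1: a priori embedding and harmonic-map comparison.} The aim is to compare the a priori embedding $H\mapsto c(H)$ with the circle packing $\cP$ through discrete harmonic functions. The invariance principle of \cite{GMSinvariance} is driven by translation invariance modulo scaling, ergodicity modulo scaling, and a moment bound of the type \eqref{eq:thm-inv-circle}. Under these hypotheses it gives that random walk on the associated map, embedded via $c(\cdot)$, converges after scaling to Brownian motion modulo time change. Rotation invariance with $\theta_0\notin\{0,\pi\}$ forces the limiting covariance to be a multiple of the identity, which is why the statement holds up to a reflection. Recurrence of this walk then gives circle packing parabolicity by \cite{HeSchramm-hy-pa}; the diameter and degree moment bound controls the degenerate bounded-degree issues.

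\textbf{Step 2: compare the packing with the a priori embedding.} Take the parabolic packing $\cP$, normalized by fixing two cells. Then $z\mapsto o(H_z)$ is a discrete map that is approximately conformal at scales where the Ring Lemma type estimates hold. Three ingredients enter here.
\begin{enumerate}[(a)]
\item The coordinate functions $c(H)$ are approximately discrete harmonic at large scales, by the invariance principle.
\item The packing center map is close to a discrete harmonic or conformal map, via He--Schramm type length--area and extremal length estimates.
\item The large cell configuration $\cH^\lge$ and conditions \ref{cond:def:CellConfigConstraint-large-(ii)}--\ref{cond:def:CellConfigConstraint-large-(iii)} of Definition~\ref{def:CellConfigConstraint-large} supply the crossing and connectedness along macroscopic lines. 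These are needed to bound discrete extremal length from below.
\end{enumerate}
Together these show that both embeddings, rescaled by $1/r$ on $B(0;r)$, are uniformly close to the same conformal automorphism of $\bbC$. After normalization that automorphism is the identity or a reflection.

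\textbf{Step 3: small disks.} Bounding the maximal disk diameter by $\e r$ inside $B(0;r)$ follows from the same extremal length bounds. A disk of macroscopic size would produce a low-resistance path, contradicting the resistance comparison obtained from the invariance principle applied to $\cH^\lge$.

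\textbf{Main obstacle.} The hardest step is (c): passing from the purely probabilistic invariance principle to a deterministic quasi-conformality statement for the packing. This needs the moment bound with the $\deg^4$ factor to control the packing's distortion near high-degree cells.

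Within this paper the proof is therefore simply a citation of \cite{HY26}.
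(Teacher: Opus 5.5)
Your proposal matches the paper: Theorem~\ref{thm:CellSystemPack} is quoted directly from \cite[Theorem 5.5]{HY26} with no proof given here, so the citation is the entire argument. Your outline of how the \cite{HY26} proof might go cannot be checked against anything in this paper and is not needed; its steps (for instance, obtaining recurrence from the invariance principle, and the role of the $\deg(H_0)^4$ factor) are heuristic assertions rather than arguments.
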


 {For a cell configuration $\cH$ such that its associated map $M$ has {whole-plane topology},  we write $\mathfrak{M}(\cH)$ for the Riemann surface formed by $M$. For $H\in\cH$, we write $v_H$ for the vertex for $H$ on $\mathfrak{M}(\cH)$. We write  $\cH^{\mathrm{face}}_0$ for the set of cells $H\in\cH$ such that $v_H$ and $v_{H_0}$ are incident to the same face of $M$. We write $d_0^f$ for the maximum degree of faces incident to $v_{H_0}$.}
\begin{theorem}\label{thm:invariance-uniformization}
    Let $\cH$ be a random cell configuration such that its associated map has  {whole-plane topology}.  Suppose $\cH$ satisfies all the conditions in Definition~\ref{def:CellConfigConstraint} and the moment bound
    \begin{equation}\label{eq:thm-inv-circle-1}
        { \bbE\Big[ \max_{H\in \cH^{\mathrm{face}}_0}\Big(\frac{\diam(H)^2}{\area(H)}\big(d_0^f+\deg(H)\big)^6\Big)\Big]<\infty. }
    \end{equation}
      Further assume that there exists another cell configuration $\cH^\lge$ satisfying the conditions in Definition~\ref{def:CellConfigConstraint} and~\eqref{eq:thm-inv-circle-1}, and assume that $(\cH,\cH^\lge)$ satisfies the conditions in Definition~\ref{def:CellConfigConstraint-large}. 
    Then the Riemann surface $\frM(\cH)$ formed by the associated map of $\cH$ is a.s.\   parabolic, and there exists a  conformal map $\varphi_0:\frM(\cH)\to\bbC$ for $\cH$ such that {up to a reflection}, the following is true.  Then {almost surely}, 
    \begin{equation}\label{eq:thm:invariance-uniform}
        \lim_{r\to\infty}\frac{1}{r}\max_{H\in \cH(B(0;r))}|c(H)-\varphi_0(v_H)| = 0.
    \end{equation}
    Furthermore,  for each $f\in \cF\cH$, we write $\mathfrak{f}_{f}$ for the face for $f$ on  $\frM(\cH)$,  
     \begin{equation}\label{eq:thm:invariance-uniform-1}
        \lim_{r\to\infty}\frac{1}{r}\max_{f\in \cF\cH(B(0;r))}\diam(\varphi_0(\mathfrak{f}_{f})) = 0,
    \end{equation}
    where $\cF\cH(B(0;r))$ is the set of faces of the associated map for $\cH$ which are incident to cells in $\cH(B(0;r))$.  {Finally, for each $\e>0$, almost surely for sufficiently large $r>0$, if the embedded face $\varphi_0(\mathfrak{f}_{f})$ intersects $B(0;r)$, then $\diam(\varphi_0(\mathfrak{f}_{f}))<\e r$.}
   % Finally, if $\cH$ further satisfies the rotation invariance in Definition~\ref{def:rotation-inv}, then $\mathbf{A}$ can be taken to be the identity matrix.
\end{theorem}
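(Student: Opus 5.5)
This statement is quoted from \cite[Theorem 5.5]{HY26}, so the proof here will be a citation. For completeness, this is how I would derive it from the circle packing statement, Theorem~\ref{thm:CellSystemPack}, together with a local comparison between circle packings and the Riemann surface $\frM(\cH)$.

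\textbf{Reduction to a simple triangulation.} Starting from $\cH$, I would build an auxiliary cell configuration $\wh\cH$ whose associated map is a simple infinite plane triangulation.
\begin{enumerate}
\item Subdivide each face $f$ of degree $p$ by adding a vertex at its center and joining it to its corners; do this twice (a barycentric-type refinement) to kill multiple edges and loops.
\item Define the new cells by splitting each $H$ into pieces, plus small cells placed inside the faces, all chosen in a translation- and scaling-covariant way. This lets conditions~\ref{cond:def:CellConfigConstraint-1}--\ref{cond:def:CellConfigConstraint-3} of Definition~\ref{def:CellConfigConstraint} pass to $\wh\cH$, and the same construction applied to $\cH^\lge$ gives $\wh\cH^\lge$ satisfying Definition~\ref{def:CellConfigConstraint-large}.
\item Check the moment bound \eqref{eq:thm-inv-circle} for $\wh\cH$. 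The degree of a new cell is at most of order $d_0^f+\deg(H)$. Its shape ratio $\diam^2/\area$ is controlled by that of the neighbouring original cells in $\cH^{\mathrm{face}}_0$. Because of the re-rooting, the relevant cell containing $0$ is size-biased by area among the pieces of $H_0$. The higher power $6$ and the maximum over $\cH^{\mathrm{face}}_0$ in \eqref{eq:thm-inv-circle-1} are there precisely to absorb these losses.
\end{enumerate}
Theorem~\ref{thm:CellSystemPack} then yields that $\wh\cH$ is circle packing parabolic, with a packing $\wh\cP$ whose centers satisfy \eqref{eq:thm:CellsystemPack}.

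\textbf{Comparing the packing with the Riemann surface.} I would pass from $\wh\cP$ to the Riemann surface in three steps.
\begin{enumerate}
\item Map each triangle of the refined map affinely onto the corresponding triangle of the carrier of $\wh\cP$ (straight segments between centers of tangent disks). This gives a homeomorphism $\psi:\frM(\cH)\to\bbC$.
\item Bound the dilatation of $\psi$ on each triangle in terms of the local degrees, using the Ring Lemma. This is only a local bound, not a uniform one, since degrees are unbounded.
\item Show that $\psi$ is quasiconformal with a dilatation field that is large only on a sparse set. I would try to make this precise by proving that ergodicity plus the moment bound gives, at every scale $r$, a vanishing fraction of area inside $B(0;r)$ where the dilatation is large.
\end{enumerate}
Parabolicity follows from this. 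The carrier of $\wh\cP$ is all of $\bbC$, and a quasiconformal image of $\bbC$ with controlled dilatation cannot be a disk; alternatively, I would use recurrence and extremal length, comparing the discrete extremal length for the refined map with the continuum extremal length on $\frM(\cH)$.

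\textbf{Obtaining $\varphi_0$ and the estimates.} Let $\varphi_0$ be the uniformizing map, normalized at two points. Then $\varphi_0\circ\psi^{-1}$ is quasiconformal on $\bbC$, with dilatation that is bounded away from a set of small area density.
\begin{enumerate}
\item Rescale by $1/r$ and apply a compactness argument for such maps (Lehto-type estimates with integrable dilatation). This shows that $\varphi_0\circ\psi^{-1}$ is close to a linear map on $B(0;r)$, uniformly at scale $o(r)$.
\item Rotation invariance, condition~\ref{cond:def:CellConfigConstraint-3}, forces this linear map to be a similarity up to reflection. Combined with \eqref{eq:thm:CellsystemPack}, this gives \eqref{eq:thm:invariance-uniform}.
\item The face diameter bounds \eqref{eq:thm:invariance-uniform-1}, and the final $\e r$ claim, follow from the same uniform closeness at scale $r$ together with the fact that the disks of $\wh\cP$ meeting $B(0;r)$ have diameter $o(r)$.
\end{enumerate}

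\textbf{Main obstacle.} The hard part is the non-uniform dilatation. High-degree vertices and degenerate faces make $\psi$ badly distorted locally, so the convergence to a linear map must come from averaging and ergodicity rather than uniform quasiconformal bounds. I would first try to derive it from the Dirichlet energy and harmonic-coordinate framework of \cite{GMSinvariance}, to which the moment bound \eqref{eq:thm-inv-circle-1} is tailored.
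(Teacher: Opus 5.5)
Your main answer, citing \cite[Theorem 5.5]{HY26}, matches the paper: the paper quotes the statement from there and gives no proof of its own. The derivation you add ``for completeness'' is not a proof, though. Apart from the obstacle you flag yourself, several of its steps fail as written.

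\textbf{The reduction to a simple triangulation.} Adding a vertex in each face and joining it to the corners never deletes an edge. So no number of repetitions removes the loops and multiple edges of the associated map, and Theorem~\ref{thm:CellSystemPack}, which requires a simple triangulation, does not apply. You would have to subdivide edges as well, e.g.\ a full barycentric subdivision done twice. Also, faces of the associated map are not regions of the plane, and the cells around a face need not share a point. The new cells therefore have to be carved out of the old ones, and each piece must be connected, meet all its new neighbours, and keep $\diam^2/\area$ under control. The same carving must be done compatibly for $\cH^\lge$ so that Definition~\ref{def:CellConfigConstraint-large} still holds. None of this is automatic, and your sketch only asserts it.

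\textbf{The comparison step.} This is the more serious problem, and it cannot work as described. Even where all degrees are bounded, the affine map from a sub-triangle of a unit regular polygon onto the corresponding carrier triangle has dilatation of order one, not close to one. So $\varphi_0\circ\psi^{-1}$ has a non-negligible Beltrami coefficient on a positive-density part of the plane, not only on a sparse set. Compactness of normalized quasiconformal maps then gives at best quasiconformal subsequential limits, not linear ones. Large-scale linearity is a homogenization theorem for a Beltrami equation with random coefficients, and that is the whole content of the statement. It would also be a degenerate and non-stationary homogenization problem, for two reasons:
\begin{itemize}
\item The coefficient lives in packing coordinates, where there is no translation invariance, and \eqref{eq:thm:CellsystemPack} controls positions only up to $o(r)$. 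So densities obtained from the ergodic theorem in the a priori embedding do not transfer.
\item The Ring Lemma bounds the dilatation only by a function growing at least exponentially in the degrees. Polynomial moments such as \eqref{eq:thm-inv-circle-1} therefore give no integrability of the dilatation, and no Lehto-type estimate is available.
\end{itemize}

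\textbf{Parabolicity.} The same defect invalidates your parabolicity argument. A homeomorphism onto $\bbC$ with unbounded dilatation need not preserve conformal type, since $\bbD$ is homeomorphic to $\bbC$. The extremal-length alternative needs exactly the face-by-face comparison that is missing.
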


\subsection{Liouville quantum gravity and mating of trees}

\subsubsection{The Gaussian free field and Liouville quantum gravity surfaces}

%The whole plane Gaussian free field (with zero average on unit circle) is defined as follows. 
Let $X \in\{\bbC,\bbH\}$. Let $m_\bbC$ (resp.\ $m_\bbH$) be the uniform measure on the unit circle {$\partial\bbD$} (resp.\ unit semicircle $\{|z|=1,\mathrm{Im}\, z>0\}$). Define the Dirichlet inner product $\langle f,g\rangle_\nabla = (2\pi)^{-1}\int_X \nabla f\cdot\nabla g $ on the space $\{f\in C^\infty(X):\int_X|\nabla f|^2<\infty; \  \int f(z)m_X(dz)=0\},$ and let $H(X)$ be the closure of this space {with respect to} the inner product $\langle f,g\rangle_\nabla$. Let $(f_n)_{n\ge1}$ be an orthonormal basis of $H(X)$, and $(\alpha_n)_{n\ge1}$ be a collection of independent standard Gaussian variables.  Then the summation
$$h_X = \sum_{n=1}^\infty \alpha_nf_n$$
a.s.\ converges in the space of distributions on $\bbC$, and $h_X$ is the \emph{Gaussian Free Field} on $X$ normalized such that $\int h(z)m_X(dz) = 0$. Let $P_X$ be the law of $h_X$. See~\cite[Section 4.1.4]{DMS14} for more details.  As argued in \cite[Section 4.1]{DMS14},  we have the decomposition $H({X}) = H_1({X})\oplus H_2({X})$, where $H_1(X)$ is the subspace of radially symmetric functions, and $H_2(X)$ is the subspace of functions having mean 0 about all  circles $\{|z|=r\}$ if $X=\bbC$ or all  semicircles $\{|z|=r, \ \mathrm{Im}\, z>0\}$ if $X=\bbH$. As a consequence, for the GFF $h_X$ sampled from $P_X$, we can decompose \eqb\label{eq:GFF-decompose}h_X=h_{X,1}+h_{X,2}\eqe where $h_{X,1}$ and $h_{X,2}$ are independent distributions given by the projection of $h$ onto $H_1(X)$ and $ H_2(X)$, respectively. 

Now let $\gamma\in(0,2)$ and $Q=\frac{2}{\gamma}+\frac{\gamma}{2}$. Consider the space of pairs $(D,h)$, where $D\subseteq \mathbb C$ is a planar domain and $h$ is a distribution on $D$ (often some variant of the GFF). For a conformal map $g:D\to\tilde D$ and a generalized function $h$ on $D$, define the generalized function $g\bullet_\gamma h$ on $\tilde{D}$ by setting 
\begin{equation}\label{eq:lqg-changecoord}
    g\bullet_\gamma h:=h\circ g^{-1}+Q\log|(g^{-1})'|.
\end{equation}
 Define the equivalence relation $\sim_\gamma$ {as follows. We say that} $(D, h)\sim_\gamma(\wt{D}, \wt{h})$ if there is a {conformal map $g:D\to\wt{D}$ such that $\tilde h = g\bullet_\gamma h$}.
A \textit{quantum surface} $S$ is an equivalence class of pairs $(D,h)$ under the {equivalence} relation $\sim_\gamma$, and we say {that} $(D,h)$ is an \emph{embedding} of $S$ if $S = (D,h)/\mathord\sim_\gamma$.   {Abusing notation, we will sometimes call $(D,h)$ as a quantum surface, and we are then referring to the equivalence class $(D,h)/\sim_\gamma$ that it defines.} Likewise, a \emph{quantum surface with} $k$ \emph{marked points} is an equivalence class of {tuples} %elements 
of the form
	$(D, h, x_1,\dots,x_k)$, where $(D,h)$ is a quantum surface, the points  $x_i\in {\overline{D}}$, and with the further
	requirement that marked points (and their ordering) are preserved by the conformal map $\varphi$ in \eqref{eq:lqg-changecoord}, and a \emph{curve decorated quantum surface} can be defined analogously. 

     For a $\gamma$-quantum surface $(D, h, z_1, ..., z_m)$, its \textit{quantum area measure} $\mu_h$ is defined by taking the weak limit $\epsilon\to 0$ of $\mu_{h_\epsilon}:=\epsilon^{\frac{\gamma^2}{2}}e^{\gamma h_\epsilon(z)}d^2z$, where $d^2z$ is the Lebesgue area {measure} and $h_\epsilon(z)$ is the circle average of $h$ over $\partial B(z, \epsilon)$. When $D=\mathbb{H}$, we can also define the  \textit{quantum boundary length measure} $\nu_h:=\lim_{\epsilon\to 0}\epsilon^{\frac{\gamma^2}{4}}e^{\frac{\gamma}{2} h_\epsilon(x)}dx$ where $h_\epsilon (x)$ is the average of $h$ over the semicircle $\{x+\epsilon e^{i\theta}:\theta\in(0,\pi)\}$. It has been shown in \cite{DS11, SW16} that all these weak limits are well-defined  for the GFF and its variants we are considering in this paper, {and that} $\mu_{h}$ and $\nu_{h}$ {can} be conformally extended to other domains using the relation $\bullet_\gamma$.

Now we introduce the quantum surfaces we will  frequently consider in this paper. 
\begin{definition}[$\alpha$-quantum cone]\label{def:alpha-cone}
  Fix $\gamma\in(0,2)$, $\alpha<Q$ and let $(B_s)_{s\ge0}$ and $(\wt{B}_s)_{s\ge0}$ be independent standard one-dimensional Brownian motions. 
  Let 	
			\begin{equation*}
				Y_t=\left\{ \begin{array}{rcl} 
					B_{t}+\alpha t  & \mbox{for} & t\ge 0,\\
					\wt{B}_{-t} +\alpha t & \mbox{for} & t<0,
				\end{array} 
				\right.
			\end{equation*}
			conditioned on   $\wt{B}_{t} + (Q-\alpha)t>0$ for all $t>0$.  Recall the decomposition in~\eqref{eq:GFF-decompose}. The $\alpha$-quantum cone is the doubly marked quantum surface $(\bbC,h,0,\infty)/\sim_\gamma$, where $h=\psi_1+\psi_2$, and $(\psi_1,\psi_2)$ are independent random generalized functions in $H_1(\bbC)\times H_2(\bbC)$ whose law can be described as follows.  The common value of $\psi_1$ on $\{|z|=e^{-t}\}$ is equal to $Y_t$ for all $t\in\bbR$, and $\psi_2$ has the same law as $h_{\bbC,2}$ as in~\eqref{eq:GFF-decompose}.
\end{definition}
The embedding $(\bbC,h,0,\infty)$ above is also called the circle average embedding of the $\alpha$-quantum cone.

A \emph{quantum disk} is a  finite-mass quantum surface $(\bbD,h^{\mathrm{disk}})/\sim_\gamma$ defined in~\cite[Definition 4.21]{DMS14} (the precise definition is not needed here).  One can consider quantum disks with specified boundary length $\nu_{h^{\mathrm{disk}}}(\partial\bbD)$ and area $\mu_{h^{\mathrm{disk}}}(\bbD)$, as well as  quantum disks with any number of marked boundary points and interior points sampled uniformly from $\nu_h$ and $\mu_h$, respectively. We will typically work with quantum disks with unit area and boundary length and a single marked boundary point. A \emph{quantum sphere} is a  finite-mass quantum surface $(\wh\bbC,h_{\mathrm{sp}})/\sim_\gamma$ defined in~\cite[Definition 4.21]{DMS14}, where $\wh\bbC = \bbC\cup\{\infty\}$, and one can consider quantum spheres with specified area $\mu_{h_{\mathrm{sp}}}(\wh\bbC)$ as well. %One important property which we shall use frequently throughout this section is, 

\subsubsection{Space-filling SLE and mating of trees}\label{subsubsec:MatedCRT}
The Schramm-Loewner evolution (SLE) is a family of fractal curves with a single parameter $\kappa>0$~\cite{Sch00}. $\SLE_\kappa$ is a simple curve for $\kappa\in(0,4)$, self-touching but not space-filling or self-crossing for $\kappa\in(4,8)$, and space-filling for $\kappa\geq8$. As studied in the theory of Imaginary Geometry~\cite{ig1,ig4}, $\SLE_\kappa$ curves can be viewed as flow lines of GFF. For $\kappa>4$, space-filling $\SLE_\kappa$ is a variant of $\SLE_\kappa$ introduced in~\cite{ig4}, which is also generated via GFF flow lines and agrees with ordinary $\SLE_\kappa$ when $\kappa\geq8$.  The space-filling $\SLE_\kappa$ loop from $\infty$ to $\infty$ can be  generated by flow lines of  $h^{\rm IG}$, where $h^{\rm IG}$ is a whole plane GFF modulo a global additive multiple of $2\pi\chi$ with $\chi = \sqrt\kappa/2-2/\sqrt\kappa$.  See~\cite{BN23SLEnotes} for a detailed introduction to SLE, and~\cite[Section 4.3]{ig4} for the precise definition of space-filling SLE.

Now we briefly introduce the theory of mating of trees in~\cite{DMS14}. Let $\gamma\in(0,2)$ and $\kappa=16/\gamma^2$. In the rest of the paper, the letter $\rmQ$ on superscripts stands for ``quantum", and the letter $\rmE$ on superscripts stands for ``Euclidean". Consider a $\gamma$-quantum cone embedded as $(\bbC,h^\rmQ,0,\infty)$ and a space-filling $\SLE_\kappa$ loop $\eta^\rmQ:\bbR\to\bbC$ from $\infty$ to $\infty$ independent of $h^\rmQ$. Assume $\eta^\rmQ(0)=0$ and $\eta^\rmQ$ is parameterized according to LQG area measure induced by $h^\rmQ$, i.e., for each $t>s$, $\mu_{h^\rmQ}(\eta^\rmQ([s,t]))=t-s$. For $t\in\bbR$, one can define the change $L_t$ (resp.\ $R_t$) in the length of the left (resp.\ right) boundary of $\eta^\rmQ((-\infty,t])$ relative to time 0.  By ~\cite[Theorem 1.9]{DMS14} and~\cite{GHMS15}, $(L_t,R_t)_{t\in\bbR}$ evolves as 2D Brownian motion with correlation $-\cos(\pi\gamma^2/4)$, where the variance $2/\sin(\pi\gamma^2/4)$ computed in~\cite{ARS21}.  {To simplify the notations, we will multiply the quantum length measure $\nu_h$ by $\frac{1}{2}$ such that $\mathrm{Var}(L_t) = \mathrm{Var}(R_t)=\frac{1}{2}|t|$ and shift back to the original quantum length measure in the proofs of Theorem~\ref{thm:pack-disk}-\ref{thm:unif-cone}.} Moreover, by~\cite[Theorem 1.11]{DMS14}, the Brownian motion $(L_t,R_t)_{t\in\bbR}$ a.s.\ determines  $(h^\rmQ, \eta^\rmQ)$ up to a rotation. Note that for $\gamma\in(0,\sqrt{2}]$ (resp.\ $\gamma\in(\sqrt{2},2)$), for each $a<b$, $\eta^\rmQ([a,b])$ and $\eta^\rmE([a,b])$ are a.s.\ simply connected (resp.\ not simply connected). Following~\cite[Lemma 2.17]{AG19}, for every $a<b$, the curve-decorated quantum surface $(\eta^\rmQ([a,b]),\eta^\rmQ|_{[a,b]})/\sim_\gamma$ is measurable with respect to $(L_t-L_a,R_t-R_a)_{t\in[a,b]}$.  We will further embed $(\eta^\rmQ([a,b]),\eta^\rmQ|_{[a,b]})/\sim_\gamma$ onto $\bbD$ such that $\eta^\rmQ(\frac{a+b}{2})=0$ and $\eta^\rmQ(a)=1$. For a random process  $(\wt L_t,\wt R_t)_{t\in[a,b]}$ whose laws are absolutely continuous with respect to $(L_t,R_t)_{t\in[a,b]}$, one can also generated a curve-decorated quantum surface in the same way.

One can also study mating of trees   in other topologies. Consider   quantum disk with unit area and boundary length and a marked point embedded as  $(\bbD,h^{\mathrm{disk}},1)$, and an independent clockwise space-filling $\SLE_\kappa$ loop  $\eta^{\rm disk}:[0,1]\to\bbD$ from 1 to 1 parameterized by LQG area induced by $h^{\mathrm{disk}}$. By~\cite{AG19}, one can define a boundary length process $(L_t,R_t)_{t\in[0,1]}$ in terms of $(h^{\rm disk},\eta^{\rm disk})$ which evolves as a Brownian excursion in $\bbR_+^2$ from {$(1,0)$ to $(0,0)$} with duration 1, while $(L_t,R_t)_{t\in[0,1]}$ a.s.\ determines $(h^{\rm disk},\eta^{\rm disk})$.   Similarly, for a unit area quantum sphere with two marked points $(\bbC,h^{\mathrm{sp}},0,\infty)$ and an independent space-filling $\SLE_\kappa$ loop $\eta^{\rm sp}$ from $\infty$ to $\infty$ parameterized by quantum area such that $\eta^{\rm sp}(0)=\eta^{\rm sp}(1)=\infty$, one can also define a boundary length process $(L_t,R_t)_{t\in[0,1]}$ in terms of $(h^{\rm sp},\eta^{\rm sp})$ which evolves as a Brownian excursion in $\bbR_+^2$ from $(0,0)$ to $(0,0)$ with duration 1~\cite{MS-mot-finite}.

For a 0-quantum cone $(\bbC,h^\rmE,0,\infty)$ and a space-filling $\SLE_\kappa$ loop $\eta^\rmE$ from $\infty$ to $\infty$ parameterized by $\mu_{h^{\rmE}}$, as considered in~\cite{GMS21tutte}, one can also define a  process $(Z_t)_{t\in\bbR}=(L_t,R_t)_{t\in\bbR}$ reflecting the change of boundary length in the same way. 

In the rest of the paper we will constantly use the following definitions. For an interval $[a,b)$, we call the intervals $[a,\frac{a+b}{2})$ and $[\frac{a+b}{2},b)$ the dyadic children of $[a,b)$, and we say $[a,b)$ is an dyadic parent of $[a,\frac{a+b}{2})$ or $[\frac{a+b}{2},b)$. We say $I\subseteq I'$ is a dyadic descendant of $I'$, if there is $I= I_1\subseteq I_2\subseteq\cdots\subseteq I_n = I'$ such that for each $j=1,...,n-1$, $I_j$ is a dyadic children of $I_{j+1}$.
\begin{definition}\label{def:1ddyadic}
    Let $\theta$ be uniformly randomly sampled in $[0,1)$. Let $I(0) = [\theta-1,\theta)$. For $k\in\bbN$, inductively let $I(k)$ be sampled uniformly from the two dyadic parents of $I(k-1)$.  The \emph{1D uniform dyadic system} $\fD$ is defined by the set of all dyadic descendants of $I(k)$  for each $k\in\bbN$. For $k\in\bbZ$, we also write  $I(k)$ for the interval in $\fD$  containing 0 of length $2^{k}$.
    We write  $\Omega_\fD$ for the space of all possible realizations of $\fD$. For $s\in\bbR$, we write $\fD-s$ for the set $\{[a-s,b-s):[a,b)\in \fD\}$.
\end{definition}

\begin{definition}\label{def:P_EP_Q}
  Let $\fD$ be an 1D uniform dyadic system as in Definition~\ref{def:1ddyadic}, and $\fU = (U_I)_{I\in\fD}$ be a collection of i.i.d.\ uniform random variables on $[0,1]$ indexed by intervals in $\fD$.  Let $\bbP_\rmE$ be the law of $(h,h^{\rm IG},\eta,\fU)$ where $(\bbC,h,0,\infty)$ is {the circle average} embedding of the 0-quantum cone, $\eta$ is a whole-plane space-filling $\SLE_\kappa$ generated by the whole plane GFF $h^{\rm IG}$, $\fU$ is generated from a uniform dyadic system $\fD$ as above,  and the three random variables  $h,h^{\rm IG},\fD$ are all independent. We parametrize $\eta$ by the $\gamma$-LQG area induced by $h$. Let $\bbP_\rmQ$ be defined in the exact same way, except that $(\bbC,h,0,\infty)$ is   {the circle average} embedding of the $\gamma$-quantum cone instead of the 0-quantum cone. Under either of these probability measures we define the boundary length process $Z$ of  $(h,\eta)$ as in Section~\ref{subsubsec:MatedCRT}.
\end{definition}

%Let $T^{\rmE,n}_{\rm MOT}, \wh T^{\rmE,n}_{\rm MOT}, \cH^{\rmE,n}_{\rm MOT},\wh\cH^{\rmE,n}_{\rm MOT}$ be defined in the identical way using $(h^\rmE,\eta^\rmE)$ as in the $\gamma$-quantum cone case. %In Section~\ref{subsec:matedCRT-0-cone} and Section~\ref{subsec:matedCRT-gamma-cone}, we will first use Theorem~\ref{thm:CellSystemPack} to prove the convergence of circle packing of $\wh T^{\rmE,n}$ to the a priori embedding $\wh\cH^{\rmE,n}$ as $n\to\infty$, then use absolute continuity and Theorem~\ref{thm:He-Schramm-extension-1} to transfer to the mated CRT map with various topologies.

\subsubsection{Quantitative bounds on SLE and LQG}\label{subsubsec:SLE-LQG-bound}

In this section we gather estimates on SLE and LQG that will  be used in the remainder of the paper. The first property is the scale invariance property for quantum cones, which is from~\cite[Lemma 2.1]{GMS21tutte}.
\begin{lemma}\label{lem:Rb}
    Let $\gamma\in(0,2)$, $\alpha<Q$ and $(\bbC,h,0,\infty)$ be the embedding of an $\alpha$-quantum cone as in Definition~\ref{def:alpha-cone}. Let $\eta$ be an independent space-filling SLE loop from $\infty$ to $\infty$ parameterized by $\mu_h$.  For $b>0$, let 
    \begin{align}
        &\fR_b = \sup\{r>0:h_r(0)+Q\log r = \frac{1}{\gamma}\log b\}; \label{eq:Rb}
        \\  &h^b(\cdot)=h(\fR_b\cdot)+Q\log \fR_b-\frac{1}{\gamma}\log b, \qquad \eta^b(\cdot) = \fR_b^{-1}\eta(b\cdot). \label{eq:hb-etab}
    \end{align}
   (Note that by definition $\fR_1=1$.)  Then $(h^b,\eta^b)$ has the same law as $(h,\eta)$. Moreover, there is a constant $a = a(\alpha,\gamma)>0$ such that for any $b_2>b_1>0$ and $C>1$, 
     \begin{equation}\label{eq:Rb-bound}
      \bbP\Big[C^{-1}(\frac{b_2}{b_1})^{\frac{1}{\gamma(Q-\alpha)}}\leq \frac{\fR_{b_2}}{\fR_{b_1}}\leq C(\frac{b_2}{b_1})^{\frac{1}{\gamma(Q-\alpha)}}\Big]\geq 1-3\exp\big(-\frac{a (\log C)^2}{\log(b_2/b_1)+\log C}\big).
  \end{equation}
\end{lemma}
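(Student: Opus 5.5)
The plan is to reduce everything to the radial part of the field. By Definition~\ref{def:alpha-cone}, the circle average $h_r(0)$ at $r=e^{-t}$ equals $Y_t$, so $h_r(0)+Q\log r = Y_t - Qt$ with $t=-\log r$. For $t\ge 0$ this is $B_t-(Q-\alpha)t$. For $t<0$ it is $\wt B_{-t}+(Q-\alpha)|t|$ (note $\alpha t - Qt = (Q-\alpha)|t|$ for $t<0$), which is positive by the conditioning and drifts to $+\infty$. Hence the process $X_s := h_{e^{-s}}(0)-Qs$, $s\in\bbR$, is a two-sided process which tends to $-\infty$ as $s\to\infty$ and to $+\infty$ as $s\to-\infty$. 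Consequently $\fR_b=e^{-\tau_b}$, where $\tau_b=\inf\{s: X_s=\gamma^{-1}\log b\}$ is the first hitting time of the level $\gamma^{-1}\log b$ from the left. The normalization $\fR_1=1$ holds because $X_0=0$ and $X_s>0$ for $s<0$.

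For the scale invariance, I will use the standard fact about $\alpha$-quantum cones that re-centering the radial process at a hitting time preserves its law. On one side, the strong Markov property shows that $(X_{\tau_b+s}-X_{\tau_b})_{s\ge0}$ is again a Brownian motion with drift $-(Q-\alpha)$. On the other side, the backward piece $(X_{\tau_b-s}-X_{\tau_b})_{s\ge0}$, by Williams' path decomposition (a Brownian motion with drift run backward from its last or first passage), is a drift-$(Q-\alpha)$ Brownian motion conditioned to stay positive. Both pieces are independent of each other, and independent of the lateral part $\psi_2$. The lateral part is scale invariant in law, since $h_{\bbC,2}(\fR\cdot)$ has the same law for any radius that is a stopping time of the radial part, by independence. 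Combining these with the coordinate change $h(\fR_b\cdot)+Q\log\fR_b$, whose radial part is exactly $X_{\tau_b+\cdot}$, gives $h^b\overset{d}{=}h$.

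For the curve, the whole-plane space-filling SLE from $\infty$ to $\infty$ is scale invariant in law and independent of $h$. Moreover, the area measure transforms as $\mu_{h^b}(A)=b^{-1}\mu_h(\fR_bA)$, so reparametrizing by $\mu_{h^b}$ gives exactly $\eta^b(t)=\fR_b^{-1}\eta(bt)$. Hence $(h^b,\eta^b)\overset{d}{=}(h,\eta)$ jointly.

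For the bound \eqref{eq:Rb-bound}, set $x_i=\gamma^{-1}\log b_i$, so that $\log(\fR_{b_2}/\fR_{b_1}) = -(\tau_{b_2}-\tau_{b_1})$. I will treat only the case $b_2>b_1\ge 1$; the remaining cases follow the same way using the conditioned side, whose behavior is also that of a drifted Brownian motion up to the conditioning. In this case, after $\tau_{b_1}$ the process is a Brownian motion with drift $-(Q-\alpha)$, and $\tau_{b_1}-\tau_{b_2}$ is the time it needs to travel the distance $\Delta=x_2-x_1$ upward in reversed time. The expected duration is $\Delta/(Q-\alpha)=\log(b_2/b_1)/(\gamma(Q-\alpha))$.

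A Gaussian tail estimate for hitting times of drifted Brownian motion then gives
\[
\bbP\big[|\tau-\Delta/(Q-\alpha)|>\log C\big]\le 2\exp\big(-a(\log C)^2/(\Delta+\log C)\big),
\]
since the hitting time has variance of order $\Delta$ and an exponential tail at scale $\log C$. The extra third term in the constant $3$ absorbs the contribution of the conditioned side. The main obstacle is keeping this tail estimate uniform across the regime where $\tau_{b_1}<0<\tau_{b_2}$, which straddles the conditioned and unconditioned halves. There I would split at time $0$ and apply a union bound, using that the conditioned process is stochastically comparable to the unconditioned drifted Brownian motion.
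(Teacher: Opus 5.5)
\textbf{The scale-invariance part is sound; the gap is in the tail bound.} Your reduction to the radial process $X_s=h_{e^{-s}}(0)-Qs$, the identification $\fR_b=e^{-\tau_b}$, and the scale-invariance argument are fine as a sketch. One caveat: for $b>1$ the time $\tau_b<0$ lies in the conditioned half. There the forward piece from $\tau_b$ is a drift-$(-(Q-\alpha))$ Brownian motion only after combining Williams' reversal of the conditioned process at its last passage time with the strong Markov property, not by the strong Markov property alone.

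The genuine gap is in your proof of \eqref{eq:Rb-bound}.
\begin{itemize}
\item The one case you treat is mis-described. Since $b_2>b_1$, you have $\tau_{b_2}<\tau_{b_1}$, so $[\tau_{b_2},\tau_{b_1}]$ lies \emph{before} $\tau_{b_1}$, not after it. Read backward from $\tau_{b_1}$, the path is the \emph{conditioned} process, and $\tau_{b_1}-\tau_{b_2}$ is its \emph{last} passage time at $\Delta$. That is not a hitting time of a drifted Brownian motion; the two have the same law, but only by a further Williams reversal that you do not invoke.
\item The remaining cases are left to an unproved ``split at $0$ plus stochastic comparison''. This includes the straddling regime, which is $\tau_{b_2}<0<\tau_{b_1}$ (i.e.\ $b_1<1<b_2$), not $\tau_{b_1}<0<\tau_{b_2}$.
\item Attributing the constant $3$ to the conditioned side is not an argument.
\end{itemize}

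\textbf{The missing observation.} The first part of the lemma removes every case distinction.
\begin{itemize}
\item Re-center at the \emph{earlier} time $\tau_{b_2}$. By the scale invariance, $W_u:=X_{\tau_{b_2}+u}-x_2$, $u\ge 0$, is a Brownian motion with drift $-(Q-\alpha)$ started at $0$, whatever the sign of $x_2$.
\item Since $X$ comes down continuously from $+\infty$, $X_s>x_2>x_1$ for $s<\tau_{b_2}$. Hence $\log(\fR_{b_2}/\fR_{b_1})=\tau_{b_1}-\tau_{b_2}$ is exactly the hitting time $T$ of $-\Delta$ by $W$. Equivalently, $\fR_{b_1}/\fR_{b_2}=\fR_{b_1/b_2}(h^{b_2})$ with $b_1/b_2<1$.
\item Set $\mu=Q-\alpha$, $m=\Delta/\mu$ and $\ell=\log C$. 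Then $\bbP[T>m+\ell]\le\bbP[W_{m+\ell}>-\Delta]=\bbP[B_{m+\ell}>\mu\ell]$.
\item By the reflection principle, $\bbP[T<m-\ell]\le\bbP[\inf_{u\in[0,m-\ell]}B_u\le-\mu\ell]=2\,\bbP[B_{m-\ell}\ge\mu\ell]$, and this is zero if $\ell\ge m$.
\item The Gaussian tail bounds these two probabilities by $\exp\big(-\mu^2\ell^2/(2(m+\ell))\big)$ and $2\exp\big(-\mu^2\ell^2/(2(m+\ell))\big)$ respectively. Finally, $m+\ell\lesssim\log(b_2/b_1)+\log C$.
\end{itemize}
This gives \eqref{eq:Rb-bound} with $3=1+2$ coming from the two tails, and no conditioned process enters at all.
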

Observe that in the setting above, if $(Z_t)_{t\in\bbR}$ is the Brownian motion encoding $(h,\eta)$, then $(h^b,\eta^b)$ is encoded by $(b^{-1/2}Z_{bt})_{t\in\bbR}$.  {The following estimates are from ~\cite[Lemma A.5, Lemma A.6]{Tutte}.}
%To simplify the notations, we are going to use the following shorthand. Let $(\bbC,h,0,\infty)$  be either a $\gamma$-quantum cone or 0-quantum cone, and $\eta$ be an independent  space-filling SLE loop from $\infty$ to $\infty$ with $\eta(0)=0$ parameterized by $\mu_h$. Let $(Z_t)_{t\in\bbR} = (L_t,R_t)_{t\in\bbR}$ be the associated boundary length process determined by $(h,\eta)$. We write $\bbP_\rmE$ (resp.\ $\bbP_\rmQ$) for the law of $(h,\eta)$ if $h$ is a 0-quantum cone (resp.\ $\gamma$-quantum cone) and $\eta$ is a space-filling $\SLE_\kappa$ loop independent from $h$ and parameterized by the LQG measure $\mu_h$, and drop the superscript from $h$ or $\eta$.

% The second result is the following quantitative bound on cells in mated CRT maps, which is from~\cite[Lemma 2.7]{GMS19har} and  {Lemma 4.9 in the Tutte draft}.
% \begin{lemma}\label{lem:diam-cell-max}
%  %Let $(\bbC,h,0,\infty)$ be either a $\gamma$-quantum cone or 0-quantum cone, and $\eta$ be an independent  space-filling SLE loop from $\infty$ to $\infty$ parameterized by LQG area induced by $h$.  
%  For each $q\in(0,2(2+\gamma)^{-2})$ and $\e\in(0,1)$, 
%     $$\bbP_\rmE\Big[\diam(\eta(J))\leq \e^q \ \text{for any interval } J \text{ with } |J|\leq\e/2 \ \text{and }\eta(J)\cap B(0;3/4)\neq\emptyset\Big]\geq 1-\e^{\alpha(q,\gamma)+o_\e(1)},$$
%     where the rate of the $o_\e(1)$ depends only on $q$ and $\gamma$, and 
%     $$\alpha(q,\gamma):=\frac{q}{2\gamma^2}(\frac{1}{q}-2-\frac{\gamma^2}{2})^2-2q>0.$$
%     The same holds for $\bbP_\rmQ$ instead of $\bbP_\rmE$.
% \end{lemma}

\begin{lemma}\label{lem:SLE-contain}
    Let $\delta>0$. There exists $c>0$ depending only on $\delta$ and $\gamma$ such that  
        \begin{align}
            & \bbP_\rmE [B(0;{2^{-\delta k}}) \nsubseteq \eta([-1,1])] \lesssim 2^{-ck}; \label{eq:SLE-contain-ball}  \\
            & \bbP_\rmE [\eta([-2^{-\delta k},2^{-\delta k}])\nsubseteq B(0;1/2) ]\lesssim 2^{-ck}. \label{eq:SLE-contained-in-ball}
        \end{align}
\end{lemma}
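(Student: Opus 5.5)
Both bounds come from the scaling property of the 0-quantum cone (Lemma~\ref{lem:Rb} with $\alpha=0$) combined with the basic non-degeneracy of space-filling SLE. The plan is to compare the quantum time scale $2^{-\delta k}$ with the Euclidean scale $\fR_{2^{-\delta k}}$, which is of order $2^{-\delta k/(\gamma Q)}$ with very high probability.

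For~\eqref{eq:SLE-contain-ball}, set $b=2^{-\delta' k}$ for a small $\delta'>0$ to be chosen. By Lemma~\ref{lem:Rb}, $(h^b,\eta^b)\overset{d}{=}(h,\eta)$ with $\eta^b(t)=\fR_b^{-1}\eta(bt)$. Hence
\[
\{B(0;\fR_b r)\subseteq \eta([-b,b])\}=\{B(0;r)\subseteq \eta^b([-1,1])\},
\]
and the probability of this event does not depend on $b$. That alone is not enough, because we need a probability close to $1$ rather than just positive. Instead, I would use a multiscale independence argument. Pick scales $b_j=2^{-j\delta'}$ for $j\le k$. The event $E_j$ that $\eta([-b_j,b_j])$ contains $B(0;\fR_{b_j}r_0)$ has probability bounded below by some $p>0$; this uses that $0$ is a.s.\ an interior point of $\eta([-1,1])$, since for $\kappa=8$ the set $\eta([-1,1])$ contains a neighborhood of $\eta(0)=0$ almost surely. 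If any $E_j$ occurs with $\fR_{b_j}r_0\ge 2^{-\delta k}$, then $B(0;2^{-\delta k})\subseteq\eta([-b_j,b_j])\subseteq\eta([-1,1])$, which is what we want.

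To get exponentially small failure probability, I need approximate independence of the $E_j$ across well-separated scales. For this I would use the locality of the imaginary-geometry construction: the behavior of $\eta$ near $0$ at Euclidean scale $\fR_{b_j}$ is determined, up to small error, by $h^{\rm IG}$ and $h$ restricted to an annulus. This is combined with the Markov property of the whole-plane GFF across concentric annuli, giving a standard iteration in which each scale succeeds with conditional probability $\ge p/2$. The error from $\fR$ is controlled by~\eqref{eq:Rb-bound} with $C=2^{\epsilon k}$. That bound gives failure probability $\exp(-c k)$, so $\fR_{b_j}\asymp b_j^{1/(\gamma Q)}$ up to factors $2^{\pm\epsilon k}$. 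Choosing $\delta'$ small relative to $\delta\gamma Q$ ensures $\fR_{b_j}r_0\ge 2^{-\delta k}$ for $j$ up to order $k$. The outcome is failure probability at most $(1-p/2)^{ck}+e^{-ck}$.

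For~\eqref{eq:SLE-contained-in-ball}, apply the scaling with $b=2^{-\delta k}$:
\[
\eta([-b,b])=\fR_b\,\eta^b([-1,1]).
\]
Hence the event in question is contained in $\{\fR_b\ge 2^{-\delta k/(2\gamma Q)}\}\cup\{\diam\eta^b([-1,1])\ge 2^{\delta k/(2\gamma Q)}/2\}$. The first event has exponentially small probability by~\eqref{eq:Rb-bound}, using $\fR_1=1$. The second has the law of $\{\diam\eta([-1,1])\ge 2^{ck}\}$. I would bound this via $\bbP[\eta([-1,1])\not\subseteq B(0;R)]\le\bbP[\mu_h(B(0;R))<1]$, because $\eta$ parametrized by area started at $0$ fills $B(0;R)$ only after exiting it. For the 0-cone the circle-average scaling makes $\mu_h(B(0;R))$ grow like $R^{\gamma Q}$ with Gaussian-type lower tails, again by~\eqref{eq:Rb-bound}. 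This yields a polynomial bound in $R$, and the exponential-in-$k$ bound follows. The inclusion needs some care: once $\eta$ has exited $B(0;R)$ it may still return, but the ball $B(0;1)$ together with the time $\eta$ needs to cover it gives the bound.

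The main obstacle is the near-independence across scales in the first bound, namely the Markov/locality decoupling for space-filling $\SLE_8$ together with the field. I would try to cite the standard annulus-iteration lemma from~\cite{GMS21tutte} or~\cite{Tutte} for this step rather than reprove it.
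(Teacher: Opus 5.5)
The paper does not prove this lemma; it imports both bounds from \cite[Lemmas A.5, A.6]{Tutte}. Your argument therefore has to stand on its own. As written it has two gaps, and one of them is an outright error.

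\textbf{The bound \eqref{eq:SLE-contained-in-ball}.} Your scaling reduction is fine. Off the event $\{\fR_b\ge 2^{-\delta k/(2\gamma Q)}\}$, which is exponentially small by \eqref{eq:Rb-bound} with $(b_1,b_2)=(b,1)$, you only need $\bbP_\rmE[\eta([-1,1])\not\subseteq B(0;R)]\lesssim R^{-c}$.

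The inequality you propose, $\bbP_\rmE[\eta([-1,1])\not\subseteq B(0;R)]\le\bbP_\rmE[\mu_h(B(0;R))<1]$, goes the wrong way. What is true is that $\mu_h(B(0;R))<2$ forces $\eta([-1,1])\not\subseteq B(0;R)$. If $\eta$ exits $B(0;R)$ at some quantum time $\sigma<1$, all you learn is that the single segment $\eta([0,\sigma])$ has mass less than $1$. That says nothing about $\mu_h(B(0;R))$ unless you know a segment of Euclidean diameter $R$ cannot be thin. Your justification, that $\eta$ fills $B(0;R)$ only after exiting it, is false: $\eta|_{[0,\infty)}$ never visits the part of the ball lying in $\eta((-\infty,0])$.

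The missing ingredient is a non-degeneracy estimate for space-filling SLE, combined with GMC lower bounds:
\begin{itemize}
\item with probability $1-O(\epsilon^c)$, every segment of $\eta$ inside a fixed ball with diameter at least $\epsilon$ contains a Euclidean ball of radius $\epsilon^{1+\zeta}$;
\item with high probability, $\mu_h$ of every Euclidean ball of that radius is bounded below, uniformly over centers.
\end{itemize}
Together these show that every segment of quantum mass at most $\epsilon$ meeting $B(0;3/4)$ has diameter at most $\epsilon^q$, off an event of polynomially small probability. Applied to $\eta([0,\epsilon])$ and $\eta([-\epsilon,0])$ with $\epsilon=2^{-\delta k}$, this gives \eqref{eq:SLE-contained-in-ball} directly, with no scaling step. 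No argument using only LQG masses can replace this geometric input.

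\textbf{The bound \eqref{eq:SLE-contain-ball}.} Several steps are fine:
\begin{itemize}
\item the choice of $r_0$;
\item the reduction to bounding $\bbP_\rmE[\bigcap_{j\le m}E_j^c]$;
\item the control of $\fR_{b_j}$ via \eqref{eq:Rb-bound}.
\end{itemize}
However, the decorrelation across scales that you defer is the entire content. It does not follow from the Markov property of the field on annuli, because $E_j$ is not determined by $(h,h^{\rm IG})$ near scale $\fR_{b_j}$. Two things make it non-local. First, whether $z\in B(0;\fR_{b_j}r_0)$ is visited within quantum time $b_j$ depends on the mass of excursions of $\eta$ that can leave any annulus. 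Second, the order in which $0$ and $z$ are visited is decided by where the flow lines of $h^{\rm IG}$ from $0$ and from $z$ merge, which can be arbitrarily far away.

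To run your iteration you would need to do the following.
\begin{itemize}
\item Define local sufficient events. For example, require that the flow lines from $0$ and from points of $B(0;\rho_j)$ merge before leaving $B(0;K\rho_j)$, so that the segments of $\eta$ between $0$ and such points stay in $B(0;K\rho_j)$. Require also that $\mu_h(B(0;K\rho_j))\le b_j$.
\item Prove these events have probability bounded below uniformly in $j$.
\item Apply an independence-across-annuli lemma to the pair $(h,h^{\rm IG})$.
\end{itemize}
This is a genuine argument, not a citation to an existing iteration lemma in \cite{GMS21tutte} or \cite{Tutte}.
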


% Recall $\fR_b$ from~\eqref{eq:Rb}, and $h^b,\eta^b$ from ~\eqref{eq:hb-etab}. By~\eqref{eq:Rb-bound}, for any $\delta>0$, for some constant $c$ depending only on $\delta$, we have
% \begin{equation}\label{eq:Rb-gammaQ}
%   1-  \bbP_\rmE[2^{(\frac{1}{\gamma Q}-\delta)k}\leq \fR_{2^k}\leq  2^{(\frac{1}{\gamma Q}+\delta)k}]\lesssim 2^{-ck}. 
% \end{equation}
% %\nina{*do you mean the complement of the event on the left side?*}
% Since $(h^b,\eta^b)$ has the same distribution as $(h,\eta)$, combined with Lemma~\ref{lem:SLE-contain} we have

\begin{lemma}\label{lem:SLE-contain-1}
    Let $\delta>0$. There exists $c>0$ depending only on $\delta$ and $\gamma$ such that  
        \begin{align}
            & \bbP_\rmE [B(0;{2^{(\frac{1}{\gamma Q}-\delta) k}}) \nsubseteq \eta([-2^k,2^k])] \lesssim 2^{-ck}; \label{eq:SLE-contain-ball-1}  \\
            & \bbP_\rmE [\eta([-2^{ k},2^{k}])\nsubseteq B(0;{2^{(\frac{1}{\gamma Q}+\delta)k}}) ]\lesssim 2^{-ck}; \label{eq:SLE-contained-in-ball-1} \\
            &  \bbP_\rmE [B(0;{2^{-(\frac{1}{\gamma Q}+\delta) k}}) \nsubseteq \eta([-2^{-k},2^{-k}])] \lesssim 2^{-ck}. \label{eq:SLE-contained-in-ball-2} 
        \end{align}
\end{lemma}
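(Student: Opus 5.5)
The plan is to deduce all three bounds from Lemma~\ref{lem:SLE-contain} and the scaling property in Lemma~\ref{lem:Rb}, applied with $\alpha=0$. There $\frac{1}{\gamma(Q-\alpha)}=\frac{1}{\gamma Q}$. The key identity is that with $b=2^k$, the pair $(h^b,\eta^b)$ with $\eta^b(\cdot)=\fR_b^{-1}\eta(b\,\cdot)$ has the same law as $(h,\eta)$ under $\bbP_\rmE$. Hence $\eta([-2^k,2^k])=\fR_{2^k}\,\eta^{2^k}([-1,1])$. So containment statements for $\eta$ on the time interval $[-2^k,2^k]$ become containment statements for an equal-in-law curve on $[-1,1]$, rescaled by the random factor $\fR_{2^k}$.

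First, control $\fR_{2^k}$. Apply~\eqref{eq:Rb-bound} with $b_1=1$ (recall $\fR_1=1$), $b_2=2^k$ and $C=2^{\delta k/2}$. This gives
\[
\bbP\big[2^{(\frac{1}{\gamma Q}-\frac\delta2)k}\le \fR_{2^k}\le 2^{(\frac{1}{\gamma Q}+\frac\delta2)k}\big]\ge 1-3\exp\Big(-\frac{a(\delta k/2)^2(\log 2)^2}{(1+\delta/2)k\log 2}\Big)=1-O(2^{-c'k}).
\]
The same estimate holds for $b_2=2^{-k}$, after exchanging the roles of $b_1$ and $b_2$.

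Next, prove~\eqref{eq:SLE-contain-ball-1}. On the good event for $\fR_{2^k}$, the ball $B(0;2^{(\frac1{\gamma Q}-\delta)k})$ is contained in $\fR_{2^k}B(0;2^{-\delta k/2})$. It therefore suffices that $B(0;2^{-\delta k/2})\subseteq\eta^{2^k}([-1,1])$. By equality in law and~\eqref{eq:SLE-contain-ball} (with $\delta/2$ in place of $\delta$), this fails with probability $\lesssim 2^{-ck}$.

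For~\eqref{eq:SLE-contained-in-ball-1}, I would use~\eqref{eq:SLE-contained-in-ball} in its scaled form. Taking $\delta'$ small, $\eta([-2^{-\delta' k'},2^{-\delta'k'}])\subseteq B(0;1/2)$ with high probability. Rescaling this by $b=2^{k+\delta'k'}$ and choosing $\delta'k'$ equal to a small multiple of $k$, the image on $[-2^k,2^k]$ is contained in $\fR_b B(0;1/2)$. Moreover $\fR_b\le 2^{(\frac1{\gamma Q}+\frac\delta2)(1+o(1))k}$ with high probability, which gives the claim after adjusting constants. Alternatively, and more simply, apply~\eqref{eq:SLE-contained-in-ball} directly with $k$ replaced by a bounded multiple: $\eta([-1,1])\subseteq\eta([-2^{\delta k},2^{\delta k}])$ is not what we want, but scaling with $b=2^{k(1+\delta'')}$ does it.

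For~\eqref{eq:SLE-contained-in-ball-2}, use $b=2^{-k}$. We have $\eta([-2^{-k},2^{-k}])=\fR_{2^{-k}}\eta^{2^{-k}}([-1,1])$, and with high probability $\fR_{2^{-k}}\ge 2^{-(\frac1{\gamma Q}+\frac\delta2)k}$. It then suffices that $B(0;2^{-\delta k/2})\subseteq \eta^{2^{-k}}([-1,1])$, which again follows from~\eqref{eq:SLE-contain-ball}.

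The only delicate point is the bookkeeping of the $\delta$-losses in~\eqref{eq:SLE-contained-in-ball-1}. Lemma~\ref{lem:SLE-contain} gives containment only for a time interval shrinking by $2^{-\delta k}$, so a slightly larger scaling parameter has to be absorbed into the exponent. Since $\delta>0$ is arbitrary, this is routine.
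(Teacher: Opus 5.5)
Your proposal is correct and follows the paper's route: the paper also derives all three bounds from Lemma~\ref{lem:SLE-contain} (with a fraction of $\delta$) combined with the scaling of Lemma~\ref{lem:Rb} for $\alpha=0$ and $C=2^{\delta k/3}$. For the second bound, your choice of the slightly larger scaling parameter $b=2^{(1+\delta'')k}$ with $\delta''$ small is the detail the paper's one-line proof leaves implicit, and it is needed because \eqref{eq:SLE-contained-in-ball} only controls shrinking time intervals.
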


\begin{proof}
    The claims follows from Lemma~\ref{lem:SLE-contain} applied for $\delta/3$  along with the scaling as described in Lemma~\ref{lem:Rb} for $(b_1,b_2) = (1,2^{\pm k})$ and $C = 2^{\delta k/3}$.
\end{proof}

\begin{lemma}\label{lem:SLE-contain-2}
  Let $\gamma=\sqrt 2$.  There exists $c,c_1>0$ such that
    \begin{equation}
        \bbP_\rmQ [\bbD\nsubseteq \eta([-2^{c_1 k},2^{c_1k}]) ]\lesssim 2^{-ck}.
    \end{equation}
\end{lemma}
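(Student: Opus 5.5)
We prove Lemma~\ref{lem:SLE-contain-2} by reducing it to the $0$-quantum cone estimate~\eqref{eq:SLE-contain-ball-1}. The bridge between the two laws $\bbP_\rmQ$ and $\bbP_\rmE$ is the scaling of Lemma~\ref{lem:Rb} together with a rescaling of space.

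\textbf{Step 1 (rescaled event).} Write $(h,\eta)$ for the circle average embedding of the $\gamma$-quantum cone, so $\alpha=\gamma$ and $Q-\alpha = \frac{2}{\gamma}-\frac{\gamma}{2}$. For $\gamma=\sqrt2$ this gives $Q-\alpha=\frac{\sqrt2}{2}$ and $\gamma(Q-\alpha)=1$. Take $b=2^{-c_1k}$ in Lemma~\ref{lem:Rb}. Since $(h^b,\eta^b)\overset{d}{=}(h,\eta)$ and $\eta^b(t)=\fR_b^{-1}\eta(bt)$, we have
\[
\bbP_\rmQ\big[\bbD\nsubseteq\eta([-2^{c_1k},2^{c_1k}])\big]=\bbP_\rmQ\big[B(0;\fR_b)\nsubseteq \eta([-1,1])\big].
\]
By~\eqref{eq:Rb-bound} with $b_1=b$, $b_2=1$ and $C=2^{c_1k/2}$, off an event of probability $\lesssim 2^{-ck}$ we have $\fR_b\le 2^{-c_1k/2}$. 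It therefore suffices to show
\[
\bbP_\rmQ\big[B(0;2^{-c_1k/2})\nsubseteq\eta([-1,1])\big]\lesssim 2^{-ck}.
\]

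\textbf{Step 2 (comparison near the origin).} Near $0$ the $\gamma$-quantum cone differs from the $0$-cone only by the radial drift. The field $h_{\bbC,2}$ is the same for both, and $\eta$ is independent of the field. On the event that the radial part of $h$ on $B(0;2^{-c_1k/4})$ is within $O(k^{1/2+\epsilon})$ of its mean, the coupling differs by an explicit function on that ball, equal to $\gamma\log|z|^{-1}$ plus a bounded Brownian-type error. Consequently the $\mu_h$-mass of small balls differs from the $0$-cone mass by factors that are at most polynomial in $2^{k}$.

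Under $\bbP_\rmE$, the bound~\eqref{eq:SLE-contain-ball} (with any $\delta$) already gives $B(0;2^{-\delta k})\subseteq\eta([-1,1])$ with probability $1-O(2^{-ck})$. To pass to $\bbP_\rmQ$, I would use two facts about the $\gamma$-cone near the origin. First, the extra $\gamma\log|z|^{-1}$ singularity only increases mass near $0$. Second, on the ball $B(0;2^{-\delta k})$ the ratio of areas is between $2^{-Ck}$ and $2^{Ck}$ with high probability.

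Whether $B(0;r)\subseteq\eta([-1,1])$ holds depends on the $\mu_h$-parametrization. The event requires that the times $\eta$ needs to fill $B(0;r)$ lie in $[-1,1]$. The $\eta$-mass of any ball of Euclidean radius $r$ around $0$ under the $\gamma$-cone is at most the mass under a $0$-cone-type field plus the drift. So it suffices to know that $\mu_h(\eta([\tau_-,\tau_+]))\le 1$, where $\tau_\pm$ are the first and last times $\eta$ hits $B(0;r)$. I would bound this by the LQG mass of the SLE segment, which is contained in $B(0;r^{1/2})$ with high probability by the Euclidean SLE estimate~\eqref{eq:SLE-contained-in-ball}. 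Standard GFF moment bounds give that $\mu_h(B(0;r^{1/2}))\le r^{c}$ with probability $1-O(r^{c'})$ for the $\gamma$-cone, since $\gamma<Q$.

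\textbf{Main obstacle.} The main obstacle is that the curve is parametrized by LQG area, so the containment event is not purely Euclidean. I would handle this by working with the unparametrized curve: use the Euclidean containment of the segment filling $B(0;r)$ in $B(0;r^{1/2})$, then an LQG mass moment bound for the $\gamma$-cone. Choosing $c_1$ small makes all exponents compatible.
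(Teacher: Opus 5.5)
Your argument is correct, but it follows a different route from the paper. The paper's proof is essentially a citation. It imports from \cite[Lemma A.7]{Tutte} the estimate that, under $\bbP_\rmQ$, $B(0;2^{-r/2})\subseteq 2^{r}(\fR_{2^{\gamma Qr}})^{-1}\eta([-2^{c_1r},2^{c_1r}])$ off an event of probability $\lesssim 2^{-cr}$ (the ``$2^r$-embedding'' there). It then uses Lemma~\ref{lem:Rb} to get $\fR_{2^{\gamma Qr}}\ge 2^{3r/2}$ with high probability, which enlarges the ball to $\bbD$.

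You instead use Lemma~\ref{lem:Rb} to rescale time, so the window becomes $[-1,1]$ and the target becomes $B(0;\fR_b)$ with $\fR_b\le 2^{-c_1k/2}$. Your computation $\gamma(Q-\gamma)=1$ and the choice $C=2^{c_1k/2}$ are fine. You then prove the small-ball statement directly through $\tau_+-\tau_-=\mu_h(\eta([\tau_-,\tau_+]))\le\mu_h(B(0;r^{1/2}))$. This combines a parametrization-free containment estimate for the curve with a mass bound at the $\gamma$-log singularity.

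Your route gives a self-contained proof from estimates already quoted in the paper plus an elementary bound on the $\gamma$-cone. It also shows why the lemma holds for every $c_1>0$: because $\gamma<Q$, the mass of $B(0;\rho)$ is typically $\rho^{\gamma(Q-\gamma)+o(1)}=\rho^{1+o(1)}$, with polynomial tails. The paper's route is shorter but outsources the key estimate.

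Three points need tightening.
\begin{enumerate}
\item The comparison with the $0$-cone at the start of Step 2 is unused and points the wrong way. The extra $\gamma\log|z|^{-1}$ term increases mass, which slows the filling of $B(0;r)$. An area ratio of order $2^{\pm Ck}$ would also be fatal at time scale $[-1,1]$. Drop this part.
\item \eqref{eq:SLE-contained-in-ball} is not by itself a Euclidean statement. What you need holds because the event depends only on the unparametrized loop, whose law is the same under $\bbP_\rmE$ and $\bbP_\rmQ$ and is scale invariant. Under $\bbP_\rmE$, Lemma~\ref{lem:SLE-contain-1} gives $B(0;r)\subseteq\eta([-2^k,2^k])\subseteq B(0;R)$ with $R/r=2^{2\delta k}$, off an event of probability $\lesssim 2^{-ck}$. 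Interior points of $\eta([-2^k,2^k])$ are not visited at other times, so this forces $[\tau_-,\tau_+]\subseteq[-2^k,2^k]$.
\item At $\gamma=\sqrt2$ one has $\bbE_\rmQ[\mu_h(B(0;\rho))]=\int_{B(0;\rho)}|z|^{-\gamma^2}\,dz=\infty$. So the mass bound cannot come from a first-moment Markov bound. Use fractional moments of order $p\in(0,1)$, or the radial Brownian motion with drift $-(Q-\gamma)$.
\end{enumerate}
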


\begin{proof}
   {Following ~\cite[Lemma A.7]{Tutte} and the definition of $2^r$-embedding there, for $\fR_b$ as defined in Lemma~\ref{lem:Rb} and $\wt\eta(\cdot) = 2^r(\fR_{2^{\gamma Qr}})^{-1}\eta(\cdot)$, we have $\bbP_\rmQ[B(0;2^{-r/2})\nsubseteq \wt\eta([-2^{c_1r},2^{c_1r}])]\lesssim 2^{-cr}$. By Lemma~\ref{lem:Rb}, $\bbP_\rmQ[\fR_{2^{\gamma Qr}}\cdot 2^{-3r/2}<1]\lesssim 2^{-cr}$. This finishes the proof.}
\end{proof}

% Combining Lemma~\ref{lem:diam-cell-max} with~\eqref{eq:SLE-contained-in-ball} and~\eqref{eq:Rb-gammaQ} (for ${(1+\delta)k}$ instead of $k$), as we scale by $\fR_{2^{(1+\delta)k}}^{-1}$, we have
% \begin{lemma}\label{lem:SLE-diam-scale}
%     Let $\delta,\beta\in(0,1)$. There exists $c,c'>0$ depending only on $\delta$ and $\gamma$ such that  
%     \begin{equation}\label{eq:lem-SLE-diam-scale-0}
%         \bbP_\rmE \big[\diam(\eta(J))\leq 2^{\frac{(1+2\delta)k}{\gamma Q}-q(1-\beta+\delta)k} \ \text{for any interval } J\subseteq[-2^k,2^k] \text{ with } |J|\leq 2^{\beta k}\big]\geq 1-c'2^{-ck}.
%     \end{equation}
%     In particular,
%     \begin{equation}\label{eq:lem-SLE-diam-scale}
%         \bbP_\rmE \big[\diam(\eta(J))\leq 2^{\frac{(1+2\delta)k}{\gamma Q}-q k} \ \text{for any interval } J\subseteq[-2^k,2^k] \text{ with } |J|\leq 2^{\delta k}\big]\geq 1-c'2^{-ck}.
%     \end{equation}
% \end{lemma}

\subsection{The Mullin bijection}\label{subsec:Mullin}

The Mullin bijection~\cite{Mullin} is a bijection between a spanning-tree decorated map and a nearest- neighbor walk in $\bbZ^2$. In this subsection, we introduce the triangulations that naturally arise from the Mullin bijection.  We start with the infinite-volume version of the bijection and then we give the sphere and disk version of the bijection. 

\subsubsection{Mullin bijection for bi-infinite walks}\label{subsubsec:Mullin-infinite} 

Let $(M,\fre_0,\mathfrak{T})$ be the uniform infinite spanning-tree decorated planar map, which is the Benjamini-Schramm~\cite{BenjaminiSchramm01} limit of uniform triples consisting of a planar map with an oriented root edge and a distinguished spanning tree. This infinite-volume limit is proven to exist in~\cite{She16, Che17}. Let $M_*$ be the dual map of $M$ and $\mathfrak{T}_*$ be the dual spanning tree of $\mathfrak{T}$, i.e., the set of edges of $M_*$ which do not cross edges of $\mathfrak{T}$. Let $T$ be the triangulation with vertex set $\cV M\cup \cV M_*$, and edge set given by the union of  $\mathfrak{T}\cup\frT_*$ and edges of the form $\{v,v_*\}$, where $v\in\cV M$ is incident to the face that is dual to $v_*\in\cV M_*$. We declare that the root edge of $T$ is the edge $\{v,v_*\}$ oriented from $v$ to $v_*$ whose primal endpoint $v$ coincides with the initial end-point of $\fre_0$ and which is the first edge in clockwise order after $\fre_0$ with this property. There is a unique path  $\mathfrak{tr}:\bbZ\to\cF T$ in the dual of $T$ which hits each face of $T$ exactly once, does not cross $\frT$ or $\frT_*$ in the sense that $\mathfrak{tr}(t)$ and $\mathfrak{tr}(i-1)$ share an edge not belonging to $\frT\cup\frT_*$ for each $t\in\bbZ$, and it holds that $\mathfrak{tr}(0)$ (resp.\ $\mathfrak{tr}(1)$) is the face to the right (resp.\ left) of $\fre_0$.  We refer to e.g.\ ~\cite[Figure 6]{GHS-mot-distance} for an illustration. 
 
Consider a pair of nearest neighbor walk $\cZ = (\cL,\cR):\bbZ\to\bbZ^2$ such that
\eqb\label{eq:walk-liminf}\liminf_{n\to-\infty}\cL_n=\liminf_{n\to-\infty}\cR_n = \liminf_{n\to\infty}\cL_n=\liminf_{n\to\infty}\cR_n=-\infty.\eqe
As explained in~\cite[Section 4.1]{Che17} and~\cite[Section 3.1.2]{GHS-mot-distance}, one can recover  an instance of  the pair $(M,\fre_0,\mathfrak{T})$ using the \emph{sewing procedure}, which is an inductive relationship between certain submaps of the triangulation $T$. For $t\in\bbZ$, let $T _{t}$ be the submap of $T$ consisting of the vertices and edges on the boundaries of the triangles in $\mathfrak{tr}((-\infty,t])|_\bbZ$, and $\fre_t$ be a marked edge on the boundary of $T _t$. We can recover $(T _{t+1},\fre_{t+1})$ from $(T _{t},\fre_{t})$ by attaching a certain triangle (corresponding to $\mathfrak{tr}(t+1)$) with one edge equal to $\fre_t$ and one edge equal to $\fre_{t+1}$. If $\cZ_{t+1}-\cZ_t = (1,0)$ (resp.\ $(0,1)$), this triangle has two edges in the external face of $T _t$ and $\fre_{t+1}$ is the right (resp.\ left) one of these two edges if we face outward toward the external face. If $\cZ_{t+1}-\cZ_t = (-1,0)$ (resp.\ $(0, -1)$), then one of the edges of the new triangle coincides with the edge of $T_t$ immediately to the left (resp.\ right) of $\fre_t$, and the remaining edge lies in the external face of $T _t$ and equals $\fre_{t+1}$. The edges of the tree $\frT$ (resp.\ $\frT_*$) are precisely the edges of $T$ which are not crossed by $\mathfrak{tr}$ and which lie to the left (resp.\ right) of the path $\mathfrak{tr}$.  This generates a  locally finite (type II) infinite plane triangulation $T$ with a path $\mathfrak{tr}:\bbZ\to\cF T$ on its faces, {where we set the root edge of $T$ to be the edge $\fre_0$, oriented from the endpoint on $\frT$ to the endpoint on $\frT_*$.}  {See Figure~\ref{fig:sew0} for an illustration.}  {When $T$ is embedded on $\bbC$, we will also view $\frtr$ as a curve on $\bbC$ passing through the embedded faces.} We define the function $\Phi_\rmT$ on the set of nearest neighborhood walks on $\bbZ^2$ satisfying~\eqref{eq:walk-liminf} with $\Phi_\rmT(\cZ) = (T,\mathfrak{tr})$. The edges of the tree $\frT$ (resp.\ $\frT_*$)  are precisely the edges of $\frT$ which are not crossed by $\mathfrak{tr}$ and which lie to the left (resp.\ right) of the path $\mathfrak{tr}$. For each edge $e_*\in\frT_*$ its dual is the edge $e = \{v_1,v_2\}\in \cE M$ such that $v_1,v_2\in\cV M$ lie on the boundary of the two faces $f_1,f_2\in\cF T$ that are incident to $e_*$. Using these observations we can recover $(M,e_0,\frT)$ from T and $\mathfrak{tr}$ by letting $\cV M$ be the set of vertices to the left of $\mathfrak{tr}$ and by letting $\cE M$ be the union of $\frT$ and the dual edges to $\frT_*$. We let $\Phi_{\mathrm{M}}$ be the map sending $\cZ$ to $(M,e_0,\frT)$.  {For each edge $\fre_t$, we call its endpoint on $\frT$ (resp.\ $\frT_*$) the left (resp.\ right) endpoint of $\fre_t$.} 

%The edges of the tree $\frT$ (resp.\ $\frT_*$)  are precisely the edges of $\frT$ which are not crossed by $\mathfrak{tr}$ and which lie to the left (resp.\ right) of the path $\mathfrak{tr}$. For each edge $e_*\in\frT_*$ its dual is the edge $e = \{v_1,v_2\}\in \cE M$ such that $v_1,v_2\in\cV M$ lie on the boundary of the two faces $f_1,f_2\in\cF T$ that are incident to $e_*$.

\begin{figure}
    \centering
    \includegraphics[scale=0.8]{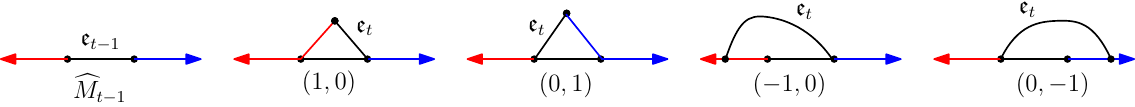}
    \caption{An illustration of the sewing procedure, where we generate the map $T _t$ and the marked edge $\fre_t$ from $T _{t-1}$ and $\fre_{t-1}$ according to the value of $\cZ_{t}-\cZ_{t-1}$. The red and blue edges are on the trees $\frT$ and $\frT_*$, respectively. The face added in the four rightmost figures is $\mathfrak{tr}(t)$.}
    \label{fig:sew0}
\end{figure}

For $T=\Phi_\rmT(\cZ)$, there is a natural correspondence between the vertex and edge sets $\cV T, \cE T$ and the steps of the walk $\cZ = (\cL,\cR)$ which we now describe. Let $\cJ_\rmL^+\subseteq\bbZ$ (resp.\ $\cJ_\rmL^-\subseteq\bbZ$) be the set of times at which $\cL$ goes up (resp.\ down), i.e.,
\eqb\label{eq:J-L-+}\cJ_\rmL^+ = \{t\in\bbZ:\cL_t = \cL_{t-1}+1\}, \ \ \ \cJ_\rmL^- = \{i\in\bbZ:\cL_t = \cL_{t-1}-1\}.\eqe
  Define $\cJ^+_\rmR$ and $\cJ^-_\rmR$ in the similar manner, and set $\cJ^- = \cJ^-_\rmL\cup\cJ^-_\rmR$, $\cJ^+ = \cJ^+_\rmL\cup\cJ^+_\rmR$.  {For $t\in\cJ:=\bbZ$, let $\frv_t$ be the vertex on the triangle $\mathfrak{tr}(t)$ that is not on the edge $\fre_t$.} Note that  {for $t\in\cJ^-$}, $t$ is the first time when $\frv_t$ is a non-boundary vertex of the map $T _t$, and $t\mapsto\frv_t$ gives  a bijection between $\cJ^-$ and $\cV T$.

 To work with the circle packing for $\Phi_\rmT(\cZ)$, we make the following observation on $\Phi_{\rm simple}\circ \Phi_\rmT(\cZ)$. Consider a pair of vertices $v_1,v_2$ of $T$ connected by edges $\fre_{t_1},...,\fre_{t_n}$ of $T$, and suppose $t_1<...<t_n$. Then from the sewing procedure, $\cL_t \geq \cL_{t_1}$, $\cR_t \geq \cR_{t_1}$ for $t_1\leq t\leq t_n$, and $(\cL_{t_n},\cR_{t_n}) = (\cL_{t_1},\cR_{t_1})$. In other words, $(\cZ_t-\cZ_{t_1})_{t_1\leq t\leq t_n}$ is an excursion on $\bbZ_{\geq0}^2$ starting and ending at $(0,0)$. In the definition of $\mr T=\Phi_{\rm simple}(T)$, the edges $(\fre_{t})_{t_1< t\leq t_n}$ and their endpoints other than $v_1$ and $v_2$ are removed from $T$, and the remaining triangulation is the same as the one generated from $\Phi_\rmT$ using the walk $(\cZ_t)_{\bbZ\backslash [t_1,t_n)}$.   From this observation, with slight abuse of notation, we define the operator $\Phi_{\rm simple}$ on  walks on $\bbZ^2$ as follows. Suppose $(\cZ_t)_{t\in \cJ}$ is  a nearest neighbor walk on $\bbZ^2$ where $\cJ=I\cap \bbZ$ and $I\subseteq\bbR$ is some interval. 
 We define $\mr\cJ$ be $\cJ$ minus all possible intervals  {$(t^-,t^+]$} where \eqb\label{eq:double-edge-Mullin}\cL_{t^-} = \cL_{t^+} = \inf_{s\in\cJ\cap [t^-,t^+]}\cL_s, \ \ \ \ \cR_{t^-} = \cR_{t^+} = \inf_{s\in\cJ\cap [t^-,t^+]}\cR_s.   \eqe
Under the event where $\mr\cJ\neq\emptyset$, let $\Phi_{\rm simple}(\cZ)$ be the  walk $\mr\cZ$ on $\bbZ^2$ such that $\mr\cZ_t = \cZ_{\mr t}$ where  {$\mr t = \sup\{\mr s\leq t: \mr s\in\mr\cJ\}$}, otherwise let $\mr\cZ$ be empty. In other words, $\Phi_{\rm simple}(\cZ)$ is the walk that erases all the $\bbZ_+^2$ excursions from $\cZ$.  We   extend the definition of $\Phi_{\rmT}$ to the setting where $\cZ_t-\cZ_{t-1}$ is allowed to be $(0,0)$, where in the sewing procedure we let the map $T _t = T _{t-1}$ and $\fre_t=\fre_{t-1}$. One can also define the space-filling path $\mr{\mathfrak{tr}}$ on $\Phi_{\rm simple}(T)$ by setting $\mr{\mathfrak{tr}}(t) = \mathfrak{tr}(\mr t)$ since the faces in $\cF\mr T$ can be identified with faces in $\cF T$, and we extend the definition of $\Phi_{\rm simple}$ to the tuple $(T,\mathfrak{tr})$.   Then from the sewing procedure and the definition we have the following.
\begin{lemma}\label{lem:basic-hat}
Let $(\cZ_t)_{t\in \cJ}$ be  a nearest neighbor walk on $\bbZ^2$ as above. 
    \begin{enumerate}
        \item Suppose $\cJ=\bbZ$ and $\cZ$ satisfies~\eqref{eq:walk-liminf}. Then   $\Phi_{\rm simple}\circ \Phi_\rmT(\cZ)=\Phi_\rmT\circ\Phi_{\rm simple} (\cZ)$. Here we  set $\Phi_\rmT(\mr \cZ)=\emptyset$ if $\mr\cZ$ is empty.
        \item  For any $t\in\mr\cJ$ and $a>0$, if the times $\inf\{s>t:\cL_s-\cL_t \leq -a\}$ and $\inf\{s>t:\cR_s-\cR_t \leq -a\}$ are finite, then they are in $\mr\cJ$.
        \item Suppose $t_1<t_2$ are in $\mr\cJ$. Then the restriction of the walk $\Phi_{\rm simple}(\cZ)$ to the interval $\mr\cJ\cap[t_1,t_2]$ is equal to $\Phi_{\rm simple}(\cZ|_{[t_1,t_2]})$.
    \end{enumerate}
\end{lemma}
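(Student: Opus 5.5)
The three claims are combinatorial, so I will argue directly from the definition of $\mr\cJ$ as $\cJ$ minus the union of all intervals $(t^-,t^+]$ satisfying~\eqref{eq:double-edge-Mullin}, which I will call \emph{erased excursions}. The first structural fact I need is that erased excursions are either nested or disjoint. Suppose $(t_1^-,t_1^+]$ and $(t_2^-,t_2^+]$ overlap with $t_1^-<t_2^-\le t_1^+<t_2^+$. Since $t_2^-\in[t_1^-,t_1^+]$, we get $\cL_{t_2^-}\ge \cL_{t_1^-}$. Since $t_1^+\in[t_2^-,t_2^+]$, we get $\cL_{t_1^+}\ge \cL_{t_2^-}$, and $\cL_{t_1^+}=\cL_{t_1^-}$. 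Hence $\cL_{t_1^-}=\cL_{t_2^-}$, and the same holds for $\cR$. Then the union $(t_1^-,t_2^+]$ also satisfies~\eqref{eq:double-edge-Mullin}. Consequently $\mr\cJ$ is $\cJ$ minus a disjoint union of maximal erased excursions, which exist because the walk has finitely many steps on bounded intervals together with~\eqref{eq:walk-liminf}.

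\textbf{Item 2.} Let $s=\inf\{u>t:\cL_u-\cL_t\le -a\}$ and suppose, for contradiction, that $s\in(t^-,t^+]$ for some erased excursion. If $t^-\ge t$, then $t^-<s$ forces $\cL_{t^-}>\cL_t-a$, while $\cL_s=\cL_t-a<\cL_{t^-}$. This contradicts $\cL_{t^-}=\inf_{[t^-,t^+]}\cL$. If instead $t^-<t$, then $t\in(t^-,s)\subseteq(t^-,t^+]$, so $t$ itself is erased, contradicting $t\in\mr\cJ$. The same argument applies to $\cR$.

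\textbf{Item 3.} I will show that for $t_1<t_2$ in $\mr\cJ$, the erased excursions of $\cZ|_{[t_1,t_2]}$ are exactly the erased excursions of $\cZ$ lying in $[t_1,t_2]$. One direction is immediate: condition~\eqref{eq:double-edge-Mullin} only involves values on $[t^-,t^+]$, so an excursion of $\cZ$ inside $[t_1,t_2]$ is also one for the restriction, and conversely an excursion of the restriction is one for $\cZ$. For the other direction, a maximal erased excursion $(t^-,t^+]$ of $\cZ$ cannot straddle $t_1$ or $t_2$, because it would then contain that point, contradicting $t_i\in\mr\cJ$. The only edge case is $t^-=t_1$, which is fine since the interval is half-open. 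So the erased sets agree on $[t_1,t_2]$, and the definitions of $\mr t$ and $\mr\cZ$ then give equality of the walks.

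\textbf{Item 1.} I will use the observation preceding the lemma. Each multi-edge between $v_1,v_2$ traversed at times $t_1<\dots<t_n$ gives an erased excursion $(t_1,t_n]$. Conversely, each erased excursion gives, through the sewing procedure, an edge $\fre_{t^-}$ that coincides with $\fre_{t^+}$ as a pair of endpoints, and hence a double edge whose enclosed region consists of the faces $\frtr((t^-,t^+])$.

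Collapsing one multi-edge removes exactly the enclosed vertices and edges. The remaining map is produced by sewing the walk with that interval deleted, because removing an excursion that returns to the same $\cZ$-value restores the same boundary state $(T_{t^-},\fre_{t^-})$. Doing this for every maximal erased excursion, using the nestedness established at the start, yields $\Phi_\rmT(\mr\cZ)$. The steps of size zero are handled by the extended definition of $\Phi_\rmT$.

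I expect the main obstacle to be the rigorous bijection between multi-edges of $T$ and excursions in the sewing procedure. This requires checking that two edges $\fre_s,\fre_u$ share both endpoints exactly when $\cZ|_{[s,u]}-\cZ_s$ is a quadrant excursion. For this I plan to track the left and right boundary lengths of $T_t$: the left endpoint of $\fre_t$ is unchanged precisely while $\cL$ stays above its running value, and similarly for $\cR$. I also need to check that the nonemptiness conventions match, with both sides of Item 1 empty exactly when $\mr\cJ$ is empty.
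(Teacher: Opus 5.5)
Your proposal is correct and is essentially the argument the paper has in mind: the paper gives no separate proof, relying for Item~1 on the observation just before the lemma that multi-edges of $T$ correspond to quadrant excursions, and your Item~2 argument is exactly the one it later writes out for Lemma~\ref{lem:hat-minus}. Two statements in your first paragraph should be corrected, though neither breaks the proof: overlapping excursions need not be nested or disjoint (what you actually show, and all you use, is that overlapping ones merge into an excursion), and maximal excursions are only guaranteed when $\mr\cJ\neq\emptyset$ (anchor at a point of $\mr\cJ$ and use \eqref{eq:walk-liminf} on the far side), since $\bbZ$ can be an increasing union of excursions whose base values decrease to $-\infty$ in both coordinates --- this is precisely the empty case you flagged, and Item~3 needs no maximality at all because your non-straddling argument applies to every excursion.
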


\subsubsection{Mullin bijection in the finite volume setting}\label{subsubsec:Mullin-finite}

The previous section describes  functions $\Phi_\rmT$ and $\Phi_{\mathrm{M}}$ from bi-infinite nearest-neighbor walks on $\bbZ^2$ satisfying~\eqref{eq:walk-liminf} to triangulations and planar maps with a spanning tree. In this section we will extend the maps so that the input walk $\cZ = (\cZ_t)_{t=0,...,n}$ may be an arbitrary nearest-neighbor walk on $\bbZ^2$ of finite length. %Special cases of $\Phi_\mathrm{T}$ give bijections between certain classes of finite-length walks and natural families of tree-decorated planar maps. 

Consider a nearest-neighbor walk $\cZ = (\cZ_t)_{t=0,...,n}$ on $\bbZ^2$.  Let $\wt\cZ = (\wt\cZ_t)_{t\in\bbZ}$ be a nearest-neighbor walk on $\bbZ^2$ satisfying~\eqref{eq:walk-liminf} such that $\wt\cZ_t = \cZ_t$ for $t=0,...,n$. Suppose $\Phi_\rmT(\wt\cZ) = (\wt T,\wt{\mathfrak{tr}})$,  $(\wt\frT,\wt\frT_*)$ are the pair of trees generated from $\wt\cZ$, and $(\wt\fre_t)_{t\in\bbZ}$ are the labeled edges of $\wt T$.  Let $\Phi_\rmM(\wt\cZ) = (\wt M, \wt\frT,\wt\fre_0)$. We define $\Phi_\rmT(\cZ)$ to be the restriction of $(\wt T,\wt{\mathfrak{tr}})$ to time $1,...,n$, that is, we let $T$ be the submap of $\wt T$ consisted of faces from $\wt{\mathfrak{tr}}(1),...,\wt{\mathfrak{tr}}(n)$,  set $\mathfrak{tr}(t)= \wt{\mathfrak{tr}}(t)$ for $t=1,...,n$ and $\fre_t=\wt\fre_t$ for $t=0,...,n$.   We let $(\frT,\frT_*)$ be the restriction of $(\wt\frT,\wt\frT_*)$ to the edges of $T$. We let the root edge of $T$  be the first edge, oriented from the endpoint on $\frT$ to the endpoint on $\frT_*$.  {We let $M$ be the submap of $\wt M$ where the vertices are $\cV T\cap \cV\wt M$ and the edges are the edges   in $\frT$ and  edges of $\wt M$ that has both endpoints on $\cV M$ and is dual to an edge of  $\frT_*$  in the interior of $T$.} We set $\Phi_\rmM(\cZ) = (M,\frT)$, where  {the root edge of $M$ is chosen in an arbitrary way. Our definition of $\Phi_\rmM(\cZ)$ is slightly different from the version in~\cite{Tutte}, where we do not further draw an edge connecting the endpoints of the first edge and the last edge as in the definition of $\Phi_\rmM^\disk$ below; in this case the map $M$ does not necessarily have disk topology; see Figure~\ref{fig:PHIM} for an illustration. The reason here is, as in Theorem~\ref{thm:conv-0-cone-unif-Hcell}, we will first prove a version of Theorem~\ref{thm:unif-cone} in the setting of a planar map with whole-plane topology encoded by a walk $(\cZ_t)_{t\in\bbR}$ and later restrict to $(\cZ_t)_{t\in I}$ where $I$ is an interval, so it is important that the map encoded by $\Phi_\rmM((\cZ_t)_{t\in I})$ is a submap of the map encoded by  $\Phi_\rmM(\cZ)$.}    Following the sewing procedure description as in Figure~\ref{fig:sew0}, $\Phi_\rmT(\cZ)$, $\Phi_\rmM(\cZ)$ and $(\frT,\frT_*)$ do not rely on the choice of $\wt\cZ$. 

\begin{figure}
    \centering
    \includegraphics[scale=0.6]{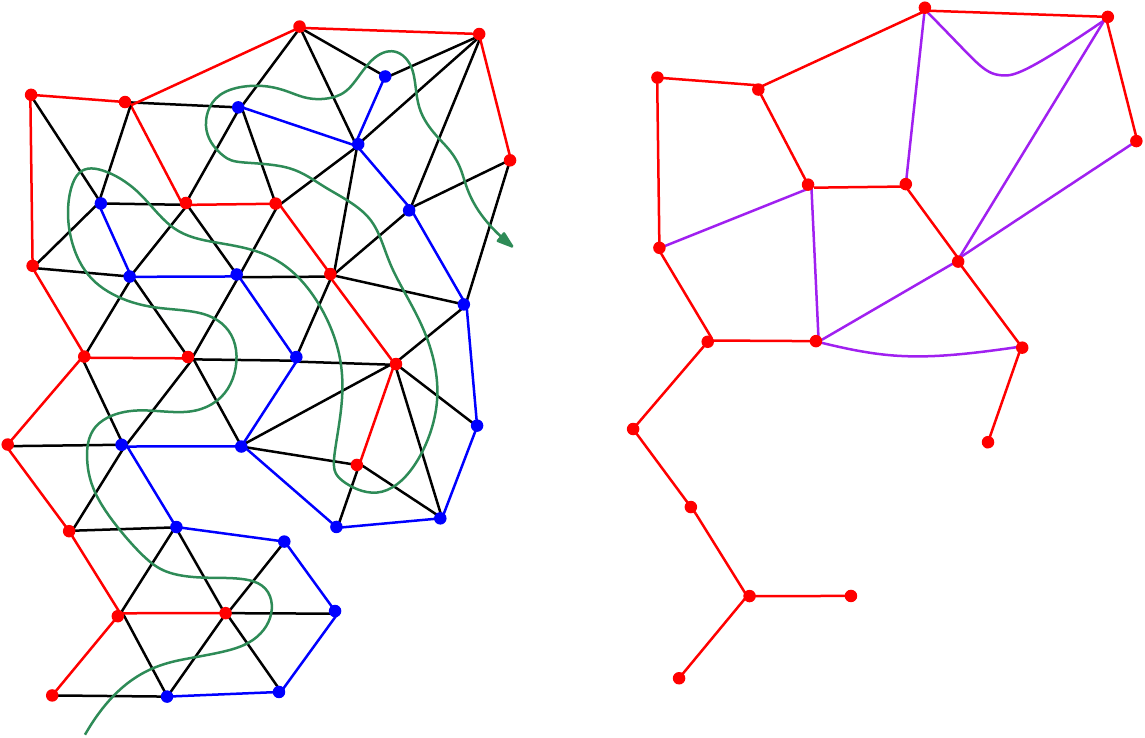}
    \caption{\textbf{Left}: An illustration of the triangulation $T$ and the path $\mathfrak{tr}$ generated from $\Phi_\rmT(\cZ)$, where $\cZ$ is a walk of 49 steps. \textbf{Right}: An illustration of the planar map defined via $\Phi_\rmM(\cZ)$, where the purple edges are those not on the spanning tree $\mathfrak{T}$.}
    \label{fig:PHIM}
\end{figure}

Now we consider two cases where the map $\Phi_\rmT$ further induces  bijections between certain classes of finite-length walks and natural families of tree-decorated planar maps. We start with the walk $\cZ\in\cA_n^\sph$ as defined in~\eqref{eq:Mullin-sph-walk}.
In this particular setting, $\Phi_\rmT(\cZ) = (T,\mathfrak{tr})$, where $T$ is a disk triangulation whose boundary is consisted of two edges $\fre_0,\fre_{2n}$, and we further identify  the edge $\fre_{2n}$ with $\fre_0$ to get a   sphere triangulation $T^{\rm sp}$ with $2n$ faces. {We declare that the root edge of $T^{\sph}$ is $\fre_0$.} Define $\mathfrak{tr}^{\rm sp}$ to be the rooted loop on faces of $T^{\rm sp}$ visiting the faces in the same order as $\mathfrak{tr}$. We define $\Phi_\rmT^{\rm sp}(\cZ) = (T^{\rm sp},\mathfrak{tr}^{\rm sp})$.    We define a planar map $M$ by setting its vertex set equal to the vertices on $\frT$, and the edge set equal to the union of $\frT$ and edges dual to $\frT_*$. We set $e_0$ to be the first edge   of $M$ in clockwise order around the starting vertex of the root edge $\fre_0$ (oriented from the endpoint on $\frT$ to the endpoint on $\frT_*$) of $T$, and we define $\Phi_{\mathrm{M}}^{\rm sp}(\cZ) = (M,e_0,\frT)$. Note that the map $t\mapsto \frv_t$ from the infinite volume setting   defines a bijection between the set of  times when $\cZ$ makes downward jumps and the vertices of $T$ other than the endpoints of  {the root edge}.

 {The following proposition is stated in~\cite[Proposition 2.2]{Tutte}, which essentially follows from the Mullin bijection as presented in e.g.~\cite{She16}.}

\begin{proposition}\label{prop:Mullin-sph}
    For any $n\in\bbN$, the map $\Phi_{\mathrm{M}}^{\rm sp}$ is a bijection between the following sets:
    \begin{itemize}
        \item The set $\cA_n^\sph$ of nearest-neighbor walks defined in terms of~\eqref{eq:Mullin-sph-walk}.
        \item The set of tuples $(M,e_0,\frT)$, where $(M,e_0)$ is a rooted planar map with $n$ edges and $\frT$ is a spanning tree on $M$.
    \end{itemize}
\end{proposition}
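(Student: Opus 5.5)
The plan is to reduce Proposition~\ref{prop:Mullin-sph} to the classical Mullin bijection as presented in~\cite{She16}, and to check that the sewing-procedure description of $\Phi_{\mathrm{M}}^{\rm sp}$ given above agrees with that presentation. Recall Sheffield's ``hamburger--cheeseburger'' (here ``hamburger only'' in two colors) encoding: a rooted map $(M,e_0)$ with $n$ edges and a spanning tree $\frT$ is encoded by a word of length $2n$ in four letters. The word is read off by walking the contour of $\frT$ on the associated triangulation $T^{\sph}$ (equivalently, traversing the edges of the quadrangulation formed by $\cV M\cup\cV M_*$). Each edge of $M$ is crossed twice: edges of $\frT$ contribute a pair (up,down) in the $\cL$ coordinate, and edges not in $\frT$ (dual edges in $\frT_*$) contribute a pair (up,down) in the $\cR$ coordinate. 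The resulting walk is a two-dimensional nearest-neighbor excursion in $\bbZ_{\ge0}^2$ from $(0,0)$ to $(0,0)$ of length $2n$, i.e.\ an element of $\cA_n^\sph$.

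\textbf{Step 1: the inverse map.} Given $(M,e_0,\frT)$, form $T^{\sph}$ and the space-filling path $\frtr^{\sph}$ visiting its $2n$ triangles in contour order, starting from the root. At each step $t$, the triangle $\frtr(t)$ either opens a new edge on the current boundary (an up-step in $\cL$ or $\cR$, according to whether the new edge lies on the $\frT$ or $\frT_*$ side) or closes one (a down-step). This produces a walk $\cZ$. The fact that each tree edge is first opened and later closed gives $\cL_t,\cR_t\ge0$. Since every opened edge is eventually closed, $\cZ_{2n}=(0,0)$.

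\textbf{Step 2: the two maps are mutually inverse.}
\begin{enumerate}
\item Starting from $\cZ\in\cA_n^\sph$, the sewing procedure of Figure~\ref{fig:sew0} builds the disk triangulation $T$ with boundary $\{\fre_0,\fre_{2n}\}$. The nonnegativity constraints guarantee that every down-step glues to an existing boundary edge on the correct side, so the process never needs the auxiliary extension $\wt\cZ$ beyond times $0,\dots,2n$.
\item The condition $\cZ_{2n}=\cZ_0$ forces the boundary to consist of exactly two edges, which are then identified to give $T^{\sph}$.
\item The red edges then form a tree $\frT$ on $\cV M$, and the blue edges form its dual tree. Connectedness and acyclicity follow inductively, because each attachment adds a leaf to one of the two trees. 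Hence $\frT$ is a spanning tree of $M$, and $M$ has exactly $n$ edges: the $n$ up-steps are in bijection with edges of $M$, each either in $\frT$ or dual to an edge of $\frT_*$.
\item Conversely, applying Step 1 to $\Phi_{\mathrm{M}}^{\rm sp}(\cZ)$ recovers $\cZ$, since the contour exploration of $T^{\sph}$ is exactly $\frtr$ and the step types are determined locally by the sewing rules.
\item Applying sewing to the walk from Step 1 rebuilds $T^{\sph}$ and hence $(M,e_0,\frT)$.
\end{enumerate}

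The main obstacle is bookkeeping rather than any conceptual difficulty. One must verify that the root conventions match: $e_0$ is defined as the first $M$-edge clockwise around the tail of $\fre_0$, and this must correspond to the starting point of the walk. One must also check that identifying $\fre_{2n}$ with $\fre_0$ yields a genuine sphere map rather than a degenerate gluing when $n$ is small. I would handle this by checking the base case $n=1$ directly, then arguing inductively on the first return decomposition of the excursion. In that decomposition, the first $\cL$-excursion or $\cR$-excursion corresponds to splitting along a tree edge or a dual tree edge incident to the root, so the bijection follows by induction on $n$.
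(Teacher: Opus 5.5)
Your proposal is correct and follows essentially the paper's route. The paper gives no separate argument: it quotes \cite[Proposition 2.2]{Tutte} and attributes the statement to Mullin's bijection as presented in \cite{She16}, and your Steps 1--2 unpack exactly that through the sewing procedure.

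The one thing I would drop is the induction in your last paragraph, because it does not work as stated. When the first $\cL$-excursion closes, $\cR$ need not be back at $0$. For example, the walk with steps $(1,0),(0,1),(-1,0),(0,-1)$ encodes a $2$-cycle whose spanning tree is one of its two edges. So there is no decomposition into shorter elements of $\cA^\sph_m$, and deleting the root tree edge does not disconnect $M$.

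No induction is needed for the gluing concern anyway. By Lemma~\ref{lem:sew}, applied in any extension $\wt\cZ$ as in Section~\ref{subsubsec:Mullin-finite}, the left (resp.\ right) endpoint of $\fre_t$ is $\frv_s$, where $s$ is the first time after $t$ at which $\cL$ (resp.\ $\cR$) drops below its value at time $t$. Since $\cZ$ stays in $\bbZ_{\geq0}^2$ on $[0,2n]$ and $\cZ_0=\cZ_{2n}$, these times coincide for $t=0$ and $t=2n$. Hence $\fre_0$ and $\fre_{2n}$ are parallel edges that make up the whole boundary of the disk $T$, and identifying them gives a sphere triangulation for every $n\geq 1$.
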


\begin{lemma}\label{lem:walk-excursion}
 Let $\ell,n\in\bbN$.   Let $\mathcal{A}_\ell^\disk$ be the set of nearest-neighbor walks defined as in~\eqref{eq:Mullin-disk-walk}, and let $\mathcal{A}_{\ell,2n+\ell}^\disk\subseteq \mathcal{A}_\ell^\disk$ be the set of walks with $2n+\ell-1$ steps. Then $\#\mathcal{A}_{\ell,2n+\ell}^\disk = \frac{\ell}{(n+1)(n+\ell)}\binom{2n+\ell-1}{n}\binom{2n+\ell+1}{n}$.  In particular,  $\#\mathcal{A}_n^\sph=\frac{1}{(n+1)^2}\binom{2n}{n}\binom{2n+2}{n}$ and $\#\mathcal{A}_n^\sph = (1+o_n(1))\pi^{-1}4^{2n+1}n^{-3}$ as $n\to\infty$.
\end{lemma}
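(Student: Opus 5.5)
The plan is a reflection-principle count (Gessel--Zeilberger) followed by algebraic simplification and Stirling's formula.

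\textbf{Why reflection applies.} The step set $\{(\pm1,0),(0,\pm1)\}$ is invariant under the two affine reflections $\sigma_1:(x,y)\mapsto(-2-x,y)$ and $\sigma_2:(x,y)\mapsto(x,-2-y)$. Their fixed lines are $x=-1$ and $y=-1$. A nearest-neighbor walk cannot jump over either line without landing on it. Hence the constraint $\cL_t,\cR_t\ge0$ is equivalent to never touching $\{x=-1\}\cup\{y=-1\}$. The group generated by $\sigma_1,\sigma_2$ has four elements, and the starting point $(\ell-1,0)$ lies strictly inside the chamber. The standard sign-reversing involution (reflect the initial segment up to the first touching time, using the wall touched first) then gives, with $N=2n+\ell-1$,
\[
\#\cA^\disk_{\ell,2n+\ell}=P_N\big((\ell-1,0)\big)-P_N\big((-\ell-1,0)\big)-P_N\big((\ell-1,-2)\big)+P_N\big((-\ell-1,-2)\big).
\]
Here $P_N(p)$ denotes the number of unconstrained nearest-neighbor walks with $N$ steps from $p$ to $(0,0)$.

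\textbf{Counting unconstrained walks.} Use the rotation $u=x+y$, $v=x-y$. Each step changes $u$ and $v$ by independent $\pm1$, so walks in $\bbZ^2$ biject with pairs of one-dimensional simple walks. This gives
\[
P_N(p)=\binom{N}{\frac{N+\Delta u}{2}}\binom{N}{\frac{N+\Delta v}{2}},
\]
where $\Delta u,\Delta v$ are the displacements of $u,v$ from $p$ to the origin. For the four image points, $(\Delta u,\Delta v)$ equals $(1-\ell,1-\ell)$, $(\ell+1,\ell+1)$, $(3-\ell,-1-\ell)$ and $(\ell+3,\ell-1)$. The corresponding binomials all have top index $N$ and bottom indices among $n$, $n\pm1$, $n+\ell$, $n+\ell+1$. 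By symmetry $\binom{N}{k}=\binom{N}{N-k}$, these reduce to products of $\binom{N}{n-1},\binom{N}{n},\binom{N}{n+1}$.

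\textbf{Algebraic simplification.} Next, factor out $\binom{N}{n}^2$ and write every ratio $\binom{N}{n\pm1}/\binom{N}{n}$ as a rational function of $n$ and $\ell$. The alternating sum should then collapse to
\[
\frac{\ell}{(n+1)(n+\ell)}\binom{2n+\ell-1}{n}\binom{2n+\ell+1}{n},
\]
after rewriting $\binom{N+2}{n}$ in terms of $\binom{N}{n}$. I expect this algebra to be the main obstacle: the bookkeeping of the four sign-weighted terms and their index shifts must be done carefully, with a sanity check at small cases such as $n=0$ or $\ell=1,n=1$. The reflection step itself is routine.

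\textbf{Sphere case and asymptotics.} Taking $\ell=1$ makes the walk start at $(0,0)$ with $2n$ steps, so $\cA^\disk_{1,2n+1}=\cA_n^\sph$. The formula becomes $\frac{1}{(n+1)^2}\binom{2n}{n}\binom{2n+2}{n}$. Finally, Stirling's formula gives $\binom{2n}{n}\sim 4^n/\sqrt{\pi n}$ and $\binom{2n+2}{n}\sim 4^{n+1}/\sqrt{\pi n}$. Multiplying these and dividing by $(n+1)^2\sim n^2$ yields $(1+o_n(1))\pi^{-1}4^{2n+1}n^{-3}$.
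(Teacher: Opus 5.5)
Your route is correct, but it differs from the paper's. The paper conditions on the number $2k+\ell-1$ of horizontal steps. The two coordinates then become independent one-dimensional walks, counted by the 1D reflection principle: a ballot number $\frac{\ell}{k+\ell}\binom{2k+\ell-1}{k}$ for the first coordinate and a Catalan number for the second. The sum over $k$ is then evaluated by Vandermonde's identity $\sum_k\binom{n+\ell}{n-k}\binom{n+1}{k}=\binom{2n+\ell+1}{n}$.

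You instead reflect in two dimensions at once. This gives a closed four-term expression with no summation, at the price of a rational-function identity at the end. That algebra cannot be avoided in your version. In the original coordinates the chamber is a product but the step set is not, since each step moves only one coordinate. In the $(u,v)$ coordinates the steps factor but the chamber $|v|\le u$ does not. The paper's conditioning on the number of horizontal steps makes both factor, and the Vandermonde sum is the cost of doing so.

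There is one bookkeeping slip. The image $(-\ell-1,-2)$ gives bottom indices $n+\ell+1$ and $n+\ell-1$, hence the product $\binom{N}{n-2}\binom{N}{n}$. It is not a product of $\binom{N}{n-1},\binom{N}{n},\binom{N}{n+1}$. The correct sum is
\[
\binom{N}{n}^2-\binom{N}{n-1}^2-\binom{N}{n+1}\binom{N}{n-1}+\binom{N}{n}\binom{N}{n-2},
\]
and the algebra does close. Put $m=n+\ell$. The ratios of $\binom{N}{n-1}$, $\binom{N}{n+1}$, $\binom{N}{n-2}$ to $\binom{N}{n}$ are $n/m$, $(m-1)/(n+1)$ and $n(n-1)/(m(m+1))$. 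Over the common denominator $(n+1)m^2(m+1)$, the numerator is
\[
(n+1)(m+1)(m^2-n^2)-nm(m^2-n^2)=(m^2-n^2)(m+n+1)=\ell(2n+\ell)(2n+\ell+1).
\]
Since $\binom{N+2}{n}/\binom{N}{n}=\frac{(2n+\ell)(2n+\ell+1)}{m(m+1)}$, this matches the target formula.

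Finally, your involution needs no tie-break rule. Each step changes only one coordinate and none of the image points lies on a wall, so the first wall contact is never the corner $(-1,-1)$.
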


\begin{proof}
    By the reflection principle, the number of walks $(\cX_k)_{k=0,...,2n+\ell-1}$ with jumps $\pm 1$ on $\bbZ$ such that $\cX_0=\ell-1$, $\cX_{2n+\ell-1}  = \inf_{0\leq k\leq 2n+\ell}\cX_{k}=0$ is $\binom{2n+\ell-1}{n}-\binom{2n+\ell-1}{n-1} = \frac{\ell}{n+\ell}\binom{2n+\ell-1}{n}$. Therefore
    \eqb
    \begin{split}
    \#\mathcal{A}_{\ell,2n+\ell}^\disk &= \sum_{k=0}^{n}\binom{2n+\ell-1}{2k+\ell-1}\frac{\ell}{k+\ell}\binom{2k+\ell-1}{k}\frac{1}{n-k+1}\binom{2n-2k}{n-k} =  \sum_{k=0}^n \frac{\ell\cdot(2n+\ell-1)!}{k!(k+\ell)!(n-k)!(n-k+1)!} 
    \\ &= \binom{2n+\ell-1}{n}\frac{\ell}{(n+1)(n+\ell)}\sum_{k=0}^n\binom{n+\ell}{n-k}\binom{n+1}{k} = \frac{\ell}{(n+1)(n+\ell)}\binom{2n+\ell-1}{n}\binom{2n+\ell+1}{n}.
    \end{split}
    \eqe
    The claim for $\#\mathcal{A}_n^\sph$ follows since $\#\mathcal{A}_n^\sph = \#\mathcal{A}_{1,2n+1}^\disk$.
\end{proof}

Next we work with the nearest-neighbor walk $\cZ = (\cZ_t)_{t=0,...,n}$ on $\bbZ^2$ such that 
\eqb\label{eq:walk-excursion-disk} \cZ_0 = (\ell,0), \ \  \cZ_n = (0,0); \ \ \cL_t\geq 0 \ \text{and} \ \ \cR_t\geq 0 \ \ \text{for } t=1,...,n,\eqe
where $\ell>0$ is a positive integer. Suppose $\Phi_\rmT(\cZ) = (T,\mathfrak{tr})$. In this case, $T$ is a disk triangulation whose boundary is consisted of $\fre_0,\fre_n$ and $\ell$ edges on $\frT$, where the endpoints of $\fre_0$ and $\fre_n$ on $\frT_*$ are the same.   We add an edge  $e_0$ connecting the endpoints of $\fre_0$ and $\fre_n$ on $\frT$ to get a disk triangulation $T^{\rm disk}$, oriented from the former to the latter.  {We declare that the root edge of $T^\disk$ is the edge $e_0$.} We define $\mathfrak{tr}^{\rm disk}(t) = \mathfrak{tr}(t)$ for $t=1,...,n$ and set $ \mathfrak{tr}^{\rm disk}(0)$ to be the triangular face consisted of $e_0,\fre_0$ and the last expolaration edge $\fre_n$, and we define $\Phi_\rmT^{\rm disk}(\cZ) = (T^{\rm disk},\mathfrak{tr}^{\rm disk})$. We define a planar map $M$ by setting its vertex set equal to the vertices on $\frT$, and the edges of $M$ are the union of edges in $\frT$, the edges  dual to edges $\frT_*$ and $e_0$. We further add $e_0$ to $\frT$ such that $\frT$ is a tree with wired boundary condition.  Define $\Phi_{\rm M}^{\rm disk}(\cZ) = (M,e_0,\frT).$ Observe that in this case,   the map $t\mapsto \frv_t$ from the infinite volume setting also defines a bijection between the set of  times when $\cZ$ makes downward jumps and the vertices of $T^\disk$ other than the endpoints of  {the last edge $\fre_n$}. % {We further define $\frv_0$ to be the vertex on both $\fre_0$ and $\fre_n$, and define $\frv_{n+1}$ to be the endpoint of $\fre_n$ that is not an endpoint of $\fre_0$, and write $v\mapsto \frt_v$ to be the inverse of this bijection.}

 {The following proposition is from~\cite[Proposition 2.5]{Tutte}.}

\begin{proposition}\label{prop:Mullin-bij-disk}
    For any $n\in\bbN$, the map $\Phi_{\mathrm{M}}^{\rm disk}$ is a bijection between the following sets:
    \begin{itemize}
        \item The set of nearest-neighbor walks $\cZ = (\cZ_t)_{t=0,...,2n-\ell-2}$ on $\bbZ^2$ satisfying~\eqref{eq:walk-excursion-disk}.
        \item The set of tuples $(M,e_0,\frT)$, where $(M,e_0)$ is a planar map with disk topology of boundary length $\ell+1$   with $n$ edges and $\frT$ is a spanning tree on $M$ with wired boundary condition where all the boundary edges  are on $\frT$.
    \end{itemize}
\end{proposition}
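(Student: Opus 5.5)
The plan is to deduce the statement from the sphere version, Proposition~\ref{prop:Mullin-sph}, instead of rebuilding the Mullin bijection. I would do this by showing that the disk objects embed into the sphere objects in a way that matches, under $\Phi_\rmM^\sph$, the insertion of a fixed block of forced steps into the walk. Comparing lengths: a sphere walk for a map with $n$ edges has $2n$ steps, while the disk walks have $2n-\ell-2$ steps. So exactly $\ell+2$ steps should be forced by the boundary data: the $\ell+1$ boundary edges, together with the rooting at $e_0$, which accounts for the displacement from $(0,0)$ to the starting point $(\ell,0)$.

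\textbf{Step 1 (disk map to sphere map).} Given $(M,e_0,\frT)$ with $\frT$ wired, remove $e_0$ from $\frT$. Since the only cycle of $\frT$ is the root-face boundary and it contains $e_0$, the result $\frT'=\frT\setminus\{e_0\}$ is a genuine spanning tree of $M$. Hence $(M,e_0,\frT')$ is a rooted tree-decorated map with $n$ edges, and Proposition~\ref{prop:Mullin-sph} gives a walk $\wt\cZ\in\cA^\sph_n$. Conversely, a sphere triple comes from such a disk triple exactly when two conditions hold: the root face is simple of degree $\ell+1$, and all of its boundary edges other than $e_0$ lie in $\frT'$. This is injective, since $\frT$ is recovered as $\frT'\cup\{e_0\}$.

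\textbf{Step 2 (translate the boundary conditions into the walk).} Use the sewing procedure of Figure~\ref{fig:sew0} in the finite-volume form of Section~\ref{subsubsec:Mullin-finite}, exploring from the triangle containing $e_0$. Because the boundary edges of the root face are tree edges, the exploration of the root face in $T^\sph$ must first sew the $\ell$ boundary tree edges one after another. Each such edge is added to the external face as a new left-boundary edge, i.e.\ an up-step of $\cL$. The remaining forced steps correspond to entering and leaving the root face, across $e_0$ and across the dual edge of $e_0$. I would pin down their exact directions by checking the four sewing moves, and then show the following.
\begin{itemize}
\item[(a)] $\wt\cZ$ lies in the image of Step~1 if and only if it begins and ends with this prescribed block of $\ell+2$ steps.
\item[(b)] Simplicity of the root face is equivalent to the middle part never returning to $\cL=0$ before its final step. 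A premature return would make a boundary vertex repeat.
\end{itemize}
Deleting the forced block and translating then gives a walk of length $2n-\ell-2$ from $(\ell,0)$ to $(0,0)$ that stays in the closed quadrant, i.e.\ a walk satisfying \eqref{eq:walk-excursion-disk}.

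\textbf{Step 3 (identify the composition with $\Phi_\rmM^\disk$).} Run the sewing procedure on the middle part of $\wt\cZ$. It produces exactly the triangulation $T$ of $\Phi_\rmT(\cZ)$: its boundary consists of $\fre_0$, $\fre_{\rm last}$ and $\ell$ edges on $\frT$. The forced steps reattach $e_0$ and close up the root face, which is precisely the construction of $T^\disk$ and of $\Phi_\rmM^\disk(\cZ)$. Hence the map $\cZ\mapsto(M,e_0,\frT)$ equals $\Phi_\rmM^\disk$ and is a bijection onto the stated set.

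\textbf{Main obstacle and fallback.} The main obstacle is Step~2: verifying that the forced prefix and suffix are exactly $\ell+2$ steps of prescribed directions, and that the quadrant condition on the remaining walk is equivalent to simplicity of the root face. I would first try to verify this directly by case analysis on the four sewing moves. If that becomes unwieldy, the fallback is to construct the inverse of $\Phi_\rmM^\disk$ explicitly by a contour exploration of $(M,\frT)$ started at $e_0$. Injectivity would then follow from the determinism of the sewing procedure, and surjectivity from the fact that every walk satisfying \eqref{eq:walk-excursion-disk} sews to a disk triangulation with the boundary described above.
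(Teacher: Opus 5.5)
Your reduction to Proposition~\ref{prop:Mullin-sph} works at the level of objects. Removing $e_0$ from the wired tree gives a spanning tree $\frT'$. The root face becomes a leaf $v_*$ of the dual tree $\frT'_*$, since its only non-tree boundary edge is $e_0$. Its $\ell+2$ incident triangles in $T^{\sph}$ form one consecutive block of the contour loop: the $\ell$ triangles on the boundary tree edges, plus the two halves of the quadrilateral of $e_0$, which in $T^{\sph}$ is split by the dual edge crossing $e_0$.

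The gap is in Step~2, where the image of Step~1 is misidentified.

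\textbf{Claim (b) is false.} Condition \eqref{eq:walk-excursion-disk} allows $\cL$ to hit $0$ before the last step. Such returns only attach further tree edges at a boundary vertex from the inside, so any repeated vertex they create lies on the boundary of a \emph{non-root} face. For example, take $\ell=2$, $n=4$ and the walk visiting $(2,0),(1,0),(0,0),(1,0),(0,0)$. It satisfies \eqref{eq:walk-excursion-disk} and touches $\cL=0$ at time $2$. Its image under $\Phi^{\disk}_{\rmM}$ is a simple triangular root face (two tree edges and $e_0$) with a pendant tree edge inside the other face. So if you impose (b), Step~3 cannot give surjectivity onto all walks satisfying \eqref{eq:walk-excursion-disk}.

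Simplicity of the root face is in fact encoded in the block itself. Along the root face the right endpoint of the exploration edge stays at $v_*$, so these $\ell$ steps are $\cL$-steps. A non-simple boundary walk along tree edges must backtrack along some edge, and that shows up as an up-step immediately followed by a down-step inside the block. So simplicity is built into the requirement that these $\ell$ steps be monotone, and it imposes nothing on the rest of the walk.

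\textbf{What does need checking} is a condition on the other coordinate, and whether it arises depends on where the sphere rooting places $\fre_0$ relative to $e_0$.
\begin{itemize}
\item If the block is contiguous, its first and last steps are the $\cR$-up and $\cR$-down steps across the dual edge crossing $e_0$. Then no condition on the remaining walk is needed.
\item If the block is split by the root edge (say $\ell$ steps $(1,0)$ and a step $(0,1)$ at the start, and a step $(0,-1)$ at the end), then beginning and ending with the block is not enough for (a). You also need $\cR\ge 1$ on the whole middle part, so that the $(0,1)$ is matched with the final $(0,-1)$. Otherwise the right endpoint of the exploration edge returns to $v_*$, which then acquires further dual-tree edges. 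The root face would then have additional non-tree boundary edges and degree larger than $\ell+1$. This matching condition is exactly what makes the translated middle satisfy $\cR\ge 0$.
\end{itemize}
In every placement, the stripped walk is constrained precisely by \eqref{eq:walk-excursion-disk}, possibly after a time reversal, with no restriction on visits of $\cL$ to $0$.

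Finally, in Step~3, $T^{\disk}$ is not a sub-triangulation of $T^{\sph}$: the quadrilateral of $e_0$ is split by $e_0$ in one and by its dual edge in the other. So you need to identify the composite with $\Phi^{\disk}_{\rmM}$ at the level of $(M,\frT)$, after reconciling the rooting conventions of the two bijections.
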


We further comment that in the above settings, for the planar map $M$ encoded by $\cZ$ via $\Phi_\rmM^\disk,\Phi_\rmM^\sph$ or  the infinite volume version of map $\Phi_\rmM$, the triangulation $T$ can naturally be embedded on the surface $\frM(M)$, and we will also view $\frtr$ as a path on the surface $\frM(M)$. %In the setting where $\cZ$ is a nearest-neighbor walk on $\bbZ^2$ and $T$ (resp.\ $M$) is the triangulation (resp.\ planar map) encoded by $\cZ$ via $\Phi_\rmT$ (resp.\ $\Phi_\rmM$), each face of $T$ has one or two vertices in $\cV M$, and we also view $\frtr$ as a path on $\frM(M)$ where $\frtr(t)$ is taken to be the vertex in $\cV M$ on the corresponding face of $T$. 
% {[*To do: define $\Phi_M$ for walk*]}

\section{The cell configuration for tree weighted maps}\label{sec:3}

In this section, we construct cell configurations $\mr\cH^\cell$ and  $\cH^\cell$ whose associated maps are encoded by a continuous-time  walk $\cZ$ on $\bbZ^2$ and prove that they satisfy the conditions of Theorem~\ref{thm:CellSystemPack} and Theorem~\ref{thm:invariance-uniformization}.  In Section~\ref{subsec:time-sets}, we introduce the time sets $\cT_t^{\rmC,d}$  using the $(Z,\cZ)$  as in~\cite[Section 5]{Tutte}. To address the collapse of multiple edges in circle packing, we will also construct a version of the time sets $\mr\cT_t^{\rmC,d}$, and prove their moment bounds in Section~\ref{subsubsec:3.1.2} and Section~\ref{subsubsec:3.1.3}. In Section~\ref{subsec:cellsystem}, we construct the cell configurations $\mr\cH^\cell$ and  $\cH^\cell$ and prove that they satisfy the conditions in Theorem~\ref{thm:CellSystemPack} and Theorem~\ref{thm:invariance-uniformization}. In Section~\ref{subsec:3.3}, we prove the convergence of the  map and trace $\frtr$ encoded by the walk $\cZ$ under circle packing/Riemann uniformization embedding in the setting of 0-quantum cone. 

Recall the definition of the probability measures $\bbP_\rmE$ (resp.\ $\bbP_\rmQ$) describing the law of the tuple $(h,h^{\rm IG},\eta, \fU)$  as introduced in Definition~\ref{def:P_EP_Q} where $\fU$ is indexed by the 1D uniform dyadic system $\fD$,  and the boundary length process $(Z_t)_{t\in\bbR}$ for $(h,\eta)$. Recall that for $k\in\bbZ$, we write $I(k)$ for the interval in $\fD$ of length $2^k$ containing 0. With slight abuse of notation, for $x\in\bbR$, we will write $I(x)$ for $I(\lceil x\rceil)$. %Let $\fU = (U_I)_{I\in\fD}$ be a collection of i.i.d.\ uniform random variables on $[0,1]$ indexed by intervals in $\fD$ and independent from everything else. 
We write $\Omega_\fU$ (resp.\ $\Omega_\fD$) for the space of all the realizations of $\fU$ (resp.\ $\fD$).  
For $t_0\in\bbR$, recall that $\fD-t_0 = \{I-t_0:I\in \fD\}$, and we set $\fU - t_0 = (\wt U_J)_{J\in \fD-t_0}$ where $\wt U_J = U_{J+t_0}$. For an interval $J\subseteq\bbR$, let $D(J,\bbR^2)$ denote the set of c\`{a}dl\`{a}g real-valued functions  $J\to \bbR^2$.  {The followings  are from ~\cite[Theorem 3.17, Theorem 4.2]{Tutte}.}

\begin{theorem}\label{prop:KMT-gamma-cone}
    Under the probability measure $\bbP_\rmQ$, there exists a process $(\cZ_t)_{t\in\bbR} = (\cL_t,\cR_t)_{t\in\bbR}$ which is right continuous and has left limit and satisfies the following.
    \begin{enumerate}[(i)]
        \item The law of $(\cZ_t)_{t\in\bbR}\in D(\bbR,\bbR^2)$ is a continuous-time nearest neighbor walk on $\bbZ^2$, such that  the set of times $\cJ$ at which $\cZ$ makes a jump is given by a Poisson point process  {on $\bbR$ with unit rate}, and conditioned on $\cJ$ the steps of $\cZ$ are i.i.d.\ and uniform in $\{(1,0),(0,1),(-1,0),(0,-1)\}$. \label{cond:prop:KMT-gamma-cone-(i)}
        \item For any $C_1>1$, there exists $C'>1$ such that for any $m\in\bbN$, 
        $$\bbP_\rmQ\big[\sup_{t\in I(m)}\|Z_t-\cZ_t\|\geq C'm\big]\leq 2^{-C_1m}$$
        where $I(m)$ is the interval in $\fD$ of length $2^m$ containing 0; \label{cond:prop:KMT-gamma-cone-(ii)}
        \item %There exists a constant $c>0$ and functions $F_m:D(\bbR,\bbR^2)\times\Omega_\fU\to D(\bbR,\bbR^2)$ such that if one sets $(\cZ_t^m)_{t\in\bbR}= F_m(Z|_{I(m)},\fU)$, then for $k,m\in\bbN$,
        There exists a measurable function $g$ such that $\cZ = g(h,h^\IG, \fU).$ \label{cond:prop:KMT-gamma-cone-(iii)}
    \end{enumerate}
\end{theorem}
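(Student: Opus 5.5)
This is a KMT-type strong coupling on dyadic scales, built so that the walk is a measurable function of $(h,h^\IG,\fU)$. The plan has three steps.

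\textbf{Step 1: reduce to a statement about $Z$.} Under $\bbP_\rmQ$ the boundary length process $Z$ is a two-dimensional Brownian motion with independent coordinates (after the normalization $\mathrm{Var}(L_t)=\mathrm{Var}(R_t)=\tfrac12|t|$). It is a measurable function of $(h,\eta)$, and $\eta$ is determined by $h^\IG$. It therefore suffices to construct $\cZ=g(Z,\fU)$ with properties (i) and (ii); property (iii) then follows. The target walk has coordinates that are compound Poisson processes with rate-$\tfrac12$ symmetric $\pm1$ jumps. Each coordinate is a martingale with variance $\tfrac12|t|$, which matches the variance of the Brownian coordinates.

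\textbf{Step 2: build the coupling along the dyadic tree.} Use the 1D uniform dyadic system $\fD$. On each interval $I\in\fD$, construct the increments of $\cZ$ recursively downward, by the Koml\'os--Major--Tusn\'ady dyadic scheme.
\begin{itemize}
\item Given the increment of $\cZ$ over a parent interval $I$, the increment over its left child is sampled from its conditional law given the parent increment. This is a (bivariate) hypergeometric/binomial-type law.
\item The sample is produced by quantile coupling against the conditional Gaussian law of the corresponding increment of $Z$ given $Z$'s increment over $I$.
\item The uniform variable $U_I$ is used to randomize the quantile transform, since the target law is discrete. It also supplies the auxiliary randomness needed to place the jump times inside the smallest intervals.
\end{itemize}
The downward recursion cannot start from a single top interval, because the parents $I(k)$ grow indefinitely. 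Handle this by coupling the increments over $I(k)$ directly via quantiles for each $k$, and checking consistency using the nested structure $I(k-1)\subset I(k)$. Alternatively, define the walk on each $I(k)$ and note that the construction restricted to $I(k-1)$ is compatible, so that the laws agree and the process is defined on all of $\bbR$. Finally, use the random uniform offset $\theta$ of $\fD$ to ensure the resulting jump-time set is stationary. Then check that the jumps form a unit-rate Poisson process with i.i.d.\ uniform steps, which gives (i).

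\textbf{Step 3: error estimate (ii), the main obstacle.} The key ingredient is the standard KMT quantile-coupling estimate: for an $N$-step conditional increment, the discrepancy with the Gaussian quantile is $O(1+x^2/\sqrt N)$ with exponential tails.
\begin{itemize}
\item Sum these discrepancies over the $O(m)$ dyadic levels inside $I(m)$.
\item Take a union bound over the $2^{m-j}$ intervals at each level $j$.
\item Use exponential moment bounds on the per-level errors.
\end{itemize}
This gives $\bbP_\rmQ[\sup_{t\in I(m)}\|Z_t-\cZ_t\|\ge C'm]\le 2^{-C_1m}$ once $C'$ is large in terms of $C_1$. The fluctuation within the smallest intervals (unit scale) is controlled by Poisson and Brownian tail bounds.

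The hardest part is the bivariate discrete setting. The conditional laws of the two coordinates are not independent given the total jump count. I would first couple the jump counts $N_I$ with the quadratic variation or a time variable, then couple the two coordinates conditionally. This is the point where I would defer to~\cite[Theorem 3.17, Theorem 4.2]{Tutte}, which proves exactly this.
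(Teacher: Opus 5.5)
Your closing appeal to \cite[Theorem 3.17]{Tutte} is exactly what the paper does: it gives no proof of its own and imports the statement from there. The sketch before that deferral, however, would not work as written, and it puts the difficulty in the wrong place.

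Write $\cZ(I)$ for the increment of $\cZ$ over $I$. Neither of your two fixes for the unbounded top produces a process.
\begin{enumerate}
\item Quantile-coupling $\cZ(I(k))$ to $Z(I(k))$ separately for each $k$ gives nested increments whose joint law is a Gaussian-copula transform, not that of the walk. For example, $\cZ(I(k))-\cZ(I(k-1))$ is not independent of $\cZ(I(k-1))$. So there is no consistency to check.
\item The top-down schemes started from $I(k)$ and from $I(k-1)$ agree in law on $I(k-1)$, but this says nothing about realizations. In the first scheme the increment over $I(k-1)$ is a conditional quantile given the data on $I(k)$, so the two walks differ pathwise. Equality of laws does not glue them into a single process coupled to $Z$, let alone one that is a function of $(h,h^{\IG},\fU)$.
\end{enumerate}
What is needed is pathwise stabilization. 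Let $\cZ^{(m)}$ be the top-down KMT walk started from $I(m)$, a function of $Z|_{I(m)}$ and $\fU$. You must show that $\cZ^{(m)}|_{I(k)}$ is a.s.\ eventually constant in $m$, quantitatively something like $\bbP_\rmQ[\cZ^{(m+k)}|_{I(m)}\neq\cZ|_{I(m)}]\le 2^{-ck}$, and define $\cZ$ as the limit. The plausible mechanism is coalescence of the conditional quantile maps. Two schemes that agree on an ancestor of $I(k)$ agree below it. An integer discrepancy in a parent increment is roughly halved on passing to the child containing $0$, and it disappears with probability bounded below at each level.

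The same contraction is missing from your Step 3. Summing KMT errors over the levels inside $I(m)$ takes the top discrepancy $\cZ(I(m))-Z(I(m))$ as input. In a bi-infinite scheme that increment is not a fresh unconditional quantile coupling; it is the output of all higher levels, and it has to be controlled by the same argument.

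Conversely, the obstacle you single out is not one. By Poisson thinning, the coordinates $\cL,\cR$ of a unit-rate walk with i.i.d.\ uniform steps are \emph{independent} rate-$\tfrac12$ continuous-time simple random walks on $\bbZ$. For $\gamma=\sqrt2$ the coordinates $L,R$ are also independent under $\bbP_\rmQ$. So you can couple $\cL$ with $L$ and $\cR$ with $R$ separately, using disjoint parts of $\fU$, and you never need to condition on total jump counts. Two smaller points: the quantile transforms need no randomization, because the Gaussian source laws are continuous; and the offset of $\fD$ plays no role in the law of $\cZ$.

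Finally, for the statement exactly as written, the bi-infinite issue can be bypassed. Since $I(m)\subseteq[-2^m,2^m]$, take two independent one-sided infinite-horizon KMT couplings anchored at time $0$. Apply them coordinatewise to unit-time increments, and let $\fU$ supply the Poisson bridges inside unit intervals. This already gives (i)--(iii). What such a $g$ lacks is the shift-equivariance \eqref{eq:shift-inv-Z} that Theorem~\ref{thm:KMT-0-cone} demands of the \emph{same} $g$. That requirement is why the construction in \cite{Tutte} is built intrinsically on $\fD$ and cannot avoid the stabilization step.
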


 %{ The above coupling also extends to the setting of 0-quantum cone as below.}
\begin{theorem}\label{thm:KMT-0-cone} 
     Under the probability measure $\bbP_\rmE$, we have the following.
     \begin{enumerate}[(i)]
         \item Let  { $g$} be the same as in Theorem~\ref{prop:KMT-gamma-cone}, and define $\cZ= g(h,h^\IG, \fU)$. Then the walk $\cZ$ %given by
        % $$\cZ|_{I(k)} = \lim_{m\to\infty} \cZ^m|_{I(k)}$$
         is a well-defined continuous time walk in $\bbZ^2$. The process  $(\cZ_t)_{t\in\bbR}\in D(\bbR,\bbR^2)$   makes jumps in $\{(0,1),(1,0),(0,-1),(-1,0)\}$, and makes finitely many jumps during any finite time   interval. Furthermore, $g$ is translation invariant modulo scaling, in the sense that for any $z\in\bbC$ and $C>0$, if we set $\tau_z$ to be the time when the space-filling curve $\eta$ hits $z$, then
         \begin{equation}\label{eq:shift-inv-Z}
             (\cZ_{t+\tau_z}-\cZ_{\tau_z})_{t\in\bbR} = g(h(C\cdot+z)+Q\log C, h^{\IG}(C\cdot+z), \fU-\tau_z).
         \end{equation} \label{cond:thm:KMT-0-cone-(i)} 
         \item For any $C_1>1$, there exists $C',C''>1,$ such that for any $m\in\bbN$, 
        $$ {\bbP_\rmE\big[\sup_{t\in [-2^m,2^m]}|Z_t-\cZ_t|\geq C'm\big]\leq C''2^{-C_1m};}$$   \label{cond:thm:KMT-0-cone-(ii)} 
        % \item There exists a constant $c>0$ such that for $F_m$ same as Theorem~\ref{prop:KMT-gamma-cone} and $(\cZ_t^m)_{t\in\bbR}= F_m(Z|_{I(m)}, \fU)$,
        % $$\bbP_\rmE\big[\cZ^{m+k}|_{ I(m)}\neq \cZ|_{I(m)}\big]\leq 2^{-ck};$$ %\label{cond:thm:KMT-0-cone-(iii)} 
        \item Let $G:D(\bbR,\bbR^2)\times\Omega_\fD\to\{0,1\}$ be a measurable function and let \eqb\label{eq:def-A} A = \{\exists s\in I(0):G(\cZ(\cdot +s)-\cZ(s),\fD-s)=1\}.\eqe
        Suppose $A\in\sigma(\cZ|_{I(k)},I(k))$. Then there exists a constant $C_0>1$ such that for all $k\in\bbN$,
        \begin{equation}\label{eq:KMT-0-cone}
            \bbP_\rmE[A] \leq \bbP_\rmQ[A]^{1/4}2^{C_0k}+2^{-k/C_0}.
        \end{equation} \label{cond:thm:KMT-0-cone-(iv)} 
     \end{enumerate}
\end{theorem}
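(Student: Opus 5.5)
This statement is quoted from~\cite[Theorem 4.2]{Tutte}, so in the paper it can simply be cited. If I were to prove it myself, I would transfer the $\gamma$-quantum cone coupling of Theorem~\ref{prop:KMT-gamma-cone} to the $0$-quantum cone. The tool is the standard relation between the two cones: a $\gamma$-quantum cone is (locally) a $0$-quantum cone re-rooted at a point sampled from $\mu_h$.

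\textbf{Step 1: definition and translation invariance, item~(i).} The map $g$ from Theorem~\ref{prop:KMT-gamma-cone} should depend on $(h,h^{\IG})$ only through the boundary length process $Z$, together with $\fU$. It is built from a dyadic KMT-type coupling in which $\cZ|_{I(m)}$ is a function of $Z|_{I(m')}$ and $(U_I)_{I\subseteq I(m')}$ for $m'\ge m$. I would show that these local values stabilize as $m'\to\infty$. Then $g$ is defined on the $0$-cone by the same local rule, which makes $\cZ$ a well-defined c\`adl\`ag nearest-neighbor walk with finitely many jumps on compacts.

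For~\eqref{eq:shift-inv-Z}, note the following. The process $(Z_{t+\tau_z}-Z_{\tau_z})_t$ is the boundary length process of the recentred and rescaled field $h(C\cdot+z)+Q\log C$ and curve, and boundary lengths are invariant under the coordinate change~\eqref{eq:lqg-changecoord}. The dyadic data are shifted accordingly to $\fU-\tau_z$. Since $g$ is built from these objects by a translation-equivariant rule, the identity follows.

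\textbf{Step 2: the comparison inequality~\eqref{eq:KMT-0-cone}.} This is the core, and I would prove it before item~(ii).

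\emph{Localisation.} By Lemma~\ref{lem:SLE-contain-1}, outside an event of probability $2^{-ck}$, the set $\eta(I(k))$ lies in a ball of radius $2^{O(k)}$ and contains a ball of radius $2^{-O(k)}$ around $0$. These failure events contribute the $2^{-k/C_0}$ term.

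\emph{Re-rooting.} On the good event, use that the $\gamma$-cone is obtained from the $0$-cone by sampling a point $w$ from $\mu_h$ and recentring. Also note that $A$ is an ``exists $s\in I(0)$'' event.

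Suppose $A$ holds for the $0$-cone at some $s\in I(0)$. Then for every time $u$ in a set of $\mu_h$-measure comparable to $1$ (the nearby times in $I(1)$), the recentred configuration at $u$ sees an event of the same type. This is stated with respect to the shifted dyadic system, and uses $\fD-u$ in~\eqref{eq:def-A}.

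\emph{Radon--Nikodym bound.} Hence $\bbP_\rmE[A]$ is bounded by an integral of $\bbP_\rmQ$-probabilities against a Radon--Nikodym density. That density is a ratio of GFF circle-average terms and area normalisations restricted to scale $2^{\pm O(k)}$, and it has all moments bounded by $2^{O(k)}$. Applying H\"older/Cauchy--Schwarz twice gives the factor $\bbP_\rmQ[A]^{1/4}2^{C_0k}$. The measurability $A\in\sigma(\cZ|_{I(k)},I(k))$ ensures only the field near $\eta(I(k))$ matters.

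\textbf{Step 3: item~(ii).} Apply Step 2 with $G$ the indicator that $\sup_{t\in[-2^m,2^m]}|Z_t-\cZ_t|\ge C'm$, covering $[-2^m,2^m]$ by $O(1)$ dyadic intervals of scale $m+1$. Item~(ii) of Theorem~\ref{prop:KMT-gamma-cone}, with $C_1$ replaced by $4(C_1+C_0)$, gives $\bbP_\rmQ[A]^{1/4}2^{C_0m}\le 2^{-C_1m}$. The polynomial error $2^{-m/C_0}$ is too weak here, so I would instead use the direct exponential bounds in~\eqref{eq:Rb-bound} to make the localisation failure probability $\le C''2^{-C_1 m}$ at the cost of larger $C'$.

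\textbf{Main obstacle.} I expect the main obstacle to be the Radon--Nikodym estimate in Step 2. The two cones differ exactly at the origin, which lies inside $\eta(I(k))$. I would handle this through the re-rooting trick above, not by a direct absolute continuity statement, and the delicate point is controlling the $\mu_h$-mass of admissible re-rooting times from below.
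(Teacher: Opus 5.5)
Citing \cite[Theorem~4.2]{Tutte} is exactly what the paper does; it gives no proof of Theorem~\ref{thm:KMT-0-cone} of its own. Your self-contained sketch, however, does not close, because of the way you chain the items.

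Item~(iii) only concerns events of the form~\eqref{eq:def-A}. There $G$ sees nothing but the shifted walk $\cZ(\cdot+s)-\cZ(s)$ and $\fD-s$, one needs $A\in\sigma(\cZ|_{I(k)},I(k))$, and the error $2^{-k/C_0}$ has a fixed rate. The event $\{\sup_{t\in[-2^m,2^m]}|Z_t-\cZ_t|\ge C'm\}$ in your Step~3 depends on $Z$, so (iii) says nothing about it. You notice the rate problem and propose to redo the spatial localization with~\eqref{eq:Rb-bound}. At that point, though, you are no longer applying (iii): you are re-running its proof for a different class of events. Besides the spatial localization, the other localization errors (stabilization of the dyadic coupling, moments of the density) would also have to be tuned.

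Step~1 has the same defect. Stabilization of $\cZ|_{I(m)}$ under $\bbP_\rmE$ is a statement about events of $(Z,\fU)$, and (iii) presupposes that $\cZ$ is already defined. So stabilization cannot come from Step~2 as you formulated it, and you give no other argument for it. What your strategy really needs is a comparison lemma for events measurable with respect to $(Z|_{I(k')},\fU)$ that are (approximately) invariant under re-rooting. In that lemma every localization error would have to decay at an arbitrarily prescribed exponential rate; items (i)--(iii) would then be consequences. Note also that the event in (ii) is not of ``$\exists s\in I(0)$'' type. Re-rooting at $u$ perturbs it by $|Z_u-\cZ_u|$, so you must halve the threshold and enlarge the window.

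Your diagnosis of the main obstacle is also off. Suppose $A$ holds with witness $s\in I(0)$, and let $A_u$ denote $A$ for the data re-rooted at time $u$ via~\eqref{eq:shift-inv-Z}. Then $A_u$ holds for exactly the times $u\in I(0)$, not for nearby times in $I(1)$: for $u$ in the sibling of $I(0)$, the unit interval of $\fD-u$ containing $0$ misses $s-u$. By the parametrization this set has $\mu_h$-mass exactly $1$, so $\mathds{1}_A\le\int_{I(0)}\mathds{1}_{A_u}\,du$ is immediate.

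The delicate points are elsewhere. First, $\eta(I(0))$ is random, so on a good event you must enlarge to a deterministic ball $B(0;2^{O(k)})$ before applying the rooted-measure identity for $\mu_h$. Second, there is no literal density between the re-rooted $0$-cone field and the $\gamma$-cone field in a fixed embedding. The circle-average embedding of the $\gamma$-cone pins $h_1(0)=0$ and imposes a singular (last-exit) conditioning on the radial part outside $\bbD$. Neither holds for the re-rooted field around its new root. So the Radon--Nikodym step has to compare quantum surfaces re-embedded by a common rule, over scales up to $2^{O(k)}$. That comparison is where the factor $2^{C_0k}$ has to be earned.
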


Without specially mentioned, throughout the rest of the paper we set $\gamma=\sqrt 2$ and $\kappa=8$.

\subsection{The time sets}\label{subsec:time-sets}

To construct our cell configurations using $(h,\eta)$ under the probability measure $\bbP_\rmE$, we introduce the time sets induced from the coupling between the walk $\cZ$ and the boundary length process $Z$ in Theorem~\ref{thm:KMT-0-cone}.  The definitions are mostly the same as ~\cite[Section 5]{Tutte}, except that we will make some modifications to work with the collapsing of the double edges as in the setting of circle packing.

We start by introducing the following   notations for walks. Let $J\subseteq \bbR$ be an interval. 
\begin{definition}\label{def:walk}
    We call a process $\cZ:J\to\bbZ^2$ a continuous-time nearest neighbor walk on $\bbZ^2$, if the trajectories $t\mapsto \cZ_t$ are right-continuous and with a finite number of jumps in any bounded interval, and the jumps of the process take their values in $\{(-1,0),(0,1),(0,-1),(1,0)\}$. We   define $\Delta\cZ_t = \cZ_t - \lim\limits_{s\uparrow t}\cZ_{s}$. Let $\cZ = (\cL,\cR):J\to\bbZ^2$ be a continuous-time  nearest neighbor walk on $\bbZ^2$. We further say $\cZ$ has \emph{rate $\lambda>0$} if the jump times for $\cZ$ is distributed as a Poisson point process with intensity  $\lambda$ times the Lebesgue measure on $J$, {and conditioned on the jump times, the jumps are independent and uniform among the four directions}. Let $\cJ = \{t\in J:\Delta\cZ_t\neq 0\}$ be the set of times when $\cZ$ makes a jump,  and let
   \begin{align}
       & \cJ_{\rmL} = \{t\in J:   \Delta\cL_t  \neq 0\},  \ \ \cJ^-_{\rmL} = \{t\in J:   \Delta\cL_t  =-1\}, \    \cJ^-_{\rmR} = \{t\in J:   \Delta\cR_t =-1\}, \    \cJ^-  = \cJ^-_\rmL\cup\cJ^-_\rmR;\\
       &\cJ_{\rmR} = \{t\in J:   \Delta\cR_t  \neq 0\},  \ \ \cJ^+_{\rmL} = \{t\in J:   \Delta\cL_t  =1\}, \    \cJ^+_{\rmR} = \{t\in J:   \Delta\cR_t =1\}, \      \cJ^+  = \cJ^+_\rmL\cup\cJ^+_\rmR.
   \end{align}
 {In the case $J = [a,b]$ is finite, we also include the time $a$ in $\cJ$.}
\end{definition}

In the case  that $J=\bbR$, we assume
   \begin{equation}\label{eq:walk-liminf-1}
       \liminf_{t\to-\infty} \cL_t= \liminf_{t\to-\infty} \cR_t=  \liminf_{t\to\infty} \cL_t= \liminf_{t\to\infty} \cR_t= -\infty.
   \end{equation}
  
   For $t\in \cJ^-$, define 
   \begin{equation}\label{eq:def-I_t-simple}
       I_{t} = [t^-,t],\  \text{where } \ t^- :=  \left\{
	\begin{aligned}
		\sup\{s\in\cJ^-_\rmL: s<t\}, & \ \ \text{if} \ \ t\in\cJ^-_\rmL \\
		\sup\{s\in\cJ^-_\rmR: s<t\}, & \ \ \text{if} \ \ t\in\cJ^-_\rmR. 
	\end{aligned}
    \right.
   \end{equation}
Notice that $\{I_t:t\in\cJ^-_\rmL\}$ is a collection of intervals with union $\bbR$ and pairwise disjoint interior, and that the same holds with $\cJ^-_\rmR$ instead of $\cJ^-_\rmL$.

Recall that for a nearest-neighbor walk $\bbZ\to\bbZ^2$, using the Mullin bijection, it generates a infinite plane triangulation $T$ and a path $\mathfrak{tr}$ on faces of $T$ via the map $\Phi_\rmT$. In the rest of the paper, we are going to extend the definition of the maps $\Phi_\rmT$ and $\Phi_\rmM$ to continuous-time nearest-neighbor walks $\cZ$ on $\bbZ^2$ satisfying~\eqref{eq:walk-liminf-1} and work with this extension.   {That is, consider an increasing bijection $\phi:\bbZ\to \cJ$ such that $\phi(0)$ is the time in $\cJ$ closest to 0. Then $\wt\cZ:=(\cZ_{\phi(n)})_{n\in\bbZ}$ defines a nearest-neighbor walk, and let $\Phi_\rmT(\wt \cZ) = (T,\wt{\mathfrak{tr}})$. Let $\wt\fre_n$ be the edge in $T\backslash(\frT\cup\frT_*)$ on both $\wt{\mathfrak{tr}}(n)$ and  $\wt{\mathfrak{tr}}(n+1)$. Let $\mathfrak{tr}(t) = \wt{\mathfrak{tr}}(n_t)$, where $n_t = \sup\{n\in\bbZ:\phi(n)\le t\}$, and let $\fre_t = \wt\fre_{n_t}$. Then we define $\Phi_\rmT(\cZ) = (T,\mathfrak{tr})$ and set $\Phi_\rmM(\cZ) = \Phi_\rmM(\wt\cZ)$. The same applies for the setting where the continuous-time nearest-neighbor walk $\bbZ^2$ is defined on a finite interval.}   {For $t\in\cJ$, let $\frv_t$ be the vertex on the triangle $\mathfrak{tr}(t)$ that is not on the edge $\fre_t$.} For the edge $\fre_t$, we  call its endpoints on $\frT$ and $\frT_*$ left and right endpoints of $\fre_t$, respectively.  Let $T _t\subseteq T$ be the map formed by $\{\mathfrak{tr}(s):s\leq t\}$. Note that  {for $t\in\cJ^-$}, $t$ is the first time when $\frv_t$ is a non-boundary vertex of the map $T _t$, and $t\mapsto\frv_t$ gives  a bijection between $\cJ^-$ and $\cV T$.

 {Following ~\cite[Section 5]{Tutte},} we define two graphs with vertex set $\cJ^-$, namely $(\cJ^-,\cE_\rmT)$ and $(\cJ^-,\cE_\rmG)$, where $\cE_\rmT,\cE_\rmG\subseteq \{\{s,u\}:s,u\in \cJ^- \}$ are the collection of edges. The letter ``$\rmT$" (resp. ``$\rmG$") is chosen since it is the first letter of the word ``triangulation" (resp.\ ``geometric"). For $s,u\in\cJ^-$, we say $\{s,u\}\in\cE_{\rm T}$ if and only if the vertices $\mathfrak{v}_s$ and  $\mathfrak{v}_u$ are adjacent in the triangulation $T$ encoded by $\cZ$. For $s,u\in\cJ^-$, we say $\{s,u\}\in\cE_{\rm G}$ if and only if at least one of the {three} following properties hold:
\begin{enumerate}[(i)]
    \item  there are $\wt s\in I_s$ and $\wt u\in I_u$ such that $$ L_{\wt s}=L_{\wt u} = \inf_{t\in[\wt s\wedge\wt u,\wt s\vee\wt u]}L_t;$$ \label{cond:def-cG-(i)}
    \item the property in condition~(\ref{cond:def-cG-(i)}) holds with $R$ and $\rmR$ instead of $L$ and $\rmL$, respectively, or \label{cond:def-cG-(ii)}
    \item $I_s\cap I_u\neq\emptyset$. \label{cond:def-cG-(iii)}
%    \item $I_s\cap I_u\neq\emptyset$.
\end{enumerate}
As explained in~\cite[Section 8.2]{DMS14}, property~(\ref{cond:def-cG-(i)}) or~(\ref{cond:def-cG-(ii)}) is satisfied if and only if $\eta(I_s)$ and $\eta(I_u)$ have non-empty intersection. We say a subset $S\subseteq \cJ^-$ is $\rmT$-connected (resp.\ $\rmG$-connected) if $S$ is connected in the graph  $(\cJ^-,\cE_\rmT)$  (resp.\ $(\cJ^-,\cE_\rmG)$).

 To deal with circle packings, we need the following modification in light of the collapse of double edges.  Recall the map  $\Phi_{\rm simple}$ on walks which erases the $\bbZ_+^2$ cone excursions, and we extend its definition to continuous-time nearest-neighbor walks. To be more precise, let $\mr\cJ$ be $\cJ$ minus all possible intervals  {$(t^-,t^+]\subseteq J$} such that $t^-,t^+\in\cJ$ and \begin{equation}\label{eq:double-edge-Mullin-A} 
     \cL_{t^-}=\cL_{t^+} = \inf_{s\in[t^-,t^+]} \cL_s, \ \ \  \cR_{t^-}=\cR_{t^+} = \inf_{s\in[t^-,t^+]} \cR_s,
 \end{equation}
 and let {$\mr\cZ_t  =(\mr \cL_t,\mr \cR_t) = \cZ_{\mr t}$ where $\mr t = \sup\{\mr s\leq t:\mr s\in \mr\cJ\}$.}  We set $\Phi_{\rm simple}(\cZ) = \mr\cZ$, and 
\begin{align}
       &\mr\cJ_{\rmL} = \{t\in J:   \Delta\mr\cL_t  \neq 0\},   \ \mr\cJ^-_{\rmL} = \{t\in J:   \Delta\mr\cL_t  =-1\}, \    \mr \cJ^-_{\rmR} = \{t\in J:   \Delta\mr\cR_t =-1\}, \    \mr\cJ^-  = \mr\cJ^-_\rmL\cup\mr\cJ^-_\rmR;\label{eq:def-hat-JL} \\
       &\mr\cJ_{\rmR} = \{t\in J:   \Delta\mr\cR_t  \neq 0\}, \ \ \ \mr\cJ^+_{\rmL} = \{t\in J:   \Delta\mr\cL_t  =1\}, \ \  \mr \cJ^+_{\rmR} = \{t\in J:   \Delta\mr\cR_t =1\}, \    \ \ \mr\cJ^+  = \mr\cJ^+_\rmL\cup\mr\cJ^+_\rmR.\label{eq:def-hat-JR}
   \end{align}
Note that by Lemma~\ref{lem:basic-hat}, we have $\Phi_\rmT(\mr\cZ) = \Phi_{\rm simple}\circ \Phi_\rmT(\cZ)$. See also Figure~\ref{fig:doubleedgewalk} for an illustration.

\begin{figure}[t]
    \centering
    \includegraphics[scale=0.7]{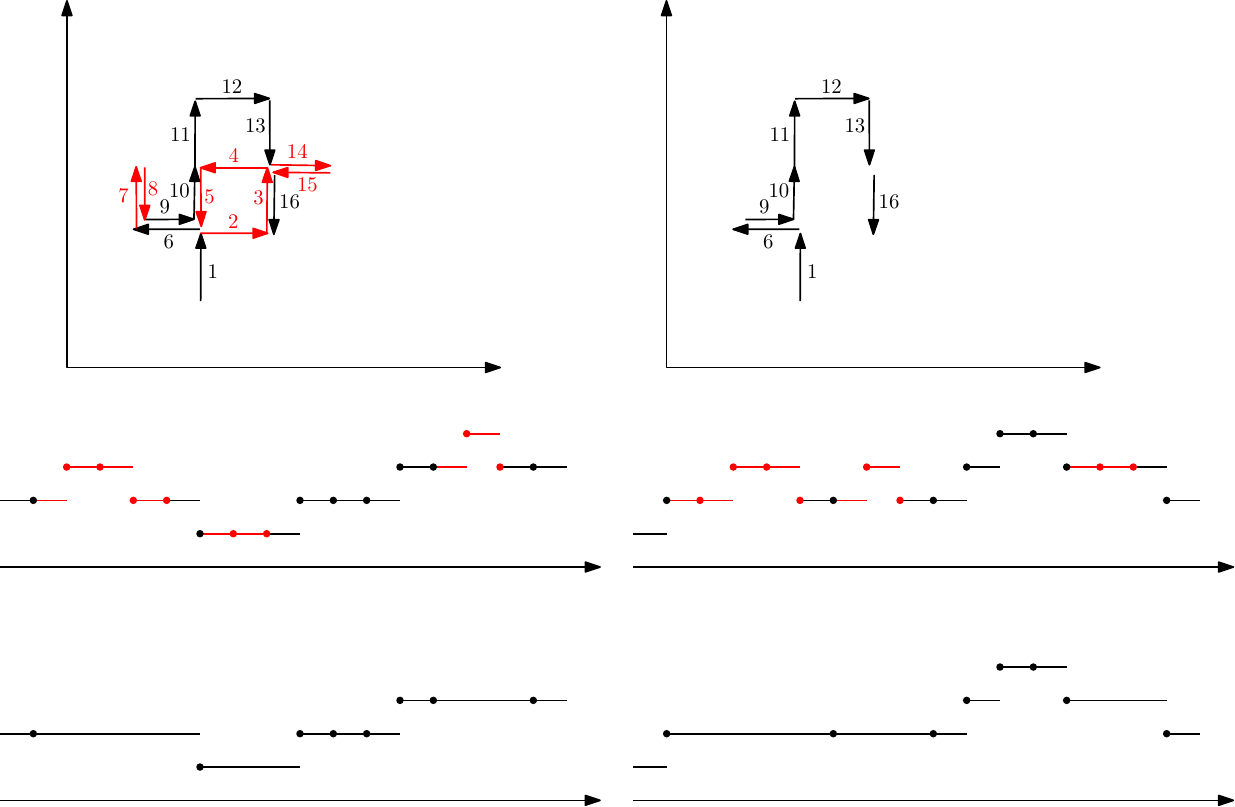}
    \caption{The top left panel illustrates a continuous-time random walk, with the steps of the walks forming a $\bbZ_+^2$ excursion is marked in red. On the top right panel, we remove these steps from the random walk. The two panels in the middle are the coordinates of the walk $\cZ$, with all jump  times in $\cJ$ are represented by the black dots, and the red dots are those to be removed in the definition of $\mr\cJ$. The two panels below  are the coordinates of the walk $\mr\cZ=\Phi_{\rm simple}(\cZ)$ with jump time $\mr\cJ$.}
    \label{fig:doubleedgewalk}
\end{figure}

The map $\Phi_{\rm simple}$ is not local, in the sense that $\mr\cZ|_{[a,b]}$ is not determined by $\cZ|_{[a,b]}$. To circumvent this,  we will frequently use the following simple observation in the remainder of this paper.

\begin{lemma}\label{lem:A2}
     Let $a_0<a<b<b_0$, $J_0=[a_0,b_0]$, and $J\subseteq\bbR$ be an interval such that $J_0\subseteq J$. Suppose $\cZ:J\to\bbZ^2$ and $\cZ_0:J_0\to\bbZ^2$ are continuous-time nearest-neighbor walks on $\bbZ^2$ such that $\cZ|_{J_0}=\cZ_0$. Let $\mr \cZ=\Phi_{\rm simple}(\cZ)$ and $ \mr \cZ_0=\Phi_{\rm simple}(\cZ_0)$.  Then if $\mr\cJ\cap (a_0,a)\neq\emptyset$ and $\mr\cJ\cap (b,b_0)\neq\emptyset$, then $\mr\cZ_0|_{[a,b]} = \mr\cZ|_{[a,b]}$.
\end{lemma}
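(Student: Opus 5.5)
The plan is to show that, under the hypotheses, the erased intervals (the maximal intervals $(t^-,t^+]$ satisfying~\eqref{eq:double-edge-Mullin-A}) that meet $[a,b]$ are the same for $\cZ$ and for $\cZ_0$. Once that holds, $\mr\cJ\cap[a,b]=\mr\cJ_0\cap[a,b]$. Moreover, for $t\in[a,b]$ the time $\mr t=\sup\{\mr s\le t:\mr s\in\mr\cJ\}$ is the same whether computed for $\cZ$ or for $\cZ_0$, because both walks have a non-erased time in $(a_0,a)$ and agree on $J_0$. Hence $\mr\cZ_0|_{[a,b]}=\mr\cZ|_{[a,b]}$.

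Fix $s\in\mr\cJ\cap(a_0,a)$ and $s'\in\mr\cJ\cap(b,b_0)$. The key step is the following claim: any interval $(t^-,t^+]$ satisfying~\eqref{eq:double-edge-Mullin-A} for $\cZ$ that intersects $[a,b]$ satisfies $s\le t^-$ and $t^+< s'$, so it is contained in $[s,s')\subseteq J_0$.

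For the lower bound, suppose $t^-<s$. Since $(t^-,t^+]$ meets $[a,b]$ and $s<a$, we get $t^+>s$, so $s\in(t^-,t^+]$. Then $s$ lies in an erased interval, contradicting $s\in\mr\cJ$.

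For the upper bound, suppose $t^+\ge s'$. Since $(t^-,t^+]$ meets $[a,b]$ and $s'>b$, we have $t^-<s'$. Then $s'\in(t^-,t^+]$, again a contradiction.

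With the claim in hand, an interval $(t^-,t^+]\subseteq J_0$ with endpoints in $\cJ$ satisfies~\eqref{eq:double-edge-Mullin-A} for $\cZ$ if and only if it does for $\cZ_0$. This is because the condition involves only $\cZ|_{[t^-,t^+]}=\cZ_0|_{[t^-,t^+]}$, and the jump sets of the two walks coincide on the interior of $J_0$. Consequently the erased intervals for $\cZ$ meeting $[a,b]$ are also erased for $\cZ_0$, so $\mr\cJ_0\cap[a,b]\subseteq\mr\cJ\cap[a,b]$.

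The converse inclusion needs the same claim for $\cZ_0$, which requires $s,s'\in\mr\cJ_0$. This is the main subtlety. Every erased interval of $\cZ_0$ lies inside $J_0$, so it is also an erased interval for $\cZ$. Hence $\mr\cJ\cap J_0\subseteq\mr\cJ_0$ (away from the endpoint $a_0$, which the paper includes in $\cJ_0$ by convention but which plays no role here). In particular $s,s'\in\mr\cJ_0$, and the claim applies to $\cZ_0$ verbatim, giving $\mr\cJ\cap[a,b]\subseteq\mr\cJ_0\cap[a,b]$.

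Finally, for $t\in[a,b]$, the supremum defining $\mr t$ ranges over $[s,t]$ in both cases, since $s$ is non-erased for both walks. Both non-erased sets agree on $[s,b]$, because any erased interval meeting $[s,b]$ either meets $[a,b]$ or lies in $[s,a)$. An erased interval lying in $[s,a)$ is contained in $J_0$, so by the same argument it is erased for one walk if and only if it is erased for the other. Therefore $\mr t$ coincides for the two walks and $\mr\cZ_t=\cZ_{\mr t}=\cZ_{0,\mr t}=\mr\cZ_{0,t}$.
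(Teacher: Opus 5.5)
Your argument is correct. It is the natural way to fill in what the paper states without proof as a ``simple observation'': the non-erased times $s\in(a_0,a)$ and $s'\in(b,b_0)$ act as barriers, so every erased interval that matters lies in $[s,s')\subseteq J_0$, where condition \eqref{eq:double-edge-Mullin-A} only sees $\cZ|_{J_0}=\cZ_0$.

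The one step you dismiss too quickly is the transfer of erased intervals from $\cZ_0$ to $\cZ$, which you need to get $s,s'\in\mr\cJ_0$. The endpoint $a_0$ is not harmless here. By convention $a_0\in\cJ_0$, but possibly $a_0\notin\cJ$. So an erased interval $(a_0,t^+]$ of $\cZ_0$ is not literally an erased interval of $\cZ$, and this is exactly the case that could erase $s$.

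The fix is one line. Let $u$ be the last jump time of $\cZ$ in $J\cap(-\infty,a_0]$. It exists in the paper's settings, where $J$ is either a finite closed interval (whose left endpoint lies in $\cJ$) or $J=\bbR$ with \eqref{eq:walk-liminf-1}. Since $\cZ$ is constant on $[u,a_0]$, the interval $(u,t^+]$ satisfies \eqref{eq:double-edge-Mullin-A} for $\cZ$ and would contain $s$, contradicting $s\in\mr\cJ$.
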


Different from~\eqref{eq:def-I_t-simple}, for $t\in\mr \cJ$ we define 
\begin{equation}\label{eq:def-I_t-double}
    \mr I_t = [\mr t^-,t], \text{ where } \mr t^- = \sup\{ s\in\mr \cJ^-: s<t\}.
\end{equation}
    
%as inusing $(\mr\cJ^-_\rmL,\mr\cJ^-_\rmR)$ in place of $(\cJ^-_\rmL,\cJ^-_\rmR)$. 
{Note that different from the definition of $I_t$, which is an interval between downsteps of the same coordinate of the walk $\cZ$,  the interval $ \mr I_t$ above records intervals between downsteps of either coordinates of the simplified walk $\mr \cZ$}, and  $\{\mr I_t:t\in\mr \cJ^-\}$ is a collection of intervals with union $\bbR$ and pairwise disjoint interior. We define two graphs  $(\mr\cJ^-,\mr\cE_{\rm T})$ and $(\mr\cJ^-,\mr\cE_{\rm G})$ exactly in the same way as  $(\cJ^-,\cE_{\rm T})$ and $(\cJ^-,\cE_{\rm G})$, with $\mr\cJ^-$ and $(\mr I_t)_{t\in\mr\cJ^-}$ instead of $\cJ^-$ and $(I_t)_{t\in\cJ^-}$, respectively. That is, the vertex set for the two graphs are $\mr\cJ^-$, and for $s,u\in\mr\cJ^-$, $\{s,u\}\in\mr\cE_{\rm T}$ if and only if the vertices for $s$ and $u$ in the triangulation $\mr T = \Phi_{\rm simple}(T)$ encoded by $\mr\cZ$ are connected by an edge, whereas $\{s,u\}\in\mr\cE_{\rm G}$ if and only if at least one of the conditions holds:

\begin{enumerate}[(i)]
    \item there are $\wt s\in \mr I_s$ and $\wt u\in \mr I_u$ such that \eqb\label{eq:def-G-connected-1-1} L_{\wt s}=L_{\wt u} = \inf_{t\in[\wt s\wedge\wt u,\wt s\vee\wt u]}L_t;\eqe \label{cond:def-cG-(i)-1}
    \item the property in condition~(\ref{cond:def-cG-(i)-1}) above holds with $R$ and $\rmR$ instead of $L$ and $\rmL$, respectively.\label{cond:def-cG-(ii)-1}
\end{enumerate}
%~(\ref{cond:def-cG-(i)})-~(\ref{cond:def-cG-(iii)}) as in the definition of $\cE_{\rm G}$ hold for  $(\mr I_s,\mr I_s)$ in place of $(I_s,I_u)$.  %Note that these definitions are equivalent with the corresponding ones where we work with $\mr\cZ$ in place of $\cZ$.

    The following relations are apparent from the definition.
\begin{lemma}\label{lem:hat}
    Let $J=[t_J^-,t_J^+]$ be a finite interval. Suppose $\mr\cJ\cap J=\emptyset$. Then there exists $t^-<t^+$ such that $J\subseteq [t^-,t^+]$ and~\eqref{eq:double-edge-Mullin-A} holds. % Then $J\cap\mr\cJ\neq\emptyset$.
\end{lemma}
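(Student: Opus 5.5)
Since $\mr\cJ$ is obtained from $\cJ$ by deleting a union of half-open intervals $(t^-,t^+]$ with endpoints in $\cJ$ satisfying~\eqref{eq:double-edge-Mullin-A}, the plan is to show that the removed intervals meeting $J$ can be merged into a single one covering $J$. Because a walk with finitely many jumps in bounded intervals is constant between jumps, it suffices to work with jump times. The degenerate case $J\cap\cJ=\emptyset$ needs separate care: then take $t^-$ to be the last jump time before $J$ (or $t_J^-$ itself if $J$ starts at $a$, which is included in $\cJ$ by convention) and $t^+$ the next jump after $J$. We still need some removed excursion interval containing these times, and this reduces to the main case applied to a jump time in $J$ or adjacent to it.

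For the main case, call an interval $(t^-,t^+]$ satisfying~\eqref{eq:double-edge-Mullin-A} an \emph{excursion interval}. The first step is to prove that the union of two overlapping excursion intervals $(s_1,u_1]$ and $(s_2,u_2]$ with $s_1\le s_2<u_1\le u_2$ is again an excursion interval, namely $(s_1,u_2]$. On $[s_1,u_1]$ both coordinates stay at least $\cZ_{s_1}$, and $\cZ_{s_2}$ lies in this range. On $[s_2,u_2]$ both coordinates stay at least $\cZ_{s_2}$, and they return to $\cZ_{s_2}$ at $u_2$. Since $s_2<u_1$, the time $u_1\in[s_2,u_2]$ gives $\cZ_{u_1}=\cZ_{s_1}\ge \cZ_{s_2}$ coordinatewise. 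Together with $\cZ_{s_2}\ge\cZ_{s_1}$, this forces $\cZ_{s_2}=\cZ_{s_1}$. Hence the infimum over $[s_1,u_2]$ is $\cZ_{s_1}=\cZ_{u_2}$, and~\eqref{eq:double-edge-Mullin-A} holds.

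Next, every jump time $r\in J\cap\cJ$ lies in some excursion interval. The left endpoint $t_J^-$ (if it is a jump time) also lies in one, since it is not in $\mr\cJ$. Consider the collection of excursion intervals that meet $J\cap\cJ$. Because $J$ is compact and the jump times in $J$ are finite, finitely many of these intervals cover $J\cap\cJ$. Order them by left endpoint. Consecutive ones either overlap or abut: any gap between them would contain a jump time of $J$ not covered, or a jump time exactly at a right endpoint $u$, and such a time is itself covered by some other interval. For abutting intervals $(s_1,u_1]$ and $(u_1,u_2]$, concatenation works directly, since $\cZ_{u_1}=\cZ_{s_1}$ is the common minimum level. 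Merging iteratively then gives one interval $(t^-,t^+]$ that contains all jump times of $J$.

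Finally, the closed interval $[t^-,t^+]$ contains $J$. Between jumps the walk is constant, so we may push $t^-$ down to the last jump at or before $t_J^-$; that time must also be covered, which ensures the merged interval starts at or before $t_J^-$. The main obstacle is exactly this boundary bookkeeping: ensuring that $J$'s endpoints, and not just its interior jump times, fall into $[t^-,t^+]$. I would handle it by applying the covering argument to the slightly enlarged set of jump times in $[t^\prime,t_J^+]$, where $t^\prime$ is the last jump at or before $t_J^-$. Such a $t^\prime$ is either in $J$ or is removed because it is not in $\mr\cJ$ (its nonremoval would make the walk on $J$ equal to $\cZ_{t^\prime}$ with $t^\prime\in\mr\cJ$). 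I need to check this last point against the definition of $\mr\cJ$.
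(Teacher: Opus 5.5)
Your merging step (two overlapping or abutting intervals satisfying \eqref{eq:double-edge-Mullin-A} combine into one) is correct, and it is exactly the key observation in the paper's proof. Your covering of the jump times in $J$ also works. The gap is in the boundary handling, and it comes from insisting that $t^\pm$ be jump times. The lemma allows arbitrary real $t^\pm$, and in general no interval with endpoints in $\cJ$ will do.

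For instance, on the domain $[0,4]$ (so $0\in\cJ$ by convention), let $\cZ$ equal $(1,0)$, $(2,0)$, $(1,0)$, $(1,-1)$ on $[0,1)$, $[1,2)$, $[2,3)$, $[3,4]$ respectively. Then $\cJ=\{0,1,2,3\}$ and the only removed interval is $(0,2]$, so $\mr\cJ=\{0,3\}$. The interval $J=[1/2,5/2]$ satisfies $J\cap\mr\cJ=\emptyset$.
\begin{itemize}
\item Your merged interval is $(0,2]$, whose closure does not contain $J$. So your sentence ``the closed interval $[t^-,t^+]$ contains $J$'' is unjustified.
\item Any admissible $[t^-,t^+]\supseteq J$ with $t^+\in\cJ$ would need $t^+=3$. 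This is impossible, since $\cZ_3=(1,-1)\neq(1,0)=\cZ_{t^-}$.
\item The correct choice is $[0,5/2]$. In general, if your merged interval ends at the last jump $t_p\in J$, the walk stays at $\cZ_{t_p}=\cZ_{t^-}$ (the minimal level) on $[t_p,t_J^+]$, so you can move $t^+$ to $t_J^+$.
\end{itemize}

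Your degenerate case fails for the same reason. If $t^+$ is the first jump after $J$, then $\cZ$ is constant on $[t^-,t^+)$ and jumps at $t^+$, so $\cZ_{t^+}\neq\cZ_{t^-}$ and \eqref{eq:double-edge-Mullin-A} fails. Moreover, no removed interval covering $J$ need exist. For example, on a finite domain where $\cL$ decreases at every jump, nothing is removed, yet any $J$ strictly between two consecutive jumps has $J\cap\mr\cJ=\emptyset$. The fix is trivial: $\cZ$ is constant on $J$, so take $[t^-,t^+]=J$. This is what the paper does. It calls a closed interval bad if \eqref{eq:double-edge-Mullin-A} holds, with arbitrary real endpoints, notes that intersecting bad intervals have bad union, and appends the constant stretch $[t_p,t_J^+]$.

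Your proposed fix at the left endpoint is both unnecessary and incorrect.
\begin{itemize}
\item It is unnecessary because the removed interval $(s,u]$ containing the first jump $t_1\in J$ has $s\in\cJ$ and $s<t_1$. Since $\cJ\cap[t_J^-,t_1)=\emptyset$, this already forces $s<t_J^-$.
\item It is incorrect because the last jump $t'\le t_J^-$ need not lie in $J$ or be removed. In the example above, $t'=0\in\mr\cJ$, being the left endpoint of $(0,2]$.
\item A time of $\mr\cJ$ lies in no removed interval. So adding $t'$ to the set of jump times to be covered would break your covering argument rather than repair it.
\end{itemize}
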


\begin{proof}
    We say a finite interval $[t^-,t^+]$ is bad if  ~\eqref{eq:double-edge-Mullin-A} holds (where $t^-,t^+$ are not necessarily in $\cJ$). Observe that if $I_1,I_2$ are bad and $I_1\cap I_2\neq\emptyset$, then $I_1\cup I_2$ is also bad. If $J\cap\cJ=\emptyset$, then $\cZ$ is constant over $J$ and the conclusion holds. Otherwise, let $t_1,...,t_p = J\cap\cJ$. Then for each $j=1,...,p$, $t_j\notin \mr\cJ$ and therefore there exists $t_j^-<t_j\leq t_j^+$ such that~\eqref{eq:double-edge-Mullin-A} holds for $(t^-,t^+)=(t_j^-,t_j^+)$. Since the walk is constant between times in $\cJ$, we may further assume that $t_J^-\geq t_1^-$ and $t_j^-\leq t_{j-1}$ for $j\geq 2$. Then $\cup_{j=1}^p[t_j^-,t_j^+]$ is a bad interval covering $[t_J^-,t_p]$, and since $\cZ$ is constant over $[t_p,t_J^+]$, $[t_p,t_J^+]$ is also a bad interval. Therefore  $[t_p,t_J^+]\cup(\bigcup_{j=1}^p[t_j^-,t_j^+])$ is a bad interval covering $J$, which finishes the proof.  %Now assume on the contrary. Then if $[t^-,t^+]$ is a bad interval containing $t_J^-$, then $t^+<t_J^+$, and the assumption that $\liminf_{t\to-\infty}\cL_t=-\infty$ implies that there is a lower bound on possible values of $t^-$. Therefore one can define the maximal bad interval $[\wt t^-,\wt t^+]$ containing $t^-_J$, and our assumption implies that $t' = \inf\{s>\wt t^+:s\in\cJ\}\in J.$ Moreover, the property where $I_1\cup I_2$ is bad if $I_1,I_2$ are both bad and have nonempty intersection along with the maximality of $\wt t^+$ implies that there does not exist any $s_1,s_2\in\cJ$ such that $s_1<t'$, $s_2\geq t'$ and~\eqref{eq:double-edge-Mullin-A} holds for $(s_1,s_2)$ in place of $(t^-,t^+)$, which further implies that $t'\in J\cap \mr \cJ$. This finishes the proof.
\end{proof}
 
 \begin{lemma}\label{lem:hat-minus}
     Suppose $t_0\in\mr\cJ$. Then for any $a>0$, the time $t_{\rmL,a}:=\inf\{s>t_0:\cL_s\leq \cL_{t_0}-a\}$ is in $\mr\cJ^-_\rmL$ and $t_{\rmR,a}:=\inf\{s>t_0:\cR_s\leq \cR_{t_0}-a\}$ is in $\mr\cJ^-_\rmR$. Furthermore, for every $t_0\in\mr\cJ^-$ with $\mr I_{t_0} = [t_0^-,t_0]$, for each $t\in [t_0^-,t_0)$, $\cL_t\geq \cL_{t_0^-}$ and   $\cL_{t_0}\geq \cL_{t_0^-}-1$, and the same holds $\cR$ in place of $\cL$.
 \end{lemma}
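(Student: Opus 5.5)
The plan is to argue directly from the definition of $\mr\cJ$ as $\cJ$ minus the union of all ``bad'' intervals $(t^-,t^+]$ satisfying~\eqref{eq:double-edge-Mullin-A}. We work with the $\cL$ coordinate; the $\cR$ case is symmetric.

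\emph{First claim.} Fix $t_0\in\mr\cJ$ and $a>0$, and set $s:=t_{\rmL,a}$. Then $s\in\cJ^-_\rmL$, since $\cL$ first drops to level $\cL_{t_0}-a$ at a down-jump. Suppose $s\notin\mr\cJ$. Then $s\in(t^-,t^+]$ for some bad interval, so $\cL_{t^-}=\inf_{[t^-,t^+]}\cL\le \cL_s$.
\begin{itemize}
\item If $t^-\ge t_0$, then $t^-<s$ gives $\cL_{t^-}>\cL_{t_0}-a\ge\cL_s$, since $s$ is the first time $\cL$ reaches that level. This is a contradiction.
\item If $t^-<t_0$, then $t_0\in(t^-,t^+]$ because $t_0<s\le t^+$. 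This contradicts $t_0\in\mr\cJ$.
\end{itemize}
Hence $s\in\mr\cJ$. Since $s$ is preceded in $\mr\cJ$ by $t_0$ or a later point, the jump of $\mr\cZ$ at $s$ is $\cZ_s-\cZ_{s'}$, where $s'$ is the previous element of $\mr\cJ$. One checks that $\cL_{s'}=\cL_{s-}$: every erased block between $s'$ and $s$ is an excursion returning to its starting level, so $\mr\cL$ drops by exactly one at $s$. Thus $s\in\mr\cJ^-_\rmL$. This bookkeeping, namely that erased blocks are $\bbZ_+^2$ excursions with zero net displacement, is also what Lemma~\ref{lem:basic-hat} encodes.

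\emph{Second claim.} Let $t_0\in\mr\cJ^-$ with $\mr I_{t_0}=[t_0^-,t_0]$, where $t_0^-\in\mr\cJ^-$ is the previous down-step time of $\mr\cZ$. We argue by contradiction.
\begin{itemize}
\item Suppose some $t\in[t_0^-,t_0)$ has $\cL_t<\cL_{t_0^-}$. Then $\inf\{s>t_0^-:\cL_s\le\cL_{t_0^-}-1\}<t_0$.
\item By the first claim, applied to $t_0^-\in\mr\cJ$ with $a=1$, this time lies in $\mr\cJ^-$.
\item It is also strictly between $t_0^-$ and $t_0$: it is larger than $t_0^-$ by definition, and smaller than $t_0$ by the assumption.
\end{itemize}
This contradicts the choice of $t_0^-$ as the largest element of $\mr\cJ^-$ below $t_0$. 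So $\cL_t\ge\cL_{t_0^-}$ on $[t_0^-,t_0)$. Since $\cZ$ is a nearest-neighbor walk, $\cL_{t_0}\ge\cL_{t_0-}-1\ge\cL_{t_0^-}-1$.

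The main obstacle is the careful verification in the first claim that the jump of $\mr\cZ$ at $s$ is truly a down-step in $\cL$. This requires that the last retained time before $s$ has the same $\cL$-value as $\cZ_{s-}$. It follows because any maximal erased block $(u^-,u^+]$ ending before $s$ satisfies $\cZ_{u^-}=\cZ_{u^+}$. Some care is needed here, since unions of overlapping bad intervals are again bad, as observed in the proof of Lemma~\ref{lem:hat}, so that maximal erased blocks are well defined.
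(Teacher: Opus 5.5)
Your proof is correct and follows the paper's argument for the first claim: a bad interval $(t^-,t^+]$ containing $t_{\rmL,a}$ must have $t^-<t_0$, which contradicts $t_0\in\mr\cJ$. You also check that the retained down-jump is genuinely a down-jump of $\mr\cL$ (via $\cZ_{s'}=\cZ_{s-}$), and you derive the second claim from the first using the maximality of $t_0^-$ in $\mr\cJ^-$; the paper treats both of these points as immediate from the construction.
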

 \begin{proof}
     We prove $t_{\rmL,a}\in \mr\cJ^-_\rmL$ only. Indeed $t_{\rmL,a}\in  \cJ^-_\rmL$. Assume on the contrary $t_{\rmL,a}\notin \mr\cJ^-_\rmL$. Then there exists $t^-<t_{\rmL,a}\leq t^+$ such that~\eqref{eq:double-edge-Mullin} holds.  Then we must have $\cL_{t^-}\leq \cL_{t_{\rmL,a}}$ and $t^-<t_0$, and this contradicts with the fact that $t_0\in\mr\cJ$. The second claim is immediate from the construction of $\mr\cJ$.
 \end{proof}

 {
The following is an immediate consequence of the sewing procedure and Lemma~\ref{lem:hat-minus}.
\begin{lemma}\label{lem:sew}
    For   $t\in\cJ$, let  $v_\rmL$ and $v_\rmR$ be the left and right endpoint of the edge $\fre_t$, respectively. Then $v_\rmL = \frv_{t_\rmL}$ and $v_{\rmR} = \frv_{t_\rmR}$, where
    \eqb\label{eq:lem:sew} t_\rmL = \inf\{s>t:\cL_s = \cL_t-1\}, \ \ \ t_\rmR = \inf\{s>t:\cR_s = \cR_t-1\}.\eqe
    In particular, if $t$ is in $\cJ^-$ (resp.\ $\mr\cJ^-$), then $\{t,t_\rmL\},\{t,t_\rmR\}$ are edges in $ \cE_\rmT$ (resp.\ $ \mr\cE_\rmT$).
   Furthermore, for $t\in\cJ^+_\rmL$, let $v_{\rmL,2}$ be the vertex in the triangle $\mathfrak{tr}(t)$ that is not on the edge $\fre_t$. Then $v_{\rmL,2} = \frv_{t_{\rmL,2}}$ where \eqb\label{eq:lem:sew-1} t_{\rmL,2} = \inf\{s>t:\cL_s = \cL_t-2\}, \ \ \  t_{\rmR,2} = \inf\{s>t:\cR_s = \cR_t-2\}.\eqe The same statement holds for $t\in\cJ^+_\rmR$ and $(\cL,\rmL)$ replaced by $(\cR,\rmR)$. 
\end{lemma}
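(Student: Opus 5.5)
The natural route is an induction along the sewing procedure of Section~\ref{subsubsec:Mullin-infinite}, tracking the boundary of the explored maps $T_s$ as $s$ increases. At each time $s\in\cJ$, the external boundary of $T_s$ splits at the edge $\fre_s$ into two arcs. The \emph{left arc} consists of edges of $\frT$ starting at the left endpoint of $\fre_s$. The \emph{right arc} consists of edges of $\frT_*$ starting at the right endpoint.

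The invariant to establish is the following. The vertices on the left arc, listed outward from $\fre_s$, are $\frv_{u_1},\frv_{u_2},\dots$, where $u_j=\inf\{u>s:\cL_u=\cL_s-j\}$. The analogous statement holds for the right arc with $\cR$. This is the usual ``stack'' description of the Mullin encoding: $\cL_s$ and $\cR_s$ are, up to an additive constant, the lengths of the two boundary arcs.

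First I would verify that this invariant is preserved by each of the four moves in Figure~\ref{fig:sew0}.
\begin{itemize}
\item An up-step in $\cL$ pushes a new boundary edge of $\frT$ onto the left arc. The old left endpoint becomes the second vertex of the arc, and the new triangle's third vertex becomes the new left endpoint.
\item A down-step in $\cL$ at time $u$ glues the triangle onto the edge immediately to the left of $\fre_{u-}$. This makes the old left endpoint interior, and by the remark in Section~\ref{subsubsec:Mullin-infinite} that vertex is exactly $\frv_u$. The next vertex on the arc becomes the new left endpoint.
\end{itemize}
The first passage of $\cL$ to level $\cL_s-1$ after $s$ is exactly the first time the current left endpoint of $\fre_s$ is covered, and that time $u$ satisfies $\frv_u=v_\rmL$. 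This gives \eqref{eq:lem:sew}; the $\cR$ case is symmetric. The condition \eqref{eq:walk-liminf-1} guarantees that these hitting times are finite.

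For $t\in\cJ^+_\rmL$, the triangle $\frtr(t)$ adds an edge to the left arc. Its vertex not on $\fre_t$ is therefore the second vertex on the left arc of $T_t$, namely the old left endpoint of $\fre_{t-}$. By the invariant, this vertex is $\frv_{t_{\rmL,2}}$ with $t_{\rmL,2}=\inf\{s>t:\cL_s=\cL_t-2\}$, which is \eqref{eq:lem:sew-1}.

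The edge claims follow next. For $t\in\cJ^-$, the triangle $\frtr(t)$ contains $\frv_t$ and both endpoints of $\fre_t$. Hence $\frv_t$ is adjacent in $T$ to $\frv_{t_\rmL}$ and $\frv_{t_\rmR}$, so $\{t,t_\rmL\},\{t,t_\rmR\}\in\cE_\rmT$.

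For $t\in\mr\cJ^-$, I apply the same argument to $\mr\cZ$ and use $\Phi_\rmT(\mr\cZ)=\Phi_{\rm simple}\circ\Phi_\rmT(\cZ)$ from Lemma~\ref{lem:basic-hat}. Lemma~\ref{lem:hat-minus} with $a=1$ shows that the passage times $t_\rmL,t_\rmR$ computed for $\cZ$ lie in $\mr\cJ^-_\rmL,\mr\cJ^-_\rmR$. Since the excised intervals are excursions in which $\cL$ and $\cR$ stay above their values at the excursion's start, these times coincide with the corresponding passage times for $\mr\cZ$.

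The main care point is the bookkeeping for the simplified walk. One must check that erasing $\bbZ_+^2$ excursions does not change first-passage times below the current levels, which is what Lemma~\ref{lem:hat-minus} supplies. The rest is routine induction on the sewing steps.
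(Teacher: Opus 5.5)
Your proposal is correct and follows the argument the paper has in mind. The paper gives no separate proof; it only says the lemma is an immediate consequence of the sewing procedure and Lemma~\ref{lem:hat-minus}, which is exactly your stack description of the two boundary arcs, combined with Lemma~\ref{lem:hat-minus} (with $a=1$) and Lemma~\ref{lem:basic-hat} for the $\mr\cJ^-$ case. One small point: time is bi-infinite, so there is no base case for an induction, and the arc description should instead be proved by tracking the left arc forward from a fixed time $s$, noting that $\cR$-steps leave it untouched; your first-passage sentence effectively does this.
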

}

In the rest of this  subsection, we will introduce the time set $\cT_t^{\rmC,d}$  where $t\in\cJ^-$ and $d\in\bbN$ defined in~\cite[Section 5.1]{Tutte}, and introduce its variant  $\mr\cT_t^{\rmC,d}$ defined via  $\{I_t:t\in\mr \cJ^-\}$ and $\mr\cJ^-$. The letter ``$\rmC$"   is chosen since it is the first letter of the word ``continuum".   We will later use $\cT_t^{\rmC,d}$ and  $\mr\cT_t^{\rmC,d}$  to construct our cell configurations in Section~\ref{subsec:cellsystem}.

\begin{proposition}\label{prop:time-set-A-0}
 There exists a.s.\ finite subsets $(\cT_{s}^{\mathrm{C},{d}})_{s\in\cJ^-}$   of $\cJ^-$ such that the following is true. For each $s_0,u_0\in \cJ^-$, we have:
    \begin{enumerate}
        \item $s_0\in \cT_{s_0}^{\mathrm{C},{d}}$.\label{cond:prop:time-set-A-0-1}
        \item $s_0\in \cT_{u_0}^{\mathrm{C},d}$ if and only if $u_0\in \cT_{s_0}^{\rmC,d}$.\label{cond:prop:time-set-A-0-2}
        \item If $\{s_0,u_0\}\in \cE_{\rm{T}}$ then $u_0\in \cT_{s_0}^{\rmC,d}$.\label{cond:prop:time-set-A-0-3}
        \item The set $\cT_{s_0}^{\rmC,d}$ is $\rmG$-connected.  \label{cond:prop:time-set-A-0-4}
        \item Let $k,d\in\bbN$ and $\delta\in(0,1)$. There exists some constant $c$ depending on $d$ and $\delta$ such that 
        $$\bbP_\rmE[\bigcup_{t\in I((1-\delta)k)} \cT_t^{\mathrm C,d}\nsubseteq I(k)]\lesssim 2^{-ck}.$$ \label{cond:prop:time-set-A-0-5}
        \item  For each $d\in\bbN$, there exists some constant $c$ depending only on $d$ such that for large enough $k$, 
      \eqb\bbP_\rmE\big[\exists t\in [-2^{k+1},2^{k+1}]\cap \cJ^-: \#\cT_t^{\rmC,d}>k^{27} \big]\lesssim 2^{-ck}.\eqe \label{cond:prop:time-set-A-0-6}
      \item  {Fix $d\geq 3$. Let $s,s'\in \cJ^-_\rmL$ such that $\{s,s'\}\in \cE_\rmG$ or $s'\in \cT_s^{\rmC,1}$. Then there exists $t_0,...,t_n\in \cT_{s}^{\rmC,d}$ such that $t_0=s$, $t_n=s'$ and for each $j=0,...,n-1$, $\{t_j,t_{j+1}\}\in\cE_\rmT$.} \label{cond:prop:time-set-A-0-7}
    \end{enumerate}
     % The same conclusions hold for $\cT_t^{\rmC,d,\mathrm {large}}$ instead of $\cT_t^{\rmC,d}$. Furthermore, for each $s_0\in\cJ^-$, $ \cT_{s_0}^{\mathrm{C},{d}}\subseteq \cT_{s_0}^{\mathrm{C},{d},\mathrm{large}}$.
\end{proposition}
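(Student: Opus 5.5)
We will not build the sets from scratch. The plan is to import the construction of $\cT_s^{\rmC,d}$ from~\cite[Section 5.1]{Tutte}, where these time sets are defined from $(Z,\cZ,\fD,\fU)$ under $\bbP_\rmE$ for the same walk $\cZ=g(h,h^\IG,\fU)$ of Theorem~\ref{thm:KMT-0-cone}. Roughly, for $s\in\cJ^-$ one takes the smallest dyadic interval $I\in\fD$ containing $I_s$ whose Brownian increments and walk increments agree to the precision guaranteed by the coupling. One then declares $\cT_s^{\rmC,d}$ to consist of all $u\in\cJ^-$ whose intervals $I_u$ lie in the $d$-fold $\rmG$-neighbourhood of such a dyadic block. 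The union is taken over the dyadic ancestors needed to capture every $\rmT$-neighbour of $s$, and is then symmetrized via $u\in\cT_s^{\rmC,d}$ iff $s\in\cT_u^{\rmC,d}$. Properties~\ref{cond:prop:time-set-A-0-1}--\ref{cond:prop:time-set-A-0-6} are then the statements proved in~\cite[Section 5]{Tutte}, and we would quote them, checking only that our setting uses the same definitions of $I_t$, $\cE_\rmT$ and $\cE_\rmG$ in~\eqref{eq:def-I_t-simple}. Concretely:
\begin{enumerate}[(a)]
\item Properties~\ref{cond:prop:time-set-A-0-1}--\ref{cond:prop:time-set-A-0-3} are built into the definition: reflexivity, symmetrization, and containment of Lemma~\ref{lem:sew}-type neighbours.
\item Property~\ref{cond:prop:time-set-A-0-4} follows because the defining region is a $\rmG$-neighbourhood of a connected collection of intervals, using the characterization of $\cE_\rmG$ via $\eta(I_s)\cap\eta(I_u)\neq\emptyset$.
\item Properties~\ref{cond:prop:time-set-A-0-5} and~\ref{cond:prop:time-set-A-0-6} are the localization and cardinality bounds. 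These come from the KMT-type estimate Theorem~\ref{thm:KMT-0-cone}(ii), the transfer~\eqref{eq:KMT-0-cone}, and the SLE containment Lemmas~\ref{lem:SLE-contain}--\ref{lem:SLE-contain-1}.
\end{enumerate}

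The genuinely new item is~\ref{cond:prop:time-set-A-0-7}: $\rmG$-adjacency (or membership in $\cT_s^{\rmC,1}$) of $s,s'$ must be upgraded to a $\rmT$-path staying inside $\cT_s^{\rmC,d}$ for $d\ge3$. We would argue as follows.
\begin{enumerate}[(a)]
\item Suppose first $\{s,s'\}\in\cE_\rmG$ through condition~(\ref{cond:def-cG-(i)}), say with $\wt s\in I_s$, $\wt u\in I_{s'}$ and $L$ attaining a running infimum at both times. By the coupling, $\cL$ has an approximate matching running infimum on the same interval up to an error $O(\log)$.
\item Following the sewing procedure (Lemma~\ref{lem:sew}), the left endpoints of the edges $\fre_t$ for $t$ between $\wt s$ and $\wt u$ trace a path in the tree $\frT$ connecting $\frv_s$ to $\frv_{s'}$. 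This path runs through vertices $\frv_{t_\rmL}$ with $t_\rmL=\inf\{r>t:\cL_r=\cL_t-1\}$.
\item Each such vertex corresponds to a time whose interval $I_{t_\rmL}$ is $\rmG$-adjacent to $I_s$ or $I_{s'}$, since the relevant $\eta$-cells touch along the left boundary. Hence it lies in the $\rmG$-neighbourhood of radius $2$ of $s$, and the choice $d\ge3$ absorbs the extra step.
\item Condition~(\ref{cond:def-cG-(iii)}) is handled the same way. There $I_s\cap I_{s'}\neq\emptyset$ forces one interval to be in $\cJ_\rmL^-$ and the other in $\cJ_\rmR^-$, and the connecting path is obtained from the triangle $\frtr$ at the overlap time, using Lemma~\ref{lem:sew}.
\item The case $s'\in\cT_s^{\rmC,1}$ reduces to a chain of $\rmG$-edges each treated as above. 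The nesting $\cT^{\rmC,1}$-neighbourhoods $\subseteq\cT^{\rmC,d}$ keeps the concatenated path inside $\cT_s^{\rmC,d}$.
\end{enumerate}

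The main obstacle is this last property. One must verify carefully that the tree path produced by the sewing procedure does not exit the region defining $\cT_s^{\rmC,d}$, given that the walk and Brownian infima only agree approximately. If the discrepancy threatens this, we would first try enlarging the dyadic block by one generation, which is the role of $d\ge3$, and controlling it by the same moment bounds.
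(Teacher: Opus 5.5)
\textbf{There is a genuine gap, mainly in item 7, and your construction does not match the sets whose properties you quote.}

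\textbf{The construction.} In the paper (Section~\ref{subsubsec:3.1.1}, following \cite{Tutte}), $\cT^{\rmC,d}_s$ is not a $\rmG$-neighbourhood of a dyadic block. For each $t_0\in\cJ$ one runs to the stopping time $\tau^{\rmC,d}_{t_0,\rmL}$ at which $L$ falls below $L_{t_0}-d\,\Delta_\rmL[t_0,t]$. One then collects the $t\in\cJ^-_\rmL\cap[t_0,\tau^{\rmC,d}_{t_0,\rmL}]$ such that $\{L_u:u\in I_t\}$ meets the band of half-width $\Delta^{\rmC,d}_{t_0,\rmL}=d\,\Delta_\rmL[t_0,\tau^{\rmC,d}_{t_0,\rmL}]$, does the same with $R$, and symmetrizes over $t_0$. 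The local walk/Brownian discrepancy and the run length $N_\rmL$ are built into $\Delta_\rmL$. That is what makes items 1--4 and 7 hold deterministically for every pair. Items 1--5 and 7 are quoted from \cite[Propositions 5.1, 5.6]{Tutte} for exactly these sets. Item 6 is proved here (Lemma~\ref{lem:prop:time-set-A-0-6}): one compares with the walk-only sets $\cT^{k^2,\infty}_t$, bounds those under $\bbP_\rmQ$, and transfers via Theorem~\ref{thm:KMT-0-cone} (Lemma~\ref{lem:P_QT_kk2-1}); the SLE containment lemmas play no role. So you cannot quote Tutte's properties for the object you describe, and your argument for 7 is written for that object rather than the one whose properties you cite.

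\textbf{Item 7.} It is also part of \cite[Proposition 5.1]{Tutte}, and your direct argument for it fails at several steps.
\begin{itemize}
\item In (b)--(c), the contour of $\frT$ between $\wt s$ and $\wt u$ visits the whole subtree explored in that window. Even on the tree geodesic, all you know is that $L$ on $I_t$ is within $O(\Delta_\rmL)$ of $L_{\wt s}$. That does not make $\eta(I_t)$ meet $\eta(I_s)\cup\eta(I_{s'})$, so the ``$\rmG$-radius $2$'' claim is unfounded. Moreover, an $O(\log)$ coupling error is only a high-probability bound, whereas 7 must hold for every pair.
\item In (d), consecutive $s<s'$ in $\cJ^-_\rmL$ have $I_{s'}=[s,s']$. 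So $I_s\cap I_{s'}\neq\emptyset$ does not force the two times to be in opposite coordinates.
\item In (e), $\cT^{\rmC,d}_{s_i}\nsubseteq\cT^{\rmC,d}_s$ in general, and the $\rmG$-chain inside $\cT^{\rmC,1}_s$ may pass through $\cJ^-_\rmR$. So the concatenated path need not stay in $\cT^{\rmC,d}_s$.
\end{itemize}

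\textbf{The missing idea} is the discrete comparison family $\cT^{\rmD,d}$: the same construction with $\cL$ in place of $L$, enlarged by the descending chains $u\mapsto\inf\{t>u:\cL_t=\cL_u-1\}$ up to $\tau^{\rmD,d}$. These sets have three properties.
\begin{itemize}
\item They are $\rmT$-connected by Lemma~\ref{lem:sew}: each one-sided piece is joined by $\rmT$-edges to its stopping time, and all pieces share an element.
\item If $\{s,s'\}\in\cE_\rmG$, then $s'\in\cT^{\rmD,2}_s$, since the triangle inequality at the matching infima costs only a factor $2$ in the discrepancy.
\item They are sandwiched with the continuum sets: $\cT^{\rmC,d}\subseteq\cT^{\rmD,d+1}$ and $\cT^{\rmD,d}\subseteq\cT^{\rmC,d+1}$.
\end{itemize}
Hence, for $d\ge 3$, either hypothesis of 7 puts $s'$ in $\cT^{\rmD,d-1}_s$, which is a $\rmT$-connected subset of $\cT^{\rmC,d}_s$. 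This is exactly where $d\ge3$ comes from. The paper carries out this argument for the $\mr\cT$ analogue in Lemma~\ref{lem:def-wh-T-Dd} and the proof of Proposition~\ref{prop:time-set-A}.
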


\begin{proposition}\label{prop:time-set-A}
  There exists a.s.\ finite subsets $(\mr\cT_{s}^{\mathrm{C},{d}})_{s\in\mr\cJ^-}$ of $\mr\cJ^-$ such that the following is true. For each $s_0,u_0\in\mr\cJ^-$, we have:
    \begin{enumerate}
        \item $s_0\in \mr\cT_{s_0}^{\mathrm{C},{d}}$.\label{cond:prop:time-set-A-1}
        \item $s_0\in \mr\cT_{u_0}^{\mathrm{C},d}$ if and only if $u_0\in \mr\cT_{s_0}^{\rmC,d}$.\label{cond:prop:time-set-A-2}
        \item If $\{s_0,u_0\}\in \mr\cE_{\rm{T}}$ then $u_0\in \mr\cT_{s_0}^{\rmC,d}$.\label{cond:prop:time-set-A-3}
        \item  $\mr\cT_{s_0}^{\rmC,d}$ is $\rmG$-connected.  \label{cond:prop:time-set-A-4}
        \item Let $k,d\in\bbN$ and $\delta\in(0,1)$. There exists some constant $c$ depending on $d$ and $\delta$ such that 
        $$\bbP_\rmE[\bigcup_{t\in I((1-\delta)k)} \mr\cT_t^{\mathrm C,d}\nsubseteq I(k)]\lesssim 2^{-ck}.$$ \label{cond:prop:time-set-A-5}
        \item  For each $d\in\bbN$, there exists some constant $c$ depending only on $d$ such that for large enough $k$, 
      \eqb\label{eq:cond:prop:time-set-A-6}\bbP_\rmE\big[\exists t\in [-2^{k+1},2^{k+1}]\cap \mr\cJ^-: \#\mr\cT_t^{\rmC,d}>k^{27} \big]\lesssim 2^{-ck}.\eqe \label{cond:prop:time-set-A-6}
       \item  {Fix $d\geq 3$. Let $s,s'\in\mr\cJ^-$ such that  $\{s,s'\}\in \mr\cE_\rmG$ or $s'\in  \mr\cT_{s}^{\mathrm{C},{1}}$. Then there exists $t_0,...,t_n\in \mr\cT_{s}^{\rmC,d}$ such that $t_0=s$, $t_n=s'$ and for each $j=0,...,n-1$, $\{t_j,t_{j+1}\}\in\mr\cE_\rmT$.} \label{cond:prop:time-set-A-7}
    \end{enumerate}
    %  The same conclusions hold for $\mr\cT_t^{\rmC,d,\mathrm {large}}$ instead of $\mr\cT_t^{\rmC,d}$. Furthermore, for each $s_0\in\mr\cJ^-$, $ \mr\cT_{s_0}^{\mathrm{C},{d}}\subseteq \mr\cT_{s_0}^{\mathrm{C},{d},\mathrm{large}}$.
\end{proposition}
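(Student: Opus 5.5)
We construct $\mr\cT^{\rmC,d}_s$ by rerunning the construction of $\cT^{\rmC,d}_s$ from~\cite[Section 5.1]{Tutte}, which underlies Proposition~\ref{prop:time-set-A-0}, for the simplified walk $\mr\cZ=\Phi_{\rm simple}(\cZ)$. Throughout we replace $\cJ^-$ by $\mr\cJ^-$, the intervals $(I_t)$ by $(\mr I_t)$, and the graphs $\cE_\rmT,\cE_\rmG$ by $\mr\cE_\rmT,\mr\cE_\rmG$. The Brownian motion $Z$ and the dyadic data $\fU$ are kept unchanged. In~\cite{Tutte}, $\cT^{\rmC,d}_s$ is essentially the set of $u\in\cJ^-$ whose interval $I_u$ lies within $d$ steps of $I_s$ in a graph built from $\eta$-cells at dyadic scales adapted to the size of $I_s$. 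The discrete triangulation adjacency is then absorbed via the KMT coupling of Theorem~\ref{thm:KMT-0-cone}.

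\textbf{Step 1: the combinatorial items (1)--(4) and (7).} We define $\mr\cT^{\rmC,d}_{s_0}$ to be the closure of the $d$-step $\mr\cE_\rmG$/$\mr\cE_\rmT$-neighbourhood in the auxiliary graph. This makes items (1), (2) and (3) immediate, and item (4) follows since the neighbourhood is built from $\mr\cE_\rmG$-paths. For item (7), we use Lemma~\ref{lem:sew} applied to $\mr\cZ$ together with Lemma~\ref{lem:hat-minus}: the endpoints of $\fre_t$ correspond to the hitting times $t_\rmL,t_\rmR$, and these lie in $\mr\cJ^-$. Consequently an $\mr\cE_\rmG$-edge between $\mr I_s$ and $\mr I_{s'}$, that is, a common running infimum of $L$, is shadowed by a chain of $\mr\cE_\rmT$-edges through intermediate downsteps that stay inside the $d$-neighbourhood once $d\ge 3$. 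This is the same argument as in~\cite{Tutte}. Lemma~\ref{lem:hat-minus} supplies the needed monotonicity ($\cL_t\ge\cL_{t_0^-}$ on $\mr I_{t_0}$) in place of the corresponding property of $I_t$.

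\textbf{Step 2: the probabilistic items (5) and (6).} We reduce these to the unsimplified estimates. The key input is to control the erased excursions. We show that with probability $1-O(2^{-ck})$, every erased interval $(t^-,t^+]$ meeting $[-2^{k+1},2^{k+1}]$ has length at most $2^{\delta k}$. The argument is as follows. A $\bbZ_+^2$ cone excursion of $\cZ$ of duration $N$ forces, via Theorem~\ref{thm:KMT-0-cone}(ii), the process $Z$ to stay within $C'k$ of a $\pi/2$-cone on a time interval of length $N$. For Brownian motion with the covariance~\eqref{eq:variance} this has probability at most $N^{-c}$ (the cone exponent for an uncorrelated Brownian motion in a quarter plane), and a union bound over the $2^{O(k)}$ starting points then gives the claim. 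We transfer the bound from $\bbP_\rmQ$ to $\bbP_\rmE$ using~\eqref{eq:KMT-0-cone} if necessary.

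Given this, each $\mr I_t$ is contained in a union of at most $O(1)$ intervals $I_u$ enlarged by erased pieces of size at most $2^{\delta k}$. Hence $\mr\cT^{\rmC,d}_t$ is contained in a bounded blow-up of $\bigcup_u\cT^{\rmC,d'}_u$, and items (5) and (6) follow from Proposition~\ref{prop:time-set-A-0}(5),(6) after adjusting $\delta$ and the polynomial power. If the power $k^{27}$ is not preserved, we instead redo the moment bound of~\cite{Tutte} directly with $\mr I_t$. The moments of the number of downsteps inside an $\eta$-cell are unchanged by erasure, since erasure only removes downsteps.

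\textbf{Main obstacle.} The main difficulty is Step 2. Erasure is non-local (see Lemma~\ref{lem:A2}), so the events involved are not measurable with respect to $\cZ|_{I(k)}$, which is required to apply~\eqref{eq:KMT-0-cone}. We first try to localise using Lemma~\ref{lem:A2} and Lemma~\ref{lem:hat}. On the high-probability event that $\mr\cJ$ meets both $(-2^{k},-2^{(1-\delta/2)k})$ and $(2^{(1-\delta/2)k},2^{k})$, the restriction $\mr\cZ|_{I((1-\delta)k)}$ is determined by $\cZ|_{I(k)}$. We bound the probability of this event's complement by the cone-excursion length estimate above.
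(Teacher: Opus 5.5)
\textbf{The main gap is in Step 2.} Your claim that, with probability $1-O(2^{-ck})$, every erased interval meeting $[-2^{k+1},2^{k+1}]$ has length at most $2^{\delta k}$ is false for $\delta<1/2$, already under $\bbP_\rmQ$. By Lemma~\ref{lem:walk-excursion}, two times at distance $m$ bound a $\bbZ_+^2$-excursion with probability of order $m^{-3}$. Hence a given step starts an excursion of length $\ge N$ with probability of order $N^{-2}$, and a window of length $2^k$ typically contains erased intervals of length about $2^{k/2}$. This is why Proposition~\ref{lem:gap-hat-J-Q} needs $\alpha>1/2$, so you cannot ``adjust $\delta$'' to be small.

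More seriously, any such bound decays only polynomially in the time scale, like $2^{-cm}$ with $c<2\alpha-1<1$. But \eqref{eq:KMT-0-cone} takes a fourth root and multiplies by $2^{C_0k}$ with $C_0>1$, so the transfer to $\bbP_\rmE$ fails. The transfer is also genuinely necessary: under $\bbP_\rmE$, $Z$ is the boundary length process of a $0$-quantum cone, not a Brownian motion, so the quarter-plane cone exponent cannot be applied there directly. Your localisation paragraph is circular as well. The complement of the event that $\mr\cJ$ hits both $(-2^k,-2^{(1-\delta/2)k})$ and $(2^{(1-\delta/2)k},2^k)$ is itself a long gap of $\mr\cJ$ under $\bbP_\rmE$, which is exactly what has to be proved.

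The missing idea is Proposition~\ref{lem:gap-hat-J}, which handles two kinds of long excursions separately:
\begin{enumerate}
\item Excursions from a time near $0$ to a far time are ruled out by $\bbP_\rmE$-estimates on the running infimum of $L$ together with KMT (Lemma~\ref{lem:gap-hat-J-9-3}).
\item Long excursions away from $0$ are ruled out geometrically. Such an excursion forces $\eta([t^-,t^+])$ to have macroscopic quantum area but tiny boundary length. Under $\bbP_\rmQ$ this is a Brownian estimate (Lemmas~\ref{lem:BM-e3}, \ref{lem:BM-e4}). It is moved to $\bbP_\rmE$ through the coupling of the two fields on $\bbD$, which differ by $\gamma\log|\cdot|$ and this is harmless away from $0$, followed by scaling (Lemma~\ref{lem:gap-hat-J-9-4}).
\end{enumerate}

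Two further ingredients are missing or wrong. First, erasure removes whole excursions, both up- and downsteps, not only downsteps. It can leave $\mr\cZ$ with long runs of upsteps containing no downstep of $\mr\cL$ or $\mr\cR$. So $\mr I_t$ is not covered by $O(1)$ intervals $I_u$, and you need a separate bound on $\mr N_\rmL,\mr N_\rmR$ (Lemmas~\ref{lem:number-jump-Q}, \ref{lem:number-jump-E}). That bound has $(15/16)^m$ decay, geometric in $m$, so unlike the gap bound it does survive \eqref{eq:KMT-0-cone} once $m=c_0k$ with $c_0$ large. It enters the tolerance $\mr\Delta$, item 5 (Lemma~\ref{lem:prop:time-set-A-5}), and the counting for item 6.

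For item 6 the paper does not use a ``bounded blow-up of $\bigcup_u\cT^{\rmC,d'}_u$'', which is unjustified because $\mr\tau^{\rmC,d}$ waits for a time of $\mr\cJ^-$ and $\mr\Delta$ involves $\mr N$. Instead it proves $\mr\cT^{\rmD,d}_t\subseteq\mr\cT^{k^2,\infty}_t$ (Lemma~\ref{lem:T_t^h}). It then bounds $\mr\cT^{k^2,\infty}_u$ by the unsimplified sets $\cT^{k^2,\infty}_t$, $t\in\cA(u)$, through the sets $\cO(\cdot)$ (Lemmas~\ref{lem:330}, \ref{lem:331}).

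Second, Step 1 does not work as described, and your description of the construction in \cite{Tutte} as a neighbourhood of dyadic $\eta$-cells is inaccurate. A $d$-step neighbourhood in a mixed $\mr\cE_\rmG/\mr\cE_\rmT$ graph is not automatically $\rmG$-connected, since $\mr\cE_\rmT$-edges need not be $\mr\cE_\rmG$-edges. It also cannot give item 7: the $\mr\cE_\rmT$-path shadowing an $\mr\cE_\rmG$-edge has length of order the $L$--$\cL$ discrepancy, not at most $d$. In the paper, $d$ instead multiplies the tolerance $\mr\Delta_\rmL[t_0,t]$ in an $L$-level window stopped at a time of $\mr\cJ^-$. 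Item 7 then comes from the discrete companion sets: $\mr\cT^{\rmD,d-1}_s$ is $\rmT$-connected, contains every $\mr\cE_\rmG$-neighbour of $s$ as well as $\mr\cT^{\rmC,1}_s$, and is contained in $\mr\cT^{\rmC,d}_s$ (Lemma~\ref{lem:def-wh-T-Dd}).
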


Proposition~\ref{prop:time-set-A-0} mostly follows from ~\cite{Tutte}. The goal of the rest of this subsection is to prove Proposition~\ref{prop:time-set-A}.

\subsubsection{The definition of the time sets}\label{subsubsec:3.1.1}

Let $\cZ=(\cL,\cR)$ and $Z=(L,R)$ be coupled as in Theorem~\ref{prop:KMT-gamma-cone} or Theorem~\ref{thm:KMT-0-cone}, and recall the walk $\mr\cZ=(\mr\cL,\mr\cR)$ defined after~\eqref{eq:double-edge-Mullin-A}. For an interval $[t_0,t_1]$, define $N_\rmL[t_0,t_1]$ to be the maximal 
number of consecutive upsteps for $\cL|_{[t_0,t_1]}$  without any downsteps, i.e.,
\eqb\label{eq:def-jump-N}N_\rmL[t_0,t_1] := \max \{\#(J\cap \cJ_\rmL^+):J\subseteq [t_0,t_1] \ \text{interval}, J\cap \cJ^-_\rmL=\emptyset\}.\eqe
Let $\mr N_\rmL[t_0,t_1]$   be the maximal 
number of consecutive upsteps for $\mr\cL|_{[t_0,t_1]}$  without any downsteps for $\mr\cL$ or $\mr \cR$, i.e.,
\eqb\label{eq:def-jump-hat-N}\mr N_\rmL[t_0,t_1] := \max \{\#(J\cap \mr\cJ_\rmL^+):J\subseteq [t_0,t_1] \ \text{interval}, J\cap \mr\cJ^-=\emptyset\}.\eqe
We set $$\Delta_\rmL[t_0,t_1] = \sup_{t\in [t_0,t_1]}|(L_t-\cL_t)-(L_{t_0}-\cL_{t_0})|+N_\rmL[t_0,t_1]+2.$$

We first introduce the time set $\cT_t^{\rmC,d}$   as in~\cite[Section 5.1]{Tutte}.  {For each $t_0\in\cJ$ and fixed $d\in\bbN$,} we define
\begin{equation}\label{eq:def-TtCd-00}
    \tau_{t_0,\rmL}^{\rmC,d} = \inf\{t\in\cJ^-_\rmL:t>t_0 \ \text{ and } \inf_{s\in [t_0,t]} (L_s-L_{t_0})<-d \cdot\Delta_\rmL[t_0,t] \},
\end{equation}
where the infimum of an empty set is infinity, and 
\begin{equation}\label{eq:def-TtCd-0011}
    \Delta_{t_0,\rmL}^{\rmC,d} = d\cdot \Delta_\rmL[t_0,\tau_{t_0,\rmL}^{\rmC,d}].
\end{equation}
Following~\cite[Section 5.1]{Tutte}, $ \tau_{t_0,\rmL}^{\rmC,d}  $ and $ \Delta_{t_0,\rmL}^{\rmC,d}$ are both a.s.\ finite under $\bbP_\rmE$ and $\bbP_\rmQ$. Now we set
\begin{equation}\label{eq:def-TtCd}
     \cT_{t_0,\rm L}^{\textrm{C},d} = \big\{ {t\in\cJ_\rmL^-}\cap [t_0, \tau_{t_0,\rm L}^{\textrm{C},d}]: \{L_s:s\in I_t\}\cap [L_{t_0}-\Delta_{t_0,\rm L}^{\textrm{C},d},L_{t_0}+\Delta_{t_0,\rm L}^{\textrm{C},d}]\neq\emptyset \big\}.
\end{equation}

Define $N_\rmR[t_0,t_1],\mr N_\rmR[t_0,t_1],\Delta_\rmR[t_0,t_1],\Delta_{t_0,\rmR}^{\rmC,d},\tau_{t_0,\rmR}^{\rmC,d},\cT_{t_0,\rmR}^{\rmC,d}$ in the exact same way by using $(\cR,\mr\cR,R)$ instead of $(\cL,\mr\cL,L)$. For $t_0\in\cJ$ and $s_0\in\cJ^-$, we set
\begin{equation}
    \cT_{t_0}^{\rmC,d,0}=\cT_{t_0,\rmL}^{\rmC,d}\cup \cT_{t_0,\rmR}^{\rmC,d}, \ \ \  \text{ and }  \ \ \ \cT_{s_0}^{\rmC,d} = \bigcup_{t_0\in\cJ:s_0\in  \cT_{t_0}^{\rmC,d,0}} \cT_{t_0}^{\rmC,d,0}.
\end{equation}

\iffalse
To define $\cT_{t_0}^{\rmD,d}$, we define
\begin{equation}
    \tau_{t_0,\rmL}^{\rmD,d} = \inf\{t\in\cJ^-_\rmL:t>t_0 \ \inf_{s\in [t_0,t]} \cL_s-\cL_0<-d \cdot\Delta_\rmL[t_0,t] \},
\end{equation}
also define
$$ \Delta_{t_0,\rmL}^{\rmD,d} = d\cdot \Delta_\rmL[t_0,\tau_{t_0,\rmL}^{\rmD,d}].$$
\fi

% {The Tutte paper also introduced the time set $\cT_{t_0}^{\rmD,d}$, and we omit its definition here.}
Now we define the time set $\mr \cT_{t_0}^{\rmC,d}$, which is slightly different from $\cT_{t_0}^{\rmC,d}$ since we will be working with the circle packing for the triangulation encoded by $\mathring{\cZ}$ rather than the map $\Phi_{\mathrm{M}}(\cZ)$.   We set \eqb\label{eq:def-delta-mr-tTDd} \mr \Delta_\rmL[t_0,t_1] = \sup_{t\in [t_0,t_1]}|(L_t-\cL_t)-(L_{t_0}-\cL_{t_0})|+\mr N_\rmL[t_0,t_1]+2.\eqe  {For each $t_0\in\cJ$ and fixed $d\in\bbN$,} we define
\begin{equation}\label{eq:def-delta-mr-tTDd-1}
   \mathring \tau_{t_0,\rmL}^{\rmC,d} = \inf\{t\in\mr\cJ^-:t>t_0 \ \text{ and } \inf_{s\in [t_0,t]} (L_s-L_{t_0})<-d \cdot\mathring\Delta_\rmL[t_0,t] \},
\end{equation}
where the infimum of an empty set is infinity, and 
\begin{equation}
    \mathring\Delta_{t_0,\rmL}^{\rmC,d} = d\cdot \mathring\Delta_\rmL[t_0,\mr\tau_{t_0,\rmL}^{\rmC,d}].
\end{equation}
We set
\begin{equation}\label{eq:def-TtCd-mr-0}
    \mr \cT_{t_0,\rm L}^{\textrm{C},d} = \big\{ {t\in\mr\cJ^-}\cap [t_0, \mr\tau_{t_0,\rm L}^{\textrm{C},d}]: \{L_s:s\in \mr I_t\}\cap [L_{t_0}-\mr\Delta_{t_0,\rm L}^{\textrm{C},d},L_{t_0}+\mr\Delta_{t_0,\rm L}^{\textrm{C},d}]\neq\emptyset \big\}.
\end{equation}
Define $\mathring \tau_{t_0,\rmR}^{\rmC,d}, \mathring\Delta_{t_0,\rmR}^{\rmC,d},\mr \cT_{t_0,\rm R}^{\textrm{C},d} $ in the exact same way  by using $(\cR,\mr\cR,R)$ instead of $(\cL,\mr\cL,L)$.  For $t_0\in\cJ$ and $s_0\in\mr\cJ^-$, we set
\begin{equation}\label{eq:def-TtCd-mr}
    \mr\cT_{t_0}^{\rmC,d,0}=\mr\cT_{t_0,\rmL}^{\rmC,d}\cup \mr\cT_{t_0,\rmR}^{\rmC,d}, \ \ \  \text{ and }  \ \ \ \mr\cT_{s_0}^{\rmC,d} = \bigcup_{t_0\in\cJ:s_0\in  \mr\cT_{t_0}^{\rmC,d,0}} \mr\cT_{t_0}^{\rmC,d,0}.
\end{equation}

\begin{lemma}\label{lem:time-set-sew}
    For  $t_0\in \cJ$, the following is true. Let $(t_{{\rm L}},t_{\rmR})$ be defined as in~\eqref{eq:lem:sew} and $(t_{ {\rm L,2}},t_{ {\rmR,2}})$ be defined as in~\eqref{eq:lem:sew-1} for $t=t_0$.  Then for $q\in\{\rmL,\rmR\}$, $t_{ {q}},t_{q,2}\in \cT_{t_0,q}^{\mathrm{C},d}$. If $t_q$ (resp.\ $t_{q,2}$) is in $\mr\cJ^-$, then $t_q$ (resp.\ $t_{q,2}$) is also in $\mr\cT_{t_0,q}^{\mathrm{C},d}$.   % let $v_{\rm L}$ and $v_{\rm R}$ be the left and right, respectively, end point of the edge $\mathfrak{e}_{t_0}$. Let $(t_{v_{\rm L}},t_{v_\rmR})$ be defined as in~\eqref{eq:lem:sew},  and $(t_{ {\rm L,2}},t_{ {\rmR,2}})$ be defined as in~\eqref{eq:lem:sew-1}. Then for any $d\in\bbN$, $t_{{\rm L}}\in \cT_{t_0,\rm L}^{\mathrm{C},d}$ and $t_{{\rmR}}\in \cT_{t_0,\rmR}^{\mathrm{C},d}$. If $t_{{\rmL}}$ and $t_{{\rmR}}$ are both in $\mr\cJ^-$, then $t_{{\rmL}}\in \mr\cT_{t_0,\rmL}^{\mathrm{C},d}$ and $t_{{\rmR}}\in \mr\cT_{t_0,\rmR}^{\mathrm{C},d}$. Finally, for $t\in \cJ^+_\rmL$ (resp.\ $t\in \cJ^+_\rmR$), then $t_{ {\rm L,2}}\in \cT_{t_0,\rm L}^{\mathrm{C},d}$ (resp.\ $t_{ {\rm R,2}}\in \cT_{t_0,\rm R}^{\mathrm{C},d}$).
\end{lemma}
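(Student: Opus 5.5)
The plan is to check directly against the definitions \eqref{eq:def-TtCd-00}--\eqref{eq:def-TtCd} and their $\mr{\ }$-analogues. We treat $q=\rmL$; the case $q=\rmR$ is symmetric. Both membership conditions in \eqref{eq:def-TtCd} must be verified. The first is that $t_\rmL,t_{\rmL,2}\in\cJ^-_\rmL\cap[t_0,\tau^{\rmC,d}_{t_0,\rmL}]$. The second is that $\{L_s:s\in I_t\}$ meets $[L_{t_0}-\Delta,L_{t_0}+\Delta]$ with $\Delta=\Delta^{\rmC,d}_{t_0,\rmL}$.

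Both $t_\rmL$ and $t_{\rmL,2}$ are first hitting times of a lower level by $\cL$, so they lie in $\cJ^-_\rmL$ and exceed $t_0$. For the second condition we use the endpoint $t$ of $I_t$ itself. We have $\cL_t=\cL_{t_0}-1$ or $\cL_{t_0}-2$. By definition, $\Delta_\rmL[t_0,t]\ge \sup_{s\in[t_0,t]}|(L_s-\cL_s)-(L_{t_0}-\cL_{t_0})|+2$. Hence
\[
|L_t-L_{t_0}|\le|\cL_t-\cL_{t_0}|+\Delta_\rmL[t_0,t]-2\le \Delta_\rmL[t_0,t].
\]
Since $\Delta_\rmL[t_0,\cdot]$ is nondecreasing in the right endpoint and $d\ge1$, it follows that $|L_t-L_{t_0}|\le \Delta^{\rmC,d}_{t_0,\rmL}$, provided $t\le\tau^{\rmC,d}_{t_0,\rmL}$.

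\textbf{The main point} is therefore to show $t_{\rmL,2}\le \tau:=\tau^{\rmC,d}_{t_0,\rmL}$; the bound for $t_\rmL$ then follows because $t_\rmL\le t_{\rmL,2}$. Suppose instead $\tau<t_{\rmL,2}$. On $[t_0,\tau]$ we then have $\cL_s\ge \cL_{t_0}-1$, so $\cL_s-\cL_{t_0}\ge -1$. This gives
\[
L_s-L_{t_0}\ge (\cL_s-\cL_{t_0})-(\Delta_\rmL[t_0,\tau]-2)\ge -\Delta_\rmL[t_0,\tau]+1>-d\,\Delta_\rmL[t_0,\tau].
\]
Thus the defining inequality of $\tau$ fails at $t=\tau$. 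We must also account for $\tau$ being an infimum over jump times. The set $\cJ^-_\rmL$ is discrete and the condition involves a closed interval, so the infimum is attained whenever it is finite, and the failure at $\tau$ is a contradiction. Hence $t_\rmL,t_{\rmL,2}\le\tau$, which settles the unmarked case.

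For the $\mr{\ }$ statement, assume $t_\rmL\in\mr\cJ^-$. The argument is identical, with $\mr\Delta_\rmL$, $\mr\tau$, $\mr I_t$ in place of $\Delta_\rmL$, $\tau$, $I_t$. The only changes needed are these. The definition \eqref{eq:def-delta-mr-tTDd} retains the summand $\sup|(L-\cL)-(L_{t_0}-\cL_{t_0})|+2$, which is written in terms of the unsimplified walk $\cL$. This is fine, since $t_\rmL$ and $t_{\rmL,2}$ are defined through $\cL$. Also, $\mr\tau$ is an infimum over $\mr\cJ^-$ rather than $\cJ^-_\rmL$, but it is still attained by discreteness. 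The contradiction argument on $[t_0,\mr\tau]$ goes through verbatim using $\cL\ge\cL_{t_0}-1$. Finally, the right endpoint of $\mr I_t$ is again $t$, so $L_t\in\{L_s:s\in\mr I_t\}$, and the same bound shows it lies within $\mr\Delta^{\rmC,d}_{t_0,\rmL}$ of $L_{t_0}$.
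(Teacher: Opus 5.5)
Your proof is correct and takes essentially the same route as the paper. You use $\cL\ge\cL_{t_0}-1$ on $[t_0,t_{\rmL,2})$ to show that the stopping condition defining $\tau^{\rmC,d}_{t_0,\rmL}$ (resp.\ $\mr\tau^{\rmC,d}_{t_0,\rmL}$) cannot be met before $t_{\rmL,2}$, and you then verify the level condition using the right endpoint $t$ of $I_t$ (resp.\ $\mr I_t$) together with $|\cL_t-\cL_{t_0}|\le 2$. The only cosmetic difference is that you argue by contradiction at $\tau$, relying on the infimum over the locally finite jump set being attained, whereas the paper checks the strict inequality directly at every $t\in[t_0,t_{\rmL,2})$.
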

% {[*Proof is identical to the draft*]}
\begin{proof}
     %{From Lemma~\ref{lem:sew}, $t_{v_{\rm L}} = \inf\{s>t:\cL_s=\cL_t-1\}$ and $t_{v_{\rm R}} = \inf\{s>t:\cR_s=\cR_t-1\}$}. Therefore 
   We only work with the case where $t_{\rmL,2}\in \mr\cJ^-$. The other cases follows similarly.  For any $t\in[t_0,t_{\rmL,2})$, by the triangle inequality and $\cL_t\geq\cL_{t_0}-1$,
    \eqb\label{eq:pf-sew-time-set-1}  L_t-L_{t_0}\geq \cL_t-\cL_{t_0}-\sup_{u\in[t_0,t]}|L_u-L_{t_0}-\cL_u+\cL_{t_0}|\geq -d\sup_{u\in[t_0,t]}|L_u-L_{t_0}-\cL_u+\cL_{t_0}|-d> -d\cdot \mr \Delta_\rmL[t_0,t]  ,\eqe
    which implies that  {$t_{{\rm L,2}}\leq \mr\tau_{t_0,\rm L}^{\textrm{C},d}$}. This further implies $\mr\Delta_{t_0,\rm L}^{\textrm{C},d}$ is greater or equal to the absolute value of the right hand side of~\eqref{eq:pf-sew-time-set-1}, and $t_{{\rm L,2}}\in \cT_{t_0,\rm L}^{\textrm{C},d}$ since 
    $$|L_{t_{{\rm L,2}}}-L_{t_0}|\leq |\cL_{{t_{\rm L,2}}}-\cL_{t_0}|+\sup_{u\in[t_0,t_{\rmL,2}]}|L_u-L_{t_0}-\cL_u+\cL_{t_0}|\leq \mr\Delta_{t_0,\rm L}^{\textrm{C},d}$$
    where we used that $|\cL_{t_{{\rm L,2}}}-\cL_{t_0}|=2$. The other assertions follows from a similar argument.
\end{proof}

\begin{lemma}\label{lem:def-wh-T-Cd}
    For each $s_0,u_0\in\mr\cJ^-$, the following hold.
    \begin{enumerate}
        \item $s_0\in \mr\cT_{s_0}^{\mathrm{C},{d}}$. \label{cond:lem:def-wh-T-Cd-1}
        \item $s_0\in \mr\cT_{u_0}^{\mathrm{C},d}$ if and only if $u_0\in \mr\cT_{s_0}^{\rmC,d}$. \label{cond:lem:def-wh-T-Cd-2}
        \item If $\{s_0,u_0\}\in \mr\cE_{\rm{T}}$ then $u_0\in \mr\cT_{s_0}^{\rmC,d}$. \label{cond:lem:def-wh-T-Cd-3}
        \item  $\mr\cT_{s_0}^{\rmC,d}$ is $\rmG$-connected. \label{cond:lem:def-wh-T-Cd-4} 
    \end{enumerate}
\end{lemma}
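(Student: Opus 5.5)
The plan is to verify the four items directly from the definitions~\eqref{eq:def-TtCd-mr-0}--\eqref{eq:def-TtCd-mr}, following the structure of the corresponding argument for $\cT^{\rmC,d}$ in~\cite[Section 5.1]{Tutte}. The key tool for items 1 and 3 is Lemma~\ref{lem:time-set-sew} combined with Lemma~\ref{lem:sew}. Item 2 comes from the symmetric form of the union defining $\mr\cT_{s_0}^{\rmC,d}$.

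\textbf{Item 1.} For $s_0\in\mr\cJ^-$, I will exhibit some $t_0\in\cJ$ with $s_0\in\mr\cT^{\rmC,d,0}_{t_0}$. Take $t_0$ to be the last jump time before $s_0$ at which the relevant coordinate equals $\cL_{s_0}+1$ (if $s_0\in\mr\cJ^-_\rmL$; otherwise use $\cR$). More concretely, choose $t_0\in\cJ$ so that $s_0=t_\rmL$ as in~\eqref{eq:lem:sew}; for instance, $t_0$ the jump time immediately preceding $s_0$ works, since $\cL$ then drops by one at $s_0$. Since $s_0\in\mr\cJ^-$, Lemma~\ref{lem:time-set-sew} gives $s_0\in\mr\cT^{\rmC,d}_{t_0,\rmL}$. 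Hence $s_0\in\mr\cT^{\rmC,d}_{s_0}$.

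\textbf{Item 2.} By definition, $u_0\in\mr\cT^{\rmC,d}_{s_0}$ if and only if there exists $t_0\in\cJ$ with both $s_0,u_0\in\mr\cT^{\rmC,d,0}_{t_0}$. This condition is symmetric in $s_0$ and $u_0$.

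\textbf{Item 3.} Suppose $\{s_0,u_0\}\in\mr\cE_\rmT$, i.e.\ the vertices are adjacent in $\mr T=\Phi_\rmT(\mr\cZ)$. Every edge of the triangulation lies on some triangle $\mr{\frtr}(t)$, so the two vertices are both endpoints of some $\fre_t$, or one is $\frv_t$ with the other an endpoint of $\fre_t$.

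By Lemma~\ref{lem:sew}, applied to $\mr\cZ$ and transported to $\cZ$ via Lemma~\ref{lem:basic-hat} and Lemma~\ref{lem:hat-minus}, these vertices correspond to times among $t_\rmL$, $t_\rmR$, $t_{\rmL,2}$, $t_{\rmR,2}$, or $t$ itself, for a suitable $t\in\cJ$. These times lie in $\mr\cJ^-$ by Lemma~\ref{lem:hat-minus}. Lemma~\ref{lem:time-set-sew} then puts both $s_0$ and $u_0$ in $\mr\cT^{\rmC,d,0}_{t}$, possibly after replacing $t$ by the previous jump time so that $t$ itself is represented as a $t_\rmL$ or $t_\rmR$.

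The main bookkeeping here is the case where the edge joins $\frv_t$ to an endpoint of $\fre_t$. There I take $t_0$ to be the time just before $t$, so that $t=t_{0,q}$ for some $q$. The endpoints of $\fre_t$ are then the first and second hitting times of the levels $\cL_{t_0}-1,\cL_{t_0}-2$ (or the mixed $\rmL/\rmR$ analogue). This is exactly what~\eqref{eq:lem:sew-1} handles.

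\textbf{Item 4.} This is the main obstacle. It suffices to show that each $\mr\cT^{\rmC,d,0}_{t_0}$ is $\rmG$-connected and contains a common element. The union over $t_0$ of sets all containing $s_0$ is then connected.

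For $\mr\cT^{\rmC,d}_{t_0,\rmL}$, the plan is as follows. Consecutive times $t<t'$ of $\mr\cJ^-$ in $[t_0,\mr\tau^{\rmC,d}_{t_0,\rmL}]$ have adjacent intervals $\mr I_t$, $\mr I_{t'}$ sharing the endpoint $t$. At a shared endpoint the condition~\eqref{eq:def-G-connected-1-1} holds trivially with $\wt s=\wt u$, so they are $\mr\cE_\rmG$-adjacent.

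The harder part is that membership in $\mr\cT^{\rmC,d}_{t_0,\rmL}$ skips times whose $L$-range avoids the window $[L_{t_0}-\mr\Delta,L_{t_0}+\mr\Delta]$. I will handle this as follows. Take two members $t<t'$ and consider any excursion of $L$ strictly outside the window between them. If it lies above the window, then by continuity of $L$ the last time before and first time after at level $L_{t_0}+\mr\Delta$ carry equal $L$-values. Moreover, $L$ on the excursion exceeds this value, so~\eqref{eq:def-G-connected-1-1} connects the two bracketing members directly. An excursion below $L_{t_0}-\mr\Delta$ cannot occur before $\mr\tau$, by the definition~\eqref{eq:def-delta-mr-tTDd-1}.

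Finally, $\mr\cT_{t_0,\rmL}$ and $\mr\cT_{t_0,\rmR}$ both contain the first element of $\mr\cJ^-$ after $t_0$, or they are linked through the interval $\mr I$ containing $t_0$, since $\mr I_t$ now records downsteps of both coordinates. This gives connectivity of $\mr\cT^{\rmC,d,0}_{t_0}$.
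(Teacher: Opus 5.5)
Your proposal is correct and takes essentially the same route as the paper. Items 1--3 come from the definitions~\eqref{eq:def-TtCd-mr-0}--\eqref{eq:def-TtCd-mr} together with Lemmas~\ref{lem:sew} and~\ref{lem:time-set-sew}. Item 4 comes from linking consecutive members of $\mr\cT^{\rmC,d}_{t_0,\rmL}$ either through a shared endpoint or through the two crossings of level $L_{t_0}+\mr\Delta^{\rmC,d}_{t_0,\rmL}$ that bracket an excursion above the window, with $\inf\{t\ge t_0: t\in\mr\cJ^-\}$ serving as a common element of the $\rmL$ and $\rmR$ sets.

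The differences are cosmetic. The paper gets item 1 directly by taking $t_0=s_0$ in~\eqref{eq:def-TtCd-mr-0}. For item 3 it views $\mr T$ as a subgraph of $T$ and splits into diagonal edges $\fre_{t_0}$ and tree edges created at an up-step $t_0$, instead of transporting Lemma~\ref{lem:sew} from $\mr\cZ$ to $\cZ$.
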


\begin{proof}
    Assertion~\ref{cond:lem:def-wh-T-Cd-1} holds since $s_0\in \mr\cT_{s_0,\rmL}^{\mathrm{C},{d}}$ from the definition~\eqref{eq:def-TtCd-mr-0}, while Assertion~\ref{cond:lem:def-wh-T-Cd-2} follows from the definition~\eqref{eq:def-TtCd-mr}. Assertion~\ref{cond:lem:def-wh-T-Cd-3} is from Lemma~\ref{lem:sew} and Lemma~\ref{lem:time-set-sew} as following. We view the triangulation $\mr T$ encoded by $\mr\cZ$ as a subgraph of the triangulation $T$ encoded by $\cZ$. Let $\frv_{s_0}$ and $\frv_{u_0}$ be the vertices for $s_0$ and $u_0$ in $T$. If $\frv_{s_0}$ and $\frv_{u_0}$ are not both in $\frT$ or $\frT_*$, then we consider  the edge $\fre_{t_0}$ joining $\frv_{s_0}$ and $\frv_{u_0}$, and we have $s_0\in \mr\cT_{t_0,\rmL}^{\mathrm{C},d}$ and $u_0\in \mr\cT_{t_0,\rmR}^{\mathrm{C},d}$. If $\frv_{s_0}$ and $\frv_{u_0}$ are connected by an edge in $\frT$ (resp.\ $\frT_*$) which first appears in the triangle $\mathfrak{tr}(t_0)$, then $s_0,u_0\in \mr\cT_{t_0,\rmL}^{\mathrm{C},d}$ (resp.\  $s_0,u_0\in \mr\cT_{t_0,\rmR}^{\mathrm{C},d}$). The claim then follows from~\eqref{eq:def-TtCd-mr}. 

     To prove Assertion~\ref{cond:lem:def-wh-T-Cd-4}, for consecutive $s<u$ in $\mr\cT_{t_0,\rmL}^{\rmC,d}$, if $\mr I_s\cap \mr I_{u}=\emptyset$, then $L_t> L_{t_0}+\mr\Delta_{t_0,\rmL}^{\rmC,d}$ for $t\in[s,u^-]$ where $\mr I_u = [u^-,u]$. If we set $\wt s = \sup\{t<s:L_t\leq L_{t_0}+\mr\Delta_{t_0,\rmL}^{\rmC,d}\}$ and $\wt u =  \inf\{t>u^-:L_t\leq L_{t_0}+\mr\Delta_{t_0,\rmL}^{\rmC,d}\}$, then $\wt s\in\mr I_s$, $\wt u\in \mr I_u$, and $L_{\wt s}=L_{\wt u} = \inf_{t\in[\wt s, \wt u]}L_t$ and therefore condition (\ref{cond:def-cG-(i)-1}) in the definition of $\mr\cE_{\rmG}$ holds. This implies that  $\mr\cT_{t_0,\rmL}^{\rmC,d}$ is $\rmG$-connected. Similarly $\mr\cT_{t_0,\rmR}^{\rmC,d}$ is G-connected.  {Moreover, if we let $t_0' = \inf\{t\geq t_0:t\in\mr\cJ^-\}$, then $t_0'\in \mr\cT_{t_0,\rmL}^{\rmC,d} \cap \mr\cT_{t_0,\rmR}^{\rmC,d}$, which implies $\mr\cT_{t_0}^{\rmC,d,0}$ is G-connected.}  Finally $\mr\cT_{s_0}^{\rmC,d}$ is $\rmG$-connected since all the sets  $\mr\cT_{t_0}^{\rmC,d,0}$ in the definition of $\mr\cT_{s_0}^{\rmC,d}$ in~\eqref{eq:def-TtCd-mr} are $\rmG$-connected and contain $s_0$. %\nina{*does this proof simplify if we remove (iii) from the definition of G-connected?*}
\end{proof}

Now we define the time set $\mr \cT_t^{\rmD,d}$.  The letter ``$\rmD$"   is chosen since it is the first letter of the word ``discrete". 
For each $t_0\in\cJ$ and fixed $d\in\bbN$, we define
\begin{equation}\label{eq:def:mr-tau-TtDd}
\begin{split}
   \mathring \tau_{t_0,\rmL}^{\rmD,d} &= \inf\{t\in\mr\cJ^-:t>t_0 \ \text{ and } \  \inf_{s\in [t_0,t]}(\cL_s-\cL_{t_0})<-d \cdot\mathring\Delta_\rmL[t_0,t] \}; \ \ \mathring\Delta_{t_0,\rmL}^{\rmD,d} = d\cdot \mathring\Delta_\rmL[t_0,\mr\tau_{t_0,\rmL}^{\rmD,d}]. % \\&=   \inf\{t\in\mr\cJ^-:t>t_0 \ \text{ and } \   \cL_t-\cL_{t_0}<-d \cdot\mathring\Delta_\rmL[t_0,t] \},
   \end{split}
\end{equation}
%  Now we set
% \begin{equation}
%     \mathring\Delta_{t_0,\rmL}^{\rmD,d} = d\cdot \mathring\Delta_\rmL[t_0,\mr\tau_{t_0,\rmL}^{\rmD,d}].
% \end{equation}

 %where the second equation holds since for any $t\in\mr\cJ^-$ and $\mr I_t = [t^-,t]$, $\cL_s\geq \cL_{t^-}$ for any $s\in [t^-,t)$.

We define
\begin{equation}\label{eq:def-mathringTtDd}
    \mr\cT_{t_0,\rm L}^{\textrm{D},d} = \big\{ {t\in\mr\cJ^-}\cap [t_0, \mr\tau_{t_0,\rm L}^{\textrm{D},d}]: \{\cL_s:s\in \mr I_t\}\cap [\cL_{t_0}-\mr\Delta_{t_0,\rm L}^{\textrm{D},d},\cL_{t_0}+\mr\Delta_{t_0,\rm L}^{\textrm{D},d}]\neq\emptyset \big\}.
\end{equation}
For any $s\in  \mr\cT_{t_0,\rm L}^{\textrm{D},d}$ define $u_0^s,u_1^s,...$ iteratively by $u_0^s=s$ and $$u_j^s = \inf\{t>u^s_{j-1}:\cL_t = \cL_{u^s_{j-1}}-1\}.$$ By   Lemma~\ref{lem:hat-minus} and $u_0^s\in \mr\cJ^-$,  $u_j^s\in\mr\cJ^-$ for each $j\in\bbN_0$. Let $\sigma_s = \inf\{j\in\bbN:u_j^s\in \mr\cT_{t_0,\rm L}^{\textrm{D},d} \ \text{or} \ u_j^s\geq  \mathring \tau_{t_0,\rmL}^{\rmD,d}   \}$, and 
\begin{equation*}
    \wh{\mr\cT_{t_0,\rm L}^{\textrm{D},d}} = \bigcup_{s\in  \mr\cT_{t_0,\rm L}^{\textrm{D},d}} \{u_0^s,...,u_{\sigma_s-1}^s\}.
\end{equation*}
Define $\mr\tau_{t_0,\rm R}^{\textrm{D},d},\mathring\Delta_{t_0,\rmR}^{\rmD,d}, \mr\cT_{t_0,\rm R}^{\textrm{D},d},\wh{\mr\cT_{t_0,\rm R}^{\textrm{D},d}}$ in the exact same way by using  $(\cR,\mr\cR,R)$ instead of $(\cL,\mr\cL,L)$. For $t_0\in\cJ$ and $s_0\in\mr\cJ^-$, we set
\begin{equation}\label{eq:def-wh-TtDd}
      \mr\cT_{t_0}^{\rmD,d,0}=\wh{\mr\cT_{t_0,\rmL}^{\rmD,d}}\cup \wh{\mr\cT_{t_0,\rmR}^{\rmD,d}}, \ \ \  \text{ and }  \ \ \ \mr\cT_{s_0}^{\rmD,d} = \bigcup_{t_0\in\cJ:s_0\in  \mr\cT_{t_0}^{\rmD,d,0}} \mr\cT_{t_0}^{\rmD,d,0}.
\end{equation}
We comment that it is straightforward to check from the definitions that for $t_0^+ = \inf\{s> t_0:s\in\mr\cJ^-\}$, $t_0^+\in \mr\cT^{\rmC,d}_{t_0,\rmL}\cap \mr\cT^{\rmC,d}_{t_0,\rmR}\cap \mr\cT^{\rmD,d}_{t_0,\rmL}\cap \mr\cT^{\rmD,d}_{t_0,\rmR}$ since $t_0\in\mr I_{t_0^+}$.

\begin{lemma}\label{lem:lower-bd-tau-hat-AA}
  Let $t_0\in\cJ$. Then  $\mr \tau :=  \mathring \tau_{t_0,\rmL}^{\rmD,d}\in \mr\cT_{t_0,\rmL}^{\rmD,d}$. If $[t_0,\mr \tau)\cap \mr\cJ^-=\emptyset$, then $\mr\cT_{t_0,\rmL}^{\rmD,d}= \wh{\mr\cT_{t_0,\rm L}^{\textrm{D},d}}=\{\mathring \tau\}$. Further assume $[t_0,\mr \tau)\cap \mr\cJ^-\neq\emptyset$. Then for $t\in [t_0, \mr \tau)$, $\cL_t\geq \cL_{t_0}- \mathring\Delta_{t_0,\rmL}^{\rmD,d}$, whereas $ \cL_{t_0}-\mathring\Delta_{t_0,\rmL}^{\rmD,d}-1\leq \cL_{\mr \tau}< \cL_{t_0}-\mathring\Delta_{t_0,\rmL}^{\rmD,d}$. The same holds for $(\cR,\rmR)$ instead of $(\cL,\rmL)$. %Furthermore, $\mathring \tau_{t_0,\rmL}^{\rmD,d}\in \mr\cT_{t,\rmL}^{\rmD,d}$.
\end{lemma}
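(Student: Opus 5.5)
The plan is to work directly from the definitions \eqref{eq:def:mr-tau-TtDd} and \eqref{eq:def-mathringTtDd}, using two inputs: the monotonicity of $t\mapsto\mr\Delta_\rmL[t_0,t]$, and the second part of Lemma~\ref{lem:hat-minus}. Write $\mr\tau=\mr\tau^{\rmD,d}_{t_0,\rmL}$, $\Delta=\mr\Delta^{\rmD,d}_{t_0,\rmL}=d\,\mr\Delta_\rmL[t_0,\mr\tau]$, and $\mr I_{\mr\tau}=[\mr\tau^-,\mr\tau]$. Both the supremum term and $\mr N_\rmL[t_0,t]$ are nondecreasing in $t$, so $\mr\Delta_\rmL[t_0,t]$ is nondecreasing in $t$. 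Also $\mr\Delta_\rmL\ge 2$, hence $\Delta\ge 2d\ge 2$.

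\emph{Case $[t_0,\mr\tau)\cap\mr\cJ^-=\emptyset$.} Here $\mr\tau^-<t_0$, so $t_0\in\mr I_{\mr\tau}$. The value $\cL_{t_0}$ lies in $[\cL_{t_0}-\Delta,\cL_{t_0}+\Delta]$, so $\mr\tau\in\mr\cT^{\rmD,d}_{t_0,\rmL}$. The only element of $\mr\cJ^-\cap[t_0,\mr\tau]$ is $\mr\tau$, so $\mr\cT^{\rmD,d}_{t_0,\rmL}=\{\mr\tau\}$. In the chain construction starting from $s=\mr\tau$ we have $u^s_1>\mr\tau$, so $\sigma_s=1$. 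Therefore $\wh{\mr\cT^{\rmD,d}_{t_0,\rmL}}=\{\mr\tau\}$.

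\emph{Case $[t_0,\mr\tau)\cap\mr\cJ^-\ne\emptyset$.} Now $\mr\tau^-\in\mr\cJ^-\cap[t_0,\mr\tau)$.
\begin{enumerate}
\item I first claim $\cL_t-\cL_{t_0}\ge-\Delta$ for all $t\in[t_0,\mr\tau^-]$. If $\mr\tau^-=t_0$ this is trivial. Otherwise $\mr\tau^->t_0$, and minimality of $\mr\tau$ gives
$$\inf_{[t_0,\mr\tau^-]}(\cL_s-\cL_{t_0})\ge -d\,\mr\Delta_\rmL[t_0,\mr\tau^-]\ge -d\,\mr\Delta_\rmL[t_0,\mr\tau]=-\Delta,$$
using monotonicity.
\item Lemma~\ref{lem:hat-minus} gives $\cL_t\ge\cL_{\mr\tau^-}$ on $[\mr\tau^-,\mr\tau)$, so the bound extends to all $t\in[t_0,\mr\tau)$. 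The same lemma also gives $\cL_{\mr\tau}\ge\cL_{\mr\tau^-}-1\ge\cL_{t_0}-\Delta-1$.
\item The defining property of $\mr\tau$ says $\inf_{[t_0,\mr\tau]}(\cL_s-\cL_{t_0})<-\Delta$. All values on $[t_0,\mr\tau)$ are at least $-\Delta$, so this infimum must be attained at $\mr\tau$, giving $\cL_{\mr\tau}<\cL_{t_0}-\Delta$. Together with the previous step this is the claimed two-sided bound.
\item Membership follows from the left limit. The left limit $\cL_{\mr\tau-}$ is a limit of values at times in $[\mr\tau^-,\mr\tau)\subseteq[t_0,\mr\tau)$, so $\cL_{\mr\tau-}\ge\cL_{t_0}-\Delta$. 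Hence $\cL$ genuinely jumps down at $\mr\tau$, and by nearest-neighbour steps $\cL_{\mr\tau-}=\cL_{\mr\tau}+1<\cL_{t_0}-\Delta+1\le\cL_{t_0}+\Delta$.
\item Taking a time $s\in\mr I_{\mr\tau}$ slightly before $\mr\tau$, with no jump in $(s,\mr\tau)$, gives $\cL_s=\cL_{\mr\tau-}$. This value lies in $[\cL_{t_0}-\Delta,\cL_{t_0}+\Delta]$, so $\mr\tau\in\mr\cT^{\rmD,d}_{t_0,\rmL}$.
\end{enumerate}

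The $(\cR,\rmR)$ version is identical by symmetry. The only delicate points are the boundary case $\mr\tau^-=t_0$ and the left-limit argument in steps 4 and 5; the left-limit argument is needed because $\Delta$ need not be an integer.
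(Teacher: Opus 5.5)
Your proof is correct and follows the paper's argument essentially step for step: the paper's $\sigma$ is your $\mr\tau^-$, and it uses the monotonicity of $\mr\Delta_\rmL[t_0,\cdot]$ and Lemma~\ref{lem:hat-minus} exactly as you do. The only difference is in the membership step, where the paper takes $\mr\tau^-$ itself as the witness in $\mr I_{\mr\tau}$, since $\cL_{t_0}-\mr\Delta^{\rmD,d}_{t_0,\rmL}\le\cL_{\mr\tau^-}\le\cL_{\mr\tau}+1<\cL_{t_0}-\mr\Delta^{\rmD,d}_{t_0,\rmL}+1$; your left-limit detour is therefore harmless but not actually needed, contrary to your closing remark.
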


\begin{proof}
From definition, if $[t_0,\mr \tau)\cap \mr\cJ^-=\emptyset$, then~\eqref{eq:def-mathringTtDd} implies that $\mr\cT_{t_0,\rmL}^{\rmD,d}= \wh{\mr\cT_{t_0,\rm L}^{\textrm{D},d}}=\{\mathring \tau\}$ since $t_0\in \mr I_{\mr \tau}$.
    Now assume $[t_0,\mr \tau)\cap \mr\cJ^-\neq\emptyset$. Let $\sigma = \max\{[t_0,\mr \tau)\cap \mr \cJ^-\}$. Then $\mr I_{\mr \tau} = [\sigma,\mr \tau]$, and the stopping condition in the definition~\eqref{eq:def:mr-tau-TtDd} has not been hit at the time $\sigma$,  which further implies that $\cL_t\geq \cL_{t_0}-d\cdot\mr\Delta_\rmL[t_0,\sigma]\ge \cL_{t_0}-d\cdot\mr\Delta_\rmL[t_0,\mr \tau] = \cL_{t_0}-\mathring\Delta_{t_0,\rmL}^{\rmD,d}$ for $t\in [t_0,\sigma]$. Here we used the fact that the function $t\mapsto d\cdot\mr\Delta_\rmL[t_0,t]$ is non-decreasing. On the other hand, by Lemma~\ref{lem:hat-minus} applied to the interval $\mr I_{\mr \tau}$, we have $\cL_{t}\geq\cL_{\sigma}$ for $t\in [\sigma,\mr \tau)$ whereas $\cL_{\mr \tau}\geq \cL_{\sigma}-1$. Therefore  $\cL_t\geq \cL_{t_0}- \mathring\Delta_{t_0,\rmL}^{\rmD,d}$ for $t\in [t_0,\mr \tau)$ and $\cL_{\mr \tau}\geq \cL_{t_0}- \mathring\Delta_{t_0,\rmL}^{\rmD,d}-1$. On the other hand, the definition~\eqref{eq:def:mr-tau-TtDd} implies that $\cL_{\mr \tau}<\cL_{t_0}- \mathring\Delta_{t_0,\rmL}^{\rmD,d}$. This further implies that $\cL_{\sigma}\in [\cL_{t_0}- \mathring\Delta_{t_0,\rmL}^{\rmD,d},\cL_{t_0}+2]$ and therefore $\mathring \tau
    \in  \mr\cT_{t_0,\rmL}^{\rmD,d}$ by~\eqref{eq:def-mathringTtDd}. This finishes the proof.
     % Suppose for some $t_1\in\mr\cJ^-$ with  $\mr I_{t_1}  = [t_1^-,t_1]$, $t\in [t_1^-,t_1)$. Indeed if $t_1\neq \mr t$, then the definition~\eqref{eq:def:mr-tau-TtDd} implies that we must have $\cL_t\geq \cL_{t_0}- d \cdot\mathring\Delta_\rmL[t_0,t_1]$. For $t_1=\mr t$, by Lemma~\ref{lem:hat-minus}, we have $\cL_t\geq \cL_{t_1^-}$. By our assumption $t_1^-\geq t_0$ then again from the definition~\eqref{eq:def:mr-tau-TtDd} $\cL_t\geq\cL_{t_1^-}\geq \cL_0-d \cdot\mathring\Delta_\rmL[t_0,t_1^-]$. Then it further follows from the definition~\eqref{eq:def:mr-tau-TtDd} that $\cL_{\mr t}< \cL_{t_0}- d \cdot\mathring\Delta_\rmL[t_0,t]$ and we must have $\mr t\in \mr\cT_{t_0,\rmL}^{\rmD,d}$. If $[t_0,\mr t)\cap \mr\cJ^-=\emptyset$, then $t_0\in I_{\mr t_0}$ and it is clear that $\wh{\mr\cT_{t_0,\rmL}^{\rmD,d}} = \mr\cT_{t_0,\rmL}^{\rmD,d}=  \{\mr t_0\} $.
\end{proof}

\begin{lemma}\label{lem:def-wh-T-Dd}
    For each $s_0,u_0\in\mr\cJ^-$, the following hold.
    \begin{enumerate}
        \item $s_0\in \mr\cT_{s_0}^{\mathrm{D},{d}}$. \label{cond:lem:def-wh-T-Dd-1}
        \item $s_0\in \mr\cT_{u_0}^{\mathrm{D},d}$ if and only if $u_0\in \mr\cT_{s_0}^{\rmD,d}$. \label{cond:lem:def-wh-T-Dd-2}
        \item  {For $d\geq 2$}, if $\{s_0,u_0\}\in \mr\cE_{\rm{G}}$ then $u_0\in \mr\cT_{s_0}^{\rmD,d}$. \label{cond:lem:def-wh-T-Dd-3}
       % \item For $d\geq2$, $\cT_{s_0}^{\rmC,d}$ is $G$-connected.  
        \item $\mr\cT_{s_0}^{\rmC,d}\subseteq \mr\cT_{s_0}^{\rmD, d+1}$ and $\mr\cT_{s_0}^{\rmD,d}\subseteq \mr\cT_{s_0}^{\rmC, d+1}$. \label{cond:lem:def-wh-T-Dd-4}
        \item  $\mr\cT_{s_0}^{\mathrm{D},{d}}$ is $\rmT$-connected. \label{cond:lem:def-wh-T-Dd-5}
    \end{enumerate}
   % The same properties hold for $\mr\cT_{s_0}^{\rmD,d,\mathrm {large}}$ and $\mr\cT_{s_0}^{\rmC,d,\mathrm{large}}$, respectively.
\end{lemma}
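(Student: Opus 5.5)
Assertions~\ref{cond:lem:def-wh-T-Dd-1} and~\ref{cond:lem:def-wh-T-Dd-2} follow just as in Lemma~\ref{lem:def-wh-T-Cd}. For Assertion~\ref{cond:lem:def-wh-T-Dd-1}, take $t_0=s_0$. Then $s_0\in\mr I_{s_0}$, so $\cL_{s_0}$ lies in the window $[\cL_{s_0}-\mr\Delta^{\rmD,d}_{s_0,\rmL},\cL_{s_0}+\mr\Delta^{\rmD,d}_{s_0,\rmL}]$. Also $s_0\le\mr\tau^{\rmD,d}_{s_0,\rmL}$, so $s_0\in\mr\cT^{\rmD,d}_{s_0,\rmL}\subseteq\wh{\mr\cT^{\rmD,d}_{s_0,\rmL}}$, since $u_0^{s}=s$. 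Assertion~\ref{cond:lem:def-wh-T-Dd-2} is immediate from the symmetric definition~\eqref{eq:def-wh-TtDd}.

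For Assertion~\ref{cond:lem:def-wh-T-Dd-4}, I compare the stopping rules directly. On $[t_0,t]$ we have $|(L_s-L_{t_0})-(\cL_s-\cL_{t_0})|\le\mr\Delta_\rmL[t_0,t]$ by~\eqref{eq:def-delta-mr-tTDd}. Hence
\[
\inf_{[t_0,t]}(\cL_s-\cL_{t_0})<-(d+1)\mr\Delta_\rmL[t_0,t]
\]
forces $\inf_{[t_0,t]}(L_s-L_{t_0})<-d\,\mr\Delta_\rmL[t_0,t]$, and symmetrically in the other direction. This gives $\mr\tau^{\rmC,d}_{t_0,\rmL}\le\mr\tau^{\rmD,d+1}_{t_0,\rmL}$ and $\mr\tau^{\rmD,d}_{t_0,\rmL}\le\mr\tau^{\rmC,d+1}_{t_0,\rmL}$. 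Since $t\mapsto\mr\Delta_\rmL[t_0,t]$ is nondecreasing, the $\Delta$'s are ordered the same way, up to the additive slack $\mr\Delta_\rmL[t_0,\cdot]$. That slack is absorbed by the extra factor $d+1$ versus $d$, using the same triangle inequality when converting the window condition on $L|_{\mr I_t}$ into one on $\cL|_{\mr I_t}$. This yields $\mr\cT^{\rmC,d}_{t_0,q}\subseteq\mr\cT^{\rmD,d+1}_{t_0,q}$ and $\mr\cT^{\rmD,d}_{t_0,q}\subseteq\wh{\mr\cT^{\rmD,d}_{t_0,q}}\subseteq\mr\cT^{\rmC,d+1}_{t_0,q}$. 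For the hat-extension I use that the points $u^s_j$ with $j<\sigma_s$ are before $\mr\tau^{\rmD,d}$ and have $\cL$-values in the window by Lemma~\ref{lem:lower-bd-tau-hat-AA}; hence they lie in the $L$-window of size $(d+1)\mr\Delta$. Taking unions over $t_0$ in~\eqref{eq:def-TtCd-mr} and~\eqref{eq:def-wh-TtDd} finishes this step. The only care needed is that the union in one definition is indexed by $t_0$ with $s_0\in\mr\cT^{\cdot,0}_{t_0}$, so the inclusion at level $0$ must be proved first.

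Assertion~\ref{cond:lem:def-wh-T-Dd-5} is the reason for the hat construction. Fix $t_0$; I show $\wh{\mr\cT^{\rmD,d}_{t_0,\rmL}}$ is T-connected. By Lemma~\ref{lem:sew}, each consecutive pair $\{u^s_j,u^s_{j+1}\}$ lies in $\mr\cE_\rmT$, because $u^s_{j+1}$ is the first time $\cL$ drops by one after $u^s_j\in\mr\cJ^-$. So each chain from $s$ is T-connected and ends, at index $\sigma_s$, either in $\mr\cT^{\rmD,d}_{t_0,\rmL}$ or at the time $\mr\tau^{\rmD,d}_{t_0,\rmL}$. The latter lies in the set by Lemma~\ref{lem:lower-bd-tau-hat-AA}: since $\cL\ge\cL_{t_0}-\mr\Delta$ before $\mr\tau$, the chain first crosses below the window exactly at $\mr\tau$. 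Following chains strictly increases time, so every $s$ is T-connected to $\mr\tau$, and the whole set is T-connected. The L and R parts both contain $t_0^+$, by the remark after~\eqref{eq:def-wh-TtDd}, so their union is T-connected. All sets in the union defining $\mr\cT^{\rmD,d}_{s_0}$ contain $s_0$, so $\mr\cT^{\rmD,d}_{s_0}$ is T-connected.

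The main obstacle is Assertion~\ref{cond:lem:def-wh-T-Dd-3}. If $\{s_0,u_0\}\in\mr\cE_\rmG$ via $\wt s\in\mr I_{s_0}$ and $\wt u\in\mr I_{u_0}$ with $L_{\wt s}=L_{\wt u}=\inf_{[\wt s,\wt u]}L$ (say $s_0<u_0$), I take $t_0=\sup\{t\in\cJ:t\le\wt s\}$, so that $t_0\in\mr I_{s_0}$ up to the endpoint. Then $s_0\in\mr\cT^{\rmD,d}_{t_0,\rmL}$, since $s_0$ is the first element of $\mr\cJ^-$ after $t_0$. It remains to show $u_0\in\mr\cT^{\rmD,d}_{t_0,\rmL}$ for $d\ge2$. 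The idea: on $[t_0,\wt u]$, $L$ stays above $L_{t_0}$ and returns to it, so $\cL$ stays at least $\cL_{t_0}-\mr\Delta_\rmL[t_0,\cdot]$. Thus $\mr\tau^{\rmD,d}_{t_0,\rmL}$ is not triggered before $\mr I_{u_0}$ once $d\ge2$, and $\cL_{\wt u}$ lies within $\mr\Delta_\rmL[t_0,\wt u]\le\mr\Delta^{\rmD,d}_{t_0,\rmL}$ of $\cL_{t_0}$. Hence $\mr I_{u_0}$ meets the window. The delicate points are that $\wt u$ may lie strictly inside $\mr I_{u_0}$, with $u_0$ possibly past the infimum point, and that $\mr\tau$ is only checked at times of $\mr\cJ^-$. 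I would handle these with Lemma~\ref{lem:hat-minus}, which says $\cL$ stays at least $\cL_{u_0^-}$ on $[u_0^-,u_0)$ and drops by at most one at $u_0$; the ``$+2$'' in $\mr\Delta$ absorbs this unit overshoot. The R case is symmetric.
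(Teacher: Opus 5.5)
You follow the same overall route as the paper. In Assertion~3 you use the base point $\sup(\cJ\cap(-\infty,\wt s])$. In Assertion~4 you use the stopping-time comparisons $\mr\tau^{\rmC,d}_{t_0,\rmL}\le\mr\tau^{\rmD,d+1}_{t_0,\rmL}$ and $\mr\tau^{\rmD,d}_{t_0,\rmL}\le\mr\tau^{\rmC,d+1}_{t_0,\rmL}$. In Assertion~5 you run the chains $u^s_j$ into $\mr\tau^{\rmD,d}_{t_0,\rmL}$ via Lemma~\ref{lem:sew}.

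\textbf{The gap: the hat-extension in Assertion~4.} You claim the chain points $u^s_j$, $1\le j<\sigma_s$, have $\cL$-values in the window $[\cL_{t_0}-\mr\Delta^{\rmD,d}_{t_0,\rmL},\cL_{t_0}+\mr\Delta^{\rmD,d}_{t_0,\rmL}]$ by Lemma~\ref{lem:lower-bd-tau-hat-AA}. That lemma gives only the lower edge. For the points that actually need treatment, the claim is false. Take $u\in\wh{\mr\cT^{\rmD,d}_{t_0,\rmL}}\setminus\mr\cT^{\rmD,d}_{t_0,\rmL}$. Then $u\in\mr\cJ^-\cap[t_0,\mr\tau^{\rmD,d}_{t_0,\rmL}]$ and $u\in\mr I_u$. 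If $\cL_u$ lay in the window, $u$ would already belong to $\mr\cT^{\rmD,d}_{t_0,\rmL}$. Hence $\cL_u>\cL_{t_0}+\mr\Delta^{\rmD,d}_{t_0,\rmL}$. (If your claim held, the hat construction would add nothing.) So the inclusion $\wh{\mr\cT^{\rmD,d}_{t_0,\rmL}}\subseteq\mr\cT^{\rmC,d+1}_{t_0,\rmL}$ needs an upper bound on how far above the window these points can sit. Your remark that the slack is absorbed by $d+1$ versus $d$ does not supply one.

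\textbf{The missing ingredient is the run-length $\mr N_\rmL$.}
\begin{enumerate}
\item Write $u=u^s_j$ with $s\in\mr\cT^{\rmD,d}_{t_0,\rmL}$, so $\cL_u<\cL_s$.
\item Pick $t'\in\mr I_s\cap[t_0,\mr\tau^{\rmD,d}_{t_0,\rmL}]$ with $\cL_{t'}$ in the window (take $t'=t_0$ if $t_0\in\mr I_s$).
\item The simplified walk makes no downstep strictly between $t'$ and $s$. By Lemma~\ref{lem:hat-minus} and the definition \eqref{eq:def-jump-hat-N}, $\cL$ can climb from $t'$ to $s$ by at most one step plus one run of upsteps. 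Thus $\cL_s\le\cL_{t'}+1+\mr N_\rmL[t_0,\mr\tau^{\rmD,d}_{t_0,\rmL}]$.
\item With the lower edge, this gives $\cL_{t_0}-\mr\Delta^{\rmD,d}_{t_0,\rmL}-1\le\cL_u\le\cL_{t_0}+\mr\Delta^{\rmD,d}_{t_0,\rmL}+\mr N_\rmL[t_0,\mr\tau^{\rmD,d}_{t_0,\rmL}]+1$.
\item Passing to $L$ costs one more $\sup|(L-\cL)-(L_{t_0}-\cL_{t_0})|$.
\end{enumerate}
The total $\mr\Delta^{\rmD,d}_{t_0,\rmL}+\sup|\cdot|+\mr N_\rmL+1$ is at most $\mr\Delta^{\rmC,d+1}_{t_0,\rmL}$ only because $\mr\Delta^{\rmC,d+1}_{t_0,\rmL}\ge(d+1)\mr\Delta_\rmL[t_0,\mr\tau^{\rmD,d}_{t_0,\rmL}]$ and $\mr\Delta_\rmL$ contains $\mr N_\rmL+2$. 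This is precisely why $\mr N_\rmL$ is built into $\mr\Delta_\rmL$.

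\textbf{A smaller slip in Assertion~3.} $L$ stays above $L_{\wt s}$ on $[\wt s,\wt u]$, not above $L_{t_0}$ on $[t_0,\wt u]$. Also $\cL_{\wt s}=\cL_{t_0}$ while $L_{\wt s}\neq L_{t_0}$ in general. Comparing through $\wt s$ costs two copies of the sup. The correct bounds are $\cL_t-\cL_{t_0}>-2\mr\Delta_\rmL[t_0,t]$ on $(\wt s,\wt u]$ and $|\cL_{\wt u}-\cL_{t_0}|\le 2\mr\Delta_\rmL[t_0,\wt u]$. This factor $2$ is exactly where $d\ge 2$ enters; with your factor-$1$ bounds the hypothesis would be superfluous.
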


\begin{proof}
    Assertion~\ref{cond:lem:def-wh-T-Dd-1} and Assertion~\ref{cond:lem:def-wh-T-Dd-2} are clear from the definition~\eqref{eq:def-wh-TtDd}.

      To prove Assertion~\ref{cond:lem:def-wh-T-Dd-3}, without loss of generality assume $s_0<u_0$. If $\mathring I_{u_0} = [s_0,u_0]$, then $\mr\tau_{s_0,\rmL}^{\rmD,d}\geq u_0$ and $u_0\in \mr\cT_{s_0,\rmL}^{\rmD,d}\subseteq\mr\cT_{s_0}^{\rmD,d}$ since $s_0\in\mr I_{u_0}$.  Therefore it  suffices to work on the case when $\mr I_{u_0}$  and $\mr I_{s_0}$ are disjoint. Without loss of generality assume that condition~(\ref{cond:def-cG-(i)-1}) in the definition of $\mr\cE_\rmG$ holds; the case where condition~(\ref{cond:def-cG-(ii)-1}) holds follows from a symmetric argument. Then  there are $\wt s\in \mr I_{s_0}$ and $\wt u\in \mr I_{u_0}$ such that~\eqref{eq:def-G-connected-1-1} holds.  Let $\underline{s} = \sup(\cJ\cap(-\infty,\wt s])$. Then by definition of $\cJ$, $\cL_{\wt s}=\cL_{\underline{s}}$. We claim that $s_0,u_0\in\mr\cT_{\underline{s},\rmL}^{\rmD,d}$. For $t\in (\underline{s}, \wt s]$, $\cL_t=\cL_{\underline{s}}$, and for every $t\in (\wt s,\wt u]$, 
    \begin{equation*}
        \begin{split}
            \cL_t-\cL_{\underline{s}}&=\cL_t-\cL_{\wt s}\geq L_t-L_{\wt s} - \sup_{t'\in [\wt s,t]}|(\cL_{t'}-\cL_{\wt s})-(L_{t'}-L_{\wt s})|\\
            &\geq - \sup_{t'\in [\wt s,t]}\big(|(\cL_{t'}-\cL_{\underline s})-(L_{t'}-L_{\underline s})|+|(\cL_{\wt s}-\cL_{\underline s})-(L_{\wt s}-L_{\underline s})|\big)\\
            &> -2\sup_{t'\in [\underline s,t]}|(\cL_{t'}-\cL_{\underline s})-(L_{t'}-L_{\underline s})|\geq  -2\mr\Delta_{\rmL}[\underline{s},t]+2,
        \end{split}
    \end{equation*}
    where in the second line we used $L_t\geq L_{\wt s}$ on $[\wt s,\wt u]$ and a triangle inequality and in the last line we used the definition~\eqref{eq:def-delta-mr-tTDd}.  By   definition~\eqref{eq:def:mr-tau-TtDd}, this further implies $\mr\tau_{\underline{s},\rmL}^{\rmD,d}\geq\wt u$ and therefore $\mr\tau_{\underline{s},\rmL}^{\rmD,d}\geq u_0$. Since $\underline{s}\in \mr I_{s_0}$, we have $s_0\in \mr\cT_{\underline{s}, \rmL}^{\rmD,d}$. Now using $L_{\wt u}=L_{\wt s}$, by a triangle inequality,
     \begin{equation}\label{eq:pf-def-T-Dd-1}
        \begin{split}
            |\cL_{\wt u}-\cL_{\underline{s}}|&=|\cL_{\wt u}-\cL_{\wt s}|\leq |L_{\wt u}-L_{\wt s}| + |(\cL_{\wt u}-\cL_{\wt s})-(L_{\wt u}-L_{\wt s})|\\
            &\leq  |(\cL_{\wt u}-\cL_{\underline s})-(L_{\wt u}-L_{\underline s})|+|(\cL_{\wt s}-\cL_{\underline s})-(L_{\wt s}-L_{\underline s})|\\
            &\leq 2\sup_{t'\in [\underline s,t]}|(\cL_{t'}-\cL_{\underline s})-(L_{t'}-L_{\underline s})|\leq 2\mr\Delta_\rmL[\underline{s},t].
        \end{split}
    \end{equation}
    Combined with $\mr\tau_{\underline{s},\rmL}^{\rmD,d}\geq \wt u$, the right hand side of~\eqref{eq:pf-def-T-Dd-1} is less or equal to $\mr\Delta_{\underline{s},\rmL}^{\rmD,d}$, which further implies $u_0\in \mr\cT_{\underline{s},\rmL}^{\rmD,d}$. By~\eqref{eq:def-wh-TtDd}  this implies $u_0\in\mr\cT_{s_0}^{\rmD,d}$, which finishes the proof for Assertion~\ref{cond:lem:def-wh-T-Dd-3}.

     To prove Assertion~\ref{cond:lem:def-wh-T-Dd-4}, we note that for any $t_0\in\cJ$, $u_0\in [t_0,\mr\tau_{t_0,\rmL}^{\rmC,d})\cap \mr\cJ^-$ and $t\in [t_0,u_0]$, using the definition of $\mr\tau_{t_0,\rmL}^{\rmC,d}$ as in~\eqref{eq:def-delta-mr-tTDd-1},
     \begin{equation*}
     \begin{split}
    \cL_t-\cL_{t_0} &\geq L_t-L_{t_0} - \sup_{s\in [t_0,t]}|(L_t-\cL_t)-(L_{t_0}-\cL_{t_0})|\\&\geq -d\cdot\mr\Delta_{ \rmL}[t_0,u_0]- \sup_{s\in [t_0,t]}|(L_t-\cL_t)-(L_{t_0}-\cL_{t_0})|\geq -(d+1)\cdot \mr\Delta_\rmL[t_0,u_0] .\end{split}\end{equation*}
    This implies $\mr\tau_{t_0,\rmL}^{\rmD,d+1}\geq\mr\tau_{t_0,\rmL}^{\rmC,d}$, and in particular $$\mr\Delta_{t_0,\rmL}^{\rmD,d+1}\geq \mr\Delta_{t_0,\rmL}^{\rmC,d}+\sup_{s\in [t_0,\mr\tau_{t_0,\rmL}^{\rmC,d}]}|(L_t-\cL_t)-(L_{t_0}-\cL_{t_0})|+\mr N_\rmL[t_0,\mr\tau_{t_0,\rmL}^{\rmC,d}]+1.$$
      Suppose $t\in \mr\cT_{t_0,\rmL}^{\rmC,d}$. Then one can find $t'\in \mr I_t\cap [t_0,\mr\tau_{t_0,\rmL}^{\rmC,d}]$ such that $|L_{t'}-L_{t_0}|\leq \mr\Delta_{t_0,\rmL}^{\rmC,d}$, and
     $$|\cL_{t'}-\cL_{t_0}|\leq |L_{t'}-L_{t_0}|+\sup_{s\in [t_0,\mr\tau_{t_0,\rmL}^{\rmC,d}]}|(L_{t'}-\cL_{t'})-(L_{t_0}-\cL_{t_0})|\leq\mr \Delta_{t_0,\rmL}^{\rmD,d+1}.$$
This further implies $\mr\cT_{t_0,\rmL}^{\rmC,d}\subseteq \mr\cT_{t_0,\rmL}^{\rmD,d+1}\subseteq \wh{\mr\cT_{t_0,\rmL}^{\rmD,d+1}}$ for any $t_0\in\cJ$. Similarly $\mr\cT_{t_0,\rmR}^{\rmC,d}\subseteq \mr\cT_{t_0,\rmR}^{\rmD,d+1}\subseteq \wh{\mr\cT_{t_0,\rmR}^{\rmD,d+1}}$ and further implies the first part of Assertion~\ref{cond:lem:def-wh-T-Cd-4}.
    
    By a symmetric argument,  $\mr\tau_{t_0,\rmL}^{\rmC,d+1}\geq\mr\tau_{t_0,\rmL}^{\rmD,d}$, and   \eqb\label{eq:pf-38712387231} \mr\Delta_{t_0,\rmL}^{\rmC,d+1}\geq \mr\Delta_{t_0,\rmL}^{\rmD,d}+\sup_{s\in [t_0,\mr\tau_{t_0,\rmL}^{\rmD,d}]}|(L_t-\cL_t)-(L_{t_0}-\cL_{t_0})|+\mr N_\rmL[t_0,\mr\tau_{t_0,\rmL}^{\rmD,d}]+1,\eqe
    and also $\mr\cT_{t_0,\rmL}^{\rmD,d}\subseteq \mr\cT_{t_0,\rmL}^{\rmC,d+1}$.
It remains to take care of $t\in \wh{\mr\cT_{t_0,\rmL}^{\rmD,d}}\backslash \mr\cT_{t_0,\rmL}^{\rmD,d}$. If this set is not empty, then we must have $[t_0,\mr\tau_{t_0,\rmL}^{\rmD,d})\cap \mr\cJ^-\neq\emptyset$, otherwise $\wh{\mr\cT_{t_0,\rmL}^{\rmD,d}}= \mr\cT_{t_0,\rmL}^{\rmD,d} = \{\mr\tau_{t_0,\rmL}^{\rmD,d}\}$. Then by Lemma~\ref{lem:lower-bd-tau-hat-AA}, $\cL_s\geq \cL_{t_0}-\mr \Delta_{t_0,\rmL}^{\rmD,d}-1$ for all $s\in [t_0,\mr\tau_{t_0,\rmL}^{\rmD,d}]$. 
Furthermore, there exists $t''\in \mr\cT_{t_0,\rmL}^{\rmD,d}$ such that $t = u^{t''}_j$ for some $j$ as in the definition of $\wh{\mr\cT_{t_0,\rmL}^{\rmD,d}}$. In particular, $\cL_{t''}\geq\cL_t$ and further $t'\in \mr I_{t''}  \cap [t_0,\mr\tau_{t_0,\rmL}^{\rmD,d}]$ such that $|\cL_{t'}-\cL_{t_0}|\leq \mr\Delta_{t_0,\rmL}^{\rmD,d}$. Let $t''' = \inf((\mr\cJ\cap [t',\infty))\leq t''$. Then by Lemma~\ref{lem:hat-minus}, $\cL_{t'}\geq \cL_{t'''}-1$ and $\cL_{t''}\leq\cL_{t'''}+\mr N_\rmL[t',t'']$, which further implies that \eqb\label{eq:TtDdcontainedTtCd+1-pf-1} \cL_{t_0}-\mr\Delta_{t_0,\rmL}^{\rmD,d}-1\leq \cL_t\leq \cL_{t_0}+\mr\Delta_{t_0,\rmL}^{\rmD,d}+ \mr N_\rmL[t_0, \mr\tau_{t_0,\rmL}^{\rmD,d}]+1.\eqe Therefore combined with~\eqref{eq:pf-38712387231}, we have
  $$|L_{t}-L_{t_0}|\leq |\cL_{t}-\cL_{t_0}|+\sup_{s\in [t_0,\mr\tau_{t_0,\rmL}^{\rmD,d}]}|(L_{t'}-\cL_{t'})-(L_{t_0}-\cL_{t_0})|\leq\mr \Delta_{t_0,\rmL}^{\rmC,d+1}.$$
  This verifies that $\wh{\mr\cT_{t_0,\rmL}^{\rmD,d}}\subseteq \mr\cT_{t_0,\rmL}^{\rmC,d+1}$, which further implies the second half of Assertion~\ref{cond:lem:def-wh-T-Dd-4}.

 Below we write $\mr\tau = \mr\tau_{t_0,\rmL}^{\rmD,d}$. For Assertion~\ref{cond:lem:def-wh-T-Dd-5}, we first prove that for each $t_0\in\cJ$, $ \wh{\mr\cT_{s_0,\rmL}^{\rmD,d}}$ is $\rmT$-connected. This is indeed the case where $[t_0,\mr\tau)\cap \mr\cJ^-=\emptyset$, since in this case $ \wh{\mr\cT_{t_0,\rmL}^{\rmD,d}} = \{\mr\tau\}$. Now assume $[t_0,\mr\tau)\cap \mr\cJ^-\neq\emptyset$.  Then  by Lemma~\ref{lem:lower-bd-tau-hat-AA}, $ \mr\tau \in  \wh{\mr\cT_{t_0,\rmL}^{\rmD,d}}$. From our construction of $ \wh{\mr\cT_{s_0,\rmL}^{\rmD,d}}$, Lemma~\ref{lem:hat-minus} and Lemma~\ref{lem:lower-bd-tau-hat-AA}, for any $u\in  \wh{\mr\cT_{t_0,\rmL}^{\rmD,d}}$ such that $u\neq \mr\tau$, we have $\cL_u>\cL_{\mr\tau}$ and therefore  $u^+:=\inf\{s\geq u:\cL_s-\cL_u=-1\}\leq\mr\tau$, $u^+ \in  \wh{\mr\cT_{t_0,\rmL}^{\rmD,d}}$ and $\{u,u^+\}\in\mr\cE_\rmT$ by Lemma~\ref{lem:sew}.  This along with Lemma~\ref{lem:lower-bd-tau-hat-AA} implies that all the elements in $\wh{\mr\cT_{t_0,\rmL}^{\rmD,d}}$ are $\rmT$-connected to $\mr\tau$ and therefore  $\wh{\mr\cT_{t_0,\rmL}^{\rmD,d}}$  is $\rmT$-connected. Similarly  $\wh{\mr\cT_{t_0,\rmR}^{\rmD,d}}$ is $\rmT$-connected, and  $\wh{\mr\cT_{t_0}^{\rmD,d,0}} = \wh{\mr\cT_{t_0,\rmL}^{\rmD,d}}\cup \wh{\mr\cT_{t_0,\rmR}^{\rmD,d}}$ is $\rmT$-connected since $\wh{\mr\cT_{t_0,\rmL}^{\rmD,d}}$ and $\wh{\mr\cT_{t_0,\rmR}^{\rmD,d}}$ both contain $\inf\{s\geq t_0: s\in\mr\cJ^-\}$. This completes the proof of Assertion~\ref{cond:lem:def-wh-T-Dd-5}, since all the sets $\wh{\mr\cT_{t_0}^{\rmD,d,0}} $ in the union  as in the definition~\eqref{eq:def-wh-TtDd} of  $\mr\cT_{s_0}^{\mathrm{D},{d}}$ contain $s_0$.
\end{proof}

\subsubsection{Bounds on the set $\mr\cJ$}\label{subsubsec:3.1.2}

As shown in~\cite[Section 5]{Tutte}, we have the following bounds on the time set $\cJ^-_\rmL$:
\begin{proposition}\label{prop:Tutte5.8}
    For any $a\in (0,1)$, there is a $c\in (0,1)$ such that 
    \begin{equation}
        \bbP_\rmE[ \cJ^-_\rmL\cap [-2^n,(a-1)2^{n}]=\emptyset]\lesssim 2^{-cn}, \ \  \bbP_\rmE[ \cJ^-_\rmL\cap [(1-a)2^n,2^{n}]=\emptyset]\lesssim 2^{-cn}.
    \end{equation}
    The same also holds if we replace $\cJ_\rmL^-$ with $\cJ^-_\rmR$, or replace $\bbP_\rmE$ with $\bbP_\rmQ$.
\end{proposition}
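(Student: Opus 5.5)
We prove the statement for $\cJ^-_\rmL$ on the right window $[(1-a)2^n,2^n]$; the left window, the case $\cJ^-_\rmR$, and the case $\bbP_\rmQ$ follow by the same argument (for $\bbP_\rmQ$ it is even simpler, since $\cZ$ is then exactly a rate-one walk by Theorem~\ref{prop:KMT-gamma-cone}(\ref{cond:prop:KMT-gamma-cone-(i)})). The idea is that an empty window forces $\cL$ to be monotone nondecreasing on an interval of length $a2^n$. The coupling with $Z$ then makes $L$ essentially nondecreasing on that interval. That is exponentially unlikely, because $L$ is a Brownian motion (for both $\bbP_\rmE$ and $\bbP_\rmQ$, via the mating-of-trees description of the boundary length process).

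\textbf{Step 1 (coupling).} Apply Theorem~\ref{thm:KMT-0-cone}(\ref{cond:thm:KMT-0-cone-(ii)}) with $C_1=1$ and $m=n$. Off an event of probability $\lesssim 2^{-n}$ we have
\[
\sup_{t\in[-2^n,2^n]}|Z_t-\cZ_t|\le C'n .
\]
On the event that $\cJ^-_\rmL\cap[(1-a)2^n,2^n]=\emptyset$, $\cL$ makes no downsteps in this window, so $\cL_t\ge \cL_s$ whenever $(1-a)2^n\le s\le t\le 2^n$. Combining the two, for all such $s\le t$,
\[
L_t-L_s\ge -2C'n .
\]

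\textbf{Step 2 (Brownian estimate).} Split $[(1-a)2^n,2^n]$ into $N=\lfloor a2^{n/2}\rfloor$ disjoint subintervals, each of length $2^{n/2}$. On each subinterval, the probability that the increment of $L$ attains a value $\le -2C'n$ is at least some $p>0$ uniformly in $n$. This holds because the Brownian increment over time $2^{n/2}$ has typical size $2^{n/4}\gg n$.

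By independence of increments under $\bbP_\rmE$, where $L$ is a two-sided Brownian motion (the 0-cone boundary length process of~\cite{GMS21tutte} has stationary independent increments), the probability that no subinterval exhibits such a drop is at most
\[
(1-p)^{N}\le \exp\!\big(-c\,2^{n/2}\big).
\]
An even cruder choice suffices: a single Brownian increment over the whole window exceeds $-2C'n$ with probability bounded away from 1. However, we need the probability to decay in $n$, which is why we use many blocks.

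\textbf{Step 3 (conclusion and main obstacle).} Combining the two steps,
\[
\bbP_\rmE\big[\cJ^-_\rmL\cap[(1-a)2^n,2^n]=\emptyset\big]\lesssim 2^{-n}+e^{-c2^{n/2}}\lesssim 2^{-cn}.
\]
The main point requiring care is that under $\bbP_\rmE$ the process $Z$ really is a Brownian motion with independent increments. If only a weaker statement is available for the 0-quantum cone, we fall back on Theorem~\ref{thm:KMT-0-cone}(\ref{cond:thm:KMT-0-cone-(iv)}) instead. We express the event $\{\exists$ a window of length $a2^n$ near time $0$ without $\cL$-downsteps$\}$ in the form~\eqref{eq:def-A}, measurable with respect to $\cZ|_{I(k)}$ with $k\approx n+1$. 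We bound its $\bbP_\rmQ$-probability by a union over polynomially many shifts, each of probability $\le 4^{-\lfloor a2^{n}\rfloor/2}$ or smaller, since the i.i.d.\ steps of the walk make a long run without $\cL$-downsteps exponentially rare in the number of steps. The resulting bound $\bbP_\rmQ[A]^{1/4}2^{C_0k}+2^{-k/C_0}$ is then still $\lesssim 2^{-cn}$.

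Finally, the translation from time windows $[-2^n,2^n]$ to the dyadic interval $I(k)$ is handled by covering $[(1-a)2^n,2^n]$ with $O(1)$ windows of comparable size. This does not affect the bounds.
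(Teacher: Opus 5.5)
Your Steps~1--2 do not prove the $\bbP_\rmE$ case, because the claim they rest on is false. Under $\bbP_\rmE$ the field is a $0$-quantum cone, and the mating-of-trees theorem makes the boundary length process a Brownian motion only for the $\gamma$-quantum cone, i.e.\ under $\bbP_\rmQ$. The $0$-cone is essentially the $\gamma$-cone viewed from a Lebesgue-typical rather than an LQG-typical point. So under $\bbP_\rmE$ the law of $Z$ is a non-local reweighting of Brownian motion: it has Brownian scaling (Lemma~\ref{lem:L_t-inf-1}), but it is not a Brownian motion, its increments are not independent, and time $0$ plays a distinguished role. This is why the paper has to quote Lemmas~\ref{lem:L_t-inf-1}--\ref{lem:L_t-inf-a1a2} from~\cite{Tutte} and needs a separate argument for Proposition~\ref{lem:gap-hat-J}. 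Consequently the block bound $(1-p)^N$ has no justification.

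Your fallback is the argument that actually works, and it suffices on its own, making Steps~1--2 unnecessary. Two details need correcting.

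\textbf{Choice of $k$ and measurability.} The event $A$ must lie in $\sigma(\cZ|_{I(k)},I(k))$. For $k\approx n+1$ the window $[(1-a)2^n,2^n]$ fails to lie in $I(k)$ with probability bounded away from $0$, and covering it by $O(1)$ windows does not help. Instead take $k=\lceil(1+\delta)n\rceil$ and $W=[(1-a)2^n+1,2^n-1]$. Let $G(\cZ',\fD')=1$ iff $W$ lies in the interval of $\fD'$ of length $2^k$ containing $0$ and the first coordinate of $\cZ'$ has no downstep in $W$. For $s\in I(0)$ that interval for $\fD-s$ is $I(k)-s$, and $I(0)$ is determined by $I(k)$. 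Hence the event $A$ of~\eqref{eq:def-A} is $\sigma(\cZ|_{I(k)},I(k))$-measurable. Taking $s=0$, the event you want to bound is contained in $A\cup\{[(1-a)2^n,2^n]\nsubseteq I(k)\}$. The second event has probability $\lesssim 2^{-\delta n}$, since $\fD$ is independent of $(h,h^{\IG})$.

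\textbf{The $\bbP_\rmQ$ bound.} No union over shifts is needed, and $4^{-\lfloor a2^n\rfloor/2}$ is not the right bound. Since $I(0)\subseteq[-1,1]$, $A$ is contained in $\{\cJ^-_\rmL\cap[(1-a)2^n+2,2^n-2]=\emptyset\}$. Because $\cJ^-_\rmL$ is a rate-$\frac14$ Poisson process under $\bbP_\rmQ$, this event has $\bbP_\rmQ$-probability $e^{-(a2^n-4)/4}$. That swamps the factor $2^{C_0k}$ in~\eqref{eq:KMT-0-cone}, leaving a total bound $\lesssim 2^{-\delta n}+2^{-k/C_0}$.

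With these corrections you have a self-contained proof by transfer from $\bbP_\rmQ$, in the same style as Lemmas~\ref{lem:number-jump-E} and~\ref{lem:331}. The paper itself simply cites~\cite[Lemma 5.8]{Tutte} for $\bbP_\rmE$ and uses the Poisson structure for $\bbP_\rmQ$.
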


\begin{proof} 
    The claim on $\bbP_\rmE$ follows from~\cite[Lemma 5.8]{Tutte} and symmetry, while the claim on $\bbP_\rmQ$ is immediate since $\cJ^-_\rmL,\cJ^-_\rmR$ are Poisson point processes under $\bbP_\rmQ$.
\end{proof}

The goal here is to prove analogous bounds on the set $\mr\cJ$, which is much more complicated due to the double edge collapsing. Recall the definition of the measure $\bbP_\rmE$ and $\bbP_\rmQ$ as in Definition~\ref{def:P_EP_Q}.

\begin{proposition}\label{lem:gap-hat-J-Q}
     For $\alpha\in(1/2,1)$, for $c\in (0,2\alpha-1)$, we have 
    \eqb\label{eq:lem:gap-hat-J-Q}\bbP_\rmQ\bigg[\exists \text{ interval } J\subseteq [-2^m,2^m], |J|=2^{\alpha m}, \mr\cJ\cap J=\emptyset\bigg]\lesssim 2^{-cm}.\eqe
\end{proposition}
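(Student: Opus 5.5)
Under $\bbP_\rmQ$, $\cZ$ is a rate-one continuous-time simple random walk, so the claim is a statement about this walk alone. I would first apply Lemma~\ref{lem:hat}. If an interval $J$ of length $2^{\alpha m}$ satisfies $\mr\cJ\cap J=\emptyset$, then $J$ is contained in a bad interval $[t^-,t^+]$ on which~\eqref{eq:double-edge-Mullin-A} holds. On $[t^-,t^+]$ the walk $\cZ-\cZ_{t^-}$ therefore stays in the closed quadrant $\bbZ_{\ge0}^2$ and returns to $0$ at time $t^+$, so the event in~\eqref{eq:lem:gap-hat-J-Q} forces a quadrant excursion of duration at least $2^{\alpha m}$ that overlaps $[-2^m,2^m]$.

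The main step is to show that such long excursions are unlikely. Discretize by placing the starting time $t^-$ in the (a.s.\ finite, with exponential tails $\lesssim 2^{m+1}$ with overwhelming probability) set of jump times of $\cZ$. There are two cases.

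\emph{Case 1: $t^-\ge -2^{m+1}$.} For a fixed jump time $s$, the probability that the walk started at $s$ stays in $\bbZ_{\ge0}^2$ relative to $\cZ_s$ for time at least $T$ is of order $T^{-1}$. This is because the two coordinates are, after a time change, independent one-dimensional walks, each with survival probability $\asymp T^{-1/2}$. A union bound over $O(2^{m})$ jump times $s\in[-2^{m+1},2^m]$ with $T=2^{\alpha m}$ then gives $2^{m}\cdot 2^{-\alpha m}$. This bound is too weak when $\alpha<1$, so it must be improved by using the requirement that the excursion actually returns to the corner.

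The sharper estimate I would aim for is that the number of starting times $s$ in a window of length $2^m$ from which a quadrant excursion of length at least $T$ begins is $\lesssim 2^m T^{-2+o(1)}\cdot T$ on average. This uses the standard asymptotics for cone excursions of planar walks: the probability that a walk in the right-angle cone returns to the apex at time $T'$ is $\asymp T'^{-1-\pi/(\pi/2)}=T'^{-3}$, consistent with Lemma~\ref{lem:walk-excursion}. Summing over lengths $T'\ge T$ gives $T^{-2}$ per starting time. A union bound over $O(2^m)$ starting times then yields
\[ 2^m\, 2^{-2\alpha m}=2^{-(2\alpha-1)m}, \]
which matches the exponent $c<2\alpha-1$ exactly. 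Any $c<2\alpha-1$ is then obtained after absorbing the Poisson count of jump times, using a Chernoff bound for the event that there are more than $2^{m+2}$ jumps. I would derive the per-start bound $\lesssim T^{-2}$ directly from the explicit counting formula in Lemma~\ref{lem:walk-excursion}: the number of quadrant excursions of $2k$ steps is $\asymp 4^{2k}k^{-3}$, and the count is stable under conditioning on Poisson jump times.

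\emph{Case 2: $t^-<-2^{m+1}$.} Then the excursion covers $[-2^{m+1},-2^m]$ or reaches into $[-2^m,2^m]$. In particular there is a time $s_0\in\{-2^m\}$ or $J$ itself from which the walk stays above its value at an earlier time. Here I would instead use that a bad interval containing a fixed time $u$ and having length $\ge T$ is dominated the same way: by time-reversal and stationarity of increments, the probability of being inside a quadrant excursion of length $\ge T$ around a fixed time is $\sum_{T'\ge T}T'\cdot T'^{-3}\asymp T^{-1}$. Applying this at $u=0$ with $T\ge 2^{m}$ gives $2^{-m}$, which suffices.

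The main obstacle is the excursion-length tail bound $\lesssim T^{-2}$ per starting jump time, uniformly. The counting is clean in discrete time, but the continuous-time setting and the fact that the excursion is only known to exceed length $T$ in continuous time, not in number of steps, require a concentration step comparing step counts with elapsed time. That comparison is routine via Poisson large deviations.
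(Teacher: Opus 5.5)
Your argument is correct and gives the same exponent as the paper, but the bookkeeping is organized differently.

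\textbf{The paper's route.} It cuts $[-2^m,2^m]$ into $O(2^{(1-\alpha)m})$ consecutive blocks $J_i$ of length $2^{\alpha m-1}$ and notes that every $J$ of length $2^{\alpha m}$ contains a whole block. For a \emph{fixed} block of $n$ steps, it bounds the probability of being covered by a quadrant excursion by
\[
\sum_{L\ge n}(L-n+1)L^{-3}\lesssim n^{-1},
\]
which is \eqref{eq:pf-gam-hat-J-Q}. A union bound over blocks, after the Poisson concentration step, then gives $2^{(1-\alpha)m}\cdot 2^{-(1-\e)\alpha m}$.

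\textbf{Your route.} You take a first-moment bound over excursion \emph{starting} steps, using the tail $\sum_{L\ge T}L^{-3}\asymp T^{-2}$ for the length of an excursion begun at a given step. This only sees excursions that start in a bounded window. That is why you need Case~2 for excursions starting before $-2^{m+1}$. The estimate you use there (a fixed time lies in an excursion of length at least $T$ with probability $\lesssim T^{-1}$) is essentially the paper's covering bound \eqref{eq:pf-gam-hat-J-Q}.

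\textbf{Comparison.} The block decomposition is slightly more economical: one estimate handles all excursions, wherever they start, with no case split. Your version makes the count $2^m\cdot T^{-2}$ more transparent.

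\textbf{Small slips to fix.}
\begin{itemize}
\item The expected count written as $2^mT^{-2+o(1)}\cdot T$ should be $2^mT^{-2}$, which is what your subsequent computation actually uses.
\item In Case~2 the bad interval is only guaranteed to contain $[-2^{m+1},-2^m]$, not $0$. Apply the fixed-time bound at $u=-2^m$ instead; by stationarity of the increments the resulting bound is unchanged.
\end{itemize}
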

% {[*Check if this proposition is needed*]}

\begin{proof}
   Using Lemma~\ref{lem:walk-excursion}, a union bound and translation invariance, for the simple random walk $(\cX_n,\cY_n)_{n\in\bbZ}$ on $\bbZ^2$, 
\eqb\label{eq:pf-gam-hat-J-Q}\bbP\big[\exists i_1\leq -n, i_2\geq0, s.t., \cX_{i_1}=\cX_{i_2}=\inf_{i_1\leq j\leq i_2}\cX_i,\cY_{i_1}=\cY_{i_2}=\inf_{i_1\leq j\leq i_2}\cY_i\big]\leq cn^{-1}.\eqe
Now for $i\in\bbZ$, let $J_i = [2^{\alpha m-1}i,2^{\alpha m-1}(i+1)]$. Fix $\e\in(0,2-1/\alpha)$. By part~(\ref{cond:prop:KMT-gamma-cone-(i)}) of Theorem~\ref{prop:KMT-gamma-cone},  $\cJ$ is a Poisson point process on $\bbR$, and  $\bbP_\rmQ[\#(J_i\cap \cJ)\leq 2^{(1-\e)\alpha m}]\leq c2^{-c_\e m^2}$.    Then by Lemma~\ref{lem:hat}, ~\eqref{eq:pf-gam-hat-J-Q} and translation invariance for the random walk $(\cX_n,\cY_n)_{n\in\bbZ}$, one can check using part~(\ref{cond:prop:KMT-gamma-cone-(i)}) of Theorem~\ref{prop:KMT-gamma-cone} that $\bbP_\rmQ\big[J_i\cap\mr\cJ=\emptyset\big]\leq c2^{-(1-\e)\alpha m}$. By a further union bound over $J_i$, we have
$$\bbP_\rmQ\big[J_i\cap\mr\cJ=\emptyset,\  \ \text{for some } i \in[ -2^{(1-\alpha)m+1}-2,2^{(1-\alpha)m+1}+2]\cap\bbZ \big]\lesssim 2^{(1-(2-\e)\alpha)m},$$
which further implies~\eqref{eq:lem:gap-hat-J-Q} since every $J\subseteq [-2^m,2^m]$ with $ |J|=2^{\alpha m}$ contains some $J_i$ for $ i \in[ -2^{(1-\alpha)m+1}-2,2^{(1-\alpha)m+1}+2]\cap\bbZ$.
\end{proof}

\begin{proposition}\label{lem:gap-hat-J}
    There exists $\e_0>0$ and $c\in(0,1)$ such that for any $n\in\bbN$, 
    \eqb\label{eq:lem:gap-hat-J}\bbP_\rmE\bigg[\exists \text{ interval } J\subseteq [-2^n,2^n], |J|=2^{(1-\e_0)n}, \mr\cJ\cap J=\emptyset\bigg]\lesssim 2^{-cn}.\eqe
\end{proposition}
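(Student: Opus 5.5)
The natural route is to transfer Proposition~\ref{lem:gap-hat-J-Q} from $\bbP_\rmQ$ to $\bbP_\rmE$ using the comparison~\eqref{eq:KMT-0-cone} of Theorem~\ref{thm:KMT-0-cone}. The obstruction is that the event in~\eqref{eq:lem:gap-hat-J} is not local: $\mr\cJ$ depends on the whole walk $\cZ$, not only on $\cZ|_{I(k)}$. Also,~\eqref{eq:KMT-0-cone} only applies to events of the form~\eqref{eq:def-A}, namely ``there exists a time $s\in I(0)$ at which the recentered walk has some property'', with the property measurable with respect to $\cZ|_{I(k)}$. So I will first replace the event by a local, recentered one, and then rescale.

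\textbf{Step 1 (localization).} By Lemma~\ref{lem:hat}, if $\mr\cJ\cap J=\emptyset$ for an interval $J$, then there is a bad interval $[t^-,t^+]\supseteq J$ on which~\eqref{eq:double-edge-Mullin-A} holds, i.e.\ $\cZ|_{[t^-,t^+]}-\cZ_{t^-}$ is a $\bbZ_{\geq 0}^2$-excursion of duration at least $|J|$. I split into two cases.

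\emph{Short case:} $t^+-t^-\leq 2^{(1+\e_1)n}$. Then, taking $s$ to be the left endpoint of $J$, the walk $\cZ(\cdot+s)-\cZ(s)$ is contained, on a window of length $2^{(1+\e_1)n}$ around $0$, in a cone excursion covering $[0,2^{(1-\e_0)n}]$. This property is measurable with respect to the walk restricted to that window.

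\emph{Long case:} one coordinate of $\cZ$ has no new running infimum over a time interval of length at least $2^{(1+\e_1)n}$ that meets $[-2^n,2^n]$. I bound this directly using~\eqref{cond:thm:KMT-0-cone-(ii)}, which says $\cZ$ is within $O(n)$ of $Z$ on $[-2^{m},2^{m}]$, together with the scaling of Lemma~\ref{lem:Rb}. This reduces the long case to a Brownian statement under $\bbP_\rmE$: it is polynomially unlikely that $L$ or $R$ stays above its value at a time in $[-2^n,2^n]$ for a duration of $2^{(1+\e_1)n}$. I would get this from Proposition~\ref{prop:Tutte5.8}, applied to a geometric sequence of scales and combined with a union bound.

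\textbf{Step 2 (rescaling to $I(0)$).} By Lemma~\ref{lem:Rb}, the process $(b^{-1/2}Z_{bt})$ has the same law as $Z$. However, the walk is not scale invariant, so rescaling must be done through the dyadic structure rather than the walk itself. Concretely, I cover $[-2^n,2^n]$ by about $2^{\e n}$ translates of unit-order intervals, up to the translation invariance modulo scaling in~\eqref{eq:shift-inv-Z}. Instead of pushing everything to scale $I(0)$, it is cleaner to apply~\eqref{eq:KMT-0-cone} directly with $k\approx(1+2\e_1)n$. For this I need the existential form ``$\exists s\in I(0)$''. I would cover $[-2^n,2^n]$ by shifts $s$ of dyadic blocks and use stationarity of the pair $(\cZ,\fD)$ under $\bbP_\rmE$ along $\tau_z$-shifts (Theorem~\ref{thm:KMT-0-cone}(i)), paying a factor $2^{n+1}$ in the union bound.

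\textbf{Step 3 (combination).} Applying~\eqref{eq:KMT-0-cone} to the local event gives a bound
\[
\bbP_\rmQ[\cdot]^{1/4}\,2^{C_0k}+2^{-k/C_0}.
\]
Under $\bbP_\rmQ$, the local event is controlled by the argument in the proof of Proposition~\ref{lem:gap-hat-J-Q}. In that argument the probability that one fixed window is covered by a cone excursion of length $2^{(1-\e_0)n}$ is about $2^{-(1-\e_0)n}$. I expect the main obstacle here: this decay is only polynomial, while the loss factor $2^{C_0k}$ with $k\approx n$ is also exponential in $n$. So the $1/4$ power together with the factor $2^{C_0 k}$ would overwhelm the gain unless $C_0$ is small.

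To get around this, I would first try the following. Apply~\eqref{eq:KMT-0-cone} at a much smaller scale $k=\delta n$, to windows of length $2^{\delta n}$ tiling $J$. The event that $J$ misses $\mr\cJ$ forces, in each of the $2^{(1-\e_0-\delta)n}$ consecutive subwindows, a cone-excursion-type configuration. Equivalently, the coordinates show no new running minima over many consecutive subwindows. Under $\bbP_\rmQ$ this costs a superpolynomial amount, or at least $2^{-c'n}$ with $c'$ large compared with $C_0\delta$. Choosing $\delta$ small relative to $1/C_0$, and then $\e_0$ small, should close the estimate and give $2^{-cn}$.
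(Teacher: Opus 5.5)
Your proposal has a genuine gap in Step~3. It sits at exactly the obstruction you identify, and the workaround you suggest does not close it.

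Under $\bbP_\rmQ$, the event $\{J\cap\mr\cJ=\emptyset\}$ for a fixed $J$ is only polynomially unlikely, of order $|J|^{-1}$. To see this, sum the quadrant-excursion probabilities $\asymp m^{-3}$ from Lemma~\ref{lem:walk-excursion} over excursions of length $m\ge|J|$ covering $J$, as in the proof of Proposition~\ref{lem:gap-hat-J-Q}. Rewriting the same event as ``no new running minima over many consecutive subwindows'' cannot make it superpolynomially unlikely.

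To use \eqref{eq:KMT-0-cone}, the event must be measurable with respect to $\cZ|_{I(k)}$ with $2^k\gtrsim|J|=2^{(1-\e_0)n}$. Then $\bbP_\rmQ[A]^{1/4}2^{C_0k}\gtrsim 2^{(C_0-1/4)(1-\e_0)n}$, which is useless since $C_0>1$. This is precisely why Proposition~\ref{lem:gap-hat-J-Q} cannot be transferred through Theorem~\ref{thm:KMT-0-cone}.

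Working with windows of length $2^{\delta n}$ does not help, for two reasons.
\begin{enumerate}
\item The restriction of ``$J$ lies inside one cone excursion'' to a short window carries no information. Every path on the window is compatible with lying deep inside a long excursion whose base level $\cZ_{t^-}$ is fixed outside the window. So no event measurable with respect to the window is both implied by $\{J\cap\mr\cJ=\emptyset\}$ and costly.
\item Even if each window event were costly, \eqref{eq:KMT-0-cone} bounds one event at a time. It gives no way to multiply bounds over $2^{(1-\e_0-\delta)n}$ windows, since no independence across windows is available under $\bbP_\rmE$.
\end{enumerate}
Separately, $\bbP_\rmE$ is not stationary under time shifts: \eqref{eq:shift-inv-Z} is an identity for the map $g$ under spatial re-rooting, not a statement about the law.

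The missing idea is to set the discrete transfer aside and argue in the continuum, as the paper does.
\begin{itemize}
\item By Theorem~\ref{thm:KMT-0-cone}(ii) and \eqref{eq:bdryprocess}, a cone excursion $[t^-,t^+]\subseteq[-2^n,2^n]$ of $\cZ$ forces $\eta([t^-,t^+])$ to have quantum area $t^+-t^-$ but quantum boundary length $O(n)$.
\item Under $\bbP_\rmQ$, Lemma~\ref{lem:BM-e3} shows that boundary length at most $\e\sqrt{t^+-t^-}$ costs $O(\e^3)$ per coordinate, i.e.\ $O(\e^6)$ in total. This does survive a union bound over a fine grid of time pairs (Lemma~\ref{lem:BM-e4}).
\item The bound is moved to $\bbP_\rmE$ not via \eqref{eq:KMT-0-cone} but via the coupling $h^{\rmE}|_{\bbD}=h^0|_{\bbD}$, $h^{\rmQ}|_{\bbD}=(h^0-\gamma\log|\cdot|)|_{\bbD}$. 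For pieces of $\eta$ in $\bbD$ whose boundaries avoid $B(0;2^{-\theta n})$, this coupling changes areas and boundary lengths only by factors $2^{O(\theta n)}$.
\item Together with scaling, this gives Lemma~\ref{lem:gap-hat-J-9-4}, which covers excursions with both endpoints away from time $0$.
\item Excursions with an endpoint near time $0$, where the logarithmic singularity sits, are handled by Brownian infimum bounds (Lemma~\ref{lem:gap-hat-J-9-3}, close in spirit to your long case).
\item Excursions reaching outside $[-2^n,2^n]$ are handled by intersecting these events over all scales $m\ge n$.
\end{itemize}
Note that running your long case over geometric scales produces, at scale $m$, excursions with both endpoints far from time $0$. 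That is exactly the case requiring the area--boundary-length argument. Also, the relevant inputs for the long case are Lemmas~\ref{lem:L_t-inf-1} and~\ref{lem:L_t-inf-a1a2}, not Proposition~\ref{prop:Tutte5.8}, which produces down-steps rather than new running infima.
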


The proof of Proposition~\ref{lem:gap-hat-J} under the $\bbP_\rmE$ setting is much more involved since the conclusion in Proposition~\ref{lem:gap-hat-J-Q} is not sufficient for us to use part~(\ref{cond:thm:KMT-0-cone-(iv)}) of Theorem~\ref{thm:KMT-0-cone} to transfer from the $\bbP_\rmQ$ setting. To overcome this, we first use the coupling between $(Z,\cZ)$ as in Theorem~\ref{prop:KMT-gamma-cone} and Theorem~\ref{thm:KMT-0-cone} to see that, if the event in~\eqref{eq:lem:gap-hat-J} does happen, then there exists $[t^-,t^+]$ such that $\eta([t^-,t^+])$ has small boundary length and large quantum area. The probability for this event under $\bbP_\rmQ$ can be controlled via Brownian motion estimates as in Lemma~\ref{lem:BM-e4}, and in Lemma~\ref{lem:gap-hat-J-9-4} we transfer this to the setting of $\bbP_\rmE$ using that the fields $h$ from $\bbP_\rmE$ and $\bbP_\rmQ$ can be coupled such that they differ from $\gamma\log|\cdot|$ within $\bbD$. 

 {We quote the following lemmas, which follows from~\cite[Lemma 4.18, Lemma 5.7]{Tutte} and the scaling in ~\cite[Lemma 4.17]{Tutte}.}
\begin{lemma}\label{lem:L_t-inf-1}
   Let $\delta,a>0$ be deterministic. Then under $\bbP_\rmE$, $(a^{-1/2}L_{at},a^{-1/2}R_{at})_{t\in\bbR} \overset{d}{=}(L_t,R_t)_{t\in\bbR}$.  Furthermore, exists a constant $c>0$ such that 
    $$\bbP_\rmE[\sup_{ {t\in [-2^k,2^k]}} |L_t| \geq k2^{k/2}]\lesssim 2^{-ck^2}, \ \ {\bbP_\rmE[\inf_{t\in {[-2^k,2^k]}}}L_t>-2^{(0.5-\delta)k}]\lesssim 2^{-ck}. $$
    The same holds for $R$ in place of $L$.
\end{lemma}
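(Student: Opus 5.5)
The plan is to prove the scaling identity first, and then use it to reduce both tail bounds to statements at unit time scale. Those unit-scale statements are the content of \cite[Lemma 4.18, Lemma 5.7]{Tutte}, and I will only sketch why they hold.

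\emph{Scaling.} Apply Lemma~\ref{lem:Rb} with $\alpha=0$ and $b=a$. The pair $(h^a,\eta^a)$ has the same law as $(h,\eta)$. The boundary length process is intrinsic to the curve-decorated quantum surface, so it is unaffected by the rescaling $\fR_a^{-1}$ of space and by the coordinate change \eqref{eq:lqg-changecoord}. The only effects are as follows:
\begin{itemize}
\item subtracting $\frac1\gamma\log a$ from the field multiplies quantum areas by $a^{-1}$, which reparametrizes time by $t\mapsto at$;
\item the same subtraction multiplies quantum lengths by $a^{-1/2}$.
\end{itemize}
Hence the process encoded by $(h^a,\eta^a)$ is $(a^{-1/2}L_{at},a^{-1/2}R_{at})_{t\in\bbR}$, as already observed after Lemma~\ref{lem:Rb}. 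Equality in law of $(h^a,\eta^a)$ and $(h,\eta)$ then gives the first claim. The same argument applies verbatim to $R$.

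\emph{Reduction of the tail bounds.} Take $a=2^k$. The first bound becomes
\[
\bbP_\rmE\Big[\sup_{t\in[-1,1]}|L_t|\ge k\Big]\lesssim 2^{-ck^2}.
\]
This is a Gaussian-type tail for the boundary length at unit scale. The second bound becomes
\[
\bbP_\rmE\Big[\inf_{t\in[-1,1]}L_t>-2^{-\delta k}\Big]\lesssim 2^{-ck},
\]
a polynomial small-ball estimate for the running infimum at scale $\e=2^{-\delta k}$. Under $\bbP_\rmQ$ both are immediate, since $Z$ is a Brownian motion:
\begin{itemize}
\item the reflection principle gives $e^{-ck^2}$ for the first;
\item $\bbP[\inf_{[0,1]}B>-\e]\asymp\e$ gives the second.
\end{itemize}

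\emph{Transfer to $\bbP_\rmE$.} For the $0$-cone I would follow the route of \cite{Tutte}, coupling the fields under $\bbP_\rmE$ and $\bbP_\rmQ$ so that they differ by $\gamma\log|\cdot|$ on $\bbD$ and agree elsewhere up to the radial part. By Lemma~\ref{lem:SLE-contain}, $\eta([-1,1])$ lies in $B(0;1/2)$ with high probability, and it also contains a small ball around $0$.

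For the supremum, $|L_t|$ is bounded by quantum lengths of pieces of $\partial\eta([0,t])$ lying in a fixed ball. A Radon--Nikodym or field-comparison bound then gives the superpolynomial tail, using the Gaussian tails of the circle average $h_1(0)$ together with moment bounds on boundary lengths of SLE cells.

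For the infimum, $\inf_{[0,1]}L>-\e$ forces the left boundary of $\eta((-\infty,0])$ to be almost entirely re-covered only at a scale $\e$. I would combine a multiscale argument with the scaling just proved: on each of roughly $\delta k$ dyadic scales, there is a uniformly positive conditional probability under $\bbP_\rmE$ that $L$ drops by the scale's size. Iterating over these nested scales yields the $2^{-ck}$ bound.

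The main obstacle is this last transfer for the infimum. The $\log$ singularity distinguishing $\bbP_\rmE$ from $\bbP_\rmQ$ sits exactly at the origin, where the small-ball event lives, so a naive absolute-continuity argument fails. This is precisely what \cite[Lemma 5.7]{Tutte} handles, and in practice I would invoke it directly after the scaling reduction.
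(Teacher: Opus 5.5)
Your argument is essentially the paper's: the scaling identity is exactly the observation after Lemma~\ref{lem:Rb} (applied with $\alpha=0$, $b=a$), and the two tail bounds reduce by this scaling to unit-scale estimates, which the paper, like you, simply quotes from \cite[Lemma 4.18, Lemma 5.7]{Tutte}. Your sketch of the $\bbP_\rmE$ transfer is not needed and is slightly off in places (e.g.\ Lemma~\ref{lem:SLE-contain} places $\eta([-2^{-\delta k},2^{-\delta k}])$ inside $B(0;1/2)$, not $\eta([-1,1])$), but since you ultimately invoke the cited lemmas, the proof stands.
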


\begin{lemma}\label{lem:L_t-inf-a1a2}
    For any $a_1>0$ and $a_2\in(0,a_1)$, there is some $c\in(0,1)$ such that for any $m\in\bbN$, 
    $$\bbP_\rmE[\inf_{t\in [-2^{(1+a_1)m},-2^m]}L_t>-2^{(1+a_2)m/2}]\lesssim 2^{-cm}, \ \ \ \ \ \bbP_\rmE[\inf_{t\in [2^m,2^{(1+a_1)m}]}L_t>-2^{(1+a_2)m/2}]\lesssim 2^{-cm}.$$
    The same holds for $R$ instead of $L$. 
\end{lemma}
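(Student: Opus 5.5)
The plan is to reduce the statement, by the scaling invariance in Lemma~\ref{lem:L_t-inf-1}, to a small-ball-type estimate for the infimum of $L$ on a time window bounded away from $0$ at unit scale. We treat only the positive-time window; the negative window $[-2^{(1+a_1)m},-2^m]$ and the case of $R$ follow by the symmetric argument. Set $a=2^{(1+a_1)m}$ and $L'_s=a^{-1/2}L_{as}$, which has the same law as $L$ under $\bbP_\rmE$. The event $\{\inf_{t\in[2^m,2^{(1+a_1)m}]}L_t>-2^{(1+a_2)m/2}\}$ then becomes
\begin{equation*}
\Big\{\inf_{s\in[2^{-a_1m},1]}L'_s>-2^{-(a_1-a_2)m/2}\Big\}.
\end{equation*}
So it suffices to show that $\bbP_\rmE[\inf_{s\in[\epsilon,1]}L_s>-\epsilon^{\beta}]\lesssim\epsilon^{c}$. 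Here $\epsilon=2^{-a_1m}$ and $\beta=(a_1-a_2)/(2a_1)\in(0,1/2)$.

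Next we separate the short initial window $[0,\epsilon]$. By the first bound in Lemma~\ref{lem:L_t-inf-1}, transported to small scales via the same scaling, with probability $1-O(2^{-cm^2})$ we have $\sup_{s\in[-\epsilon,\epsilon]}|L_s|\le a_1m\,\epsilon^{1/2}$, which is much smaller than $\epsilon^\beta$ since $\beta<1/2$. On this event, $\inf_{[\epsilon,1]}L>-\epsilon^\beta$ forces $\inf_{[0,1]}L>-2\epsilon^\beta$. Hence the task is a one-sided lower-tail bound $\bbP_\rmE[\inf_{s\in[0,1]}L_s>-x]\lesssim x^{c}$ for small $x=2\epsilon^\beta$.

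The main obstacle is this one-sided bound. The second bound of Lemma~\ref{lem:L_t-inf-1} is stated over the two-sided window $[-2^k,2^k]$, and under $\bbP_\rmE$ the process $L$ is not a Brownian motion. I would first try to quote the one-sided version directly from~\cite[Lemma 5.7]{Tutte}; this is what the sketched statement indicates. Failing that, I would transfer from $\bbP_\rmQ$, where $L$ is Brownian and $\bbP_\rmQ[\inf_{[0,1]}L>-x]\asymp x$.

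For the transfer, I would rewrite the event in terms of the walk $\cZ$, which is within $O(m)$ of $Z$ by Theorem~\ref{thm:KMT-0-cone}(\ref{cond:thm:KMT-0-cone-(ii)}) after rescaling back to scale $2^{(1+a_1)m}$. I would then express it in the form~\eqref{eq:def-A} and apply~\eqref{eq:KMT-0-cone} at a scale $k$ that is a small multiple of $m$. The bound $\bbP_\rmQ[A]^{1/4}2^{C_0k}+2^{-k/C_0}$ is then $\lesssim2^{-cm}$, provided $k$ is chosen so that $C_0k$ is less than a quarter of the polynomial Brownian exponent $(a_1-a_2)m/2$. The discretization error from $\|Z-\cZ\|\lesssim m$ only shifts the threshold by a negligible amount at that scale.
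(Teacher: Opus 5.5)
Your rescaling is correct, and so is absorbing $[0,\epsilon]$ with the first bound of Lemma~\ref{lem:L_t-inf-1}. But the reduction gains nothing. Since $\{\inf_{[0,1]}L>-x\}\subseteq\{\inf_{[\epsilon,1]}L>-x\}$, the lemma itself (with $x=2^{-(a_1-a_2)m/2}$, $\epsilon=2^{-a_1m}$) already implies your one-sided bound $\bbP_\rmE[\inf_{[0,1]}L>-x]\lesssim x^{c}$. The two statements are therefore equivalent, and all the difficulty sits in the step you leave open.

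The second bound of Lemma~\ref{lem:L_t-inf-1} does not supply that step. It controls $\{\inf_{[-1,1]}L>-x\}$, which is a \emph{sub}event of $\{\inf_{[0,1]}L>-x\}$, so it gives no upper bound for the larger event.

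The paper does not prove this lemma. It quotes it from \cite[Lemma 4.18, Lemma 5.7]{Tutte} together with the scaling of \cite[Lemma 4.17]{Tutte}. Your first option is thus the paper's route, but then you should cite the estimate as stated, rather than guess that Tutte contains a one-sided version at time $0$.

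Your fallback via part~(\ref{cond:thm:KMT-0-cone-(iv)}) of Theorem~\ref{thm:KMT-0-cone} fails quantitatively. The bound $\bbP_\rmQ[A]^{1/4}2^{C_0k}+2^{-k/C_0}$ with $C_0>1$ is small only if $\bbP_\rmQ[A]\le 2^{-4C_0k}$. Here $2^k$ is at least the length of the walk window on which $A$ must be measurable. However, any persistence event $\{\inf_W\cL>-y\}$ with $y\ge 1$ and $W$ of length at most $2^k$ has $\bbP_\rmQ$-probability $\gtrsim 2^{-k/2}$. Hence $\bbP_\rmQ[A]^{1/4}2^{C_0k}\gtrsim 2^{(C_0-1/8)k}$ is never small.

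Concretely, there are two ways to set this up, and both fail:
\begin{itemize}
\item At the original scale, measurability forces $k\ge(1+a_1)m$, while $\bbP_\rmQ[A]$ is of order $2^{-(a_1-a_2)m/2}$. Your choice $C_0k<(a_1-a_2)m/8$ is incompatible with this.
\item If you instead rescale (Lemma~\ref{lem:def-cZ^n}) so that the window has length $2^k$, the threshold becomes $2^{k/2-(a_1-a_2)m/2}\ll 1$. The coupling error from part~(\ref{cond:thm:KMT-0-cone-(ii)}) is of order $k$, so your claim that the discretization error is negligible is false.
\end{itemize}

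This is exactly the obstruction the paper points out when it explains why the polynomial bound of Proposition~\ref{lem:gap-hat-J-Q} cannot be transferred to $\bbP_\rmE$ via part~(\ref{cond:thm:KMT-0-cone-(iv)}). So the key estimate of your argument remains unproved.
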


\begin{lemma}\label{lem:gap-hat-J-9-3}
    Let $0<\wt\e_1<1$. There exists constants $\wt c_1,\wt c_2>0$ depending only on $\wt\e_1$, such that under the probability measure $\bbP_\rmE$ for any $n\in\bbN$, with probability at least $1-\wt c_12^{-\wt c_2n}$, there does not exist $t_1,t_2\in\bbR$ with $|t_1|\leq 2^{(1-\wt\e_1)n}$ and $ |t_2|\geq 2^{n}$ such that~\eqref{eq:double-edge-Mullin-A} holds for $(t_1\wedge t_2,t_1\vee t_2)$ in place of $(t^-,t^+)$.
\end{lemma}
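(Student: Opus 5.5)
We argue by contradiction on the bad event. Suppose there are $t_1,t_2$ with $|t_1|\le 2^{(1-\wt\e_1)n}$, $|t_2|\ge 2^n$ such that~\eqref{eq:double-edge-Mullin-A} holds for $(t^-,t^+)=(t_1\wedge t_2,t_1\vee t_2)$; by symmetry assume $t_2\ge 2^n$, so $t^-=t_1$ and $t^+=t_2$. Then both coordinates satisfy $\cL_s\ge\cL_{t_1}$ and $\cR_s\ge\cR_{t_1}$ for all $s\in[t_1,t_2]$, and in particular on $[t_1,2^n]$. Both coordinates of the walk stay above their value at a time near $0$ for a time window of length about $2^n$. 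This cannot happen for the Brownian motion $Z$, and the coupling transfers the statement to $\cZ$.

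\textbf{Step 1 (walk to Brownian motion).} By part~(\ref{cond:thm:KMT-0-cone-(ii)}) of Theorem~\ref{thm:KMT-0-cone} with $m=n+1$ and any fixed $C_1$, off an event of probability $\lesssim 2^{-C_1 n}$ we have $\sup_{|t|\le 2^{n+1}}|Z_t-\cZ_t|\le C'n$. On this event the bad event forces $L_s\ge L_{t_1}-2C'n$ and $R_s\ge R_{t_1}-2C'n$ for all $s\in[t_1,2^n]$.

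\textbf{Step 2 (control the starting value).} By Lemma~\ref{lem:L_t-inf-1} applied at scale $2^{(1-\wt\e_1)n}$, with probability $1-O(2^{-cn^2})$ we have $|L_{t_1}|,|R_{t_1}|\le n2^{(1-\wt\e_1)n/2}$ uniformly over admissible $t_1$. So on the bad event,
\[
\inf_{s\in[2^{(1-\wt\e_1)n},2^n]}L_s\ \ge\ -n2^{(1-\wt\e_1)n/2}-2C'n\ \ge\ -2^{(1-\wt\e_1/2)n/2}
\]
for large $n$.

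\textbf{Step 3 (Brownian estimate).} Set $m=(1-\wt\e_1)n$ and choose $a_1$ with $(1+a_1)m=n$, i.e.\ $a_1=\wt\e_1/(1-\wt\e_1)$. Choose $a_2\in(0,a_1)$ with $(1+a_2)m\ge(1-\wt\e_1/2)n$; for instance $a_2=a_1/2$ works after checking the arithmetic, adjusting constants if necessary. Lemma~\ref{lem:L_t-inf-a1a2} then bounds the probability of the Step 2 event by $\lesssim 2^{-cm}$. If $m$ is not an integer, I would replace it by its floor; this only perturbs the exponents. The case $t_2\le -2^n$ is symmetric: the walk stays above its value at $t_2$ on $[t_2,t_1]$, so $L_s\ge L_{t_2}-2C'n$ there. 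The naive reduction would then need to control the unbounded value $L_{t_2}$.

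\textbf{Step 4 (the case $t_2\le -2^n$).} Here I would instead use that $L_{t_1}$ is small and reuse the first half of the interval. The bad event forces $L_{t_2}\le \inf_{s\in[t_2,t_1]}L_s+2C'n$, so the value $L_{t_2}$ at $t_2$ is within $O(n)$ of the running minimum of $L$ over $[t_2,t_1]$. In particular $L_{t_2}\le L_{t_1}+2C'n\le 2^{(1-\wt\e_1/2)n/2}$. Moreover $\inf_{s\in[-2^n,-2^{(1-\wt\e_1)n}]}L_s\ge L_{t_2}-2C'n$. The quantity $\inf_{[-2^n,-2^m]}L-\inf_{[t_2,t_1]}L$ is not obviously large, so the cleanest route is to use the time reversal of $Z$. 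Time reversal preserves the law of the $\bbP_\rmE$ boundary length process modulo relabeling $L\leftrightarrow L$ with increments reversed; under this relabeling "staying above the start" becomes "staying above the end". So the estimate reduces to the first case with the roles of the two endpoints exchanged, using the second inequality of Lemma~\ref{lem:L_t-inf-a1a2}.

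I expect this reversal step, together with the uniform-in-$t_1$ bookkeeping, to be the main obstacle. If reversal symmetry is unavailable for the $0$-cone process, I would fall back on a union bound over a grid of starting times at spacing $1$ combined with translation-type estimates.
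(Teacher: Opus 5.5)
Your Steps 1--3 are correct for the case $t_2\ge 2^n$, and they match the paper's argument there (the coupling event, the bound on $\sup\|Z\|$ near $0$ from Lemma~\ref{lem:L_t-inf-1}, and the dip of $L$ on $[2^{(1-\wt\e_1)n},2^n]$ from Lemma~\ref{lem:L_t-inf-a1a2}). The gap is Step 4. The obstacle you see there does not exist, and the workaround you propose is not a proof as written.

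You only used half of \eqref{eq:double-edge-Mullin-A}. That condition requires \emph{both} endpoints to attain the infimum. So when $t_2\le -2^n$ we have $t^+=t_1$, and the condition directly gives $\cL_s\ge\cL_{t_1}$ for all $s\in[t_2,t_1]$. In particular this holds on $[-2^n,-2^{(1-\wt\e_1)n}]\subseteq[t_2,t_1]$. The reference value is again $\cL_{t_1}$ with $|t_1|\le 2^{(1-\wt\e_1)n}$, which your Step 2 already controls. Hence Steps 1--3 go through verbatim, using the first (negative-time) inequality of Lemma~\ref{lem:L_t-inf-a1a2} in place of the second. This is what the paper's ``similarly'' means (its event $E_3$).

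By contrast, the inequalities you write in Step 4 in terms of $L_{t_2}$ are not justified. The time $t_2$ can lie arbitrarily far outside $[-2^{n+1},2^{n+1}]$, where part~(\ref{cond:thm:KMT-0-cone-(ii)}) of Theorem~\ref{thm:KMT-0-cone} gives no control on $|Z_{t_2}-\cZ_{t_2}|$. Time-reversal invariance of the $\bbP_\rmE$ boundary length process is also not among the tools the paper provides. It would need reversibility of whole-plane space-filling $\SLE_8$ together with its independence from the $0$-quantum cone field, and you would have to verify it. Your fallback, a union bound over a grid, is not specified either. With the observation above, none of this is needed.
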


\begin{proof}
Let $m\geq n$. By part~(\ref{cond:thm:KMT-0-cone-(ii)}) of Theorem~\ref{thm:KMT-0-cone}, for some constants $C_1,C_2>1$,  
\eqb \label{eq:double-edge-lem-0-pf-1-00}\bbP_\rmE\big[ \sup_{t\in [-2^n,2^n]} \|\cZ_t - Z_t\|\leq  C_1n  \big]\geq 1-C_22^{-n}.\eqe
Let $\e_3,\e_4 \in (0,\wt\e_1)$ and $\e_3<\e_4$. We  check that for large enough $n$, under the event
\begin{equation}\label{eq:double-edge-lem-0-pf-1}
    \begin{split}
        &\{\sup_{t\in [-2^n,2^n]}  \|\cZ_t - Z_t\|\leq C_1n  \}  \cap \{\inf_{t\in [2^{(1-\wt\e_1)n},2^{n}]} L_t <- 2^{(1-\e_3)n/2} \} \\& \cap  \{  \inf_{t\in [-2^{n},-2^{(1-\wt\e_1)n}]} L_t < -2^{(1-\e_3)n/2}  \} \cap \{ \sup_{t\in [-2^{(1-\wt\e_1)n},2^{(1-\wt\e_1)n}]}\|Z_t\|<2^{(1-\e_4)n/2} \},
    \end{split}
\end{equation}
 for any $t_-,t_+\in\bbR$ such that $|t^-|\leq 2^{(1-\wt\e_1)n}$ and $t^+\geq 2^n$, the condition~\eqref{eq:double-edge-Mullin-A}   does not hold. Denote the four events in~\eqref{eq:double-edge-lem-0-pf-1} by $E_1 ,E_2 ,E_3 ,E_4 $, respectively. Then for large enough $n$, under $E_1 \cap E_4 $, $\cL_{t^-}>-2^{(1-\e_4)n/2+1}$, whereas under $E_1 \cap E_2 \cap E_3$, we have $\inf_{t^-\leq t\leq t^+} \cL_t<-2^{(1-\e_3)n/2-1}$. In particular, condition~\eqref{eq:double-edge-Mullin-A} does not hold. Similarly, under the event~\eqref{eq:double-edge-lem-0-pf-1}, there does not exist $t^-\leq -2^n$ and $|t^+|\leq  2^{(1-\wt\e_1)n}$ such that~\eqref{eq:double-edge-Mullin-A} holds. Therefore it suffices to prove $\bbP_\rmE\big[ E_1 \cap E_2 \cap E_3 \cap E_4 \big]\geq 1-\wt c_12^{-\wt c_2n}$ for some $\wt c_1,\wt c_2>0$. By~\eqref{eq:double-edge-lem-0-pf-1-00}, $\bbP_\rmE[E_1 ]\geq 1-C_22^{-n}$, and  {by Lemma ~\ref{lem:L_t-inf-a1a2}}, $\bbP_{\rmE}[E_2]\geq 1-c_2'2^{-c_2''n}$ and $\bbP_{\rmE}[E_3]\geq 1-c_3'2^{-c_3''n}$, and finally by  {Lemma~\ref{lem:L_t-inf-1}},  $\bbP_\rmE[E_4 ]\geq 1-c_4'2^{-c_4''n}$ for some constants $c_2',c_2'',c_3',c_3'',c_4',c_4''>0$. This completes the proof.
\end{proof}

\begin{lemma}\label{lem:BM-e3}
    Let $(B_t)_{t\geq 0}$ be the standard Brownian motion. There exists some constant $c>0$ such that the following is true. For any $\e\in(0,1)$,  $\bbP\big[\inf_{t\in[0,1]}B_t\geq -\e; B_1<\e \big]\leq c\e^3$.
\end{lemma}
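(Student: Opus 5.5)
We must show $\bbP[\inf_{[0,1]}B\ge -\e,\ B_1<\e]\le c\e^3$. The plan is to write this probability exactly using the reflection principle and then expand for small $\e$. The event forces $B_1\in[-\e,\e)$. By the reflection principle, for $x\ge -\e$ the density of $B_1$ on the event $\{\inf_{[0,1]}B\ge-\e\}$ is
\[
p(x)=\frac{1}{\sqrt{2\pi}}\Big(e^{-x^2/2}-e^{-(x+2\e)^2/2}\Big).
\]
Hence
\[
\bbP\big[\inf_{t\in[0,1]}B_t\ge -\e;\ B_1<\e\big]=\int_{-\e}^{\e}p(x)\,dx .
\]

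Next we bound the integrand pointwise. Since $e^{-y^2/2}$ is $1$-Lipschitz on $\bbR$ (its derivative is bounded by $\sup_y|y|e^{-y^2/2}\le 1$), the mean value theorem gives
\[
0\le p(x)\le \frac{1}{\sqrt{2\pi}}\Big|\tfrac{x^2}{2}-\tfrac{(x+2\e)^2}{2}\Big|\cdot \sup_{y}e^{-y^2/2}.
\]
It is sharper to apply the mean value theorem to $u\mapsto e^{-u/2}$ with $u=y^2$. This yields $p(x)\le \frac{1}{2\sqrt{2\pi}}\big((x+2\e)^2-x^2\big)=\frac{1}{2\sqrt{2\pi}}\,4\e(x+\e)$.

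For $x\in[-\e,\e]$ the factor $(x+\e)$ is at most $2\e$, so $p(x)\le \frac{4\e^2}{\sqrt{2\pi}}$. Integrating over an interval of length $2\e$ gives the bound $\frac{8}{\sqrt{2\pi}}\e^3$. This proves the claim with $c=8/\sqrt{2\pi}$.

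The only genuine input is the reflection-principle formula for the killed density, which is standard. There is no real obstacle; the point to check is that the density vanishes linearly at the barrier $x=-\e$. This linear vanishing supplies the extra factor of $\e$, and the interval length supplies another.
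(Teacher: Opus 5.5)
Your argument is correct and is essentially the paper's: integrating your killed density $p$ over $[-\e,\e]$ gives exactly $\bbP[-\e\le B_1\le\e]-\bbP[\e\le B_1\le 3\e]$, which by symmetry is the paper's expression $\bbP[-\e\le B_1\le\e]-\bbP[-3\e\le B_1\le-\e]$. Your mean-value bound $0\le p(x)\le 2\e(x+\e)/\sqrt{2\pi}$ supplies the final $O(\e^3)$ estimate, which the paper asserts without computing. The ``$1$-Lipschitz'' remark before your first displayed bound is not what justifies that line, but it is harmless because your second argument replaces it.
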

\begin{proof}
    By the reflection principle, we have $\bbP\big[\inf_{t\in[0,1]}B_t\leq -\e; -\e\leq B_1\leq \e \big] = \bbP\big[-3\e\leq B_1\leq -\e \big]$. Therefore $\bbP\big[\inf_{t\in[0,1]}B_t\geq -\e; B_1<\e \big] = \bbP\big[-\e\leq B_1\leq \e \big] - \bbP\big[\inf_{t\in[0,1]}B_t\leq -\e; -\e\leq B_1\leq \e \big]  =   \bbP\big[-\e\leq B_1\leq \e \big]- \bbP\big[-3\e\leq B_1\leq -\e \big] \leq c\e^3$.
\end{proof}

Under the probability measure $\bbP_\rmE$ or $\bbP_\rmQ$, we write $\mr\eta$ for the space-filling $\SLE_\kappa$ curve $\eta$ parameterized by the Lebesgue measure such that $\mr\eta(0)=0$.
\begin{lemma}\label{lem:BM-e4}
    Let $\e_1\in (0,1/4)$ and $\beta>0$ such that $1-3\e_1-2\beta>0$. Then there exists $c_1,c_2>0$, such that under $\bbP_\rmQ$, with probability at least $1-c_12^{-c_2n}$, for any $t_1<t_2$ with $\mu_h(\mr\eta([t_1,t_2]))\geq 2^{-\e_1n}$ and $\mr\eta([t_1,t_2])\subseteq \bbD$, $\nu_h(\partial \mr\eta([t_1,t_2]))\geq 2^{(\beta-\frac{1}{2})n}$.
\end{lemma}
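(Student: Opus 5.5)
We will reduce the statement to a Brownian-motion estimate in the mating-of-trees encoding. Under $\bbP_\rmQ$, $\mr\eta$ is the space-filling $\SLE_8$ reparametrized by Lebesgue measure. For any $t_1<t_2$, the set $\mr\eta([t_1,t_2])$ equals $\eta([s_1,s_2])$, where $s_i$ are the corresponding quantum times, so $s_2-s_1=\mu_h(\mr\eta([t_1,t_2]))$. The boundary length $\nu_h(\partial\eta([s_1,s_2]))$ is, up to a constant factor, given by the mating-of-trees formula
$$(L_{s_1}-\inf_{[s_1,s_2]}L)+(L_{s_2}-\inf_{[s_1,s_2]}L)+(R_{s_1}-\inf_{[s_1,s_2]}R)+(R_{s_2}-\inf_{[s_1,s_2]}R),$$
which in particular dominates $\max(L_{s_1}-\inf_{[s_1,s_2]}L,\ L_{s_2}-\inf_{[s_1,s_2]}L)$. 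So it suffices to show: with high probability there is no interval $[s_1,s_2]$ with $s_2-s_1\ge 2^{-\e_1 n}$ on which both $L$ and $R$ have all four of these ``excursion heights'' at most $2^{(\beta-1/2)n}$.

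\textbf{Step 1: localize in time.} The condition $\mr\eta([t_1,t_2])\subseteq\bbD$ lets us restrict quantum times to a window. Under $\bbP_\rmQ$, Lemma~\ref{lem:SLE-contain-2} and the scaling of Lemma~\ref{lem:Rb} show that with probability $1-O(2^{-cn})$ we have $\eta^{-1}(\bbD)\subseteq[-2^{\delta n},2^{\delta n}]$ for any fixed small $\delta>0$. Concretely, $\mu_h(\bbD)$ has all negative moments and good tails, so $\bbD$ is contained in $\eta([-T,T])$ with $T\le 2^{\delta n}$ with high probability. Intervals $[s_1,s_2]$ whose image lies in $\bbD$ are then contained in $[-2^{\delta n},2^{\delta n}]$.

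\textbf{Step 2: discretize and bound a single interval.} Set $a=2^{-\e_1 n}$ and cover $[-2^{\delta n},2^{\delta n}]$ by a grid of mesh $a/4$. Any bad interval of length at least $a$ contains a grid interval $[u,u+a/2]$ on which the same smallness holds for the grid endpoints up to a slack. This slack is controlled using the uniform Hölder continuity of $L$ and $R$ on scale $2^{-Kn}$ together with a finer grid. For a fixed grid interval, the event requires that $L$ returns to within $\epsilon:=2^{(\beta-1/2)n}$ of its starting value without dipping more than $\epsilon$ below it. After rescaling time by $a/2$, this event has $\epsilon'=\epsilon (a/2)^{-1/2}\asymp 2^{(\beta-1/2+\e_1/2)n}$. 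By Lemma~\ref{lem:BM-e3}, its probability is at most $c\,\epsilon'^3$, and the same bound applies to $R$. Since $L$ and $R$ are independent (covariance $0$ for $\gamma=\sqrt2$), the joint probability is at most $c\,\epsilon'^6$. The sign convention here should be the reversed one, $B_1>-\epsilon$, $\inf B\ge-\epsilon$; its probability matches Lemma~\ref{lem:BM-e3} by symmetry.

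\textbf{Step 3: union bound.} The number of grid intervals is $O(2^{(\delta+\e_1)n})$, up to polynomial factors from the finer grid. The total probability is therefore
$$2^{(\delta+\e_1)n}\,2^{6(\beta-1/2+\e_1/2)n}=2^{(\delta+4\e_1+6\beta-3)n}.$$
This decays exponentially when $\delta$ is small, given that $1-3\e_1-2\beta>0$. Indeed, $3(1-3\e_1-2\beta)=3-9\e_1-6\beta$ exceeds $4\e_1+6\beta-3$ in the relevant sense, and even the weaker one-coordinate bound $\epsilon'^3$ with exponent $\e_1+3\beta-3/2+3\e_1/2$ gives $-\tfrac32(1-3\e_1-2\beta)+\ldots<0$. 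So we will aim to use the single coordinate $L$, which makes the stated hypothesis exactly what is needed.

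The main obstacle is Step 2's passage from an arbitrary interval to grid intervals while keeping the excursion-height conditions. The plan is to shrink the bad interval inward to grid points and absorb the resulting error with Hölder continuity at scale $2^{-Kn}$. One must also verify the boundary-length formula via the mating-of-trees identification of $\partial\eta([s_1,s_2])$, which is simply connected for $\gamma=\sqrt2$.
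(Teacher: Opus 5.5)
\textbf{There is a genuine gap in Steps 2--3.} Your architecture matches the paper's: localize with Lemma~\ref{lem:SLE-contain-2}, discretize, apply Lemma~\ref{lem:BM-e3} to each coordinate using independence of $L$ and $R$, then take a union bound. Step~1 is fine.

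The problem is the claim that a bad interval $[s_1,s_2]$ contains a coarse grid interval $[u,u+a/2]$ on which the endpoint conditions still hold. This is false. The constraints $L_{s_i}-\inf_{[s_1,s_2]}L\le\epsilon$ concern only the two endpoints, and at an interior grid point $L_u$ is typically far above the infimum.

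To pass to a discrete event you must move \emph{both} endpoints to nearby grid points. For the resulting slack to be below $\epsilon=2^{(\beta-1/2)n}$, the mesh must be $2^{-pn}$ with $p>1-2\beta$, since the oscillation of $Z$ on scale $2^{-pn}$ is about $2^{-pn/2}$ up to subexponential factors. Taking the mesh $2^{-Kn}$ with $K$ large, as you suggest, only makes the count worse. The number of grid pairs in $[-2^{\lambda n},2^{\lambda n}]$ is $2^{2(\lambda+p)n}$. This is an exponential factor, not a polynomial one. The correct bound is
\[
2^{2(\lambda+p)n}\cdot 2^{6(\beta-\frac12+\frac{\e_1}{2})n},
\]
whose exponent tends to $2\beta+3\e_1-1$ as $p\downarrow 1-2\beta$ and $\lambda\downarrow 0$. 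It is negative precisely when $1-3\e_1-2\beta>0$. So the hypothesis is calibrated to exactly this count, and this is how the paper proceeds.

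\textbf{Consequently, using only $L$ cannot work.} With the correct count, the one-coordinate bound $\epsilon'^3$ gives an exponent strictly larger than $\frac12-\beta+\frac32\e_1$. Under the hypothesis this exceeds $3\e_1>0$.

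More fundamentally, the one-coordinate event is not rare at all. Let $s_1$ be the argmin of $L$ on $[-1,0]$ and $s_2=\inf\{s>0:L_s<L_{s_1}\}$. Then $L_{s_1}=L_{s_2}=\inf_{[s_1,s_2]}L$, and with probability tending to one $s_2-s_1\ge a$ and $s_2$ lies in the window. These are identified points of the tree coded by $L$. So the two-dimensional $\pi/2$-cone structure, i.e.\ using the $\epsilon'^3$ bound for $L$ \emph{and} for $R$, is essential rather than optional.

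Separately, your ``reversed sign convention'' is wrong. The event $\{B_1>-\epsilon,\ \inf B\ge-\epsilon\}$ has probability of order $\epsilon$. The event you actually need, $\{\inf_{[0,1]}B\ge-\epsilon,\ B_1<\epsilon\}$, is literally the one in Lemma~\ref{lem:BM-e3}.
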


\begin{proof} 
We choose $\lambda>0$ and $p>1-2\beta$ such that $c_2':=3-(2(\lambda+p)+6\beta+3\e_1)>0$.
%We fix $\lambda\in (0,(1-3\e_0-2\beta)/10)$, let $\e_0' = \e_0+\lambda$ and $\beta' = \beta+\frac{\lambda}{2}$. 
By the definition of the boundary length process, \eqb\label{eq:bdryprocess}\nu_h(\partial\eta([t_1,t_2])) = (L_{t_2}-L_{t_1})+(R_{t_2}-R_{t_1})-2\inf_{s\in[t_1,t_2]}(L_s-L_{t_1})-2\inf_{s\in[t_1,t_2]}(R_s-R_{t_1}).\eqe
    Since under $\bbP_\rmQ$, $(L,R)$ evolves as standard 2D Brownian motion, by Lemma~\ref{lem:BM-e3}, the Brownian scaling and a union bound,  
    \begin{equation}\label{eq:lem:BM-e4-11}
        \bbP_\rmQ\big[\, \exists\, t_1,t_2\in [-2^{\lambda n},2^{\lambda n}]\cap 2^{-pn}\bbZ: |t_1-t_2|\geq 2^{-\e_1n-1} \ \text{and }   \nu_h(\partial\eta([t_1,t_2]))\leq 2^{(\beta-\frac{1}{2})n+1}  \big]\lesssim 2^{2(\lambda+p)n}\cdot 2^{6(\beta+\frac{\e_1-1}{2})n}.
    \end{equation}
    Moreover, by standard estimates of the Brownian motion and a union bound, since $p>1-2\beta$, for some constant $c_1'>0$,
      \begin{equation}\label{eq:lem:BM-e4-22}
        \bbP_\rmQ\big[\, \exists\, t_1\in [-2^{\lambda n},2^{\lambda n}]\cap 2^{-pn}\bbZ:  \sup_{s\in[t_1-2^{-pn},t_1+2^{-pn}]}\|Z_s-Z_{t_1}\|\geq 2^{(\beta-\frac{1}{2})n-6} \big]\lesssim 2^{(\lambda+p)n}\cdot 2^{-c_1'n^2}.
    \end{equation}
    Then by~\eqref{eq:lem:BM-e4-11} and~\eqref{eq:lem:BM-e4-22} and the definition of $c_2'$, we have
    \begin{equation*}
         \bbP_\rmQ\big[\, \exists\, t_1,t_2\in [-2^{\lambda n},2^{\lambda n}]: |t_1-t_2|\geq 2^{-\e_1n} \ \text{and }   \nu_h(\partial\eta([t_1,t_2]))\leq 2^{(\beta-\frac{1}{2})n}  \big]\lesssim 2^{-c_2'n}.
    \end{equation*}
     {Finally, by Lemma~\ref{lem:SLE-contain-2}, for some $c_2''>0$, we have}
    \begin{equation*}
        \bbP_\rmQ[\bbD\nsubseteq \eta([-2^{\lambda n},2^{\lambda n}]) ]\lesssim 2^{-c_2''n}.
    \end{equation*}
    Therefore by a union bound, for some $c_1,c_2>0$, under $\bbP_\rmQ$, with probability at least $1-c_12^{-c_2n}$, there cannot be $t_1<t_2$ such that $\eta([t_1,t_2])\subseteq \bbD$, $|t_1-t_2|\geq 2^{-\e_1n}$ and $\nu_h(\partial\eta([t_1,t_2]))\leq 2^{(\beta-\frac{1}{2})n}$.
    The lemma then follows by reparameterizing $\eta$ via Lebesgue measure.
\end{proof}

\begin{lemma}\label{lem:gap-hat-J-9-4}
   There exists $\wt\e_0,\wt\e_2,\wt c_3,\wt c_4>0$ such that the following is true. Under $\bbP_\rmE$, for any $n\in\bbN$, with probability at least $1-\wt c_3 2^{-\wt c_4 n}$, there does not exist $t^-,t^+\in [-2^n,2^n]\backslash[-2^{(1-\wt\e_2)n},2^{(1-\wt\e_2)n}]$ and $t^+-t^-\geq 2^{(1-\wt \e_0)n}$ such that~\eqref{eq:double-edge-Mullin-A} holds.
\end{lemma}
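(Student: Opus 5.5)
Our approach follows the outline given just before Lemma~\ref{lem:L_t-inf-1}. If \eqref{eq:double-edge-Mullin-A} holds for $(t^-,t^+)$ with $t^+-t^-\ge 2^{(1-\wt\e_0)n}$, then the walk $\cZ$ makes a $\bbZ_+^2$ excursion over $[t^-,t^+]$. By part~(\ref{cond:thm:KMT-0-cone-(ii)}) of Theorem~\ref{thm:KMT-0-cone}, with probability $1-O(2^{-n})$ we have $\|Z_t-\cZ_t\|\le C'n$ on $[-2^n,2^n]$. On this event \eqref{eq:bdryprocess} gives
\[
\nu_h(\partial\eta([t^-,t^+]))\le 8C'n,
\]
while $\mu_h(\eta([t^-,t^+]))=t^+-t^-\ge 2^{(1-\wt\e_0)n}$. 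So it suffices to show that, under $\bbP_\rmE$, a segment of $\eta$ of duration at least $2^{(1-\wt\e_0)n}$ and boundary length $O(n)$ inside $[-2^n,2^n]$, away from time $0$, is exponentially unlikely. The restriction $t^\pm\notin[-2^{(1-\wt\e_2)n},2^{(1-\wt\e_2)n}]$ combines with Lemma~\ref{lem:gap-hat-J-9-3}, and it also means the segment avoids a neighbourhood of the origin. This matters because the $\gamma\log$ singularity difference between the $0$-cone and the $\gamma$-cone sits at the origin.

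Next we rescale. By Lemma~\ref{lem:Rb} and Lemma~\ref{lem:L_t-inf-1}, replace $(h,\eta)$ by $(h^b,\eta^b)$ with $b=2^n$. The time window becomes $[-1,1]$, the area condition becomes $\ge 2^{-\wt\e_0 n}$, and the boundary length becomes $O(n2^{-n/2})\le 2^{(\beta-1/2)n}$ for any small $\beta>0$. By \eqref{eq:SLE-contained-in-ball-1} (for $\bbP_\rmE$, after scaling), with probability $1-O(2^{-cn})$ the rescaled curve on $[-1,1]$ stays in a ball $B(0;2^{\delta n})$. By Lemma~\ref{lem:SLE-contain-1}, the segment misses a small ball $B(0;2^{-Kn})$ once $|t^\pm|\ge 2^{-\wt\e_2 n}$ in rescaled time, with $K$ depending on $\wt\e_2$. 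We then cover $B(0;2^{\delta n})\setminus B(0;2^{-Kn})$ by polynomially many dyadic annuli. Applying the scaling of Lemma~\ref{lem:Rb} once more in each annulus, and a union bound, reduces us to the statement of Lemma~\ref{lem:BM-e4} inside $\bbD$, with the Lebesgue-to-$\mu_h$ reparametrization handled exactly as there.

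The transfer from $\bbP_\rmQ$ to $\bbP_\rmE$ is the main obstacle. Here we couple the two circle-average embeddings so that $h_\rmE=h_\rmQ+\gamma\log|\cdot|$ plus a radial correction, which is bounded on $\{2^{-Kn}\le|z|\le 1\}$ up to an exponentially small failure probability coming from the radial Brownian parts $Y_t$ in Definition~\ref{def:alpha-cone}. Since $\eta$ is independent of the field, on the segment $\mu_{h_\rmE}$ and $\nu_{h_\rmE}$ differ from $\mu_{h_\rmQ}$ and $\nu_{h_\rmQ}$ by factors $2^{O(Kn)}$.

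There is a further difficulty: the two fields induce different parametrizations of $\eta$. To avoid it, we state Lemma~\ref{lem:BM-e4} in terms of the Lebesgue parametrization $\mr\eta$, as is already done there, so that the event "some Lebesgue-subsegment inside the annulus has large area and small boundary length" is field-comparable. These $2^{O(Kn)}$ distortion factors are absorbed by choosing $\wt\e_0,\wt\e_2,\delta$ small relative to $\e_1$ and $\beta$ in Lemma~\ref{lem:BM-e4}. The admissible range $1-3\e_1-2\beta>0$ leaves room for this. The resulting bound is $1-\wt c_32^{-\wt c_4 n}$.
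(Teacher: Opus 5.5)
Your proposal follows essentially the paper's route: the KMT coupling plus \eqref{eq:bdryprocess} turn a $\bbZ_+^2$ excursion into a segment with large area and $O(n)$ boundary length; Lemma~\ref{lem:BM-e4} is applied under $\bbP_\rmQ$ for the Lebesgue parametrization $\mr\eta$ and transferred to $\bbP_\rmE$ via $h_{\rmE}|_\bbD=(h_{\rmQ}+\gamma\log|\cdot|)|_\bbD$ away from a small ball given by Lemma~\ref{lem:SLE-contain-1}; and small exponents absorb the $2^{O(Kn)}$ distortion, with the paper simply working on the window $[-2^{-\e_2 n},2^{-\e_2 n}]$ (whose image already lies in $\bbD$) before scaling by $2^{(1+\e_2)n}$, so your annulus decomposition and ``radial correction'' are unnecessary since the coupling is exact on $\bbD$. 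One small fix: when $t^-<0<t^+$ the segment contains $B(0;2^{-Kn})$ and only its boundary avoids it, which suffices because the area condition transfers for free from $\mu_{h_{\rmQ}}=|z|^{-\gamma^2}\mu_{h_{\rmE}}\ge\mu_{h_{\rmE}}$ on $\bbD$.
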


\begin{proof}
    Note that under $\bbP_\rmE$ (resp.\ $\bbP_\rmQ$), we have $h|_\bbD = h^0|_\bbD$ (resp.\ $h|_\bbD = (h^0-\gamma\log|\cdot|)|_\bbD$, where $h^0$ is a whole plane GFF normalized such that its average over $\partial\bbD$ is 0. Let $\theta >0$. Then using this coupling, for $\beta,\e_1,c_1,c_2$  as in Lemma~\ref{lem:BM-e4},   under $\bbP_\rmE$,  with probability at least $1-c_12^{-c_2n}$,  for any $t^-<t^+$ with $\mu_h(\mr\eta([t^-,t^+]))\geq 2^{-\e_1n}$ and $\partial\mr\eta([t^-,t^+])\subseteq \bbD\backslash B(0;2^{-\theta n})$, $\nu_h(\partial \mr\eta([t^-,t^+]))\geq 2^{(\beta-\frac{1+\gamma^2\theta}{2})n}$.  {By Lemma~\ref{lem:SLE-contain-1}, for any $\delta,\e_2\in(0,1)$, there exists some $c_3''>0$, such that
    $$\bbP_\rmE[B(0;2^{-\theta n})\nsubseteq\eta([-2^{-(\theta-\delta)n/\gamma Q},2^{-(\theta-\delta)n/\gamma Q}  ]) ]\lesssim  2^{-c_3''n}; \ \ \ \bbP_\rmE[\eta([-2^{-\e_2n},2^{-\e_2n}])\nsubseteq \bbD]\lesssim  2^{-c_3''n}. $$}
    Pick $\theta,\delta,\e_2<\e_1$ such that $\e_3:=(\theta-\delta)/\gamma Q>\e_2$ and $\e_4:=\beta-\frac{\gamma^2\theta}{2}+\frac{\e_2}{2}>0$.
    We reparameterize $\mr\eta$ by the quantum area to transfer back to $\eta$. By a union bound, for some constant $c_5',c_5''>0$, under the probability measure $\bbP_\rmE$, with probability at least $1-c_5'2^{-c_5''n}$, for any $t^-<t^+$ where $t^-,t^+\in [-2^{-\e_2n},2^{-\e_2n}]\backslash [-2^{-\e_3 n},2^{-\e_3 n}  ]$ with $t^+-t^-> 2^{-\e_1n}$, we have $\nu_h(\partial \eta([t^-,t^+]))\geq 2^{(\beta-\frac{1+\gamma^2\theta}{2})n}$. Further by the definition of the boundary length process as in~\eqref{eq:bdryprocess} together with  {the scaling in Lemma~\ref{lem:L_t-inf-1}}, with probability at least $1-c_5'2^{-c_5''n}$ under $\bbP_\rmE$, for every $t^-<t^+$ with $t^-,t^+\in [-2^n,2^n]\backslash [-2^{(1+\e_2-\e_3)n},2^{(1+\e_2-\e_3)n}]$ and $t^+-t^-\geq 2^{(1+\e_2-\e_1)n}$,
    $$L_{t^-}+L_{t^+}-2\inf_{s\in [t^-,t^+]} L_s + R_{t^-}+R_{t^+}-2\inf_{s\in [t^-,t^+]} R_s\geq 2^{\frac{(1+\e_2)n}{2}}\cdot 2^{(\beta-\frac{1+\gamma^2\theta}{2})n} = 2^{\e_4n}.$$
    Further combined with~\eqref{eq:double-edge-lem-0-pf-1-00}, for large enough $n$, for some $c_6',c_6''>0$, with probability at least $1-c_6'2^{-c_6''n}$ under $\bbP_\rmE$,  for every $t^-<t^+$ with $t^-,t^+\in [-2^n,2^n]\backslash [-2^{(1+\e_2-\e_3)n},2^{(1+\e_2-\e_3)n}]$ and $t^+-t^->2^{(1+\e_2-\e_1)n}$,
     $$\cL_{t^-}+\cL_{t^+}-2\inf_{s\in [t^-,t^+]} \cL_s + \cR_{t^-}+\cR_{t^+}-2\inf_{s\in [t^-,t^+]} \cR_s\geq\frac{1}{4}\cdot 2^{\frac{(1+\e_2)n}{2}}\cdot 2^{(\beta-\frac{1+\gamma^2\theta}{2})n} = 2^{\e_4n-2},$$
     which further implies the conclusion by setting $\wt\e_0=\e_1-\e_2$ and $\wt \e_2=\e_3-\e_2$.
\end{proof}

\begin{proof}[Proof of Proposition~\ref{lem:gap-hat-J}]
  Let $\wt\e_0,\wt\e_2$ be as in Lemma~\ref{lem:gap-hat-J-9-4}, and $F^{2,n}$ be the corresponding event considered in Lemma~\ref{lem:gap-hat-J-9-4}.  We set $\e_0 = \frac{1}{3} ( \wt\e_0\wedge\wt\e_2)$.  
   %In light of Lemma~\ref{lem:hat}, it suffices to prove that for some $c>0$, with $\bbP_\rmE$-probability  at least $1-c^{-1}2^{-cn}$, there does not exist any $t^-<t^+$ such that~\eqref{eq:double-edge-Mullin-A} holds and $[t^-,t^+]\cap [-2^n,2^n]$ has length at least $2^{(1-\e_0)n}$. 
  Let $F^{1,n}$ be the event that there does not exist any $t_1,t_2$ with $|t_1|\leq 2^{(1-\wt\e_2)n}$ and $t_2\in\bbR$ such that $|t_2-t_1|\geq 2^{(1-\e_0) n-1}$ and  ~\eqref{eq:double-edge-Mullin-A} holds for $(t_1\wedge t_2,t_1\vee t_2)$ in place of $(t^-,t^+)$. Since $\e_0<\wt\e_2$,   Lemma~\ref{lem:gap-hat-J-9-3} and a scaling imply that $1-\bbP_\rmE[F^{1,n}]\lesssim 2^{-c''n}$ for some $c''>0$. Then under $F^n:=F^{1,n}\cap F^{2,n}$, there does not exist any $t^-,t^+\in [-2^n,2^n]$ such that $t^+-t^-\geq 2^{(1-\e_0)n-1}$ and~\eqref{eq:double-edge-Mullin-A} holds. Consider the event $\cap_{m=n}^\infty F^m$. If there exists $J\subseteq [-2^n,2^n]$ such that $|J|=2^{(1-\e_0)n}$ and $J\cap \mr \cJ=\emptyset$, then by Lemma~\ref{lem:hat},  there exists $[t^-,t^+]$ such that $J\subseteq [t^-,t^+]$ and~\eqref{eq:double-edge-Mullin-A} holds. Then we have the following two possibilities:
  \begin{itemize}
      \item $|t^-|,|t^+|\leq 2^{n+1}$. Then $t^+-t^-\geq 2^{(1-\e_0)n}\geq 2^{(1-\e_0)(n+1)-1}$, and therefore $F^{n+1}$ does not happen;
      \item $|t^-|\vee|t^+|\geq 2^{n+1}$. Suppose for some $m\geq n+1$, $|t^-|\vee|t^+| \in [2^m,2^{m+1}]$. Then $t^+-t^-\geq 2^{m-1}\geq 2^{(1-\e_0)(m+1)-1}$, and therefore $F^{m+1}$ does not happen.
  \end{itemize}
  Therefore the event in~\eqref{eq:lem:gap-hat-J} does not happen under the event $\cap_{m=n}^\infty F^m$. The proposition then follows from Lemma~\ref{lem:gap-hat-J-9-3}, Lemma~\ref{lem:gap-hat-J-9-4} and a union bound.
\end{proof}

\begin{proposition}\label{lem:gap-hat-J^-}
    There exists $\e_1>0$ and $c\in(0,1)$ such that for any $m\in\bbN$, 
    \eqb\label{eq:lem:gap-hat-J^-}\bbP_\rmE\bigg[\exists \text{ interval } J\subseteq [-2^m,2^m], |J|=2^{(1-\e_1)m}, \mr\cJ^-\cap J=\emptyset\bigg]\lesssim 2^{-cm}.\eqe
    In particular, for any $a_1>a_2>0$, 
\begin{equation}\label{eq:gap-hat-J^--a1a2}
    \bbP_\rmE\Big[\mr\cJ^-\cap [-2^{a_1m},-2^{a_2m}]=\emptyset  \Big]\lesssim 2^{-cm}.
\end{equation}
    Furthermore, there exists $c_0,c>0$, such that 
     \eqb\label{eq:jump-hat-J^-}\bbP_\rmE\big[\mr N_\rmL[-2^m,2^m]\geq c_0m\big]\lesssim 2^{-cm}; \ \ \ \ \bbP_\rmE\big[\mr N_\rmR[-2^m,2^m]\geq c_0m\big]\lesssim 2^{-cm}.  \eqe
     % In particular,
     % \eqb\label{eq:jump-hat-J^--1}\bbP_\rmE\big[\exists s\in \mr\cJ^-, s.t., \mr I_s = [s^-,s]\cap [-2^m,2^m]\neq\emptyset, \text{ and } \max\{\cL_s-\cL_{s^-}, \cR_s-\cR_{s^-}\}\geq c_0m\big]\lesssim 2^{-cm}.\eqe
     The same conclusions hold if we replace $\bbP_\rmE$ by $\bbP_\rmQ$.
\end{proposition}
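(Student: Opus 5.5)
We will derive all three claims from Proposition~\ref{lem:gap-hat-J} (gaps in $\mr\cJ$) together with simple random-walk estimates for the steps of $\cZ$. First we treat~\eqref{eq:jump-hat-J^-}. Fix $m$ and work on the event of Proposition~\ref{lem:gap-hat-J} and on the event $\{\#(\cJ\cap[-2^{m+1},2^{m+1}])\le 2^{3m}\}$. Under $\bbP_\rmQ$ the second event is a Poisson tail; under $\bbP_\rmE$ we obtain it from part~(\ref{cond:thm:KMT-0-cone-(iv)}) of Theorem~\ref{thm:KMT-0-cone} after rescaling.

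Suppose $\mr N_\rmL[-2^m,2^m]\ge c_0m$. Then there is a string of $c_0m$ consecutive jumps of $\mr\cZ$ that are all up-steps of $\mr\cL$ and contain no down-step of $\mr\cL$ or $\mr\cR$. Interspersed $\mr\cR$ up-steps are allowed. We will show that this forces about $c_0m$ consecutive jumps of the original walk $\cZ$ (in its jump-chain order) that contain no down-step, apart from steps inside erased excursions. An erased excursion~\eqref{eq:double-edge-Mullin-A} has zero net displacement, so deleting it does not change which surviving steps are up-steps.

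The key combinatorial point is the following. Consider the surviving steps at times $s_1<\dots<s_{k}$ in $\mr\cJ$ with $k\ge c_0m$, all up-steps. Each gap between consecutive $s_i$ is a union of erased excursions. Because each such excursion begins and ends at its own running infimum, the walk $\cZ$ restricted to $[s_1,s_k]$ never goes below $\cZ_{s_1}$ in either coordinate after it first rises. We then use independence of the i.i.d.\ steps and condition on the times $s_i$; the probability that a given surviving step is an up-step, given the past, should be at most $1/2+o(1)$.

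Rather than making that conditional argument rigorous, we plan a cleaner route: a union bound over the starting jump index $j$ among at most $2^{3m}$ indices. The event to bound is that, starting from jump $j$, the walk with excursions removed produces $c_0m$ up-steps in a row. We encode this as an event about the i.i.d.\ jump sequence and bound it by $(3/4)^{c_0m}$, using that the next surviving step is the first step of $\cZ$ after the removed excursion. Conditioned on the excursion having ended, the law of that step is not uniform, since it is constrained not to start a new erasable excursion. We control this by a direct comparison bounding the conditional probability of an up-step away from $1$. Choosing $c_0$ large then beats $2^{3m}$ under $\bbP_\rmQ$, and part~(\ref{cond:thm:KMT-0-cone-(iv)}) of Theorem~\ref{thm:KMT-0-cone} transfers the bound to $\bbP_\rmE$. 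This choice of $G$ is local and of the form~\eqref{eq:def-A} after covering $[-2^m,2^m]$ by unit intervals.

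Next we prove~\eqref{eq:lem:gap-hat-J^-}. Split a $J$ of length $2^{(1-\e_1)m}$, with $\e_1<\e_0$, into $2^{(\e_0-\e_1)m}$ disjoint subintervals of length $2^{(1-\e_0)m}$. By Proposition~\ref{lem:gap-hat-J} each subinterval contains a time of $\mr\cJ$, so $J$ contains many surviving jumps. If none of them is in $\mr\cJ^-$, then $\mr\cL$ and $\mr\cR$ only increase on $J$, giving a run of at least $2^{(\e_0-\e_1)m}/2\gg c_0m$ up-steps of one coordinate with no down-steps, and~\eqref{eq:jump-hat-J^-} excludes this.

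Finally,~\eqref{eq:gap-hat-J^--a1a2} follows by applying~\eqref{eq:lem:gap-hat-J^-} at scale $a_1m$, since $2^{(1-\e_1)a_1m}<2^{a_1m}-2^{a_2m}$ for large $m$. The main obstacle is the conditional-law step for the surviving step after an erased excursion. If the direct comparison fails, we will instead show that an up-step run of length $c_0m$ forces $\cL$ or $\cR$ to rise by about $c_0m/2$ without returning, which with a Poisson or KMT count gives a bound of order $2^{-cm}$ per start.
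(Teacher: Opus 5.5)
\textbf{There is a genuine gap at the core step.} Your overall plan matches the paper's: bound runs of retained steps with no down-step by a union bound over starting jumps under $\bbP_\rmQ$, transfer to $\bbP_\rmE$ with part~(\ref{cond:thm:KMT-0-cone-(iv)}) of Theorem~\ref{thm:KMT-0-cone}, then derive \eqref{eq:lem:gap-hat-J^-} and \eqref{eq:gap-hat-J^--a1a2}. Those last two deductions are fine; for the $\bbP_\rmQ$ version, use Proposition~\ref{lem:gap-hat-J-Q} in place of Proposition~\ref{lem:gap-hat-J}. What is missing is the exponential bound $q^{c_0m}$, $q<1$, on $c_0m$ consecutive retained steps with no down-step. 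You flag it as the main obstacle and leave it to an unspecified ``direct comparison''.

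The difficulty is real: whether a step survives depends on the future, so after conditioning on the retained prefix the remaining walk is not free. The paper handles this in Lemma~\ref{lem:number-jump-Q}, roughly as follows.
\begin{itemize}
\item Condition on the path up to the $j$-th retained time $\sigma_j$ and on $\sigma_1,\dots,\sigma_j$, with retained positions non-decreasing. The rest of the walk is then a simple random walk conditioned on the event $E^N_{\sigma_1,\dots,\sigma_j}$ that no later time closes a quadrant excursion started at an earlier $\sigma_i$. This event is decreasing in $N$.
\item Suppose the last retained step was in the other coordinate. Insert the two steps $(-1,0),(1,0)$ right after $\sigma_j$. This maps $E^{N-2}$ into $E^N\cap A_j$, and on $A_j$ the time $\sigma_{j+1}=\sigma_j+1$ is a retained down-step.
\item Hence the run stops with conditional probability at least $1/16$, giving $(15/16)^{m-1}$.
\end{itemize}
Your fallback does not work. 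A run of up-steps only forces $\cZ$ to stay in the quadrant above its starting value while $\cL+\cR$ rises by $c_0m$. The erased excursions in between impose no time constraint, and the walk can do this in time of order $m^2$ at a cost that is only polynomial in $m$ (order $m^{-2}$ by the quadrant cone estimate). That cannot survive a union over $2^{3m}$ starting points.

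\textbf{A second, more easily repaired, problem: your $G$ is not local.} $\mr N_\rmL[-2^m,2^m]$ is defined from $\mr\cZ=\Phi_{\rm simple}(\cZ)$, which depends on $\cZ$ outside every bounded window. Part~(\ref{cond:thm:KMT-0-cone-(iv)}) of Theorem~\ref{thm:KMT-0-cone} requires $A\in\sigma(\cZ|_{I(k)},I(k))$, and covering by unit intervals does not change this. The paper instead runs the union bound and the transfer for the localized walk $\Phi_{\rm simple}(\cZ|_{I(2m)})$, which is $\cZ|_{I(2m)}$-measurable (Lemma~\ref{lem:number-jump-E}). It then restricts to $\{[-2^{1.5m},2^{1.5m}]\subseteq I(2m)\}$ and uses Proposition~\ref{lem:gap-hat-J} to find times of $\mr\cJ$ in $[-2^{1.5m},-2^m]$ and $[2^m,2^{1.5m}]$. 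Lemma~\ref{lem:A2} then identifies the localized erased walk with $\mr\cZ$ on $[-2^m,2^m]$. This localization is what the event of Proposition~\ref{lem:gap-hat-J} is for in proving \eqref{eq:jump-hat-J^-}; you invoke that event but never use it.
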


Recall the operator $\Phi_{\rm simple}$ for the continuous-time walks $\cZ$, $\mr\cZ =\Phi_{\rm simple}(\cZ) $ and we set $\mr\cZ_k = \Phi_{\rm simple}(\cZ|_{I(k)})$ which is measurable with respect to $\cZ|_{I(k)}$. Let $\mr\cJ_k$, $\mr\cJ_k^+$ and $\mr\cJ_k^-$ be defined for the walk $\mr\cZ_k$ in the same way as~\eqref{eq:def-hat-JL} and~\eqref{eq:def-hat-JR}, and $\mr N_{k,\rmL}[t_0,t_1],\mr N_{k,\rmR}[t_0,t_1]$ be defined for the walk $\mr\cZ_k$ in the same way as ~\eqref{eq:def-jump-hat-N} using $\mr\cJ_k$, $\mr\cJ_k^+$ and $\mr\cJ_k^-$. %The following is immediate from definition.

In light of Lemma~\ref{lem:A2}, we first prove the corresponding statement for $\mr\cZ_k$ under the probability measure $\bbP_\rmQ$, and later use Theorem~\ref{thm:KMT-0-cone} to transfer to the setting of $\bbP_\rmE$. For $k,m>0$, we let $E^{\uparrow}_{k,m}$ be the event where there exists consecutive times $t_1<...<t_m$ in $\mr\cJ_k$, such that $\cL_{t_1}\leq \cL_{t_2}\leq...\leq\cL_{t_m}$ and $\cR_{t_1}\leq \cR_{t_2}\leq...\leq\cR_{t_m}$. Note that under the event $(E^{\uparrow}_{k,m})^c$, we have $\mr N_{k,\rmL}[a_k,b_k]\leq m$ and $\mr N_{k,\rmR}[a_k,b_k]\leq m$ where $I(k)=[a_k,b_k]$.

\begin{lemma}\label{lem:number-jump-Q}
 There exists $c'>0$, such that $\bbP_\rmQ[E^{\uparrow}_{k,m}]\leq c'(2^{2.2k-m/20}+2^{-k^2}).$
\end{lemma}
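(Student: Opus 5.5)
Under $\bbP_\rmQ$ the walk $\cZ|_{I(k)}$ is a continuous-time simple random walk whose jump times form a unit-rate Poisson process. The plan is to reduce to a statement about the discrete skeleton walk and bound it with a union bound over starting times. First, with probability at least $1-2^{-k^2}$ the number of jumps of $\cZ$ in $I(k)$ is at most $2^{1.1k}$, by a Poisson tail bound. On this event we list the jump times in $I(k)$ and view $\mr\cZ_k=\Phi_{\rm simple}(\cZ|_{I(k)})$ as obtained from the discrete walk $(X_j)_{j\le N}$, $N\le 2^{1.1k}$, by erasing all $\bbZ_+^2$-cone excursions as in~\eqref{eq:double-edge-Mullin-A}. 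The event $E^\uparrow_{k,m}$ says that some $m$ consecutive surviving times carry only up-steps in both coordinates, i.e.\ no surviving down-step occurs among them.

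The key observation is what happens between two consecutive surviving times $t_i<t_{i+1}$ in $\mr\cJ_k$. The erased portion consists of a union of cone excursions returning to the starting point, so $\cZ_{t_{i+1}}-\cZ_{t_{i+1}-}$ is the surviving step. On $E^\uparrow_{k,m}$, the original walk therefore goes from time $t_1$ to time $t_m$ as a concatenation of $m-1$ up-steps interlaced with cone excursions in the quadrant above the current position. Hence for $t\in[t_1,t_m]$ we have $\cL_t\ge \cL_{t_1}$ and $\cR_t\ge\cR_{t_1}$, and $\|\cZ_{t_m}-\cZ_{t_1}\|_1=m-1$. Moreover, each inserted excursion ends at the running minimum of both coordinates within its own range, and the excursions are separated by $m-1$ unit up-steps.

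For the bound, fix the discrete index $j$ of $t_1$; there are at most $2^{1.1k}$ choices. Also fix the number $\ell$ of discrete steps until $t_m$; again at most $2^{1.1k}$ choices. Then bound the probability that the walk started at $X_j$ realizes the pattern above with $m$ surviving up-steps. I will estimate this via a generating-function or decomposition argument. Cut at the $m-1$ surviving up-steps, so that the walk is a sequence of $m$ (possibly empty) cone-excursion blocks, each followed by an up-step. For the simple random walk, the probability that the next block is a cone excursion of any length followed by one specific kind of up-step is at most some $\rho<1$. Indeed, the total mass of $\bbZ_+^2$ excursions is $\sum_n \#\cA_n^\sph 4^{-2n}<1$ by Lemma~\ref{lem:walk-excursion}, with summable $n^{-3}$ tails, and we multiply this by $1/2$ for the step being an up-step. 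By the strong Markov property at the block boundaries, the probability of $m-1$ successive blocks is then at most $\rho^{m-1}$.

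A subtlety is that surviving in $\mr\cJ_k$ depends on the future as well, since a later excursion could erase an earlier step. However, requiring only that the blocks decompose this way is a necessary condition, so the upper bound remains valid. Provided $\rho\le 2^{-1/20}$, the union bound gives $2^{2.2k}\rho^{m}$. If the excursion mass does not give $\rho\le 2^{-1/20}$ directly, I would instead group blocks in pairs, or use the crude bound that each block ends with a step in $\{(1,0),(0,1)\}$ after an excursion, whose probability is at most $\tfrac12\cdot(1+\text{excursion mass})/\ldots$. The main obstacle is verifying that this per-block probability is at most $2^{-1/20}$, which only requires that the total excursion weight be bounded away from the value forcing $\rho=1$.
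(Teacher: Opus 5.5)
Your reduction is correct and takes a genuinely different route from the paper's. Between consecutive times $t_i<t_{i+1}$ of $\mr\cJ_k$, the walk does perform a (possibly empty) $\bbZ_+^2$-cone excursion from $\cZ_{t_i}$: chain the erasing intervals back to $t_i$ and use that overlapping bad intervals merge. So $E^\uparrow_{k,m}$ forces $m-1$ blocks of the form ``excursion, then a step in $\{(1,0),(0,1)\}$''. Since this is only a necessary condition, the non-locality of the erasure causes no trouble.

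\textbf{The gap} is the one quantitative input everything rests on. Summing over block lengths, the per-block factor is $\rho=\frac12 G$ with $G:=\sum_{n\ge 0}\#\cA_n^{\sph}16^{-n}$. The $n=0$ term (the empty excursion, which must be allowed) already equals $1$. So either your inequality $\sum_n\#\cA_n^{\sph}4^{-2n}<1$ is false (if $n=0$ is included), or your formula $\rho=\frac12\cdot(\text{mass})$ drops the empty block (if it is not). In either case, Lemma~\ref{lem:walk-excursion} and the summable $n^{-3}$ tail only give $G<\infty$. They do not give $G<2$, let alone $\rho\le 2^{-1/20}$, which the exponent $m/20$ requires when $m$ is large compared to $k$. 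Grouping blocks in pairs does not help, since a pair contributes exactly $\rho^2$. You flag this as the main obstacle but leave it open, so as written the argument is incomplete.

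\textbf{The fix} is short. $G$ is the expected number of visits to $0$ before the walk leaves $\bbZ_+^2$, so $G=1/(1-q)$, where $q$ is the probability of returning to $0$ before leaving. The first step must be $(1,0)$ or $(0,1)$, and from there the next step exits the quadrant with probability $1/4$. Hence $q\le\frac12\cdot\frac34=\frac38$, so $G\le\frac85$ and $\rho\le\frac45<2^{-1/20}$.

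Also obtain $\rho^{m-1}$ by a union bound over the lengths $(n_1,\dots,n_{m-1})$ together with the Markov property at the resulting deterministic times. Do not invoke the strong Markov property, since the block boundaries are not stopping times. Fixing only the index of $t_1$ then already gives $2^{1.1k}\rho^{m-1}+2^{-k^2}$, so fixing $\ell$ is unnecessary.

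\textbf{Comparison with the paper.} With this repair your proof is a clean first-moment alternative. The paper instead conditions on the retained history and identifies the conditional law of the future as a walk conditioned on a future-dependent event. It then inserts the steps $(-1,0),(1,0)$ to show that the monotone chain breaks at the next retained time with conditional probability at least $1/16$, giving $(15/16)^{m-1}$. That route needs no excursion-mass computation but must handle the delicate conditioning. Yours avoids the conditioning entirely and yields a better rate.
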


\begin{proof}
Fix $N\in\bbN$. Consider a simple random walk $(\cX_n,\cY_n)_{n=0,...,N}$ on $\bbZ^2$ of $N$ steps. For all $n^-<n^+$ such that \eqb\label{eq:pi/2-excursion-AA}\cX_{n^-}=\cX_{n^+} = \inf_{n^-\leq j\leq n^+}\cX_j, \ \ \cY_{n^-}=\cY_{n^+}=\inf_{n^-\leq j\leq n^+}\cY_j,\eqe  we erase all the times in $(n^-,n^+]$ from $[0,N]\cap\bbZ$ as in the definition of $\Phi_{\rm simple}$ and let $\cN$ be the retained set consisting of $0=\sigma_1<\sigma_2<\cdots$.  Let $\wt E^{\uparrow}_{N,m}$ be the event that there exists at least $m$ times in $\cN$, such that $\cX_{\sigma_1}\leq ... \leq \cX_{\sigma_m}$, $\cY_{\sigma_1}\leq ... \leq \cY_{\sigma_m}$. We prove that for each $N$, \eqb\label{eq:pf-lem-Ex-111112}\bbP[\wt E^{\uparrow}_{N,m}]\leq (\frac{15}{16})^{m-1}.\eqe

Let $1\leq j<m$ and we condition on $(\cX_n,\cY_n)_{0\leq n\leq \sigma_j}$ and $(\sigma_1,...,\sigma_j)$ where $(\cX_{\sigma_i},\cY_{\sigma_i})_{0\leq i\leq j}$ are nondecreasing in both coordinates. Then $(\cX_n,\cY_n)_{\sigma_j\leq n\leq N}$ has the law as a simple random walk started from $(\cX_{\sigma_j},\cY_{\sigma_j})$ and stopped at time $N-\sigma_j$, conditioned on the event $E_{\sigma_1,...,\sigma_j}^N$ that there does not exist any $1\leq i< j$ and $t\in(\sigma_j,N]$ such that $\cL_{\sigma_i} = \cL_{t} = \inf_{\sigma_i\leq u\leq t}\cL_u$ and $\cR_{\sigma_i} = \cR_{t} = \inf_{\sigma_i\leq u\leq t}\cR_u$. The reason is, for any trace of  $(\cX_n,\cY_n)_{n=0,...,N}$ with given  $(\cX_n,\cY_n)_{0\leq n\leq \sigma_j}$ and $(\sigma_1,...,\sigma_j)$, if the above equality does hold for some $i<j$ and $t\in(\sigma_j,N]$, then the time $\sigma_j$ would be erased and cannot remain in $\cN$; if the event $E_{\sigma_1,...,\sigma_j}^N$ holds, then there cannot be $n^-<\sigma_i\leq n^+$ for $i\leq j$ such that~\eqref{eq:pi/2-excursion-AA} holds. Indeed, if this is not the case, then $(\cX_{n^-},\cY_{n^-})$ must be equal to   $(\cX_{\sigma_{i'}},\cY_{\sigma_{i'}})$ with $i'$ being the largest index with $\sigma_{i'}\leq n^-$ since  $(\cX_{\sigma_i},\cY_{\sigma_i})_{0\leq i\leq j}$  are nondecreasing in both coordinates, and this  contradicts with the existing condition on $(\cX_n,\cY_n)_{0\leq n\leq \sigma_j}$ and our event $E_{\sigma_1,...,\sigma_j}^N$. Now we assume that $j\geq 2$, $\sigma_j+2\leq N$ and $\cX_{\sigma_{j}}=\cX_{\sigma_{j-1}}$. Let   $A_j:=\{\cX_{\sigma_j+1}-\cX_{\sigma_j}=-1,\cX_{\sigma_j+2}-\cX_{\sigma_j+1}=1\}$. For a walk started from $(\cX_{\sigma_j},\cY_{\sigma_j})$ that is in the event $E_{\sigma_1,...,\sigma_j}^{N-2}$, if we insert two steps $(-1,0),(1,0)$ at the beginning, then we get a walk that is in $E_{\sigma_1,...,\sigma_j}^{N}\cap A_j$. Therefore $\bbP[A_j\cap E_{\sigma_1,...,\sigma_j}^{N}]\geq \frac{1}{16}\bbP[E_{\sigma_1,...,\sigma_j}^{N-2}]\geq \frac{1}{16}\bbP[E_{\sigma_1,...,\sigma_j}^{N}]$ where the factor $\frac{1}{16}$ is from the insertion map and the last inequality is from the fact that $ E_{\sigma_1,...,\sigma_j}^{N}$ is decreasing over $N$. This gives $\bbP[A_j|E_{\sigma_1,...,\sigma_j}^{N}]\geq \frac{1}{16}$. Furthermore, under the event $A_j$, any excursion starting before $\sigma_j$ that erases $\sigma_j+1$ would also erase $\sigma_j$, while there cannot be any excursion starting at $\sigma_j$ that erase $\sigma_j+1$, and therefore $\sigma_{j+1}=\sigma_j+1$ and $\wt E^{\uparrow}_{N,j+1}$ no longer holds. This proves that \eqb\label{eq:pf-lem-Ex-111}\bbP[\wt E^{\uparrow}_{N,j+1}| \wt E^{\uparrow}_{N,j},(\cX_n,\cY_n)_{0\leq n\leq \sigma_j},(\sigma_1,...,\sigma_j)]\leq\frac{15}{16}.\eqe
  If $\sigma_j+1 = N$ or $j=1$, we can directly work with the event that the first step of the walk $(\cX_n,\cY_n)_{\sigma_j\leq n\leq N}$ is $(-1,0)$ and~\eqref{eq:pf-lem-Ex-111} holds with $3/4$ in place of $15/16$. If $\sigma_j=N$, then the continuation is not possible and the conditional probability is 0. The case where $\cY_{\sigma_{j}}=\cY_{\sigma_{j-1}}$ follows similarly. This proves~\eqref{eq:pf-lem-Ex-111112}. 

  Since $\cJ$ is a  {unit rate} Poisson point process under the measure $\bbP_\rmQ$, $\bbP_\rmQ[\#\{\cJ\cap [-2^k,2^k]\}\geq 2^{1.1k}]\leq 2^{-k^2}$.  Using description of the $\cZ$ as in part~(\ref{cond:prop:KMT-gamma-cone-(i)}) of Theorem~\ref{prop:KMT-gamma-cone}, by a union bound over all the starting and ending positions in $\cJ\cap [-2^k,2^k]\supseteq \cJ\cap I(k)$ and ~\eqref{eq:pf-lem-Ex-111112}, with probability at least $1-(2^{2.2k}\cdot \big(\frac{15}{16}\big)^{m-1}+2^{-k^2})$, uniformly over all possible realizations of $I(k)\subseteq [-2^k,2^k]$, the event $E^{\uparrow}_{k,m}$ does not happen.
 Therefore \begin{equation}
      \bbP_\rmQ[E^{\uparrow}_{k,m}]\leq 2^{2.2k}\cdot \big(\frac{15}{16}\big)^{m-1}+2^{-k^2},
  \end{equation}
which gives the claim.
\end{proof}

\begin{lemma}\label{lem:number-jump-E}
    There exists constants $c_0,c>0$, such that $\bbP_\rmE[E^{\uparrow}_{k,c_0k}]\lesssim 2^{-ck}.$
\end{lemma}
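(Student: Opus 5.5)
The plan is to transfer Lemma~\ref{lem:number-jump-Q} from $\bbP_\rmQ$ to $\bbP_\rmE$ using part~(\ref{cond:thm:KMT-0-cone-(iv)}) of Theorem~\ref{thm:KMT-0-cone}. Fix $c_0$ large, to be chosen. The event $E^{\uparrow}_{k,c_0k}$ is defined through $\mr\cZ_k=\Phi_{\rm simple}(\cZ|_{I(k)})$, which is measurable with respect to $\sigma(\cZ|_{I(k)},I(k))$. However, the events $A$ to which \eqref{eq:KMT-0-cone} applies must be of the form~\eqref{eq:def-A}, that is, $\{\exists s\in I(0): G(\cZ(\cdot+s)-\cZ(s),\fD-s)=1\}$. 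I will therefore rewrite $E^{\uparrow}_{k,c_0k}$ in this form. Take $G(\cZ',\fD')$ to be the indicator that the walk $\cZ'$ restricted to the interval of $\fD'$ of length $2^k$ containing $0$, after applying $\Phi_{\rm simple}$, has $c_0k$ consecutive retained times that are coordinatewise nondecreasing.

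The shift by $s\in I(0)$ does not change $I(k)$, since $I(0)\subseteq I(k)$ and all intervals in $\fD$ are nested. It also does not change the increments of $\cZ$. So $G(\cZ(\cdot+s)-\cZ(s),\fD-s)$ is the indicator of $E^{\uparrow}_{k,c_0k}$ for every $s\in I(0)$, and we get $A=E^{\uparrow}_{k,c_0k}$ with $A\in\sigma(\cZ|_{I(k)},I(k))$. This measurability check, together with making sure the existential form is harmless, is the only step needing care. If the translation-invariant formulation is awkward, I would instead take the union over dyadic shifts $s\in 2^{-k^2}\bbZ\cap I(0)$ and handle boundary effects by monotonicity.

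Applying \eqref{eq:KMT-0-cone} then gives
\[
\bbP_\rmE[E^{\uparrow}_{k,c_0k}]\le \bbP_\rmQ[E^{\uparrow}_{k,c_0k}]^{1/4}2^{C_0k}+2^{-k/C_0}.
\]
By Lemma~\ref{lem:number-jump-Q}, $\bbP_\rmQ[E^{\uparrow}_{k,c_0k}]\le c'(2^{2.2k-c_0k/20}+2^{-k^2})$. Its fourth root is at most $c(2^{(0.55-c_0/80)k}+2^{-k^2/4})$. Choosing $c_0>80(C_0+1)$, the product with $2^{C_0k}$ is at most $2^{-k}$ for large $k$. Hence $\bbP_\rmE[E^{\uparrow}_{k,c_0k}]\lesssim 2^{-ck}$ with $c=\min(1,1/C_0)$, which is the claim.
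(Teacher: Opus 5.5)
Your proposal is correct and is essentially the paper's proof: you take $G$ to be the indicator of $E^{\uparrow}_{k,c_0k}$ read off from the dyadic interval of length $2^k$ containing $0$, you note that the shift by $s\in I(0)$ leaves this unchanged, so $A=E^{\uparrow}_{k,c_0k}$, and you combine Lemma~\ref{lem:number-jump-Q} with \eqref{eq:KMT-0-cone}; the dyadic-shift fallback is unnecessary. There is one harmless arithmetic slip: with $c_0>80(C_0+1)$ the main term is only bounded by $2^{(0.55+C_0-c_0/80)k}\le 2^{-0.45k}$, not $2^{-k}$, but this still gives exponential decay with $c=\min(0.45,1/C_0)$ (the paper takes $c_0>100(C_0+1)$).
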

\begin{proof}
   Let $C_0>0$ be the constant from part~(\ref{cond:thm:KMT-0-cone-(iv)}) of Theorem~\ref{thm:KMT-0-cone}. We take $c_0>100(C_0+1)$ and let $m=c_0k$ in Lemma~\ref{lem:number-jump-Q}. Note that if we define the function $G(\cZ,\fD) = \mathds{1}_{E^\uparrow_{k,c_0k}}$, then the event $A$ defined in~\eqref{eq:def-A} agrees with $E^\uparrow_{k,c_0k}$. The claim then follows from Lemma~\ref{lem:number-jump-Q} and Theorem~\ref{thm:KMT-0-cone} applied to the event $A$.
\end{proof}

\begin{proof}[Proof of Proposition~\ref{lem:gap-hat-J^-}]
    We work with the setting of $\bbP_\rmE$ and  the event $\{[-2^{1.5m},2^{1.5m}]\subseteq I(2m)\}$, which has probability $1-2^{-cm}$. By Proposition~\ref{lem:gap-hat-J},   with probability $1-c'2^{-cm}$, $[-2^{1.5m},-2^{m}]\cap \mr\cJ\neq\emptyset$ and $[2^{m},2^{1.5m}]\cap \mr\cJ\neq\emptyset$ and further by Lemma~\ref{lem:A2}, with probability  $1-c'2^{-cm}$, $\mr\cZ|_{[-2^m,2^m]} = \mr\cZ_{2m}|_{[-2^m,2^m]}$, $\mr\cJ\cap [-2^m,2^m] = \mr\cJ_{2m}\cap [-2^m,2^m]$ and $\mr\cJ^-\cap [-2^m,2^m] = \mr\cJ_{2m}^-\cap [-2^m,2^m]$. Now by Lemma~\ref{lem:number-jump-E} and Lemma~\ref{lem:A2}, for some constants $c',c'c_0>0$, with probability $1-c'2^{-cm}$, $\mr N_{2m,\rmL}[-2^m,2^m]\leq c_0m$, $\mr N_{2m,\rmR}[-2^m,2^m]\leq c_0m$ and further $\mr N_{\rmL}[-2^m,2^m]\leq c_0m$, $\mr N_{\rmR}[-2^m,2^m]\leq c_0m$. This verifies~\eqref{eq:jump-hat-J^-}. By Proposition~\ref{lem:gap-hat-J},  for $\e_1\in(0,\e_0)$, with probability $1-c'2^{-cm}$,  each interval $J\subseteq [-2^m,2^m]$ of length $2^{(1-\e_1) m}$ contains at least $2^{(\e_0-\e_1)m/2}$ points in $\mr\cJ$, and under the event $\{\mr N_\rmL[-2^m,2^m],\mr N_\rmR[-2^m,2^m]\leq c_0m\}$, $J$ must contain a point in $\mr\cJ^-$. This implies~\eqref{eq:lem:gap-hat-J^-}. The proof in the setting of $\bbP_\rmQ$ is similar, where one use Proposition~\ref{lem:gap-hat-J-Q} and Lemma~\ref{lem:number-jump-Q}. The relation~\eqref{eq:gap-hat-J^--a1a2} is straightforward from~\eqref{eq:lem:gap-hat-J^-} by replacing $m$ with $a_1m$.
\end{proof}

\begin{corollary}\label{cor:eta-boundary}
    Almost surely, for all large enough $r$, the following holds. If $s\in\mr\cJ^-$ such that $\eta(\mr I_s)\cap B(0;r)\neq\emptyset$, then $\partial\eta(\mr I_s)\cap B(0;r)\neq\emptyset$. Similarly, if $s\in \cJ^-$ such that $\eta(  I_s)\cap B(0;r)\neq\emptyset$, then $\partial\eta( I_s)\cap B(0;r)\neq\emptyset$.
\end{corollary}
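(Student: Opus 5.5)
The plan is to argue by contradiction: if $\eta(\mr I_s)$ meets $B(0;r)$ but $\partial\eta(\mr I_s)$ does not, then $B(0;r)\subseteq\eta(\mr I_s)$. This holds because $\eta(\mr I_s)$ is a compact connected set. For $\gamma=\sqrt2$, $\eta(\mr I_s)$ is the closure of a simply connected domain. So if it meets the connected ball $B(0;r)$ without its boundary meeting it, the ball lies entirely inside it. Hence it suffices to show that a.s., for all large $r$, no single cell $\eta(\mr I_s)$ (resp.\ $\eta(I_s)$) contains $B(0;r)$. The same argument applies to both families, and I treat $\mr I_s$, which is the harder case.

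I would discretize $r$ along $r_k=2^{k}$; monotonicity handles intermediate $r$, since a cell containing $B(0;r)$ with $r\in[2^k,2^{k+1}]$ also contains $B(0;2^k)$. By Lemma~\ref{lem:SLE-contain-1} (specifically \eqref{eq:SLE-contained-in-ball-1}), $\eta([-2^{m},2^{m}])\subseteq B(0;2^{(\frac{1}{\gamma Q}+\delta)m})$ with probability $1-O(2^{-cm})$. Take $m=m(k)$ with $(\frac{1}{\gamma Q}+\delta)m\approx k-1$, i.e.\ $m\approx \gamma Q k/(1+\gamma Q\delta)$. On this event $\eta$ is inside $B(0;2^{k-1})$ during $[-2^m,2^m]$. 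Consequently, any time interval $\mr I_s$ with $B(0;2^k)\subseteq\eta(\mr I_s)$ must contain a time at which $\eta$ hits $0$. Since $\eta(0)=0$, and the set of times $\eta$ visits $0$ is a.s.\ just $\{0\}$ for a fixed point, this gives $0\in\mr I_s$. Moreover, $\eta(\mr I_s)\supseteq B(0;2^k)\not\subseteq \eta([-2^m,2^m])$, so $\mr I_s\not\subseteq[-2^m,2^m]$. Thus $\mr I_s$ is an interval containing $0$ and reaching outside $[-2^m,2^m]$; in particular it contains $[0,2^m]$ or $[-2^m,0]$, both of length $2^m$ and contained in $[-2^{m},2^{m}]$.

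Next, $\mr I_s=[\mr s^-,s]$ has no points of $\mr\cJ^-$ in its interior. Proposition~\ref{lem:gap-hat-J^-}, \eqref{eq:lem:gap-hat-J^-}, says that with probability $1-O(2^{-cm})$ every interval of length $2^{(1-\e_1)m}$ inside $[-2^m,2^m]$ meets $\mr\cJ^-$. Applying this to a subinterval of $(0,2^m)$ or $(-2^m,0)$ away from the endpoints gives the contradiction. For $I_s$ the same works using Proposition~\ref{prop:Tutte5.8} with $\cJ^-_\rmL,\cJ^-_\rmR$ in place of $\mr\cJ^-$, since $I_s$ contains no point of $\cJ^-_\rmL$ (or of $\cJ^-_\rmR$) in its interior. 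Summing the $O(2^{-cm(k)})$ failure probabilities over $k$ and applying Borel--Cantelli gives the almost sure statement for all large $k$, hence all large $r$.

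The main obstacle is the first topological step: the claim that "meets $B(0;r)$ but boundary disjoint" forces containment. This needs $\eta(\mr I_s)$ to have no holes, i.e.\ to be a closed set whose complement is connected. That is the simple-connectedness of $\eta([a,b])$ for $\gamma\le\sqrt2$ noted in Section~\ref{subsubsec:MatedCRT}. If $\eta(\mr I_s)$ is the whole plane, the claim fails, but that is excluded since cells are compact. I would also double-check the measure-zero issue of multiple visits to $0$. An alternative that avoids it is to use instead that $\eta$ is inside $B(0;2^{k-1})$ during $[-2^m,2^m]$ together with the fact that $\eta(\mr I_s)$ contains $0$, which forces $\mr I_s$ to contain $0$ after all.
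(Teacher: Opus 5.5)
Your argument is correct, but it takes a more quantitative route than the paper, which is essentially two lines.

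\textbf{The paper's route.} By Proposition~\ref{lem:gap-hat-J^-} and Borel--Cantelli, the single interval $\mr I_{s_0}$ containing time $0$ is a.s.\ finite. So $\eta(\mr I_{s_0})$ is compact and lies inside $B(0;r)$ for all large $r$, which makes the claim trivial for that cell. For every other $s$ one has $0\notin\mr I_s$, so the point $0=\eta(0)$ is not in the interior of $\eta(\mr I_s)$. Since $B(0;r)$ is connected and contains $0$, it must then meet $\partial\eta(\mr I_s)$.

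\textbf{Your route.} You reach the same reduction: a cell containing $B(0;r)$ must contain $0$, hence must be $\mr I_{s_0}$. This uses that $0$ is a.s.\ visited only at time $0$, which the paper's argument also uses implicitly. You then run a dyadic union bound at every scale, combining \eqref{eq:SLE-contained-in-ball-1} with \eqref{eq:lem:gap-hat-J^-} (resp.\ Proposition~\ref{prop:Tutte5.8}). This is valid and gives summable failure probabilities. However, once the only possible offender is $\mr I_{s_0}$, boundedness of $\eta(\mr I_{s_0})$ alone finishes the proof, so Lemma~\ref{lem:SLE-contain-1} and the per-scale estimates are superfluous.

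\textbf{The step you flagged as the main obstacle.} It needs no simple connectivity. If the connected set $B(0;r)$ meets $\eta(\mr I_s)$ but not its topological boundary, then $B(0;r)$ lies in the interior of $\eta(\mr I_s)$ by connectedness alone.

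\textbf{Your closing ``alternative.''} It does not actually avoid the multiple-visit issue. If you wanted to avoid it, an area argument would do. For $s\neq s_0$, $\eta(\mr I_{s_0})$ contains $\eta([0,\epsilon])$ or $\eta([-\epsilon,0])$, which has nonempty interior inside $B(0;r)$. This would overlap $\eta(\mr I_s)\supseteq B(0;r)$ in positive area, contradicting that distinct cells meet only along their boundaries.
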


\begin{proof}
    We only prove the first claim as the proof for the second is similar. By Proposition~\ref{lem:gap-hat-J^-} and the Borel-Contelli lemma, almost surely the interval $\mr I_s$ containing 0 is finite, and the claim holds if $0\in \mr I_s$. Since $\eta(0)=0$, it is clear that for any $r>0$, if  $\eta(\mr I_s)\cap B(0;r)\neq\emptyset$ and $0\notin \mr I_s$, $\partial\eta(\mr I_s)\cap B(0;r)\neq\emptyset$.
\end{proof}

\subsubsection{Bounds on the time sets}\label{subsubsec:3.1.3}

\begin{lemma}\label{lem:prop:time-set-A-5}
For any $a>0$ and $d>1$, there exists $c>0$  such that 
$$\bbP_\rmE\big[\max\{\mr\tau_{t,\rmL}^{\rmC,d}:t\in [-2^m,2^m]\cap\cJ\}>2^{(1+a)m} \big]\lesssim 2^{-cm}.$$
Furthermore, Assertion~\ref{cond:prop:time-set-A-5} in Proposition~\ref{prop:time-set-A} holds.
\end{lemma}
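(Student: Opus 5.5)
To bound $\mr\tau_{t,\rmL}^{\rmC,d}$ uniformly over $t\in[-2^m,2^m]\cap\cJ$, I would build a high-probability event on which every such stopping time has fired before $2^{(1+a)m}$. By definition~\eqref{eq:def-delta-mr-tTDd-1}, $\mr\tau_{t_0,\rmL}^{\rmC,d}\le u$ as soon as two things hold. First, $u\in\mr\cJ^-$ with $u>t_0$. Second,
\[
\inf_{s\in[t_0,u]}(L_s-L_{t_0}) < -d\,\mr\Delta_\rmL[t_0,u].
\]
Since $\mr\Delta_\rmL$ is nondecreasing in the right endpoint, it suffices to control it on the whole window. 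So I would fix small $a_2<a$ and let $W=[-2^m,2^{(1+a)m}]$. Up to a slightly larger dyadic scale, $W\subseteq[-2^{(1+a)m},2^{(1+a)m}]$. I would then intersect the following four events.

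\begin{itemize}
\item[(i)] $\sup_{t\in W}\|Z_t-\cZ_t\|\le C'(1+a)m$. This holds by part~(\ref{cond:thm:KMT-0-cone-(ii)}) of Theorem~\ref{thm:KMT-0-cone}, with failure probability $2^{-cm}$.
\item[(ii)] $\mr N_\rmL[W]\le c_0(1+a)m$. This follows from~\eqref{eq:jump-hat-J^-} of Proposition~\ref{lem:gap-hat-J^-}. On (i)$\cap$(ii), $\mr\Delta_\rmL[t_0,u]\le Cm$ for all $t_0,u\in W$.
\item[(iii)] $\sup_{t\in[-2^m,2^m]}|L_t|\le m2^{m/2}$. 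This is Lemma~\ref{lem:L_t-inf-1}, with failure probability $2^{-cm^2}$.
\item[(iv)] $\inf_{t\in[2^{m+1},2^{(1+a)m-1}]}L_t<-2^{(1+a_2)m/2}$. This is Lemma~\ref{lem:L_t-inf-a1a2}.
\end{itemize}

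On the intersection, fix $t_0\in[-2^m,2^m]\cap\cJ$. There is a time $s^*\le 2^{(1+a)m-1}$ with
\[
L_{s^*}-L_{t_0}\le -2^{(1+a_2)m/2}+m2^{m/2}\ll -dCm .
\]
To turn $s^*$ into an admissible stopping time, I would use~\eqref{eq:gap-hat-J^--a1a2} in Proposition~\ref{lem:gap-hat-J^-}, applied with a window between $2^{(1+a)m-1}$ and $2^{(1+a)m}$ (after rescaling the exponents). This places some $u\in\mr\cJ^-\cap(s^*,2^{(1+a)m}]$, with failure probability $2^{-cm}$. Taking the infimum over $[t_0,u]\ni s^*$ then gives $\mr\tau_{t_0,\rmL}^{\rmC,d}\le u\le 2^{(1+a)m}$, simultaneously for all $t_0$. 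A union bound finishes the first claim. The one subtlety is that (iv) requires a quantile statement slightly inside the window, which is handled by choosing $a_2<a$ and shrinking the endpoints by constant factors.

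For Assertion~\ref{cond:prop:time-set-A-5}, I would show that on a good event all relevant times lie in $I(k)$. Let $m=(1-\delta/2)k$. On the event $I((1-\delta)k)\subseteq[-2^m,2^m]$ and $[-2^{(1-\delta/4)k},2^{(1-\delta/4)k}]\subseteq I(k)$, which fails with probability $2^{-ck}$ by the uniform dyadic construction, every $t_0$ with $t_0\le s_0$ and $s_0\in\mr\cT^{\rmC,d,0}_{t_0}$ must be handled. For this I need a lower bound on how far back $t_0$ can be. I expect this to be the main obstacle, since $\mr\cT_{s_0}^{\rmC,d}$ is a union over all $t_0$ whose set contains $s_0$, including $t_0<-2^m$.

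My first attempt would be the following. For $t_0<-2^{(1-\delta/4)k}$, the set $\mr\cT^{\rmC,d,0}_{t_0}$ only contains times up to $\mr\tau_{t_0}$. I would argue, by the same argument on dyadic blocks $[-2^{j+1},-2^j]$ for $j$ between $(1-\delta/4)k$ and $\infty$ with a geometric union bound, that $\mr\tau_{t_0}\le t_0+2^{(1+a)j}$ is not enough. Instead, I would use Lemma~\ref{lem:L_t-inf-1}, or more precisely the sharper statement that $L$ attains a new running minimum of size $\gg j$ between $t_0$ and $-2^m$. This forces the stopping time to occur before $-2^m$, with probability $1-2^{-cj}$. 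It holds because $L_s-L_{t_0}$ must drop below $-dCj$ within time $2^{j-1}$, which fails only with probability $2^{-cj}$ by Brownian scaling, the coupling (i), and (ii) at scale $j$.

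Summing over $j\ge(1-\delta/4)k$ gives $2^{-ck}$. With $a$ small enough that $(1+a)(1-\delta/2)<1-\delta/4$, both the forward bound from the first claim and the backward bound place all sets $\mr\cT^{\rmC,d,0}_{t_0}$ meeting $I((1-\delta)k)$ inside $[-2^{(1-\delta/4)k},2^{(1-\delta/4)k}]\subseteq I(k)$.
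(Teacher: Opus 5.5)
Your argument for the first claim is correct and follows the paper's route: the paper defers to \cite[Lemma 5.10]{Tutte}, with \eqref{eq:lem:gap-hat-J^-} and \eqref{eq:jump-hat-J^-} as inputs. One small correction: $[2^{(1+a)m-1},2^{(1+a)m}]$ lies on the positive side, so invoke \eqref{eq:lem:gap-hat-J^-} at scale $\lceil(1+a)m\rceil$ rather than \eqref{eq:gap-hat-J^--a1a2}.

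The genuine gap is the backward half of Assertion~\ref{cond:prop:time-set-A-5}. You claim that, off an event of probability $2^{-cj}$, every $t_0\in\cJ\cap[-2^{j+1},-2^j]$ sees $L_s-L_{t_0}$ drop below $-dCj$ within time $2^{j-1}$, so that $\mr\tau^{\rmC,d}_{t_0,\rmL}<-2^m$. This is false uniformly in $t_0$.
\begin{itemize}
\item By the scale invariance in Lemma~\ref{lem:L_t-inf-1}, with probability bounded below uniformly in $j$, the minimum of $L$ over $[-2^{j+1},-2^m]$ is attained at some $t^*\in[-\tfrac32\cdot 2^{j},-2^j]$.
\item Let $t_0$ be the last jump time before $t^*$. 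Then $\cL$ is constant on $[t_0,t^*]$ and $L\ge L_{t^*}$ on $[t_0,-2^m]$.
\item Hence for every $t\in(t_0,-2^m]$ you get $\inf_{[t_0,t]}(L-L_{t_0})\ge-\sup_{[t_0,t\wedge t^*]}|L-L_{t_0}|>-\mr\Delta_\rmL[t_0,t]$, so the stopping rule never fires before $-2^m$.
\end{itemize}
For one fixed $t_0$ the probability you want is indeed of order $j2^{-j/2}$. But there are $\asymp 2^j$ candidates, and their union has probability of order one, so the geometric sum over $j$ cannot be carried out. Worse, no argument that only controls the time $\mr\tau^{\rmC,d}_{t_0,\rmL}$ can succeed. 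If you take $t^*$ to be the argmin over $[-2^{j+1},2^{(1-\delta)k+1}]$ instead, then with probability not decaying in $j$ there is some $t_0\le-2^j$ with $[t_0,\mr\tau^{\rmC,d}_{t_0,\rmL}]\supseteq I((1-\delta)k)$.

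What excludes such $t_0$ is the level constraint $\{L_s:s\in\mr I_t\}\cap[L_{t_0}-\mr\Delta^{\rmC,d}_{t_0,\rmL},L_{t_0}+\mr\Delta^{\rmC,d}_{t_0,\rmL}]\neq\emptyset$ in \eqref{eq:def-TtCd-mr-0}, which your proposal never uses. Here is how it closes the argument.
\begin{itemize}
\item Suppose $s_0\in\mr\cT^{\rmC,d}_{t_0,\rmL}$ with $\mr I_{s_0}=[s_0^-,s_0]$ and $s_0^->t_0$. Since $s_0^-<\mr\tau^{\rmC,d}_{t_0,\rmL}$, you have $L\ge L_{t_0}-\mr\Delta^{\rmC,d}_{t_0,\rmL}$ on $[t_0,s_0^-]$.
\item The level constraint then gives some $u\in\mr I_{s_0}$ with $L_u\le\inf_{[t_0,s_0^-]}L+2\mr\Delta^{\rmC,d}_{t_0,\rmL}$.
\item For $t_0\in[-2^{j+1},-2^j]$, your first claim at scale $j+1$, combined with the coupling and \eqref{eq:jump-hat-J^-} at scale $\lceil(1+a)(j+1)\rceil$, gives $\mr\Delta^{\rmC,d}_{t_0,\rmL}\lesssim j$ off probability $2^{-cj}$.
\item The gap bound \eqref{eq:lem:gap-hat-J^-} puts $u$ in $[-2^{(1-\delta)k+2},2^{(1-\delta)k+2}]$, where $|L|\le k2^{(1-\delta)k/2+1}$.
\item So the bad event forces $\inf_{[-2^j,-2^{(1-\delta)k+2}]}L\ge-k2^{(1-\delta)k/2+1}-O(j)$. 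For $j\ge(1-\delta/4)k$ this has probability $\lesssim 2^{-cj}$ by Lemmas~\ref{lem:L_t-inf-1} and~\ref{lem:L_t-inf-a1a2}, and now the sum over $j$ converges. The $\rmR$ side is symmetric.
\end{itemize}

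Separately, your forward step has a scale mismatch. The first claim at $m=(1-\delta/2)k$ does not cover $t_0\in[-2^{(1-\delta/4)k},-2^m)$. Apply it instead at scale $(1-\delta/4)k$, with $a$ small enough that $(1+a)(1-\delta/4)<1-\delta/8$. Then use $[-2^{(1-\delta/8)k},2^{(1-\delta/8)k}]\subseteq I(k)$, which holds off probability $2^{-ck}$.
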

\begin{proof}
    { The proof of the first claim is identical to  the proof of~\cite[Lemma 5.10]{Tutte}, where we replace ~\cite[Lemma 5.8, Lemma 5.9]{Tutte} there with~\eqref{eq:lem:gap-hat-J^-} and~\eqref{eq:jump-hat-J^-} as in Proposition~\ref{lem:gap-hat-J^-}, respectively. The proof for the second claim is identical to the proof of ~\cite[Proposition 5.6]{Tutte}, where we may assume $\delta<0.1$ and replace all the numbers 0.45, 0.8, 0.85, 0.9 there with $(1-\delta)/2, 1-2\delta, 1-3\delta/2, 1-\delta$, respectively. We omit the details.}
\end{proof}

To bound the cardinality of time sets $\mr\cT_t^{\rmC,d}$,  {we will use the time set $\cT_t^{h,\infty}$ as in ~\cite[Section 5.1.2]{Tutte} and introduce a variant $\mr\cT_t^{h,\infty}$}.
Fix $h\geq 1$. For $t_0\in\cJ$, define 
$$\tau_{t_0,\rmL}^h = \inf\{t>t_0:\cL_t-\cL_{t_0}<-h\} = \inf\{t>t_0: t\in\cJ^- \  \text{and} \ \cL_t-\cL_{t_0}<-h \}.$$
Let \eqb\label{eq:def-Tt-Lh-infty} \cT_{t_0,\rmL}^{h,\infty} = \{t\in \cJ_\rmL^-\cap [t_0,\tau_{t_0,\rmL}^h]:\{\cL_s:s\in I_t\}\cap [\cL_{t_0}-h,\cL_{t_0}+h]\neq\emptyset\}.\eqe
It is clear  that $\tau_{t_0,\rmL}^h\in  \cT_{t_0,\rmL}^{h,\infty} $.  Define $\tau_{t_0,\rmR}^h$ and $\cT_{t_0,\rmR}^{h,\infty}$ in the exact same way by using $\cR$ instead of $\cL$. For $t_0\in\cJ$,$s_0\in\cJ^-$, set
 \eqb\label{eq:def-Tth-simple} \cT_{t_0}^{h,\infty,0} = \cT_{t_0,\rmL}^{h,\infty}\cup \cT_{t_0,\rmR}^{h,\infty}; \ \ \  \cT_{s_0}^{h,\infty} = \bigcup_{t_0\in\cJ:s_0\in \cT_{t_0}^{h,\infty,0}}\cT_{t_0}^{h,\infty,0}.\eqe
 For $m\geq 1$ and $q\in\{\rmL,\rmR\}$, if $t_0\in I(m)$, set $ \cT_{t_0,q}^{h,m}= \cT_{t_0,q}^{h,\infty}\cap I(m)$ and otherwise set $ \cT_{t_0,q}^{h,m}=\emptyset$. 
%set $\cT_{t_0}^{h,m,0} = \cT_{t_0}^{h,\infty,0} $ if $\cT_{t_0}^{h,\infty,0}\subseteq I(m)$ and $\cT_{t_0}^{h,m,0} =\emptyset$ otherwise. }
For $t_0\in\cJ$, $s_0\in\cJ^-$, set
$$\cT_{t_0}^{h,m,0} = \cT_{t_0,\rmL}^{h,m}\cup \cT_{t_0,\rmR}^{h,m}; \ \ \  \cT_{s_0}^{h,m} = \bigcup_{t_0\in\cJ:s_0\in \cT_{t_0}^{h,m,0}}\cT_{t_0}^{h,m,0}.$$
Then by  {~\cite[Lemma 5.5]{Tutte}, we have}
\begin{lemma}\label{lem:T_t^h,m}
   {Let $s_0\in\cJ^-\cap I(m)$. Then} $\cT_{s_0}^{h,m}\subseteq \cT_{s_0}^{h,\infty}$, and  $\cT_{s_0}^{h,m}$ is a measurable function with respect to $\cZ|_{I(m)}$. Furthermore, if $\bigcup_{u\in\cT_{s_0}^{h,\infty}}I_u\subseteq  I(m)$, then $\cT_{s_0}^{h,m}=\cT_{s_0}^{h,\infty}$.
\end{lemma}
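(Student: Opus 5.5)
The proof will go directly through the definitions, in three steps.

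\textbf{Step 1: the inclusion $\cT_{s_0}^{h,m}\subseteq \cT_{s_0}^{h,\infty}$.} For any $t_0\in\cJ$ and $q\in\{\rmL,\rmR\}$ we have $\cT_{t_0,q}^{h,m}\subseteq \cT_{t_0,q}^{h,\infty}$. This holds by definition: the set is either $\cT_{t_0,q}^{h,\infty}\cap I(m)$ or empty. Hence $\cT_{t_0}^{h,m,0}\subseteq\cT_{t_0}^{h,\infty,0}$. Any $t_0$ with $s_0\in\cT_{t_0}^{h,m,0}$ also satisfies $s_0\in\cT_{t_0}^{h,\infty,0}$. Taking unions over such $t_0$ in~\eqref{eq:def-Tth-simple} and in its $m$-analogue gives the inclusion.

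\textbf{Step 2: measurability with respect to $\cZ|_{I(m)}$.} Only $t_0\in I(m)$ contribute, and only elements of $I(m)$ are retained. So it suffices to show that, for $t_0\in I(m)$, the set $\cT_{t_0,\rmL}^{h,\infty}\cap I(m)$ is determined by $\cZ|_{I(m)}$, and likewise for $\rmR$. I will check this as follows.
\begin{enumerate}[(a)]
\item Membership $t\in\cJ^-_\rmL$ for $t\in I(m)$ is read off from the jumps of $\cZ$ on $I(m)$.
\item The condition $t\le\tau^h_{t_0,\rmL}$ for $t\in I(m)$ is equivalent to $\inf_{s\in[t_0,t)}(\cL_s-\cL_{t_0})\ge -h$, which uses only $\cZ$ on $[t_0,t]\subseteq I(m)$.
\item The interval $I_t=[t^-,t]$ involves the previous $\rmL$-downstep $t^-$, which may lie before $I(m)$. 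Here I use that $t\le\tau^h_{t_0,\rmL}$ forces $\cL$ to stay at least $\cL_{t_0}-h$ on $[t_0,t)$. Splitting $I_t$ as $[t^-,t_0]\cup[t_0,t]$: if $t^-\ge t_0$, everything is inside $I(m)$. If $t^-<t_0$, then $t_0\in I_t$, and $\cL_{t_0}$ itself lies in $[\cL_{t_0}-h,\cL_{t_0}+h]$, so the intersection condition holds automatically.
\end{enumerate}
In either case the indicator of $t\in\cT^{h,\infty}_{t_0,\rmL}$ depends only on $\cZ|_{I(m)}$. The time $t_0$ itself must lie in $I(m)$, which is also known, and $I(m)$ is part of the data.

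\textbf{Step 3: equality under $\bigcup_{u\in\cT_{s_0}^{h,\infty}}I_u\subseteq I(m)$.} Take $u\in\cT_{s_0}^{h,\infty}$. Then $u\in\cT_{t_0}^{h,\infty,0}$ for some $t_0$ with $s_0\in\cT_{t_0}^{h,\infty,0}$, and it remains to show $t_0\in I(m)$. The main point is that $t_0$ lies in $I_v$ for some $v\in\cT_{t_0}^{h,\infty,0}$. Indeed, let $v=\inf\{s\ge t_0:s\in\cJ^-_\rmL\}$. Then $t_0\in I_v$, and $v\le\tau^h_{t_0,\rmL}$ because before $v$ the walk $\cL$ does not decrease. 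Also $\cL_{t_0}$ lies in the required window, so $v\in\cT^{h,\infty}_{t_0,\rmL}$. Since $v\in\cT_{s_0}^{h,\infty}$, the hypothesis gives $t_0\in I_v\subseteq I(m)$.

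Consequently $\cT_{t_0,q}^{h,m}=\cT_{t_0,q}^{h,\infty}\cap I(m)=\cT_{t_0,q}^{h,\infty}$, where the last equality uses $u\in I_u\subseteq I(m)$. Combining over all contributing $t_0$ yields $\cT_{s_0}^{h,m}=\cT_{s_0}^{h,\infty}$.

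The only delicate step is the boundary issue in Step~2(c), where $I_t$ can stick out of $I(m)$; the case split above resolves it.
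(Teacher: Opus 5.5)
Your proof is correct. The paper gives no argument of its own here; it simply imports the statement from \cite[Lemma 5.5]{Tutte}, and your direct verification from the definitions is the natural proof. You also isolate the two points that carry it. First, the window condition in $\cT^{h,\infty}_{t_0,\rmL}$ holds automatically once $t_0\in I_t$, so an interval $I_t$ protruding to the left of $I(m)$ does not break measurability with respect to $\cZ|_{I(m)}$. Second, for every $t_0$ with $s_0\in\cT^{h,\infty,0}_{t_0}$, the first $\rmL$-downstep $v\ge t_0$ lies in $\cT^{h,\infty}_{t_0,\rmL}\subseteq\cT^{h,\infty}_{s_0}$ and satisfies $t_0\in I_v$, which forces $t_0\in I(m)$ under the hypothesis of the last claim.
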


Similarly, let $\mr\tau_{t_0,\rmL}^h = \inf\{t\in\mr\cJ^-:  \inf\limits_{s\in [t_0,t]} \cL_s-\cL_{t_0}<-h \}$, and  
$$ {\mr\cT_{t_0,\rmL}^{h,\infty} = \{t\in \mr\cJ^-\cap [t_0,\mr\tau_{t_0,\rmL}^h]:\{\cL_s:s\in \mr I_t\}\cap [\cL_{t_0}-h,\cL_{t_0}+h]\neq\emptyset\}.}$$
We have the following analog of Lemma~\ref{lem:lower-bd-tau-hat-AA}. The proof is identical and we omit it here.
\begin{lemma}\label{lem:lower-bd-tau-hat-AAAA}
      Let $t_0\in\cJ$ and $\mr \tau =  \mathring \tau_{t_0,\rmL}^{h}$. Then $\mr \tau\in \mr \cT_{t_0,\rmL}^{h,\infty}$. If $[t_0,\mr \tau)\cap \mr\cJ^-=\emptyset$, then $\mr\cT_{t_0,\rmL}^{h,\infty} = \{\mr\tau\}$. If $[t_0,\mr \tau)\cap \mr\cJ^-\neq\emptyset$, then for $t\in [t_0, \mr \tau)$, $\cL_t\geq \cL_{t_0}- h$, whereas $\cL_{\mr \tau}< \cL_{t_0}- h$. The same holds for $(\cR,\rmR)$ instead of $(\cL,\rmL)$.
\end{lemma}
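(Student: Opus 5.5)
The plan is to transcribe the proof of Lemma~\ref{lem:lower-bd-tau-hat-AA}, with the threshold $d\cdot\mr\Delta_\rmL[t_0,t]$ replaced by the constant $h$. This is in fact simpler, because the stopping threshold no longer depends on $t$. Write $\mr\tau=\mr\tau_{t_0,\rmL}^{h}$. The proof has three steps: the degenerate case, the two bounds on $\cL$ in the non-degenerate case, and the membership $\mr\tau\in\mr\cT_{t_0,\rmL}^{h,\infty}$.

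\textbf{Degenerate case.} Suppose $[t_0,\mr\tau)\cap\mr\cJ^-=\emptyset$. Then the left endpoint of $\mr I_{\mr\tau}$ is $\mr\tau^-=\sup\{s\in\mr\cJ^-:s<\mr\tau\}<t_0$, so $t_0\in\mr I_{\mr\tau}$. Hence $\cL_{t_0}$ belongs to the set $\{\cL_s:s\in\mr I_{\mr\tau}\}$, which therefore meets $[\cL_{t_0}-h,\cL_{t_0}+h]$, and so $\mr\tau\in\mr\cT_{t_0,\rmL}^{h,\infty}$. Every other element of $\mr\cT_{t_0,\rmL}^{h,\infty}$ lies in $\mr\cJ^-\cap[t_0,\mr\tau]$, and that set is $\{\mr\tau\}$. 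So $\mr\cT_{t_0,\rmL}^{h,\infty}=\{\mr\tau\}$.

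\textbf{Non-degenerate case.} Suppose $[t_0,\mr\tau)\cap\mr\cJ^-\neq\emptyset$, and let $\sigma=\max([t_0,\mr\tau)\cap\mr\cJ^-)$; this maximum exists because the jump set is locally finite. Then $\mr I_{\mr\tau}=[\sigma,\mr\tau]$. Since $\sigma\in\mr\cJ^-$ and $\sigma<\mr\tau$, the stopping condition has not yet triggered at time $\sigma$, so $\inf_{s\in[t_0,\sigma]}\cL_s-\cL_{t_0}\geq -h$. This gives $\cL_t\geq\cL_{t_0}-h$ for $t\in[t_0,\sigma]$. Lemma~\ref{lem:hat-minus} applied to $\mr I_{\mr\tau}$ gives $\cL_t\geq\cL_\sigma\geq\cL_{t_0}-h$ for $t\in[\sigma,\mr\tau)$. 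Together these give $\cL_t\geq\cL_{t_0}-h$ on all of $[t_0,\mr\tau)$.

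By the definition of $\mr\tau$, $\inf_{s\in[t_0,\mr\tau]}\cL_s<\cL_{t_0}-h$. Since the bound just proved holds everywhere on $[t_0,\mr\tau)$, the infimum must be attained at $\mr\tau$ itself, so $\cL_{\mr\tau}<\cL_{t_0}-h$.

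\textbf{Membership of $\mr\tau$.} It remains to show that $\{\cL_s:s\in[\sigma,\mr\tau]\}$ meets $[\cL_{t_0}-h,\cL_{t_0}+h]$. Lemma~\ref{lem:hat-minus} gives $\cL_{\mr\tau}\geq\cL_\sigma-1$. Combined with $\cL_{\mr\tau}<\cL_{t_0}-h$, this yields $\cL_\sigma<\cL_{t_0}-h+1$. We already have $\cL_\sigma\geq\cL_{t_0}-h$. Since $h\ge1$, it follows that $\cL_\sigma\in[\cL_{t_0}-h,\cL_{t_0}+h]$, and hence $\mr\tau\in\mr\cT_{t_0,\rmL}^{h,\infty}$.

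The statement for $(\cR,\rmR)$ follows from the identical argument with $\cR$ in place of $\cL$. No step is a real obstacle. The only point needing care is the identification $\mr I_{\mr\tau}=[\sigma,\mr\tau]$. This uses that $\mr I_t$ is defined through $\mr\cJ^-$ with both coordinates mixed, which is exactly why the definition of $\mr\tau$ ranges over $\mr\cJ^-$ rather than over $\mr\cJ^-_\rmL$.
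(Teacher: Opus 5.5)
Your proof is correct, and it is the argument the paper intends. The paper omits the proof, saying it is identical to that of Lemma~\ref{lem:lower-bd-tau-hat-AA}, which proceeds exactly as you do via $\sigma=\max([t_0,\mr\tau)\cap\mr\cJ^-)$ and Lemma~\ref{lem:hat-minus} applied to $\mr I_{\mr\tau}=[\sigma,\mr\tau]$; with the constant threshold $h$, the monotonicity of $t\mapsto d\cdot\mr\Delta_\rmL[t_0,t]$ used there is simply not needed.
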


%Note that for $u_0\in \mr\cT_{t_0,\rmL}^{h,\infty} $, if $u_0$ is not  the largest element in $\mr\cT_{t_0,\rmL}^{h,\infty}$, we must have $\cL_{u_0}\geq\cL_t-h$.
Define $\mr\tau_{t_0,\rmR}^h$ and $\mr\cT_{t_0,\rmR}^{h,\infty}$ in the exact same way by using $\cR$ instead of $\cL$. Set
$$\mr\cT_{t_0}^{h,\infty,0} = \mr\cT_{t_0,\rmL}^{h,\infty}\cup \mr\cT_{t_0,\rmR}^{h,\infty}.$$
For $s_0\in\mr\cJ^-$, set
$$\mr\cT_{s_0}^{h,\infty} = \bigcup_{t_0\in\cJ:s_0\in \mr\cT_{t_0}^{h,\infty,0}}\mr\cT_{t_0}^{h,\infty,0}.$$
% {
% From the definition of $\cT_{s_0}^{h,m}$ and $\cT_{s_0}^{h,\infty}$, we immediately have
% \begin{lemma}\label{lem:T_t^h,m}
%    \tcr{Let $s_0\in\cJ^-\cap I(m)$. Then} $\cT_{s_0}^{h,m}\subseteq \cT_{s_0}^{h,\infty}$, and  $\cT_{s_0}^{h,m}$ is a measurable function with respect to $\cZ|_{I(m)}$. Furthermore, if $\cT_{s_0}^{h,m}\subseteq  I(m)$, then $\cT_{s_0}^{h,m}=\cT_{s_0}^{h,\infty}$.
% \end{lemma}
% }

The following lemma allows us to bound the time set $\mr\cT_t^{\rmD,d }$ (and  $\mr\cT_t^{\rmC,d }$ by of Assertion~\ref{cond:lem:def-wh-T-Cd-4} of Lemma~\ref{lem:def-wh-T-Dd}) by $\mr \cT_t^{h,\infty}$.
\begin{lemma}\label{lem:T_t^h}
    Let $k\in\bbN$ and $h>0$. Suppose  
    \begin{equation}\label{eq:lem:T_t^h}
        d\,\sup_{t\in [-2^{k+1},2^{k+1}]}|L_t-\cL_t|+2d<\frac{h}{8}, \ \ \  \ d\,\sup_{t\in [-2^{k+1},2^{k+1}]}|R_t-\cR_t|+2d<\frac{h}{8}, \ \ \  {\mr\cJ^-\cap(-2^{k+1},-2^k)\neq\emptyset},
    \end{equation}
   and in addition,
    \begin{equation}\label{eq:lem:T_t^h-1}
       d\cdot \mr N_\rmL[-2^{k+1},2^{k+1}]+1\leq \frac{h}{8}, \ \ d\cdot \mr N_\rmR[-2^{k+1},2^{k+1}]+1\leq \frac{h}{8},  
    \end{equation}
    then for any $t\in\mr\cJ^-$ such that $\mr\cT_t^{\rmD,d }\subseteq I(k)$, it holds that $ \mr\cT_t^{\rmD,d }\subseteq  \mr\cT_t^{h,\infty}$.
\end{lemma}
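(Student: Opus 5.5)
Since $\mr\cT_t^{\rmD,d}$ and $\mr\cT_t^{h,\infty}$ are unions of the same form, it suffices to compare the building blocks. We show that for every $t_0\in\cJ$ with $\wh{\mr\cT_{t_0,\rmL}^{\rmD,d}}\subseteq I(k)$ one has
\[
\wh{\mr\cT_{t_0,\rmL}^{\rmD,d}}\subseteq \mr\cT_{t_0,\rmL}^{h,\infty},
\]
together with the same statement for $\rmR$. Given this, take $s_0\in\mr\cJ^-$ and any $t_0$ with $s_0\in\mr\cT_{t_0}^{\rmD,d,0}$. Then $s_0$ lies in one of the two hatted sets. Since $\mr\cT_{s_0}^{\rmD,d}\subseteq I(k)$, every such $\mr\cT_{t_0}^{\rmD,d,0}$ is contained in $I(k)$. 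Hence $\mr\cT_{t_0}^{\rmD,d,0}\subseteq\mr\cT_{t_0}^{h,\infty,0}$, which means $s_0\in\mr\cT_{t_0}^{h,\infty,0}$ and $\mr\cT_{t_0}^{\rmD,d,0}\subseteq\mr\cT_{s_0}^{h,\infty}$. Taking the union over $t_0$ gives the claim.

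\textbf{Step 1: bound $\mr\Delta^{\rmD,d}_{t_0,\rmL}$.} We need to localize $t_0$. The set $\mr\cT_{t_0,\rmL}^{\rmD,d}$ contains $t_0^+=\inf\{s>t_0:s\in\mr\cJ^-\}$ and is contained in $I(k)\subseteq[-2^{k},2^{k}]$. The hypothesis $\mr\cJ^-\cap(-2^{k+1},-2^k)\neq\emptyset$ then forces $t_0\ge -2^{k+1}$. Also $\mr\tau^{\rmD,d}_{t_0,\rmL}\le 2^k$, so the whole interval $[t_0,\mr\tau^{\rmD,d}_{t_0,\rmL}]$ lies in $[-2^{k+1},2^{k+1}]$. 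By \eqref{eq:lem:T_t^h} and \eqref{eq:lem:T_t^h-1},
\[
\mr\Delta^{\rmD,d}_{t_0,\rmL}=d\bigl(\sup|(L-\cL)-(L_{t_0}-\cL_{t_0})|+\mr N_\rmL+2\bigr)\le 2d\sup|L-\cL|+d\,\mr N_\rmL+2d<\tfrac{h}{4}+\tfrac{h}{8}<h/2 .
\]

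\textbf{Step 2: compare the stopping times.} We claim $\mr\tau^{\rmD,d}_{t_0,\rmL}\le \mr\tau^{h}_{t_0,\rmL}$. By Lemma~\ref{lem:lower-bd-tau-hat-AA}, $\cL_t\ge\cL_{t_0}-\mr\Delta^{\rmD,d}_{t_0,\rmL}>\cL_{t_0}-h$ for $t\in[t_0,\mr\tau^{\rmD,d}_{t_0,\rmL})$. At the endpoint, $\cL_{\mr\tau^{\rmD,d}}\ge \cL_{t_0}-\mr\Delta^{\rmD,d}_{t_0,\rmL}-1> \cL_{t_0}-h$, using $h>16d\ge 16$. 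So the running infimum of $\cL-\cL_{t_0}$ stays above $-h$ up to and including $\mr\tau^{\rmD,d}$, which gives the claim.

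\textbf{Step 3: check the level condition for each element.} Elements of $\mr\cT_{t_0,\rmL}^{\rmD,d}$ lie in $\mr\cJ^-\cap[t_0,\mr\tau^h]$. Their intervals $\mr I_t$ contain a time $s$ with $|\cL_s-\cL_{t_0}|\le\mr\Delta^{\rmD,d}_{t_0,\rmL}<h$, so they belong to $\mr\cT^{h,\infty}_{t_0,\rmL}$. For the added points $u_j^s$, repeat the bound \eqref{eq:TtDdcontainedTtCd+1-pf-1} from the proof of Lemma~\ref{lem:def-wh-T-Dd}:
\[
\cL_{t_0}-\mr\Delta^{\rmD,d}-1\le\cL_{u}\le\cL_{t_0}+\mr\Delta^{\rmD,d}+\mr N_\rmL+1 .
\]
By Step 1 and \eqref{eq:lem:T_t^h-1}, both sides lie in $[\cL_{t_0}-h,\cL_{t_0}+h]$. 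Since $u\in\mr I_u$, this gives the level condition, and $u\le\mr\tau^{\rmD,d}\le\mr\tau^h$. The same argument works for $\rmR$.

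\textbf{Main obstacle.} The delicate step is Step 1. We must verify that all relevant times, including $t_0$ and $\mr\tau^{\rmD,d}$ (not only the set elements), lie in $[-2^{k+1},2^{k+1}]$, so that the suprema in \eqref{eq:lem:T_t^h} and \eqref{eq:lem:T_t^h-1} actually control $\mr\Delta^{\rmD,d}$. This is where the condition on $\mr\cJ^-\cap(-2^{k+1},-2^k)$ is used: it guarantees that $t_0^+$ cannot lie beyond a point of $\mr\cJ^-$ in that window, which rules out $t_0<-2^{k+1}$.
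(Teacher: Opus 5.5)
Your proposal is correct and follows the paper's proof essentially step for step: the same reduction to $\wh{\mr\cT_{t_0,q}^{\rmD,d}}\subseteq\mr\cT_{t_0,q}^{h,\infty}$, the same localization $t_0\ge -2^{k+1}$ through $t_0^+$, the bound $\mr\Delta^{\rmD,d}_{t_0,\rmL}<h/2$, Lemma~\ref{lem:lower-bd-tau-hat-AA} to compare $\mr\tau^{\rmD,d}_{t_0,\rmL}$ with $\mr\tau^{h}_{t_0,\rmL}$, and \eqref{eq:TtDdcontainedTtCd+1-pf-1} for the points $u_j^s$. The one thing to add is the case split that the paper makes explicitly. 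The lower bound on $\cL$ from Lemma~\ref{lem:lower-bd-tau-hat-AA} that you use in Step 2 is only available when $[t_0,\mr\tau^{\rmD,d}_{t_0,\rmL})\cap\mr\cJ^-\neq\emptyset$; otherwise $\cL$ may drop far below $\cL_{t_0}-h$ before $t_0^+$, e.g.\ when $t_0$ lies inside an erased excursion. In that case, however, $\mr\tau^{\rmD,d}_{t_0,\rmL}=t_0^+\le\mr\tau^{h}_{t_0,\rmL}$ and $\wh{\mr\cT_{t_0,\rmL}^{\rmD,d}}=\{t_0^+\}\subseteq\mr\cT_{t_0,\rmL}^{h,\infty}$ directly, since $t_0\in\mr I_{t_0^+}$.
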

\begin{proof}
    %We only work on the second claim; the first claim can be proved analogously.
    Assume $\mr\cT_t^{\rmD,d}\subseteq I(k)$. Let $t_0\in\cJ$ such that $t\in \wh{\mr\cT_{t_0,\rmL}^{\rmD,d}}\cup \wh{\mr\cT_{t_0,\rmR}^{\rmD,d}}\subseteq \mr\cT_t^{\rmD,d}$.  Then $ \wh{\mr\cT_{t_0,\rmL}^{\rmD,d}}\subseteq I(k)$ and $\mr\tau_{t_0,\rmL}^{\rmD,d}\in I(k)$, and the same holds for  $\rmR$ in place of $\rmL$.  {Furthermore, since $t_0^+:=\inf\{s\in\mr\cJ^-:s>t_0\}\in \wh{\mr\cT_{t_0,\rmL}^{\rmD,d}}$, under the event $\{\mr\cJ^-\cap(-2^{k+1},-2^k)\neq\emptyset\}$, one must have $t_0\geq -2^{k+1}$.} Therefore $[t_0,\mr\tau_{t_0,\rmL}^{\rmD,d}]\subseteq [-2^{k+1},2^{k+1}]$, and under  conditions~\eqref{eq:lem:T_t^h} and~\eqref{eq:lem:T_t^h-1}, 
    \eqb\label{eq:pf-eq:lem:T_t^h-1} \mr\Delta_{t_0,\rmL}^{\rmD,d} = d\cdot\mr\Delta_{ \rmL}[t_0,\mr\tau_{t_0,\rmL}^{\rmD,d}]=  d\sup_{s\in[t_0,\mr\tau_{t_0,\rmL}^{\rmD,d}]}|(\cL_s-L_s)-(\cL_{t_0}-L_{t_0})|+2d+ d\mr N_\rmL[t_0,\mr\tau_{t_0,\rmL}^{\rmD,d}] <\frac{h}{2}.\eqe
    If $[t_0,\mr\tau_{t_0,\rmL}^{\rmD,d})\cap \mr\cJ^-=\emptyset$, $t_0^+=\mr\tau_{t_0,\rmL}^{\rmD,d}$ and we have $\wh{\mr\cT_{t_0,\rmL}^{\rmD,d}} = \{t_0^+\}\subseteq \mr \cT_{t_0,\rmL}^{h,\infty}$. If $[t_0,\mr\tau_{t_0,\rmL}^{\rmD,d})\cap \mr\cJ^-\neq\emptyset$, then by Lemma~\ref{lem:lower-bd-tau-hat-AA}, we have $\tau_{t_0,\rmL}^h \geq \mr\tau_{t_0,\rmL}^{\rmD,d}$, which implies $\mr\cT_{t_0,\rmL}^{\rmD,d}\subseteq \mr \cT_{t_0,\rmL}^{h,\infty}$ by~\eqref{eq:pf-eq:lem:T_t^h-1} and the definition. For $u\in \wh{\mr\cT_{t_0,\rmL}^{\rmD,d}}\backslash \mr\cT_{t_0,\rmL}^{\rmD,d}$, by ~\eqref{eq:TtDdcontainedTtCd+1-pf-1},~\eqref{eq:lem:T_t^h-1} and~\eqref{eq:pf-eq:lem:T_t^h-1} we also have $u\leq    \tau_{t_0,\rmL}^{h }$ and $|\cL_u-\cL_{t_0}|\leq h$. This verifies $\wh{\mr\cT_{t_0,\rmL}^{\rmD,d}} \subseteq \mr \cT_{t_0,\rmL}^{h,\infty}$. By a symmetric argument, $\wh{\mr\cT_{t_0,\rmR}^{\rmD,d}} \subseteq \mr \cT_{t_0,\rmR}^{h,\infty}$. Now if $s\in\mr\cT_t^{\rmD,d}$, then for some $t_0\in\cJ$, we have $s,t\in \wh{\mr\cT_{t_0,\rmL}^{\rmD,d}}\cup \wh{\mr\cT_{t_0,\rmR}^{\rmD,d}}\subseteq (\mr \cT_{t_0,\rmL}^{h,\infty}\cup\mr \cT_{t_0,\rmR}^{h,\infty})$.  The claim then follows from the definition of $\mr\cT_t^{h,\infty}$.
\end{proof}

We will further use the following comparison between $\mr\cT_t^{h,\infty}$ and $\cT_t^{h,\infty}$. For each $t\in\cJ^-$, define $\cO(t) = \{s\in\mr\cJ^-:\mr I_s\cap I_t\neq\emptyset\}$. 
\begin{lemma}\label{lem:329}
    Let $h>2$, $t_0\in\cJ$. Then for $q\in\{\rmL,\rmR\}$, we have
    \begin{equation}\label{eq:lem329}
        \mr\cT_{t_0,q}^{h,\infty} \subseteq \bigcup_{s\in \cT_{t_0,q}^{h,\infty} }\cO(s).
    \end{equation}
\end{lemma}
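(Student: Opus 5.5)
We prove \eqref{eq:lem329} for $q=\rmL$; the case $q=\rmR$ is symmetric. Fix $u\in \mr\cT_{t_0,\rmL}^{h,\infty}$, so $u\in\mr\cJ^-\cap[t_0,\mr\tau^h_{t_0,\rmL}]$ and there is $s'\in\mr I_u$ with $|\cL_{s'}-\cL_{t_0}|\le h$. We must produce $s\in\cT^{h,\infty}_{t_0,\rmL}$ with $\mr I_u\cap I_s\neq\emptyset$.

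\textbf{Reduce to a single time.} Since the intervals $\{I_s:s\in\cJ^-_\rmL\}$ cover $\bbR$ with disjoint interiors, every time $r$ lies in some $I_s$ with $s\in\cJ^-_\rmL$. The plan is to choose a time $r\in\mr I_u$ such that three things hold for the $s$ with $r\in I_s$:
\begin{enumerate}[(a)]
    \item $s\in[t_0,\tau^h_{t_0,\rmL}]$;
    \item some point of $I_s$ has $\cL$-value in $[\cL_{t_0}-h,\cL_{t_0}+h]$;
    \item $r\in I_s\cap\mr I_u$.
\end{enumerate}
This shows $u\in\cO(s)$ with $s\in\cT^{h,\infty}_{t_0,\rmL}$.

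\textbf{Choice of $r$ in the main case.} Take $r=\max(s',t_0)$ and let $s\in\cJ^-_\rmL$ be the first $\cL$-downstep at or after $r$. Then $r\in I_s$, and since $r\ge t_0$, also $s\ge t_0$. For (b), note that if $s'<t_0$ then $t_0\in\mr I_u$ and we take the point $t_0$ itself; otherwise $s'\in I_s$ works. Both give an $\cL$-value in $[\cL_{t_0}-h,\cL_{t_0}+h]$.

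\textbf{Verifying (a): $s\le\tau^h_{t_0,\rmL}$.} Suppose instead $s>\tau^h_{t_0,\rmL}$. Then $\tau^h_{t_0,\rmL}<r$, because $\cL$ makes no downstep in $[r,s)$. Hence $\inf_{[t_0,r]}\cL<\cL_{t_0}-h$.
\begin{itemize}
    \item \emph{Case $r<u$.} Since $\mr\tau^h_{t_0,\rmL}\ge u$, and $\mr\tau^h$ is the first $\mr\cJ^-$ time at which the running infimum drops below $\cL_{t_0}-h$, we have $r<\mr\tau^h_{t_0,\rmL}$. Take $\sigma=\max([t_0,\mr\tau^h)\cap\mr\cJ^-)$, or use the argument of Lemma~\ref{lem:lower-bd-tau-hat-AAAA}. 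That lemma gives $\cL_t\ge\cL_{t_0}-h$ on $[t_0,\mr\tau^h)$, which is a contradiction.
    \item \emph{Case $r=u$.} Here $s=u$, since $u\in\mr\cJ^-$. If $u\in\mr\cJ^-_\rmL$ then $u\in\cJ^-_\rmL$; otherwise $u$ is an $\cR$-downstep and a separate check is needed.
\end{itemize}
This is where Lemma~\ref{lem:lower-bd-tau-hat-AAAA} enters: the $\cL$-values on $[t_0,\mr\tau^h_{t_0,\rmL})$ never go below $\cL_{t_0}-h$. So the first time $\cL$ drops below $\cL_{t_0}-h$ is either $\mr\tau^h$ itself or a time that is erased (lying inside an excursion interval).

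\textbf{Main obstacle: erased times.} The difficulty is the interplay between erased excursion intervals $(t^-,t^+]$ and $\cL$-downsteps. The key fact to establish is that erased excursions never lower $\cL$ below its value at their start, by \eqref{eq:double-edge-Mullin-A}. Consequently $\inf_{[t_0,t]}\cL$ equals $\inf$ over the retained times, so $\tau^h_{t_0,\rmL}\ge\mr\tau^h_{t_0,\rmL}$ essentially. More precisely, $\tau^h_{t_0,\rmL}$ is the first $\cJ^-_\rmL$ time reaching $\cL_{t_0}-h-1$, and this time is retained by Lemma~\ref{lem:hat-minus}, provided $t_0$ or a nearby retained time is used.

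\textbf{Handling $t_0\notin\mr\cJ$, and the role of $h>2$.} When $t_0\notin\mr\cJ$, replace $t_0$ by the start $t^-$ of the maximal excursion containing it. This costs at most $O(1)$ in $\cL$-levels, via Lemma~\ref{lem:hat-minus}, and the assumption $h>2$ absorbs that slack. Concretely, $\mr\tau^h$ and every point of $\mr I_u$ with $u<\mr\tau^h$ have $\cL\ge \cL_{t_0}-h$. Therefore the needed $\cL$-downstep $s$ satisfies $\cL_s\ge\cL_{t_0}-h-1$, and since $\tau^h_{t_0,\rmL}$ is the first time with $\cL<\cL_{t_0}-h$, we get $s\le\tau^h_{t_0,\rmL}$.
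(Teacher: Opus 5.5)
Your argument is correct when $u<\mr\tau^h_{t_0,\rmL}$, and there it is exactly the paper's second case. You do not even need Lemma~\ref{lem:lower-bd-tau-hat-AAAA} for it. Since $u\in\mr\cJ^-\cap[t_0,\mr\tau^h_{t_0,\rmL})$, the definition of $\mr\tau^h_{t_0,\rmL}$ gives $\inf_{[t_0,u]}\cL\ge\cL_{t_0}-h$, i.e.\ $\tau^h_{t_0,\rmL}>u\ge r$. As $\tau^h_{t_0,\rmL}\in\cJ^-_\rmL$, your $s$ satisfies $s\le\tau^h_{t_0,\rmL}$. This also closes the subcase $r=u$, $u\in\cJ^-_\rmR$, that you left open.

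The gap is the case $u=\mr\tau^h_{t_0,\rmL}$. There you use $\cL\ge\cL_{t_0}-h$ on $[t_0,\mr\tau)$. Lemma~\ref{lem:lower-bd-tau-hat-AAAA} gives this only if $[t_0,\mr\tau)\cap\mr\cJ^-\neq\emptyset$, and otherwise both the bound and your conclusion can fail. For example, take $h=3$ and let $t_0$ lie in an erased excursion $(t^-,t^+]$ during which $\cR$ stays at its minimum, while $\cL$:
\begin{itemize}
\item rises from $\cL_{t^-}=0$ to $\cL_{t_0}=10$,
\item falls to $6$ (this step is $\tau^h_{t_0,\rmL}$),
\item rises to $8$,
\item falls back to $0$ at $t^+$,
\end{itemize}
and suppose the next step is a retained $\cR$-downstep $u$. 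Then $u=\mr\tau^h_{t_0,\rmL}\in\mr\cT^{h,\infty}_{t_0,\rmL}$, $[t_0,u)\cap\mr\cJ^-=\emptyset$, and $(t^-,t^+]\subseteq\mr I_u$. Choosing $s'$ at level $8$ after $\tau^h_{t_0,\rmL}$ gives $r=s'<u$ and $s>\tau^h_{t_0,\rmL}$, so $s\notin\cT^{h,\infty}_{t_0,\rmL}$. Inserting seven retained $\cL$-upsteps before $u$, so that $\cL_u=7$ and $s'=u$ is admissible, breaks your subcase $r=u$ in the same way.

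The missing idea is to handle $u=\mr\tau^h_{t_0,\rmL}$ by taking $s=\tau^h_{t_0,\rmL}$ itself, as the paper does. One has $\tau^h_{t_0,\rmL}\le\mr\tau^h_{t_0,\rmL}=u$. Also $\tau^h_{t_0,\rmL}$ exceeds the left endpoint $\mr u^-$ of $\mr I_u$: either $\mr u^-<t_0$, or $\mr u^-\in[t_0,\mr\tau)\cap\mr\cJ^-$, whence $\inf_{[t_0,\mr u^-]}\cL\ge\cL_{t_0}-h$. So $\tau^h_{t_0,\rmL}\in\mr I_u\cap I_{\tau^h_{t_0,\rmL}}$. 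Together with $\tau^h_{t_0,\rmL}\in\cT^{h,\infty}_{t_0,\rmL}$ (noted right after \eqref{eq:def-Tt-Lh-infty}), this gives $u\in\cO(\tau^h_{t_0,\rmL})$.

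With this split your last two paragraphs are unnecessary, and several claims in them are wrong:
\begin{itemize}
\item One always has $\tau^h_{t_0,\rmL}\le\mr\tau^h_{t_0,\rmL}$, not the reverse, and the inequality can be strict (as in the example).
\item $\tau^h_{t_0,\rmL}$ need not be retained when $t_0\notin\mr\cJ$; Lemma~\ref{lem:hat-minus} requires $t_0\in\mr\cJ$.
\item Replacing $t_0$ by the start of its excursion shifts the $\cL$-level by $\cL_{t_0}-\cL_{t^-}$, which is unbounded (it is $10$ above). So $h>2$ cannot absorb it; that hypothesis is not used in this lemma at all.
\end{itemize}
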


\begin{proof}
    Without loss of generality assume $q = \rmL$. Suppose $u\in  \mr\cT_{t_0,q}^{h,\infty}$. If $u = \mr \tau_{t_0,\rmL}^h$, then we must have $\tau_{t_0,\rmL}^h\in \mr I_u$ and $\tau_{t_0,\rmL}^h\in \cT_{t_0,\rmL}^{h,\infty}$. Then $u\in\cO(\tau_{t_0,\rmL}^h)$. If $u<\mr \tau_{t_0,\rmL}^h$, then $\cL_u\geq \cL_{t_0}-h$ and $u<\tau_{t_0,\rmL}^h$. Then there exists $r\in [t_0,u]\cap \mr I_u$, such that $\cL_r\in [\cL_{t_0}-h,\cL_{t_0}+h]$.  Let $u_1 = \inf\{u'\geq r:u'\in\cJ_\rmL^-\}$. Then $r\in I_{u_1}$ and $u_1\in  \cT_{t_0,\rmL}^{h,\infty}$. Therefore $u\in \cO(u_1)$. This verifies~\eqref{eq:lem329}.
\end{proof}

  For each $r\in\bbR$ and $q\in\{\rmL,\rmR\}$, let $u_q^-(r) = \inf\{s\geq r: s\in \cJ^-_q\}$. For $s\in \mr\cJ^-$, let $\cA(s) = \{ u_q^-(r):r\in \mr I_s\cap\mr\cJ, q\in\{\rmL,\rmR\}\}$.
\begin{lemma}\label{lem:330}
  Let $u\in\mr\cJ^-$, $h>2$. Then 
  \begin{equation}\label{eq:lem330}
      \mr\cT_u^{h,\infty}\subseteq \bigcup_{t\in \cA(u)}\bigcup_{s\in \cT_t^{h,\infty}}\cO(s).
  \end{equation}
\end{lemma}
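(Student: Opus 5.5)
We unpack the definition of $\mr\cT_u^{h,\infty}$ and apply Lemma~\ref{lem:329} to each piece. By definition, $\mr\cT_u^{h,\infty}$ is the union of $\mr\cT_{t_0}^{h,\infty,0}=\mr\cT_{t_0,\rmL}^{h,\infty}\cup\mr\cT_{t_0,\rmR}^{h,\infty}$ over all $t_0\in\cJ$ with $u\in\mr\cT_{t_0}^{h,\infty,0}$. Fix such a $t_0$ and an element $v\in\mr\cT_{t_0,q'}^{h,\infty}$ for some $q'\in\{\rmL,\rmR\}$. By Lemma~\ref{lem:329}, there is $s\in\cT_{t_0,q'}^{h,\infty}\subseteq\cT_{t_0}^{h,\infty,0}$ with $v\in\cO(s)$. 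It therefore suffices to find $t\in\cA(u)$ with $t\in\cT_{t_0}^{h,\infty,0}$. Then $\cT_{t_0}^{h,\infty,0}\subseteq\cT_t^{h,\infty}$ by the definition~\eqref{eq:def-Tth-simple}, so $s\in\cT_t^{h,\infty}$ and $v$ lies in the right-hand side of~\eqref{eq:lem330}.

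To find $t$, we use the membership $u\in\mr\cT_{t_0,q}^{h,\infty}$ for some $q$, say $q=\rmL$.

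\emph{Case $u=\mr\tau_{t_0,\rmL}^h$.} The proof of Lemma~\ref{lem:329} gives $\tau_{t_0,\rmL}^h\in\mr I_u$. We need $\tau_{t_0,\rmL}^h$ to be of the form $u_\rmL^-(r)$ with $r\in\mr I_u\cap\mr\cJ$. Take $r$ to be the last time of $\mr\cJ$ in $\mr I_u$ at or before $\tau_{t_0,\rmL}^h$; this exists since $\mr t^-\in\mr\cJ$. Between $r$ and $\tau_{t_0,\rmL}^h$, every jump is erased and so lies inside a $\bbZ_+^2$ excursion. Such excursions keep $\cL$ at or above their starting level, so no $\rmL$-downstep below level $\cL_r$ can occur before $\tau_{t_0,\rmL}^h$. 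Lemma~\ref{lem:hat-minus} should therefore give $u_\rmL^-(r)=\tau_{t_0,\rmL}^h$, up to the edge case where $\tau_{t_0,\rmL}^h$ itself is erased. We then take $t=u_\rmL^-(r)\in\cT_{t_0,\rmL}^{h,\infty}$.

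\emph{Case $u<\mr\tau_{t_0,\rmL}^h$.} Lemma~\ref{lem:lower-bd-tau-hat-AAAA} gives $\cL\geq\cL_{t_0}-h$ on $[t_0,u]$. There is $r'\in[t_0,u]\cap\mr I_u$ with $\cL_{r'}\in[\cL_{t_0}-h,\cL_{t_0}+h]$. Replace $r'$ by $r=\sup(\mr\cJ\cap(-\infty,r'])$. Then $\cL_r=\mr\cL_{r'}$, which stays within distance one of $[\cL_{t_0}-h,\cL_{t_0}+h]$, using that the walk returns to $\cZ_r$ at the end of each erased excursion and stays above $\cL_r$ in between. We need $r\geq t_0$, or else $r\in\mr I_u$ handled by a slight adjustment. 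Then $t=u_\rmL^-(r)$ satisfies $r\in I_t$, and as in the proof of Lemma~\ref{lem:329}, $t\in\cT_{t_0,\rmL}^{h,\infty}$ provided $t\leq\tau_{t_0,\rmL}^h$, which holds because $r<\tau_{t_0,\rmL}^h$.

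The main obstacle is this replacement of a general time in $\mr I_u$ by a time $r\in\mr I_u\cap\mr\cJ$. It requires checking that $r$ stays in $\mr I_u$ and after $t_0$, and that the $\cL$-level condition survives the replacement; the off-by-one slack is where $h>2$ is used. The $\rmR$ case is symmetric.
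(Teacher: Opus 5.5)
Your reduction is correct and is the same as the paper's: by Lemma~\ref{lem:329} it suffices to exhibit $t\in\cA(u)\cap\cT_{t_0}^{h,\infty,0}$. The gap is the ``edge case'' you leave open when $u=\mr\tau_{t_0,\rmL}^h$, and it cannot be closed within that reduction.

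The argument you give in that case also fails as stated. $u_\rmL^-(r)$ is the first $\rmL$-downstep after $r$ at any level, so knowing there are no downsteps below $\cL_r$ tells you nothing about it. What actually holds is a dichotomy.
\begin{itemize}
\item If $[t_0,\mr\tau_{t_0,\rmL}^h)\cap\mr\cJ^-\neq\emptyset$, Lemma~\ref{lem:lower-bd-tau-hat-AAAA} gives $\cL\ge\cL_{t_0}-h$ on $[t_0,\mr\tau)$ and $\cL_{\mr\tau}<\cL_{t_0}-h$. Hence $\tau_{t_0,\rmL}^h=\mr\tau_{t_0,\rmL}^h=u\in\cJ^-_\rmL$, and $t=u$ works trivially.
\item If $[t_0,\mr\tau_{t_0,\rmL}^h)\cap\mr\cJ^-=\emptyset$, then $t_0$ lies inside an erased excursion, and $\tau_{t_0,\rmL}^h$ can be an erased downstep deep inside it. In that situation $\cA(u)$ need not meet $\cT_{t_0,\rmL}^{h,\infty}$, or even $\cT_{t_0}^{h,\infty,0}$.
\end{itemize}

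Here is a concrete instance. Let $t^-\in\mr\cJ^-_\rmR$ with $\cZ_{t^-}=(0,0)$, followed by an excursion $(t^-,t^+]$. The excursion climbs to $(h+5,h+5)$, makes one $\rmL$- and one $\rmR$-downstep, climbs back to $(h+5,h+5)$ at a jump time $t_0$, and then descends to $(0,0)$. Let the next jump, at time $u$, be an $\rmL$-downstep. Then:
\begin{itemize}
\item $u\in\mr\cJ^-$, $\mr I_u=[t^-,u]$, $\mr\tau^h_{t_0,\rmL}=\mr\tau^h_{t_0,\rmR}=u$, and $\mr\cT_{t_0}^{h,\infty,0}=\{u\}$;
\item $\cA(u)$ consists of $t^-$, the first $\rmL$-downstep of the excursion (which precedes $t_0$), $u$, and a time after $u$;
\item none of these lies in $\cT_{t_0}^{h,\infty,0}\subseteq(t_0,t^+)$.
\end{itemize}
So your sufficient condition fails for every choice of $q$.

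The missing step is the paper's case split.
\begin{itemize}
\item If coordinate $q$ is degenerate in the above sense, then $\mr\cT_{t_0,q}^{h,\infty}=\{u\}$ and $t_0\in\mr I_u$. Hence $u\in\mr\cT_{t_0,q'}^{h,\infty}$ for the other coordinate $q'$, and you rerun the argument with $q'$.
\item If both coordinates are degenerate, then $\mr\cT_{t_0}^{h,\infty,0}=\{u\}$. You must then check directly that $u$ lies in the right-hand side of~\eqref{eq:lem330}, taking $t=s=u$: indeed $u\in\cA(u)$, $u\in\cT_u^{h,\infty}$ and $u\in\cO(u)$.
\end{itemize}

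A smaller, fixable point concerns your second case. Your claim that $\cL_r$ is within distance one of $[\cL_{t_0}-h,\cL_{t_0}+h]$ is false when $r<t_0$, since inside an erased excursion $\cL_{r'}$ can be far above $\cL_r$. Instead, replace $r$ by the first time of $\mr\cJ$ after $t_0$. Its $\cL$-value is at most $\cL_r+1\le\cL_{t_0}+1$, and at least $\cL_{t_0}-h$ because it lies in $[t_0,u]$. Alternatively, take as the paper does the last time of $\mr I_u\cap\mr\cJ$ at which $\cL\le\cL_{t_0}+h$.
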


\begin{proof}
We first prove that for $q\in\{\rmL,\rmR\}$ and $t_0\in\cJ$, if $u\in\mr\cT_{t_0,q}^{h,\infty}$ and $[t_0,\mr\tau_{t_0,q}^h)\cap\mr\cJ^-\neq\emptyset$, then $\cA(u)\cap \cT_{t_0,q}^{h,\infty}\neq\emptyset$. Without loss of generality assume $q = \rmL$. By Lemma~\ref{lem:lower-bd-tau-hat-AAAA}, $\mr \tau_{t_0,\rmL}^h =  \tau_{t_0,\rmL}^h\in \cT_{t_0,\rmL}^{h,\infty}$.  If  $u=\mr \tau_{t_0,\rmL}^h$, then the claim holds since $u\in \cT_{t_0,\rmL}^{h,\infty}\cap \cA(u)$. If $u< \mr \tau_{t_0,\rmL}^h$, then we have $\{\cL_r:r\in \mr I_u\cap [t_0,u]\}\cap \{[\cL_{t_0}-h,\cL_{t_0}+h]\}\neq\emptyset$. If $\cL_u \leq \cL_{t_0}+h$, then $u_\rmL^-(u)\in  \cT_{t_0,\rmL}^{h,\infty}\cap \cA(u)$ and the claim continues to hold. Let $\mr I_u = [u^-,u]$. 
Then Lemma~\ref{lem:hat-minus} implies that $\cL_{u^-}\leq \cL_{t_0}+h$, since otherwise $\cL_r> \cL_{t_0}+h$ on $\mr I_u$. 
Let $s$ be the last time in $\mr I_u\cap\mr J$ such that $\cL_s\leq\cL_{t_0}+h$. By Lemma~\ref{lem:hat-minus}, $\cL_r\geq \cL_s$ for $r\in [s,u)$ and we must have $s\geq t_0$, and furthermore $\cL_s\geq \cL_{t_0}+h-1$. Since $h>2$, the time $u_\rmL^-(s)\in  \cT_{t_0,\rmL}^{h,\infty}\cap \cA(u)$. This verifies our claim.

 Now suppose $s\in  \mr\cT_u^{h,\infty}$. Then there exists $t_0\in\cJ$ such that $u,s\in \mr\cT_{t_0}^{h,\infty,0}$. Without loss of generality assume $u\in \mr\cT_{t_0,\rmL}^{h,\infty}$. If $[t_0,\mr\tau_{t_0,\rmL}^h)\cap\mr\cJ^-\neq\emptyset$, then by our claim, there exists $t\in \cA(u)\cap \cT_{t_0}^{h,\infty,0}$, and Lemma~\ref{lem:329} implies that $s\in \cO(w)$ for some $w\in  \cT_{t_0}^{h,\infty,0}$.  The condition that $t,w\in \cT_{t_0}^{h,\infty,0}$ implies that $w\in \cT_t^{h,\infty}$ and therefore $s$ is in the set on the right hand side of~\eqref{eq:lem330}.  If $[t_0,\mr\tau_{t_0,\rmL}^h)\cap\mr\cJ^-=\emptyset$, then $\mr\cT_{t_0,\rmL}^{h,\infty} = \{u\}$ and $t_0\in \mr I_u$. In particular, $u\in \mr\cT_{t_0,\rmR}^{h,\infty}$. Then by an identical argument, if  $[t_0,\mr\tau_{t_0,\rmR}^h)\cap\mr\cJ^-\neq\emptyset$, then  $s$ is in the set on the right hand side of~\eqref{eq:lem330}, while if  $[t_0,\mr\tau_{t_0,\rmR}^h)\cap\mr\cJ^-=\emptyset$, then $\mr\cT_{t_0,\rmL}^{h,\infty} = \mr\cT_{t_0,\rmR}^{h,\infty} = \{u\}$ and thus we must have $s=u$, and~\eqref{eq:lem330} also holds in this case.
\end{proof}

\begin{lemma}\label{lem:331}
    Let $k\in\bbN$. Then there exists some constant $A_1,c>0$ such that 
    \begin{equation}
        \bbP_\rmE\big[\exists\,s\in\cJ^-:I_s\subseteq[-2^k,2^k], \  \#\cO(s)\geq A_1k \big]\lesssim 2^{-ck}.
    \end{equation}
\end{lemma}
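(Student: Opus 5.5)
We need to bound $\#\cO(s)$, the number of $\mr I_{s'}$ ($s'\in\mr\cJ^-$) meeting $I_s=[s^-,s]$. Since the intervals $\{\mr I_{s'}:s'\in\mr\cJ^-\}$ have disjoint interiors and cover $\bbR$, we have
$$\#\cO(s)\le \#(\mr\cJ^-\cap I_s)+2 .$$
So it suffices to bound the number of downsteps of the simplified walk $\mr\cZ$ inside an interval $I_s$, uniformly over $s\in\cJ^-$ with $I_s\subseteq[-2^k,2^k]$.

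Assume $s\in\cJ^-_\rmL$; the $\rmR$ case is symmetric. On $(s^-,s)$ the coordinate $\cL$ makes no downstep. Every time in $\mr\cJ^-\cap(s^-,s)$ is therefore in $\mr\cJ^-_\rmR$, and since $\mr\cJ\subseteq\cJ$, also in $\cJ^-_\rmR$. On the other hand, the upsteps of $\mr\cL$ in this window are consecutive $\mr\cL$-upsteps, and there are at most $\mr N_\rmL[-2^k,2^k]$ of them between two consecutive $\mr\cJ^-$ times.

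This suggests the following plan. Let $p_1<\dots<p_r$ be the points of $\mr\cJ^-\cap(s^-,s)$; each is an $\mr\cR$-downstep, and $\mr\cL$ does not decrease in the window. Between consecutive $p_i$, $\mr\cR$ can rise by at most $\mr N_\rmR$ steps, and $\mr\cL$ can rise only by upsteps.

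The key combinatorial input is the one used in Lemma~\ref{lem:number-jump-Q}. A long stretch of $\mr\cZ$ in which $\mr\cL$ is monotone nondecreasing while $\mr\cR$ goes down repeatedly is atypical. More simply, I would try to directly prove a $\bbP_\rmQ$ estimate: the probability that there is a window of the walk containing at least $m$ retained $\cR$-downsteps and no $\cL$-downsteps at all (retained or not) is at most $2^{2.2k}\rho^m+2^{-k^2}$ for some $\rho<1$.

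For a bare simple random walk this is immediate. Each step independently is an $\cL$-downstep with probability $1/4$, so a window with $m$ $\cR$-downsteps and no $\cL$-downsteps has probability at most $(3/4)^m$ per starting point. A union bound over the at most $2^{1.1k}$ starting jump times in $[-2^k,2^k]$, which holds off an event of probability at most $2^{-k^2}$, finishes the $\bbP_\rmQ$ bound.

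This reduces the main issue to handling retention. Retained $\cR$-downsteps are a subset of all $\cR$-downsteps, so the bound for $\cJ$ trivially implies the bound for $\mr\cJ$. The quantity to control is then simply $\#(\cJ^-_\rmR\cap I_s)$ for $s\in\cJ^-_\rmL$, which is nonlocal only through $I_s$ itself, and that is a local object.

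To transfer to $\bbP_\rmE$, I take $A_1=c_0$ large, define $G$ as the indicator of the event $\{\exists s\in\cJ^-: I_s\subseteq I(k),\ \#(\cJ^-\cap I_s)\ge A_1k\}$, and apply Theorem~\ref{thm:KMT-0-cone}\,(\ref{cond:thm:KMT-0-cone-(iv)}) exactly as in Lemma~\ref{lem:number-jump-E}. This event is measurable with respect to $\cZ|_{I(k)}$. With $A_1>100(C_0+1)$, the bound $\bbP_\rmQ[A]^{1/4}2^{C_0k}$ decays exponentially.

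The remaining step, which I expect to be the main obstacle, is replacing $[-2^k,2^k]$ by $I(k)$. I would handle it by working at scale $k+2$ on the event $[-2^k,2^k]\subseteq I(k+2)$, whose complement has probability $\lesssim 2^{-ck}$ for a uniform dyadic system. Combining the two bounds gives the claim, with $\#\cO(s)\le A_1k+2$ adjusted into the constant $A_1$.
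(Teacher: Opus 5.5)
Your route is essentially the paper's. You bound $\#\cO(s)$ by a count of jump times of $\cZ$ in $I_s$, get a geometric tail under $\bbP_\rmQ$ with a union bound over the at most $2^{1.1k}$ jump times, and transfer to $\bbP_\rmE$ via Theorem~\ref{thm:KMT-0-cone}(\ref{cond:thm:KMT-0-cone-(iv)}), using an event measurable with respect to $\cZ|_{I(k)}$.

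A small remark on the first reduction. The paper uses the cruder bound $\#\cO(s)\le \#(I_s\cap\cJ)+2$, which needs only $\mr\cJ^-\subseteq\mr\cJ\subseteq\cJ$. Your restriction to $\cR$-downsteps needs more: a retained downstep of $\mr\cZ$ must be a downstep of $\cZ$ in the same coordinate. That is true, because each erased block returns $\cZ$ to its value at the preceding retained time. It does not follow from $\mr\cJ\subseteq\cJ$ alone, as you wrote. Since your $G$ counts $\cJ^-\cap I_s$ anyway, you can just use the paper's bound.

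The step that actually fails is the last one. For the uniform dyadic system, the relative position of $0$ inside $I(k+j)$ is uniform on $[0,1)$. Hence $\bbP[[-2^k,2^k]\subseteq I(k+j)]=1-2^{1-j}$, which for $j=2$ equals $1/2$. So the complement at scale $k+2$ is not $\lesssim 2^{-ck}$. You need $j$ growing linearly in $k$. The paper takes $j=k$, so that $\bbP_\rmE[[-2^k,2^k]\nsubseteq I(2k)]\le 2^{1-k}$. It then applies the $I(\cdot)$-version of the estimate at scale $2k$ with threshold $A\cdot 2k$ and sets $A_1=2A$. With that change your argument goes through.
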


\begin{proof}
    Indeed, we have $\#\cO(t)\leq \#(I_t\cap\mr\cJ^-)+2\leq \#(I_t\cap\cJ)+2.$ In the setting of $\bbP_\rmQ$, using the Poissonian description of $\cZ$, by a union bound, for $A>0$, we have
    \begin{equation}
        \bbP_\rmQ\big[\exists\,s\in\cJ^-:I_s\subseteq I(k), \  \#(I_s\cap\cJ)\geq Ak \big] \leq \bbP_\rmQ[\cJ\cap I(k)\geq 2^{1.1k}] + 2^{1.1k}\big(\frac34\big)^{Ak}\lesssim 2^{-ck^2}+2^{(1.1-A/3)k}.
    \end{equation}
    Then by part~(\ref{cond:thm:KMT-0-cone-(iv)}) of Theorem~\ref{thm:KMT-0-cone}, if we pick the constant $A$ sufficiently large,
    \begin{equation}
        \bbP_\rmE\big[\exists\,s\in\cJ^-:I_s\subseteq I(k), \  \#\cO(s)\geq Ak \big] \leq  \bbP_\rmE\big[\exists\,s\in\cJ^-:I_s\subseteq I(k), \  \#(I_s\cap\cJ)\geq (A-1)k \big]\lesssim 2^{-ck}.
    \end{equation}
    Therefore if we set $A_1=2A$, then
    \begin{equation*}
        \begin{split}
           & \bbP_\rmE\big[\exists\,s\in\cJ^-:I_s\subseteq[-2^k,2^k], \   \#\cO(s)\geq A_1k \big]\leq \\& \bbP_\rmE\big[\exists\,s\in\cJ^-:I_s\subseteq I(2k), \  \#\cO(s)\geq 2Ak \big]+ \bbP_\rmE[[-2^k,2^k]\nsubseteq I(2k)] \lesssim 2^{-ck}.
        \end{split}
    \end{equation*}
\end{proof}

Next we bound $\# \mr\cT_t^{\rmC,d}$ via Lemma~\ref{lem:330} and Lemma~\ref{lem:331}.
\begin{lemma}\label{lem:P_QT_0}
   Let $\mathrm{up}(0,\cJ^-) = \inf(\cJ^-\cap [0,\infty))$.  There exists some constant $\wt c>0$ such that $\bbP_\rmQ[\#\cT_{\mathrm{up}(0,\cJ^-)}^{h,\infty}>h^{9}]\lesssim 2^{-\wt ch^{9/8}}$.
\end{lemma}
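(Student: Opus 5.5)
Under $\bbP_\rmQ$ the walk $\cZ$ is a continuous-time simple random walk with unit-rate Poissonian jump times, so the bound is purely about the discrete walk. Since $\cT^{h,\infty}_{t}$ is defined only through $\cZ$ (not through $Z$), I would first pass to the embedded discrete-time simple random walk $(\cX_n,\cY_n)_{n\in\bbZ}$ on $\bbZ^2$, recentred at the step $\mathrm{up}(0,\cJ^-)$. Write $s_0=\mathrm{up}(0,\cJ^-)$. The key structural step is to localize $\cT_{s_0}^{h,\infty}$. Every $t_0$ with $s_0\in\cT^{h,\infty,0}_{t_0}$ satisfies $t_0\le s_0\le\tau^h_{t_0,q}$ for some $q$. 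Hence the walk on $[t_0,s_0]$ stays above $\cL_{t_0}-h$ (resp.\ $\cR_{t_0}-h$) in that coordinate, while $I_{s_0}$ reaches level $[\cL_{t_0}-h,\cL_{t_0}+h]$. All elements of $\cT^{h,\infty}_{s_0}$ then lie in a window $[t_0,\tau^h_{t_0,q}]$ whose endpoints are hitting times of levels within distance $O(h)$ of $\cZ_{s_0}$.

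The plan is therefore to show that, outside an event of probability $\lesssim 2^{-\wt c h^{9/8}}$, the total number of steps of the walk in the union $W$ of these windows is at most $h^{9}$. Since $\cT^{h,\infty}_{s_0}\subseteq\cJ^-\cap W$, that bound suffices.

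First, I would bound the time span. Let $\sigma^+$ be the first time after $s_0$ at which $\cL$ or $\cR$ has dropped by $2h+2$ below its value at $s_0$. I claim every $\tau^h_{t_0,q}$ appearing above is at most $\sigma^+$: the level at $t_0$ is at most $\cL_{s_0}+h+1$ (up to the $\pm1$ from $I_{s_0}$), so dropping $2h+2$ below $\cL_{s_0}$ forces $\cL$ below $\cL_{t_0}-h$. Symmetrically, let $\sigma^-$ be the last time before $s_0$ at which $\cL$ or $\cR$ was $2h+2$ below its value at $s_0$. Then $t_0\ge\sigma^-$, since otherwise the walk on $[t_0,s_0]$ would have gone below $\cL_{t_0}-h$ or the level condition would fail. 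The case analysis here is a bit delicate because $t_0$ may serve through $\rmL$ while $s_0\in\cJ^-_\rmR$; one uses that $I_{s_0}$ is measured in the coordinate of $s_0$ and that the $\rmL$/$\rmR$ versions are handled identically. This gives $W\subseteq[\sigma^-,\sigma^+]$.

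Second, I would apply a tail estimate. For a one-dimensional simple random walk, the number of steps $N$ needed before it first drops by $2h+2$ satisfies $\bbP[N> h^{9}]\le \exp(-c\,h^{9}/h^2)$ by iterating: each block of $h^2$ steps independently has a probability bounded below of producing such a drop. This gives a bound far stronger than $2^{-\wt c h^{9/8}}$. The same holds in reverse time from $s_0$ by the time-reversal symmetry of the i.i.d.\ steps. Choosing $s_0$ as the first downstep after $0$ introduces only a bounded perturbation: the pre-$s_0$ walk is still i.i.d.\ steps up to the stopping time, so I would handle it by a union bound over the $O(h^{9})$ possible discrete positions. Combining, $\#\{\text{steps in }[\sigma^-,\sigma^+]\}\le 2h^9$ off an event of probability $\lesssim 2^{-\wt c h^{9/8}}$, and adjusting constants gives the bound $h^9$.

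The main obstacle is the first step: verifying rigorously that every contributing $t_0$, and every element of each $\cT^{h,\infty,0}_{t_0}$, lies in $[\sigma^-,\sigma^+]$. I would prove it by a case split on whether $t_0$ contributes through $\rmL$ or $\rmR$, using Lemma~\ref{lem:T_t^h,m}-type reasoning. I expect the generous exponent $9$ (versus the natural $h^2$ scale) to absorb any slack, for instance by enlarging the drop threshold to $Ch$.
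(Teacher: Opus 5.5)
Your reduction to counting all steps of the walk in a time window fails, because the tail estimate you rely on is false. For a one-dimensional simple random walk, the first time $N$ at which it has dropped $2h+2$ below its starting value satisfies $\bbP[N>n]\asymp h/\sqrt{n}$ for $n\gg h^2$, not $e^{-cn/h^2}$. After an upward excursion, a block of $h^2$ steps no longer has probability bounded below of reaching the fixed level; the $e^{-cn/h^2}$ bound is for exiting a two-sided strip of width $O(h)$, not for hitting a one-sided level. This breaks the argument, not just the constants. Taking $t_0=s_0$ and $q$ the coordinate of $s_0$, one has $s_0\in\cT^{h,\infty}_{s_0,q}$ trivially, so your $W$ contains $[s_0,\tau^h_{s_0,q}]$. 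That window alone contains more than $h^9$ downsteps with probability of order $h^{-7/2}$. So the statement $\#(\cJ^-\cap W)\le h^9$ that you reduce to fails with polynomial probability, and enlarging the threshold to $Ch$ does not help. What does have good tails is the quantity actually being counted. An element $t\in\cT^{h,\infty}_{t_0,\rmL}$ is an $\rmL$-downstep whose interval $I_t$ meets the band $[\cL_{t_0}-h,\cL_{t_0}+h]$ before $\cL$ exits below it. Hence $\#\cT^{h,\infty}_{t_0,\rmL}$ is at most the number of visits of $\cL$ to $O(h)$ levels before exiting, plus one. Each level is visited a geometric number of times with mean $O(h)$, and a long excursion above the band costs a lot of time but contributes at most one element.

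Your containment $W\subseteq[\sigma^-,\sigma^+]$ also fails, and not only because your $\sigma^+$ is a minimum over the two coordinates (so it can precede $\tau^h_{s_0,\rmL}$ when $\cR$ drops first). Suppose $s_0\in\cJ^-_\rmL$. Every $t_0$ with $s_0\in\cT^{h,\infty,0}_{t_0}$ must contain $s_0$ through $\rmL$, since $\cT^{h,\infty}_{t_0,\rmR}\subseteq\cJ^-_\rmR$, so $\cL_{t_0}$ is within $O(h)$ of $\cL_{s_0}$. But $\cT^{h,\infty,0}_{t_0}$ also contains $\cT^{h,\infty}_{t_0,\rmR}$, whose reference level $\cR_{t_0}$ is unrelated to $\cR_{s_0}$, because $\cR$ can move arbitrarily far on $[t_0,s_0]$. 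So $\tau^h_{t_0,\rmR}$ is not controlled by $\sigma^{\pm}$ at all.

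The missing step is to bound $\max_{t_0}\#\cT^{h,\infty}_{t_0,\rmR}$ over the qualifying $t_0$ without a union bound over time positions. The qualifying $t_0$ can be spread over a window whose length has only a polynomial tail, so such a union bound again loses a polynomial factor. One way to supply the step:
\begin{itemize}
\item Use that under $\bbP_\rmQ$ the coordinates $\cL$ and $\cR$ are independent rate-$\tfrac12$ walks, by Poisson thinning.
\item Bound the number of qualifying $t_0$ by a band-occupation quantity of the time-reversed $\rmL$-walk from $s_0$. This is polynomial in $h$ outside a stretched-exponentially small event.
\item Union bound over these polynomially many times, using the band-local-time tail for $\cR$ after each of them.
\end{itemize}
The paper does not reprove the lemma here; it quotes it from its companion work, so the gap lies in your argument itself.
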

\begin{proof}
     {This is ~\cite[Lemma 5.32]{Tutte}.}
\end{proof}

In the content below we will set $h=k^2$.
\begin{lemma}\label{lem:P_QT_kk2}
    There exists some constant $c>0$ such that 
    $$\bbP_\rmQ\big[\exists t\in I(k)\cap \cJ^-: \#\cT_t^{k^2,\infty}>k^{18} \big]\lesssim 2^{-ck^2}.$$
\end{lemma}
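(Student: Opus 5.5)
We deduce the claim from Lemma~\ref{lem:P_QT_0} using stationarity of $\bbP_\rmQ$ and a union bound over the jump times in a window slightly larger than $I(k)$. Under $\bbP_\rmQ$, $\cZ$ is a unit-rate continuous-time walk with i.i.d.\ uniform steps at the times of a Poisson process $\cJ$. Hence its increments are stationary: for any deterministic $r\in\bbR$, the shifted walk $(\cZ_{t+r}-\cZ_r)_{t\in\bbR}$ has the same law as $\cZ$. The set $\cT_t^{h,\infty}$ is defined purely in terms of the increments of $\cZ$ near $t$, through $\cJ_\rmL^-,\cJ_\rmR^-$, the intervals $I_u$ and the hitting times $\tau_{t_0,q}^h$. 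So for deterministic $r$, the quantity $\#\cT^{h,\infty}_{\mathrm{up}(r,\cJ^-)}$, where $\mathrm{up}(r,\cJ^-)=\inf(\cJ^-\cap[r,\infty))$, has the same law as $\#\cT^{h,\infty}_{\mathrm{up}(0,\cJ^-)}$. With $h=k^2$, Lemma~\ref{lem:P_QT_0} then gives $\bbP_\rmQ[\#\cT^{k^2,\infty}_{\mathrm{up}(r,\cJ^-)}>k^{18}]\lesssim 2^{-\wt c k^{9/4}}$ uniformly in $r$.

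The first step reduces the random interval $I(k)$ to the deterministic interval $[-2^k,2^k]$, using $I(k)\subseteq[-2^k,2^k]$. The second step discretizes: every $t\in\cJ^-\cap[-2^k,2^k]$ equals $\mathrm{up}(r,\cJ^-)$ for some $r$ in the mesh $2^{-ck^2}\bbZ\cap[-2^k-1,2^k]$, unless two Poisson points of $\cJ$ fall within distance $2^{-ck^2}$ of each other in $[-2^k-1,2^k+1]$. That bad event has probability at most $2^{2k}\cdot 2^{-ck^2}$, which is still $\lesssim 2^{-c'k^2}$ once $c$ is taken large. The mesh has at most $2^{k+2+ck^2}$ points.

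The third step is the union bound, which gives a total probability of at most $2^{k+2+ck^2}\cdot 2^{-\wt c k^{9/4}}+2^{-c'k^2}\lesssim 2^{-c''k^2}$, because $k^{9/4}$ dominates $k^2$.

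A cleaner variant of the second step avoids the mesh. On the event $\#(\cJ\cap[-2^k,2^k])\le 2^{1.1k}$, whose complement has probability $\le 2^{-k^2}$, we can apply the Palm/Mecke formula for the Poisson process $\cJ$. This identifies the expected number of $t\in\cJ^-\cap[-2^k,2^k]$ with $\#\cT_t^{k^2,\infty}>k^{18}$ with $\lesssim 2^{k}$ times the probability of the typical-point event, which Lemma~\ref{lem:P_QT_0} controls by translation.

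The main obstacle is making the stationarity claim rigorous. One must check that $\cT_t^{h,\infty}$ is a translation-covariant function of the walk increments only, with the choice $\mathrm{up}(r,\cJ^-)$ matching the typical point used in Lemma~\ref{lem:P_QT_0}. It must not depend on the absolute time origin or on the dyadic system $\fD$; this holds because the definition \eqref{eq:def-Tt-Lh-infty} involves no reference to $\fD$ or to $Z$. Once that is checked, the remaining estimates are routine.
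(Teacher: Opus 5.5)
Your main argument is correct and is essentially the paper's proof. You take a mesh of width $2^{-ck^2}$, discard the event that two points of $\cJ$ are that close, and union-bound over mesh points using stationarity of $\cZ$ under $\bbP_\rmQ$ and Lemma~\ref{lem:P_QT_0} with $h=k^2$; the paper does the same with mesh width $a=2^{-k^2}$. One caution about your Palm/Mecke variant: the Palm law at a typical point of $\cJ^-$ is not the law of the configuration seen from $\mathrm{up}(0,\cJ^-)$, since the latter is size-biased by the gap straddling $0$, so invoking Lemma~\ref{lem:P_QT_0} there needs an extra comparison step, which is essentially the mesh argument again.
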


\begin{proof}
    Let $a>0$ and let $ E_a^{\rm dense}$ be the event where any of the intervals $\{[-2^k+(m-1)a,-2^k+ma]:1\leq m\leq 1+2^{k+1}/a\}$ has at least two points in $\cJ$. Since under $\bbP_\rmQ$, $\cJ$ has the law as a unit rate Poisson point process on $\bbR$, by a union bound we conclude that $\bbP_\rmQ[  E_a^{\rm dense}]\leq 4a^{2}2^{k+2}/a = 4a2^{k+2}$. Now by a union bound and translation invariance in law of $\cZ$ under $\bbP_\rmQ$, from Lemma~\ref{lem:P_QT_0}, 
    $$\bbP_\rmQ\big[\exists t\in I(k)\cap \cJ^-: \cT_t^{k^2,\infty}>k^{18} \big]\leq \bbP_\rmQ[  E_a^{\rm dense}]+\frac{2^{k+2}}{a}\bbP_\rmQ[\#\cT_{\mathrm{up}(0,\cJ^-)}^{k^2,\infty}>k^{18} ]\lesssim 4a2^{k+2}+ \frac{2^{k+2}}{a}\cdot 2^{-\wt c k^{9/4}}$$
    where the implicit constant does not depend on $k$ and $a$. The conclusion then follows by setting $a=2^{-k^2}$.
\end{proof}

Now we transfer to the setting of $\bbP_\rmE$.
\begin{lemma}\label{lem:P_QT_kk2-1}
    There exists some constant $c>0$ such that for large enough $k$,
    \eqb\label{eq:lem:P_QT_kk2-1}\bbP_\rmE\big[\exists t\in [-2^k,2^k]\cap \cJ^-: \#\cT_t^{k^2,\infty}>k^{20} \big]\lesssim 2^{-ck}.\eqe
    %In particular, Assertion~\ref{cond:prop:time-set-A-0-6} in Proposition~\ref{prop:time-set-A-0} holds.
\end{lemma}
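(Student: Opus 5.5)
The plan is to transfer Lemma~\ref{lem:P_QT_kk2} from $\bbP_\rmQ$ to $\bbP_\rmE$ using part~(\ref{cond:thm:KMT-0-cone-(iv)}) of Theorem~\ref{thm:KMT-0-cone}. The transfer needs an event measurable with respect to $(\cZ|_{I(k')},I(k'))$, so we first localize using the truncated sets $\cT_t^{h,m}$ of Lemma~\ref{lem:T_t^h,m}, which are functions of $\cZ|_{I(m)}$. We will work at the larger scale $m=2k$ (or $m=k+2$) with $h=k^2$.

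\textbf{Step 1: localization.} We show that with $\bbP_\rmE$-probability $1-O(2^{-ck})$, for every $t\in[-2^k,2^k]\cap\cJ^-$ we have $\bigcup_{u\in\cT_t^{k^2,\infty}}I_u\subseteq I(m)$, and hence $\cT_t^{k^2,m}=\cT_t^{k^2,\infty}$. To do this:
\begin{enumerate}
\item Use that $\{[-2^{k+1},2^{k+1}]\subseteq I(m)\}$ has probability $1-O(2^{-ck})$.
\item Bound the sets $\cT^{k^2,\infty}_{t_0,q}$. The stopping time $\tau^{k^2}_{t_0,q}$ is the first time the walk drops by $k^2$. By Theorem~\ref{thm:KMT-0-cone}(\ref{cond:thm:KMT-0-cone-(ii)}) and Lemma~\ref{lem:L_t-inf-1}, $L$ and $R$ drop below $-k2^{k/2}\cdot$const on both sides within times $2^{k+1}$. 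This forces every relevant $t_0$, and every $\tau^{k^2}_{t_0,q}$, to lie in a window of size $O(2^{k+1})$.
\item Control the intervals $I_u$ with $u$ in that window using Proposition~\ref{prop:Tutte5.8}, which says $\cJ^-_\rmL$ and $\cJ^-_\rmR$ hit both ends of these windows.
\end{enumerate}
There is one subtlety. The relevant $t_0$ are those with $t\in\cT^{k^2,\infty,0}_{t_0}$. Such a $t_0$ must satisfy $\tau^{k^2}_{t_0}\ge t$, and it cannot lie far before $-2^{k}$. The reason is that $L$ drops by much more than $k^2$ in $[-2^{k+1},-2^k]$, using Lemma~\ref{lem:L_t-inf-a1a2} combined with the coupling.

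\textbf{Step 2: transfer.} Consider the event $A=\{\exists t\in I(k')\cap\cJ^-: \#\cT_t^{k^2,m}>k^{20}\}$ with $k'=m$. This can be written in the form~\eqref{eq:def-A}, and it lies in $\sigma(\cZ|_{I(k')},I(k'))$. For the $\bbP_\rmQ$ bound, note that $\cT^{k^2,m}\subseteq\cT^{k^2,\infty}$. Lemma~\ref{lem:P_QT_kk2} applied at scale $k'$, with $k'^{18}\le k^{20}$ for large $k$, gives $\bbP_\rmQ[A]\lesssim 2^{-ck^2}$. Then~\eqref{eq:KMT-0-cone} yields
\[
\bbP_\rmE[A]\le 2^{-ck^2/4}2^{C_0k'}+2^{-k'/C_0}\lesssim 2^{-ck}.
\]
Since $[-2^k,2^k]\subseteq I(k')$ with high probability, combining this with Step~1 gives~\eqref{eq:lem:P_QT_kk2-1}.

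The main obstacle is Step~1: showing uniformly over $t$ that the relevant $t_0$'s and their stopping times stay in the window. This requires the drop estimates to hold at a height $k^2$, which is tiny compared with the $2^{k/2}$ fluctuation scale, so they should hold comfortably.
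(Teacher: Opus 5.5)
Your architecture is the paper's. You truncate to $\cT^{h,m}_t$ (Lemma~\ref{lem:T_t^h,m}), transfer Lemma~\ref{lem:P_QT_kk2} to $\bbP_\rmE$ via part~(\ref{cond:thm:KMT-0-cone-(iv)}) of Theorem~\ref{thm:KMT-0-cone}, and undo the truncation on a high-probability event. Transferring over $t\in I(2k)$ directly, rather than over $I(k)$ and rescaling $k\mapsto 2k$ at the end as the paper does, is an equivalent bookkeeping choice. In Step~2, note that Lemma~\ref{lem:P_QT_kk2} at scale $2k$ concerns $\cT^{4k^2,\infty}_t$, so you need the easy fact that $\cT^{h,\infty}_t$ is nondecreasing in $h$.

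The problem is Step~1: its quantitative claims fail with probability bounded away from $0$, not $O(2^{-ck})$.
\begin{itemize}
\item With probability of order one, $\cL$ attains its minimum over $[-2^k,2^{k+1}]$ at some $t_0\in\cJ\cap[-2^{k-1},2^{k-1}]$. Let $t$ be the next element of $\cJ^-_\rmL$ after $t_0$. Then $t\in\cT^{k^2,\infty}_{t_0,\rmL}\cap[-2^k,2^k]$, while $\tau^{k^2}_{t_0,\rmL}>2^{k+1}$ and $\tau^{k^2}_{t_0,\rmL}\in\cT^{k^2,\infty}_{t_0,\rmL}\subseteq\cT^{k^2,\infty}_t$.
\item Set $a:=\min_{[-2^{k+1},-2^k]}\cL-1$, and suppose $\cL$ goes below $a$ during $[-2^k,2^k]$ (again probability of order one). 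Then the last jump time before $-2^{k+1}$ at which $\cL=a$ is a relevant $t_0<-2^{k+1}$. Indeed, the first return of $\cL$ to $a$ after $-2^k$ lies in $\cT^{k^2,\infty}_{t_0,\rmL}$.
\end{itemize}
So neither the relevant $t_0$ nor $\tau^{k^2}_{t_0,q}$ is confined to a window of size $O(2^k)$. The heuristic that $k^2\ll 2^{k/2}$ is beside the point: the obstruction is where the running minimum sits, and it persists for every $h$. Also, Lemma~\ref{lem:L_t-inf-1} gives only $\sup|L|\le k2^{k/2}$ and one drop somewhere in $[-2^k,2^k]$, not a drop on each side.

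The repair is to use windows of polynomially growing ratio, which is what Lemma~\ref{lem:L_t-inf-a1a2} provides. It gives $\inf_{[2^k,2^{(1+a_1)k}]}L$ and $\inf_{[-2^{(1+a_1)k},-2^k]}L$ below $-2^{(1+a_2)k/2}$ with probability $1-O(2^{-ck})$. This beats $\sup_{[-2^{k+1},2^{k+1}]}|L|\lesssim k2^{k/2}$, the $O(k)$ coupling error, and $h=k^2$. A relevant $t_0$ satisfies $\cL_{t_0}\ge\min_{I_t}\cL-k^2$ (or the $\cR$ analogue). This confines all relevant $t_0$ and $\tau^{k^2}_{t_0,q}$ to $[-2^{(1+a)k},2^{(1+a)k}]$ for some $a<1$, possibly after one more nested scale for the other coordinate. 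Proposition~\ref{prop:Tutte5.8} then puts every $I_u$ inside $I(2k)$. This is exactly the localization $\cT^{k^2,\infty}_t\subseteq I(\lceil1.5k\rceil)$, $\bigcup_u I_u\subseteq I(2k)$, that the paper imports from the proof of \cite[Proposition 5.6(iii)]{Tutte}. In particular $m$ must be of order $(1+a)k$ or larger. The choice $m=k+2$ is not an option, since even $[-2^k,2^k]\subseteq I(k+2)$ holds only with probability $1/2$.
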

\begin{proof}
    By Lemma~\ref{lem:T_t^h,m} and Lemma~\ref{lem:P_QT_kk2}, \eqb\label{eq:lem:P_QT_kk2} \bbP_\rmQ\big[\exists t\in I(k)\cap \cJ^-: \#\cT_t^{k^2,2k}>k^{18}, \cT^{k^2,2k}_t\subseteq I(2k) \big]\lesssim 2^{-ck^2},\eqe
   and the event listed above is measurable with respect to $\cZ|_{I(2k)}$. If we set  $G(\cZ(\cdot),\fD) = 1$ if the event in~\eqref{eq:lem:P_QT_kk2} happens and $G(\cZ(\cdot),\fD) = 0$ otherwise, then since the time sets above does not change if we add a global additive constant to $\cZ$,  
    $$\{\exists s\in I(0):G(\cZ(\cdot+s)-\cZ(s),\fD-s)=1\} = \{G(\cZ(\cdot),\fD)=1\},$$
    and therefore by Theorem~\ref{thm:KMT-0-cone}, 
    $$ {\bbP_\rmE\big[\exists t\in I(k)\cap \cJ^-: \#\cT_t^{k^2,2k}>k^{18}, \cT^{k^2,2k}_t\subseteq I(2k) \big]\lesssim 2^{-ck}.}$$
    By  {following the same proof of ~\cite[Proposition 5.6(iii)]{Tutte}}, 
   \eqb\label{eq:336} \bbP_\rmE\big[\exists t\in I(k)\cap \cJ^-:  \cT^{k^2,\infty}_t\nsubseteq I(\lceil1.5k\rceil) \big]\lesssim 2^{-ck} \eqe
    Furthermore, using Proposition~\ref{prop:Tutte5.8},  
     \begin{equation*}
        \begin{split}
            \bbP_\rmE&[\bigcup_{u\in\cJ^-\cap I(\lceil1.5k\rceil)}I_u\nsubseteq I(2k)]\leq \bbP_\rmE[[-2^{1.7k},2^{1.7k}]\nsubseteq I(2k)] + \bbP_\rmE\big[[-2^{1,7k},-2^{1.6k}]\cap \cJ_\rmL^-]=\emptyset\big] \\&+ \bbP_\rmE\big[[-2^{1.7k},-2^{1.6k}]\cap \cJ_\rmR^-]=\emptyset\big] +   \bbP_\rmE\big[[2^{1.6k},2^{1.7k}]\cap \cJ_\rmL^-]=\emptyset\big] + \bbP_\rmE\big[[2^{1.6k},2^{1.7k}]\cap \cJ_\rmR^-]=\emptyset\big] \lesssim 2^{-ck}.
        \end{split}
    \end{equation*}
    Therefore  by Lemma~\ref{lem:T_t^h,m} and a union bound,  $$\bbP_\rmE\big[\exists t\in I(k)\cap \cJ^-:  \#\cT^{k^2,\infty}_t>k^{18} \big]\lesssim 2^{-ck},$$
    and finally by another union bound,
    $$\bbP_\rmE\big[\exists t\in [-2^k,2^k]\cap \cJ^-: \#\cT_t^{k^2,\infty}>(2k)^{18} \big]\leq \bbP_\rmE\big[\exists t\in I(2k)\cap \cJ^-:  \#\cT^{k^2,\infty}_t>(2k)^{18} \big]+\bbP_\rmE[[-2^k,2^k]\nsubseteq I(2k)] \lesssim 2^{-ck}.$$
    %We then conclude the proof of ~\eqref{eq:lem:P_QT_kk2-1} by Lemma~\ref{lem:T_t^h,m} and a union bound. 
\end{proof}

\begin{lemma}\label{lem:P_QT_kk2-10}
    There exists some constant $c>0$ such that for large enough $k$,
    $$\bbP_\rmE\big[\exists t\in [-2^k,2^k]\cap \mr\cJ^-: \#\mr\cT_t^{k^2,\infty}>k^{26} \big]\lesssim 2^{-ck}.$$
\end{lemma}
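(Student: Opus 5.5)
The plan is to combine the set-theoretic inclusion of Lemma~\ref{lem:330} with the cardinality bounds of Lemma~\ref{lem:P_QT_kk2-1} and Lemma~\ref{lem:331}, plus a bound on $\#\cA(u)$. For $u\in\mr\cJ^-\cap[-2^k,2^k]$, Lemma~\ref{lem:330} with $h=k^2$ gives
\[
\#\mr\cT_u^{k^2,\infty}\le \#\cA(u)\cdot\max_{t\in\cA(u)}\#\cT_t^{k^2,\infty}\cdot\max_{t\in\cA(u)}\max_{s\in\cT_t^{k^2,\infty}}\#\cO(s).
\]
It therefore suffices to show that, off an event of probability $\lesssim 2^{-ck}$, each factor is polynomially bounded in $k$: $\#\cA(u)\lesssim k$, $\#\cT_t^{k^2,\infty}\le k^{20}$, and $\#\cO(s)\le A_1 k$. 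The product is then $O(k^{22})$, which is at most $k^{26}$ for large $k$.

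\textbf{Step 1: localization.} First I would place everything in a window $[-2^{2k},2^{2k}]$. By Proposition~\ref{lem:gap-hat-J^-}, with probability $1-O(2^{-ck})$ every interval $\mr I_u$ meeting $[-2^k,2^k]$ has length at most $2^{(1-\e_1)(k+1)}$, so $\mr I_u\subseteq[-2^{k+1},2^{k+1}]$. By Proposition~\ref{prop:Tutte5.8}, applied at a few scales and combined with a union bound, with high probability $u^-_q(r)\in[-2^{k+2},2^{k+2}]$ for all $r\in\mr I_u$. Thus $\cA(u)\subseteq[-2^{k+2},2^{k+2}]\cap\cJ^-$.

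Next, by \eqref{eq:336} applied with $k+2$ in place of $k$ (or by the analogous localization argument), together with the containment of $I(\cdot)$ intervals, with high probability $\cT_t^{k^2,\infty}\subseteq[-2^{2k},2^{2k}]$ for those $t$. Proposition~\ref{prop:Tutte5.8} then gives $I_s\subseteq[-2^{2k+1},2^{2k+1}]$ for all such $s$.

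\textbf{Step 2: the three factors.} Lemma~\ref{lem:P_QT_kk2-1} applied at scale $k+2$ bounds $\#\cT_t^{k^2,\infty}$. There is a mismatch here: the lemma controls $\cT_t^{(k+2)^2,\infty}$, not $\cT_t^{k^2,\infty}$. I would handle it by monotonicity, since $\cT^{h,\infty}_t$ is increasing in $h$ by its definition \eqref{eq:def-Tt-Lh-infty}. This gives the bound $(k+2)^{20}$.

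Lemma~\ref{lem:331} at scale $2k+1$ gives $\#\cO(s)\le A_1(2k+1)$.

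For $\#\cA(u)$, note that $\#\cA(u)\le 2\#(\mr I_u\cap\mr\cJ)$. I expect this to be the \textbf{main obstacle}, because $\mr I_u$ lies between consecutive elements of $\mr\cJ^-$, so the number of $\mr\cJ$-points it contains is governed by runs of upsteps of $\mr\cZ$. Since $\mr\cZ$ has no downstep in the interior of $\mr I_u$, every $\mr\cJ$-point there is an upstep of $\mr\cL$ or of $\mr\cR$. Hence $\#(\mr I_u\cap\mr\cJ)\le\mr N_\rmL[\cdot]+\mr N_\rmR[\cdot]+1$, and \eqref{eq:jump-hat-J^-} at scale $k+1$ bounds this by $2c_0(k+1)+1$ with high probability.

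\textbf{Step 3: conclusion.} A union bound over the finitely many bad events, each of probability $\lesssim 2^{-ck}$, yields
\[
\#\mr\cT_u^{k^2,\infty}\lesssim k\cdot k^{20}\cdot k\le k^{26}
\]
for large $k$, uniformly over $u\in[-2^k,2^k]\cap\mr\cJ^-$, which is the claim.
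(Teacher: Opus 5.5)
Your proposal is correct and follows essentially the same route as the paper: you apply the inclusion of Lemma~\ref{lem:330}, control the three factors by Lemma~\ref{lem:P_QT_kk2-1} (using monotonicity of $\cT_t^{h,\infty}$ in $h$), Lemma~\ref{lem:331}, and the bound $\#\cA(u)\le 2\#(\mr I_u\cap\mr\cJ)$ via \eqref{eq:jump-hat-J^-}, after localizing with Proposition~\ref{lem:gap-hat-J^-}, Proposition~\ref{prop:Tutte5.8} and \eqref{eq:336}. One small fix is needed: \eqref{eq:336} cannot be applied at scale $k+2$, since $I(k+2)\not\supseteq[-2^{k+2},2^{k+2}]$; instead apply it at scale $\lceil 1.1k\rceil$, as the paper does, together with $\bbP_\rmE[[-2^{k+2},2^{k+2}]\nsubseteq I(\lceil 1.1k\rceil)]\lesssim 2^{-ck}$ and monotonicity in $h$, which yields $\cT_t^{k^2,\infty}\subseteq[-2^{2k},2^{2k}]$.
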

\begin{proof}
    We first prove that for  every $k>100$,%
    \eqb\label{pf:lem:P_QT_kk^2-10}
    \begin{split}
    \{\nexists &t\in [-2^k,2^k]\cap \mr\cJ^-: \#\mr\cT_t^{k^2,\infty}>k^{26}\}\supseteq \bigg\{\bigcup_{t\in [-2^k,2^k]\cap \mr\cJ^-} \mr I_t\subseteq [-2^{k+1},2^{k+1}]\bigg\}\cap\{\nexists t\in [-2^{k+4},2^{k+4}]\cap\cJ^-:\#\cT_t^{k^2,\infty}>k^{21} \}\\&\cap\{\nexists t\in [-2^{k+3},2^{k+3}]\cap \mr\cJ^-:\#(\mr I_t\cap \mr\cJ)\geq k^2\}\cap  \{\cJ^-_\rmL\cap [2^{k+2},2^{k+3}]\neq\emptyset,\cJ^-_\rmR \cap[2^{k+2},2^{k+3}]\neq\emptyset  \} \\&\cap \{\nexists t\in \cJ^-\cap [-2^{k+3},2^{k+3}]: \cT_t^{k^2,\infty}\nsubseteq [-2^{2k},2^{2k}]\} \cap \{\nexists s\in\cJ^-\cap [-2^{2k},2^{2k}],\  \#\cO(s)\geq k^2\}.
    \end{split}
    \eqe
Denote the six events on the right  hand side of~\eqref{pf:lem:P_QT_kk^2-10} by $E_1,E_2,E_3,E_4,E_5,E_6$, respectively. Let $u\in \mr\cJ^-\cap [-2^k,2^k]$. Then under $E_1\cap E_4$, we have $\cA(u)\subseteq [-2^{k+3},2^{k+3}]$, and further under $E_5$, for any $t\in\cA(u)$ and $s\in \cT_t^{k^2,\infty}$, $s\in [-2^{2k},2^{2k}]$. Further under $E_3$, $\#\cA(u)\leq 2k^2$, and therefore $\#\mr\cT_u^{k^2,\infty}<k^{26}$ under $\cap_{i=1}^6 E_i$. This completes the proof.

 Below $c>0$ is some constant that may vary line by line.  By Proposition~\ref{lem:gap-hat-J^-}, $\bbP_\rmE[E_1^c]\lesssim 2^{-ck}$. By Lemma~\ref{lem:P_QT_kk2-1}, $\bbP_\rmE[E_2^c]\lesssim 2^{-ck}$. By~\eqref{eq:jump-hat-J^-} and Proposition~\ref{lem:gap-hat-J^-}, $\bbP_\rmE[E_3^c] \lesssim \bbP_\rmE[\mr N_\rmL[-2^{k+4},2^{k+4}]>k^{1.5}]+ \bbP_\rmE[\mr N_\rmR[-2^{k+4},2^{k+4}]>k^{1.5}]+\bbP_\rmE[[-2^{k+4},-2^{k+3}]\cap\mr\cJ^-=\emptyset ] \lesssim 2^{-ck}$. By Proposition~\ref{prop:Tutte5.8} and symmetry, $\bbP_\rmE[E_4^c]\lesssim 2^{-ck}$. By~\eqref{eq:336} applied with $1.1k$ in place of $k$ and $\bbP_\rmE[[-2^{k+4},2^{k+4}]\nsubseteq I(\lceil 1.1k\rceil)]\lesssim 2^{-c(\e_0)k}$, we have $\bbP_\rmE[E_5^c]\lesssim 2^{-ck}$. Finally by Lemma~\ref{lem:331},  $ \bbP_\rmE[\exists s\in\cJ^-\cap [-2^{3k},2^{3k}]: I_s\subseteq[-2^{3k},2^{3k}],\  \#\cO(s)\geq k^2] \lesssim 2^{-ck}$, and  by Proposition~\ref{prop:Tutte5.8}, with $\bbP_\rmE$ probability at least $1-c'2^{-ck}$, $[-2^{3k},-2^{2k}]$ and $[2^{2k},2^{3k}]$ have nonempty intersection with both $\cJ_\rmL^-$ and $\cJ^-_\rmR$, which implies $\bbP_\rmE[E_6^c]\lesssim 2^{-ck}$ by a union bound.

% Suppose the four events on the right hand side of~\eqref{pf:lem:P_QT_kk^2-10} occur, and let $u\in I(k)\cap \mr\cJ^-$. Then the first event and the third event implies that $\cA(u)\subseteq $
% By the first event, $s_j\in I(2k)$, and by the third event, $n\leq k^2$. By the fourth event, $t_j^q\in I(3k)$. By Lemma~\ref{lem:compare-number-hat-T_t^h} and the second event, we have 
% $$\# \mr\cT_{s_0}^{k^2,\infty}\leq \sum_{q\in\{\rmL,\rmR\}}\sum_{j=1}^n\# \cT_{t_j^q}^{k^2,\infty}\leq 2nk^{21}\leq k^{24}.$$
% This verifies~\eqref{pf:lem:P_QT_kk^2-10}.
    
%     By Lemma~\ref{lem:gap-hat-J^-}, 
%     $$\bbP_\rmE[\bigcup_{t\in I(k)\cap\mr\cJ^-} \mr I_t\nsubseteq I(2k)]\leq\bbP_\rmE\big[\mr\cJ^-\cap [-2^{k+1},-2^k]=\emptyset \ \text{or } \mr\cJ^-\cap [2^k,2^{k+1}]=\emptyset\big]\lesssim 2^{-ck}.$$
%     For large enough $k$, the second and third event on the right hand side of~\eqref{pf:lem:P_QT_kk^2-10} each has probability at least $1-c'2^{-ck}$ by Lemma~\ref{lem:P_QT_kk2-1} and~\eqref{eq:jump-hat-J^-}, respectively. {The last event has probability at least $1-c'2^{-ck}$ by Lemma 5.8 of the Tutte draft.} This completes the proof.
\end{proof}

\begin{lemma}\label{lem:prop:time-set-A-6}
       Assertion~\ref{cond:prop:time-set-A-6} in Proposition~\ref{prop:time-set-A} holds.
\end{lemma}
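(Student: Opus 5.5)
We need to show: for large $k$, $\bbP_\rmE[\exists t\in[-2^{k+1},2^{k+1}]\cap\mr\cJ^-:\#\mr\cT_t^{\rmC,d}>k^{27}]\lesssim 2^{-ck}$. The plan is to bound $\mr\cT_t^{\rmC,d}$ by $\mr\cT_t^{\rmD,d+1}$ (Assertion~\ref{cond:lem:def-wh-T-Dd-4} of Lemma~\ref{lem:def-wh-T-Dd}), then by $\mr\cT_t^{h,\infty}$ with $h=k^2$ via Lemma~\ref{lem:T_t^h}, and finally use the cardinality bound of Lemma~\ref{lem:P_QT_kk2-10}. Concretely, I will work with the index $k'=k+c$ (a fixed shift, or $k'=\lceil 1.1k\rceil$) so that $[-2^{k+1},2^{k+1}]\subseteq[-2^{k'},2^{k'}]$, and apply Lemma~\ref{lem:T_t^h} with $d+1$ in place of $d$, $h=k'^2$, and the dyadic scale $I(K)$ for some $K$ comparable to $k$ (e.g. $K=\lceil 1.2 k'\rceil$), noting that Lemma~\ref{lem:T_t^h} requires its hypotheses on $[-2^{K+1},2^{K+1}]$.

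The key steps, in order, are as follows. First, by Assertion~\ref{cond:lem:def-wh-T-Dd-4} of Lemma~\ref{lem:def-wh-T-Dd}, $\#\mr\cT_t^{\rmC,d}\le \#\mr\cT_t^{\rmD,d+1}$. Second, I check that the hypotheses~\eqref{eq:lem:T_t^h} and~\eqref{eq:lem:T_t^h-1} of Lemma~\ref{lem:T_t^h} (with $d+1$ and $h=k'^2$) hold with probability $1-O(2^{-ck})$:
\begin{itemize}
\item the coupling error $\sup_{[-2^{K+1},2^{K+1}]}\|Z-\cZ\|\le C'K$ holds by part~(\ref{cond:thm:KMT-0-cone-(ii)}) of Theorem~\ref{thm:KMT-0-cone}, and $(d+1)C'K+2(d+1)<k'^2/8$ for large $k$;
\item $\mr N_{\rmL},\mr N_\rmR\le c_0K$ on $[-2^{K+1},2^{K+1}]$ by~\eqref{eq:jump-hat-J^-}, again $\ll k'^2$;
\item $\mr\cJ^-\cap(-2^{K+1},-2^K)\neq\emptyset$ by~\eqref{eq:gap-hat-J^--a1a2}/\eqref{eq:lem:gap-hat-J^-}.
\end{itemize}
Third, I need $\mr\cT_t^{\rmD,d+1}\subseteq I(K)$ for all $t\in[-2^{k+1},2^{k+1}]\cap\mr\cJ^-$. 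Using Assertion~\ref{cond:lem:def-wh-T-Dd-4} once more, $\mr\cT_t^{\rmD,d+1}\subseteq\mr\cT_t^{\rmC,d+2}$. Then Assertion~\ref{cond:prop:time-set-A-5} (proved in Lemma~\ref{lem:prop:time-set-A-5}) with $\delta$ small, together with $[-2^{k+1},2^{k+1}]\subseteq I((1-\delta)K)$ (which fails with probability only $2^{-ck}$ by the uniform dyadic system), gives containment in $I(K)$. Lemma~\ref{lem:T_t^h} then yields $\mr\cT_t^{\rmD,d+1}\subseteq\mr\cT_t^{k'^2,\infty}$. Finally, Lemma~\ref{lem:P_QT_kk2-10} applied at scale $k'$ gives $\#\mr\cT_t^{k'^2,\infty}\le k'^{26}\le k^{27}$ for large $k$, outside probability $2^{-ck}$. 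A union bound concludes.

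The main obstacle I expect is the scale bookkeeping in the third step. One must choose $K$ so that three things hold simultaneously: the containment of Assertion~\ref{cond:prop:time-set-A-5} is available for all $t$ in $[-2^{k+1},2^{k+1}]$; the hypotheses of Lemma~\ref{lem:T_t^h} (on $[-2^{K+1},2^{K+1}]$, with errors of order $K$) remain $\ll h=k'^2$; and the final cardinality scale $k'$ satisfies $k'^{26}\le k^{27}$. Taking $K,k'$ linear in $k$ makes all of these compatible.

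One subtlety is that Lemma~\ref{lem:P_QT_kk2-10} bounds $\mr\cT_t^{h,\infty}$ only for $t\in[-2^{k'},2^{k'}]$, which is fine. A second subtlety is that $I(K)$ is random; the deterministic inclusions $[-2^{k+1},2^{k+1}]\subseteq I((1-\delta)K)$ and $I(K)\subseteq[-2^{K},2^{K}]$ are handled by noting that the former fails with probability $\lesssim 2^{-ck}$ and the latter holds always.
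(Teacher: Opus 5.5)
Your proposal is correct and follows the paper's proof. Both arguments pass from $\mr\cT_t^{\rmC,d}$ to $\mr\cT_t^{\rmD,d+1}$ via part~\ref{cond:lem:def-wh-T-Dd-4} of Lemma~\ref{lem:def-wh-T-Dd}, get containment in a dyadic interval from Lemma~\ref{lem:prop:time-set-A-5}, check the hypotheses of Lemma~\ref{lem:T_t^h} with $h$ quadratic in $k$ using Theorem~\ref{thm:KMT-0-cone}(ii), \eqref{eq:lem:gap-hat-J^-} and \eqref{eq:jump-hat-J^-}, and conclude with Lemma~\ref{lem:P_QT_kk2-10}. The only difference is bookkeeping: the paper uses $I(2k)$ and $h=k^2$ throughout, while you separate the scales $K$ and $k'$ so that $h=k'^2$ matches the scale at which Lemma~\ref{lem:P_QT_kk2-10} is applied, which is slightly cleaner.
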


\begin{proof}
    By Assertion~\ref{cond:lem:def-wh-T-Dd-4} of Lemma~\ref{lem:def-wh-T-Dd}, it suffices to prove the claim~\eqref{eq:cond:prop:time-set-A-6} for $\mr\cT_t^{\rmD,d}$ in place of  $\mr\cT_t^{\rmC,d}$. By Lemma~\ref{lem:prop:time-set-A-5} and  Assertion~\ref{cond:lem:def-wh-T-Dd-4} of Lemma~\ref{lem:def-wh-T-Dd} and the fact that $\bbP_\rmE[[-2^{k+1},2^{k+1}]\nsubseteq I(1.5k)]\lesssim 2^{-ck}$, we have  $$\bbP_\rmE[\bigcup_{t\in [-2^{k+1},2^{k+1}]} \mr\cT_t^{\mathrm D,d}\nsubseteq I(2k)]\lesssim 2^{-ck}.$$
    By part~(\ref{cond:thm:KMT-0-cone-(ii)}) of Theorem~\ref{thm:KMT-0-cone},~\eqref{eq:lem:gap-hat-J^-} and ~\eqref{eq:jump-hat-J^-}, with probability at least $1-c'2^{-ck}$ under $\bbP_\rmE$, the conditions in Lemma~\ref{lem:T_t^h} hold for $2k$ in place of $k$ and $h=k^2$. Therefore by Lemma~\ref{lem:T_t^h},   with probability at least $1-c'2^{-ck}$ under $\bbP_\rmE$,  $ \mr\cT_t^{\mathrm D,d}\subseteq  \mr\cT_t^{k^2,\infty}$ for every $t\in [-2^{k+1},2^{k+1}]$. By Lemma~\ref{lem:P_QT_kk2-10},  $$\bbP_\rmE\big[\exists t\in [-2^{k+1},2^{k+1}]\cap \mr\cJ^-: \#\mr\cT_t^{k^2,\infty}>k^{27} \big]\lesssim 2^{-ck}.$$
    This completes the proof.
\end{proof}

\begin{lemma}\label{lem:prop:time-set-A-0-6}
       Assertion~\ref{cond:prop:time-set-A-0-6} in Proposition~\ref{prop:time-set-A-0} holds.
\end{lemma}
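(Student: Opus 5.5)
We mirror the proof of Lemma~\ref{lem:prop:time-set-A-6}, now for the unsimplified walk. The simplified case needed the sets $\mr\cT_t^{\rmD,d}$ and the comparison Lemmas~\ref{lem:329}--\ref{lem:330}; here the bound on $\cT_t^{h,\infty}$ is already available from Lemma~\ref{lem:P_QT_kk2-1}. The plan has three steps.

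First, define the discrete analogue $\cT_t^{\rmD,d}$ exactly as $\mr\cT_t^{\rmD,d}$, but using $\cJ^-_\rmL$, $\cJ^-_\rmR$, $I_t$, $N_\rmL$, $N_\rmR$ in place of $\mr\cJ^-$, $\mr I_t$, $\mr N_\rmL$, $\mr N_\rmR$. This is the construction in~\cite[Section 5.1]{Tutte}. The argument of Assertion~\ref{cond:lem:def-wh-T-Dd-4} of Lemma~\ref{lem:def-wh-T-Dd} then carries over verbatim and gives $\cT_t^{\rmC,d}\subseteq\cT_t^{\rmD,d+1}$. It therefore suffices to bound $\#\cT_t^{\rmD,d}$ uniformly in $t\in[-2^{k+1},2^{k+1}]\cap\cJ^-$.

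Second, we localize and compare with the $h$-sets for $h=k^2$.
\begin{itemize}
\item[(a)] By Assertion~\ref{cond:prop:time-set-A-0-5} of Proposition~\ref{prop:time-set-A-0} and the bound $\bbP_\rmE[[-2^{k+1},2^{k+1}]\nsubseteq I(1.5k)]\lesssim 2^{-ck}$, with probability $1-O(2^{-ck})$ every $\cT_t^{\rmD,d}$ with $t\in[-2^{k+1},2^{k+1}]$ is contained in $I(2k)$.
\item[(b)] The non-simplified version of Lemma~\ref{lem:T_t^h} has the same proof, with Lemma~\ref{lem:lower-bd-tau-hat-AA} replaced by its obvious analogue for $\cJ^-_\rmL$. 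It shows $\cT_t^{\rmD,d}\subseteq\cT_t^{k^2,\infty}$ under three hypotheses: $d\sup_{[-2^{2k+1},2^{2k+1}]}\|Z-\cZ\|+2d<k^2/8$; $dN_q[-2^{2k+1},2^{2k+1}]+1\le k^2/8$ for $q\in\{\rmL,\rmR\}$; and $\cJ^-\cap(-2^{2k+1},-2^{2k})\neq\emptyset$.
\item[(c)] These hypotheses hold with probability $1-O(2^{-ck})$. The coupling bound follows from part~(\ref{cond:thm:KMT-0-cone-(ii)}) of Theorem~\ref{thm:KMT-0-cone}, since $C'k\ll k^2$. The nonempty-intersection condition follows from Proposition~\ref{prop:Tutte5.8}.
\end{itemize}

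Third, Lemma~\ref{lem:P_QT_kk2-1} (applied at scale $k+1$) gives $\#\cT_t^{k^2,\infty}\le (k+1)^{20}\le k^{27}$ for all $t\in[-2^{k+1},2^{k+1}]\cap\cJ^-$, outside an event of probability $O(2^{-ck})$. A union bound over the events in the second and third steps finishes the proof.

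The main obstacle is the bound on the longest run of up-steps in (b), namely $\bbP_\rmE[N_\rmL[-2^{2k+1},2^{2k+1}]\ge c_0k]\lesssim 2^{-ck}$. This is the unsimplified counterpart of~\eqref{eq:jump-hat-J^-}. If it is not directly quotable from~\cite[Lemma 5.9]{Tutte}, I would prove it as in Lemmas~\ref{lem:number-jump-Q}--\ref{lem:number-jump-E}. Under $\bbP_\rmQ$ the steps are i.i.d., so a run of $m$ consecutive $\cL$-moves that are all up-steps has probability $2^{-m}$ at a given start, and a union bound over the at most $2^{1.1\cdot 2k}$ jump times is enough. The estimate then transfers to $\bbP_\rmE$ via part~(\ref{cond:thm:KMT-0-cone-(iv)}) of Theorem~\ref{thm:KMT-0-cone} with $c_0$ large relative to $C_0$. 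This event is measurable with respect to $\cZ|_{I(k)}$, exactly as in Lemma~\ref{lem:number-jump-E}.
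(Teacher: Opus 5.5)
Your proposal is correct and follows the paper's route. The paper also reruns the proof of Lemma~\ref{lem:prop:time-set-A-6} for the unsimplified sets, comparing $\cT_t^{\rmD,d}$ with $\cT_t^{k^2,\infty}$ via \cite[Lemma 5.5]{Tutte} and bounding the latter by Lemma~\ref{lem:P_QT_kk2-1}; the up-step run bound on $N_\rmL,N_\rmR$ that you flag is the unsimplified counterpart of \eqref{eq:jump-hat-J^-}, available as \cite[Lemma 5.9]{Tutte}, so your fallback argument is not needed. Two small points should be made explicit: step (a) also uses the reverse inclusion $\cT_t^{\rmD,d}\subseteq\cT_t^{\rmC,d+1}$, and Lemma~\ref{lem:P_QT_kk2-1} at scale $k+1$ bounds $\cT_t^{(k+1)^2,\infty}$, which contains $\cT_t^{k^2,\infty}$ because $\cT_t^{h,\infty}$ is increasing in $h$.
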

\begin{proof}
    The proof is similar to that of Lemma~\ref{lem:prop:time-set-A-6} where one uses Lemma~\ref{lem:P_QT_kk2-1} and  {~\cite[Lemma 5.5]{Tutte}.}
\end{proof}

\begin{proof}[Proof of Proposition~\ref{prop:time-set-A-0}]
    Assertions~\ref{cond:prop:time-set-A-0-1}-\ref{cond:prop:time-set-A-0-4} as well as Assertion \ref{cond:prop:time-set-A-0-7}  {follow from \cite[Proposition 5.1]{Tutte}.} Assertion~\ref{cond:prop:time-set-A-0-5} is  {a consequence of \cite[Proposition 5.6]{Tutte}}, and Assertion~\ref{cond:prop:time-set-A-0-6} follows from Lemma~\ref{lem:prop:time-set-A-0-6}.
\end{proof}

\begin{proof}[Proof of Proposition~\ref{prop:time-set-A}]
    Assertions~\ref{cond:prop:time-set-A-1}-\ref{cond:prop:time-set-A-4} {follow from Lemma~\ref{lem:def-wh-T-Cd} and Lemma~\ref{lem:def-wh-T-Dd}.} Assertion~\ref{cond:prop:time-set-A-5} is proven in Lemma~\ref{lem:prop:time-set-A-5}, while  Assertion~\ref{cond:prop:time-set-A-6} follows from Lemma~\ref{lem:prop:time-set-A-6}. For Assertion \ref{cond:prop:time-set-A-0-7}, if $\{s,s'\}\in\mr\cE_\rmG$ or $s'\in\mr\cT_s^{\rmC,1}$, then by part~\ref{cond:lem:def-wh-T-Dd-3} and part~\ref{cond:lem:def-wh-T-Dd-4} of Lemma~\ref{lem:def-wh-T-Dd}, $s,s'\in \mr\cT_s^{\rmD,d-1}$, and the claim follows from part~\ref{cond:lem:def-wh-T-Dd-5} and part~\ref{cond:lem:def-wh-T-Dd-4} of Lemma~\ref{lem:def-wh-T-Dd}.
\end{proof}

\subsection{The cell configuration}\label{subsec:cellsystem}

Let $\cZ$ be the walk as in Theorem~\ref{thm:KMT-0-cone}, $(T,\frtr) = \Phi_\rmT(\cZ)$ be the triangulation as well as the curve on faces of $T$ encoded by $\cZ$ via the Mullin bijection, and $M = \Phi_{\rmM}(\cZ)$. Let $\mr T = \Phi_\simple(T)$ be the simple triangulation obtained by collapsing the double edges of $T$ and $\mr\frtr$ be the corresponding curve on faces of $\mr T$. 
In this subsection, we {introduce the  cell configuration $\cH^\cell$ in~\cite[Section 5.2]{Tutte} whose associated map is $M$, and}  use the time set $\mr \cT_t^{\rmC,d}$ introduced as in Section~\ref{subsec:time-sets} to construct a cell configuration $\mr\cH^\cell$ whose associated map is $\mr T$. We  further verify that $\mr\cH^\cell$ and $\cH^\cell$ satisfy the conditions in Theorem~\ref{thm:CellSystemPack} and Theorem~\ref{thm:invariance-uniformization}, respectively.  %We start with introducing the  cell configuration $\cH^{\cell}$ as the {the Tutte paper} for the Riemann uniformization setting, and later construct a cell configuration $\mr\cH^\cell$ using the time sets $\mr\cT_t^{\rmC,d}$ as in Proposition~\ref{prop:time-set-A}. 

\subsubsection{The definition of the cell configurations}

Let $\{\cT_t^{\rmC,d}:t\in\cJ^-\}$ be the time sets as constructed in Proposition~\ref{prop:time-set-A-0}. For $s\in\cJ_\rmL^-$ we define
\begin{equation}\label{eq:def-cT-cell}
    \cT_s^\cell:= \bigcup_{u\in\cJ^-:\{u,s\}\in\cE_\rmT \ \text{or} \ u=s  }\cT_u^{\rmC,1}, \ \ \ \cT_s^\lge:= \bigcup_{u\in\cJ^-:\{u,s\}\in\cE_\rmT \ \text{or} \ u=s  }\cT_u^{\rmC,3}.
\end{equation}
For  {$u\in\cJ^-$, we set
\begin{equation}
    \cT_u^{\cell,\inv}:=\{s\in\cJ^-_\rmL: u\in \cT_s^\cell\}, \ \ \ \ \cT_u^{\lge,\inv}:=\{s\in\cJ^-_\rmL: u\in \cT_s^\lge\}.
\end{equation}
}

 {The following follows from \cite[Proposition 5.13]{Tutte}.
\begin{proposition}\label{prop:def-cell-inv}
 In the setting of Theorem~\ref{thm:KMT-0-cone},  there is a cell configuration $\cH^\cell$ whose associated map is $\Phi_{\mathrm{M}}(\cZ)$ with $\cH^\cell = (H_t^\cell)_{t\in\cJ^-_\rmL}$ such that for each $t\in\cJ^-_\rmL$, 
 \begin{equation}\label{eq:cell-area-0}
   \bigcup_{s\in\cT_t^\cell}\partial\eta(I_s)\subseteq H_t^\cell\subseteq \bigcup_{s\in\cT_t^\cell}\eta(I_s); \ \ \ \area(H_t^\cell) = \sum_{s\in \cT_t^\cell}\frac{\area(\eta(I_s))}{2\cdot\# \cT_s^{\cell,\inv}}.
 \end{equation}
There is also a cell configuration $\cH^\lge$ satisfying the exact same conditions with ``\ $\cell$" replaced by ``\ $\lge$". Finally, the cell configurations $\cH^\cell$ and $\cH^\lge$  satisfies all the conditions in Definition~\ref{def:CellConfigConstraint} and conditions~$($\ref{cond:def:CellConfigConstraint-large-(i)}$)$ and~$($\ref{cond:def:CellConfigConstraint-large-(ii)}$)$ in Definition~\ref{def:CellConfigConstraint-large}.
\end{proposition}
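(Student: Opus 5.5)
The plan is to follow the construction of \cite[Proposition 5.13]{Tutte} essentially verbatim, since the time sets $\cT_t^{\rmC,d}$ here coincide with those of \cite[Section 5.1]{Tutte} and Proposition~\ref{prop:time-set-A-0} records all the properties used there. For each $s\in\cJ^-$ the set $\eta(I_s)$ is a compact, simply connected region, since $\gamma=\sqrt2$. We distribute $\eta(I_s)$ among the $\#\cT_s^{\cell,\inv}$ cells that claim it, namely those $H_t^\cell$ with $t\in\cJ^-_\rmL$ and $s\in\cT_t^\cell$. The set $\cT_s^{\cell,\inv}$ is nonempty and finite: its finiteness follows from Assertions~\ref{cond:prop:time-set-A-0-2}, \ref{cond:prop:time-set-A-0-3} and~\ref{cond:prop:time-set-A-0-6} of Proposition~\ref{prop:time-set-A-0}, together with local finiteness of $T$.

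The first step is the partition. Inside each $\eta(I_s)$, split the region into $\#\cT_s^{\cell,\inv}$ connected pieces. Each piece has area $\area(\eta(I_s))/(2\#\cT_s^{\cell,\inv})$ and is assigned to one of the claiming cells. Split a thin neighborhood of $\partial\eta(I_s)$ into further pieces carrying the remaining half of the area, so that every claiming cell receives a piece containing all of $\partial\eta(I_s)$ in its closure. Since this thin neighborhood is shared, the resulting overlaps have measure zero. This produces $\bigcup_{s\in\cT_t^\cell}\partial\eta(I_s)\subseteq H_t^\cell\subseteq\bigcup_{s\in\cT_t^\cell}\eta(I_s)$ and the area identity in~\eqref{eq:cell-area-0}.

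The second step is connectedness of $H_t^\cell$. The set $\cT_t^\cell$ is a union of sets $\cT_u^{\rmC,1}$, each $\rmG$-connected by Assertion~\ref{cond:prop:time-set-A-0-4}. These sets are linked through $t$ via $\cE_\rmT$-edges, and $\cE_\rmT$-adjacent intervals have intersecting $\eta$-images. Since $\rmG$-adjacency means exactly $\eta(I_s)\cap\eta(I_u)\neq\emptyset$ (by \cite[Section 8.2]{DMS14}), the boundaries $\partial\eta(I_s)$, $s\in\cT_t^\cell$, form a connected union. Each assigned piece is attached to such a boundary, so $H_t^\cell$ is connected.

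The third step is the associated map. If $\{s,t\}$ is an edge of $\Phi_\rmM(\cZ)$, then both $t$ and $s$ lie in $\cT_t^\cell\cap\cT_s^\cell$ by Assertion~\ref{cond:prop:time-set-A-0-3} and Lemma~\ref{lem:sew}. Hence both cells contain $\partial\eta(I_t)$ and therefore intersect. The cells cover $\bbC$ because the $\eta(I_s)$ do and every $\eta(I_s)$ is fully distributed. Local finiteness follows from Assertion~\ref{cond:prop:time-set-A-0-6} and the containment Assertion~\ref{cond:prop:time-set-A-0-5}. The same construction with $\cT_u^{\rmC,3}$ gives $\cH^\lge$. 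Condition~(\ref{cond:def:CellConfigConstraint-large-(i)}) is immediate, and condition~(\ref{cond:def:CellConfigConstraint-large-(ii)}) follows from $\cT_u^{\rmC,1}\subseteq\cT_u^{\rmC,3}$, since $H_t^\cell\subseteq\bigcup_{s\in\cT^\lge_t}\eta(I_s)$ and the latter union is contained in the convex hull of $H_t^\lge$, which contains all the boundaries $\partial\eta(I_s)$.

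The main obstacle is the conditions in Definition~\ref{def:CellConfigConstraint}. Translation invariance modulo scaling must come from~\eqref{eq:shift-inv-Z} and the invariance of the 0-quantum cone/SLE pair under re-rooting at a Lebesgue-typical point. This requires the partition rule to be a measurable, equivariant function of $(h,h^\IG,\fU)$, so we choose pieces canonically, for instance by slicing $\eta(I_s)$ in time order. Ergodicity modulo scaling should follow from tail triviality of the whole-plane GFF. Rotation invariance should follow from the rotation invariance in law of $(h,h^\IG)$ modulo the additive constant, with the rotation fixing the Brownian encoding. I would carry over the argument of \cite[Proposition 5.13]{Tutte} for these three conditions rather than reprove them.
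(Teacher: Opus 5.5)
Your Step~1 does not produce a cell configuration, because it treats each $\eta(I_s)$ in isolation and ignores that the regions $\eta(I_s)$, $s\in\cJ^-$, cover $\bbC$ \emph{twice}. The families $\{I_s:s\in\cJ^-_\rmL\}$ and $\{I_s:s\in\cJ^-_\rmR\}$ each tile $\bbR$. So almost every point of $\bbC$ lies in exactly one L-type region and one R-type region, and an L-region and an R-region overlap in positive area whenever $I_s\cap I_{s'}$ has positive length. This is where the factor $2$ in \eqref{eq:cell-area-0} comes from: summed over all cells, the identity allots to each region $\eta(I_s)$ exactly half of its area.

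Your bookkeeping inside one region is already inconsistent. The $\#\cT_s^{\cell,\inv}$ pieces of area $\area(\eta(I_s))/(2\#\cT_s^{\cell,\inv})$ use up half of $\eta(I_s)$, and the ``remaining half'' cannot sit in a thin neighbourhood of $\partial\eta(I_s)$. More seriously, in Step~3 you get the covering property by saying every $\eta(I_s)$ is fully distributed among its claiming cells. If you do that for all $s\in\cJ^-$, a cell fed by an L-region and a cell fed by an overlapping R-region intersect in positive Lebesgue measure, violating Definition~\ref{def:cell-config}, and the area identity fails too. If instead you distribute only half of each region, you must say which half, and nothing in your construction makes the used halves of L- and R-regions disjoint and exhaustive.

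What is missing is a choice coordinated between the two tilings. One way: decompose $\bbC$ into the elementary regions $\eta(I_s\cap I_{s'})$ with $s\in\cJ^-_\rmL$, $s'\in\cJ^-_\rmR$; these are the $\eta$-images of the intervals between consecutive points of $\cJ^-$. Split each into two equal-area halves, one assigned to $s$ and one to $s'$. This gives sets $P_s\subseteq\eta(I_s)$ of area $\frac12\area(\eta(I_s))$ that tile $\bbC$. Only then divide each $P_s$ equally among the cells indexed by $\cT_s^{\cell,\inv}$, and adjoin null boundary sets.

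Step~2 has a related flaw. From $\eta(I_s)\cap\eta(I_u)\neq\emptyset$ you conclude that $\partial\eta(I_s)$ and $\partial\eta(I_u)$ meet. That inference is valid when the two regions have disjoint interiors, as for the intervals $\mr I_s$ in the circle-packing cells, where $\mr\cE_\rmG$ only uses conditions (i)--(ii). But $\cE_\rmG$ also contains condition (iii). For $s\in\cJ^-_\rmL$ and $u\in\cJ^-_\rmR$ one can have $I_u\subseteq I_s$, so $\eta(I_u)\subseteq\eta(I_s)$ and nothing forces the boundaries to touch. Hence $\bigcup_{s\in\cT_t^\cell}\partial\eta(I_s)$ need not be connected, and neither need your $H_t^\cell$. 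This is repairable: for each $s\in\cT_t^\cell$, adjoin the union of the boundaries of all elementary regions inside $\eta(I_s)$. That union is null and connected, contains $\partial\eta(I_s)$, and is shared by any overlapping L- and R-regions. But it has to be built into the construction.

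The remaining steps are fine: the intersection of cells joined by an edge of $\Phi_\rmM(\cZ)$ (for edges dual to $\frT_*$ you go through a common dual neighbour rather than a direct $\rmT$-edge), local finiteness, and conditions (i)--(ii) of Definition~\ref{def:CellConfigConstraint-large}. Deferring the conditions of Definition~\ref{def:CellConfigConstraint} to \cite[Proposition 5.13]{Tutte} is also fine; the paper proves the whole proposition by that citation.
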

}

Next we construct the cell configuration $\mr\cH^\cell$ in the circle packing setting. We set
$$\mr\cT_t^\cell = \mr\cT_t^{\rmC,1}, \ \ \text{and } \ \ \mr\cT_t^\lge = \mr\cT_t^{\rmC,3}. $$
 Let $s\in\mr \cJ^-$ and   $\mr I_s = [s^-,s]$. Let $a_s=\eta(b_s)$, where $b_s$ is the smallest time $u$ in $[s^-,s]$ such that the Euclidean distance $|\eta(u)-\eta(s^-)|$ reaches its maximum over $[s^-,s]$.  We define sets $\{\mr A_{s,t}:s\in \mr\cJ^-,t\in \mr\cT_s^{\cell}\}$ as follows. Consider the ray $L$ starting from $\eta(s^-)$ and passing through $a_s$ of angle $\alpha_0$. Let $n = \# \mr\cT_s^{\cell}$ and  $0=\beta_0<\beta_1<...<\beta_n=2\pi$. For $j=1,...,n$, define the wedge $W_j = \eta(s^-)+\{re^{i\theta}:\alpha_0+\beta_{j-1}\leq\theta\leq \alpha_0+\beta_j, r\geq 0  \}$, and we inductively choose  $\beta_1,...,\beta_{n-1}$ to be the smallest numbers  such that $\area(\eta(\mr I_s)\cap W_j) = \area(\eta(\mr I_s))/n$ for every $j$. For $\mr\cT_s^{\cell} = \{t_1,...,t_n\}$ where $t_1<...<t_n$, let $\mr A_{s,t_j} = (W_j\cap \eta(\mr I_s))\cup \partial\eta(\mr I_s)$. Note that $\mr A_{s,t}$ is connected and compact  since $\eta(\mr I_s)$ is simply connected, and $\cup_{t\in \mr\cT_s^{\cell}}\mr A_{s,t} = \eta(\mr I_s)$.   Define the sets  $\{\mr A_{s,t}^\lge:s\in \mr\cJ^-,t\in \mr\cT_s^{\lge}\}$ in the identical way using $\mr\cT_s^\lge$.  For $t\in \mr\cJ^-$, we define
  \begin{equation}\label{eq:def-H}
     \mr H_t^{\cell} = \bigcup_{s\in \mr\cT_t^{\cell}}\mr A_{s,t}; \ \ \ \mr H_t^{\lge} = \bigcup_{s\in \mr\cT_t^{\lge}} \mr A_{s,t}^\lge.
  \end{equation}
  See Figure~\ref{fig:single-cell} for an illustration. We define the cell configurations $\mr\cH^\cell = \{\mr H_t^\cell:t\in \mr \cJ^-\}$ and $\mr\cH^\lge = \{ \mr H_t^\lge:t\in \mr\cJ^- \}$ where the associated maps are $\mr T$. Note that $\mr\cH^\cell$ and $\mr\cH^\lge$ are measurable with repsect to $(h,\eta,\fU)$.

  \begin{figure}
      \centering
      \includegraphics[scale=0.75]{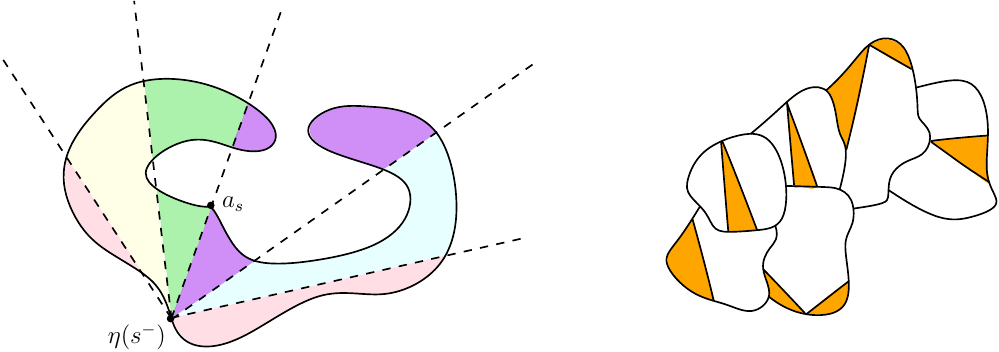}
      \caption{\textbf{Left:} For each $t\in \mr\cT_s^\cell$, the set $\mr A_{s,t}$ is the union of one of the colored regions and $\partial \eta(\mr I_s)$, such that each of $\mr A_{s,t}$ is a connected compact set and each $\mr A_{s,t}$ and $\mr A_{s,t'}$ has equal area. \textbf{Right:} An example of a single cell $\mr H_s^\cell$, which is   the union of the orange regions together with the black curves.}
      \label{fig:single-cell}
  \end{figure}

\begin{proposition}\label{prop:def-cell-pack}
Almost surely, under $\bbP_\rmE$, we have the following. 
\begin{enumerate}[(i)]
    \item The cell configurations $\mr\cH^\cell$ and $\mr\cH^\lge$ are well-defined,  and the map $\mr H_t^\cell\mapsto \mr H_t^\lge$ is an isomorphism between   $\mr \cH^\cell$ and $\mr\cH^\lge$;  
    \item For each $t\in\mr\cJ^-$, the convex hull of   $\mr H_t$ is contained in the convex hull of $\mr H_t^\lge$;
    \item For any $t\in\mr\cJ^-$, we have    
         \eqb\label{eq:cell-area}\bigcup_{s\in \mr\cT_t^\cell} \partial \eta(\mr I_s)\subseteq \mr H_t^\cell\subseteq \bigcup_{s\in \mr\cT_t^\cell} \eta(\mr I_s); \ \ \ \ \area(\mr H_t^\cell) = \sum_{s\in\mr \cT_t^\cell} \frac{\area(\eta(\mr I_s))}{\#\mr\cT_s^\cell}, \eqe
         and the same holds if we replace ``\ $\cell$" by ``\ $\lge$".
\end{enumerate}
\end{proposition}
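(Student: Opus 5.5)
The proof checks three things: that each $\mr H_t^\cell$ is a legitimate cell and the cells form a cell configuration with associated map $\mr T$; the set inclusions and area identity \eqref{eq:cell-area}; and the comparison with $\mr\cH^\lge$.

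\emph{Well-definedness of the pieces.} Fix $s\in\mr\cJ^-$ with $\mr I_s=[s^-,s]$.
\begin{enumerate}
\item By Proposition~\ref{lem:gap-hat-J^-} and Borel--Cantelli, a.s.\ every $\mr I_s$ is a finite nondegenerate interval. Hence $\eta(\mr I_s)$ is a compact set of positive Lebesgue area.
\item Since $\gamma=\sqrt2$, $\eta(\mr I_s)$ is simply connected, and its boundary is a union of finitely many SLE-type boundary arcs, so it is connected.
\item The functions $\beta\mapsto \area(\eta(\mr I_s)\cap W_\beta)$ are continuous and nondecreasing. So the smallest $\beta_j$ with the required areas exist, with $\beta_n=2\pi$.
\item Each $\mr A_{s,t}=(W_j\cap\eta(\mr I_s))\cup\partial\eta(\mr I_s)$ is compact. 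It is connected because every point of $W_j\cap \eta(\mr I_s)$ joins the connected set $\partial\eta(\mr I_s)$ along a radial segment of the wedge, using simple connectivity.
\end{enumerate}
By Proposition~\ref{prop:time-set-A}, $\#\mr\cT_t^\cell<\infty$. Hence $\mr H_t^\cell$ is a finite union of compact connected sets. These sets all contain $\partial\eta(\mr I_s)$ for $s\in\mr\cT_t^\cell$, and $\mr\cT_t^\cell$ is G-connected by Assertion~\ref{cond:prop:time-set-A-4}. G-adjacency means $\eta(\mr I_s)\cap\eta(\mr I_u)\ne\emptyset$, and since the interiors of the $\eta(\mr I_\cdot)$ are disjoint, this intersection lies in $\partial\eta(\mr I_s)\cap\partial\eta(\mr I_u)$. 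Therefore $\mr H_t^\cell$ is connected.

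\emph{Cell configuration axioms and the inclusions.} Each $\mr A_{s,t}$ contains a nonempty-interior sector piece, so the interior of $\mr H_t^\cell$ is nonempty.
\begin{itemize}
\item \textbf{Covering.} By Assertion~\ref{cond:prop:time-set-A-2}, $t\in\mr\cT_s^\cell$ iff $s\in\mr\cT_t^\cell$, and $\bigcup_{t\in\mr\cT_s^\cell}\mr A_{s,t}=\eta(\mr I_s)$. Since the $\mr I_s$ tile $\bbR$, the union of all cells is $\bbC$.
\item \textbf{Overlaps.} Distinct cells meet only in boundaries of $\eta(\mr I_s)$ and wedge rays, which have zero Lebesgue measure.
\item \textbf{Area identity.} For fixed $s$, the sets $W_j\cap\eta(\mr I_s)$ have disjoint interiors and equal area. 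This gives $\area(\mr H_t^\cell)=\sum_{s}\area(\eta(\mr I_s))/\#\mr\cT_s^\cell$.
\item \textbf{Inclusions.} Both inclusions in \eqref{eq:cell-area} are immediate from the definition.
\item \textbf{Adjacency.} If $\{s,u\}\in\mr\cE_\rmT$, Assertion~\ref{cond:prop:time-set-A-3} gives $u\in\mr\cT_s^\cell$. So $\partial\eta(\mr I_s)\subseteq \mr H_s^\cell\cap \mr H_u^\cell$, and adjacent vertices of $\mr T$ have intersecting cells.
\item \textbf{Local finiteness.} By Assertion~\ref{cond:prop:time-set-A-5} and Corollary~\ref{cor:eta-boundary}, only finitely many $\mr I_s$ have $\eta(\mr I_s)$ meeting a bounded set. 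Each such $s$ contributes finitely many cells, so the configuration is locally finite.
\end{itemize}
Thus $\mr\cH^\cell$ is a cell configuration with associated map $\mr T$. The same argument with $\mr\cT^{\rmC,3}$ handles $\mr\cH^\lge$.

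\emph{Isomorphism and convex hulls.} Both configurations have the same index set $\mr\cJ^-$ and associated map $\mr T$, so $\mr H_t^\cell\mapsto\mr H_t^\lge$ is an isomorphism.

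For (ii), I would show $\mr H_t^\cell\subseteq \bigcup_{s\in\mr\cT_t^\cell}\eta(\mr I_s)\subseteq \mathrm{conv}(\mr H_t^\lge)$. First, $\mr\cT_t^{\rmC,1}\subseteq\mr\cT_t^{\rmC,3}$; this should follow from the definitions, since increasing $d$ enlarges $\mr\tau$ and $\mr\Delta$ (or via Assertion~\ref{cond:lem:def-wh-T-Dd-4} of Lemma~\ref{lem:def-wh-T-Dd} applied twice). Then for $s\in\mr\cT_t^\lge$, $\partial\eta(\mr I_s)\subseteq\mr H_t^\lge$, and a compact set lies in the convex hull of its boundary. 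Hence $\eta(\mr I_s)\subseteq\mathrm{conv}(\mr H_t^\lge)$, which gives (ii).

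The main obstacle I expect is this monotonicity $\mr\cT^{\rmC,1}_t\subseteq\mr\cT^{\rmC,3}_t$. The stopping time $\mr\tau^{\rmC,d}$ is defined through $\mr\Delta_\rmL[t_0,\cdot]$ with $d$ multiplying both sides, so I would verify directly that the stopping threshold $-d\,\mr\Delta_\rmL$ decreases in $d$, hence $\mr\tau$ and $\mr\Delta^{\rmC,d}_{t_0}$ are nondecreasing in $d$. The secondary delicate point is the connectivity of $\mr A_{s,t}$, which relies on simple connectivity of $\eta(\mr I_s)$ at $\gamma=\sqrt2$.
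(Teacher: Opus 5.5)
Your proposal is correct and follows the paper's argument: finiteness of the time sets, G-connectedness (Assertion~\ref{cond:prop:time-set-A-4}) for connectivity of each cell, Assertion~\ref{cond:prop:time-set-A-3} for adjacency, and the symmetry in Assertion~\ref{cond:prop:time-set-A-2} for covering and local finiteness. Part (ii) likewise follows the paper, using $\mr\cT_t^{\rmC,1}\subseteq\mr\cT_t^{\rmC,3}$ together with the fact that $\mr A_{s,t}$ and $\eta(\mr I_s)$ have the same convex hull. One correction: finiteness of $\{s:\eta(\mr I_s)\cap B\neq\emptyset\}$ for bounded $B$ does not follow from Assertion~\ref{cond:prop:time-set-A-5} of Proposition~\ref{prop:time-set-A} or Corollary~\ref{cor:eta-boundary}. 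It comes from $\eta$ being continuous with $|\eta(t)|\to\infty$ as $|t|\to\infty$ (e.g.\ via Lemma~\ref{lem:SLE-contain-1} and Borel--Cantelli), together with $\mr\cJ^-$ being locally finite with finite gaps, which the paper also uses implicitly.
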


\begin{proof}
 The claim (iii)  follows from the definition of $\mr H_t^\cell$ and $\mr A_{s,t}$. By the Borel-Contelli lemma applied to Assertion~\ref{cond:prop:time-set-A-6} of Proposition~\ref{prop:time-set-A}, almost surely under $\bbP_\rmE$, all the time sets are finite, so each cell is a compact set. Furthermore, for each $s$,  $\mr\cH^\cell$ and $\mr\cH^\lge$ are locally finite.
    By Assertion~\ref{cond:prop:time-set-A-3} of Proposition~\ref{prop:time-set-A} and the definition~\eqref{eq:def-H}, we have $\mr H_s\cap \mr H_t\neq\emptyset$ if $\{s,t\}\in\mr\cE_\rmT$. Furthermore by~\eqref{eq:cell-area}, if $\mr H_t^\cell$ intersects $\eta(\mr I_s)$, then for some $s'\in \mr\cJ^-$ such that $\eta(\mr I_s)\cap \eta(\mr I_{s'})\neq\emptyset$, $s'\in \mr\cT_t^\cell$ and hence $t\in \mr\cT_{s'}^\cell$. This proves the local finiteness for $\mr\cH^\cell$, and the local finiteness for    $\mr\cH^\lge$ follows similarly. Assertion~\ref{cond:prop:time-set-A-4} of Proposition~\ref{prop:time-set-A} as well as~\eqref{eq:def-H}, for each $t\in\mr\cJ^-$, $\mr H_t^\cell$ is connected. The same properties holds if we replace ``\ $\cell$" by ``\ $\lge$". This verifies the claim (i). The claim (ii) is immediate since $\mr\cT_t^\cell\subseteq\mr\cT_t^\lge$ and the the convex hull of $\mr A_{s,t}$ and $\mr A_{s,t}^\lge$ agrees with the convex hull of $\eta(\mr I_s)$.
\end{proof}

Next we check the conditions in Definition~\ref{def:CellConfigConstraint} for the cell configuration $\mr\cH^\cell$ and $\mr\cH^\lge$. We will use the following  definition of dyadic systems and uniform dyadic systems as in~\cite{GMSinvariance}.  For a square $S$, we write $|S|$ for the side length of $S$. A \emph{dyadic child} of $S$ is one of the 4 squares with half side length of $S$ whose corners include one corner of $S$ and the center of $S$. A \emph{dyadic parent} of $S$ is one of the four squares with $S$ as a dyadic
child. Then each square has 4 dyadic children and 4 dyadic parents. A dyadic descendant (resp.\ dyadic ancestor) of $S$ is a square which can be obtained from $S$ by iteratively choosing dyadic children (resp.\ parents) finitely many times. A \emph{dyadic system} is a collection $\fD^\bbC$ of closed squares with their sides parallel to the $x$-axis or $y$-axis with the following properties:
\begin{enumerate} 
    \item If $S\in\fD^\bbC$, then each of the four dyadic children of $S$ is in $\fD^\bbC$.
    \item If $S\in\fD^\bbC$, then exactly one of the dyadic parents of $S$ is in $\fD^\bbC$.
    \item Any two squares in $\fD^\bbC$ have a common dyadic ancestor.
\end{enumerate}
Let $s$ be uniformly sampled from $[0,1]$, and conditioned on $s$, we uniformly randomly sample $w$ in $[0,2^s]\times[0,2^s]$. Let $S_0 = [0,2^s]\times[0,2^s]-w$. For $k\in\bbN$, inductively let $S_k$ be sampled uniformly from the four dyadic parents of $S_{k-1}$.  The \emph{uniform dyadic system} $\fD^\bbC$ on $\bbC$ is defined by the set of all dyadic descendants of $S_k$ for each $k\in\bbN$.

\begin{lemma}
    There exists  a measurable function $\mr\phi$ such that $\mr\cH^\cell = \mr\phi(h,h^\IG,\fU)$ and for every $z\in\bbC$ and $C>0$, a.s.\  
    \begin{equation}\label{eq:lem:ergodicity-200}
        C^{-1}(\mr\cH^\cell-z) = \mr\phi(h(C\cdot+z)+Q\log C, h^\IG(C\cdot+z) ,\fU-\tau_z),
    \end{equation}
 {where $\tau_z$ is the time when the space-filling curve $\eta$ generated by $h^\IG$ and parameterized by the LQG measure with respect to $h$ hits $z$.} Let $\fD^\bbC$ be an independent uniform dyadic system on $\bbC$, and let $\wh S_m$ be the largest square $S$ in $\fD^\bbC$ containing 0 such that \eqb\label{eq:lem:ergodicity-200202}\sum_{H\in\mr\cH^\cell} \frac{\area(H\cap S)}{\area(H)}\leq m.\eqe Then for every fixed $m>0$, there exists a measurable function $\psi_{m,n}$ taking values in $\fD^\bbC$, such that 
    \eqb\label{eq:lem:ergodicity-200201}\lim_{n\to\infty}\bbP_\rmE\big[\wh S_m = \psi_{m,n}(h|_{B(0;n)},h^{\IG}|_{B(0;n)},\fU,\fD^\bbC) \big]=1.\eqe 
    % There are measurable functions $\mr\phi_n$ taking their value in the set of cell configurations such that for any fixed $a>1$,
    % \begin{equation}\label{eq:lem:ergodicity-2}
    %     \lim_{n\to\infty} \bbP_\rmE\Big[\mr\cH^\cell|_{B(0;a)} = \mr \phi_n\big(h|_{B(0;n)},h^\IG|_{B(0;n)}, \fU\big)|_{B(0;a)} \Big]=1, 
    % \end{equation}
    % where the restriction of a cell configuration to $B(0;a)$ means the collection of cells intersecting $B(0;a)$, along with information about the associated map restricted to these cells. 
    The same property holds for $\mr\cH^\lge$ instead of $\mr\cH^\cell$ as we define $\wh S_m$ using $\mr\cH^\lge$ in place of $\mr\cH^\cell$.
\end{lemma}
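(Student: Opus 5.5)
The plan has two parts: first the covariance relation \eqref{eq:lem:ergodicity-200}, then the approximate locality \eqref{eq:lem:ergodicity-200201}.

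\textbf{Covariance.} Every ingredient of $\mr\cH^\cell$ is a deterministic function of the pair $(\cZ,Z)$, of $\eta$, and of Lebesgue area. Concretely, these ingredients are the sets $\mr\cJ,\mr\cJ^-$, the intervals $\mr I_s$, the time sets $\mr\cT^{\rmC,1}_s$, the points $a_s$ and the wedge decomposition $\mr A_{s,t}$. By part~(\ref{cond:thm:KMT-0-cone-(i)}) of Theorem~\ref{thm:KMT-0-cone} we have $\cZ=g(h,h^\IG,\fU)$. Moreover $Z$ and $\eta$ are functions of $(h,h^\IG)$, so we set $\mr\phi$ to be this composite map. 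Now replace $(h,h^\IG,\fU)$ by $(h(C\cdot+z)+Q\log C,h^\IG(C\cdot+z),\fU-\tau_z)$. By the LQG coordinate change, the new curve is $C^{-1}(\eta(\cdot+\tau_z)-z)$ with the same quantum parametrization. Its boundary length process is $Z_{\cdot+\tau_z}-Z_{\tau_z}$, and by \eqref{eq:shift-inv-Z} its walk is $\cZ_{\cdot+\tau_z}-\cZ_{\tau_z}$. All the constructions above are invariant under additive constants in $(Z,\cZ)$ and under time shifts. Two further points need checking. First, $\Phi_{\rm simple}$ is defined intrinsically through \eqref{eq:double-edge-Mullin-A}, so it commutes with time shifts. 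Second, the wedge construction is equivariant under Euclidean similarities, because ratios of areas and the maximiser of $|\eta(u)-\eta(s^-)|$ are preserved by scaling. This gives \eqref{eq:lem:ergodicity-200} a.s.

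\textbf{Locality.} Fix $m$. We show that with high probability the cells meeting $\wh S_m$ are determined by the field restricted to $B(0;n)$.

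1. Tightness: $\wh S_m\subseteq B(0;r)$ with probability at least $1-\epsilon$ for $r$ large. The cells are bounded sets whose areas are a.s.\ finite and positive. Hence for a fixed large square the left-hand side of \eqref{eq:lem:ergodicity-200202} exceeds $m$, because it contains infinitely many whole cells.

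2. The cells intersecting $B(0;r)$ depend only on a bounded time window $[-T,T]$ of $(\eta,\cZ,Z,\fU)$. This uses several earlier results. Lemma~\ref{lem:SLE-contain-1} gives $\eta^{-1}(B(0;r))\subseteq[-2^k,2^k]$. Proposition~\ref{prop:time-set-A}, Assertion~\ref{cond:prop:time-set-A-5}, keeps the time sets $\mr\cT^{\rmC,1}$ and $\mr\cT^{\rmC,3}$ indexed there inside $I(k')$. Proposition~\ref{lem:gap-hat-J^-} bounds $\mr I_s$. Finally, Lemma~\ref{lem:A2} together with Proposition~\ref{lem:gap-hat-J} lets us replace $\Phi_{\rm simple}(\cZ)$ on $I(k')$ by $\Phi_{\rm simple}(\cZ|_{I(k'')})$.

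3. The segment $\eta([-T,T])$ lies in $B(0;n)$ with high probability. On this event, $\eta|_{[-T,T]}$ and its parametrization are determined by $(h,h^\IG)|_{B(0;n)}$, up to an error vanishing as $n\to\infty$; this is locality of space-filling SLE flow lines and of the LQG measure. The walk $\cZ|_{I(k'')}$ must also be approximately determined by the restricted fields. I expect this to be the main obstacle, since $g$ is given only abstractly. I would try first to invoke the corresponding locality statement for $g$ used for $\cH^\cell$ in~\cite[Proposition 5.13]{Tutte}. That statement should say that $\cZ|_{I(k)}$ is, with high probability, a function of $Z|_{I(k+j)}$ and $\fU$. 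Combined with the measurability of $Z|_{[a,b]}$ with respect to $\eta([a,b])$ and the field there, this gives the claim.

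4. Define $\psi_{m,n}$ by applying the construction to the restricted data and selecting the largest admissible square of $\fD^\bbC$. On the intersection of the events above it equals $\wh S_m$, which yields \eqref{eq:lem:ergodicity-200201}. The proof for $\mr\cH^\lge$ is identical, using $\mr\cT^{\rmC,3}$ in place of $\mr\cT^{\rmC,1}$.
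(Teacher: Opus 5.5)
Your proof of the covariance relation \eqref{eq:lem:ergodicity-200} is the paper's argument. The transformed pair $(h(C\cdot+z)+Q\log C,h^\IG(C\cdot+z))$ encodes the curve $C^{-1}(\eta(\cdot+\tau_z)-z)$, whose boundary length process is $Z_{\cdot+\tau_z}-Z_{\tau_z}$. Then \eqref{eq:shift-inv-Z} identifies the walk, and the time sets and wedge construction are invariant under additive constants, time shifts and Euclidean similarities.

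The locality part has a genuine gap, exactly at your Step~3.
\begin{itemize}
\item Nothing available says $g$ is local. The paper only provides $\cZ=g(h,h^\IG,\fU)$ for some measurable $g$, together with \eqref{eq:shift-inv-Z} and the comparison bound \eqref{eq:KMT-0-cone}. Also, \cite[Proposition 5.13]{Tutte} is quoted only for the existence and properties of $\cH^\cell$. It is not quoted for any statement that $\cZ|_{I(k)}$ is, with high probability, a function of $Z|_{I(k+j)}$ and $\fU$.
\item Without such a statement your Step~4 is not well defined. The map $g$ requires whole-plane input, so ``the construction applied to the restricted data'' has no meaning.
\item The claimed approximate determination of $\eta|_{[-T,T]}$ by $(h,h^\IG)|_{B(0;n)}$ is likewise only asserted. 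The order in which space-filling SLE visits points depends on where flow lines merge, which can be far away.
\end{itemize}

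The missing observation is that no quantitative locality is needed; this is the paper's proof.
\begin{itemize}
\item Write $\wh S_m=S_{K_m}$, where $S_j$ is the square of $\fD^\bbC$ containing $0$ with side length in $[2^j,2^{j+1})$.
\item $K_m$ is an a.s.\ finite integer. The sum in \eqref{eq:lem:ergodicity-200202} is monotone in $j$. It tends to $0$ as $j\to-\infty$ by local finiteness. It tends to $\infty$ as $j\to\infty$ because the number of cells contained in $S_j$ does. (Your Step~1 should be rephrased accordingly: a fixed square cannot contain infinitely many cells.)
\item $K_m$ is measurable with respect to $\cF_\infty=\sigma(h,h^\IG,\fU,\fD^\bbC)$, and $\cF_n=\sigma(h|_{B(0;n)},h^\IG|_{B(0;n)},\fU,\fD^\bbC)$ increases to $\cF_\infty$.
\item By L\'evy's upward theorem, $p_{n,j}:=\bbP_\rmE[K_m=j\mid\cF_n]\to\mathds{1}_{K_m=j}$ a.s.\ for all $j$ simultaneously.
\item Let $K_{m,n}$ be the unique $j$ with $p_{n,j}>2/3$, and $0$ if none exists. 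Uniqueness holds because $\sum_j p_{n,j}=1$. Then $K_{m,n}$ is $\cF_n$-measurable and a.s.\ equals $K_m$ for all large $n$.
\item Hence $\psi_{m,n}:=S_{K_{m,n}}$ satisfies \eqref{eq:lem:ergodicity-200201}.
\end{itemize}
This argument makes your Steps~2--3 unnecessary, including the estimates from Lemma~\ref{lem:SLE-contain-1}, Proposition~\ref{prop:time-set-A} and Proposition~\ref{lem:gap-hat-J}.
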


\begin{proof}
We only deal with the $\mr\cH^\cell$ case as the proof for $\mr\cH^\lge$ is identical.  The boundary length process for $(h(C\cdot+z)+Q\log C, h^\IG(C\cdot+z))$ is $(Z_{t+\tau_z}-Z_{\tau_z})_{t\in\bbR}$ and by relation~\eqref{eq:shift-inv-Z} as in part~(\ref{cond:thm:KMT-0-cone-(i)}) of Theorem~\ref{thm:KMT-0-cone}, the corresponding discrete-time walk for $(h(C\cdot+z)+Q\log C, h^\IG(C\cdot+z) ,\fU-\tau_z)$ is $(\cZ_{t+\tau_z}-\cZ_{\tau_z})_{t\in\bbR}$.  The relation~\eqref{eq:lem:ergodicity-200} is a consequence of the fact that the time sets are invariant if we add a constant to the walk $\cZ$ and our construction of the cells commutes with translation and scaling. 

Now let $\cF_n = \sigma(h|_{B(0;n)}, h^\IG|_{B(0;n)}, \fU,\fD^\bbC)$. Let $S_j\in\fD^\bbC$ be the a.s.\ unique square containing 0 of side length within $[2^j,2^{j+1})$. Then there exists a finite integer $K_m$ which is measurable with respect to $\sigma(h,h^\IG,\fU,\fD^\bbC)$ such that $\wh S_m = S_{K_m}$. For each integer $j$, let $p_{n,j} = \bbE[\mathds{1}_{K_m=j}|\cF_n]$. Then by the martingale convergence theorem, almost surely, for each $j$, $p_{n,j}\to \mathds{1}_{K_m=j}$. Let $K_{m,n} = j$ if $p_{n,j}>2/3$ for some $j$ and $K_{m,n}=0$ otherwise. Since $\sum_j p_{n,j} = 1$, there is at most one $j$ such that $p_{n,j}>2/3$ so $K_{m,n}$ is well-defined and measurable with respect to $\cF_n$. Therefore combined with $p_{n,j}\to \mathds{1}_{K_m=j}$, almost surely, for all sufficiently large $n$, $K_{m,n}=K_m$. Therefore the claim~\eqref{eq:lem:ergodicity-200201} holds if we set $\psi_{m,n}(h|_{B(0;n)},h^{\IG}|_{B(0;n)},\fU,\fD^\bbC) = S_{K_{m,n}}$. The proof for $\mr\cH^\lge$ is identical.

\end{proof}

\begin{lemma}\label{lem:def-cell-pack-AA}
    The cell configurations $\mr\cH^\cell$ and $\mr\cH^\lge$ satisfy all the conditions in Definition~\ref{def:CellConfigConstraint}.
\end{lemma}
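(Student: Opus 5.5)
We check the three conditions of Definition~\ref{def:CellConfigConstraint} for $\mr\cH^\cell$; the argument for $\mr\cH^\lge$ is the same word for word. The plan follows the verification of the analogous statement for $\cH^\cell$ in \cite[Proposition 5.13]{Tutte} (quoted as Proposition~\ref{prop:def-cell-inv}). The only inputs specific to the circle-packing setting are the covariance relation~\eqref{eq:lem:ergodicity-200} and the approximate locality~\eqref{eq:lem:ergodicity-200201} proved in the previous lemma.

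\textbf{Rotation invariance.} Under $\bbP_\rmE$, the pair $(h,h^\IG)$ is rotation invariant in law modulo the global additive constant of $h^\IG$. Rotating the space-filling $\SLE_8$ by $e^{i\theta}$ corresponds to a change of the additive constant $\theta\chi$, and $\eta(0)=0$ is preserved. The boundary length process $Z$ is unchanged, and $\fU$ is independent of everything else. Hence $\cZ=g(h,h^\IG,\fU)$ is unchanged, and so are all the time sets and the map $\mr T$. The cells are then built from $\eta(\mr I_s)$ and wedges based at $\eta(s^-)$ toward $a_s$. Both of these are defined from Euclidean distances, so the wedge decomposition commutes with rotations, and we get $e^{i\theta}\mr\cH^\cell\overset{d}{=}\mr\cH^\cell$ for every $\theta$.

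\textbf{Translation invariance modulo scaling.} We take $U_j$ to be the squares $\wh S_j$ of the lemma, that is, the largest squares of $\fD^\bbC$ containing $0$ with cell-weighted mass at most $j$ as in~\eqref{eq:lem:ergodicity-200202}. Let $z_j$ be uniform in $U_j$. By~\eqref{eq:lem:ergodicity-200}, $C(\mr\cH^\cell-z_j)$ equals $\mr\phi$ evaluated at the field recentred at $z_j$, rescaled so that the circle-average normalization at scale $1$ holds. This is the same scheme as \cite[Section 5]{Tutte} and \cite[Lemma 3.?]{GMSinvariance}: the 0-quantum cone, viewed from a Lebesgue-typical point and rescaled, converges in law to itself in the sense of total variation on bounded regions, because $\gamma\log$-singularities are absent in the $\alpha=0$ case. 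The shift $\fU-\tau_z$ is also invariant in law, since $\fD$ is a uniform dyadic system. The choice of $U_j$ via~\eqref{eq:lem:ergodicity-200202}, rather than Euclidean squares, is what lets us weight points by $1/\area(H)$ appropriately.

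The main obstacle is passing from convergence of the fields restricted to balls to convergence of $\mr\cH^\cell$ in the metric~\eqref{eq:metric-cc}, because $\Phi_{\rm simple}$ and the time sets are nonlocal. Here~\eqref{eq:lem:ergodicity-200201} is the key: with high probability the cells meeting a fixed ball, and their adjacency, are determined by $(h,h^\IG)|_{B(0;n)}$. Total-variation convergence of the local data then gives convergence of the restricted configurations. Locality of each cell in turn comes from Proposition~\ref{prop:time-set-A}, Assertions~\ref{cond:prop:time-set-A-5}--\ref{cond:prop:time-set-A-6}, and from Lemma~\ref{lem:A2} together with Proposition~\ref{lem:gap-hat-J^-}, which ensure that $\mr\cZ$ on a window is determined by $\cZ$ on a slightly larger window.

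\textbf{Ergodicity modulo scaling.} Let $F(\mr\cH^\cell)$ be invariant under translation and scaling. By~\eqref{eq:lem:ergodicity-200}, $F\circ\mr\phi$ is then invariant under the scaling maps $h\mapsto h^b$ of Lemma~\ref{lem:Rb}, with $\fU$ shifted accordingly. We then use the tail triviality of the 0-quantum cone: $h|_{\bbC\setminus B(0;r)}$ as $r\to0$, or equivalently the scaling flow, is mixing, as in \cite{GMSinvariance} and \cite{Tutte}. Together with approximate locality, this shows that $F$ is measurable with respect to arbitrarily small neighbourhoods of $0$ up to errors of small probability. Hence $F$ is independent of itself and therefore a.s.\ constant.
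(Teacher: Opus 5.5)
Your rotation-invariance argument is fine and matches the paper's. The ergodicity step, however, rests on a false claim.

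\textbf{Ergodicity.} Identity \eqref{eq:lem:ergodicity-200} only covers the re-embeddings $h\mapsto h(C\cdot+z)+Q\log C$ (with $\fU\mapsto\fU-\tau_z$). These preserve $\mu_h$, and hence the quantum-time parametrization of $\eta$. The law-preserving zoom of Lemma~\ref{lem:Rb}, $h^b=h(\fR_b\cdot)+Q\log\fR_b-\frac1\gamma\log b$, is not of this form. The extra constant multiplies quantum area by $b^{-1}$ and replaces $Z$ by $(b^{-1/2}Z_{bt})_{t\in\bbR}$. The walk $\cZ=g(h,h^\IG,\fU)$ is calibrated to unit quantum time (unit jump rate under $\bbP_\rmQ$), and $\fD$ has a fixed unit time scale. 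Hence $\mr\phi(h^b,\dots)$ is a configuration whose cells have $h$-quantum mass of order $b$. It is not a Euclidean rescaling of $\mr\cH^\cell$, so $F\circ\mr\phi$ is invariant under the zoom only in law, and mixing of the scaling flow gives nothing.

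For the same reason, Euclidean scale invariance of $F$ cannot localize $F$ in small neighbourhoods of $0$. The configuration has an intrinsic scale, and its restriction to $B(0;r)$ for small $r$ only sees the cell containing $0$. (Also, $\sigma(h|_{\bbC\setminus B(0;r)})$ increases as $r\to0$, so it is not a tail $\sigma$-algebra.)

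The symmetry you actually have is re-rooting. Taking $z=\eta(t)$ in \eqref{eq:lem:ergodicity-200} shows that $F\circ\mr\phi$ is unchanged under $(Z,\fU)\mapsto(Z_{t+\cdot}-Z_t,\fU-t)$, up to re-embedding. Ergodicity has to come from this shift, or from decorrelation of the field near $0$ and near a distant point combined with translation invariance of $F$ and approximate locality. For the shift, one could work under $\bbP_\rmQ$, where the shift preserves the law and $Z$ has stationary independent increments, and then transfer to $\bbP_\rmE$. The paper runs the argument of \cite[Lemma 5.22]{Tutte} with exactly this re-embedding invariance as its input.

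\textbf{Translation invariance.} This step omits the one ingredient the paper actually adds. Because $U_j=\wh S_j$ is selected using the configuration itself, $z_j$ is not a Lebesgue point chosen independently of the environment. So the total-variation convergence of the $0$-quantum cone recentred at a uniform point of a deterministic region does not apply as stated. What makes $\wh S_m$ usable is its covariance. The criterion \eqref{eq:lem:ergodicity-200202} is invariant under translation and scaling, so for $z\in\wh S_m$ one has
\[
C^{-1}(\wh S_m-z)=\psi_m\big(h(C\cdot+z)+Q\log C,\,h^\IG(C\cdot+z),\,\fU-\tau_z,\,C^{-1}(\fD^\bbC-z)\big),
\]
and the law of $\fD^\bbC$ is invariant under translations and dilations. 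This identity, not a reweighting by $1/\area(H)$, is what is fed into \cite[Lemma 5.21]{Tutte}.

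Finally, \eqref{eq:lem:ergodicity-200201} says only that $\wh S_m$ is, with high probability, determined by $(h,h^\IG)|_{B(0;n)}$, $\fU$ and $\fD^\bbC$. It does not say that the cells meeting a fixed ball are determined by $(h,h^\IG)|_{B(0;n)}$. That locality must be proved separately: from the time-set bounds and Lemma~\ref{lem:A2} as you indicate, together with approximate locality of $g$ and of $\eta$.
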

\begin{proof}
 {We only prove the claim for $\mr\cH^\cell$ as the proof for $\mr\cH^\lge$  is identical. Let $\psi_m$ be the measurable function such that $\wh S_m = \psi_m(h,h^\IG,\fU,\fD^\bbC)$. Then by~\eqref{eq:lem:ergodicity-200} and~\eqref{eq:lem:ergodicity-200202},  $C^{-1}(\wh S_m-z) = \psi_m(h(C\cdot+z)+Q\log C, h^\IG(C\cdot+z) ,\fU-\tau_z,C^{-1}(\fD^\bbC-z))$ for $z\in \wh S_m$. Then the same argument as in the proof of \cite[Lemma 5.21]{Tutte} gives the translation invariant modulo scaling property as in Condition~\ref{cond:def:CellConfigConstraint-1} of Definition~\ref{def:CellConfigConstraint}.  The ergodicity modulo scaling in Condition~\ref{cond:def:CellConfigConstraint-2} of Definition~\ref{def:CellConfigConstraint} follows from the argument of \cite[Lemma 5.22]{Tutte} using~\eqref{eq:lem:ergodicity-200}. The rotation invariance in law property in  Condition~\ref{cond:def:CellConfigConstraint-3} is a consequence of the rotation invariance in law of $(h,\eta)$ and the rotation invariance of the underlying construction.}
  %  {The proof for Conditions~\ref{cond:def:CellConfigConstraint-1} and~\ref{cond:def:CellConfigConstraint-2} of Definition~\ref{def:CellConfigConstraint}  are identical to Lemmas 5.21 and 5.22 of Tutte, respectively. Condition~\ref{cond:def:CellConfigConstraint-3} is a consequence of the rotation invariance in law of $(h,\eta)$.}
\end{proof}

\begin{proposition}\label{prop:moment-bound-1}
    The cell configurations $\mr\cH^\cell$ and $\mr\cH^\lge$ satisfy the moment bound~\eqref{eq:thm-inv-circle}.
\end{proposition}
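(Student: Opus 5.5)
We need to bound $\bbE_\rmE\big[\frac{\diam(\mr H_0)^2}{\area(\mr H_0)}\deg(\mr H_0)^4\big]$, where $\mr H_0$ is the cell containing $0$. The plan is to imitate the corresponding moment bound for $\cH^\cell$ in \cite{Tutte}, replacing the time sets $\cT^{\rmC,d}$ by $\mr\cT^{\rmC,d}$ and the intervals $I_s$ by $\mr I_s$. By scale invariance we may intersect with events at dyadic scales. For $k\in\bbN$, let $E_k$ be the good event on which the following hold:
\begin{itemize}
\item[(a)] every $t\in\mr\cJ^-\cap[-2^{k+1},2^{k+1}]$ has $\#\mr\cT_t^{\rmC,3}\le k^{27}$ (Assertion~\ref{cond:prop:time-set-A-6} of Proposition~\ref{prop:time-set-A});
\item[(b)] $\bigcup_{t\in I((1-\delta)k)}\mr\cT_t^{\rmC,3}\subseteq I(k)$ (Assertion~\ref{cond:prop:time-set-A-5});
\item[(c)] the intervals $\mr I_s$ meeting $[-2^{k},2^{k}]$ lie in $[-2^{k+1},2^{k+1}]$ and have length at most $2^{(1-\e_1)k}$ (Proposition~\ref{lem:gap-hat-J^-});
\item[(d)] SLE/LQG regularity holds, in the form of Lemma~\ref{lem:SLE-contain-1}.
\end{itemize}
Each of these fails with probability $\lesssim 2^{-ck}$. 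We then decompose according to the smallest $k$ for which $E_k$ holds. On $E_k\setminus E_{k-1}$ we bound each factor polynomially in $2^k$ and in the quantities from (d), and then sum.

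\emph{Degree.} Since $\mr H_0=\mr H_{t_0}$ with $0\in\eta(\mr I_s)$ for some $s\in\mr\cT_{t_0}^\cell$, every neighbour $u$ of $t_0$ in $\mr T$ lies in $\mr\cT_{t_0}^\cell$ by Assertion~\ref{cond:prop:time-set-A-3}. Hence $\deg(\mr H_0)\le\#\mr\cT_{t_0}^{\rmC,1}\le k^{27}$ on (a). The localization in (b) guarantees that $t_0$ is within the controlled window, since $0\in\eta(I(0))$.

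\emph{Diameter.} By \eqref{eq:cell-area}, $\diam(\mr H_0)\le\diam\big(\bigcup_{s\in\mr\cT_{t_0}^\cell}\eta(\mr I_s)\big)\le\diam(\eta(I(k)))$. On (d) this is at most $2^{(1/(\gamma Q)+\delta)k}$.

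\emph{Area.} By \eqref{eq:cell-area}, $\area(\mr H_0)\ge\area(\eta(\mr I_{s_0}))/\#\mr\cT_{s_0}^\cell\ge k^{-27}\area(\eta(\mr I_{s_0}))$, where $s_0$ is the element with $0\in\eta(\mr I_{s_0})$. The main obstacle is a lower bound on this area, which requires a lower bound on the length of $\mr I_{s_0}$ together with a lower bound on the Euclidean area of space-filling SLE segments. For the Euclidean part I would use that $\eta([a,b])$ contains a Euclidean ball of radius comparable to a polynomial in $(b-a)$ with superpolynomially small failure probability, as in \cite[Lemma A.5--A.7]{Tutte} and Lemma~\ref{lem:SLE-contain-1}.

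For the length of $\mr I_{s_0}$, note that it contains an interval between consecutive jumps in $\mr\cJ$. Moreover, $\mr I_{s_0}\supseteq[\mr s_0^-,s_0]$ with both endpoints in $\mr\cJ$, so its length is at least the gap between consecutive points of $\cJ$ containing $0$ (or adjacent to it). This is exactly the quantity controlled for $I_{s}$ in \cite{Tutte}, because $\mr\cJ\subseteq\cJ$ and erasing points only enlarges gaps. Therefore the small-interval estimate from \cite{Tutte}, that $\bbP_\rmE[|\text{gap containing }0|<2^{-k}]\lesssim 2^{-ck}$ (transferred from the Poisson case via Theorem~\ref{thm:KMT-0-cone}(\ref{cond:thm:KMT-0-cone-(iv)})), applies verbatim. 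Combining this with Lemma~\ref{lem:SLE-contain-1}(\ref{eq:SLE-contained-in-ball-2}) gives $\area(\eta(\mr I_{s_0}))\ge 2^{-Ck}$ off an event of probability $\lesssim2^{-ck}$.

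\emph{Summation.} Putting the three bounds together, on $E_k$ the integrand is at most $2^{C'k}$. The probability $\bbP[E_{k-1}^c]$ decays only like $2^{-ck}$, so a naive sum may diverge if $C'>c$. To fix this, I would use Cauchy--Schwarz together with the superpolynomial tails available at a fixed scale. Specifically, I would bound the integrand by $F\cdot k^{C}$ with $F$ a product of scale-free ratios, namely $\diam(\eta(I(0)))^2/\area(\eta([\text{gap}]))$, whose moments are finite by the LQG/SLE estimates already used in \cite{Tutte}, and reserve the $2^{-ck}$ events only for the polynomial factor $k^{27}$. This is the same bookkeeping as in the proof of the moment bound for $\cH^\cell$ in \cite{Tutte}, and the case of $\mr\cH^\lge$ is identical with $\mr\cT^{\rmC,3}$ in place of $\mr\cT^{\rmC,1}$.
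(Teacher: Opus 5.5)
Your proposal has a genuine gap in the summation step, which is the heart of this proposition. Your bounds compare the diameter of the whole window, $\diam(\eta(I(k)))$, with the area of a single piece, $\area(\eta(\mr I_{s_0}))$. These are sets at unrelated scales, so on $E_k$ the integrand is of size $2^{C'k}$ and cannot be summed against failure probabilities that are only $2^{-ck}$ with $c$ small. You notice this, but the repair does not work. The factor $F=\diam(\eta(I(0)))^2/\area(\eta(\text{gap}))$ is not an upper bound for $\diam(\mr H_0)^2/\area(\mr H_0)$, even up to $k^C$. On $E_k\setminus E_{k-1}$ the pieces $\eta(\mr I_s)$, $s\in\mr\cT^{\cell}_{t_0}$, are only known to lie in $\eta(I(k))$, so the cell can be far larger than $\eta(I(0))$. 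Replacing $I(0)$ by $I(k)$ reintroduces a factor of order $2^{2k/(\gamma Q)}$, which again has no usable moments. The root cause is that you lower-bound the area by keeping only the one piece $\eta(\mr I_{s_0})$ while bounding the diameter by the entire window. That leaves no scale-invariant quantity to control.

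The missing idea is to keep the full area identity in \eqref{eq:cell-area}. Then use G-connectivity of $\mr\cT^{\cell}_{t_0}$ (Assertion~\ref{cond:prop:time-set-A-4} of Proposition~\ref{prop:time-set-A}) to get $\diam(\mr H_0)\le\sum_{s}\diam(\eta(\mr I_s))$. Cauchy--Schwarz then gives
\[
\frac{\diam(\mr H_0)^2}{\area(\mr H_0)}\le \#\mr\cT^{\cell}_{t_0}\cdot\max_{s\in\mr\cT^{\cell}_{t_0}}\#\mr\cT^{\cell}_s\cdot\max_{s\in\mr\cT^{\cell}_{t_0}}\frac{\diam(\eta(\mr I_s))^2}{\area(\eta(\mr I_s))}.
\]
Each single-piece ratio is dilation invariant. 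Lemma~\ref{lem:diam^2/area-E} bounds it by $M$ simultaneously for all intervals $J\subseteq[-M^{c_1},M^{c_1}]$ with $|J|\ge M^{-c_2}$, outside an event of probability $O(M^{-p})$ for arbitrary $p$.

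You should tie the localization scale to the tail level rather than summing over $k$. Localize $\bigcup_s \mr I_s$ inside $I(\lceil c_1\log M\rceil)$, using Assertion~\ref{cond:prop:time-set-A-5} twice and Proposition~\ref{lem:gap-hat-J^-}. With $c_1$ large, the $2^{-ck}$ bounds become $M^{-p}$. Obtain $|\mr I_s|\ge M^{-c_2}$ from \eqref{lem:gap-J-min-B}, using $\mr I_s\supseteq\wt I_s$; your observation that erasing points only lengthens intervals is exactly what is needed here. This gives all moments of the maximal ratio. The cardinality factors, and $\deg(\mr H_0)\le\#\mr\cT^{\cell}_{t_0}$ (which you have right), have all moments by Assertion~\ref{cond:prop:time-set-A-6}. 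H\"older's inequality then yields \eqref{eq:thm-inv-circle}.
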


\begin{proposition}\label{prop:moment-bound-2}
    The cell configurations $\cH^\cell$ and $\cH^\lge$ satisfy the moment bound~\eqref{eq:thm-inv-circle-1}.
\end{proposition}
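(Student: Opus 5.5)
The statement to prove is Proposition~\ref{prop:moment-bound-2}: $\cH^\cell$ and $\cH^\lge$ satisfy \eqref{eq:thm-inv-circle-1}. The plan is to follow the moment-bound argument of~\cite[Section 5]{Tutte}. For the Riemann uniformization cell configuration $\cH^\cell$, whose associated map is $M=\Phi_\rmM(\cZ)$, this argument should go through essentially verbatim. The new input is that the statement involves $\max_{H\in\cH_0^{\rm face}}$ and the face degree $d_0^f$, and these must be controlled in terms of the time sets.

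\textbf{Step 1: reduce to events on a dyadic scale.} Write the random variable inside the expectation as
\[
X:=\max_{H\in\cH_0^{\rm face}}\frac{\diam(H)^2}{\area(H)}\big(d_0^f+\deg(H)\big)^6 .
\]
I will bound $\bbP_\rmE[X>2^{Ck}]$ (or a polynomial tail in $k$) by good events at scale $2^k$. Fix a large $k$ and let $G_k$ be the intersection of the following events:
\begin{enumerate}[(a)]
\item $\#\cT_t^{\rmC,3}\le k^{27}$ for all $t\in[-2^{k+1},2^{k+1}]\cap\cJ^-$ (Proposition~\ref{prop:time-set-A-0}, Assertion~\ref{cond:prop:time-set-A-0-6});
\item all time sets for $t$ near $0$ lie in $I(k)$ (Assertion~\ref{cond:prop:time-set-A-0-5});
\item every $I_s$ with $s$ in these sets has length at most $2^{(1-\e)k}$ and at least $2^{-k}$-type bounds, via Proposition~\ref{prop:Tutte5.8} and the Poisson description;
\item Lemma~\ref{lem:SLE-contain-1} controls $\eta([-2^k,2^k])$.
\end{enumerate}
On $G_k$ I expect the following bounds.
\begin{itemize}
\item $\deg(H)\le \#\cT^\cell$-related quantities, at most $\mathrm{poly}(k)$, since neighbors in $M$ correspond to $\cE_\rmT$-neighbors contained in the time sets (Assertion~\ref{cond:prop:time-set-A-0-3}).
\item The face degree $d_0^f$ is bounded similarly. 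A face of $M$ incident to $v_{H_0}$ corresponds to a dual vertex of $T$ adjacent to $v_{H_0}$, so its degree is the degree of a $\frT_*$-vertex. That degree is bounded by the length of the corresponding $\cR$-excursion interval, and Proposition~\ref{prop:time-set-A-0}(\ref{cond:prop:time-set-A-0-7}) together with the jump-count bound $N_\rmR\lesssim k$ controls it.
\item The set $\cH_0^{\rm face}$ then has at most $\mathrm{poly}(k)$ elements, all inside $\eta(I(k))$.
\end{itemize}

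\textbf{Step 2: bound $\diam^2/\area$ of each cell.} By \eqref{eq:cell-area-0}, $\diam(H_t^\cell)\le\diam\big(\bigcup_{s\in\cT_t^\cell}\eta(I_s)\big)$. This is bounded by $\diam(\eta(I(k)))$, and I will also use the area bound
\[
\area(H_t^\cell)\ge \frac{\area(\eta(I_s))}{2\#\cT_s^{\cell,\inv}}\quad\text{for } s=t\in\cT_t^\cell .
\]
So the ratio is at most $\mathrm{poly}(k)\cdot\diam(\eta(I(k)))^2/\area(\eta(I_t))$. As in~\cite[Section 5.3]{Tutte}, I will use the SLE/LQG estimate that the Euclidean ball containment of $\eta(I)$ is comparable to its diameter: for a segment of quantum length $\ge 2^{-k}$-scale, $\eta(I)$ contains a ball of radius $2^{-Ck}$ times the diameter, with probability $1-2^{-ck}$. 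Combined with $|I_t|$ being polynomially comparable to $2^{\pm k}$ on $G_k$, and with the scale invariance in Lemma~\ref{lem:Rb}, this gives
\[
\frac{\diam(H)^2}{\area(H)}\le 2^{C'k}
\]
on $G_k\setminus B_k$, where $\bbP_\rmE[B_k]\lesssim 2^{-ck}$.

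\textbf{Step 3: the main obstacle.} A crude $2^{C'k}$ bound against a $2^{-ck}$ tail does not give a finite expectation. I will therefore follow~\cite{Tutte} and use scale invariance and translation invariance to reduce to the cell at a typical point. The ratio $\diam^2/\area$ is scale-invariant, so the bound should be an $O(1)$ ratio with super-polynomial tails in the number of time-set elements and in the ratio $\diam(\eta(I))/\text{inradius}$. The latter has finite moments of every order for space-filling $\SLE_8$ on a $0$-quantum cone; this is the estimate used in~\cite[Proposition 5.24]{Tutte}.

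Concretely, I will take $k$ as the scale parameter on the \emph{shape} side: the event $\{X>k^{C}\}$ forces some time-set cardinality $>k$ or some shape ratio $>k$, and each of these has probability $\lesssim 2^{-ck}$. Summing over $k$ yields finiteness. The additional maximum over $\cH_0^{\rm face}$ only costs a union bound over $\mathrm{poly}(k)$ cells. Each of these cells is handled by the same estimate via translation invariance modulo scaling, using \eqref{eq:shift-inv-Z}.

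The case of $\cH^\lge$ is identical, with $\cT^{\rmC,3}$ in place of $\cT^{\rmC,1}$.
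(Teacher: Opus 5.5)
There is a genuine gap at the core of your Steps 2--3: you never obtain a usable bound on $\diam(H)^2/\area(H)$ for an individual cell. Step 2 keeps only the term $s=t$ of the area identity in \eqref{eq:cell-area-0} and compares it with the diameter of a whole time window, which gives an exponentially large bound. Step 3 then asserts, by appeal to scale invariance, that the cell ratio is controlled by time-set cardinalities and the shape ratios of the segments $\eta(I_s)$. That is exactly what has to be proved, and scale invariance does not give it: the segments $\eta(I_s)$, $s\in\cT_t^\cell$, making up one cell may have very different sizes, so $\diam(\eta(I_s))$ for one $s$ cannot be compared with $\area(\eta(I_t))$.

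The missing step combines three ingredients: the \emph{full} area identity; the bound $\diam(H_t)\le\sum_{s\in\cT_t^\cell}\diam(\eta(I_s))$, valid because the union is connected since $\cT_t^\cell$ is $\rmG$-connected; and Cauchy--Schwarz. Together they give
\[
\frac{\diam(H_t)^2}{\area(H_t)}\le\frac{2\big(\sum_{s\in\cT_t^\cell}\diam(\eta(I_s))\big)^2}{\sum_{s\in\cT_t^\cell}\area(\eta(I_s))/\#\cT_s^{\cell,\inv}}\le 2\,\#\cT_t^\cell\max_{s\in\cT_t^\cell}\#\cT_s^{\cell,\inv}\max_{s\in\cT_t^\cell}\frac{\diam(\eta(I_s))^2}{\area(\eta(I_s))}.
\]
Now use $\cT_s^\cell,\cT_s^{\cell,\inv}\subseteq\wt\cT_s^{\rmC,1}$ from \cite[Lemma 5.29]{Tutte}. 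For every $H_t\in\cH_0^{\rm face}$, all times $s$ entering this bound lie in a finitely iterated time set $\wt\cT_3$, started from the times $s_{0,\rmL}\in\cJ^-_\rmL$ and $s_{0,\rmR}\in\cJ^-_\rmR$ whose intervals contain $0$. This reduces the problem to cardinalities of time sets near $0$ and to $\max_{s\in\wt\cT_3}\diam(\eta(I_s))^2/\area(\eta(I_s))$.

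Your probabilistic bookkeeping also does not match the available estimates.
\begin{itemize}
\item Lemma~\ref{lem:diam^2/area-E} gives only $c'M^{-p}$ (for any fixed $p$), not $2^{-cM}$. It also holds only uniformly over intervals $J\subseteq[-M^{c_1},M^{c_1}]$ with $|J|\ge M^{-c_2}$, so ``some shape ratio $>k$'' over time windows of length $2^k$ is not covered.
\item The scales must be matched. At threshold $M$, localize $\bigcup_{s\in\wt\cT_3}I_s$ in $I(\lceil c_1\log M\rceil)$ by iterating Assertion~\ref{cond:prop:time-set-A-0-5} of Proposition~\ref{prop:time-set-A-0}; the failure probability is $\le M^{-p}$ once $c_1$ is large. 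Take $|I_s|\ge M^{-c_2}$ from Lemma~\ref{lem:gap-J-min}. Proposition~\ref{prop:Tutte5.8} only bounds gaps from above, and the Poisson description holds only under $\bbP_\rmQ$.
\item The cells of $\cH_0^{\rm face}$ are selected by adjacency to $H_0$, not at independent typical points. So translation invariance modulo scaling cannot be applied to them one at a time; the localization together with the uniform-in-$J$ estimate is what replaces it.
\item $\deg(H)$ counts multiple edges, so bounding the number of distinct $\cE_\rmT$-neighbours is not enough.
\item Assertion~\ref{cond:prop:time-set-A-0-7} and $N_\rmR$ do not control the degree of a dual vertex: repeated up--down steps of $\cR$ keep $N_\rmR=1$ while adding faces around the same dual vertex.
\end{itemize}
Instead, use Lemma~\ref{lem:Euler} with Assertion~\ref{cond:prop:time-set-A-0-3} to get $\deg(\frv_t)\le 2\#\cT_t^{\rmC,1}$. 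Apply this both to the primal vertices of the cells in $\cH_0^{\rm face}$ and to the dual vertices $\frv_{t^f}$, $t^f\in\cJ^-_\rmR$, adjacent to the vertex of $H_0$; all of their times lie in $\wt\cT_3$.
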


The following is  {\cite[Lemma 5.24]{Tutte}}, which essentially follows from~\cite[Lemma 2.9]{GMSinvariance}.
\begin{lemma}\label{lem:GMS2.9}
    Let $\cH$ be a cell configuration satisfying the conditions in Definition~\ref{def:CellConfigConstraint} and the moment bound~\eqref{eq:thm-inv-circle} or~\eqref{eq:thm-inv-circle-1}. Then for each $\e>0$, almost surely for large enough $r$, $\diam(H)<\e r$ for each $H\in \cH([-r,r]^2)$.
\end{lemma}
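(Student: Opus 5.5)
The plan is to follow the ergodic-average argument of \cite[Lemma 2.9]{GMSinvariance}, as adapted in \cite[Lemma 5.24]{Tutte}. The only input from the moment bounds is scale-invariant integrability. Since $\deg(H)\ge 1$, both~\eqref{eq:thm-inv-circle} and~\eqref{eq:thm-inv-circle-1} imply
\[
\bbE\big[f(H_0)\big]<\infty,\qquad\text{where } f(H):=\frac{\diam(H)^2}{\area(H)}.
\]
The quantity $f$ is invariant under translation and scaling of the configuration.

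\textbf{Step 1 (ergodic averages).} I would first use Conditions~\ref{cond:def:CellConfigConstraint-1} and~\ref{cond:def:CellConfigConstraint-2} of Definition~\ref{def:CellConfigConstraint} to establish an ergodic theorem for scale-invariant functionals of the cell containing a point. For every $M>0$, almost surely,
\[
\lim_{\rho\to\infty}\frac{1}{(2\rho)^2}\int_{[-\rho,\rho]^2} f(H_z)\,\mathds 1_{\{f(H_z)\ge M\}}\,dz \;=\; \bbE\big[f(H_0)\mathds 1_{\{f(H_0)\ge M\}}\big]=:p_M ,
\]
where $H_z$ is the cell containing $z$. The argument is the one in \cite[Lemma 2.8]{GMSinvariance}: translation invariance modulo scaling identifies the law seen from a uniform point of $U_j$ with that of $\cH$. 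Ergodicity modulo scaling then makes the limit deterministic, and one passes from the sets $U_j$ to boxes $[-\rho,\rho]^2$ by the standard comparison. Integrability of $f(H_0)$ gives $p_M\to0$ as $M\to\infty$. The analogous statement also holds for $\mathds 1_{\{\area(H_z)\ge \delta\rho^2\}}$-type functionals after normalizing by the scale.

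\textbf{Step 2 (choosing a scale for a bad cell).} Suppose, for contradiction, that there are arbitrarily large $r$ for which some $H\in\cH([-r,r]^2)$ has $\diam(H)\ge\e r$. Since $H$ is compact and meets $[-r,r]^2$, we have $H\subseteq[-\rho,\rho]^2$ with $\rho:=r+\diam(H)\le(1+\e^{-1})\diam(H)$. I would split into two cases according to whether $\area(H)\le\delta\rho^2$, where $\delta>0$ is small and fixed.
\begin{itemize}
\item[] \emph{Case $\area(H)\le\delta\rho^2$.} Then $f(H)\ge \diam(H)^2/(\delta\rho^2)\ge c_\e/\delta=:M$, and $H$ contributes
\[
\int_H f(H_z)\,dz=\diam(H)^2\ge c_\e\,\rho^2
\]
to the truncated average at scale $\rho$. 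Choosing $\delta$ so small that $p_M<c_\e/8$ contradicts Step 1, as long as this happens at arbitrarily large $\rho$.
\item[] \emph{Case $\area(H)\ge\delta\rho^2$.} Then a single cell fills a fixed fraction $\delta/4$ of $[-\rho,\rho]^2$.
\end{itemize}

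\textbf{Step 3 (ruling out the second case — the expected obstacle).} This is the step I expect to be hardest, because the functional involved is not of the form handled directly in Step 1. If a single cell fills a fixed fraction $\delta/4$ of $[-\rho_j,\rho_j]^2$ along a sequence $\rho_j\to\infty$, I would show this contradicts translation invariance modulo scaling, using local finiteness (which in particular says the cell $H_0$ is a.s.\ bounded). Pick $z$ uniformly in $U_j\supseteq[-\rho_j,\rho_j]^2$ (comparable sizes). With probability bounded below, $z$ lands in the big cell. Then $C_j(\cH-z)$ has a cell containing $0$ of diameter at least a constant times the scale on which it looks like $\cH$. The limit in law would therefore have, with positive probability, a cell at $0$ that is unbounded relative to any fixed window. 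This contradicts $C_j(\cH-z_j)\to\cH$ together with a.s.\ boundedness of $H_0$. To do this rigorously, I would use the metric~\eqref{eq:metric-cc} on windows $B(0;R)$ and let $R\to\infty$.

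Combining the two cases gives the claim for each fixed $\e$. Since $\e$ ranges over a countable set, the almost-sure statement holds simultaneously for all $\e>0$.
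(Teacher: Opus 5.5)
\textbf{Assessment.} The paper gives no proof of this lemma; it cites \cite[Lemma 5.24]{Tutte} and \cite[Lemma 2.9]{GMSinvariance}. Your overall route, ergodic averages of scale-invariant functionals, is the same as in that source. Granting the ergodic theorem of Step 1 as a black box, your Case 1 (thin big cells, handled by the truncated $\diam^2/\area$ average) is correct. However, Step 3 has a genuine gap.

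\textbf{Why Step 3 fails.}
\begin{enumerate}
\item The hypothesis of translation invariance modulo scaling concerns a \emph{given} sequence $U_j$, which may be random, not centered at $0$, and very sparse in scale, and it gives only convergence in law. Your bad scales $\rho_j$ are random, so nothing lets you take $U_j$ comparable to $[-\rho_j,\rho_j]^2$.
\item Knowing that fat big cells occur almost surely at arbitrarily large scales does not give that $z_j$ lands in a cell of size comparable to $U_j$ with probability bounded below for deterministic $j$.
\item More fundamentally, even granting this, there is no contradiction of the kind you describe. The constants $C_j$ are arbitrary, so they can simply rescale the big cell to unit size. A unit-size cell at $0$ is perfectly compatible with the limit $\cH$, so ``unbounded relative to any fixed window'' has no content.
\item Separately, your aside that the ergodic theorem also covers $\mathds 1_{\{\area(H_z)\ge\delta\rho^2\}}$ is false. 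That functional is not scale invariant.
\end{enumerate}

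\textbf{How to close it.} Apply the same ergodic theorem to a bounded scale-invariant functional that encodes local finiteness. Set $A:=3(1+\e^{-1})$ and
\[
F_N(\cH):=\mathds 1\{\#\cH(B(0;A\diam(H_0)))\ge N\}.
\]
\begin{itemize}
\item Since $\cH$ is locally finite and $H_0$ is compact, $\bbP[F_N(\cH)=1]\to0$ as $N\to\infty$. Fix $N$ with $\bbP[F_N(\cH)=1]<\delta/8$.
\item In your Case 2, $H\subseteq[-\rho,\rho]^2$ and $\rho\le(1+\e^{-1})\diam(H)$. Hence for a.e.\ $z\in H$ the ball $B(z;A\diam(H))$ contains $[-\rho,\rho]^2$.
\item Therefore $F_N(\cH-z)=1$ as soon as $\#\cH([-\rho,\rho]^2)\ge N$. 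This holds for all large $\rho$, because finitely many compact cells cannot cover an arbitrarily large box.
\item Consequently $(2\rho)^{-2}\int_{[-\rho,\rho]^2}F_N(\cH-z)\,dz\ge\area(H)/(4\rho^2)\ge\delta/4$ at arbitrarily large $\rho$. This contradicts the ergodic limit $\bbP[F_N(\cH)=1]<\delta/8$.
\end{itemize}
With this replacement for Step 3 and Case 1 unchanged, the argument closes. Only countably many functionals are used, and the moment bound is needed only for the thin case.
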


The proof of Proposition~\ref{prop:moment-bound-1} and~\ref{prop:moment-bound-2} are deferred to Section~\ref{subsubsec:moment-bound-cell}. %Now we apply the moment bounds to verify condition~(\ref{cond:def:CellConfigConstraint-large-(iii)}) in Definition~\ref{def:CellConfigConstraint-large}. 
\begin{lemma}\label{lem:line-connectivity}
    Condition~$($\ref{cond:def:CellConfigConstraint-large-(iii)}$)$ in Definition~\ref{def:CellConfigConstraint-large} holds for $(\cH^\cell,\cH^\lge)$ and $(\mr\cH^\cell,\mr\cH^\lge)$. 
\end{lemma}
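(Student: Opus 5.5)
We will prove the statement for $(\mr\cH^\cell,\mr\cH^\lge)$; the case $(\cH^\cell,\cH^\lge)$ is identical, with $(\cJ^-,I_s,\cT^{\rmC,d},\cE_\rmT,\cE_\rmG)$ in place of their circled versions. Fix $\e>0$ and a horizontal or vertical segment $\ell\subseteq B(0;r)$ of length at least $\e r$, with $r$ large. The first requirement in condition~(\ref{cond:def:CellConfigConstraint-large-(iii)}) is that $\phi(H)$ meets $\ell$ whenever $H$ does. This is immediate, because $\mr\cT_t^{\rmC,1}\subseteq\mr\cT_t^{\rmC,3}$, so $\mr H_t^\cell\subseteq\mr H_t^\lge$ by~\eqref{eq:def-H}.

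For the path, take two cells $\mr H_t^\cell,\mr H_{t'}^\cell$ meeting $\ell$. By~\eqref{eq:cell-area}, there are $s\in\mr\cT_t^{\rmC,1}$ and $s'\in\mr\cT_{t'}^{\rmC,1}$ with $\eta(\mr I_s)\cap\ell\neq\emptyset$ and $\eta(\mr I_{s'})\cap\ell\neq\emptyset$. The set $S_\ell=\{u\in\mr\cJ^-:\eta(\mr I_u)\cap\ell\neq\emptyset\}$ is $\rmG$-connected. Indeed, the sets $\eta(\mr I_u)$ cover $\ell$, and $\ell$ is connected. Moreover, two such sets intersect iff condition (i) or (ii) in the definition of $\mr\cE_\rmG$ holds, by the remark following the definition of $\cE_\rmG$ citing~\cite[Section 8.2]{DMS14}. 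So we get a chain $s=u_0,u_1,\dots,u_p=s'$ in $S_\ell$ with $\{u_i,u_{i+1}\}\in\mr\cE_\rmG$.

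We then upgrade each $\rmG$-step to a $\rmT$-path. By Assertion~\ref{cond:prop:time-set-A-7} of Proposition~\ref{prop:time-set-A} with $d=3$, there are $w_0=u_i,\dots,w_n=u_{i+1}$ in $\mr\cT_{u_i}^{\rmC,3}$ with consecutive pairs in $\mr\cE_\rmT$. The same assertion, applied to $s'\in\mr\cT_s^{\rmC,1}$, links $t$ to $s$ and $s'$ to $t'$. By Assertion~\ref{cond:prop:time-set-A-2}, each $w_j$ satisfies $u_i\in\mr\cT_{w_j}^{\rmC,3}=\mr\cT_{w_j}^\lge$. Hence $\mr A^\lge_{u_i,w_j}\subseteq\mr H^\lge_{w_j}$, and this set contains $\partial\eta(\mr I_{u_i})$.

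The remaining issue is that $\partial\eta(\mr I_{u_i})$ need not meet $\ell$ even though $\eta(\mr I_{u_i})$ does. We handle this through the macroscopic regime. By Lemma~\ref{lem:GMS2.9}, applied using Proposition~\ref{prop:moment-bound-1}, all cells meeting $B(0;2r)$ have diameter at most $\e' r$ for $r$ large. If $\eta(\mr I_u)$ meets $\ell$ but $\partial\eta(\mr I_u)$ does not, then $\ell\subseteq\eta(\mr I_u)$, since $\ell$ is connected and $\eta(\mr I_u)$ is simply connected. This would give $\diam\eta(\mr I_u)\geq\e r$. But $\eta(\mr I_u)\subseteq\mr H^\cell_u$, which has diameter less than $\e' r<\e r$, a contradiction. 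Therefore $\partial\eta(\mr I_{u_i})\cap\ell\neq\emptyset$ for every $u_i$ in the chain, and so $\mr H^\lge_{w_j}$ meets $\ell$.

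The same argument handles the links $t\to s$ and $s'\to t'$, which are routed through $\mr\cT_s^{\rmC,3}$. Concatenating all the pieces gives the required path $H\sim H_1\sim\cdots\sim H'$ in the associated map $\mr T$, with every $\phi(H_j)$ meeting $\ell$.

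The step we expect to be the main obstacle is the boundary issue in the previous paragraph. One has to check that "almost surely for $r$ large" is uniform over all segments $\ell$, which Lemma~\ref{lem:GMS2.9} provides. One also has to check that the wedge pieces $\mr A^\lge_{u,w}$ genuinely contain $\partial\eta(\mr I_u)$, which holds by their definition.
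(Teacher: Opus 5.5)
Your argument follows the paper's proof essentially step for step: diameter control from Lemma~\ref{lem:GMS2.9} and Proposition~\ref{prop:moment-bound-1}, a $\rmG$-chain of times $u$ with $\eta(\mr I_u)\cap\ell\neq\emptyset$, upgrading each $\rmG$-step and the two end links to $\rmT$-paths inside $\mr\cT^{\rmC,3}$ via Assertion~\ref{cond:prop:time-set-A-7} of Proposition~\ref{prop:time-set-A}, and Assertion~\ref{cond:prop:time-set-A-2} to put $\partial\eta(\mr I_u)$ into every large cell along the path.

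However, two set inclusions you rely on are false. For each $s\in\mr\cT^\cell_t$, the cell $\mr H^\cell_t$ contains only one of the $\#\mr\cT^\cell_s$ equal-area wedge pieces of $\eta(\mr I_s)$, together with $\partial\eta(\mr I_s)$. The large cell $\mr H^\lge_t$ uses a different, non-nested partition into $\#\mr\cT^\lge_s$ pieces. The sets $\eta(\mr I_s)$ overlap only in null sets, so by construction (cf.\ the area formula~\eqref{eq:cell-area}) two things go wrong:
\begin{itemize}
\item $\mr H^\cell_t\subseteq\mr H^\lge_t$ fails as soon as $\#\mr\cT^\lge_s>\#\mr\cT^\cell_s$ for some $s\in\mr\cT^\cell_t$;
\item $\eta(\mr I_u)\subseteq\mr H^\cell_u$ fails whenever $\#\mr\cT^\cell_u\geq 2$, which always holds by Assertion~\ref{cond:prop:time-set-A-3}.
\end{itemize}
So neither your proof of the first requirement ($\phi(H)$ meets $\ell$) nor your diameter bound in the boundary step is valid as written.

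Both are easily repaired using only $\partial\eta(\mr I_s)\subseteq\mr A_{s,w}$ and $\partial\eta(\mr I_s)\subseteq\mr A^\lge_{s,w}$.

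For the boundary step, suppose $\eta(\mr I_u)$ meets $\ell$. Then some piece $\mr A_{u,w}$ with $w\in\mr\cT^\cell_u$ meets $\ell$. Hence $\mr H^\cell_w\supseteq\mr A_{u,w}$ meets $B(0;r)$, and so has diameter less than $\e' r$. Since $\mr A_{u,w}$ contains $\partial\eta(\mr I_u)$, it has the same convex hull as $\eta(\mr I_u)$. This gives $\diam\eta(\mr I_u)<\e' r$, and your connectedness argument then yields $\partial\eta(\mr I_u)\cap\ell\neq\emptyset$.

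The first requirement should be proved after this step, as the paper does. If $\mr H^\cell_t$ meets $\ell$, pick $s\in\mr\cT^\cell_t$ with $\eta(\mr I_s)\cap\ell\neq\emptyset$. Then $\partial\eta(\mr I_s)$ meets $\ell$. Since $s\in\mr\cT^\lge_t$, we get $\partial\eta(\mr I_s)\subseteq\mr A^\lge_{s,t}\subseteq\mr H^\lge_t$.

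Finally, the case $(\cH^\cell,\cH^\lge)$ is not a literal substitution. Those cells are indexed by $\cJ^-_\rmL$ and the associated map is $M$. The $\rmT$-paths from Assertion~\ref{cond:prop:time-set-A-0-7} of Proposition~\ref{prop:time-set-A-0} may pass through dual times in $\cJ^-_\rmR$, which are not cells. Such paths must be rerouted around the corresponding faces of $M$. The union over $\rmT$-neighbors in~\eqref{eq:def-cT-cell} is what makes this work, and the paper delegates this case to~\cite[Lemma 5.26]{Tutte}.
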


\begin{proof}
   We only prove the claim for $(\mr\cH^\cell,\mr\cH^\lge)$ as the proof for the other one  {follows from the same argument as in \cite[Lemma 5.26]{Tutte}}. Fix $\e\in(0,1)$.  By Proposition~\ref{prop:moment-bound-1} and Lemma~\ref{lem:GMS2.9}, where we set the conductance to be equal to 1 for all the edges in $\mr\cH^\cell$ and $\mr\cH^\lge$,   almost surely for large enough $r>1$, for all $H\in\mr\cH^\cell([-r,r]^2)$  and $H\in\mr\cH^\lge([-r,r]^2)$ we have $\diam(H)\leq \e r/3$. In particular, by~\eqref{eq:cell-area}, for all $s\in\mr\cJ^-$ such that $\eta(\mr I_s)\cap [-r,r]^2\neq\emptyset$ we have $\diam(\eta(\mr I_s))\leq \e r/3$.

   Let $\ell\subseteq B(0;r)$  with length at least $\e r$, and suppose $\mr H_{t}^\cell\cap\ell\neq\emptyset$ and $\mr H_{t'}^\cell\cap \ell\neq \emptyset$. Below we assume $r$ is sufficiently large. Then  $\ell$ intersects the outer boundaries of $\mr H_{t}^\cell$ and $\mr H_{t'}^\cell$ and therefore $\ell\cap \mr H_{t}^\lge\neq\emptyset$ and  $\ell\cap \mr H_{t'}^\lge\neq\emptyset$ by our construction. From our definition of $\mr H_t^\cell$ and $\mr H_{t'}^\cell$, there exists $s\in  \mr\cT_t^\cell=\mr\cT_t^{\rmC,1}$ and $s'\in \mr\cT_{t'}^\cell=\mr\cT_{t'}^{\rmC,1}$ such that $\eta(\mr I_s)\cap \ell\neq\emptyset$ and $\eta(\mr I_{s'})\cap\ell\neq\emptyset$. Then $\partial\eta(\mr I_s)\cap \ell\neq\emptyset$ and $\partial\eta(\mr I_{s'})\cap\ell\neq\emptyset$.  Using the line connectivity property for $\{\eta(\mr I_s):s\in\cJ^-\}$ as similar in the proof of~\cite[Proposition 3.1]{GMS21tutte}, one can find $s_1,...,s_n\in \mr\cJ^-$ such that $s_1=s$ and $s_n=s'$, and for $j=1,...,n-1$, $\{s_j,s_{j+1}\}\in\mr\cE_\rmG$ and $\eta(\mr I_{s_j})\cap\ell\neq\emptyset$.    By Assertion~\ref{cond:prop:time-set-A-7} of Proposition~\ref{prop:time-set-A}, for $j=1,...,n-1$ there further exist $s_{j,1},...,s_{j,n_j}\in \mr\cT_{s_j}^{\lge}=\mr\cT_{s_j}^{\rmC,3}$ with $s_{j,1}=s_j$ and $s_{j,n_j}=s_{j+1}$, such that for $b=1,...,n_j-1$, $\{s_{j,b},s_{j,b+1}\}\in \mr\cE_\rmT$. Further using~\eqref{eq:cell-area} and Assertion~\ref{cond:prop:time-set-A-2} of Proposition~\ref{prop:time-set-A}, we have $s_j\in \mr\cT_{s_{j,b}}^\lge$ and therefore $\partial\eta(\mr I_{s_j})\subseteq \mr H_{s_{j,b}}^\lge$. Furthermore, since $\eta(\mr I_{s_j})\cap\ell\neq\emptyset$, $\diam(\eta(\mr I_{s_j}))<\e r/3$ and $\ell$ has length at least $\e r$, we see that $\partial\eta(\mr I_{s_j})\cap\ell\neq\emptyset$ and thus $\mr H_{s_{j,b}}^\lge\cap\ell\neq\emptyset$.    Therefore the collection of the cells $\mr H_{s_{j,b}}^\cell$ form a path connecting $\mr H_s^\cell$ and $\mr H_{s'}^\cell$ in $\mr \cH^\cell$ and each   $\mr H_{s_{j,b}}^\lge$ intersects $\ell$. Furthermore, by another application of Assertion~\ref{cond:prop:time-set-A-2} and  Assertion~\ref{cond:prop:time-set-A-7} of Proposition~\ref{prop:time-set-A}, $t\in  \mr\cT_s^\cell=\mr\cT_s^{\rmC,1}$, and there exists $t_1,...,t_m$ where $t_m=s$ and $t_1=t$ such that $\{t_j,t_{j+1}\}\in\mr\cE_\rmT$ for $j=1,...,m-1$ and $t_j \in \mr\cT_s^{\rmC,3}$. Therefore $\mr H_{t_{j}}^\cell$ form a path connecting $\mr H_s^\cell$ and $\mr H_{t}^\cell$ in $\mr \cH^\cell$ and each   $\mr H_{t_{j}}^\lge$ intersects $\ell$. By the same argument, the same holds for $\mr H_{s'}^\cell$ and $\mr H_{t'}^\cell$.  This concludes the proof. 
\end{proof}

\begin{proposition}\label{prop:property-CH}
    The cell configurations $\mr\cH^\cell$ and $\mr\cH^\lge$ satisfy all the conditions in Theorem~\ref{thm:CellSystemPack}, while the cell configurations $\cH^\cell$ and $\cH^\lge$ satisfy all the conditions in Theorem~\ref{thm:invariance-uniformization}.
\end{proposition}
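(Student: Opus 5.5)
The plan is to check the hypotheses of Theorem~\ref{thm:CellSystemPack} for $(\mr\cH^\cell,\mr\cH^\lge)$ and of Theorem~\ref{thm:invariance-uniformization} for $(\cH^\cell,\cH^\lge)$ one at a time. Most of them are already available from the preceding lemmas, so the proof amounts to assembling them.

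\emph{Circle packing case.} First, the associated map of $\mr\cH^\cell$ is $\mr T=\Phi_{\rm simple}(T)$, and we need it to be a simple infinite plane triangulation. By Lemma~\ref{lem:basic-hat}, $\mr T=\Phi_\rmT(\mr\cZ)$. The walk $\mr\cZ$ still satisfies~\eqref{eq:walk-liminf-1}: Lemma~\ref{lem:hat-minus} shows that the hitting times of arbitrarily low levels of $\cL$ and $\cR$ after any time of $\mr\cJ$ are retained in $\mr\cJ$. It is also bi-infinite and nonempty, since Proposition~\ref{lem:gap-hat-J^-} combined with Borel--Cantelli gives points of $\mr\cJ^-$ in every dyadic shell. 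Hence $\mr T$ is an infinite plane triangulation. It is simple because every multiple edge corresponds to a $\bbZ_+^2$ excursion~\eqref{eq:double-edge-Mullin-A}, and all such excursions are erased by $\Phi_{\rm simple}$; any remaining loops are excluded in the same way.

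Second, the conditions of Definition~\ref{def:CellConfigConstraint} are Lemma~\ref{lem:def-cell-pack-AA}, and the moment bound~\eqref{eq:thm-inv-circle} for both configurations is Proposition~\ref{prop:moment-bound-1}. Third, in Definition~\ref{def:CellConfigConstraint-large}, conditions (i) and (ii) are Proposition~\ref{prop:def-cell-pack}(i),(ii), and condition (iii) is Lemma~\ref{lem:line-connectivity}. The same argument applies to $\mr\cH^\lge$, which is isomorphic to $\mr\cH^\cell$ and so has the same associated map.

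\emph{Riemann uniformization case.} The associated map of $\cH^\cell$ is $\Phi_\rmM(\cZ)$ by Proposition~\ref{prop:def-cell-inv}. It has whole-plane topology: $\cZ$ satisfies~\eqref{eq:walk-liminf-1} almost surely by Lemma~\ref{lem:L_t-inf-1} and scaling, so the sewing procedure yields a locally finite map whose faces are all bounded. The conditions of Definition~\ref{def:CellConfigConstraint}, together with (i) and (ii) of Definition~\ref{def:CellConfigConstraint-large}, are part of Proposition~\ref{prop:def-cell-inv}. Condition (iii) is Lemma~\ref{lem:line-connectivity}, and the moment bound~\eqref{eq:thm-inv-circle-1} is Proposition~\ref{prop:moment-bound-2}.

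\emph{Main obstacle.} Since the deferred moment bounds are simply cited, the only step requiring real care is the simplicity of $\mr T$. One must confirm that erasing all maximal $\bbZ_+^2$ excursions, including nested and overlapping ones (merged as in the proof of Lemma~\ref{lem:hat}), removes every multi-edge. I would argue this through the characterization given before~\eqref{eq:double-edge-Mullin}: edges $\fre_{t_1},\fre_{t_n}$ join the same pair of vertices exactly when $\cZ|_{[t_1,t_n]}$ is such an excursion, and after erasure no two retained edge times can satisfy~\eqref{eq:double-edge-Mullin-A}.
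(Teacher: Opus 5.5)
Your proposal is correct and is essentially the paper's proof. The paper proves the proposition purely by citation: Proposition~\ref{prop:def-cell-pack}, Lemma~\ref{lem:def-cell-pack-AA}, Proposition~\ref{prop:moment-bound-1} and Lemma~\ref{lem:line-connectivity} for $(\mr\cH^\cell,\mr\cH^\lge)$, and Proposition~\ref{prop:def-cell-inv}, Proposition~\ref{prop:moment-bound-2} and Lemma~\ref{lem:line-connectivity} for $(\cH^\cell,\cH^\lge)$.

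Your extra check that $\mr T$ is a simple infinite plane triangulation is something the paper takes directly from the construction of $\Phi_{\rm simple}$ and Lemma~\ref{lem:basic-hat}. If you keep it, note that Lemma~\ref{lem:hat-minus} only gives the $t\to+\infty$ half of \eqref{eq:walk-liminf-1} for $\mr\cZ$. The $t\to-\infty$ half needs that $\mr\cJ$ is unbounded in both directions, so that each maximal erased interval $(a,b]$ is bounded, has $a\in\mr\cJ$, and satisfies $\cL_a=\inf_{[a,b]}\cL$.
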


\begin{proof}
    The first claim follows from Proposition~\ref{prop:def-cell-pack}, Lemma~\ref{lem:def-cell-pack-AA}, Proposition~\ref{prop:moment-bound-1} and Lemma~\ref{lem:line-connectivity}, while the second claim follows from  Proposition~\ref{prop:def-cell-inv}, Proposition~\ref{prop:moment-bound-2} and Lemma~\ref{lem:line-connectivity}.
\end{proof}

\subsubsection{Cell configuration moment bounds}\label{subsubsec:moment-bound-cell}

The goal here is to prove Proposition~\ref{prop:moment-bound-1} and~\ref{prop:moment-bound-2}. Throughout this subsection we fix $p>0$.

\begin{lemma}\label{lem:diam^2/area-E}
    For all sufficiently large $c_1,c_2>0$,  there exists $c'$ depending on $c_1,c_2,p$ such that 
    \begin{equation}
        \bbP_\rmE\bigg[\exists \text{ interval } J\subseteq [-M^{c_1},M^{c_1}], \  |J|\geq M^{-c_2},  \frac{\diam(\eta(J))^2}{\area(\eta(J))}>M  \bigg]\leq  
        c' M^{-p}.
    \end{equation}
\end{lemma}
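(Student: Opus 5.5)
We reduce the statement to a known estimate for space-filling SLE segments in a fixed window, combined with the scaling of quantum cones. The basic input is the analogue for the $0$-quantum cone of \cite[Lemma 2.7/Lemma A.x]{GMS21tutte}, as used in \cite{Tutte}. For the $\gamma$-quantum cone or $0$-quantum cone with $\eta$ parameterized by LQG area, and any $q>0$, it gives a superpolynomial bound of the form
\[
\bbP_\rmE\Big[\exists J\subseteq[-1,1],\ |J|\ge \e,\ \tfrac{\diam(\eta(J))^2}{\area(\eta(J))}> \e^{-q}\Big]\lesssim \e^{p'} .
\]
This holds because each $\eta(J)$ contains a Euclidean ball of radius comparable to a power of $\diam(\eta(J))$ (up to $\e^{o(1)}$ errors), by the regularity of space-filling SLE, see \cite{GHM-KPZ}-type estimates. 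Since we may assume any earlier result but this exact statement is not stated earlier, we quote the estimate from \cite{Tutte}, where the analogous lemma is used for $\cH^\cell$. If needed, we rederive it as follows. Cover $\eta(J)$ by dyadic squares. With high probability, every segment $\eta(J)$ with $\diam(\eta(J))\ge \delta$ contains a ball of radius $\delta^{1+\zeta}$ (the standard ``SLE segments are not too thin'' bound). This gives $\diam^2/\area\le \delta^{-2\zeta}$.

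The steps are as follows. First, handle the range of times $[-M^{c_1},M^{c_1}]$ using the scaling in Lemma~\ref{lem:Rb}: with $b=M^{c_1}$, $(h^b,\eta^b)$ has the law of $(h,\eta)$, and $\eta^b(\cdot)=\fR_b^{-1}\eta(b\,\cdot)$. The ratio $\diam^2/\area$ is invariant under Euclidean scaling, so the event for $J\subseteq[-M^{c_1},M^{c_1}]$ with $|J|\ge M^{-c_2}$ becomes the event for $J'\subseteq[-1,1]$ with $|J'|\ge M^{-c_1-c_2}$. The factor $\fR_b$ drops out completely, so there is no need to control it. Second, apply the base estimate with $\e=M^{-c_1-c_2}$ and $q$ small. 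Third, confine $\eta([-1,1])$ to a fixed ball using \eqref{eq:SLE-contained-in-ball-1} with $k=0$ scaling, since the base estimate is phrased for segments hitting $B(0;3/4)$.

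For the base estimate itself, combine a lower bound on the Euclidean size of segments with a ball-filling property. Lower bound: \eqref{eq:SLE-contain-ball}-type bounds imply that any $\eta(J)$ with $|J|\ge\e$ has diameter at least $\e^{C}$ off an event of probability $\e^{p'}$. This uses the fact that the LQG mass of Euclidean balls of radius $r$ is at most $r^{\gamma(Q-\gamma)... }$, the upper moment bounds for $\mu_h$. Ball-filling: each such segment contains a ball of radius $\diam(\eta(J))^{1+\zeta}$, via the imaginary-geometry estimate of \cite[Prop.~3.4]{GHM-KPZ}. Together these give $\diam^2/\area\lesssim \e^{-2C\zeta}\le M$ once $\zeta$ is small relative to $c_1,c_2$. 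The requirement that $c_1,c_2$ be large only enters by making $p'$ large enough to exceed $p$, since the failure probability is polynomial in $\e=M^{-c_1-c_2}$ with a controllable exponent.

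The main obstacle is the ball-filling step at all scales simultaneously with a polynomial tail of arbitrarily high degree $p$. The plan is to use a union bound over dyadic squares of side $2^{-k}\ge \e^{C}$ together with the superpolynomial estimate for each square, namely that $\eta$ fills a ball of relative size $2^{-\zeta k}$ before exiting. Under $\bbP_\rmE$ the field near $0$ is a plain GFF, and $\eta$ is independent of $h$, so this estimate is purely about SLE and is unaffected by the $0$-cone choice.
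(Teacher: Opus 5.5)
Your rescaling step is correct, and it is a cleaner route than the paper's. With $b=M^{c_1}$, Lemma~\ref{lem:Rb} and the dilation invariance of $\diam^2/\area$ turn the event into one about intervals $J'\subseteq[-1,1]$ with $|J'|\ge M^{-(c_1+c_2)}$. The paper instead applies \cite[Lemma 5.40]{Tutte} with the given $c_1,c_2$; that lemma covers intervals with $M^{-c_2}\le|J|\le 1$ inside $[-M^{c_1},M^{c_1}]$. It then raises the upper limit on $|J|$ to $M^{(2^j-1)c_2}\ge M^{c_1}$ by repeated rescaling by powers of $M^{c_2}$ and union bounds. After your rescaling, one application of that same lemma with $c_2$ replaced by $c_1+c_2$ would suffice. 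This is legitimate because the lemma holds for all sufficiently large $c_2$ and $[-1,1]\subseteq[-M^{c_1},M^{c_1}]$. One further rescaling by $2$ covers $|J'|\in(1,2]$, and the proof is finished without any iteration.

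The gap is that you never actually use this input. You attribute your ``base estimate'' to an unspecified \cite{GMS21tutte}-type lemma. Your third step shows that what you have in mind only controls segments hitting $B(0;3/4)$.

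\textbf{The confinement step fails.} You then need $\eta([-1,1])\subseteq B(0;3/4)$ off an event of probability $O(M^{-p})$, and this is false. At $k=0$, \eqref{eq:SLE-contained-in-ball-1} only says $\bbP_\rmE[\eta([-1,1])\nsubseteq B(0;1)]\lesssim 1$, which is vacuous. Indeed, the law of $\eta$ does not depend on $M$, so $\eta([-1,1])$ exits any fixed ball with a fixed positive probability.

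\textbf{The rederivation inherits the problem.} To get failure probability $O(M^{-p})$ you would have to work in a ball of radius $M^{\beta}$. The ball-filling input is pure SLE and scale invariant, so that part survives. But the diameter lower bound requires that no Euclidean ball of radius $\e^C$ carries $\mu_h$-mass at least $\e$. Over that large ball, this needs estimates for the $0$-quantum cone field far outside $\bbD$. There your remark that $h$ is a plain GFF no longer applies, and you do not supply these estimates.

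Dropping the third step and citing \cite[Lemma 5.40]{Tutte} directly, as above, closes the gap.
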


\begin{proof}
    {Following \cite[Lemma 5.40]{Tutte}}, for all sufficiently large $c_1,c_2>0$, there is a constant $c_0'$ depending on $c_1,c_2,p$ such that  
    \begin{equation}\label{eq:pf-diam2/area-1}
          \bbP_\rmE\bigg[\exists \text{ interval } J\subseteq [-M^{c_1}, M^{c_1}], \ M^{-c_2}\leq |J|\leq 1, \frac{\diam(\eta(J))^2}{\area(\eta(J))}>M \bigg]\leq c_0'M^{-p}.
    \end{equation}
    Since  for each set $A$, $\frac{\diam(A)^2}{\area(A)}$ is invariant under dilation, using the scaling of $\eta$ in Lemma~\ref{lem:Rb}, we have 
     \begin{equation}\label{eq:pf-diam2/area-2}
          \bbP_\rmE\bigg[\exists \text{ interval } J\subseteq [-M^{c_1+c_2}, M^{c_1+c_2}], \ 1\leq |J|\leq M^{c_2}, \frac{\diam(\eta(J))^2}{\area(\eta(J))}>M \bigg]\leq c_0'M^{-p}.
    \end{equation}
     Therefore by a union bound, 
    \begin{equation}\label{eq:pf-diam2/area-3}
          \bbP_\rmE\bigg[\exists \text{ interval } J\subseteq [-M^{c_1}, M^{c_1}], \ M^{-c_2} \leq |J|\leq M^{c_2}, \frac{\diam(\eta(J))^2}{\area(\eta(J))}>M \bigg]\leq 2c_0'M^{-p}.
    \end{equation}
 Now repeating this trick of scaling and union bound, for any $j\in \bbN$,
    \begin{equation}\label{eq:pf-diam2/area-4}
          \bbP_\rmE\bigg[\exists \text{ interval } J\subseteq [-M^{c_1}, M^{c_1}], \ M^{-c_2} \leq |J|\leq M^{(2^j-1)c_2}, \frac{\diam(\eta(J))^2}{\area(\eta(J))}>M \bigg]\leq 2^jc_0'M^{-p},
    \end{equation}
    and the conclusion follows by fixing some $j$ such that $(2^j-1)c_2\geq c_1$ and setting $c' = 2^j c_0'$.
\end{proof}

%{The following is from identical argument as Lemma 5.36 in the draft.}
\begin{lemma}\label{lem:gap-J-min}
   Let $\wt c, c_1>0$.  There exists $c',c_2>0$, such that for all $M>1$,
    \begin{equation}\label{lem:gap-J-min-A}
        \bbP_\rmE\big[\exists\ t\in \cJ^-: I_t\subseteq I(\lceil c_1\log M\rceil) \text{ and } |I_t|\notin  (M^{-c_2},c_2\log M)  \big]\leq c'M^{-\wt c}.
    \end{equation}
   % The same holds for $\cJ^-_\rmL$ or $\cJ^-_\rmR$ instead of $\cJ^-$.  
    {For $t\in\cJ^-$, let $\wt I_t = [\wt t^-,t]$ where $\wt t^- = \sup\{s<t:s\in\cJ^-\}$. Then there exists $\wt c',\wt c_2>0$, such that 
      \begin{equation}\label{lem:gap-J-min-B}
        \bbP_\rmE\big[\exists\ t\in \cJ^-: \wt I_t\subseteq I(\lceil c_1\log M\rceil) \text{ and } |\wt I_t|\notin  (M^{-\wt c_2},\wt c_2\log M)  \big]\leq \wt c'M^{-\wt c}.
    \end{equation}}
\end{lemma}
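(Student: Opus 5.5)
The strategy is to prove both bounds first under $\bbP_\rmQ$, where $\cJ$ is a unit-rate Poisson process with i.i.d.\ uniform steps. We then transfer to $\bbP_\rmE$ using part~(\ref{cond:thm:KMT-0-cone-(iv)}) of Theorem~\ref{thm:KMT-0-cone}, exactly as in Lemma~\ref{lem:331} and Lemma~\ref{lem:number-jump-E}. Set $k=\lceil c_1\log M\rceil$. The event in~\eqref{lem:gap-J-min-A} is measurable with respect to $(\cZ|_{I(k)},I(k))$. It can be written in the form~\eqref{eq:def-A}: the indicator $G$ of the event depends only on increments of $\cZ$, so shifting by $s\in I(0)$ leaves it unchanged. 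Hence~\eqref{eq:KMT-0-cone} gives $\bbP_\rmE[A]\le \bbP_\rmQ[A]^{1/4}2^{C_0k}+2^{-k/C_0}$.

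The second term is $M^{-c_1\log 2/C_0}$. It is $\le M^{-\wt c}$ provided $c_1$ is large. If the given $c_1$ is small, first enlarge it: the event is monotone in $c_1$, since $I(k)\subseteq I(k')$ for $k\le k'$. Thus we only need $\bbP_\rmQ[A]\le M^{-4(\wt c+C_0c_1)}$, which requires superpolynomial or tunable-polynomial decay under $\bbP_\rmQ$.

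Under $\bbP_\rmQ$, first restrict to the event $\#(\cJ\cap[-2^{k+1},2^{k+1}])\le 2^{1.1k}=M^{O(c_1)}$, whose complement has probability $2^{-k^2}$. There are two failure modes for $|I_t|$.

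\emph{Long intervals.} If $|I_t|\ge c_2\log M$, then there is a gap of length about $c_2\log M$ without $\cJ^-_\rmL$ points (or without $\cJ^-_\rmR$ points) inside $[-2^{k},2^k]$. Each of $\cJ^-_\rmL,\cJ^-_\rmR$ is a Poisson process of rate $1/4$. Covering $[-2^k,2^k]$ by $O(2^k)$ windows of length $c_2\log M/2$ and taking a union bound gives probability $\lesssim 2^k e^{-c_2\log M/8}=M^{c_1-c_2/8}$, which is as small as needed once $c_2$ is large.

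\emph{Short intervals.} If $|I_t|\le M^{-c_2}$, then two points of $\cJ$ lie within distance $M^{-c_2}$ inside $[-2^k,2^k]$. Arguing as in the proof of Lemma~\ref{lem:P_QT_kk2} with $a=M^{-c_2}$, this has probability $\lesssim 2^k M^{-c_2}= M^{c_1-c_2}$.

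Choosing $c_2$ large relative to $\wt c,C_0,c_1$ makes $\bbP_\rmQ[A]\le M^{-4(\wt c+C_0c_1)}$, which proves~\eqref{lem:gap-J-min-A}.

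For~\eqref{lem:gap-J-min-B} the argument is identical, with $\cJ^-$ (a Poisson process of rate $1/2$) in place of $\cJ^-_\rmL$. Alternatively, the upper bound follows directly from~\eqref{lem:gap-J-min-A}, since $\wt I_t\subseteq I_t$. The lower bound still needs its own treatment, but the Poisson two-point estimate above gives it.

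The main obstacle is bookkeeping of constants in the transfer step. The polynomial loss $2^{C_0k}=M^{O(c_1C_0)}$ must be beaten by the $\bbP_\rmQ$ bound. This is why $c_2$ has to be chosen after $c_1$, and why the short-interval bound is only polynomial (tunable through $c_2$) rather than superpolynomial. A second point needing care is the conditioning $I_t\subseteq I(k)$. It must be expressed through $\cZ|_{I(k)}$ and $I(k)$ alone, so that the measurability hypothesis of Theorem~\ref{thm:KMT-0-cone}(\ref{cond:thm:KMT-0-cone-(iv)}) holds. This is fine because $I_t$ is determined by consecutive down-steps, all of which lie in $I(k)$.
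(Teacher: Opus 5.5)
Your proposal is correct and follows the paper's route: you prove Poisson estimates under $\bbP_\rmQ$ (rate $1/4$ for $\cJ^-_\rmL,\cJ^-_\rmR$, rate $1/2$ for $\cJ^-$) and transfer them to $\bbP_\rmE$ via part~(\ref{cond:thm:KMT-0-cone-(iv)}) of Theorem~\ref{thm:KMT-0-cone}, which is exactly how the paper treats \eqref{lem:gap-J-min-B} (it cites \cite[Lemma 5.36]{Tutte} for \eqref{lem:gap-J-min-A}). The only blemish is your aside that the upper bound in \eqref{lem:gap-J-min-B} follows directly from \eqref{lem:gap-J-min-A} because $\wt I_t\subseteq I_t$: this fails as stated, since $\wt I_t\subseteq I(\lceil c_1\log M\rceil)$ does not force $I_t\subseteq I(\lceil c_1\log M\rceil)$, but your main argument does not rely on it.
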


\begin{proof}
    The first claim~\eqref{lem:gap-J-min-A} follows from  \cite[Lemma 5.36]{Tutte}, while the second claim~\eqref{lem:gap-J-min-B} follows analogously, where one first use the setting where $\cJ^-$ is a rate 0.5 Poisson point process under $\bbP_\rmQ$ and further use part~(\ref{cond:thm:KMT-0-cone-(iv)}) of Theorem~\ref{thm:KMT-0-cone} to transfer to the setting of $\bbP_\rmE$.
\end{proof}

\begin{proof}[Proof of Proposition~\ref{prop:moment-bound-1}]
   We only prove~\eqref{eq:thm-inv-circle} for $\mr\cH^\cell$; the proof for $\mr \cH^\lge$ is identical. Throughout the proof, $c>0$ is a constant that may vary line by line. We write $\wh\cell(0)$ for the a.s.\ unique time $t_0$ in $\mr\cJ^-$   such that $0\in \mr H_{t_0}^{\rm cell}$. Then following~\eqref{eq:cell-area},
    \eqb\label{eq:pf-prop:cell-moment-bound-000}\begin{split}
    \frac{\diam(\mr H_0^{\rm cell})^2}{\area(\mr H_0^{\rm cell})}&\leq \frac{\big(\sum_{s\in \mr\cT_{\wh\cell(0)}^{\cell}} \diam(\eta(\mr I_s))\big)^2}{\sum_{s\in \mr\cT_{\wh\cell(0)}^{\cell}}\area(\eta(\mr I_s))/\# \mr\cT_{s}^{\cell}}\leq \#\mr\cT_{\wh\cell(0)}^{\cell}\cdot \max_{s\in \mr\cT_{\wh\cell(0)}^{\cell}}\# \mr\cT_{s}^{\cell}\cdot\frac{\sum_{s\in \mr\cT_{\wh\cell(0)}^{\cell}} \diam(\eta(\mr I_s))^2}{\sum_{s\in \mr\cT_{\wh\cell(0)}^{\cell}}\area(\eta(\mr I_s))}\\
    &\leq \#\mr\cT_{\wh\cell(0)}^{\cell}\cdot \max_{s\in \mr\cT_{\wh\cell(0)}^{\cell}}\# \mr\cT_{s}^{\cell}\cdot \max_{s\in \mr\cT_{\wh\cell(0)}^{\cell}}\frac{\diam(\eta(\mr I_s))^2}{\area(\eta(\mr I_s))}.
    \end{split}\eqe
    We further write $s_0$ for the a.s.\ unique time in $\mr\cJ^-$ such that $0\in \mr I_{s_0}$. From Lemma~\ref{lem:gap-hat-J^-}, we have $\bbP_\rmE[\mr I_{s_0}\nsubseteq I(k/4) ]\lesssim 2^{-ck}$. By~\eqref{eq:cell-area}, we have $s_0\in\mr\cT_{\wh\cell(0)}^\cell$, and by Assertion~\ref{cond:prop:time-set-A-2} of Proposition~\ref{prop:time-set-A}, $\wh\cell(0)\in \mr\cT_{s_0}^{\cell}$. We write $\wt\cT:=\cup_{t\in \mr\cT_{s_0}^{\cell}} \mr\cT_{t}^{\cell}$, then
    \begin{equation}\label{eq:pf-prop:cell-moment-bound-0}
         \frac{\diam(\mr H_0^{\rm cell})^2}{\area(\mr H_0^{\rm cell})}\leq \big(\max_{s\in\wt\cT}\# \mr\cT_{s}^{\cell}\big)^2\cdot \max_{s\in \wt\cT}\bigg(\frac{\diam(\eta(\mr I_s))^2}{\area(\eta(\mr I_s))}\bigg).
    \end{equation}
    Applying Assertion~\ref{cond:prop:time-set-A-5} of Proposition~\ref{prop:time-set-A} twice and a union bound, we have  
    $$\bbP_\rmE[\wt\cT\nsubseteq I(k/2)]\lesssim 2^{-ck}.$$
    Further combined with Lemma~\ref{lem:gap-hat-J^-}, we have
     \eqb\label{eq:pf-prop:cell-moment-bound-1}\bbP_\rmE[\bigcup_{s\in\wt \cT}\mr I_s\nsubseteq I(k)]\lesssim 2^{-ck}.\eqe
    By~\eqref{eq:pf-prop:cell-moment-bound-1}, we may choose $c_1>0$ sufficiently large, such that for some $c'>0$,
    $$\bbP_\rmE\big[\bigcup_{s\in\wt \cT}\mr I_s\nsubseteq I(\lceil c_1\log M\rceil) \big]\leq c'M^{-p}.$$
    Combined with ~\eqref{lem:gap-J-min-B} and $\mr\cJ^-\subseteq \cJ^-$, by a union bound, for some $c_2>0$,
    $$\bbP_\rmE[\bigcup_{s\in\wt \cT}\mr I_s\subseteq I(\lceil c_1\log M \rceil),\ \min_{s\in\wt\cT}|\mr I_s|\geq M^{-c_2}]\geq1- 2c'M^{-p}.$$
    Further by Lemma~\ref{lem:diam^2/area-E} and a union bound, we conclude that  for some constant $c'$,
    $$\bbP_\rmE\big[\max_{s\in \wt\cT}\bigg(\frac{\diam(\eta(\mr I_s))^2}{\area(\eta(\mr I_s))}\bigg)>M\big]\leq c'M^{-p}.$$
    This verifies that for each $p'\in(0,p)$, $$\bbE_\rmE\bigg[\max_{s\in \wt\cT}\bigg(\frac{\diam(\eta(\mr I_s))^2}{\area(\eta(\mr I_s))}\bigg)^{p'}\bigg]<\infty.$$
    By~\eqref{eq:pf-prop:cell-moment-bound-1} along with Assertion~\ref{cond:prop:time-set-A-6} of Proposition~\ref{prop:time-set-A}, for large enough $k$, we have
    $$\bbP_\rmE[\max_{s\in\wt\cT}\# \mr\cT_{s}^{\cell}>k^{28}]\lesssim 2^{-ck},$$
    which implies $\bbE_\rmE\big[\big(\max_{s\in\wt\cT}\# \mr\cT_{s}^{\cell}\big)^p\big]<\infty$. Since $p>0$ is arbitrary, by~\eqref{eq:pf-prop:cell-moment-bound-000}, we infer that $\bbE_\rmE[\big( \frac{\diam(\mr H_0^{\rm cell})^2}{\area(\mr H_0^{\rm cell})}\big)^p]<\infty$ for every $p>0$. Further using $\deg(\mr H_0^{\rm cell})\leq \#\mr\cT_{\wh\cell(0)}^{\cell}\leq \max_{s\in\wt\cT}\# \mr\cT_{s}^{\cell}$ we have $\bbE_\rmE[\deg(\mr H_0^{\rm cell})^p]<\infty$. This completes the proof.
\end{proof}
For $t\in\cJ^-$, we define 
$$\wt \cT_t^{\rmC,d} = \cup_{u\in \cT^{\rmC,d}_t}\cT_u^{\rmC,d}.$$
By iterating once, it is straightforward to check that Assertion~\ref{cond:prop:time-set-A-0-5} of Proposition~\ref{prop:time-set-A-0}  holds for $\wt\cT_t^{\rmC,d}$ in place of $\cT_t^{\rmC,d}$, and by \cite[Lemma 5.29]{Tutte}, $\cT_s^\cell\subseteq \wt \cT_s^{\rmC,1}$ and $\cT_s^{\cell,\inv}\subseteq \wt \cT_s^{\rmC,1}$ for any $s\in\cJ^-$.

We will also use the following basic result.
\begin{lemma}\label{lem:Euler}
    Let $T$ be a disk triangulation without loops, and $\frv$ be a vertex on $T$. Let $f(\frv)$ be the number of  faces of $T$ incident to $\frv$ (excluding the outer face), and $d(\frv)$ be the number of vertices incident to $\frv$. Then $f(\frv)\leq 2d(\frv)-2$.
\end{lemma}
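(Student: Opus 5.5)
The plan is to use Euler's formula on the star of $\frv$. The faces incident to $\frv$ are triangles, each with two further vertices adjacent to $\frv$. Let $N(\frv)$ be the set of $d(\frv)$ neighbors of $\frv$, and form the auxiliary graph $G$ on $N(\frv)$: for each triangular face $\{\frv,a,b\}$ incident to $\frv$, put an edge $\{a,b\}$ (the edge of the face opposite $\frv$). The goal is then to bound the number of edges of $G$, counted with multiplicity, by $2d(\frv)-2$. Multiplicity matters because $T$ may have multiple edges, so two faces at $\frv$ can share the same opposite pair $\{a,b\}$.

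The key step is planarity. Take the union of $\frv$, its neighbors, the edges from $\frv$ to $N(\frv)$, and the opposite edges of the incident faces. This is a plane multigraph $H$ (no loops, since $T$ has none) with $d(\frv)+1$ vertices. Its edge count is $e_0+f(\frv)$, where $e_0\ge d(\frv)$ counts the edges from $\frv$, with multiplicity. Each incident triangular face of $T$ is a face of $H$. For a simple-edge situation, Euler's formula $V-E+F=2$ with every face of length $\ge 3$ gives $E\le 3V-6$. This yields $e_0+f(\frv)\le 3(d(\frv)+1)-6$, hence $f(\frv)\le 3d(\frv)-3-e_0\le 2d(\frv)-3$ once $d(\frv)\ge 3$. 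The degenerate cases with small $d(\frv)$ are handled by direct inspection.

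The main obstacle is multiple edges, for which the bound $E\le 3V-6$ fails. To handle them, I would not bound $H$ directly but count faces. Going cyclically around $\frv$, the incident faces form a fan (a disk or interior vertex) or, for a boundary vertex, a path of $f(\frv)$ triangles. Consecutive triangles share an edge from $\frv$, so with multiplicity $e_0=f(\frv)$ for an interior vertex and $e_0=f(\frv)+1$ for a boundary vertex.

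So it suffices to show that each neighbor is used by at most two of these edges from $\frv$, except possibly for some loss. Whenever two edges from $\frv$ go to the same neighbor $a$, they form a 2-cycle that encloses a region of $T$. By planarity these 2-cycles are nested or disjoint, which gives a laminar family.

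Then I would induct on this family. Contracting the region inside an innermost 2-cycle removes at least one face at $\frv$ and at least one neighbor not otherwise adjacent outside. The reason is that the region contains a vertex, since there are no loops and the faces are triangles. The loopless simple case then gives the inequality. If this induction gets messy, the fallback is a cruder argument that still suffices: each neighbor appears in at most two consecutive faces of each "visit" around $\frv$. The number of visits is bounded by the number of distinct neighbors via the laminar 2-cycle structure, and charging faces to (neighbor, visit) pairs yields $f(\frv)\le 2d(\frv)-2$.
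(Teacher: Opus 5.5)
Your simple-edge computation is fine, but the lemma is stated for, and applied in the paper to, triangulations with multiple edges, and your treatment of that case has genuine gaps.

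\begin{itemize}
\item \emph{The reduction is false.} You propose that ``each neighbour is used by at most two of the edges from $\frv$''. But $\frv$ may be joined to a single vertex $a$ by $k$ parallel edges, with one further neighbour $b_i$ of $\frv$ inside each digon between consecutive copies. Then $a$ is used $k$ times for arbitrary $k$, while $d(\frv)=k+1$.
\item \emph{The reduction is also insufficient.} Even if it held, it would only give $e_0\le 2d(\frv)$, i.e.\ $f(\frv)\le 2d(\frv)$, which misses the $-2$.
\item \emph{The induction tracks the wrong quantity.} ``Removes at least one face and at least one neighbour'' is not what matters. You need that each contraction removes at most twice as many faces at $\frv$ as neighbours of $\frv$. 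This does hold for an innermost digon: the edges from $\frv$ strictly inside it go to $m\ge1$ distinct vertices strictly inside, so exactly $m+1$ faces and $m$ neighbours disappear, and $m+1\le 2m$. Once no parallel edges at $\frv$ remain, you get $f(\frv)\le e_0=d(\frv)\le 2d(\frv)-2$ without any appeal to $E\le 3V-6$. None of this accounting appears in your write-up.
\item \emph{The fallback is wrong as stated.} By the same example, the number of visits is not bounded by $d(\frv)$. What is true is a Davenport--Schinzel-type bound, $e_0\le 2d(\frv)-2$ for cyclic sequences with no immediate repetition and no pattern $a\cdots b\cdots a\cdots b$. That bound is essentially the whole lemma, and you leave it unproved.
\end{itemize}

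The obstacle you identified disappears if you apply Euler's formula to your graph $H$ (essentially the paper's map $T_\frv$ formed by the faces incident to $\frv$) without bounding every face degree by $3$. Only the $f(\frv)$ incident triangles need degree $3$. Every other face of $H$ (the outer face, digons between parallel edges, and so on) has degree at least $2$, because $T$ has no loops. Hence $2E\ge 3f(\frv)+2(F-f(\frv))$. Combined with $E=V+F-2$ and $V\le d(\frv)+1$, this gives $f(\frv)\le 2V-4\le 2d(\frv)-2$ at once, for simple and multiple edges alike. Since $E$ cancels, this also avoids your edge count $E(H)=e_0+f(\frv)$. That count is only an upper bound, with strict inequality possible when two faces at $\frv$ share their opposite edge.
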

\begin{proof}
    Consider the planar map $T_\frv$ formed by the  faces of $T$ that are incident to $\frv$ (excluding the outer face). By Euler's formula, $2\#\cE T_\frv = 2\# \cF T_\frv+2\# \cV T_\frv-4$. There are exactly $f(\frv)$ faces of $T$ incident to $\frv$ (excluding the outer face), and since $T$ has no loops, the remaining faces of $T_\frv$ has degree at least 2. Therefore $2\#\cE T_\frv\geq 3f(\frv) + 2(\# \cF T_\frv-f(\frv))$. This gives $f(\frv)\leq 2\# \cV T_\frv-4$, and the claim follows since $\# \cV T_\frv = d(\frv)+1$.
\end{proof}

\begin{proof}[Proof of Proposition~\ref{prop:moment-bound-2}]
 We only prove~\eqref{eq:thm-inv-circle-1} for $\cH^\cell$; the proof for $ \cH^\lge$ is identical. By \cite[Lemma 5.29]{Tutte}, we have $\cT_s^\cell\subseteq \wt\cT_s^{\rmC,1}$ and $\cT_s^{\cell,\inv}\subseteq \wt\cT_s^{\rmC,1}$ for each $s\in \cJ^-$. Then by~\eqref{eq:cell-area-0}, for each $t\in\cJ^-_\rmL$,
  \eqb\label{eq:pf-prop:cell-moment-bound-000-A}\begin{split}
    \frac{\diam( H_t^{\rm cell})^2}{\area( H_t^{\rm cell})}&\leq \frac{2\big(\sum_{s\in \cT_{t}^{\cell}} \diam(\eta( I_s))\big)^2}{\sum_{s\in \cT_{t}^{\cell}}\area(\eta( I_s))/\# \cT_{s}^{\cell,\inv}}\leq 4\#\cT_{t}^{\cell}\cdot \max_{s\in \cT_{t}^{\cell}}\#  \cT_{s}^{\cell,\inv}\cdot\frac{\sum_{s\in  \cT_{t}^{\cell}} \diam(\eta( I_s))^2}{\sum_{s\in \cT_{t}^{\cell}}\area(\eta( I_s))}\\
    &\leq 4\#\wt\cT_{t}^{\rmC,1}\cdot \max_{s\in \wt\cT_{t}^{\rmC,1}}\# \wt\cT_{s}^{\rmC,1}\cdot \max_{s\in \wt\cT_{t}^{\rmC,1}}\frac{\diam(\eta( I_s))^2}{\area(\eta( I_s))}\leq 4\big( \max_{s\in \wt\cT_{t}^{\rmC,1}}\# \wt\cT_{s}^{\rmC,1}\big)^2 \cdot \max_{s\in \wt\cT_{t}^{\rmC,1}}\frac{\diam(\eta( I_s))^2}{\area(\eta( I_s))}
    \end{split}\eqe
    since $t\in\wt \cT^{\rmC,1}_t$.
 %{As shown in Lemma 5.14 of Tutte, we have $\deg(H_t^\cell)\leq \#\wt\cT_t^{\rmC,1}$.}
 
 Recall the definition of $d_0^f$ and $\cH_0^{\rm face}$ before Theorem~\ref{thm:invariance-uniformization}.   We write $\cell(0)$ for the a.s.\ unique time $t_0$ in $\cJ^-_\rmL$ such that $0\in H_{t_0}^\cell$. Now suppose $t\in \cJ^-_\rmL$ such that $H_t^\cell\in \cH_0^{\rm face}$. Then there exists $t'\in \cJ^-_\rmR$ such that $\{t',\cell(0)\},\{t',t\}\in \cE_\rmT$. By Assertion~\ref{cond:prop:time-set-A-0-3} of Proposition~\ref{prop:time-set-A-0}, we have $t\in \wt \cT_{\cell(0)}^{\rmC,1}$.  We write $s_{0,\rmL}\in\cJ_\rmL^-$ and $s_{0,\rmR}\in\cJ^-_\rmR$ for the a.s.\ unique time  such that $0\in I_{s_{0,\rmL}}\cap I_{s_{0,\rmR}}$. Then we have $\{s_{0,\rmL},s_{0,\rmR}\}\cap \cT_{ \cell(0)}^\cell\neq\emptyset$. Following Assertion~\ref{cond:prop:time-set-A-0-3} of Proposition~\ref{prop:time-set-A-0}, we have $s\in \wt\cT_t^{\rmC,1}$ if and only if $t\in \wt\cT_s^{\rmC,1}$, which further implies that $\cell(0)\in \wt\cT_{s_{0,\rmL}}^{\rmC,1}\cup \wt\cT_{s_{0,\rmR}}^{\rmC,1}$. Now by Proposition~\ref{prop:Tutte5.8}, Proposition~\ref{lem:gap-hat-J^-} holds for $\cJ^-_\rmL$ or $\cJ^-_\rmR$ in place of $\mr\cJ^-$, and therefore we have $\bbP_\rmE[\{s_{0,\rmL},s_{0,\rmR}\}\nsubseteq I(k)]\lesssim 2^{-ck}$. Let
 $$\wt\cT_1 =\bigcup_{t_0\in \wt\cT_{s_{0,\rmL}}^{\rmC,1}\cup \wt\cT_{s_{0,\rmR}}^{\rmC,1}}  \wt\cT_{t_0}^{\rmC,1} \ \ \text{, } \ \ \  \wt\cT_2 =\bigcup_{t\in \wt\cT_1} \cT_t^{\rmC,1} \ \ \text{ ,  } \ \ \ \wt\cT_3 = \bigcup_{u\in \wt\cT_2} \cT_u^{\rmC,1}  \ \ \text{ and  } \ \ \  \wt\cT_4 = \bigcup_{s\in \wt\cT_3} \cT_s^{\rmC,1} .  $$
 It follows that for each pair $(s,t)$ such that $s\in \wt \cT_t^{\rmC,1}$ and $t\in \wt\cT_{\cell(0)}^{\rmC,1}$, $s\in\wt \cT_3$.
 By iteratively applying Assertion~\ref{cond:prop:time-set-A-0-5} of Proposition~\ref{prop:time-set-A-0}, we have $\bbP_\rmE[\wt\cT_3\nsubseteq I(6k)]\lesssim 2^{-ck}$ and $\bbP_\rmE[\wt\cT_4\nsubseteq I(8k)]\lesssim 2^{-ck}$, and furthermore
 $$\bbP_\rmE\bigg[  \bigcup_{s\in\wt\cT_3} I_s\nsubseteq I(7k)   \bigg]\lesssim 2^{-ck} $$
 for some $c>0$. Following the same proof as Proposition~\ref{prop:moment-bound-1}, using Lemma~\ref{lem:diam^2/area-E} and Lemma~\ref{lem:gap-J-min}, we conclude that for every $p'\in (0,p)$,
  $$\bbE_\rmE\bigg[\max_{s\in \wt\cT_3}\bigg(\frac{\diam(\eta(  I_s))^2}{\area(\eta( I_s))}\bigg)^{p'}\bigg]<\infty.$$
  Using $\bbP_\rmE[\wt\cT_4\nsubseteq I(8k)]\lesssim 2^{-ck}$ and Assertion~\ref{cond:prop:time-set-A-0-6} of Proposition~\ref{prop:time-set-A-0}, we conclude that 
  \eqb\label{eq:degreeeeeee-new}\bbP_\rmE\big[\max_{s\in \wt\cT_3} \#\wt\cT_s^{\rmC,1}\geq k^{60}\big]\lesssim 2^{-ck},\eqe
  and therefore $\bbE_\rmE\big[ \big(\max_{s\in \wt\cT_3} \#\wt\cT_s^{\rmC,1}\big)^p \big]<\infty$. 

  Now consider a time $t\in\cJ^-$ and let $\frv_t$ be its vertex in $T$ with degree $d$. Since there are exactly $d$ triangular faces incident to $\frv_t$ and $T$ has no loops, by Lemma~\ref{lem:Euler} and Assertion~\ref{cond:prop:time-set-A-0-3} of Proposition~\ref{prop:time-set-A-0}, $d\leq 2\#\cT_t^{\rmC,1}$.
  %Let $T_t$ be the planar map formed by the union of all the faces incident to $\frv_t$, and $\cV T_t,\cE T_t,\cF T_t$ be its vertex, edge and face sets (with the unbounded face counted), respectively. By Euler's formula, $\# \cE T_t = \#\cF T_t+\#\cV T_t-2$.  we have $2\#\cE T_t\geq 3d+ 2(\#\cF T_t - d)$ since the faces in $\#\cF T_t$ not incident to $\frv_t$ has degree at least 2. Thus $d\leq 2\# \cV T_t-4$. On the other hand, all the times $t'\in\cJ^-$ for vertices of $T_t$ are contained within $\cT_t^{\rmC,1}$ by  Assertion~\ref{cond:prop:time-set-A-0-3} of Proposition~\ref{prop:time-set-A-0}. 
  Therefore for each cell $H_t\in \cH$, since its degree in the primal map $M$ is bounded by its degree in the triangulation $T$, we have $\deg(H_t)\leq 2\#\cT_t^{\rmC,1}$ and~\eqref{eq:degreeeeeee-new} implies that $\bbE\big[\max_{H\in\cH^{\mathrm{face}}_0}(\deg(H))^p\big]<\infty$.
  Since $p>0$ is arbitrary, we have verified that 
  $$\bbE\Big[ \max_{H\in \cH^{\mathrm{face}}_0}\Big(\frac{\diam(H)^2}{\area(H)}\big(\deg(H)\big)^q\Big)\Big]<\infty $$
  for any $q>0$. Finally, for any $t^f\in\cJ^-_\rmR$ such that $\{t^f,\cell(0)\}\in\cE_\rmT$, we have $t^f\in\wt\cT_3$ and $\deg(t^f)\leq 2\# \cT_{t^f}^{\rmC,1}$. Then by an identical argument as above, $\bbE_\rmE[(d_0^f)^q]<\infty$ for any $q>0$. This finishes the proof.
\end{proof}

\subsection{Convergence of tree-weighted maps on 0-quantum cone}\label{subsec:3.3}

In this subsection, we prove that under the probability measure $\bbP_\rmE$, the mating-of-trees embedding under 0-quantum cone generated using $(h,\eta,Z)$ is close to the circle packing of $\Phi_\rmT(\cZ)$ and the Riemann uniformization embedding of $M=\Phi_\rmM(\cZ)$. Recall that $\Phi_\rmT(\cZ) = (T,\mathfrak{tr})$ and $\Phi_{\rm simple}\circ \Phi_\rmT(\cZ) = (\mr T,\mr \frtr)$, where $T$ is the infinite plane triangulation generated from $\cZ$ using Mullin bijection, $\mathfrak{tr}:\bbR\to \cF T$ is the path on faces of $T$, $\mr T = \Phi_{\rm simple}(T)$ is the version of $T$ with double edges collapsed.  We will also view a circle packing for $\mr T$ as a circle packing for $T$, where the disks for the vertices enclosed by multiple edges between vertices $v_1$ and $v_2$ will be the tangent point of the disks for $v_1$ and $v_2$. % and   $\mr{\mathfrak{tr}}:\bbR\to \cF \mr T$ is the path on faces of $\mr T$. 
With slight abuse of notation, once we fix an embedding of $T$ or $M$, we will also view $\mathfrak{tr}$ as a continuous path $\bbR\to\bbC$ such that for each $t\in\bbR$, $\mathfrak{tr}(t)$ is a point on the corresponding embedded face, where the faces are possibly degenerate in case of circle packing for $T$. For   the Riemann uniformization, the triangulation $T$ can be naturally embedded onto $\frM(M)$ by embedding the vertices of $\cV T\backslash\cV M$ at the center of the corresponding faces, and we also write $\frtr$ for the image of the space-filling path on $T$ under a conformal map $\varphi:\frM(M)\to\bbC$. Note that some points and edges of $T$ under this embedding of $T$ onto $\frM(M)$ might be degenerate in this case since we are collapsing faces of degree 1 and 2 of $M$ when defining $\frM(M)$.  Recall that $s\mapsto \frv_s$ is a bijection between $\cJ_\rmL^-$ and $\cV M$ and also a bijection  between $\cJ_\rmR^-$ and $\cF M$, and we have map $t\mapsto \fre_t$ from $\cJ$ to $\cE T$. %  and   $s\mapsto \mr\frv_s$ is a  bijection between $\mr\cJ^-$ and $\cV \mr T$.
For $s\in\cJ^-_\rmL$, we  identify $\frv_s$ with the corresponding vertex for $\frv_s$ on the surface $\frM(M)$, and for $s\in\cJ^-_\rmR$, we identify $\frv_s$ with the center of its corresponding face, which is denoted by $\mathfrak{f}_s$, on $\frM(M)$. If the face $\mathfrak{f}_s$ has degree 1, then it  shrinks to its adjacent vertex on  $\frM(M)$, and we identify $\frv_s$ with that vertex. If  $\mathfrak{f}_s$ has degree 2, then it shrinks to an edge on $\frM(M)$ and we identify $\frv_s$ with the midpoint of that edge.  Given a circle packing for $  T$, we write $o(\frv_s)$ for the center of the disk for the vertex $\frv_s$. 

% We recall the topology on curves viewed modulo time parameterization. %as in~\cite{GMSinvariance}.
% Let $\beta_1:[0,T_{\beta_1}]\to\bbC$ and $\beta_2:[0,T_{\beta_2}]\to\bbC$ be continuous curves defined on possibly different time intervals. We define 
% \begin{equation}\label{eq:curve-metric-0}
%     \mathbb{d}^{\rm CMP}(\beta_1,\beta_2):=\inf_\phi \sup_{t\in[0,T_{\beta_1}]}|\beta_1(t)-\beta_2(\phi(t))|,
% \end{equation}
% where the infimum is over all increasing homeomorphisms $\phi:[0,T_{\beta_1}]\to [0,T_{\beta_2}]$ and ``CMP'' stands for ``curves modulo parameterization". 

%We can also consider the extended version of circle packing for $T$, where the faces removed in the construction of $\mr T=\Phi_{\rm simple}(T)$ will be embedded as a single  point or a straight line segment in the packing. 
For $s\in\cJ$, let $T_s\subseteq T$ be the map formed by the faces $\{\mathfrak{tr}(u):u\leq s \}$, i.e., the sewing procedure stopped at time $s$.   Let $s_{1,\rmL}<s_{2,\rmL}<...$ and $s_{1,\rmR}<s_{2,\rmR}<...$ be the set of times in $\cJ^-$ such that the  vertices $\frv_{s_{1,\rmL}},\frv_{s_{2,\rmL}},...$ and $\frv_{s_{1,\rmR}},\frv_{s_{2,\rmR}},...$ form the left and right boundaries of $T_s$, respectively. As a consequence of Lemma~\ref{lem:sew}, $s_{k,\rmL} = \inf\{u>s:\cL_u-\cL_s=-k\}$.  For $s,t\in \cJ$ where $s<t$,  we let $T_{s,t}$ be the disk triangulation formed by $\{{\mathfrak{tr}}(u):s\leq u\leq t\}\backslash\{{\mathfrak{tr}}(s) \}$, and let $\frv_{s_{1,\rmL}},\frv_{s_{2,\rmL}},...,\frv_{s_{i_1,\rmL}}$ and $\frv_{s_{1,\rmR}},\frv_{s_{2,\rmR}},...,\frv_{s_{i_2,\rmR}}$ be the boundary vertices of $T_{s,t}$ that are on the boundary of $T_s$, and let $\frv_{t_{1,\rmL}},\frv_{t_{2,\rmL}},...,\frv_{t_{j_1,\rmL}}$ and $\frv_{t_{1,\rmR}},\frv_{t_{2,\rmR}},...,\frv_{t_{j_2,\rmR}}$ be the boundary vertices of $T_{s,t}$ that are on the boundary of $T_t$ but not on the boundary of $T_s$. We allow that $j_1,j_2$ to be zero, and set $s_{0,q}=s$, $t_{0,q}=t$ for $q\in\{\rmL,\rmR\}$ below.
%and call the vertex on the left (resp.\ right) boundary of both $T_s$ and $T_t$ the leftmost (resp.\ rightmost) vertex of $T_{s,t}$. %Similarly, for $s,t\in \mr\cJ$,  let $\mr T_{s,t}$ be the disk triangulation formed by $\{\mr{\mathfrak{tr}}(u):s\leq u\leq t\}\backslash\{\mr{\mathfrak{tr}}(s) \}$, and let the leftmost and rightmost vertex of $\mr T_{s,t}$ to be the same as the leftmost and rightmost vertex of  $T_{s,t}$. 
Note that  by Lemma~\ref{lem:hat-minus}, if $s\in \mr\cJ$, then the vertices on the boundary of $T_s$ are also vertices in $\mr T$. %Similarly, 
For $s<t$, we call the point $z_{s,t,\rmL}^-=\eta( u_{s,t,\rmL}^-)$ and $z_{s,t,\rmR}^-=\eta( u_{s,t,\rmR}^-)$ the leftmost and rightmost point of $\eta([s,t])$, where $u_{s,t,\rmL}^-$ (resp.\ $u_{s,t,\rmR}^-$) is the time in $[s,t]$ when $L|_{[s,t]}$ (resp.\ $R|_{[s,t]}$) hits its running infimum. 

\begin{figure}
    \centering
    \includegraphics[scale=0.85]{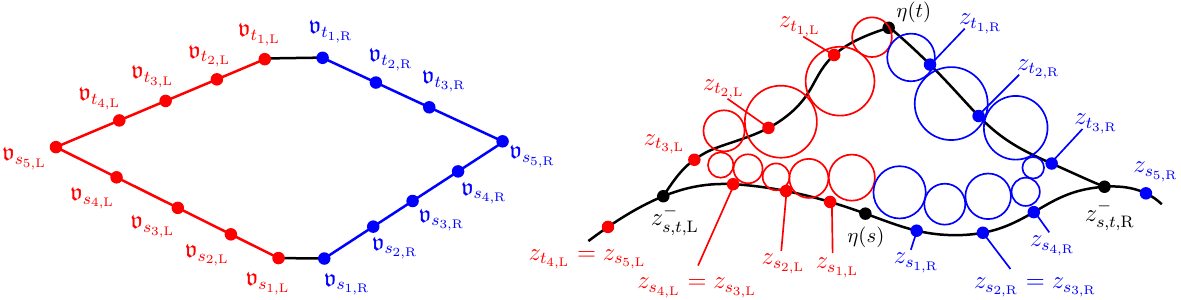}
    \caption{An illustration of part~(\ref{cond:thm:conv-0-cone-pack-Hcell-(iii)}) of Theorem~\ref{thm:conv-0-cone-pack-Hcell} where $(i_1,i_2,j_1,j_2) = (5,5,4,3)$. The goal of this part is to show that when restricted to boundary vertices in $T_{s,t}$, each disk in the packing $\cP$ will be close to a reference point, which are red and blue solid dots. This will eventually allow us to conclude that, as we join centers of these red and blue boundary disks, we get a curve that is close to $\partial\eta([s,t])$, and therefore we can apply Theorem~\ref{thm:He-Schramm-extension-1} below to cut and transfer to the finite area settings.}
    \label{fig:curvemetricclose}
\end{figure}

\begin{theorem}\label{thm:conv-0-cone-pack-Hcell}
    Under the probability measure $\bbP_\rmE$, the following is true. There exists a circle packing $\cP$ for $T$ such that almost surely, the following is true.
    \begin{enumerate}[(i)]
        \item The maximal diameter of disks intersecting $B(0;r)$ is   $o(r)$ as $r\to\infty$; \label{cond:thm:conv-0-cone-pack-Hcell-(i)}
        \item We have \begin{align}
 &  \lim_{r\to\infty} \frac{1}{r}\sup\{ |\eta(t)-o(\frv_t)|: t\in \cJ^-,\eta(t)\in B(0;r)\} = 0; \label{eq:E-pack} \\ & 
   \lim_{r\to\infty} \frac{1}{r}\sup\{ |\eta(t)-{\mathfrak{tr}}(t)|:\eta(t)\in B(0;r)\} = 0,\label{eq:E-curve-pack}
\end{align}  \label{cond:thm:conv-0-cone-pack-Hcell-(ii)}
where the limit~\eqref{eq:E-curve-pack} is uniform over all choices of the space-filling curve ${\mathfrak{tr}}$ such that ${\mathfrak{tr}}(t)$ is within the corresponding  embedded face. 
\item For every $\e>0$, a.s.\ for each large enough $r>0$, we have the following. For every $s,t\in\mr\cJ$ such that $\eta([s,t])\subseteq B(0;r)$, in the disk triangulation $  T_{s,t}$, there exists (not necessarily distinct) points $z_{s_{1,\rmL}},z_{s_{2,\rmL}},...,z_{s_{i_1,\rmL}}$ on the left boundary of $\eta((-\infty,s]))$ aligned from right to left, $z_{s_{1,\rmR}},z_{s_{2,\rmR}},...,z_{s_{i_2,\rmR}}$ on the right boundary of $\eta((-\infty,s]))$ aligned from left to right,  $z_{t_{1,\rmL}},z_{t_{2,\rmL}},...,z_{t_{j_1,\rmL}}$ on the left boundary of $\eta((-\infty,t]))$ aligned from right to left, $z_{t_{1,\rmR}},z_{t_{2,\rmR}},...,z_{t_{j_2,\rmR}}$ on the right boundary of $\eta((-\infty,t]))$ aligned from left to right, such that for $x\in\{s,t\}$, $q\in\{\rmL,\rmR\}$, the distance between each $z_{x_{k,q}}$ and the center of the disk for $\frv_{x_{k,q}}$ is less than $\e r$, and the diameter of the segment of $\partial\eta((-\infty,x])$ between $z_{x_{k,q}}$ and $z_{x_{k-1,q}}$ is less than $\e r$, where we define $z_{x_{0,q}} = \eta(x)$. Furthermore, the segment of $\partial\eta((-\infty,s])$ between $z_{s_{i_1,\rmL}}$  (resp.\ $z_{s_{i_2,\rmR}}$) and $z_{s,t,\rmL}^-$  (resp.\ $z_{s,t,\rmR}^-$) has diameter less than $\e r$, and the segment of $\partial\eta((-\infty,t])$ between $z_{t_{j_1,\rmL}}$  (resp.\ $z_{t_{j_2,\rmR}}$) and $z_{s,t,\rmL}^-$  (resp.\ $z_{s,t,\rmR}^-$)  has diameter less than $\e r$.  \label{cond:thm:conv-0-cone-pack-Hcell-(iii)} 
%distance between the $z_{s_{i_1,\rmL}}$ (resp.\ $z_{s_{i_2,\rmR}}$) and the leftmost (resp.\ rightmost) point of $\eta([s,t])$ is less than $\e r$, and the same holds for $z_{t_{j_1,\rmL}}$ and $z_{t_{j_2,\rmR}}$ in place of $z_{s_{i_1,\rmL}}$ and $z_{s_{i_2,\rmR}}$.  %formed by $\{\mr{\mathfrak{tr}}(u):s\leq u\leq t\}\backslash\{\mr{\mathfrak{tr}}(s) \} $.  the distance~\eqref{eq:curve-metric-0} between the curve joining the centers of the disks for vertices on $\partial \mr T_{s,t}$ in $\cP$ and $\partial\eta([s,t])$ is less than $\e r$, and the distance between the center of the disk for the leftmost (resp.\ rightmost) vertex of $\mr T_{s,t}$ and the leftmost (resp.\ rightmost) point of $\eta([s,t])$ is less than $\e r$.
\item There is an absolute constant $C>0$, such that in the setting of claim (iii), the quantum length of the boundary segment of $\eta((-\infty,x])$ from $\eta(x)$ to $z_{x_{k,q}}$ is in $[k-C\log r,k+C\log r]$. Furthermore, if $z_{x_{k,q}}$ is on $\partial\eta((-\infty,s]))\backslash\partial\eta([s,t])$, then the quantum length of the segment of  $\eta((-\infty,s])$ from $z_{x_{k,q}}$ to $z_{s,t,q}^-$ is less than $C\log r$. \label{cond:thm:conv-0-cone-pack-Hcell-(iv)}
    \end{enumerate}
%and  The same conclusions hold for the extended version of the circle packing for the triangulation $T$.
%Furthermore,  we have the following. 
\end{theorem}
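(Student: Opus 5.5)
We take $\cP$ to be the circle packing of $\mr T$ from Theorem~\ref{thm:CellSystemPack}, applied to $(\mr\cH^\cell,\mr\cH^\lge)$; its hypotheses are verified in Proposition~\ref{prop:property-CH}. We extend $\cP$ to $T$ as in Definition~\ref{def:pack-non-simple}, and we may assume there is no reflection because $(h,\eta)$ is invariant in law under reflection. Part~(\ref{cond:thm:conv-0-cone-pack-Hcell-(i)}) follows directly from Theorem~\ref{thm:CellSystemPack}, since degenerate disks have diameter zero.

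For \eqref{eq:E-pack}, take a vertex $\frv_t\in\cV\mr T$ with $t\in\mr\cJ^-$. By \eqref{eq:cell-area}, $\eta(t)\in\eta(\mr I_t)$ and $\partial\eta(\mr I_t)\subseteq\mr H_t^\cell$, so $|\eta(t)-c(\mr H_t^\cell)|\le\diam(\mr H_t^\cell)$. Lemma~\ref{lem:GMS2.9} gives $\diam(\mr H_t^\cell)=o(r)$, and \eqref{eq:thm:CellsystemPack} gives $|c(\mr H_t^\cell)-o(\frv_t)|=o(r)$. Together these bound $|\eta(t)-o(\frv_t)|$ by $o(r)$.

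If $\frv_t$ is removed by $\Phi_{\rm simple}$, then $t$ lies in an erased excursion $(t^-,t^+]$. The degenerate disk of $\frv_t$ is the tangency point of the disks of $\frv_{t_1},\frv_{t_2}\in\cV\mr T$, where $t_1,t_2$ are the first downsteps of $\cL,\cR$ below the level at $t^-$ (Lemma~\ref{lem:sew}, Lemma~\ref{lem:hat-minus}). We need $\diam\eta([t^-,t^+])=o(r)$. This is where Lemma~\ref{lem:gap-hat-J-9-3}, Lemma~\ref{lem:gap-hat-J-9-4} and Proposition~\ref{lem:gap-hat-J} enter: by Borel--Cantelli, every excursion meeting $[-2^n,2^n]$ has length at most $2^{(1-\e_0)n}$. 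With Lemma~\ref{lem:SLE-contain-1} and uniform continuity of $\eta$ under scaling, this gives diameter $o(r)$ inside $B(0;r)$.

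For \eqref{eq:E-curve-pack}, every vertex of the face $\frtr(t)$ is $\frv_u$ for some $u$ with $|\eta(u)-\eta(t)|=o(r)$. This holds because $u\in\{t,t_\rmL,t_\rmR,t_{\rmL,2},\dots\}$, which lie in $\cT_t^{\rmC,1}$ by Lemma~\ref{lem:time-set-sew}. Those time sets lie in $I(k)$ with small $k$ relative to $r$ by Proposition~\ref{prop:time-set-A-0}(\ref{cond:prop:time-set-A-0-5}) and scaling, so the whole face is within $o(r)$ of $\eta(t)$.

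For (iii) and (iv), fix $s,t\in\mr\cJ$ and $x\in\{s,t\}$. We set $z_{x_{k,q}}$ to be the point $\eta(x_{k,q}^*)$, where $x_{k,q}^*$ is the first time after $x$ at which $L$ (resp.\ $R$) reaches $L_x-k$ (resp.\ $R_x-k$). These points lie on the left/right boundary of $\eta((-\infty,x])$ in the required order, and the quantum boundary length from $\eta(x)$ to $z_{x_{k,q}}$ is exactly $k$ in this normalization. By Lemma~\ref{lem:sew}, $x_{k,q}=\inf\{u>x:\cL_u-\cL_x=-k\}$. The coupling bound Theorem~\ref{thm:KMT-0-cone}(\ref{cond:thm:KMT-0-cone-(ii)}) gives $|Z-\cZ|\le C\log r$ on the relevant time range, so $x_{k,q}$ and $x^*_{k,q}$ are times at which $L$ differs by $O(\log r)$ on the same monotone hitting sequence. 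This yields the length statement in (iv). The last claim of (iv) comes from comparing the running infimum of $\cL$ on $[s,t]$ with that of $L$.

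The geometric statements in (iii) then follow from \eqref{eq:E-pack}, provided boundary arcs of quantum length $O(\log r)$ between $\eta(x_{k,q})$ and $\eta(x^*_{k,q})$ have Euclidean diameter $o(r)$. We get this from the relation between boundary length and $\eta$ on short time intervals, using Lemma~\ref{lem:L_t-inf-1} and the fact that Lemma~\ref{lem:BM-e4} forces small boundary length to correspond to small area, and hence to small diameter by Lemma~\ref{lem:diam^2/area-E}.

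The main obstacle is making this last step uniform over all $s,t$ with $\eta([s,t])\subseteq B(0;r)$. The approach is to prove a polynomial-tail estimate at dyadic scales: with probability $1-O(2^{-cn})$, every pair of boundary points of some $\eta((-\infty,x])$ with $x\in[-2^n,2^n]$ separated by quantum length at most $n^2$ lies within an $\eta$-interval of length $2^{(1-\delta)n}$. We then apply Borel--Cantelli, mirroring the proof of Proposition~\ref{lem:gap-hat-J}.
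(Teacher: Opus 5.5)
\textbf{The locality step in \eqref{eq:E-curve-pack} fails.} Your treatment of (i), and of \eqref{eq:E-pack} for vertices of $\mr T$, is fine and matches the paper. The problem is the claim that the vertices of $\frtr(t)$ are $o(r)$-close to $\eta(t)$ because their time sets "lie in $I(k)$ with small $k$". Proposition~\ref{prop:time-set-A-0}(\ref{cond:prop:time-set-A-0-5}) only says that time sets of times in $I((1-\delta)k)$ stay inside $I(k)$. For $t$ with $|\eta(t)|$ of order $r$, this confines the relevant vertices to $\eta(I(k))$, a set of diameter at least of order $r$, so it gives no $o(r)$ locality.

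The locality has to come from the cells, exactly as in your vertex case. Let $t_0=\sup\{u\le t:u\in\mr\cJ\}$, let $t_\rmL^+$ be the left endpoint of $\fre_{t_0}$, and let $t^+=\inf\{u>t_0:u\in\mr\cJ^-\}$. Lemma~\ref{lem:time-set-sew} gives $t^+\in\mr\cT^{\rmC,1}_{t_\rmL^+}$. Hence $\eta(t)\in\eta(\mr I_{t^+})$, while $\partial\eta(\mr I_{t^+})\subseteq\mr H^\cell_{t_\rmL^+}$ by \eqref{eq:cell-area}. Lemma~\ref{lem:GMS2.9}, Theorem~\ref{thm:CellSystemPack} and (i) then give \eqref{eq:E-curve-pack}, and \eqref{eq:E-pack} for all of $\cJ^-$ follows by the triangle inequality.

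An erased excursion lies entirely inside one such $\mr I_{t^+}$, so this argument also makes your excursion-length/SLE-continuity detour unnecessary. That detour has problems of its own as written. It only controls $\eta([t^-,t^+])$, not the distance from $\eta(t^-)$ to $o(\frv_{t_1})$ with $t_1>t^+$. It also needs a Hölder estimate for $\eta$ that the paper does not provide.

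\textbf{Part (iii) rests on an estimate you have not proved.} Placing $z_{x_{k,q}}$ at exact quantum length $k$ makes (iv) trivial. But it forces you to show, uniformly in $x$ and $k$, that $|\eta(x_{k,q})-\eta(x^*_{k,q})|=o(r)$. Since $\eta(x_{k,q})$ is not on the boundary, this means $\eta$ must have small diameter on the whole stretch between the first times after $x$ at which $L$ has dropped by $k\pm O(\log r)$. These are increments of hitting times of $L$, which are heavy-tailed, and the tools you cite do not control them:
\begin{itemize}
\item Lemma~\ref{lem:BM-e4} bounds the \emph{total} boundary length from below, and the $R$-part over such a stretch can be large.
\item Lemma~\ref{lem:diam^2/area-E} compares diameter with Euclidean area, not with elapsed quantum time.
\item A $\bbP_\rmQ$-bound of order $2^{-n/2}$ cannot be pushed to $\bbP_\rmE$ through part~(\ref{cond:thm:KMT-0-cone-(iv)}) of Theorem~\ref{thm:KMT-0-cone}, because of the factor $2^{C_0k}$.
\end{itemize}
So your final paragraph describes a new estimate that would have to be proved, not a step of the argument.

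The missing idea is Lemma~\ref{lem:construction-embed-cell-0}. Instead of your points, take $z_{s_{k,\rmL}}=\eta(\wh s_{k,\rmL})$, where $\wh s_{k,\rmL}$ is the time at which $L$ attains its minimum on $[s,s_{k,\rmL}]$; this ties the point to the \emph{discrete} hitting time $s_{k,\rmL}$. One then checks that every $u\in\cJ^-_\rmL$ with $\eta(I_u)$ meeting the boundary segment between $z_{s_{k-1,\rmL}}$ and $z_{s_{k,\rmL}}$ lies in $\cT^{\rmC,1}_{s_{k',\rmL},\rmL}$, together with $s_{k,\rmL}$. Consequently that segment lies in the convex hull of the single cell $H^\cell_{\wt s_{k,\rmL}}$. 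The same argument handles the segments ending at $z^-_{s,t,q}$. All geometric claims in (iii) then follow from Lemma~\ref{lem:GMS2.9} and \eqref{eq:E-curve-pack}, and (iv) reduces to $|L-\cL|\le C\log r$ on the relevant time range.

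\textbf{Reflection.} Invariance in law of $(h,\eta)$ under reflection does not decide, for a given realization, whether $\cP$ or its mirror image tracks the cells. The paper excludes the mirror image by a winding-number argument. It centers the disk of $\mr H^\cell_0$ at $0$, takes a large $\eta([s_n,t_n])\supseteq B(0;cr)$, and compares the winding numbers around $0$ of its two boundary curves: the one built from the points in (iii), and the orientation-preserving polygon through disk centers. You need an argument of this kind to obtain an orientation-preserving packing, which the later circle-embedding comparisons require.
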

%In particular, the claims~(\ref{cond:thm:conv-0-cone-pack-Hcell-(i)}), ~(\ref{cond:thm:conv-0-cone-pack-Hcell-(iii)}) above along with the continuity of $\partial\eta([s,t])$ imply that the curve distance~\eqref{eq:curve-metric-0} between the embedded boundary of $T_{s,t}$ and $\partial\eta([s,t])$ has order $o(r)$.

The reason for claim~(\ref{cond:thm:conv-0-cone-pack-Hcell-(iii)}) is that, as in Condition~(\ref{cond:thm:He-Schramm-extension-1-(iv)}) of Theorem~\ref{thm:He-Schramm-extension-1}, to transfer the above convergence of circle packing to the other settings, we will need the convergence of the boundary of the above circle packing for $T_{s,t}$ when viewed as curves as well. See Figure~\ref{fig:curvemetricclose} for an illustration. Note that as explained in Section~\ref{subsubsec:MatedCRT}, here we continue to multiply the actual quantum length by a half.

\begin{theorem}\label{thm:conv-0-cone-unif-Hcell}
    Under the probability measure $\bbP_\rmE$, the following is true.  There exists a conformal map $\varphi:\frM(M)\to \bbC$ such that the following is true.
    \begin{enumerate}
        \item Almost surely, \begin{align}
 &  \lim_{r\to\infty} \frac{1}{r}\sup\{ |\eta(t)-\varphi(\frv_t)|: t\in \cJ_\rmL^-,\eta(t)\in B(0;r)\} = 0; \label{eq:E-unif} \\  
 &  \lim_{r\to\infty} \frac{1}{r}\sup\{\diam(\varphi(\mathfrak{f}_t)): t\in\cJ_\rmR^-, \eta(t)\in B(0;r)\} =0; \label{eq:E-edge}   \\ & 
   \lim_{r\to\infty} \frac{1}{r}\sup\{ |\eta(t)-{\mathfrak{tr}}(t)|:\eta(t)\in B(0;r)\} = 0,\label{eq:E-curve-unif}
\end{align}
where the limit~\eqref{eq:E-curve-unif} is uniform over all choices of the space-filling curve ${\mathfrak{tr}}$ such that ${\mathfrak{tr}}(t)$ is within the corresponding  embedded face.  {Furthermore, for each $\e>0$, almost surely for sufficiently large $r>0$, if the embedded face $\varphi(\mathfrak{f}_{f})$ intersects $B(0;r)$, then $\diam(\varphi(\mathfrak{f}_{f}))<\e r$.}
\item Claims $($\ref{cond:thm:conv-0-cone-pack-Hcell-(iii)}$)$ and $($\ref{cond:thm:conv-0-cone-pack-Hcell-(iv)}$)$ of Theorem~\ref{thm:conv-0-cone-pack-Hcell} with $\cJ$ in place of $\mr\cJ$ and ``the center of the disk for $\frv_{x_{k,q}}$" replaced by ``$\varphi(\frv_{x_{k,q}})$" are true.
    \end{enumerate}
    %Furthermore, for every $\e>0$, a.s.\ for each large enough $r>0$ we have the following. For every $s,t\in\mr\cJ$ such that $\eta([s,t])\subseteq B(0;r)$, consider the disk triangulation $ T_{s,t}$ formed by $\{{\mathfrak{tr}}(u):s\leq u\leq t\}\backslash\{{\mathfrak{tr}}(s) \} $ and the surface $M(T_{s,t})\subseteq M(T)$. Then  the distance~\eqref{eq:curve-metric-0} between $\varphi(\partial M(T_{s,t}))$ and $\partial\eta([s,t])$ is less than $\e r$.
\end{theorem}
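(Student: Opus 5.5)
The plan is to derive Theorem~\ref{thm:conv-0-cone-unif-Hcell} from Theorem~\ref{thm:invariance-uniformization}, applied to $(\cH^\cell,\cH^\lge)$, following the template of Theorem~\ref{thm:conv-0-cone-pack-Hcell}. By Proposition~\ref{prop:property-CH}, $\cH^\cell$ and $\cH^\lge$ satisfy all hypotheses of Theorem~\ref{thm:invariance-uniformization}. Hence $\frM(M)$ is a.s.\ parabolic and there is a conformal map $\varphi_0:\frM(M)\to\bbC$, after possibly composing with a reflection, with $\frac1r\max_{H\in\cH^\cell(B(0;r))}|c(H)-\varphi_0(v_H)|\to0$. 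Moreover \eqref{eq:thm:invariance-uniform-1} and the final claim of that theorem hold. We take $\varphi=\varphi_0$; the reflection is absorbed using the rotation and reflection symmetry of the setting, as in the circle packing case. The last assertion of part 1 is then exactly the final claim of Theorem~\ref{thm:invariance-uniformization}.

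For \eqref{eq:E-unif}, I compare $\eta(t)$ with $c(H_t^\cell)$ for $t\in\cJ^-_\rmL$. By \eqref{eq:cell-area-0}, $\eta(t)\in\eta(I_t)\subseteq H_t^\cell$, so $|\eta(t)-c(H_t^\cell)|\le\diam(H_t^\cell)$. Lemma~\ref{lem:GMS2.9}, with Proposition~\ref{prop:moment-bound-2}, gives $\diam(H)=o(r)$ for all cells meeting $B(0;r)$. Cells meeting $B(0;r)$ lie in $B(0;2r)$ for large $r$, so the comparison is uniform.

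For \eqref{eq:E-edge}, I use that each face $\mathfrak f_t$, $t\in\cJ^-_\rmR$, is incident to vertices $\frv_s$ with $s\in\cJ^-_\rmL$ adjacent to $t$ in $T$. Such $s$ lie in $\cT^{\rmC,1}_t$, so $\eta(I_s)$ is close to $\eta(t)$ by Assertion~\ref{cond:prop:time-set-A-0-3} and the cell diameter bound. The face then meets $B(0;2r)$, and \eqref{eq:thm:invariance-uniform-1} gives $\diam(\varphi(\mathfrak f_t))=o(r)$. For \eqref{eq:E-curve-unif}, $\mathfrak{tr}(t)$ lies in an embedded face of $T$ whose vertices are $\frv_u$, with $u$ among $t$, $t_\rmL$, $t_\rmR$ (Lemma~\ref{lem:sew}). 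These embed near $\eta(u)$, and $\eta(u)$ is close to $\eta(t)$ because $I_u$ and the relevant interval are short in the rescaled sense; here I use Lemma~\ref{lem:gap-J-min} and the uniform continuity of $\eta$ on compact time intervals. Since $T$ is embedded in $\frM(M)$ with the vertices of $M_*$ placed at face centers, all points of the embedded triangle lie within $o(r)$ of $\eta(t)$, uniformly over the choice of $\mathfrak{tr}(t)$ in the face.

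Part 2 is the main obstacle; it is the analogue of the new property (iii)--(iv) of Theorem~\ref{thm:conv-0-cone-pack-Hcell}, via Lemma~\ref{lem:construction-embed-cell-0}. For $s,t\in\cJ$, the boundary vertices of $T_{s,t}$ on $\partial T_s$ are $\frv_{s_{k,q}}$ with $s_{k,q}=\inf\{u>s:\cL_u-\cL_s=-k\}$ (for $q=\rmL$). I set $z_{s_{k,\rmL}}=\eta(\sigma_k)$, where $\sigma_k$ is the first time after $s$ at which $L$ decreases by $k$. Since $L$ hits its running infimum at $\sigma_k$, this point lies on the left boundary of $\eta((-\infty,s])$, and the points are ordered. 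The coupling in Theorem~\ref{thm:KMT-0-cone}(\ref{cond:thm:KMT-0-cone-(ii)}) gives $|\sigma_k-s_{k,\rmL}|$ small in the following sense: $\cL$ and $L$ differ by $O(\log r)$ on the relevant window, so $\sigma_k$ and $s_{k,\rmL}$ are separated by an interval on which $L$ moves by $O(\log r)$. The quantum-length statement (iv) follows from this directly. To get (iii), I convert the $O(\log r)$ length discrepancy into $o(r)$ Euclidean diameter of the corresponding boundary arc, using continuity of $\partial\eta$ and the scaling of Lemma~\ref{lem:Rb}. The distance from $\varphi(\frv_{s_{k,\rmL}})$ to $z_{s_{k,\rmL}}$ is then controlled by \eqref{eq:E-unif} together with $|\eta(s_{k,\rmL})-\eta(\sigma_k)|=o(r)$. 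The same argument, replacing $s$ by $t$ and stopping at the infimum time $u^-_{s,t,q}$, handles the $t$-side points and the two endpoint segments. Everything must be made uniform over all $s,t$ with $\eta([s,t])\subseteq B(0;r)$; I would do this with a union bound over a polynomial number of times $s,t\in\cJ\cap[-r^{C},r^{C}]$, using the exponential tail estimates, followed by Borel--Cantelli over dyadic $r$.
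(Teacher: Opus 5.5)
Your treatment of \eqref{eq:E-unif} and \eqref{eq:E-edge} is essentially the paper's. One small correction: \eqref{eq:cell-area-0} only puts $\partial\eta(I_t)$, not $\eta(I_t)$, inside $H^{\cell}_{t}$, but $\eta(t)\in\partial\eta(I_t)$, so this is harmless.

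\textbf{Main gap.} The problem is every step where you turn closeness of walk or boundary-length data into Euclidean closeness, namely \eqref{eq:E-curve-unif} and all of Part 2. The time intervals you treat as short are not short.
\begin{itemize}
\item If $t$ is near a running minimum of $\cL$, then $t_\rmL=\inf\{s>t:\cL_s=\cL_t-1\}$ is macroscopically later than $t$.
\item $s_{k,\rmL}$ and your $\sigma_k$ are separated by an interval on which only the \emph{net} change of $L$ is $O(\log r)$. In between, $L$ and $\eta$ can make a macroscopic excursion, since the first-passage process of $L$ has macroscopic jumps.
\end{itemize}
So neither Lemma~\ref{lem:gap-J-min} nor time-continuity of $\eta$ gives $|\eta(t_\rmL)-\eta(t)|=o(r)$ or $|\eta(s_{k,\rmL})-\eta(\sigma_k)|=o(r)$. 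These points are close only because they are joined by boundary arcs of quantum length $O(\log r)$. Converting that into $o(r)$ Euclidean size, uniformly over all $\partial\eta((-\infty,x])$ with $x$ in a polynomial window, is a modulus-of-continuity estimate. ``Continuity of $\partial\eta$ plus scaling'' does not provide it, and you have no tail bound to feed into your union bound.

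The missing idea is Lemma~\ref{lem:construction-embed-cell-0}(ii). Take $z_{s_{k,\rmL}}$ to be $\eta$ at the time where $L$ attains its infimum over $[s,s_{k,\rmL}]$, not at the hitting time of $L_s-k$. Then the arc of $\partial\eta((-\infty,s])$ between $z_{s_{k-1,\rmL}}$ and $z_{s_{k,\rmL}}$ is covered by sets $\eta(I_u)$ with $u\in\cT^{\rmC,1}_{s_{k,\rmL}}$. Hence it lies in the convex hull of the single cell $H^{\cell}_{\wt s_{k,\rmL}}$; this is a combinatorial consequence of the definition of $\cT^{\rmC,1}$. From there:
\begin{itemize}
\item Lemma~\ref{lem:GMS2.9} bounds all these arcs by $\e r$ simultaneously, with no union bound.
\item $|z_{x_{k,q}}-\varphi(\frv_{x_{k,q}})|<\e r$ follows via \eqref{eq:E-curve-unif} and the face-diameter bound.
\item Claim (iv) follows from the coupling of Theorem~\ref{thm:KMT-0-cone}, as in Lemma~\ref{lem:construction-embed-cell-0}(iii).
\end{itemize}

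The same device repairs \eqref{eq:E-curve-unif}. Let $t_0=\sup\{u\le t:u\in\cJ\}$, let $\frv_{t^+_\rmL}$ be the left endpoint of $\fre_{t_0}$, and let $t^+$ be the next $\cJ^-_\rmL$-time after $t_0$. Lemma~\ref{lem:time-set-sew} gives $t^+\in\cT^{\rmC,1}_{t^+_\rmL}$, so $\partial\eta(I_{t^+})\subseteq H^{\cell}_{t^+_\rmL}$. Hence $\eta(t)$ is within $o(r)$ of $\varphi(\frv_{t^+_\rmL})$, which is a vertex of the face containing $\frtr(t)$.

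\textbf{Reflection.} Theorem~\ref{thm:invariance-uniformization} gives its conclusions only for $\varphi_0$ or for $\overline{\varphi_0}$. The map $\overline{\varphi_0}$ is anticonformal on $\frM(M)$, so it cannot be absorbed into $\varphi$. Rotation invariance in law is exactly what leaves this ambiguity, and no invariance in law can decide on a given realization which alternative holds.

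The paper excludes the reflected case with an orientation argument that itself uses Part 2. By scaling, on an event of positive probability there are $s_n,t_n\in\cJ$ with $B(0;cr)\subseteq\eta([s_n,t_n])\subseteq B(0;c^{-1}r)$. The loop through the points $z_{x_{k,q}}$ and the $\varphi_0$-image of $\partial T_{s_n,t_n}$ both trace $\partial T_{s_n,t_n}$ with its planar orientation, so they have the same winding number about $0$. If $\overline{\varphi_0}$ were the good map, the first loop would be $O(\e r)$-close to a loop with the opposite winding number, a contradiction. So Part 2 must first be proved up to reflection, and then used to fix orientation, before Part 1 holds for a genuinely conformal $\varphi$.
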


\begin{figure}
    \centering
    \includegraphics[scale=0.8]{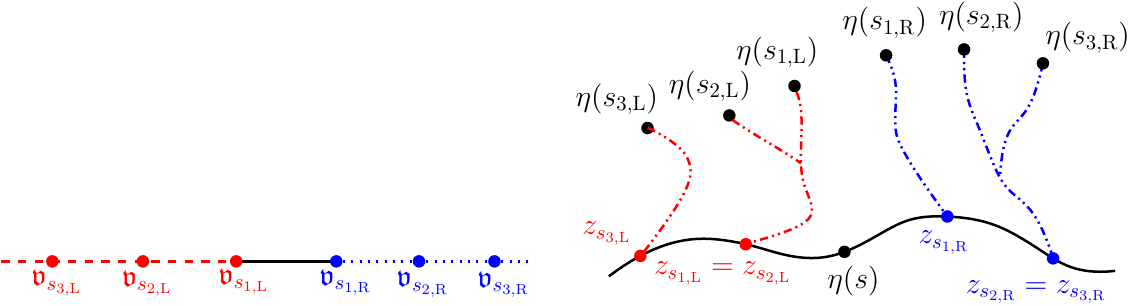}
    \caption{An illustration of Lemma~\ref{lem:construction-embed-cell-0}. \textbf{Left:} The map $T_s\subseteq T$ generated by the sewing procedure stopped at time $s$ and its boundary vertices. \textbf{Right:} The merging points $z_{s,k,\rmL}$ and $z_{s,j,\rmR}$ defined in Lemma~\ref{lem:construction-embed-cell-0}.% We will prove that  %and the dashed segments connecting $z_{s,k,\rmL}$ with $\eta(s_{k,\rmL})$ or $z_{s,k,\rmR}$ with $\eta(s_{k,\rmR})$ are parts of the left or right boundaries of $\eta((-\infty,s_{k,\rmL}])$ or $\eta((-\infty,s_{k,\rmR}])$. We are going to show that, each of these dashed segments are contained within the corresponding $\mr H^{\rm cell}_{s_{k,\rmL}}$ or $\mr H^{\rm cell}_{s_{k,\rmR}}$.
    }
    \label{fig:lem-def-cell-embed-001}
\end{figure}

Below we construct the points $z_{s_{k,\rmL}}$ in Theorem~\ref{thm:conv-0-cone-pack-Hcell} and Theorem~\ref{thm:conv-0-cone-unif-Hcell}.   For each $k\in\bbN$, let $\wh s_{k,\rmL}\in [s,s_{k,\rmL}]$ be the time when $L$ hits its infimum during $[s,s_{k,\rmL}]$, and let $z_{s_{k,\rmL}} = \eta(\wh s_{k,\rmL})$. From the boundary length process description,  $z_{s_{k,\rmL}}$ is the leftmost point on $\eta((-\infty,s])\cap\eta([s,s_{k,\rmL}])$, and  the boundary segment of $\eta((-\infty,s])$ from $\eta(s)$ to $z_{s_{k,\rmL}}$ has quantum length equal to $L_s - \inf_{u\in[s,s_{k,\rmL}]} L_u$. Define $z_{s_{k,\rmR}}$ analogously using $(\rmR,R)$ in place of $(\rmL,L)$.

\begin{lemma}\label{lem:construction-embed-cell-0}
  Let $s\in\cJ$.   For each $k\in\bbN$, let $z_{s_{k,\rmL}}$ be  defined as above.  Then we have
    \begin{enumerate}[(i)]
        \item  $z_{s_{1,\rmL}}, z_{s_{2,\rmL}},...$ are aligned from right to left on the left boundary of $\eta((-\infty,s])$ (some points may duplicate). 
        \item  The segment of the left boundary of $\eta((-\infty,s])$ between $z_{s_{k-1,\rmL}}$ and $z_{s_{k,\rmL}}$    is contained within the convex hull of $ H_{\wt s_{k,\rmL}}^{\mathrm{cell}}$, where $\wt s_{k,\rmL}\in\cJ_\rmL^-$ is any time such that $\wt s_{k,\rmL} = s_{k,\rmL}$ or $\{\wt s_{k,\rmL},s_{k,\rmL}\}\in\cE_\rmT$ and we write $z_{s_{0,\rmL}}=\eta(s)$. \label{cond:lem:construction-embed-cell-0-(ii)}
     \item There exists an absolute constant $C>0$ such that almost surely,  for $n$ large enough, the following is true. Let $\ell_{s,k,\rmL}$ be quantum length of the boundary segment of $\eta((-\infty,s])$ from $\eta(s)$ to $z_{s_{k,\rmL}}$.   If $s,s_{k,\rmL}\in [-2^n,2^n]$, then $|\ell_{s,k,\rmL}-k|\leq Cn$.  \label{cond:lem:construction-embed-cell-0-(iii)} %If $s\in\mr\cJ$, then the times $s_{k,\rmL}$  are also in $\mr\cJ^-$, the vertices $\frv_{s_{k,\rmL}}$ are also vertices of $\mr T$, and $z_{s,k,\rmL}\in \mr H_{s_{k,\rmL}}^{\mathrm{cell}}$.  
    \end{enumerate}
  The analogous statement is true with $\rmR$ instead of $\rmL$ and ``right" instead of ``left".  
\end{lemma}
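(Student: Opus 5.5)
We argue directly from the definitions of $z_{s_{k,\rmL}}$ and the mating-of-trees description of $\partial\eta((-\infty,s])$. The left boundary of $\eta((-\infty,s])$ is parametrized by quantum length: the point at quantum distance $a\ge0$ from $\eta(s)$ is $\eta(\sigma_a)$ with $\sigma_a=\inf\{u>s:L_u=L_s-a\}$. Since $z_{s_{k,\rmL}}=\eta(\wh s_{k,\rmL})$ and $L_{\wh s_{k,\rmL}}=\inf_{[s,s_{k,\rmL}]}L$, the point $z_{s_{k,\rmL}}$ is the boundary point at distance $a_k:=L_s-\inf_{[s,s_{k,\rmL}]}L$. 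The times $s_{k,\rmL}$ increase in $k$, so $a_k$ is nondecreasing. This gives (i): the points are ordered from right to left, and may coincide when $a_k=a_{k-1}$.

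For (iii), write $\ell_{s,k,\rmL}=a_k$. By construction $\cL_{s_{k,\rmL}}=\cL_s-k$ and $\inf_{[s,s_{k,\rmL}]}\cL=\cL_s-k$. Hence
\[|a_k-k|\le 2\sup_{u\in[s,s_{k,\rmL}]}|(L_u-L_s)-(\cL_u-\cL_s)|\le 4\sup_{[-2^n,2^n]}|L-\cL|.\]
Part~(\ref{cond:thm:KMT-0-cone-(ii)}) of Theorem~\ref{thm:KMT-0-cone} bounds the right side by $C'n$ except with probability $C''2^{-C_1n}$. Borel--Cantelli then gives the statement for all large $n$, with $C=4C'$.

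Part (ii) is the main step. The boundary segment between $z_{s_{k-1,\rmL}}$ and $z_{s_{k,\rmL}}$ consists of the points $\eta(\sigma_a)$ with $a\in[a_{k-1},a_k]$. For such $a$, $\sigma_a\in[s_{k-1,\rmL},s_{k,\rmL}]$, up to the endpoint case where $\sigma_{a_{k-1}}=\wh s_{k-1,\rmL}\le s_{k-1,\rmL}$, which is harmless because that point is $z_{s_{k-1,\rmL}}$ and is handled by the previous interval. Now $\{u\in\cJ_\rmL^-:u\ge s_{k-1,\rmL}\}$ begins with $s_{k,\rmL}$ only if no $\cL$-downstep occurs strictly between them; in general $[s_{k-1,\rmL},s_{k,\rmL}]$ is covered by intervals $I_u$, $u\in\cJ^-_\rmL$. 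So I plan the following:
\begin{enumerate}
\item Each point $\eta(\sigma_a)$ lies in $\partial\eta(I_u)$ for some $u\in\cJ_\rmL^-\cap[s_{k-1,\rmL},s_{k,\rmL}]$. This holds because $\sigma_a$ is a running-infimum time of $L$ after $s$, and $\eta$ at such a time is on the boundary of the cell containing it.
\item Every such $u$ lies in $\cT^{\rmC,1}_{s_{k,\rmL}}$, or at least in $\cT^\cell_{\wt s_{k,\rmL}}$. To get this, take $t_0=s_{k-1,\rmL}$ (or the last $\cJ$-time before it) and check $u\in\cT^{\rmC,1}_{t_0,\rmL}$ together with $s_{k,\rmL}\in\cT^{\rmC,1}_{t_0,\rmL}$ via Lemma~\ref{lem:time-set-sew}. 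On $[t_0,s_{k,\rmL})$ we have $\cL\ge\cL_{t_0}$, so $L$ stays above $L_{t_0}-\Delta$ and the stopping time is not reached. Also $L_{\sigma_a}$ lies within $\Delta_\rmL[t_0,\cdot]$ of $L_{t_0}$, since $\cL_{\sigma_a}\in\{\cL_{t_0},\cL_{t_0}-1\}$ up to coupling error. This is a triangle-inequality argument of the same type as Lemma~\ref{lem:time-set-sew} and Assertion~\ref{cond:lem:def-wh-T-Dd-3} of Lemma~\ref{lem:def-wh-T-Dd}.
\item From $\cT^{\rmC,1}_{s_{k,\rmL}}\subseteq\cT^\cell_{\wt s_{k,\rmL}}$, which holds by definition~\eqref{eq:def-cT-cell} when $\wt s_{k,\rmL}=s_{k,\rmL}$ or when they are $\rmT$-adjacent, the inclusion~\eqref{eq:cell-area-0} gives $\partial\eta(I_u)\subseteq H^\cell_{\wt s_{k,\rmL}}$. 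This yields containment in $H^\cell_{\wt s_{k,\rmL}}$ itself, and a fortiori in its convex hull.
\end{enumerate}

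I expect step 2 to be the obstacle. The difficulty is to verify the threshold inequalities in the definition of $\cT^{\rmC,1}_{t_0,\rmL}$ for every intermediate $u$ with $d=1$, since the slack $\Delta$ includes the term $+2$ and the $N_\rmL$ term. If $d=1$ proves too tight, the fallback is to pass through $\cT^{\rmC,1}_u$ for the $\rmT$-neighbour $\wt s_{k,\rmL}$; this explains why the statement allows any $\rmT$-neighbour.

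The $\rmR$ case is symmetric.
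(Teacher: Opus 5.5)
Your parts (i) and (iii) are fine; (iii) is the paper's argument. Part (ii) with the anchor $t_0=s_{k-1,\rmL}$ also matches the paper whenever $\inf_{[s,s_{k-1,\rmL}]}L>\inf_{[s,s_{k,\rmL}]}L$. In that case $\sigma_a\in(s_{k-1,\rmL},s_{k,\rmL}]$ for $a\in(a_{k-1},a_k]$. The bound $\cL\ge\cL_{t_0}-1$ on $[t_0,s_{k,\rmL}]$, together with the $+2$ in $\Delta_\rmL$, gives both $\tau^{\rmC,1}_{t_0,\rmL}>s_{k,\rmL}$ and $L_{t_0}-\Delta^{\rmC,1}_{t_0,\rmL}\le L_{\sigma_a}\le L_{t_0}$. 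So $d=1$ is not too tight (cf.~\eqref{eq:pf-sew-time-set-1}) and no fallback is needed; the $\rmT$-neighbour freedom in the statement is there because cells are indexed by $\cJ^-_\rmL$, which matters in the $\rmR$ version. The endpoint then follows by closedness of the convex hull.

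\emph{The gap} is the degenerate case $k\ge 2$ with $\inf_{[s,s_{k-1,\rmL}]}L=\inf_{[s,s_{k,\rmL}]}L$, which includes $\inf_{[s,s_{k,\rmL}]}L=L_s$. There the segment is the single point $z_{s_{k,\rmL}}=z_{s_{k-1,\rmL}}$. It is visited at $\wh s_{k,\rmL}\le s_{k-1,\rmL}$, possibly many $\rmL$-downsteps earlier, and in general not during $(s_{k-1,\rmL},s_{k,\rmL}]$. Sets anchored at $t_0=s_{k-1,\rmL}$ consist of times $u\ge s_{k-1,\rmL}$, whose intervals $I_u$ reach back at most to the $\cJ^-_\rmL$-time preceding $s_{k-1,\rmL}$, so they need not contain $\wh s_{k,\rmL}$. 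Saying this point is ``handled by the previous interval'' only puts it in the convex hull of $H^{\cell}_{\wt s_{k-1,\rmL}}$, a different cell, which does not give the claim for $H^{\cell}_{\wt s_{k,\rmL}}$. This case is not vacuous. It is exactly what part~(\ref{cond:thm:conv-0-cone-pack-Hcell-(iii)}) of Theorem~\ref{thm:conv-0-cone-pack-Hcell} uses to place $z_{x_{k,q}}$ near the disk of $\frv_{x_{k,q}}$ when $z_{x_{k,q}}=z_{x_{k-1,q}}$.

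The missing idea is the paper's anchor $t_0=s_{k',\rmL}$, where $k'<k$ is the largest index with $\inf_{[s,s_{k',\rmL}]}L>\inf_{[s,s_{k,\rmL}]}L$. If $\inf_{[s,s_{k,\rmL}]}L=L_s$, take $t_0=s$ instead, use $s''=\inf\{u\ge s:u\in\cJ^-_\rmL\}$, and note $\eta(s)\in\eta(I_{s''})$. With this anchor $\wh s_{k,\rmL}\in(s_{k',\rmL},s_{k'+1,\rmL}]$ is reachable. However, your estimate $\cL\ge\cL_{t_0}$ on $[t_0,s_{k,\rmL})$ now fails, since $\cL$ drops by $k-k'-1$ there. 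It must be replaced by two facts:
\begin{itemize}
\item $\cL\ge\cL_{t_0}-1$ holds only on $[s_{k',\rmL},s_{k'+1,\rmL}]$;
\item $\inf_{[s_{k',\rmL},s_{k,\rmL}]}L=\inf_{[s_{k',\rmL},s_{k'+1,\rmL}]}L$.
\end{itemize}
Together these give $\tau^{\rmC,1}_{s_{k',\rmL},\rmL}>s_{k,\rmL}$. The membership $s_{k,\rmL}\in\cT^{\rmC,1}_{s_{k',\rmL},\rmL}$ then no longer follows from Lemma~\ref{lem:time-set-sew}. It follows from $L_{s_{k,\rmL}}\le L_{s_{k',\rmL}}+\Delta_\rmL[s_{k',\rmL},s_{k,\rmL}]$, a consequence of $\cL_{s_{k,\rmL}}<\cL_{s_{k',\rmL}}$.

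A smaller point concerns your Step~1. The assertion $\eta(\sigma_a)\in\partial\eta(I_u)$ can fail when $I_u\ni s$ (e.g.\ $k=1$, $a=0$), so ``contained in $H^{\cell}_{\wt s_{k,\rmL}}$ itself'' is not justified. What holds, and suffices, is $\eta(I_u)\subseteq\mathrm{conv}(\partial\eta(I_u))\subseteq\mathrm{conv}(H^{\cell}_{\wt s_{k,\rmL}})$; this is why the lemma is phrased with convex hulls.
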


\begin{proof}
 The  claim (i) is immediate from construction of the space-filling SLE curve in~\cite[Section 4.2]{ig4}.
 
 For the claim (ii), first assume $\inf_{u\in[s,s_{k,\rmL}]}L_u<L_s$ choose the largest integer $k'<k$ such that $\inf_{u\in[s,s_{k',\rmL}]}L_u>\inf_{u\in[s,s_{k,\rmL}]}L_u$ %. If $\inf_{u\in[s,s_{k,\rmL}]}L_u=L_s$, we set $k'=0$
 where recall that $s_{0,\rmL}=s$. Then $\inf_{u\in[s,s_{k'+1,\rmL}]}L_u=\inf_{u\in[s,s_{k,\rmL}]}L_u$,   $\inf_{u\in [s_{k',\rmL},s_{k,\rmL}]}L_u< \inf_{u\in[s,s_{k',\rmL}]}L_u$ and $\inf_{u\in [s_{k',\rmL},s_{k,\rmL}]}L_u = \inf_{u\in [s_{k',\rmL},s_{k'+1,\rmL}]}L_u$.
 For $t\in [s_{k',\rmL},s_{k'+1,\rmL}]$, $\cL_t\geq \cL_{s_{k',\rmL}}-1$, and therefore $$L_{t}\geq  L_{s_{k',\rmL}}-\sup_{u\in [s_{k',\rmL},t]}|(L_{s_{k',\rmL}} - \cL_{s_{k',\rmL}}) - (L_u-\cL_u)|-1> L_{s_{k',\rmL}} -\Delta_\rmL[s_{k',\rmL},t],$$
 which implies that $s_{k'+1,\rmL}< \tau_{s_{k',\rmL},\rmL}^{\rmC,1}$ and hence  $s_{k,\rmL}< \tau_{s_{k',\rmL},\rmL}^{\rmC,1}$. In particular, $L_{s_{k',\rmL}} -  \Delta_{s_{k',\rmL},\rmL}^{\rmC,1}< \inf_{u\in[s,s_{k,\rmL}]}L_u \leq L_{s_{k',\rmL}}$. Suppose $t> s_{k',\rmL}$, $t\in\cJ^-_\rmL$ and $w\in I_t$ such that $\eta(w)$ is in the interior of segment of $\partial\eta((-\infty,s])$ between $z_{k',\rmL}$ and $z_{k,\rmL}$. Then $\inf_{u\in[s,s_{k,\rmL}]}L_u\leq L_w\leq L_{s_{k',\rmL}}$, $w<s_{k,\rmL}$, which implies that $t\leq \tau_{s_{k',\rmL},\rmL}^{\rmC,1}$ and thus $t\in \cT_{s_{k',\rmL},\rmL}^{\rmC,1}$. On the other hand, since $\cL_{s_{k,\rmL}}<\cL_{s_{k',\rmL}}$, 
 \eqb\label{eq:pf-close-0900} L_{s_{k,\rmL}}\leq  L_{s_{k',\rmL}}+\sup_{u\in [s_{k',\rmL},s_{k,\rmL}]}|(L_{s_{k',\rmL}} - \cL_{s_{k',\rmL}}) - (L_u-\cL_u)| \leq  L_{s_{k',\rmL}}+ \Delta_\rmL[s_{k',\rmL},s_{k,\rmL}]\leq L_{s_{k',\rmL}}+\Delta_{s_{k',\rmL},\rmL}^{\rmC,1} \eqe
 where we used $s_{k'+1,\rmL}< \tau_{s_{k',\rmL},\rmL}^{\rmC,1}$ in the last step. Thus $s_{k,\rmL}\in \cT_{s_{k',\rmL},\rmL}^{\rmC,1}$ and we further have $t\in \cT_{s_{k,\rmL}}^{\rmC,1}\subseteq \cT_{\wt s_{k,\rmL}}^\cell$ and $\partial\eta(I_t)\subseteq H_{\wt s_{k,\rmL}}^\cell$.  This proves claim (ii). If $\inf_{u\in[s,s_{k,\rmL}]}L_u=L_s$, then clearly $s_{k,\rmL}<\tau_{s,\rmL}^{\rmC,1}$ and $L_{s_{k,\rmL}}\geq L_s$.    Replacing $s_{k',\rmL}$ in~\eqref{eq:pf-close-0900} with $s$ then gives $s_{k,\rmL}\in \cT_{s,\rmL}^{\rmC,1}$.  Let $s'' = \inf\{u\geq s:u\in\cJ^-_\rmL\}$. Then $s''\in \cT_{s,\rmL}^{\rmC,1}$  and therefore $s''\in \cT_{s_{k,\rmL}}^{\rmC,1}$ and $\partial\eta(I_{s''})\subseteq H_{\wt s_{k,\rmL}}^\cell$. Then claim (ii) for this case also follows since $z_{s_{k,\rmL}} = z_{s_{k-1,\rmL}} = \eta(s)$ and $s\in I_{s''}$.

 For the claim (iii), by claim~(\ref{cond:thm:KMT-0-cone-(ii)}) of Theorem~\ref{thm:KMT-0-cone}, \cite[Lemma 5.9, Lemma 5.10]{Tutte} and the Borel-Contelli lemma, there is some constant $C>1$ such that for every large enough $n$, $\sup_{u\in [-2^{2n},2^{2n}]}|L_u-\cL_u|\leq Cn$. We work in this setting. By our definition, we have $$\ell_{s,k,\rmL} = L_s - \inf_{u\in [s,s_{k,\rmL}]} L_u, \ \ \inf_{u\in [s,s_{k,\rmL}]} \cL_u  = \cL_s-k.$$
 Then $$|\ell_{s,k,\rmL}-k|\leq \sup_{u\in [s,s_{k,\rmL}]}|(L_s-\cL_s) - (L_u-\cL_u)|\leq 2Cn$$
 since $[s,s_{k,\rmL}]\subseteq [-2^n,2^n]$. This finishes the proof of the first part.

 % Now let $k'<k$ be the largest number such that $z_{s_{k',\rmL}}$ is different from $z_{s_{k,\rmL}}$. Let $u = s_{k',\rmL}$ and $t_{k,\rmL}^+$ be the last time when $\eta$ hits $z_{s_{k,\rmL}}$. Similar to the proof of claim (ii), where we work with the time $u$ instead of $s$,  we have $s_{k,\rmL}\leq t_{k,\rmL}^+<\tau_{u,\rmL}^{\rmC,1}$, $L_{s_{k,\rmL}}\geq L_{t_{k,\rmL}^+}$ and \eqb \label{eq:pf:construction-embed-cell-0-A0}0\geq L_{t_{k,\rmL}^+}-L_u\geq -\Delta_{u,\rmL}^{\rmC,1}.\eqe  By our assumption, $u\in [-2^n,2^n]$, $\tau_{u,\rmL}^{\rmC,1}\leq 2^{2n}$, and therefore by the definition~\eqref{eq:def-TtCd-0011},  $\Delta_{u,\rmL}^{\rmC,1}\leq 4Cn+2$, which further implies that
 % $$\cL_{s_{k,\rmL}} - \cL_u\geq L_{s_{k,\rmL}}-L_u - |\cL_{s_{k,\rmL}} -L_{s_{k,\rmL}} |-|\cL_u-L_u|\geq -\Delta_{u,\rmL}^{\rmC,1}-4Cn\geq -6Cn-2.$$
 % Since for each $j$, $\cL_{s_{j,\rmL}} = \cL_s-j$, we have $\cL_u=\cL_s-k'$ and $\cL_{s_{k,\rmL}}=\cL_s-k$, so the above implies
 % \begin{equation}\label{eq:pf:construction-embed-cell-0-A}
 %     0<k -k'\leq 6Cn+2.
 % \end{equation}
 % Now combining~\eqref{eq:pf:construction-embed-cell-0-A0} and~\eqref{eq:pf:construction-embed-cell-0-A}, we have
 % \begin{equation}
 %     |L_s - L_{t_{k,\rmL}^+} - k|\leq |L_{t_{k,\rmL}^+}-L_u|+|L_s-L_u-\cL_s+\cL_u|+|\cL_s-\cL_u-k|\leq \Delta_{u,\rmL}^{\rmC,1}+2Cn+ (6Cn+2)\leq 20Cn.
 % \end{equation}
 % The claim (iii) then follows since $L_s-L_{t_{k,\rmL}^+} $ is the quantum length of the segment of the left boundary of $\eta((-\infty,s))$ from $\eta(s)$ to $z_{s_{k,\rmL}}$.
 \end{proof}

\begin{proof}[Proof of Theorem~\ref{thm:conv-0-cone-pack-Hcell}]
Below we assume $r$ is sufficiently large.    By Proposition~\ref{prop:property-CH}, the conditions in Theorem~\ref{thm:CellSystemPack} holds for  $(\mr\cH^\cell,\mr \cH^\lge)$, and claim (\ref{cond:thm:conv-0-cone-pack-Hcell-(i)}) follows from Theorem~\ref{thm:CellSystemPack}. Note that for $t\in\cJ^-$, $\frtr(t)$ is in the convex hull of the union of the disk for $\frv_t$ and its neighboring disks, so~\eqref{eq:E-pack} is a consequence of claim (\ref{cond:thm:conv-0-cone-pack-Hcell-(i)}), ~\eqref{eq:E-curve-pack} and a triangle inequality. Now we prove~\eqref{eq:E-curve-pack}. Let $t\in\bbR$ such that $\eta(t)\in B(0;r)$. Let $t_0 = \sup\{u\leq t:u\in\mr\cJ\}$,   $t_\rmL^+$ be the left  endpoint of $\fre_{t_0}$, and $ t^+ = \inf\{u> t_0: u\in\mr\cJ^-\}$. Following Lemma~\ref{lem:time-set-sew}, we have $t_\rmL^+\in \mr\cT_{t_0,\rmL}^{\rmC,1}$ and by~\eqref{eq:def-TtCd-mr-0}, $t^+\in  \mr\cT_{t_0,\rmL}^{\rmC,1}$, and therefore by~\eqref{eq:def-TtCd-mr} we have $t^+\in \mr\cT_{t_\rmL^+}^{\rmC,1}$. Since $t\in\mr I_{t^+}$, by Corollary~\ref{cor:eta-boundary}, $\partial \eta(\mr I_{t^+})\cap B(0;r)\neq\emptyset$. By~\eqref{eq:cell-area}, we have $\partial\eta(\mr I_{t^+})\subseteq \mr H_{t_\rmL^+}^\cell$ and in particular  $\mr H_{t_\rmL^+}^\cell\cap B(0;r)\neq\emptyset$. Furthermore, by Lemma~\ref{lem:GMS2.9}, for large enough $r$, we have $\diam(\mr H_{t_\rmL^+}^\cell)<0.1\e r$ and thus $|\eta(t)-c(\mr H_{t_\rmL^+}^\cell)|\leq 0.1\e r$. Therefore by Theorem~\ref{thm:CellSystemPack},  {up to a reflection of $\cP$, for each $t$ such that $\eta(t)\in B(0;r)$,} $|\eta(t)-o(\frv_{t_\rmL^+})|<0.2\e r$ for $r$ large enough. By a translation, we may further assume that the disk for the cell $\mr H_0^\cell$ is centered at 0, which does not affect the pervious convergence. On the other hand, $\frtr(t)$ is in the convex hull of the union of the disk for $\frv_{t_\rmL^+}$ and its neighboring disks. This along with claim (\ref{cond:thm:conv-0-cone-pack-Hcell-(i)}) applied to $B(0;3r)$ and a triangle inequality gives~\eqref{eq:E-curve-pack}, which completes the proof for (\ref{cond:thm:conv-0-cone-pack-Hcell-(ii)})  {up to a possible reflection of $\cP$}.

    To prove claim (\ref{cond:thm:conv-0-cone-pack-Hcell-(iii)}), let $s,t\in\mr\cJ$ such that $\eta([s,t])\subseteq B(0;r)$.   By~\eqref{eq:E-curve-pack} and claim  (\ref{cond:thm:conv-0-cone-pack-Hcell-(i)}), all the disks for the vertices of $T_{s,t}$ are contained within $B(0;1.5 r)$ if we assume $r$ is large enough. Let $\frv=\frv_{x_{j,q}}$ be a boundary vertex of $T_{s,t}$ that is on an edge $\fre_u$ where $u\in [s,t]$. Let $\wt s\in\cJ^-_\rmL$ such that $\frv_{\wt s}$ is also on the edge $\fre_u$. Following the identical argument as in the previous paragraph, using Lemma~\ref{lem:time-set-sew}, ~\eqref{eq:def-TtCd}, for  $u' = \inf\{u_1\geq u: u_1\in\cJ^-\}$, we have $u',\wt s\in \cT_u^{\rmC,1,0}$ and therefore $u'\in \cT_{\wt s}^{\rmC,1}$.  Then by~\eqref{eq:cell-area-0},  $\partial\eta(I_{u'})\subseteq H_{\wt s}^\cell$, and by Corollary~\ref{cor:eta-boundary}, $H_{\wt s}^\cell\cap B(0;r)\neq\emptyset$.
    By Lemma~\ref{lem:GMS2.9} applied to $\cH^\cell$, for $r$ large enough $\diam (H_{\wt s}^\cell)<0.1\e r$. By claim~(\ref{cond:lem:construction-embed-cell-0-(ii)}) of Lemma~\ref{lem:construction-embed-cell-0}, the  corresponding segment of $\partial\eta((-\infty,x])$ between $z_{x_{j,q}}$ and $z_{x_{j-1,q}}$ is contained within the convex hull of $H_{\wt s}^\cell$ and has diameter less than $0.1\e r$. Then $|\eta(u) - z_{x_{j,q}}|\leq \diam(H_{\wt s}^\cell)<0.1\e r$. By~\eqref{eq:E-curve-pack}, $|\eta(u)-\frtr(u)|\leq 0.1\e r$ for $r$ large enough, while by claim (\ref{cond:thm:conv-0-cone-pack-Hcell-(i)}) applied to $B(0;3r)$, since $\frtr(u)$ is within the convex hull of the union of the disk for $\frv$ and its neighboring disks,   $|\frtr(u)-o(\frv)|\leq 0.1\e r$. Thus $|z_{x_{j,q}}-o(\frv)|\leq 0.4\e r$. This verifies the first part of claim (\ref{cond:thm:conv-0-cone-pack-Hcell-(iii)}) up to a global reflection of $\cP$.

    %while by~\eqref{eq:E-curve-pack} along with claim (i), we have $|\eta(u) - o(\frv)|<0.1\e r$. This together with a triangle inequality verify the first part of claim (iii).
 To prove the last sentence of claim  (\ref{cond:thm:conv-0-cone-pack-Hcell-(iii)}), again by claim~(\ref{cond:lem:construction-embed-cell-0-(ii)}) of Lemma~\ref{lem:construction-embed-cell-0}, the segment of the left boundary of $\eta((-\infty,s])$ from $\eta(s)$ to $z_{s_{1,\rmL}}$ is contained in the convex hull of $H_{{s_{1,\rmL}}}^\cell$ and therefore has diameter less than $\e r$ for all large enough $r$, which implies claim  (\ref{cond:thm:conv-0-cone-pack-Hcell-(iii)}) for $z_{s_{i_1,\rmL}}$ if $i_1=1$, since in this case $t<s_{1,\rmL}$. If $i_1\geq2$, $s_{i_1-1,\rmL}\in [s,t]$. Then $z_{s_{i_1-1,\rmL}}$, the leftmost point of $\eta([s,t])$ and $z_{s_{i_1,\rmL}}$  are on the left boundary of $\eta((-\infty,s]))$ and ordered from right to left, while by the same argument, using claim (\ref{cond:lem:construction-embed-cell-0-(ii)}) of Lemma~\ref{lem:construction-embed-cell-0},  this segment of $\partial\eta((-\infty,s]))$ between $z_{s_{i_1-1,\rmL}}$ and $z_{s_{i_1,\rmL}}$ is contained within the convex hull of some cell $H^\cell\in\cH^\cell$ that intersects $B(0;r)$ and therefore has diameter less than $\e r$ by Lemma~\ref{lem:GMS2.9}. This gives the desired closeness between $z_{s_{i_1,\rmL}}$ and the leftmost point of $\eta([s,t])$. Now we prove the analogous result for $z_{t_{j_1,\rmL}}$. Assume $\inf_{u\in [s,s_{i_1,\rmL}]} L_u< L_s$ and let $k$ be the largest integer such that $\inf_{u\in [s,s_{k,\rmL}]} L_u> \inf_{u\in [s,s_{i_1,\rmL}]}L_u$.    Then the same argument as in the proof of Lemma~\ref{lem:construction-embed-cell-0} implies that $L_u\geq L_{s_{k,\rmL}} - \Delta_{s_{k,\rmL},\rmL}^{\rmC,1}$ for $u\in [s_{k,\rmL},s_{i_1,\rmL}]$, $s_{i_1,\rmL}<\tau_{s_{k,\rmL},\rmL}^{\rmC,1}$ and $s_{i_1,\rmL}\in \cT_{s_{k,\rmL},\rmL}^{\rmC,1}$. Furthermore, $t\in [s_{k,\rmL},s_{i_1,\rmL}]$ and $\inf_{u\in [s_{k,\rmL},s_{i_1,\rmL}]} L_u<\inf_{u\in [s,s_{k,\rmL}]}L_u$ and therefore $-\Delta_{s_{k,\rmL},\rmL}^{\rmC,1}<\inf_{u\in [s,t]}L_u-L_{s_{k,\rmL}}\leq 0$. On the other hand, by the sewing procedure, $\cL_{t_{j_1,\rmL}}\leq \cL_{s_{k,\rmL}}$ and $t_{j_1,\rmL}\in [s_{k,\rmL},s_{i_1,\rmL}]$, and similar to~\eqref{eq:pf-close-0900}, we have $L_{t_{j_1,\rmL}}\leq L_{s_{k,\rmL}}+\Delta_{s_{k,\rmL},\rmL}^{\rmC,1}$. Observe that $\eta([s_{k,\rmL},s_{i_1,\rmL}])$ covers  the segment of $\partial\eta((-\infty,t])$ between $z_{s,t,\rmL}^-$ and $z_{t_{j_1,\rmL}}$. Suppose $u\in \cJ^-_\rmL$ such that for some $w\in I_u\cap[s_{k,\rmL},s_{i_1,\rmL}]$,   $\eta(w)$ is on this segment.  Then  $L_w$ is between $\inf_{u\in [s,t]}L_u$ and $\inf_{u\in [t,t_{j_1,\rmL}]}L_u$, where both are values are in $[L_{s_{k,\rmL}}-\Delta_{s_{k,\rmL},\rmL}^{\rmC,1},L_{s_{k,\rmL}}+\Delta_{s_{k,\rmL},\rmL}^{\rmC,1}]$. Furthermore, $u\leq s_{i_1,\rmL} \leq \tau_{s_{k,\rmL},\rmL}^{\rmC,1}$. Thus $u\in \cT_{s_{k,\rmL},\rmL}^{\rmC,1}$, which combined with $s_{i_1,\rmL}\in \cT_{s_{k,\rmL},\rmL}^{\rmC,1}$ gives $u\in \cT_{s_{i_1,\rmL}}^{\rmC,1}$. By~\eqref{eq:cell-area-0}, $\partial\eta(I_u)\subseteq H_{s_{i_1,\rmL}}^\cell$, and since $u$ is arbitrary, this further implies that the segment of $\partial\eta((-\infty,t])$ between $z_{s,t,\rmL}^-$ and $z_{t_{j_1,\rmL}}$ is within the convex hull of  $H_{s_{i_1,\rmL}}^\cell$ and hence has diameter less than $\e r$ if we assume $r$ is large enough. The case where  $\inf_{u\in [s,s_{i_1,\rmL}]} L_u= L_s$ can be handled analogously, where the same proof above works with $s$ in place of $s_{k,\rmL}$.

 The same applies for $z_{s_{i_2,\rmR}}$, $z_{t_{j_2,\rmR}}$ and the rightmost point of $\eta([s,t])$. This verifies the claim  (\ref{cond:thm:conv-0-cone-pack-Hcell-(iii)}) {up to a possible reflection of $\cP$}. {Now suppose with $\bbP_\rmE$-probability, the claim (\ref{cond:thm:conv-0-cone-pack-Hcell-(ii)}) and (\ref{cond:thm:conv-0-cone-pack-Hcell-(iii)})   hold for the reflection of $\cP$. Assume $p>0$. By the scale invariance in Lemma~\ref{lem:Rb},  we may choose some constant $c>0$ small enough, then with  $\bbP_\rmE$-probability $1-p/2$, for some large $r>0$, $B(0;cr)\subseteq \eta([s_n,t_n])\subseteq B(0;c^{-1}r)$ where $s_n = \inf(\mr\cJ\cap [-2^n,\infty))$ and  $t_n = \inf(\mr\cJ\cap [2^n,\infty))$. We fix $r$ large enough and work with the intersection between two events, which has positive probability. Let $\gamma_{s_n,t_n}$ be an $o(r)$-perturbation of the curve formed by the points $z_{x_{i,q}}$ and segments of $\partial((-\infty,s_n]))\cup\partial((-\infty,t_n]))$, such that it is a simple curve and forms an  embedding of $\partial T_{s_n,t_n}$ with the same orientation as on the planar map $T$. Let $\gamma_{s_n,t_n}'$ be the embedded curve for $\partial T_{s_n,t_n}$  formed by joining the centers of adjacent disks in $\cP$ for vertices on $\partial T_{s_n,t_n}$ with line segments, which also preserves the orientation.   The curves  $\gamma_{s_n,t_n}$ and  $\wt\gamma'_{s_n,t_n}$ have distance at most $4\e r$ when viewed as curve modulo time parameterizations, where $\wt\gamma'_{s_n,t_n}$ is   the reflection of $\gamma'_{s_n,t_n}$ , and using $B(0;cr)\subseteq \eta([s_n,t_n])$, the two curves must have the same winding number around 0.  Since we have centered the disk for $\mr H_0^\cell$ at 0 in $\cP$, $\gamma_{s_n,t_n}$ and  $\gamma'_{s_n,t_n}$ must have the same winding number around 0. This is a contradiction and thus $p=0$. This finishes the proof for claims (\ref{cond:thm:conv-0-cone-pack-Hcell-(i)})-(\ref{cond:thm:conv-0-cone-pack-Hcell-(iii)}).
}

    To prove the claim  (\ref{cond:thm:conv-0-cone-pack-Hcell-(iv)}), by~\eqref{eq:SLE-contain-ball-1}, ~\eqref{eq:SLE-contained-in-ball-1} and the Borel-Contelli lemma, a.s.\ for each $\delta>0$, for large enough $n$, $B(0;2^{(\frac{1}{\gamma Q}-\delta)n})\subseteq \eta([-2^n,2^n])\subseteq B(0;2^{(\frac{1}{\gamma Q}+\delta)n})$. One can further infer that for some $\beta>0$, a.s.\ for large enough $r>0$, $B(0;4r)\subseteq \eta([-r^\beta,r^\beta])\subseteq B(0;r^{1+\delta})$ and therefore if $\eta([s,t])\subseteq B(0;r)$ then $s,t\in [-2^m,2^m]$ where $m\leq 10\beta\log r$. Since $s_{i_1-1,\rmL}\leq t$ and $t_{j,\rmL}\leq s_{i_1+1,\rmL}$ for $j=1,...,j_1$,  {by \cite[Lemma 5.10]{Tutte}, part~(\ref{cond:thm:KMT-0-cone-(ii)}) of Theorem~\ref{thm:KMT-0-cone} and Borel-Contelli lemma, for some constant $C_1>0$,} almost surely for large enough $m$, $\sup_{u\in [-2^{2m},2^{2m}]}|L_u-\cL_u|\leq Cm $, and for every $u\in\cJ\cap[-2^m,2^m]$, $\inf\{u'\geq u: \cL_{u'}<\cL_u-3\}<2^{2m}$,  $\inf\{u'\geq u: \cR_{u'}<\cR_u-3\}<2^{2m}$. Then for $r$ large enough, all the times $s,t$ and $x_{j,q}$ are within $[-2^{2m},2^{2m}]$ with $m\leq 10\beta\log r$. We work with this setting.  %  The claim (\ref{cond:thm:conv-0-cone-pack-Hcell-(iv)}) for the $x=s$ case then follows from the claim~(\ref{cond:lem:construction-embed-cell-0-(iii)}) of Lemma~\ref{lem:construction-embed-cell-0}. For the  $x=t$ case, by the claim (\ref{cond:thm:conv-0-cone-pack-Hcell-(iii)}), the points $z_{t_{j,q}}$ are in $B(0;1.5r)$, and since $z_{t_{j,q}}$ is not in the interior of $\eta((-\infty, t_{j,q}]))$, we must have $t_{j,q}\leq t'$, and therefore 
    The first part of claim (\ref{cond:thm:conv-0-cone-pack-Hcell-(iv)}) then follows from the claim~(\ref{cond:lem:construction-embed-cell-0-(iii)}) of Lemma~\ref{lem:construction-embed-cell-0}. For the last claim, suppose $q=\rmL$ and $z_{x_{k,\rmL}}$ is on $\partial\eta((-\infty,s])\backslash\partial\eta([s,t])$. Then the segment of $\partial\eta((-\infty,s])$ between $z_{x_{k,\rmL}}$ and $z_{s,t,\rmL}^-$ is contained with in the segment of $\partial\eta((-\infty,s])$ between $z_{s_{i_1,\rmL}}$ and $z_{s,t,\rmL}^-$ so we may assume $x=s,k=i_1$. Now the quantum length of the segment of $\partial\eta((-\infty,s])$ between $\eta(s)$ and $z_{s,t,\rmL}^-$ is equal to $L_s-\inf_{u\in[s,t]}L_u$, whereas $i_1 = 1+\cL_s-\inf_{u\in [s,t]}\cL_u$. Then 
    $$|L_s-\inf_{u\in[s,t]}L_u - (1+\cL_s-\inf_{u\in [s,t]}\cL_u)|\leq 1+\sup_{u\in[s,t]}|L_s-L_u-(\cL_s-\cL_u)|\leq 1+6Cm.$$
    The last claim then follows from the first part of (\ref{cond:thm:conv-0-cone-pack-Hcell-(iv)}) as the quantum length of the  segment of $\partial\eta((-\infty,s])$ between $\eta(s)$ and $z_{s_{i_1,\rmL}}$ is in $[i_1-C\log r,i_1+C\log r]$.
    \end{proof}

\begin{proof}[Proof of Theorem~\ref{thm:conv-0-cone-unif-Hcell}]
    By Proposition~\ref{prop:property-CH}, the conditions in Theorem~\ref{thm:invariance-uniformization} holds for  $(\cH^\cell, \cH^\lge)$. By identical arguments as in the proof of Theorem~\ref{thm:conv-0-cone-pack-Hcell}, a.s.\ for large enough $r$, for $t\in\cJ^-_\rmL$ such that $\eta(t)\in B(0;r)$, we have $H_t^\cell\cap B(0;r)\neq\emptyset$. Similarly, for   $t\in\cJ^-_\rmR$ such that $\eta(t)\in B(0;r)$ and $\{s,t\}\in\cE_\rmT$, $s\in \cJ^-_\rmL$, by~\eqref{eq:def-cT-cell} along with Assertion~\ref{cond:prop:time-set-A-0-3} of Proposition~\ref{prop:time-set-A-0}, $t\in \cT_s^\cell$, and by~\eqref{eq:cell-area-0}, $\partial\eta(I_t)\subseteq H_s^\cell$ and $H_s^\cell\cap B(0;r)\neq\emptyset$. The claims~\eqref{eq:E-unif} and~\eqref{eq:E-edge} as well as the final claim in the first part then follow from Theorem~\ref{thm:invariance-uniformization}. The proof for~\eqref{eq:E-curve-unif} is the same as that for~\eqref{eq:E-curve-pack}, where  $t\in \bbR$ and $\eta(t)\in B(0;r)$, for $t_0 = \sup\{u\leq t: u\in \cJ\}$ and $t_\rmL^+$ being the left endpoint of $\fre_{t_0}$, we have $|\eta(t)-\varphi(\frv_{t_\rmL^+})|<0.2\e r$ up to a reflection and $\frtr(t)$ is within the union of all embedded faces adjacent to $\frv_{t_\rmL^+}$ under $\varphi$.  The proof of the second assertion is identical to the proof of claim~(\ref{cond:thm:conv-0-cone-pack-Hcell-(iii)}) of Theorem~\ref{thm:conv-0-cone-pack-Hcell}, where we use the same orientation argument to exclude the case where the reflection of the embedding via $\varphi$ is close to $\cH^\cell$.
\end{proof}

\section{Convergence of circle packing for triangulations from tree-weighted maps}\label{subsec:pack-pf}

In this section, we prove the convergence of circle packing for triangulations from tree-weighted maps, namely Theorems~\ref{thm:pack-disk}-\ref{thm:pack-cone}. %We will combine the methods from {mated-CRT} and {Section 6 of the Tutte draft.}  
We start by gathering some results on the convergence of circle packing maps to conformal maps in~\cite{MatedCRT} in Section~\ref{subsec:He-Schramm}. In Section~\ref{subsec:pack-iid}, we work with the triangulations encoded by i.i.d.\ walks using the transform in Theorem~\ref{thm:He-Schramm-extension-1} and ~\cite[Section 6.1]{Tutte}. In Section~\ref{subsec:pack-disk-pf}-\ref{subsec:pack-cone-pf}, we use local absolute continuity as well as Theorem~\ref{thm:He-Schramm-extension-1}-\ref{thm:He-Schramm-disk} to transfer to other cases and prove Theorems~\ref{thm:pack-disk}-\ref{thm:pack-cone}.

% Recall the operator $\wh \cZ=\Phi_{\rm simple}(\cZ)$ defined for continuous-time nearest-neighbor walks $\cZ:I\to \bbZ^2$ where $I\subseteq \bbR$ is an interval, and $\mr\cJ$ is the time when $\mr\cZ$ makes a jump. The map $\Phi_{\rm simple}$ is not local, in the sense that $\mr\cZ|_{[a,b]}$ is not determined by $\cZ|_{[a,b]}$. To circumvent this,  we will frequently use the following simple observation.

% \begin{lemma}\label{lem:A2}
%      Let $a_0<a<b<b_0$, $I_0=[a_0,b_0]$, and $I\subseteq\bbR$ be an interval such that $I_0\subseteq I$. Suppose $\cZ:I\to\bbZ^2$ and $\cZ_0:I_0\to\bbZ^2$ are continuous-time nearest-neighbor walks on $\bbZ^2$ such that $\cZ|_{I_0}=\cZ_0$. Let $\mr \cZ=\Phi_{\rm simple}(\cZ)$ and $ \mr \cZ_0=\Phi_{\rm simple}(\cZ_0)$.  Then if $\mr\cJ\cap (a_0,a)\neq\emptyset$ and $\mr\cJ\cap (b,b_0)\neq\emptyset$, then $\mr\cZ_0|_{[a,b]} = \mr\cZ|_{[a,b]}$.
% \end{lemma}

In Section~\ref{subsubsec:Mullin-finite}, we defined the map $\Phi_\rmT(\cZ) = (T,\mathfrak{tr})$ for finite discrete nearest-neighbor walks on $\bbZ^2$, and we will also extend it to the setting  for  continuous-time nearest-neighbor walks on $\cZ:I\to\bbZ^2$ during a finite interval in an identical way   as discussed after~\eqref{eq:def-I_t-simple}. As in the infinite volume case, we parameterize $\mathfrak{tr}:I\to \cF T$ such that $\mathfrak{tr}$ moves to the next faces only when $\cZ$ makes a jump. When $T$ is embedded on $\bbC$, we will also view $\mathfrak{tr}:I\to\bbC$ as a path where $\mathfrak{tr}(t)$ is an arbitrary point on the corresponding embedded face. We will continue to view circle packings for $\mr T=\Phi_{\rm simple}(T)$ as circle packing for the triangulation $T$ with potential multiple edges. For  triangulations $T,T'$, where $T$ is a submap of $T'$, we will often compare circle packing of $T$ with circle packing of $T'$ restricted to vertices in $T$; when doing this, we will always assume that $\Phi_{\rm simple}(T)$ is a submap of $\Phi_{\rm simple}(T')$.

\subsection{Convergence of circle packing maps to conformal maps}\label{subsec:He-Schramm}

In this subsection we gather some result on the convergence of circle packing maps to conformal maps.
An \emph{interstice} of a circle packing $\cP$ is a connected component of the complement of $\cup_{D\in \cP} D$. The \emph{support}, $\supp(\cP)$, of $\cP$ is defined to be the union of the disks in $\cP$ and all bounded interstices of $\cP$. 
Let $\{T^n\}_{n\geq1}$ be simple  disk triangulations, and $\cP^n, \wt\cP^n$ be circle packings of $T^n$ in domains $D$ and $\wt D$, respectively. This induces a natural  {(non-unique)} homeomorphism $f^n$ between the supports of $\cP^n$ and $\wt\cP^n$, sending each disk in $\cP^n$ to its corresponding disk in $\wt\cP^n$. It was conjectured by Thurston that if the maximal diameter of disks in $\cP^n$ and $\wt\cP^n$ converge to 0, then the circle packing map $\{f^n\}$ approximates a conformal map $f$ from $D$  to $\wt D$ as $n\to\infty$. This was proved by Rodin and Sullivan for the hexagonal lattice case %\nina{*does this paper consider any triangulation with uniformly bounded degrees or only the hexagonal lattice? Or do you mean that the proof carries through to the uniform bounded degree case?*} {[*It was commented in ~\cite{HeSchramm} that the proof by~\cite{RodinSullivan87} carries to uniform bounded degree case *]}, 
and later extended to general simple disk triangulations by He and Schramm~\cite{HeSchramm}. For a circle packing $\cP$, we define $\partial\cP$ for the union of the disks that are on the boundary. We will use an extension of~\cite[Theorem 1.1]{HeSchramm}, where we allow some tiny portion of disks in $\cP^n$ and $\wt\cP^n$ to  lie outside $D$ and $\wt D$. We will use the  following definition on the uniform convergence of functions $\{f^n\}$ defined on different domains.
\begin{definition}\label{def:unif-convergence}
    Let  $D,D^n\subseteq\bbC$ be Jordan domains such that the Hausdorff distance between $\ol {D^n}$ and $\ol D$ tends to 0 as $n\to\infty$.   Let $f^n:\ol {D^n} {\to\bbC}$ and $f:\ol D\to\bbC$.   We say $f^n$ converges to $f$ uniformly, if for each $\e>0$, there exists $\delta>0$ and $N>0$, such that for each $n>N$, $z'\in \ol {D^n}$ and $z\in \ol D$ with $|z'-z|<\delta$, we have $|f^n(z')-f(z)|<\e$.
\end{definition}

We also recall the topology on curves viewed modulo time parameterization. %as in~\cite{GMSinvariance}.
Let $\beta_1:[0,T_{\beta_1}]\to\bbC$ and $\beta_2:[0,T_{\beta_2}]\to\bbC$ be continuous curves defined on possibly different time intervals. We define 
\begin{equation}\label{eq:curve-metric-0}
    \mathbb{d}^{\rm CMP}(\beta_1,\beta_2):=\inf_\phi \sup_{t\in[0,T_{\beta_1}]}|\beta_1(t)-\beta_2(\phi(t))|,
\end{equation}
where the infimum is over all increasing homeomorphisms $\phi:[0,T_{\beta_1}]\to [0,T_{\beta_2}]$ and ``CMP'' stands for ``curves modulo parameterization". To compare circle packings of maps and its submap, we also introduce the \emph{circle embedding} $\cP = \cup_{v\in\cV  T} D_v$  of a disk triangulation $T$, where $D_v$ are   closed disks with disjoint interiors, such that if $\{v_1,v_2\}\in\cE T$ then $D_{v_1}$ and $D_{v_2}$ are tangent, while if $D_{v_1}$ and $D_{v_2}$ are tangent and $v_1,v_2$ are not both boundary vertices then $\{v_1,v_2\}\in\cE T$. In other words, a circle embedding is a circle packing except that we also allow additional tangencies of disks for boundary vertices. We further connect centers of  disks $D_{v_1}$ and $D_{v_2}$ where $\{v_1,v_2\}\in\cE T$ by line segments. This gives an embedding of $T$, and we require that this embedding preserves the orientation of $T$.   Note that if $T\subseteq T_1$ are triangulations, and $T$ is a disk triangulation, then the restriction of a circle packing for $T_1$ to disks for vertices in $T$ gives a circle embedding of $T$ but not necessarily a circle packing of $T$. For a circle embedding of a simple disk triangulation $T$, we also define its \emph{carrier} to be $\cup_{f\in\cF T}\Delta_f$, where the union is over all triangular faces of $T$ and $\Delta_f$ is the triangle formed by the centers of the disks for vertices adjacent to $f$.

The following is from~\cite[Theorem 1.12, Theorem 1.14]{MatedCRT}.

\begin{theorem}\label{thm:He-Schramm-extension-1}
   Let $D,\wt D\subsetneq \bbC$ be    {bounded Jordan} domains. Let $\{T^n\}_{n\geq1}$ be simple disk triangulations, $\cP^n, \wt\cP^n$ be circle embeddings of $T^n$, and $f^n$ be homeomorphisms from the carrier of $\cP^n$ to that of $\wt\cP^n$, sending  each $D_v\cap \mathrm{carrier}(\cP^n)$ where  $v\in\cV T^n$ to $\wt D_v\cap \mathrm{carrier}(\wt \cP^n)$. Consider the following conditions:
   \begin{enumerate}[(I)]
    \item The maximal spherical diameter of the disks in $\cP^n$ converges to 0 as $n\to\infty$; \label{cond:thm:He-Schramm-extension-1-(i)}
       \item The distance between  the union of boundary disks of $\cP^n$ and $D$  converges to 0 as $n\to\infty$, in the sense that
       $$\lim_{n\to\infty}\max_{{D^n\in\partial\cP^n}}\inf_{z\in D^n}d(z,\partial D) = 0,$$
       %\nina{*Could you write ``$D^n\in\partial\cP^n$'' instead in the subscript of max and define $\partial\cP^n$ as a subset of $\cP^n$? That'll make this easier to read*}
       and the same holds for $\wt\cP^n$ and $\wt D$ instead of $\cP^n$ and $D$. Moreover, the maximal spherical diameter of the disks in $\wt\cP^n$ intersecting $\bbC\backslash \ol{\wt D}$ converges to 0 as $n\to\infty$; \label{cond:thm:He-Schramm-extension-1-(ii)} %\nina{*where is this used?*} {[*This is used to show that the maximal diameter of the disks in $\wt\cP^n$ converges to 0*]} %The Hausdorff distance between the union of boundary disks of $\cP^n$ (resp.\ $\wt\cP^n$) and $D$ (resp.\ $\wt D$) converges to 0 as $n\to\infty$; \tcr{[*To do: double check if need to use curve modulo parameterization distance here instead of Hausdorff distance*]}
       \item For some $p_0\in D$, for some $N_0>0$, the sequence $\{f^n(p_0):n\geq N_0\}$ is compact in $\wt D$; \label{cond:thm:He-Schramm-extension-1-(iii)}
       \item Let $\gamma_n$ (resp.\ $\wt \gamma_n$) be the loop formed by connecting the centers of adjacent boundary disks of $\cP^n$ (resp.\ $\wt\cP^n$) using straight line segments, i.e., $\gamma_n$ (resp.\ $\wt \gamma_n$) is the boundary of the carrier of $\cP^n$ (resp.\ $\wt\cP^n$). Then the distance~\eqref{eq:curve-metric-0} between $\gamma_n$ and $\partial D$ (resp.\ $\wt\gamma_n$ and $\wt\partial D$) tends to 0 as $n\to\infty$. \label{cond:thm:He-Schramm-extension-1-(iv)}
   \end{enumerate}
 Suppose Conditions (I)-(III) holds. Then Conditions (I) and (III) hold for $(\wt\cP^n,\wt D)$ in place of $(\cP^n,D)$, and there is   a subsequence of  $\{f^n\}$ converging to a conformal map $f:D\to\wt D$ uniformly on compact subsets of $D$. If we further assume Condition (IV) holds,    then the above convergence is  uniform  {in the sense of Definition~\ref{def:unif-convergence}.} %\nina{*should define what this means*}.
\end{theorem}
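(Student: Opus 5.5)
The plan is to follow the strategy of He and Schramm~\cite{HeSchramm}, adapted to circle embeddings whose boundary disks only approximately touch $\partial D$ and $\partial\wt D$. In the first step I would transfer Condition~(\ref{cond:thm:He-Schramm-extension-1-(i)}) to $\wt\cP^n$. Suppose, to the contrary, that some disk $\wt D_{v_n}\in\wt\cP^n$ has spherical diameter bounded below. By Condition~(\ref{cond:thm:He-Schramm-extension-1-(ii)}) this disk must lie essentially inside $\ol{\wt D}$. Then I would use the vertex extremal length (VEL) comparison of He--Schramm: it is quasi-invariant between $T^n$ and the combinatorial geometry of either embedding. In $\cP^n$, the disks are small and the boundary disks approach $\partial D$. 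Hence the VEL in $T^n$ between $v_n$ (or a small combinatorial ball around it) and $\partial T^n$, respectively the combinatorial neighbourhood of a far-away disk, tends to infinity. In $\wt\cP^n$, a macroscopic disk forces that VEL to stay bounded. This is the standard ring-lemma-free argument. Condition~(\ref{cond:thm:He-Schramm-extension-1-(iii)}) for $\wt\cP^n$ then follows the same way: $f^n(p_0)$ cannot approach $\partial\wt D$, because the VEL from the vertex near $p_0$ to the boundary is bounded in $\cP^n$.

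In the second step I would prove equicontinuity of $\{f^n\}$ on compacts of $D$, and of the inverses on compacts of $\wt D$, again by VEL and the length--area lemma for disk packings. A small annulus around $z\in D$ has large extremal length, and hence large VEL in $T^n$. Its image therefore separates the small image of the inner disk from the boundary of $\wt\cP^n$ with small diameter, since $\wt\cP^n$ fills $\wt D$ up to $o(1)$. Arzel\`a--Ascoli, combined with Condition~(\ref{cond:thm:He-Schramm-extension-1-(iii)}) excluding a constant limit, gives a subsequential limit $f$ that is a homeomorphism onto $\wt D$. Conformality would then follow from the Rodin--Sullivan/He--Schramm argument: on compacts, degrees are bounded by VEL, so local disk configurations are asymptotically hexagonal or rigid, and one gets quasiconformality with dilatation tending to $1$. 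I would cite~\cite{HeSchramm} for this step essentially verbatim, since it is local and unaffected by boundary behaviour.

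In the final step, assuming Condition~(\ref{cond:thm:He-Schramm-extension-1-(iv)}), I would upgrade to uniform convergence in the sense of Definition~\ref{def:unif-convergence}. By Carath\'eodory, $f$ extends to a homeomorphism $\ol D\to\ol{\wt D}$. The key claim is a uniform boundary equicontinuity: for $z$ near $\partial D$, a point $z'$ of the carrier within $\delta$ of $z$ is mapped near $f(z)$. I would prove it with crosscuts. Take a circle arc of radius $\rho\in(\delta,\sqrt\delta)$ around a boundary point; by the length--area principle, some such arc has a short image under $f^n$, measured via disk diameters along a chain of disks, and it cuts off a region of $\mathrm{carrier}(\cP^n)$ containing $z'$. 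Condition~(\ref{cond:thm:He-Schramm-extension-1-(iv)}) makes the cut-off piece of $\gamma_n$ correspond to a small arc of $\wt\gamma_n$, and this in turn is close in the CMP metric to a small arc of $\partial\wt D$. This is where the Jordan hypothesis and CMP closeness, rather than mere Hausdorff closeness, are needed. The image region is then bounded by a short chain plus a small boundary arc, so it has small diameter.

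I expect this boundary step to be the main obstacle. The difficulty is matching orientation and parametrization of $\gamma_n$ and $\wt\gamma_n$ with the fixed boundary correspondence of $f$, and handling the disks that protrude outside $\ol D$.
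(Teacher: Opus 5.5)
The paper gives no proof of this statement; it imports it from \cite[Theorems 1.12 and 1.14]{MatedCRT}. Your sketch therefore has to stand on its own, and its load-bearing step, transferring (I) to $\wt\cP^n$, does not work as written.

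\textbf{The gap in Step 1.} Suppose $\wt D_{v_n}$ has diameter at least $\delta$.
\begin{enumerate}
\item If the tiny disk $D_{v_n}$ stays in a compact subset of $D$, your argument is fine. In $\cP^n$ the annulus lower bound gives $\mathrm{VEL}(v_n,\partial T^n)\to\infty$. In $\wt\cP^n$, bounding VEL by the Euclidean extremal length of the corresponding curve family (Bojarski-type), plus a Teichm\"uller bound for a ring around a continuum of diameter $\ge\delta$, keeps it bounded. This direction needs nothing about disk sizes in $\wt\cP^n$.
\item The case that matters is $D_{v_n}\to\partial D$. There you only assert that the VEL between $v_n$ and ``the combinatorial neighbourhood of a far-away disk'' stays bounded in $\wt\cP^n$. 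Nothing makes that neighbourhood macroscopic in $\wt\cP^n$.
\begin{enumerate}
\item Proving it is macroscopic would need Euclidean \emph{lower} bounds on VEL inside $\wt\cP^n$. Those fail exactly when a large disk is present: one disk crossing an annulus caps the VEL across it by a constant. So the reasoning is circular.
\item Even two macroscopic sets in a Jordan domain can have unbounded extremal distance inside it, for instance across a narrow neck.
\end{enumerate}
\item Above all, (III) never enters, yet without it the conclusion is false. Take $\wt D=\bbD$ and an admissible $\wt\cP^n$. Compose it with disk automorphisms $M_n$ sending the centre of a tiny boundary horodisk to $0$. This preserves (I) and (II), but produces a disk of diameter $>1$ and pushes $f^n(p_0)$ to $\partial\bbD$.
\item ``(III) for $(\wt\cP^n,\wt D)$'' means that $(f^n)^{-1}(\wt p_0)$ stays in a compact subset of $D$ for some $\wt p_0\in\wt D$. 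Your sentence instead re-derives the hypothesis that $f^n(p_0)$ avoids $\partial\wt D$.
\end{enumerate}
What is missing is an argument in which the normalization is what excludes the macroscopic disk. It must also handle large \emph{boundary} disks. Under (II) alone, such a disk may come close to $\partial\wt D$ at one point while its outer arc lies deep inside $\wt D$. So a priori the carrier of $\wt\cP^n$ need not fill $\wt D$.

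\textbf{Consequences for Steps 2 and 3.} Both lean on Step 1. You use that $\wt\cP^n$ fills $\wt D$ and that the inverses are equicontinuous, and hence that the limit is non-constant, injective and onto; (III) alone does not exclude a constant limit. So the gap propagates. Two smaller points follow.
\begin{enumerate}
\item \emph{Conformality.} Your description is not He--Schramm's mechanism and would fail here. VEL gives no bound on degrees, and degrees are unbounded in general, including in this paper's application. There is also no hexagonal asymptotics, so the bounded-valence route of Rodin--Sullivan/Stephenson is unavailable. He--Schramm's proof for unbounded valences rests on VEL estimates and a discrete argument principle. Citing \cite{HeSchramm} is still legitimate because that argument is local, but for that reason, not the one you give.
\item \emph{Boundary step.} The orientation issue you worry about is settled by (III). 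A short image chain cuts the carrier of $\wt\cP^n$ into two pieces. The piece containing $f^n(p_0)$ cannot be the one bounded by the chain and the short arc of $\wt\gamma_n$, since that piece lies near $\partial\wt D$. What you really need, on both sides, is the uniform local connectivity of the carriers that (IV) provides. For example, it guarantees that a point $z'$ of the carrier within $\delta$ of $z$ lies in the cut-off piece.
\end{enumerate}
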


In Theorem~\ref{thm:He-Schramm-extension-1}, we required that $D,\wt D\neq\bbC$. The following result from~\cite[Theorem 1.16]{MatedCRT} is a version of Theorem~\ref{thm:He-Schramm-extension-1} when $D=\wt D=\bbC$.

\begin{theorem}\label{thm:He-Schramm-sphere}
     Let $\{T^n\}_{n\geq1}$ be  disk triangulations, $\cP^n, \wt\cP^n$ be circle embeddings of $T^n$ in $\bbC$, and $f^n$ be homeomorphisms from the carrier of $\cP^n$ to that of $\wt\cP^n$, sending  each $D_v\cap \mathrm{carrier}(\cP^n)$ where  $v\in\cV T^n$ to $\wt D_v\cap \mathrm{carrier}(\wt \cP^n)$. Assume the following hold:
     \begin{enumerate}[(I)]
         \item The maximal spherical diameter of disks as well as the bounded interstices  in $\cP^n$ tends to 0 as $n\to\infty$;\label{cond:thm:He-Schramm-sphere-(i)}
         \item For every $R>0$, for large enough $n$, $B(0;R)\subseteq \mathrm{carrier}(\cP^n)$; \label{cond:thm:He-Schramm-sphere-(ii)}
         \item For each $n$, $f^n(0)=0$ and $f^n(1)=1$. \label{cond:thm:He-Schramm-sphere-(iii)} 
     \end{enumerate}
     Then (I) and (II) above hold for $\wt\cP^n$ in place of $\cP^n$, and $f^n$ converges uniformly to the identity in the spherical metric, in the sense that for any $\e>0$, there exists some $N>0$, such that for  every $n>N$ and $z\in\mathrm{carrier}(\cP^n)$, $d_{\sph}(f^n(z),z)<\e$.
\end{theorem}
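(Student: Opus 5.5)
We would reduce to the bounded-domain result, Theorem~\ref{thm:He-Schramm-extension-1}, by exhaustion. Then we would identify the limit by Liouville-type rigidity, and control the region near $\infty$ with a combinatorial extremal length argument.

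\textbf{Step 1: the embedding $\wt\cP^n$.} Fix $R>0$. By (\ref{cond:thm:He-Schramm-sphere-(ii)}), for large $n$ the carrier of $\cP^n$ contains $B(0;2R)$, so we can take the sub-triangulation $T^n_R\subseteq T^n$ spanned by the faces whose triangles meet $\ol{B(0;R)}$. Its boundary disks lie within $o(1)$ of $\partial B(0;R)$ by (\ref{cond:thm:He-Schramm-sphere-(i)}), up to filling in interstices. We want small disks and exhaustion for $\wt\cP^n$ as well, and we would obtain both from the conformal invariance of He--Schramm vertex extremal length. Vertex extremal length is defined purely combinatorially on $T^n$. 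For a packing with small disks it is comparable, up to universal constants, to the continuum extremal length of the corresponding curve family; for disks of arbitrary size one still has a one-sided comparison, which is what enters below.

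We would take the annulus $A_R=B(0;R^2)\setminus B(0;R)$. In $\cP^n$, the family of vertex paths in $T^n$ separating $B(0;R)$ from $\partial B(0;R^2)$ has combinatorial extremal length $\asymp 1/\log R$. The same combinatorial family has comparable extremal length in $\wt\cP^n$. Combined with the normalization $f^n(0)=0$, $f^n(1)=1$, this gives three things: a single disk of $\wt\cP^n$ cannot have large spherical diameter; the image of $\ol{B(0;R)}$ stays in a bounded set; and the image of $\bbC\setminus B(0;R^2)$ lies outside a ball of radius $\to\infty$ as $R\to\infty$, uniformly in $n$. From the last point, (\ref{cond:thm:He-Schramm-sphere-(ii)}) for $\wt\cP^n$ follows, and we get control of $f^n$ near $\infty$ in the spherical metric. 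Small interstices in $\wt\cP^n$ follow because bounded interstices are bounded by three disks.

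\textbf{Step 2: local convergence and identification.} With Step~1 in hand, we would apply Theorem~\ref{thm:He-Schramm-extension-1} on $D=B(0;R)$ with $\wt D$ a Jordan domain approximating $f^n(B(0;R))$. Condition (\ref{cond:thm:He-Schramm-extension-1-(iii)}) holds at $p_0=0$ thanks to the normalization. This yields subsequential uniform convergence on compacts to a conformal embedding of $B(0;R)$. A diagonal argument over $R\to\infty$ gives a conformal injection $f:\bbC\to\bbC$. Hence $f$ is affine, and $f(0)=0$, $f(1)=1$ force $f=\mathrm{id}$. Since the limit is unique, the whole sequence converges uniformly on compacts of $\bbC$.

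\textbf{Step 3: globalizing in the spherical metric.} Given $\e>0$, choose $R$ so that the spherical ball of radius $\e/2$ about $\infty$ contains $\bbC\setminus B(0;R^2)$ and also, by Step~1, all of $f^n(\mathrm{carrier}(\cP^n)\setminus B(0;R^2))$. On $\ol{B(0;R^2)}$ the uniform convergence from Step~2 gives $d_{\sph}(f^n(z),z)<\e$ for $n$ large. Together these two facts give the spherical uniform statement.

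\textbf{Main obstacle.} We expect the main obstacle to be Step~1, namely the a priori smallness and exhaustion for $\wt\cP^n$ without a bounded-degree (Ring Lemma) assumption. Here we would rely on He--Schramm's two-sided comparison between vertex extremal length and the extremal length of packings. Some care is needed because the comparison is only one-sided when disks are large, so we would argue by contradiction: a large disk or a failure of exhaustion would produce an annulus of small modulus in $\wt\cP^n$ that is separated by combinatorial curves of large extremal length.
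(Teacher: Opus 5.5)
The paper does not prove this statement; it imports it from the companion paper. I am therefore judging your outline on its own terms, and as written it has a genuine gap.

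\textbf{Step 2.} Theorem~\ref{thm:He-Schramm-extension-1} is stated for \emph{fixed} bounded Jordan domains $D,\wt D$. Its hypothesis~(\ref{cond:thm:He-Schramm-extension-1-(ii)}) requires every boundary disk of $\wt\cP^n|_{T^n_R}$ to lie within $o(1)$ of $\partial\wt D$. A Jordan domain ``approximating $f^n(B(0;R))$'' moves with $n$. To get a fixed one, you must already know that, along a subsequence, the Jordan curves $f^n(\partial B(0;R))$ converge uniformly to a Jordan curve. The three outputs of Step~1 do not give this: small disks in $\wt\cP^n$, boundedness of $f^n(\ol{B(0;R)})$, and escape of $f^n(\bbC\setminus B(0;R^2))$ are all compatible with $f^n(\partial B(0;R))$ pinching, or collapsing in the Hausdorff sense onto a non-Jordan set. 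What is missing is an a priori equicontinuity estimate for $f^n$ \emph{and} $(f^n)^{-1}$ on compacts, at scales above the mesh of $\cP^n$. This is the real content of the statement. With it, a subsequential limit $f$ is a homeomorphism, and Theorem~\ref{thm:He-Schramm-extension-1} applies with $D=B(0;R)$ and $\wt D=f(B(0;R))$. From there your Liouville argument and Step~3 are fine. As it stands, Step~2 is circular.

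\textbf{Step 1.} Step~1 relies on a comparison you do not have. Vertex extremal length is combinatorial, so it is the same for $\cP^n$ and $\wt\cP^n$. What you need is to relate it to continuum extremal length in $\wt\cP^n$, where disks may be large. There the only available inequality is $\mathrm{VEL}\le C\,\mathrm{EL}$, valid only for curves not contained in a single doubled disk. Since each traced vertex set contains a combinatorial path of the family, this converts \emph{lower} bounds on VEL into lower bounds on continuum EL, and nothing else.

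Consequently, for the separating family the bound $\mathrm{VEL}\lesssim 1/\log R$ says nothing about $\wt\cP^n$. Only the lower bound $\mathrm{VEL}\gtrsim 1/\log R$ transfers; you can get it in $\cP^n$ from the explicit metric $m(v)=\mathrm{diam}(D_v)/|c_v|$, with $c_v$ the center of $D_v$, using small disks. It gives $\mathrm{mod}\,f^n(A_R)\lesssim\log R$, which is the wrong direction for escape.

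Escape and smallness of disks of $\wt\cP^n$ must come from the connecting family, whose VEL is $\gtrsim\log R$ in $\cP^n$. Even then, you first have to exclude macroscopic disks of $\wt\cP^n$ whose doubles contain the relevant curves. Your ``argue by contradiction'' remark points this way but is not carried out.

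\textbf{What would close the gap.} Run this argument for ring domains at every center and every scale above the mesh, using lower VEL bounds for both the connecting and the separating families. This should give two-sided modulus distortion bounds for $f^n$, of the form $c\log(r_2/r_1)\le\mathrm{mod}\,f^n(A(z;r_1,r_2))\le C\log(r_2/r_1)$. Together with $f^n(0)=0$ and $f^n(1)=1$, these are what one would use to derive the equicontinuity that Step~2 needs.
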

The next result ~\cite[Theorem 1.17]{MatedCRT} shows that in some sense, circle packings of disk triangulations will not change too much if we add a few vertices. See Figure~\ref{fig:pack-disk-1} for an illustration for the setup.
\begin{figure}
    \centering
    \includegraphics[scale=0.78]{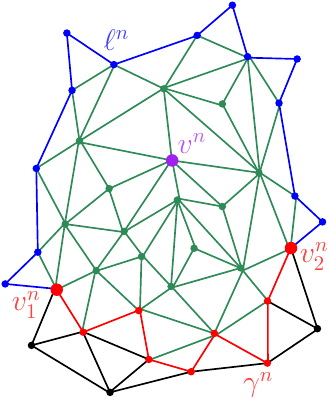}
    \caption{An illustration of the triangulation $\wt T^n$ in Theorem~\ref{thm:He-Schramm-disk}.  The map $T^n$ is formed by the red path $\gamma^n$, the blue path $\ell^n$ and the green vertices and edges between $\gamma^n$ and $\ell^n$.}
    \label{fig:pack-disk-1}
\end{figure}

\begin{theorem}\label{thm:He-Schramm-disk}
    Let $\wt T^n$ be disk triangulations, $v_1^n$ and $v_2^n$ be boundary vertices of $\wt T^n$, and $\ell^n$ be the clockwise boundary arc of $\partial\wt T^n$ from $v_1^n$ to $v_2^n$. Let $\gamma^n$ be a simple path on $\wt T^n$ that intersects $\ell^n$ only at $v_1^n$ and $v_2^n$. Let $ T^n\subseteq \wt T^n$ be the disk triangulation formed by $\ell^n$, $\gamma^n$ and vertices between $\ell^n$ and $\gamma^n$.  Let $v^n$ be an interior vertex of $ T^n$, and $\wt\cP^n$ and $\cP^n$ be circle packing of $\wt T^n$ and $T^n$ in $\bbD$ centered at $v^n$. Further assume  the following:
    \begin{enumerate}[(i)]
        \item The maximal diameter of disks in $\cP^n$ tends to 0 as $n\to\infty$;
        \item The diameter of the union of the disks in $\cP^n$ for vertices in $\gamma^n$ tends to 0 as $n\to\infty$.
    \end{enumerate}
    Let $\cP_1^n$ be the restriction of $\wt\cP^n$ to the disks for  vertices in $ T^n$, and $f^n$ be a homeomorphism between the  {carriers} of $\cP^n$ and $\cP_1^n$   sending  each $D_v\cap \mathrm{carrier}(\cP^n)$ where  $v\in\cV T^n$ to $\wt D_v\cap \mathrm{carrier}( \cP_1^n)$.  Then the maximal diameter of disks in $\wt\cP^n$ as well as the diameter of the union of disks in $\wt\cP^n$ for vertices in $\gamma^n\cup(\wt T^n\backslash T^n)$ tends to 0 as $n\to\infty$, and there exists $\theta_n\in [0,2\pi)$ such that $e^{i\theta_n} f^n$ converges uniformly to the identity in the sense of Definition~\ref{def:unif-convergence}.
\end{theorem}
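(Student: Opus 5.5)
The plan is to reduce the statement to Theorem~\ref{thm:He-Schramm-extension-1}, applied with $D=\wt D=\bbD$, to the two circle embeddings $\cP^n$ and $\cP_1^n$ of $T^n$. The extra input needed is an a priori estimate showing that gluing the region $\wt T^n\backslash T^n$ along the tiny arc $\gamma^n$ does not create macroscopic disks in $\wt\cP^n$. Write $S^n:=\gamma^n\cup(\wt T^n\backslash T^n)$ for the set of vertices one expects to collapse to a point.

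\textbf{Step 1 (smallness of $S^n$ in $\wt\cP^n$), the main obstacle.} I would use discrete vertex extremal length (VEL) in the sense of He--Schramm. For each packing in $\bbD$ it compares with Euclidean extremal length through the standard two-sided estimates: a set of macroscopic diameter cannot be separated from the disk at the origin by a large VEL, while a set of tiny diameter is separated by a large VEL.

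In $\cP^n$, the union of the disks for $\gamma^n$ has diameter $o(1)$ by assumption~(ii), and all disks are small by assumption~(i). Hence the VEL in $T^n$ between $\gamma^n$ and a fixed ball of vertices around $v^n$ tends to $\infty$.

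Every path in $\wt T^n$ from $S^n$ to that ball must enter $T^n$ through $\gamma^n$, and from then on it is a path in $T^n$. So the VEL in $\wt T^n$ between $S^n$ and the ball is at least the VEL in $T^n$ between $\gamma^n$ and the ball, which is large. Transferring back to the Euclidean side in $\wt\cP^n$, the union of the disks for $S^n$ has diameter $o(1)$.

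The delicate part is that the set ``a fixed ball around $v^n$'' is defined through $\cP^n$ but used in $\wt\cP^n$. I would fix this by using the center disk $D_{v^n}$ itself, whose radius is bounded below in $\wt\cP^n$ because the packing is normalized in $\bbD$. The same VEL argument, run with a single vertex $u$ in place of $S^n$, should show that every disk of $\wt\cP^n$ is small. For $u\in T^n$, paths in $\wt T^n$ that detour through $S^n$ are handled by the previous bound: $S^n$ is separated from the core by a large VEL, so such detours do not lower the extremal length by much. Together these give the first assertion of the theorem.

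\textbf{Step 2 (verifying the hypotheses).} Next I would check Conditions~(\ref{cond:thm:He-Schramm-extension-1-(i)})--(\ref{cond:thm:He-Schramm-extension-1-(iv)}) for the pair $(\cP^n,\cP_1^n)$.
\begin{enumerate}[(a)]
\item Condition~(\ref{cond:thm:He-Schramm-extension-1-(i)}) is assumption~(i).
\item For Condition~(\ref{cond:thm:He-Schramm-extension-1-(ii)}): the boundary disks of $T^n$ are those of $\ell^n$, which are tangent to $\partial\bbD$ in both packings, together with those of $\gamma^n$. The latter lie in a set of vanishing diameter that meets a disk tangent to $\partial\bbD$ (at $v_1^n$), so they are close to $\partial\bbD$. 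In $\cP_1^n$, Step~1 gives the same conclusion.
\item Condition~(\ref{cond:thm:He-Schramm-extension-1-(iii)}) holds with $p_0=0$, since both packings are centered at $v^n$ and hence $f^n(0)=0$.
\item For Condition~(\ref{cond:thm:He-Schramm-extension-1-(iv)}): the boundary loop of each carrier consists of a polygonal approximation of an arc of $\partial\bbD$ (coming from $\ell^n$) and a closing piece of diameter $o(1)$ (coming from $\gamma^n$). Its $\mathbb{d}^{\rm CMP}$-distance to $\partial\bbD$ therefore tends to $0$.
\end{enumerate}

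\textbf{Step 3 (conclusion).} Theorem~\ref{thm:He-Schramm-extension-1} then yields, along a subsequence, uniform convergence in the sense of Definition~\ref{def:unif-convergence} of $f^n$ to a conformal automorphism of $\bbD$ fixing $0$, i.e.\ a rotation. Since every subsequence has a further subsequence with this property, I would define $\theta_n$ to minimize $\sup|e^{i\theta}f^n-\mathrm{id}|$ over $\theta$. A routine subsequence argument then gives that $e^{i\theta_n}f^n\to\mathrm{id}$ uniformly along the full sequence.
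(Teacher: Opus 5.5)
Your reduction in Steps 2--3 to Theorem~\ref{thm:He-Schramm-extension-1} with $D=\wt D=\bbD$ is the right route. (The paper gives no proof of this statement; it imports it from \cite{MatedCRT}.) However, Step~1 has a genuine gap exactly where you flag the ``delicate part'', and the fix you propose is false. The radius of $\wt D_{v^n}$ is \emph{not} bounded below by the normalization in $\bbD$. It tends to $0$, as the conclusion of the theorem requires. This already follows from the discrete Schwarz lemma: the hyperbolic radius of $v^n$ in the packing $\cP_1^n$ of $T^n$ is at most its hyperbolic radius in the maximal packing $\cP^n$, which tends to $0$ by (i). The only Euclidean information you can extract from a large VEL in $\wt\cP^n$ is a dichotomy. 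If $A,B$ are connected and $\mathrm{VEL}_{\wt T^n}(A,B)\to\infty$, then one of $\bigcup_{v\in A}\wt D_v$, $\bigcup_{v\in B}\wt D_v$ has diameter tending to $0$; this is the He--Schramm bound $\mathrm{VEL}(A,B)\le C(\e)$ for connected sets whose unions have diameter at least $\e$. With $B=\{v^n\}$ this is vacuous. The same holds for any ``ball around $v^n$'' defined through $\cP^n$, since nothing gives such a set a lower bound on its size in $\wt\cP^n$. Your single-vertex argument for the smallness of all disks has the same defect.

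What is missing is an anchor that is far from $\gamma^n$ in $\cP^n$ \emph{and} certifiably macroscopic in $\wt\cP^n$; that certificate has to come from the boundary tangencies. For instance, let $p_n$ be the point where $D_{v_1^n}$ touches $\partial\bbD$ in $\cP^n$. Let $X_n$ be a vertex path in $T^n$ from $v^n$ to a boundary vertex, whose $\cP^n$-disks stay within $o(1)$ of the segment $[0,-p_n]$; by (ii) its endpoint lies in $\ell^n$. Then $\mathrm{VEL}_{T^n}(\gamma^n,X_n)\to\infty$ by length--area, hence $\mathrm{VEL}_{\wt T^n}(S^n,X_n)\to\infty$ by your monotonicity argument. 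In $\wt\cP^n$ the disks of $X_n$ contain $0$ and touch $\partial\bbD$, so their union has diameter at least $1$. Since $S^n$ is connected, the dichotomy forces $\diam\bigl(\bigcup_{v\in S^n}\wt D_v\bigr)\to0$. After this you do not need a separate VEL argument for single disks. Conditions (I)--(III) now hold; for (III), note that you only know $f^n(0)\in\wt D_{v^n}$, not $f^n(0)=0$, which suffices because $\wt D_{v^n}$ is small. Theorem~\ref{thm:He-Schramm-extension-1} then itself gives that the disks of $\cP_1^n$ are small. You need this \emph{before} checking (IV) for $\cP_1^n$. The polygon through the centres of the $\ell^n$-disks is uniformly close to $\partial\bbD$ only if those disks are small. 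It covers $\partial\bbD$ up to $o(1)$ because the complementary boundary arc of $\wt T^n$ lies in $S^n$.
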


We further gather some results on conformal maps, which can be viewed as the analog of Theorem~\ref{thm:He-Schramm-extension-1}, Theorem~\ref{thm:He-Schramm-disk} and Theorem~\ref{thm:He-Schramm-sphere} in the continuum setting. These are \cite[Propositions 3.1, 3.4, 3.5]{MatedCRT} and will be used in Section~\ref{sec:unif-pf} for the Riemann uniformization setting.

\begin{proposition}\label{prop:boundary-perturb}
    Let $(D^n)_{n\geq1}$,  $(\wt D^n)_{n\geq1}$, $D$ and $\wt D$ be bounded Jordan domains containing 0, and $f^n:D^n\to\wt D^n$ be conformal maps sending 0 to 0 such that $(f^n)'(0)>0$. Suppose $\partial D^n$ converges to $\partial D$ and $\partial \wt D^n$ converges to $\partial \wt D$ in the metric~\eqref{eq:curve-metric-0}. Then $\{f^n\}_{n\geq1}$ is equicontinuous, and $f^n$ converges uniformly to a conformal map $f:D\to\wt D$ with $f'(0)>0$ in the sense of Definition~\ref{def:unif-convergence}.
\end{proposition}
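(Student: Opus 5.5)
We normalize the maps and then extract convergent subsequences. Precompose and postcompose with Riemann maps: let $\phi_n:\bbD\to D^n$ and $\psi_n:\bbD\to\wt D^n$ be the conformal maps with $\phi_n(0)=0$, $\phi_n'(0)>0$ and $\psi_n(0)=0$, $\psi_n'(0)>0$. Define $\phi,\psi$ analogously for $D$ and $\wt D$. By uniqueness of the normalized Riemann map we have $f^n=\psi_n\circ\phi_n^{-1}$, and the candidate limit is $f=\psi\circ\phi^{-1}$. So the statement reduces to two claims about a single sequence of normalized Riemann maps:
\begin{enumerate}[(a)]
\item if $\partial D^n\to\partial D$ in the metric~\eqref{eq:curve-metric-0}, then $\phi_n\to\phi$ uniformly on $\ol\bbD$;
\item $\{\phi_n^{-1}\}$ is equicontinuous and converges to $\phi^{-1}$ uniformly in the sense of Definition~\ref{def:unif-convergence}.
\end{enumerate}
Composing (a) for $\psi_n$ with (b) for $\phi_n$ then gives uniform convergence of $f^n$ and its equicontinuity.

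For (a), I would proceed in three steps. First, Carathéodory kernel convergence gives $\phi_n\to\phi$ locally uniformly on $\bbD$: since the boundaries converge as curves, every compact subset of $D$ lies in $D^n$ for $n$ large, and no point outside $\ol D$ stays in $D^n$. Second, we need equicontinuity of $\{\phi_n\}$ on $\ol\bbD$, which I expect to be the main obstacle. I would prove it with the standard Carathéodory-type argument via the length–area (Wolff) lemma. Suppose two points $z,z'\in\ol\bbD$ are close. Then there is a crosscut of $\bbD$ separating them from $0$, and its image under $\phi_n$ has length $\le C/\sqrt{\log(1/\delta)}$. The image is a short crosscut of $D^n$ whose endpoints are close on $\partial D^n$. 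The uniform convergence of the parametrized boundary curves yields a uniform modulus of "local connectivity": there is a function $\omega(\delta)\to0$, independent of $n$, such that any two points of $\partial D^n$ at distance $<\delta$ are joined by a boundary arc of diameter $<\omega(\delta)$ (the arc not containing most of the boundary). This is where the CMP convergence, rather than mere Hausdorff convergence, is essential, and it follows since $\partial D$ is a Jordan curve and hence has such a modulus. Therefore the short crosscut cuts off a region of small diameter, because the region containing $0$ cannot be the small one as the domains contain a fixed ball around $0$. This bounds $|\phi_n(z)-\phi_n(z')|$ uniformly in $n$. Third, Arzelà–Ascoli together with the interior limit identifies the uniform limit as $\phi$.

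For (b), the maps $\phi_n$ are homeomorphisms $\ol\bbD\to\ol{D^n}$ converging uniformly to the homeomorphism $\phi$. A compactness argument then gives convergence of the inverses in the sense of Definition~\ref{def:unif-convergence}. Indeed, if $z_n'\in\ol{D^n}$ and $z\in\ol D$ with $|z_n'-z|\to0$ but $|\phi_n^{-1}(z_n')-\phi^{-1}(z)|\ge\e$, pass to a subsequence with $\phi_n^{-1}(z_n')\to w$. Uniform convergence gives $\phi(w)=z$, so $w=\phi^{-1}(z)$, a contradiction. Equicontinuity of $\{\phi_n^{-1}\}$ follows by the same contradiction argument, using injectivity of $\phi$ on $\ol\bbD$.

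Finally, the limit $f$ is conformal with $f'(0)=\psi'(0)/\phi'(0)>0$. Combining (a) and (b), for $|z'-z|<\delta$ we get $|f^n(z')-f(z)|\le|\psi_n(\phi_n^{-1}(z'))-\psi(\phi_n^{-1}(z'))|+|\psi(\phi_n^{-1}(z'))-\psi(\phi^{-1}(z))|$, and both terms are small by uniform convergence of $\psi_n$ and uniform continuity of $\psi$.
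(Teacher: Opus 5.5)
\textbf{Assessment.} Your proof is correct. The paper gives no argument for this proposition: it is quoted from the companion paper \cite{MatedCRT} (Proposition 3.1 there), so there is no in-text proof to compare routes against. What you have written is the classical proof of Rad\'o's theorem. Normalized Riemann maps of Jordan domains whose boundaries converge as parametrized curves converge uniformly on $\ol\bbD$; this follows from Carath\'eodory kernel convergence, a Wolff length--area bound for equicontinuity, and Arzel\`a--Ascoli. Inversion by a compactness argument and composition then give the statement, so it is a self-contained substitute for the citation. Your argument also yields the Hausdorff convergence $\ol{D^n}\to\ol D$ that Definition~\ref{def:unif-convergence} presupposes, since $\ol{D^n}=\phi_n(\ol\bbD)$ and $\|\phi_n-\phi\|_\infty\to0$.

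\textbf{Details to make explicit in step (a).} All of these are routine.

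\emph{Interior pairs.} The crosscut estimate only controls pairs $z,z'$ lying within $O(\delta)$ of $\partial\bbD$. Pairs at distance at least $\delta_1$ from the circle need the separate interior bound. For example, use the Cauchy estimate $|\phi_n'|\le M/\delta_1$ on $\{|w|\le1-\delta_1\}$, where $M$ bounds all $\ol{D^n}$, and choose the two scales in the usual order.

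\emph{Uniformity of $\omega$.} The local-connectivity modulus coming from CMP closeness and the uniform continuity of $\gamma^{-1}$ only applies to those $n$ whose CMP error lies below a threshold depending on the target $\e$. The finitely many remaining $n$ are covered because each $\partial D^n$ is itself a Jordan curve. Hence $\sup_n\omega_n(\delta)\to0$ holds for all $n$.

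\emph{The fixed ball.} The ball $B(0;\rho_0)\subseteq D^n$ is likewise only available for large $n$. For the finitely many others, the uniform continuity of each individual $\phi_n$ on $\ol\bbD$ suffices.

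\emph{Which side of the crosscut contains $0$.} To rule out that the small side contains $0$, you need $\ell+\omega(\ell)<\rho_0$. Use the fact that a Jordan curve of diameter $d$ surrounding $0$ lies in $\ol{B(0;d)}$. The boundary arc would then lie inside $B(0;\rho_0)\subseteq D^n$, which is a contradiction since that arc lies in $\partial D^n$.

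With these additions the argument is complete.
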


\begin{proposition}\label{prop:unif-disk}
    Let $\theta_n>0$ be a sequence tending to 0, $D^n\subseteq \bbD$ be a Jordan domain,  $\varphi^n:\ol \bbD\to \ol {D^n}$ be a conformal map fixing 0 such that $(\varphi^n)'(0)>0$, and $\varphi^n(e^{i\theta})\in\partial\bbD$ for $\theta\in [\theta_n,2\pi]$. Then the diameter of $\bbD\backslash D^n$ tends to 0, and $\varphi^n$ converges uniformly on $\ol \bbD$ to the identity function. 
\end{proposition}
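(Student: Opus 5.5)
The plan is to reduce everything to Proposition~\ref{prop:boundary-perturb}. That reduction needs one geometric input: the image of the short boundary arc has small diameter. Write $\sigma_n=\{e^{i\theta}:\theta\in[0,\theta_n]\}$, $A_n=\{e^{i\theta}:\theta\in[\theta_n,2\pi]\}$, $\gamma_n=\varphi^n(\sigma_n)$ and $K_n=\ol\bbD\setminus D^n$. Since $\varphi^n$ extends to a homeomorphism $\ol\bbD\to\ol{D^n}$ and $\varphi^n(A_n)\subseteq\partial\bbD$, the boundary $\partial D^n$ is the union of the arc $\varphi^n(A_n)\subseteq\partial\bbD$ and the curve $\gamma_n$. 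Here $\gamma_n$ is a crosscut of $\bbD$ (possibly partly running along $\partial\bbD$) that cuts off $K_n$. Moreover $\ol{K_n}$ is a continuum containing $\gamma_n$ and meeting $\partial\bbD$, and $\diam(\bbD\setminus D^n)\le\diam(\ol{K_n})$.

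\textbf{Step 1: $\diam(\ol{K_n})\to0$.} I would argue by harmonic measure. By conformal invariance, the harmonic measure of $\gamma_n$ in $D^n$ seen from $0$ equals that of $\sigma_n$ in $\bbD$ seen from $0$, namely $\theta_n/2\pi\to0$. In Brownian terms, a Brownian motion $B$ started at $0$ exits $D^n$ through $\gamma_n$ exactly when it hits $\ol{K_n}$ before leaving $\bbD$ through $\partial\bbD\setminus\ol{K_n}$. So it suffices to prove the following uniform lower bound: for every $\delta>0$ there is $c(\delta)>0$ such that for every continuum $K\subseteq\ol\bbD$ meeting $\partial\bbD$ with $\diam K\ge\delta$ and $0\notin K$, we have
\[
\bbP[B \text{ hits } K \text{ before } \partial\bbD\setminus K]\ge c(\delta).
\]
I would prove this as follows. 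Pick $x\in K\cap\partial\bbD$ and a point $y\in K$ with $|y-x|\ge\delta/2$. With probability bounded below depending only on $\delta$, the path of $B$ reaches the ball $B(x;\delta/8)$ before exiting $\bbD$ and then makes a full loop around $x$ inside the annulus $B(x;\delta/4)\setminus B(x;\delta/8)$, intersected with $\bbD$, before exiting $\bbD$. Any such loop must cross the connected set $K$, which joins $x$ to the outside of $B(x;\delta/4)$. One must handle the fact that the loop is only a half-annulus near the boundary. The cleanest route is probably a compactness argument over continua in the Hausdorff topology, or a Beurling-type estimate, and this is the step I expect to require the most care. Given the lower bound, if $\diam\ol{K_n}\ge\delta$ along a subsequence we get $\theta_n/2\pi\ge c(\delta)$, a contradiction. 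This also shows $\diam\gamma_n\to0$.

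\textbf{Step 2: boundary convergence.} From Step 1, $\partial D^n$ consists of the arc $\varphi^n(A_n)$ of $\partial\bbD$ together with a curve of vanishing diameter whose endpoints lie on $\partial\bbD$ and are close to each other. Parametrizing $\partial D^n$ and $\partial\bbD$ compatibly, agreeing on the common arc and collapsing the small pieces, shows that $\mathbb{d}^{\rm CMP}(\partial D^n,\partial\bbD)\to0$.

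\textbf{Step 3: uniform convergence.} Apply Proposition~\ref{prop:boundary-perturb} with $D^n_{\rm there}=\bbD$, $\wt D^n=D^n$ and $f^n=\varphi^n$; note $0\in D^n$ because $\varphi^n(0)=0$. Its hypotheses hold: $\varphi^n(0)=0$, $(\varphi^n)'(0)>0$, and Step 2 gives the boundary convergence. Hence $\varphi^n\to f$ uniformly on $\ol\bbD$ in the sense of Definition~\ref{def:unif-convergence}, where $f:\bbD\to\bbD$ is conformal with $f(0)=0$ and $f'(0)>0$. By the Schwarz lemma $f$ is the identity, which completes the proof.
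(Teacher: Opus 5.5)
Your Steps 2 and 3 are sound: Proposition~\ref{prop:boundary-perturb} applies to $\varphi^n:\bbD\to D^n$ with both limit domains equal to $\bbD$, and the Schwarz lemma identifies the limit. The reduction in Step 1 is also sound once $K_n$ is repaired. As literally defined, $K_n=\ol\bbD\setminus D^n$ contains all of $\partial\bbD$, so $\diam\ol{K_n}\to0$ is false. Take instead $E_n:=\ol\bbD\setminus\big(D^n\cup\varphi^n(A_n^\circ)\big)$, with $A_n^\circ$ the open arc. This is a continuum containing $\gamma_n$ and $\bbD\setminus D^n$, meeting $\partial\bbD$ and not containing $0$. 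Writing $T_A$ for the hitting time of $A$, one has exactly $\bbP_0[T_{E_n}\le T_{\partial\bbD}]=\theta_n/2\pi$.

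The genuine gap is the uniform bound $c(\delta)$. It carries the whole proof, you leave it unproved, and the argument you sketch for it fails. Since $x\in\partial\bbD$, no loop in $\bbD$ winds around $x$. The crossing of the half-annulus that the Brownian path is forced to make on its way to $B(x;\delta/8)$ runs from the outer to the inner arc, exactly like the crossing contained in $K$, so the two need not meet. The critical sets are those like an arc of $\partial\bbD$, which the path can touch only at its exit time. This is also why a Hausdorff-compactness argument is not automatic: $K\mapsto\bbP_0[T_K\le T_{\partial\bbD}]$ is lower semicontinuous on the event $\{T_K<T_{\partial\bbD}\}$, via small loops around an interior hitting point. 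On $\{T_K=T_{\partial\bbD}\}$, semicontinuity is precisely the boundary statement in question.

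One way to close the gap is to make the path cross the half-annulus sideways and exit through one of its sides.
Let $Q=\{z\in\bbD:\delta/8<|z-x|<\delta/4\}$; its boundary consists of two circular arcs and two arcs $\ell,r\subseteq\partial\bbD$. By boundary bumping, some component $C$ of $K\cap\ol Q$ joins the two circular arcs. Hence the closure of each component $V$ of $Q\setminus C$ meets at most one of $\ell\setminus C$ and $r\setminus C$; otherwise a path in $V$ would join $\ell$ to $r$ while avoiding $C$.

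Now use two probability bounds. With probability at least $c_1(\delta)$, the path reaches $\{|z-x|=3\delta/16\}$ before $T_{\partial\bbD}$ at a point $w$ with $\mathrm{dist}(w,\partial\bbD)\ge\delta/100$. From any such $w$, it exits $\bbD$ through $\ell$ (resp.\ $r$) without leaving $Q$ with probability at least $c_2(\delta)$. Take the side not reached by the component of $Q\setminus C$ containing $w$. On this event the path meets $C\subseteq K$ no later than $T_{\partial\bbD}$: if not before, then at its exit point, which lies in $C$.

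Alternatively, and more cleanly, Schwarz reflection across $A_n^\circ$ extends $\varphi^n$ to a univalent map of $\wh\bbC\setminus\sigma_n$ onto $\wh\bbC\setminus(E_n\cup E_n^*)$, where $E_n^*$ is the reflection of $E_n$ in $\partial\bbD$. This map sends the annulus $\{\theta_n<|z-1|<1/2\}$ onto a ring of modulus $\frac{1}{2\pi}\log\frac{1}{2\theta_n}$ separating $\{0,\infty\}$ from $E_n\cup E_n^*$. Teichm\"uller's modulus estimate then forces $\sup_{z_1,z_2\in E_n\cup E_n^*}|1-z_1/z_2|\to0$. Hence $\diam E_n\to0$, because $E_n$ meets $\partial\bbD$. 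This replaces the Brownian lemma altogether.
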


\begin{proposition}\label{prop:unif-sph}
    Let $D^n,\wt D^n$ be simply connected domains containing 0 and 1, $\varphi^n:D^n\to\wt D^n$ be conformal maps fixing 0 and 1. Further assume that for any $R>0$,  $B(0;R)\subseteq D^n$ for large enough $n$. Then for each $R>0$, for large enough $n$, $B(0;R)\subseteq \wt D^n$. Moreover,  $\varphi^n$ converges uniformly to the identity function, in the sense that for any $\e>0$, there exists $N>0$ such that for every $n>N$ and $z\in D^n$, $d_\sph(\varphi^n(z),z)<\e$.
\end{proposition}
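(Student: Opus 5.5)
The plan is a normal-families argument on large disks, followed by a separate treatment of the region near $\infty$. Write $\mathcal{S}_R$ for the family of univalent maps $g$ on $B(0;R)$ with $g(0)=0$ and $g(1)=1$, where $R>2$.

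\textbf{Step 1: normality of $\mathcal{S}_R$.} Rescale to the unit disk by setting $G(w)=g(Rw)/(Rg'(0))$, which is a normalized schlicht function. The Koebe growth bounds give
\[
\frac{|w|}{(1+|w|)^2}\le |G(w)|\le \frac{|w|}{(1-|w|)^2}.
\]
Evaluating at $w=1/R$ and using $g(1)=1$ shows that $|g'(0)|$ lies in a compact subset of $(0,\infty)$ depending only on $R$. The growth bound then gives local uniform boundedness of $\mathcal{S}_R$ on $B(0;R)$, so $\mathcal{S}_R$ is normal by Montel. By Hurwitz, every subsequential limit is univalent, since the bound on $|g'(0)|$ prevents the limit from being constant.

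\textbf{Step 2: local uniform convergence to the identity.} For fixed $R$ and large $n$, $B(0;R)\subseteq D^n$, so $\varphi^n|_{B(0;R)}\in\mathcal{S}_R$. A diagonal argument over $R\in\bbN$ shows that any subsequence of $(\varphi^n)$ has a further subsequence converging locally uniformly on $\bbC$ to a univalent entire function $\varphi$ with $\varphi(0)=0$ and $\varphi(1)=1$. An injective entire function is affine, so $\varphi(z)=z$. Since every subsequence has this same limit, $\varphi^n\to\mathrm{id}$ uniformly on compact subsets of $\bbC$.

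\textbf{Step 3: large balls lie in $\wt D^n$.} Fix $R$. For large $n$, $\sup_{|z|=2R}|\varphi^n(z)-z|<R$. The image $\varphi^n(B(0;2R))$ is a Jordan domain bounded by $\varphi^n(\partial B(0;2R))$. For each $w\in B(0;R)$, Rouché's theorem applied to $\varphi^n-w$ and $z-w$ on $\partial B(0;2R)$ shows that $w$ has a preimage in $B(0;2R)$. Hence
\[
B(0;R)\subseteq \varphi^n(B(0;2R))\subseteq \wt D^n .
\]

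\textbf{Step 4: uniform spherical convergence on all of $D^n$.} Given $\e>0$, choose $R$ so large that every pair of points of modulus at least $R/2$ is within spherical distance $\e$. For large $n$ we have three facts:
\begin{itemize}
\item[(a)] $\sup_{|z|\le 2R}|\varphi^n(z)-z|$ is small enough to give spherical distance less than $\e$ on $\overline{B(0;2R)}$;
\item[(b)] $\varphi^n(B(0;2R))\supseteq B(0;R)$, by the argument of Step 3.
\end{itemize}
Now take $z\in D^n$ with $|z|\ge 2R$. Injectivity of $\varphi^n$ together with (b) forces $\varphi^n(z)\notin B(0;R)$. So both $z$ and $\varphi^n(z)$ have modulus at least $R$, and $d_\sph(\varphi^n(z),z)<\e$. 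Combined with (a), this gives the claimed uniform convergence.

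\textbf{Main obstacle.} The only genuinely nontrivial step is the nondegeneracy in Step 1, namely the two-sided control of $|g'(0)|$ coming from the normalization $g(1)=1$. Without it the limit could be constant or infinite, and the affine-rigidity argument in Step 2 would fail. Once this is in place, Steps 3 and 4 are routine.
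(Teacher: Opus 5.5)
Your argument is correct and complete. This paper does not prove the proposition itself; it quotes it from the companion paper \cite{MatedCRT}, so there is no in-text argument to compare yours against.

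Your route is a standard self-contained one. You apply the Koebe growth bounds at the point $1$ to obtain $(1-1/R)^2\le |g'(0)|\le (1+1/R)^2$. This gives normality of the family, and Hurwitz together with the rigidity of injective entire functions then gives local uniform convergence to the identity. Rouché yields $B(0;R)\subseteq\varphi^n(B(0;2R))$, and injectivity forces $|\varphi^n(z)|\ge R$ whenever $|z|\ge 2R$.

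Three minor remarks:
\begin{itemize}
\item Simple connectivity is never used, so your proof gives the statement for arbitrary domains.
\item You are implicitly reading $\wt D^n\subseteq\bbC$, so that $\varphi^n$ is holomorphic. This reading is necessary: if $\wt D^n$ were allowed to contain $\infty$, then $D^n=\bbC$ with $\varphi^n(z)=z/(2-z)$ (onto $\wh\bbC\setminus\{-1\}$) would be a counterexample.
\item Step 4 announces three facts but lists only two; nothing is actually missing.
\end{itemize}
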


\subsection{The i.i.d.\ walk case}\label{subsec:pack-iid}

Let $I = [a_I,b_I]\subseteq\bbR$ be a bounded and possibly random interval of positive length. For each $n\in\bbN$ let
\begin{itemize}
    \item $\cZ^n:I\to\bbR^2$ be a continuous-time nearest-neighbor walk on $\bbZ^2$, $\mr\cZ^n = \Phi_{\rm simple}(\cZ^n)$, $\mr \cJ^n$ be the set of times when $\mr \cZ^n$ makes a jump, and
    \item $Z^n:I\to\bbR^2$ be a continuous process with values in $\bbR^2$ such that conditioned on $I$, $Z^n$ is absolutely continuous with respect to the boundary length process restricted to $I$ under one of the measures $\bbP_\rmQ$ or $\bbP_\rmE$. % or is absolutely continuous with respect to an uncorrelated Brownian cone excursion in the first quadrant from $(1,0)$ to $(0,0)$.
\end{itemize}

\begin{definition}\label{def:CPtoMOT}
    We say that \emph{circle packing converges to mating of trees for $(\cZ^n,Z^n)_{n\in\bbN}$} if the following is true.    {Let $a,b\in (0,1)$ with $a+b<1$, 
    let $I_{a,b}\subseteq  I$ be the closed interval such that the left (resp.\ right) endpoint of $I_{a,b}$ has distance $a|I|$ (resp.\ $b|I|$) from the left (resp.\ right) endpoint of $I$. Let $I'=[t_{a,b}^-,t_{a,b}^+]$, where $t_{a,b}^-$ (resp.\ $t_{a,b}^+$) is the smallest (resp.\ largest) time in $\mr\cJ^{n}\cap I_{a,b}$. Let $t_{\rm mid}$ be the midpoint of $I_{a,b}$, and $t_{\rm mid}':=\inf\{u\geq t_{\rm mid}:u\in\mr\cJ^{n,-}\}$.}  Consider the SLE-decorated LQG surface encoded by $Z^n|_{I}$ and restricted to the time $I'$ and embed it as $(\bbD,h_{n,I'},\eta_{n,I'})$ such that $\eta_{n,I'}(t_{\rm mid})=0$ and $\eta_n$ starts from 1 (See Section~\ref{subsubsec:MatedCRT}). Consider the disk triangulation $ T_{n,I'}$ and the path ${\mathfrak{tr}}_{n,I'}$ on $\cF T_{n,I'}$ from  $\Phi_\rmT(\cZ^n|_{I'})$.   
    Consider the univalent circle packing $\cP_{n,I'}$ for $ T_{n,I'}$ in $\bbD$ centered at the vertex $\frv_{t_{\rm mid}'}$ such that the disk for the left endpoint of the root edge (i.e., the edge corresponding to the smallest time in $I'$ in the sewing procedure) is tangent to $\partial\bbD$ at 1.
     Then for any deterministic $a,b$, we have:
    \begin{enumerate} 
    \item with probability tending to 1, the map $\Phi_\rmT(\cZ^n|_{I'})$ has at least three vertices on the boundary, $\big||I'|- |I_{a,b}|\big|\to 0$ and  $|t_{\rm mid}'- t_{\rm mid}|\to0$ in probability as $n\to\infty$;\label{cond:def:CPtoMOT-0}
        \item the  maximal diameter of disks in $\cP_{n,I'}$ tends to 0 in probability as $n\to\infty$;\label{cond:def:CPtoMOT-1}
        \item we have  $$\sup_{t\in I'}|\eta_{n,I'}(t)-{\mathfrak{tr}}_{n,I'}(t)| \to 0 \text{ in probability as } n\to\infty,$$ where the convergence is uniform over all choices of $\mathfrak{tr}$ such that ${\mathfrak{tr}}_{n,I'}(t)$ is on the corresponding embedded face; 
        \label{cond:def:CPtoMOT-2}
        \item let $\nu_{n,I'}$ be the measure on $\partial\bbD$ that assigns mass $\nu(n)$ to each of the tangent point of a disk in $\cP_{n,I'}$ and $\partial \bbD$ where $\nu(n)>0$ is a constant. Then the Prokhorov distance between $\nu_{n,I'}$ and the LQG boundary length measure $\nu_{h_{n,I'}}$ tends to 0 in probability as $n\to\infty$.\label{cond:def:CPtoMOT-3}
    \end{enumerate}
    %the distance between the disk for the leftmost (resp.\ rightmost) vertex of $\mr T_{n,I'}$ and the leftmost (resp.\ rightmost) point of $\eta(I')$ tends to 0 in probability as $n\to\infty$, and 
\end{definition}
Note that $\cP_{n,I'}$ is measurable with respect to $(\cZ^n,I,a,b)$. Suppose $\cP_{n,I'}$ is   well-defined and $t_{\rm mid},t_{\rm mid}' \in I'$.
We  write $\wt F^n(\cZ^n,a,b) = (t_{\rm mid}',|I'|,\delta_{n,I'},{\mathfrak{tr}}_{n,I'},\nu_{n,I'})$, where $\delta_{n,I'}$ is the maximal diameter of disks in $\cP_{n,I'}$ and ${\mathfrak{tr}}_{n,I'}$ is chosen to   be continuous and %space-filling on each embedded face and 
measurable with respect to $\cZ^n|_{I'}$. Let $\wt F^{\infty}(Z^n,\cZ^n,a,b) = (t_{\rm mid}, |I_{a,b}|,0,\eta_{n,I'},\nu_{h_{n,I'}})$. If $\cP_{n,I'}$ is  not well-defined or any of $t_{\rm mid}$, $t_{\rm mid}'$ is not in $I'$, let $\wt F^{\infty}(Z^n,\cZ^n,a,b) = (-1,-1,0,0,0)$ and $\wt F^n(\cZ^n,a,b) = (-1,0,0,0,0)$. Set $\wt F_0^{\infty}(Z^n, a,b) = (t_{\rm mid}, |I_{a,b}|,0,\eta_{n,I_{a,b}},\nu_{h_{n,I_{a,b}}})$, where $\eta_{n,I_{a,b}},\nu_{h_{n,I_{a,b}}}$ are defined in the same way as $\eta_{n,I'},\nu_{h_{n,I'}}$ using $I_{a,b}$ in place of $I'$. Let $\mathfrak{d}$ be the metric on $\bbR^3\times C(I,\ol\bbD)\times \mathrm{Measure}(\partial\bbD)$ using the supremum norm over continuous functions and Prokhorov distances on measures.
We further define   $\mathrm{err}(\cZ^n,Z^n,a,b)= \mathfrak{d}(\wt F^n(\cZ^n,a,b),\wt F^{\infty}(Z^n,\cZ^n,a,b))$ where the traces ${\mathfrak{tr}}_{n,I'},\eta_{n,I'},\eta_{n,I_{a,b}}$  are set to be constant on $I\backslash I'$ or $I\backslash I_{a,b}$. % we declare that $\mathrm{err}(\cZ^n,Z^n,a,b)=1$ whenever the circle packing $\cP_{n,I'}$ is not well-defined. 
%where $t_{\rm mid}$ is the mid-point of the interval $I$. The walk $\cZ^n$ encodes a disk triangulation $T_n$ and a path $\mathfrak{tr}_n$ on $\cF\cT_n$ through the map $\Phi_\rmT$.  
%Let $\mr\cZ^n = \Phi_{\rm simple}(\cZ^n)$.  % We parameterize $\mathfrak{tr}_n:I\to \cF T_n$ such that  
The reason we use the subintervals $I'$ instead of the whole interval $I$ is that, as explained before Lemma~\ref{lem:A2}, the operator $\Phi_{\rm simple}$ is non-local, and for our choice of $I'$, $\Phi_{\rm simple}(T_{n,I'})$ can be viewed as a submap of the $\Phi_{\rm simple}$ version of the triangulation encoded by $\cZ^n|_I$. For a dyadic system $\fD$ and $n\in\bbN$, we define $2^n\fD = \{2^n I:I\in\fD\}$, and for $\fU = (U_I:I\in\fD)$ indexed by the dyadic system $\fD$, we let $2^n\fU = (U_I^n:I\in 2^n\fD)$ where $U_I^n= U_{2^{-n}I}$.  {The following is from \cite[Lemma 6.3]{Tutte}.}

\begin{lemma}\label{lem:def-cZ^n}
     Consider the setting above under one of the probability measures $\bbP_\rmQ$ or $\bbP_\rmE$. For each $n\in\bbN$ there is a process $\cZ^n:\bbR\to\bbR^2$ such that 
     \begin{equation}
         ((2^{n/2}Z_{2^{-n}t})_{t\in\bbR}, 2^n\fU,(\cZ^n_{2^{-n}t})_{t\in\bbR}) \overset{d}{=} ((Z_t)_{t\in\bbR},  \fU,(\cZ_t)_{t\in\bbR}).
     \end{equation}
     Furthermore, $\cZ^n$ is measurable with respect to $(Z, \fU)$.
\end{lemma}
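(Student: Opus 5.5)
The plan is to construct $\cZ^n$ explicitly by rescaling the coupling map $g$ from Theorem~\ref{prop:KMT-gamma-cone} and Theorem~\ref{thm:KMT-0-cone}, applied to rescaled data. Set
\[
\cZ^n_{s} := \big(g(h^{(n)},h^{\IG,(n)},2^n\fU)\big)_{2^n s}, \qquad s\in\bbR,
\]
where $(h^{(n)},h^{\IG,(n)})$ is the rescaled field pair. It should be obtained through Lemma~\ref{lem:Rb} with $b=2^{-n}$: take $h^{(n)}=h^{b}$ and $\eta^{(n)}=\eta^{b}$. The imaginary geometry field is rescaled correspondingly, $h^{\IG,(n)}=h^{\IG}(\fR_b\,\cdot)$, which generates the curve $\eta^b$ up to the parametrization fixed by $h^b$.

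The first step is to check the equality in law for the field data. By Lemma~\ref{lem:Rb}, $(h^b,\eta^b)\overset{d}{=}(h,\eta)$. The boundary length process of $(h^b,\eta^b)$ is $(b^{-1/2}Z_{bt})_{t}=(2^{n/2}Z_{2^{-n}t})_t$, by the observation following Lemma~\ref{lem:Rb}. Since $h^{\IG}$ is scale invariant modulo additive constant, the joint law of $(h^b,h^{\IG,(n)})$ equals that of $(h,h^{\IG})$. Next I need $2^n\fU\overset{d}{=}\fU$, independent of the fields. This holds because the 1D uniform dyadic system is invariant in law under scaling by powers of $2$: $\theta$ is uniform and the parents are chosen uniformly, so $2^n\fD\overset{d}{=}\fD$. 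Relabeling the i.i.d.\ uniforms preserves their law.

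With that in hand, the triple $(h^b,h^{\IG,(n)},2^n\fU)$ has the same law as $(h,h^{\IG},\fU)$. Hence the walk $\wt\cZ:=g(h^b,h^{\IG,(n)},2^n\fU)$ satisfies
\[
(2^{n/2}Z_{2^{-n}\cdot},\,2^n\fU,\,\wt\cZ)\overset{d}{=}(Z,\fU,\cZ).
\]
Setting $\cZ^n_s=\wt\cZ_{2^n s}$ gives $\cZ^n_{2^{-n}t}=\wt\cZ_t$, which is the claim.

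The remaining step, which I expect to be the main obstacle, is the measurability of $\cZ^n$ with respect to $(Z,\fU)$ rather than $(h,h^{\IG},\fU)$. This needs two facts.
\begin{enumerate}
\item By~\cite[Theorem 1.11]{DMS14}, $Z$ determines $(h,\eta)$ up to rotation. The curve $\eta$ determines $h^{\IG}$ modulo $2\pi\chi$, which is all that $g$ uses.
\item The output of $g$ is invariant under rotations of the embedding. I would argue this from the statement that $g$ depends on the inputs only through the boundary length process and $\fU$. This is suggested by the construction in~\cite{Tutte}, and is consistent with~\eqref{eq:shift-inv-Z} together with the rotation invariance of the boundary length process.
\end{enumerate}
Granting these, $\wt\cZ$ is a measurable function of $(2^{n/2}Z_{2^{-n}\cdot},2^n\fU)$, hence of $(Z,\fU)$. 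If the rotation invariance of $g$ is not available directly, the fallback is to use that $g$ is a measurable function of $(Z,\fU)$, as in~\cite[Lemma 6.3]{Tutte} where this lemma is stated.
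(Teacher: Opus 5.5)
Your equality-in-law argument is fine. With $b=2^{-n}$, Lemma~\ref{lem:Rb} together with the independence of $h^{\IG}$ from $h$ and its scale invariance modulo $2\pi\chi$ gives $(h^b,h^{\IG}(\fR_b\cdot),2^n\fU)\overset{d}{=}(h,h^{\IG},\fU)$. The boundary length process of the rescaled pair is $2^{n/2}Z_{2^{-n}\cdot}$.

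The gap is the measurability clause, which you grant rather than prove, and which is the real content of the lemma. It cannot be avoided. The map $(Z,\fU)\mapsto(2^{n/2}Z_{2^{-n}\cdot},2^n\fU)$ is a measurable bijection, so the identity in law together with $\cZ^n\in\sigma(Z,\fU)$ forces $\cZ$ itself to be a.s.\ a measurable function of $(Z,\fU)$. That property is not among those of $g$ recorded in Theorems~\ref{prop:KMT-gamma-cone} and~\ref{thm:KMT-0-cone}. Those theorems only give $\cZ=g(h,h^{\IG},\fU)$ and covariance under translations and dilations by real $C>0$; nothing is said about rotations.

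Your routes to the missing property do not close:
\begin{itemize}
\item You deduce rotation invariance of $g$ ``from the statement that $g$ depends on the inputs only through the boundary length process and $\fU$''. That assumes exactly what is to be shown.
\item Your fallback cites the lemma itself.
\item \cite[Theorem 1.11]{DMS14} is a statement about the $\gamma$-quantum cone. So your first fact, that $Z$ determines $(h,\eta)$ up to rotation, is not available under $\bbP_\rmE$, where $h$ is a $0$-quantum cone.
\end{itemize}

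The missing input is a property of the coupling constructed in \cite{Tutte}: the walk is built from the boundary length process and the dyadic uniforms alone, i.e.\ $g(h,h^{\IG},\fU)=F(Z,\fU)$ for a measurable $F$. This is what the paper imports by quoting \cite[Lemma 6.3]{Tutte}; it gives no further argument.

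Once you have this, the field rescaling through Lemma~\ref{lem:Rb} is unnecessary. Put $\cZ^n_s:=F(2^{n/2}Z_{2^{-n}\cdot},2^n\fU)_{2^ns}$, which is $\sigma(Z,\fU)$-measurable by construction. The identity in law then reduces to $(2^{n/2}Z_{2^{-n}\cdot},2^n\fU)\overset{d}{=}(Z,\fU)$. This follows from:
\begin{itemize}
\item Brownian scaling under $\bbP_\rmQ$, and Lemma~\ref{lem:L_t-inf-1} under $\bbP_\rmE$;
\item the invariance $2^n\fD\overset{d}{=}\fD$ of the uniform dyadic system, which you already checked;
\item the independence of $\fU$ from $(h,h^{\IG})$, hence from $Z$.
\end{itemize}
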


\begin{proposition}\label{prop:cZ^n-pack-E}
    Let $\cZ^n$ be defined as in Lemma~\ref{lem:def-cZ^n}. Let $a',b'\in [0,1)$ such that $a'+b'\leq 0.8$ and $I_{a',b'}(0)\subseteq \ol {I(0)}$ be the closed interval whose left (resp.\ right) endpoint has distance $a'$ (resp.\ $b'$) from the left (resp.\ right) endpoint of $I(0)$. Under the probability measure $\bbP_\rmE$, $\|(2^{-n/2}\cZ^n-Z)|_{I_{a',b'}(0)}\|_\infty$ converges to 0 in probability, and circle packing converges to mating of trees for $(\cZ^n|_{I_{a',b'}(0)},Z|_{I_{a',b'}(0)})$ with $\nu(n)=2^{-n/2}$ in Condition~\ref{cond:def:CPtoMOT-3}. Furthermore, for each compact set $K\subseteq\{(a,b,a',b'):a,b>0,a+b<1,a',b'\geq0, a'+b'\leq 0.8\}$ and $\e>0$, we have \eqb\label{eq:410uniform}\sup_{(a,b,a',b')\in K}\bbP_\rmE\big[\mathrm{err}(\cZ^n|_{I_{a',b'}(0)},Z^n|_{I_{a',b'}(0)}, a,b)>\e\big]\to 0 \ \ \text{as } n\to\infty.\eqe
\end{proposition}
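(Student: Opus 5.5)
The plan is to reduce, via the scaling of Lemma~\ref{lem:def-cZ^n}, to the infinite-volume statement Theorem~\ref{thm:conv-0-cone-pack-Hcell} at large scale $r$, and then cut to the finite interval with Theorem~\ref{thm:He-Schramm-extension-1}. By Lemma~\ref{lem:def-cZ^n}, the tuple $(Z,\cZ^n)$ restricted to $I_{a',b'}(0)$ has the same law as $(2^{-n/2}Z_{2^n\cdot},\cZ_{2^n\cdot})$ on the corresponding interval, which has length of order $2^n$ and contains the origin. I would therefore work with a single realization of $(h,\eta,\cZ)$ under $\bbP_\rmE$ at time scale $2^n$. The convergence $\|(2^{-n/2}\cZ^n-Z)\|_\infty\to0$ is immediate from part~(\ref{cond:thm:KMT-0-cone-(ii)}) of Theorem~\ref{thm:KMT-0-cone}, since the error is $O(n)\ll 2^{n/2}$.

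First I would verify Condition~\ref{cond:def:CPtoMOT-0}. Proposition~\ref{lem:gap-hat-J} shows that $\mr\cJ$ and $\mr\cJ^-$ have no gaps of length $2^{(1-\e_0)n}$ in $[-2^{n+1},2^{n+1}]$, which gives $\big||I'|-|I_{a,b}|\big|\to0$ and $t'_{\rm mid}\to t_{\rm mid}$ after rescaling. By Lemma~\ref{lem:A2}, $\Phi_{\rm simple}$ applied to $\cZ|_{I'}$ then agrees with the restriction of $\mr\cZ$. Hence $\Phi_{\rm simple}(T_{I'})$ is the submap $T_{s,t}$, for $s,t\in\mr\cJ$ the endpoints of $I'$, of the simplified whole-plane triangulation $\mr T$. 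Having at least three boundary vertices follows from $\partial\eta(I')$ having macroscopic length; in the discrete setting this is controlled by part~(\ref{cond:thm:conv-0-cone-pack-Hcell-(iv)}).

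Next take the whole-plane packing $\cP$ of Theorem~\ref{thm:conv-0-cone-pack-Hcell} and restrict it to $T_{s,t}$. This gives a circle embedding $\wt\cP^n$. Choose $r$ of order $\diam\eta([-2^{n+1},2^{n+1}])$, which by Lemma~\ref{lem:SLE-contain-1} is of order $2^{n/(\gamma Q)\pm\delta n}$, and rescale by $r^{-1}$. Let $\wt D$ be the rescaled $\eta(I')$, a Jordan domain since $\gamma=\sqrt2$.

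I would then check the hypotheses of Theorem~\ref{thm:He-Schramm-extension-1}, taking $\cP^n$ to be the rescaled $\wt\cP^n$ in $\wt D$ and $\cP_{n,I'}$ in $\bbD$.
\begin{enumerate}[(I)]
\item Small disks come from part~(\ref{cond:thm:conv-0-cone-pack-Hcell-(i)}).
\item Boundary disks lie near $\partial\wt D$ by part~(\ref{cond:thm:conv-0-cone-pack-Hcell-(iii)}).
\item Compactness of the image of the centre vertex $\frv_{t'_{\rm mid}}$ follows from \eqref{eq:E-pack}, since $\eta(t_{\rm mid})$ is interior at macroscopic distance from $\partial\wt D$.
\item Curve convergence follows from part~(\ref{cond:thm:conv-0-cone-pack-Hcell-(iii)}). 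Segment-joining of the disk centres tracks the points $z_{x_{k,q}}$, which traverse $\partial\eta([s,t])$ in order with $o(r)$ gaps, giving closeness in the metric \eqref{eq:curve-metric-0}.
\end{enumerate}
The theorem then yields subsequential uniform convergence (in the sense of Definition~\ref{def:unif-convergence}) of the packing maps to a conformal map $\wt D\to\bbD$. Pinning $0$ and the boundary point $1$ identifies this map as the Riemann map giving the embedding $(\bbD,h_{n,I'},\eta_{n,I'})$, so the limit is unique and full convergence holds. Conditions~\ref{cond:def:CPtoMOT-1} and \ref{cond:def:CPtoMOT-2} follow by composing with \eqref{eq:E-curve-pack}, using uniform continuity of $\eta$.

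For Condition~\ref{cond:def:CPtoMOT-3}, part~(\ref{cond:thm:conv-0-cone-pack-Hcell-(iv)}) says the $k$-th boundary vertex sits within $o(r)$ of the point at quantum boundary length $k\pm C\log r$. The counting measure times $2^{-n/2}$ therefore matches $\nu_h$ up to $O(n2^{-n/2})$ per arc. Pushing forward under the uniformly convergent maps, and using that $\nu_{h_{n,I'}}$ is non-atomic, gives Prokhorov convergence.

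Uniformity over the compact set $K$ in \eqref{eq:410uniform} I would get by a finite-net argument. The relevant events are monotone and continuous in $(a,b,a',b')$, because all estimates above are simultaneous over all $s,t$ with $\eta([s,t])\subseteq B(0;r)$. The main obstacle I expect is making the deterministic-$r$ almost-sure statements of Theorem~\ref{thm:conv-0-cone-pack-Hcell} into in-probability statements at the scale $2^n$, uniformly over intervals, and handling the random scale $\fR_{2^n}$. I would handle this with Lemma~\ref{lem:Rb}, which gives law equality at every scale, so that "for all large $r$" a.s.\ implies probability tending to one at scale $2^n$.
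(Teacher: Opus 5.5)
Your overall route is the paper's: rescale via Lemma~\ref{lem:def-cZ^n}, import the whole-plane estimates of Theorem~\ref{thm:conv-0-cone-pack-Hcell}, cut out $I'$ with Theorem~\ref{thm:He-Schramm-extension-1}, and get the boundary measure from part~(\ref{cond:thm:conv-0-cone-pack-Hcell-(iv)}). The problem is how you invoke Theorem~\ref{thm:He-Schramm-extension-1}.

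\textbf{The domain must be fixed.} Theorem~\ref{thm:He-Schramm-extension-1} is a deterministic statement about a sequence of circle embeddings approximating \emph{one fixed} Jordan domain $D$. One applies it pathwise along subsequences on which its hypotheses hold a.s. You instead work with a single realization of $(h,\eta,\cZ)$ at time scale $2^n$ and take $\wt D$ to be the rescaled $\eta(I')$. That domain $r_n^{-1}\eta(2^nI')$ changes with $n$ and has no pathwise limit. Even in a fixed coupling, $I'$ depends on $n$ through $\mr\cJ$. So the theorem does not apply as you set it up.

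The fix is to use Lemma~\ref{lem:def-cZ^n} in the other direction, keeping $\eta$ fixed:
\begin{enumerate}[(a)]
\item Apply Theorem~\ref{thm:conv-0-cone-pack-Hcell} to the rescaled pair $(h^{2^{-n}},\eta^{2^{-n}})$, with $\eta^{2^{-n}}(t)=\fR_{2^{-n}}^{-1}\eta(2^{-n}t)$. This pair has law $\bbP_\rmE$. Take $r=\fR_{2^{-n}}^{-1}\diam(\eta([-2,2]))$, then undo the scaling by $\fR_{2^{-n}}$.
\item Take $D=\eta(I_{a,b})$. Verify (II) and (IV) against $\partial\eta(I_{a,b})$ using $|I'|\to|I_{a,b}|$ and continuity of $\eta$.
\item Translate the embedding so the centre disk sits at the fixed point $p_0=\eta(t_{\rm mid})$; this gives (III).
\item Use a first application of the theorem to get vanishing disk diameters on the $\bbD$ side. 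Only then does (IV) hold there, and with it uniform convergence up to the boundary.
\item The target embedding $(\bbD,h_{n,I'},\eta_{n,I'})$ is built from $\eta(I')$, not $\eta(I_{a,b})$. So your ``pinning $0$ and $1$ identifies the limit'' step needs Proposition~\ref{prop:boundary-perturb} to show $\varphi_{I'}\to\varphi_{I_{a,b}}$.
\end{enumerate}

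\textbf{Uniformity over $K$.} Your finite-net argument is not justified. $\mathrm{err}$ is neither monotone nor continuous in $(a,b,a',b')$: moving the parameters changes $T_{n,I'}$ and hence its entire packing. Theorem~\ref{thm:He-Schramm-extension-1} also gives no rate. So having the hypotheses hold simultaneously over all $[s,t]$ does not make the conclusion uniform.

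The standard argument, which the paper uses, goes as follows:
\begin{enumerate}[(a)]
\item Run the whole proof for an arbitrary deterministic sequence $(a_n',b_n',a_n,b_n)\in K$ converging to $(a',b',a,b)$. The hypotheses still hold, because the estimates are simultaneous over intervals and $\partial\eta(I_{a_n,b_n})\to\partial\eta(I_{a,b})$ in the metric~\eqref{eq:curve-metric-0}.
\item If uniformity failed, compactness of $K$ would give such a convergent sequence with $\bbP_\rmE[\mathrm{err}>\e_0]\ge\delta_0$ along a subsequence. This contradicts (a).
\end{enumerate}
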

The reason for introducing $I_{a',b'}(0)$ here is that, as in Lemma~\ref{lem:cZ^n-pack-Q-n} below, to transfer the convergence along the sequence $2^n$ to the general $n$, we need to truncate $I(0)$ and work with its subintervals as well.

\begin{proof}
    The first claim is immediate from Lemma~\ref{lem:def-cZ^n} and Theorem~\ref{thm:KMT-0-cone}. For the second claim, let  $\Phi_\rmT(\cZ^n) = ( T^n,{\mathfrak{tr}^n})$, and let $\eta^n$ be the space-filling curve generated via the boundary length process  $(2^{n/2}Z_{2^{-n}t})_{t\in\bbR}$.    Let $\cJ^{n,-}$ and $\mr\cJ^{n,-}$ be the downward jump times defined in the same way as $\cJ^-$ and $\mr\cJ^-$ for $\cZ^n$, and $t\mapsto \frv_t^n$ be the bijection between $\cJ^{n,-}$ and $\cV T^n$.  Note that from the scaling in Lemma~\ref{lem:Rb}, $\eta^n(t) = \fR_{2^{-n}}^{-1}\eta(2^{-n}t)$, and the downward jump times for $(\cZ^n_{2^{-n}t})_{t\in\bbR}$ is equal to $2^n\cJ^{n,-}$.  By Lemma~\ref{lem:def-cZ^n} and Theorem~\ref{thm:conv-0-cone-pack-Hcell} applied to $r = \diam(\eta^n([-2^{n+1},2^{n+1}])) = \fR_{2^{-n}}^{-1}\diam(\eta([-2,2]))$, as we scale by  $\fR_{2^{-n}}$, there is a circle packing for $  T^n$ such that   in probability as $n\to\infty$,
    \begin{align}
 &  \lim_{n\to\infty} \frac{1}{ \diam(\eta([-2,2])) }\sup\{ |\eta(t)-o(\frv_t^n)|: t\in \cJ^{n,-},t\in [-2,2]\} = 0;  \label{eq:E-pack-0-0}  \\ & 
   \lim_{n\to\infty} \frac{1}{ \diam(\eta([-2,2]))}\sup\{ |\eta(t)-{\mathfrak{tr}^n}(t)|: t\in [-2,2]\} = 0,  \label{eq:E-pack-0-1}
\end{align}
where the limit~\eqref{eq:E-pack-0-1} is uniform over all choices of the space-filling curve $\mathfrak{tr}^n$. This along with part (\ref{cond:thm:conv-0-cone-pack-Hcell-(i)}) of Theorem~\ref{thm:conv-0-cone-pack-Hcell} implies that the maximal diameter of disks for vertices incident to the faces for times in $[-2,2]$ tends to 0 in probability. Fix a compact set $K$ as in the statement and $(a,b,a',b')\in K$.  Also choose any deterministic sequence $(a'_n,b'_n,a_n,b_n)\in K$ such that $(a'_n,b'_n,a_n,b_n)\to (a',b',a,b)$.  Let $I = I_{a',b'}(0)$ and the interval $I_{a,b}$ for $I$ be as in Definition~\ref{def:CPtoMOT} with its middlepoint $t_{\rm mid}$. Define $I_{a_n',b_n'}(0)$ and $I_{a_n,b_n}$ analogously. Let $I_n'$ be the interval $I'$ defined for $I = I_{a_n',b_n'}(0)$ in Definition~\ref{def:CPtoMOT}. By Proposition~\ref{lem:gap-hat-J^-} and the scaling from Lemma~\ref{lem:def-cZ^n},  with probability tending to 1 as $n\to\infty$,   $\mr\cJ^{n,-}$ intersects both components of $I_{a_n',b_n'}\backslash I_{a_n,b_n}$, and the length of the interval $|I_n'|\to |I_{a,b}|$   in probability.  By    Lemma~\ref{lem:A2},  $ \Phi_{\rm simple}(\cZ^n)|_{I_{a_n,b_n}} =   \Phi_{\rm simple}(\cZ^n|_{I_{a_n',b_n'}(0)})|_{I_{a_n,b_n}}$, and therefore  the simple version  $\Phi_{\rm simple}(T_{n,I'_n})$ as in Definition~\ref{def:CPtoMOT} can be viewed as a submap of $\Phi_{\rm simple}(T^n)$. 

Let $t_{\rm mid}' =  \inf\{u\geq t_{\rm mid}:u\in \mr\cJ^{n,-}\}$. Then as $n\to\infty$, $t_{\rm mid}'\in \mr\cJ^{n,-} $ with probability tending to 1, and $t_{\rm mid}'\to t_{\rm mid}$ in probability by  Lemma~\ref{lem:A2} and Proposition~\ref{lem:gap-hat-J^-}. The convergence in~\eqref{eq:E-pack-0-1} and $t_{\rm mid}'\to t_{\rm mid}$ implies that with probability tending to 1, $\frv_{t_{\rm mid}'}$ is an interior vertex of $T_{n,I_n'}$.    Let $D=\eta(I_{a,b})$, $\wt D=\bbD$, $\cP^n$ is the circle packing for $T^n$ restricted to vertices in  $T_{n,I_n'}$  and thus a circle embedding for $T_{n,I_n'}$, and $\wt\cP^n$ is a   circle packing in $\bbD$ for $ T_{n,I_n'}$ centered at the vertex $\frv_{t_{\rm mid}'}$.   Let $f^{n,0}$ be a homeomorphism between the carriers of $\cP^n$ and $\wt\cP^n$ that corresponds disks and send centers of disks to the  corresponding centers.   %{centered at} $\frv_{t_{\rm mid}'}^n$, where $t_{\rm mid}' = \inf\{u\geq t_{\rm mid}:u\in \mr\cJ^{n,-}\}$. 
  Note that $\eta(I_{a,b})$ is a Jordan domain by the construction of the space-filling curve $\eta$ via GFF flow line as in~\cite{ig4}. Below we apply Theorem~\ref{thm:He-Schramm-extension-1} for $\cP^n$ and $\wt\cP^n$.

Condition~(\ref{cond:thm:He-Schramm-extension-1-(i)}) is true since the maximal diameter of disks for vertices $\frv_t^n$ where $t\in [-2,2]\cap  \cJ^{n,-}$ and the disks tangent to them tends to 0 in probability. By part (\ref{cond:thm:conv-0-cone-pack-Hcell-(iii)}) of Theorem~\ref{thm:conv-0-cone-pack-Hcell}, the curve distance~\eqref{eq:curve-metric-0} between $\partial \mathrm{carrier}(\cP^n)$ and $\partial\eta(I_n')$ tends to 0 in probability. Using $|I_n'|\to |I_{a,b}|$ in probability and the continuity of the curve $\eta$,   the two components of $\eta(I_{a,b}\backslash I_n')\cup\eta(I_n'\backslash I_{a,b})$ are contained within a small neighborhood of the two endpoints of $\eta(I_{a,b})$ and thus the curve distance~\eqref{eq:curve-metric-0} between $\partial\eta(I_n')$ and  $\partial\eta(I_{a,b})$  tends to 0. Thus   the curve distance~\eqref{eq:curve-metric-0} between $\partial \mathrm{carrier}(\cP^n)$ and $\partial\eta(I_{a,b})$ tends to 0 in probability as $n\to\infty$ as well. This along with Condition~(\ref{cond:thm:He-Schramm-extension-1-(i)}) implies both Condition~(\ref{cond:thm:He-Schramm-extension-1-(ii)}) and Condition~(\ref{cond:thm:He-Schramm-extension-1-(iv)}) for $\cP^n$.  Moreover, by continuity of $\eta$ and~\eqref{eq:E-pack-0-0}, $|\eta(t_{\rm mid}) - o(\frv_{t_{\rm mid}'})|\leq |\eta(t_{\rm mid})-\eta(t'_{\rm mid})|+|\eta(t'_{\rm mid})-o(\frv_{t_{\rm mid}'})|\to 0$.  Then Conditions~(\ref{cond:thm:He-Schramm-extension-1-(i)}), (\ref{cond:thm:He-Schramm-extension-1-(ii)}) and  (\ref{cond:thm:He-Schramm-extension-1-(iv)}) remain true if we translate $\cP^n$ such that the disk for $\frv_{t_{\rm mid}'}$ is now centered at $\eta(t_{\rm mid})$. We write $g^n(z) = f^{n,0}(z - \eta(t_{\rm mid})+o(\frv_{t_{\rm mid}'})) $.    Condition~(\ref{cond:thm:He-Schramm-extension-1-(iii)}) then holds for $p_0=\eta(t_{\rm mid})$ after the translation of $\cP^n$ since $g^n(p_0)=0$ by our choice of embedding of $\wt\cP^n$.  Condition~(\ref{cond:thm:He-Schramm-extension-1-(ii)}) for $\wt\cP^n$ is immediate since the circle packing is in $\bbD$.

Now by Theorem~\ref{thm:He-Schramm-extension-1}, since Conditions~(\ref{cond:thm:He-Schramm-extension-1-(i)})-(\ref{cond:thm:He-Schramm-extension-1-(iii)}) hold, the maximal diameter of disks in $\cP_{n,I_n'} =  \wt\cP^n$ tends to 0 in probability, which further implies  Condition~(\ref{cond:thm:He-Schramm-extension-1-(iv)}) for $ \wt\cP^n$ since the packing is in $\partial \bbD$. Further by Theorem~\ref{thm:He-Schramm-extension-1}, up to a rotation, the maps $g^n$ converges uniformly to a conformal map $\varphi_{I_{a,b}}:\eta(I_{a,b})\to\bbD$ sending $\eta(t_{\rm mid})$ to 0, while by Proposition~\ref{prop:boundary-perturb}, up to a rotation, the conformal map $\varphi_{I_n'}:\eta(I_n')\to\bbD$ sending $\eta(t_{\rm mid})$ to 0 converges uniformly to $\varphi_{I_{a,b}}$ in probability. The uniformity of the limit~\eqref{eq:E-pack-0-1} over the trace $\frtr$ and the choice that $\eta_{n,I_n'}$ starts from 1 further imply that the tangent point between $\partial\bbD$ and the disk in $\cP_{n,I_n'}$ for the left endpoint of the root edge of $T_{n,I_n'}$ is in $o_n(1)$ distance from 1 in probability.  This along with ~\eqref{eq:E-pack-0-0} and ~\eqref{eq:E-pack-0-1} verifies  Conditions~\ref{cond:def:CPtoMOT-0}-\ref{cond:def:CPtoMOT-2} in Definition~\ref{def:CPtoMOT} for $\cP_{n,I_n'}$ and $(I_{a_n,b_n},I_n')$, since recentering the packing $\cP_{n,I_n'}$ at the corresponding midpoint vertex for $I_{a_n,b_n}$ instead of $\frv_{t_{\rm mid}'}$ and rotating $\cP_{n,I_n'}$ such that the disk for the vertex to the left of the root edge is tangent to $\partial\bbD$ at 1 only change the error by $o_n(1)$. For  Condition~\ref{cond:def:CPtoMOT-3}, note that by ~\eqref{eq:Rb-bound}, $2^{-n/2}\log(\fR_{2^{-n}}^{-1}\diam(\eta([-2,2])))\to 0$ in probability as $n\to\infty$. Let $I_n' = [s_n,t_n]$, and $z_{s_n,t_n,q}^-$, $z_{x_{k,q}}^n$ where $q\in\{\rmL,\rmR\}$, $x\in\{s,t\}$ be the points defined in Theorem~\ref{thm:conv-0-cone-pack-Hcell} for $[s,t]=[s_n,t_n]$. We further move the points  $z_{x_{k,q}}^n$ that are not on $\partial\eta(I_n')$ to $z_{s_n,t_n,q}^-$. 
%and if we further move all the points $x_{i,q}$ that is not on $\partial\eta([s,t])$ to $z_{s,t,q}^-$, then part~(\ref{cond:thm:conv-0-cone-pack-Hcell-(iv)}) of Theorem~\ref{thm:conv-0-cone-pack-Hcell} remains true by part~(\ref{cond:thm:conv-0-cone-pack-Hcell-(iii)}) of Theorem~\ref{thm:conv-0-cone-pack-Hcell} and the continuity of the boundary length process. 
Conditions~\ref{cond:def:CPtoMOT-3} in Definition~\ref{def:CPtoMOT} then follows by  the uniform convergence of map $g^n$ and the conformal map $\varphi_{I_n'}$ to the conformal map $\varphi_{I_{a,b}}$ and the bound in part~(\ref{cond:thm:conv-0-cone-pack-Hcell-(iv)}) of Theorem~\ref{thm:conv-0-cone-pack-Hcell}. This gives that for any $\e>0$, $\bbP_\rmE\big[\mathrm{err}(\cZ^n|_{I_{a_n',b_n'}(0)},Z^n|_{I_{a_n',b_n'}(0)},a_n,b_n)>\e\big]\to 0$, which implies the desired convergence of circle packing to mating of trees by setting $(a_n',b_n',a_n,b_n)=(a',b',a,b)$ and also~\eqref{eq:410uniform}, since otherwise by compactness there will be deterministic $(a_n',b_n',a_n,b_n)$ and $\e_0,\delta_0>0$ such that $\bbP_\rmE\big[\mathrm{err}(\cZ^n|_{I_{a_n',b_n'}(0)},Z^n|_{I_{a_n',b_n'}(0)}, a_n,b_n)>\e_0\big]\to \delta_0$, which is a contradiction.
\end{proof}

Now we transfer from Proposition~\ref{prop:cZ^n-pack-E} to the setting of $\bbP_\rmQ$. Recall the definition of rate $\lambda$ continuous-time nearest-neighbor walk on $\bbZ^2$ from Definition~\ref{def:walk}.
\begin{proposition}\label{prop:cZ^n-pack-Q}
    For each $n\in\bbN$ there is a probability measure on pairs $(Z^n,\cZ^n)$ such that the following holds: 
    \begin{enumerate}[(i)]
        \item $(\cZ_t^n)_{t\in [0,1]}$ is a continuous-time nearest-neighbor walk on $\bbZ^2$ with rate $2^n$ such that $\cZ_0^n=0$;
        \item $(Z^n_t)_{t\in[0,1]}$ has the law of a variance $\frac12$ uncorrelated 2D Brownian motion restricted to time $[0,1]$;
        \item Circle packing converges to mating of trees for $(\cZ^n,Z^n)_{n\in\bbN}$ with $\nu(n)=2^{-n/2}$ in Condition~\ref{cond:def:CPtoMOT-3}, and for each compact set $K$ in Proposition~\ref{prop:cZ^n-pack-E} and any $\e>0$, %with $[0,1]$ and $[a',1-b']$ in place of $I(0)$ and $I_{a',b'}(0)$ converges to 0 uniformly in probability as $n\to\infty$ in the sense of~\eqref{eq:410uniform};  \
        \begin{equation}
           \sup_{(a,b,a',b')\in K} \bbP\big[\mathrm{err}(\cZ^n|_{[a',1-b']},Z^n|_{[a',1-b']},a,b)>\e \big]\to 0 \text{ as } n\to\infty.
        \end{equation}
        \label{cond:prop:cZ^n-pack-Q-(iii)}
        \item $\|Z^n-2^{-n/2}\cZ^n\|_\infty$ converges to 0 in probability as $n\to\infty$.
    \end{enumerate}
\end{proposition}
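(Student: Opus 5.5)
The plan is to transfer Proposition~\ref{prop:cZ^n-pack-E} from $\bbP_\rmE$ to the Brownian setting via the local transfer of \cite[Section 6.1]{Tutte}, which the paper cites for exactly this purpose. I would follow the structure of the analogous Tutte-embedding statement there.

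\textbf{Step 1 (from $\bbP_\rmE$ to $\bbP_\rmQ$ on dyadic scales).} Under $\bbP_\rmQ$, the boundary length process $Z$ is a variance-$\frac12$ uncorrelated Brownian motion, and by Theorem~\ref{prop:KMT-gamma-cone} the coupled walk $\cZ$ is a rate-one nearest-neighbour walk with $\sup_{I(m)}\|Z-\cZ\|=O(m)$. Applying Lemma~\ref{lem:def-cZ^n} under $\bbP_\rmQ$, the rescaled pair $(2^{n/2}Z_{2^{-n}\cdot},\cZ^n_{2^{-n}\cdot})$ restricted to $I(0)$ gives a rate-$2^n$ walk coupled with Brownian motion, with $\|Z-2^{-n/2}\cZ^n\|_\infty\to0$.

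The event $\{\mathrm{err}(\cZ^n|_{I_{a',b'}(0)},Z|_{I_{a',b'}(0)},a,b)>\e\}$ is measurable with respect to $(Z,\cZ^n)|_{I(0)}$. It depends only on increments, because the quantum surface encoded by $Z|_{I'}$ is determined by increments (\cite[Lemma 2.17]{AG19}) and $\Phi_\rmT$, $\Phi_{\rm simple}$ are translation invariant. I would then compare its probabilities under $\bbP_\rmE$ and $\bbP_\rmQ$. The plan is to use the argument of \cite[Section 6.1]{Tutte}: the laws of $(Z,\fU)|_{I(0)}$ under $\bbP_\rmE$ and $\bbP_\rmQ$ are mutually absolutely continuous up to a tiny event. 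This comes from coupling the fields so that they differ by $\gamma\log|\cdot|$ on $\bbD$, from the measurability $\cZ=g(h,h^\IG,\fU)$, and from localization of $g$ on $I(0)$.

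So I would show that a sequence of events whose $\bbP_\rmE$-probability tends to $0$ also has $\bbP_\rmQ$-probability tending to $0$. This is the reverse direction of \eqref{eq:KMT-0-cone}. I expect it to be the main obstacle, and I would cite whatever version is proven in \cite[Section 6.1]{Tutte}. It should follow because the Radon--Nikodym derivative of the $\bbP_\rmQ$-law of $Z|_{I(0)}$ with respect to the $\bbP_\rmE$-law is integrable, together with uniform integrability of its truncations.

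\textbf{Step 2 (removing the random interval $I(0)$).} The interval $I(0)=[\theta-1,\theta)$ is random, while the statement concerns $[0,1]$. By Brownian translation invariance, and by the Poisson description of $\cZ$ under $\bbP_\rmQ$ together with its independence from $\theta$, I would condition on $\theta$. I would then take $\theta$ in a subset of positive probability and translate. This expresses the pair on $[0,1]$ as a sub-piece of $I(0)$ after a rescaling by a bounded factor $2^{\pm1}$, which keeps the rate within $[2^{n-1},2^{n+1}]$.

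The subinterval parameters $(a',b')$ and $(a,b)$ change accordingly, and these are exactly what the uniformity \eqref{eq:410uniform} over compact $K$ absorbs. That is why $I_{a',b'}(0)$ was introduced. Rates between dyadic scales are handled the same way: the rate-$2^n$ walk on $[0,1]$ is a time-scaled portion of a rate-$2^{n'}$ walk on a subinterval of $I(0)$.

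Since all statements are in probability, the choice of coupling on each such event can be patched together. Conditions~\ref{cond:def:CPtoMOT-0}--\ref{cond:def:CPtoMOT-3} of Definition~\ref{def:CPtoMOT} are invariant under these affine time changes, provided $\nu(n)=2^{-n/2}$ is rescaled consistently. This yields (i)--(iv), with the uniform statement (iii) inherited directly from \eqref{eq:410uniform}.
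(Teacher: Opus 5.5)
Your Step~1 rests on a false claim. The laws of $(Z,\fU)|_{I(0)}$ under $\bbP_\rmE$ and $\bbP_\rmQ$ are not mutually absolutely continuous, even after discarding a small event: they are mutually singular.

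The reason is that $0$ is an interior point of $I(0)$. The curve-decorated quantum surface $\eta(I(0))$, marked at $\eta(0)$, is determined by the increments of $Z$ on $I(0)$ together with $\theta$. The thickness $\lim_{\e\to0}h_\e(\eta(0))/\log(1/\e)$ of the field at an interior marked point is unchanged by the coordinate change \eqref{eq:lqg-changecoord}, so it is a measurable function of that data. It equals $\gamma$ almost surely under $\bbP_\rmQ$, where $h|_\bbD=h^0-\gamma\log|\cdot|$. It equals $0$ almost surely under $\bbP_\rmE$, where $h|_\bbD=h^0|_\bbD$. So there is no Radon--Nikodym derivative of the $\bbP_\rmQ$-law of $Z|_{I(0)}$ with respect to the $\bbP_\rmE$-law at all, integrable or not, and no truncation argument can rescue it.

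The coupling in which the fields differ by $\gamma\log|\cdot|$ on $\bbD$ cannot supply such a derivative. That perturbation is singular exactly at $\eta(0)$, inside the region your events live on. It also changes the quantum-area parametrization, and with it $I(0)$, $\cZ=g(h,h^{\IG},\fU)$ and all the time sets. This is also why the comparison \eqref{eq:KMT-0-cone}, which you propose to use ``in reverse'', is only stated for events of the form $\{\exists s\in I(0):G(\cZ(\cdot+s)-\cZ(s),\fD-s)=1\}$. Such events do not single out the marked time $0$, whereas a pointwise comparison of the two measures on $I(0)$ is impossible.

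The paper's proof runs the proof of \cite[Proposition 6.2]{Tutte} verbatim, and it singles out the property that makes this possible. The error events of Definition~\ref{def:CPtoMOT} are measurable with respect to $(Z^n,\cZ^n)$ and are invariant under shifting the time domain of $(Z^n,\cZ^n)$. This shift invariance is what lets the $\bbP_\rmE\to\bbP_\rmQ$ transfer of \cite[Section 6.1]{Tutte} avoid comparing the two measures at the marked time. In your proposal, shift invariance appears only as bookkeeping in Step~2, for removing $\theta$; in fact it is the essential input to Step~1. As written, the step you yourself identify as the main obstacle is justified by an argument that cannot work, so the proposal has a genuine gap there.
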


\begin{proof}
    {In~\cite[Section 6.1]{Tutte}, an analogous statement (\cite[Proposition 6.2]{Tutte} there) was proven based on the analogous result of Proposition~\ref{prop:cZ^n-pack-E} (\cite[Proposition 6.4]{Tutte} there) for random walk converging to Brownian motion on $(Z^n,\cZ^n)$. Note that in our definition of circle packing converges to mating of trees in Definition~\ref{def:CPtoMOT}, the events considered there is measurable with respect to $(Z^n,\cZ^n)$ and is invariant if we shift the domain of $(Z^n,\cZ^n)$. Therefore the claim follows from the identical proof of ~\cite[Proposition 6.2]{Tutte} where we replace ``random walk converges to Brownian motion" by ``circle packing converges to mating of trees".}
\end{proof}

Our Proposition~\ref{prop:cZ^n-pack-Q} above is stated for walks of rate $2^n$. Next we transfer to the walks of rate $n$.

\begin{lemma}\label{lem:cZ^n-pack-Q-n}
  Let $t_0>0$. There is a coupling of $Z,\cZ^1,\cZ^2,...$ such that the following holds: 
    \begin{enumerate}[(i)]
        \item For each $n$, $(\cZ_t^n)_{t\in [0,t_0]}$ is a continuous-time nearest-neighbor walk on $\bbZ^2$ with rate $n$ such that $\cZ_0^n=0$;
        \item $(Z_t)_{t\in[0,t_0]}$ has the law of a variance $\frac12$ uncorrelated 2D Brownian motion restricted to time $[0,t_0]$;
          \item $\|Z-n^{-1/2}\cZ^n\|_\infty$ converges to 0 in probability as $n\to\infty$;
        \item Circle packing converges to mating of trees for $(\cZ^n,Z)_{n\in\bbN}$ with $\nu(n)=n^{-1/2}$ in Condition~\ref{cond:def:CPtoMOT-3}, and for each compact set $K\subseteq\{(a,b,a',b'):a,b>0,a+b<1,a',b'\geq0, a'+b'\leq 0.3\}$ and $\e>0$, %with $[0,1]$ and $[a',1-b']$ in place of $I(0)$ and $I_{a',b'}(0)$ converges to 0 uniformly in probability as $n\to\infty$ in the sense of~\eqref{eq:410uniform};  \
        \begin{equation}\label{eq:conv-dist-check}
           \sup_{(a,b,a',b')\in K} \bbP\big[\mathrm{err}(\cZ^n|_{[a't_0,(1-b')t_0]},Z|_{[a't_0,(1-b')t_0]},a,b)>\e \big]\to 0 \text{ as } n\to\infty.
        \end{equation}\label{cond:lem:cZ^n-pack-Q-n-(iv)}
       % and the corresponding error in Proposition~\ref{prop:cZ^n-pack-E} with $[0,t_0]$ and $[a't_0,(1-b')t_0]$ in place of $I(0)$ and $I_{a',b'}(0)$ converges to 0 uniformly in probability as $n\to\infty$  in the sense of~\eqref{eq:410uniform}.
    \end{enumerate}
\end{lemma}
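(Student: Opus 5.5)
The plan is to deduce the lemma from Proposition~\ref{prop:cZ^n-pack-Q} by Brownian scaling plus a sandwiching of the rate $n$ between dyadic rates. This mirrors the passage from rate $2^n$ to general rate in \cite[Section 6.1]{Tutte}.

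\emph{Choice of scale.} Given $n$, I write $n = 2^{m}\lambda_n$ with $m=m(n)\in\bbN$ and $\lambda_n\in[1,2)$. I use the scaling $t\mapsto \lambda t$ of the walk together with Brownian scaling of the limit. Concretely, a rate-$n$ walk on $[0,t_0]$ is the time change $t\mapsto 2^{m}\lambda_n t/2^m$ of a rate-$2^m$ walk on $[0,\lambda_n t_0]$. So I take the coupling $(Z^{m},\cZ^{m})$ of Proposition~\ref{prop:cZ^n-pack-Q}, run on a time interval of length $T:=2t_0$ rather than $1$. This extension is harmless: apply the proposition and use Brownian scaling by the deterministic factor $T$, which maps a rate-$2^m$ walk on $[0,1]$ to a rate $2^m/T$ walk. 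To make this clean, I let $m$ absorb the factor, writing $n=2^{m}\lambda_n/T$.

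I then define
\[
\cZ^n_t:=\cZ^{m}_{\lambda_n t},\qquad Z_t:=\lambda_n^{-1/2}Z^{m}_{\lambda_n t},\qquad t\in[0,t_0].
\]
This gives (i), with rate $n$, and (ii) by Brownian scaling. Item (iii) follows from item (iv) of Proposition~\ref{prop:cZ^n-pack-Q}, since $n^{-1/2}\cZ^n_t-Z_t=\lambda_n^{-1/2}\big(2^{-m/2}\cZ^m_{\lambda_nt}-Z^m_{\lambda_nt}\big)$ up to the constant $T^{1/2}$. The law of $Z$ depends on $n$ here; to have a single $Z$ for all $n$, I pass through Skorokhod or a diagonal coupling. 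Since all statements are in probability, it suffices to prove them along each sequence, and then glue using that the joint laws of $(Z,\cZ^n)$ can be realized on one space with a common $Z$ because the conditional law of $\cZ^n$ given $Z$ is what matters.

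\emph{Transfer of circle packing convergence.} The interval $[a't_0,(1-b')t_0]$ for $\cZ^n$ corresponds to $[\lambda_n a't_0,\lambda_n(1-b')t_0]\subseteq[0,T]$ for $\cZ^m$. In the normalized time of $[0,T]$, this interval has offsets $a''=\lambda_n a'/2$ and $b''=1-\lambda_n(1-b')/2$. Since $\lambda_n\in[1,2)$ and $a'+b'\le 0.3$, the parameters $(a'',b'')$ stay nonnegative with $a''+b''\le 1-\tfrac12(0.7)\le 0.8$. This is precisely why the lemma restricts to $a'+b'\le0.3$. Thus $(a,b,a'',b'')$ ranges over a compact subset $K'$ of the region in Proposition~\ref{prop:cZ^n-pack-Q}.

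The error functional $\mathrm{err}$ is invariant under the deterministic linear time change and under Brownian rescaling of $Z$. The SLE/LQG surface encoded by $Z|_{I'}$ is the same quantum surface up to the constant rescaling of field and time, the embedding into $\bbD$ is normalized, and the circle packing depends only on the combinatorics. The exceptions are the time coordinates ($t_{\rm mid}'$, $|I'|$, and the parametrizations of $\frtr$ and $\eta$), which scale by the bounded factor $\lambda_n$, and the boundary mass, which changes from $2^{-m/2}$ to $n^{-1/2}$ by a bounded factor absorbed into the normalization of $\nu_h$. Hence $\mathrm{err}$ for $(\cZ^n,Z,a,b)$ is bounded by a constant times $\mathrm{err}$ for $(\cZ^m,Z^m,a,b)$ on $[a'',1-b'']$. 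Applying the uniform statement (iii) of Proposition~\ref{prop:cZ^n-pack-Q} over $K'$ then gives~\eqref{eq:conv-dist-check}.

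\emph{Main obstacle.} The step I expect to be delicate is checking that the time-change does not interact badly with the non-local $\Phi_{\rm simple}$ and the trimming to $I'$. These are defined intrinsically from the walk on the given interval, so they commute with linear time changes. I would verify this directly from Definition~\ref{def:CPtoMOT}, together with the measurability remark that the relevant events are shift and scale invariant.
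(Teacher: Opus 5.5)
Your proposal is correct and is essentially the paper's proof. The paper picks $m$ with $2^{m-1}<nt_0\le 2^m$ and sets $\cZ^n_t=\check\cZ^m_{tn2^{-m}}$, which is exactly your construction since your $\lambda_n/T=n2^{-m}$. It then resamples $\cZ^n$ from its conditional law given a common Brownian motion $Z$, and deduces (iv) from the uniformity in Proposition~\ref{prop:cZ^n-pack-Q} applied to the sub-interval $[0,nt_02^{-m}]$ with $nt_02^{-m}\in[1/2,1]$.

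The detour through $T=2t_0$ and any Skorokhod argument are unnecessary; the conditional resampling alone gives the common $Z$. Also, the boundary masses match exactly rather than only up to a bounded factor: $n^{-1/2}/2^{-m/2}=(n2^{-m})^{-1/2}$ is precisely the factor by which $\nu_h$ rescales under $Z_t=(n2^{-m})^{-1/2}\check Z_{tn2^{-m}}$.
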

\begin{proof}
 Consider the setting in Proposition~\ref{prop:cZ^n-pack-Q},  except that we suppose the processes for different $n$ are defined on the same probability space and that they have been coupled together so that $Z^n = Z^1$ for all $n\in\bbN$. Let $\check{Z},\check{\cZ^1},\check{\cZ^2},...,$ be equal to the processes $Z,\cZ^1,\cZ^2,...$ in  Proposition~\ref{prop:cZ^n-pack-Q} under these assumptions.  For each sufficiently large $n$, choose $m=m(n)$ such that $2^{m-1}<nt_0\leq 2^m$. Let $\wh\cZ^n_{t} = \check{\cZ}^m_{tn2^{-m}}$ and $Z_{t}^n = n^{-1/2}2^{m/2}\check{Z}_{tn2^{-m}}$ for $t\in [0,t_0]$. Note that each $Z_t^n$ has the law as a  variance $\frac12$ uncorrelated 2D Brownian motion. We further fix one Brownian motion $Z$, and conditioned on  $Z$, let $\cZ^n$ be sampled from the conditional law of $\wh\cZ^n$ given $Z^n=Z$.  Then claims (i)-(iii) follows directly from our construction, while the claim (iv) follows from the uniformity of the convergence in part (\ref{cond:prop:cZ^n-pack-Q-(iii)}) of Proposition~\ref{prop:cZ^n-pack-Q} as we set $(a',b')=(0,1-nt_02^{-m})$ since $nt_02^{-m}\in [1/2,1]$.
\end{proof}

\subsection{The disk topology case}\label{subsec:pack-disk-pf}

In this subsection we prove Theorem~\ref{thm:pack-disk}. Our proof is organized as follows. We will first work with walks $\wt\cZ^n$ with Poissonian jumps.  We begin with the setup of Lemma~\ref{lem:cZ^n-pack-Q-n} and transfer to the setting where the process there $Z$ is the process $Z$ in Theorem~\ref{thm:pack-disk} restricted to $[\e,A-\e]$. We then use  Theorem~\ref{thm:He-Schramm-disk} to  prove that in the circle packing for the whole disk triangulation generated by $\wt\cZ^n$, the portion of disks for times outside $[\e,A-\e]$ is little. This implies the convergence of the space-filling curve $\wt\frtr^n$ for $\wt\cZ^n$ as well as the closeness between the circle packing and the mating of trees embedding, in the setting where the walk has Poissonian jumps (Lemma~\ref{lem:disk-pf-2}). Finally, in Lemma~\ref{lem:disk-pf-3}, we prove that the times at which $\wt\cZ^n$ jumps are approximately evenly distributed, which allows us to reparameterize $\wt\frtr^n$ and transfer back to the setting of Theorem~\ref{thm:pack-disk}, and further prove the measure convergence in Theorem~\ref{thm:pack-disk}.

\begin{lemma}\label{lem:disk-pf-0}
      There is a coupling of $(Z,\wt\cZ^1,\wt\cZ^2,...)$ such that
    \begin{enumerate}[(i)]
    \item $Z:[0,A]\to\bbR_+^2$ has the law of  {$\frac{1}{2}$ times} a Brownian cone excursion  {of covariance~\eqref{eq:variance}} from $(2,0)$ to $(0,0)$ for random duration $A>0$ where $A$ has density $\rho(a) = a^{-3}e^{-1/a}$; \label{cond:lem:disk-pf-0-(i)}
        \item for each $n\in\bbN$, conditioned on $A=a$, $\wt\cZ^n:[0,a]\to\bbZ^2$ is  a rate $n$ continuous-time nearest neighbor walk on $\bbZ^2$ started from $(\ceil{ {\sqrt n}-1 },0)$ and ended at $(0,0)$, conditioned on  $\wt\cZ^n_t\in\bbR_+^2$ for every $t\in [0,a]$;  \label{cond:lem:disk-pf-0-(ii)}
        \item {the total variation distance between the law of  $\Phi_\rmT^\disk(\wt\cZ^n)=(\wt T^{\disk,n},\wt\frtr^{\disk,n})$  and the  law of  $(T^{\disk,n},\frtr^{\disk,n})$ in Theorem~\ref{thm:pack-disk} for $\ell=\ceil{\sqrt n}$ tends to 0 as $n\to\infty$ up to a reparameterization of the trace $\wt\frtr^{\disk,n}$};  \label{cond:lem:disk-pf-0-(iii)}
        \item  {for each deterministic $\e\in(0,1/4)$,} circle packing converges to mating of trees for  {$(\wt\cZ^n|_{[A\e,A(1-\e)]},Z|_{[A\e,A(1-\e)]})_{n\in\bbN}$}  with $\nu(n)=n^{-1/2}$ in Condition~\ref{cond:def:CPtoMOT-3};  \label{cond:lem:disk-pf-0-(iv)}
        \item $\|Z-n^{-1/2}\wt\cZ^n \|_\infty \to 0 $ in probability as $n\to\infty$ and  \label{cond:lem:disk-pf-0-(v)}
        \item let $B^n = \{t\in [0,A]: \wt\cZ_t^n-\lim_{u\uparrow t}\wt\cZ_u^n \neq 0 \}$ denote the set of times at which $\wt\cZ^n$ jumps, conditioned on $A$ and $\#B^n$, the elements of $B^n$ have the law of $\#B^n$ uniform and independent times in $[0,A]$. {Furthermore, $\#B^n/n$ converges to $A$ in probability as $n\to\infty$.}  \label{cond:lem:disk-pf-0-(vi)}
    \end{enumerate}
\end{lemma}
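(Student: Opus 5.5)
The plan is to follow the disk-case argument of \cite[Section 6]{Tutte}, which is built from an i.i.d.\ coupling, with ``random walk converges to Brownian motion'' replaced by ``circle packing converges to mating of trees''. The starting point is the coupling of Lemma~\ref{lem:cZ^n-pack-Q-n}, where the walk is unconditioned and has i.i.d.\ Poissonian steps, and the goal is to reweight it into a cone excursion. The steps are as follows.

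First, fix the Brownian excursion $Z$ of~(\ref{cond:lem:disk-pf-0-(i)}). Using Brownian scaling, sample the duration $A$ from the density $\rho$. Conditionally on $A=a$, the walk $\wt\cZ^n$ is the rate-$n$ walk from $(\ceil{\sqrt n-1},0)$ conditioned to end at the origin and stay in $\bbR_+^2$. Claim~(\ref{cond:lem:disk-pf-0-(vi)}) is then automatic for the Poisson jump times: conditioning on the endpoints and on staying in the cone only involves the skeleton chain given $\#B^n$, so the jump times remain i.i.d.\ uniform. The concentration $\#B^n/n\to A$ follows from local limit estimates for the number of steps of the discrete excursion.

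Claim~(\ref{cond:lem:disk-pf-0-(iii)}) follows by comparing the law of the skeleton chain with the $4^{-m}$-weighted law on $\cA_\ell^\disk$. Given $\#B^n=m$, the skeleton is uniform on $\cA_\ell^\disk$ with $m$ steps. The weight of $\#B^n=m$ is proportional to $\int \rho(a)\,e^{-na}(na)^m/m!\,da$ times a normalization, and one checks via Lemma~\ref{lem:walk-excursion} and Stirling's formula that this is asymptotically proportional to $4^{-m}\#\cA_{\ell,m}^\disk$. This gives total variation convergence, with the density $a^{-3}e^{-1/a}$ chosen precisely to match the $m^{-3}$-type tail of the counts.

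The main step is~(\ref{cond:lem:disk-pf-0-(iv)})--(\ref{cond:lem:disk-pf-0-(v)}), obtained by absolute continuity on $[A\e,A(1-\e)]$. Conditionally on $A$ and on the values $(Z_{A\e},Z_{A(1-\e)})$, the law of $Z|_{[A\e,A(1-\e)]}$ is that of a Brownian path conditioned to stay in the quadrant with prescribed endpoints. This law has Radon--Nikodym derivative with respect to an unconditioned Brownian piece that is a continuous functional of the path: the product of the cone-escape probabilities at the two ends, $\bbP[\text{stay in cone}]$ over $[0,A\e]$ and $[A(1-\e),A]$. The same holds for the walk, and by KMT-type local estimates the discrete Radon--Nikodym factors converge to the continuum ones uniformly on the event that the endpoints are at distance $\gg n^{-1/2}\log n$ from $\partial\bbR_+^2$. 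One then resamples $(Z,\wt\cZ^n)$ on $[A\e,A(1-\e)]$ from the coupling of Lemma~\ref{lem:cZ^n-pack-Q-n}, with $t_0=A(1-2\e)$ after time shift, weighted by these factors. The outer segments $[0,A\e]$ and $[A(1-\e),A]$ are coupled by Donsker/KMT for conditioned walks. Weighting preserves convergence in probability of $\mathrm{err}$, because the weights are uniformly integrable (they are bounded on high-probability events), and the uniformity in~\eqref{eq:conv-dist-check} over $(a,b,a',b')$ in a compact set absorbs the random rescaling by $A$.

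I expect the main obstacle to be the uniform convergence of the discrete cone-conditioning weights near the boundary of the quadrant. I would first truncate to the event that $Z_{A\e}$ and $Z_{A(1-\e)}$ are at distance at least $\delta$ from the axes, which has probability $1-o_\delta(1)$, and then send $\delta\to0$. Claim~(\ref{cond:lem:disk-pf-0-(v)}) on the full interval then combines the middle coupling with the outer Donsker couplings and $\e\to0$.
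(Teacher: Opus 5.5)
Your plan is essentially the paper's argument. You reweight the i.i.d.\ coupling of Lemma~\ref{lem:cZ^n-pack-Q-n} on the middle interval, compare the law of $\#B^n$ with the $4^{-m}$-weighted duration via Lemma~\ref{lem:walk-excursion}, and use that Poissonian jump times stay i.i.d.\ uniform given their number.

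The one genuine omission is that the statement asks for a \emph{single} coupling under which (iv) holds for every deterministic $\e\in(0,1/4)$. Everything you construct (the reweighting on $[A\e,A(1-\e)]$ and the outer couplings) is tied to one fixed $\e$. The paper handles this in two steps. First, it does the fixed-$\e$ construction conditionally on $A=a$ and then integrates over $\rho$, so no uniformity in the duration is needed. Second, it runs the construction with $\e=\e(n)\downarrow 0$ slowly enough. The uniformity of the error over sub-intervals then gives (iv) for each fixed $\e$. Claim (v) follows from sup-norm convergence on the middle interval together with tightness of $n^{-1/2}\wt\cZ^n$ and $Z$ near the two endpoints. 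So your explicit outer Donsker/KMT couplings are not needed.

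Two smaller points.

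\emph{The normalization in the jump-count computation.} The normalizing factor is the probability that an unconditioned rate-$n$ walk from $(\ceil{\sqrt n}-1,0)$ stays in $\bbR_+^2$ on $[0,a]$ and ends at the origin. It depends on $a$, so it belongs inside the $\rho(a)\,da$ integral, not outside as you wrote it. The choice of $\rho$ works because this factor is asymptotically a constant times $n^{-5/2}\rho(a)$. In other words, $\rho$ matches the full profile $a^{-3}e^{-1/a}$, not just an $m^{-3}$ tail.

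\emph{The obstacle you anticipate is milder than it looks.} For the Poissonized walk, $\wt\cL^n$ and $\wt\cR^n$ are independent rate-$n/2$ walks, and the conditioning is a product event. Hence the discrete weights are explicit one-dimensional ballot quantities. Alternatively, you can follow the paper's sphere case (Lemma~\ref{lem:sph-pack-pf-1}). There one reweights by the bounded discrete density so that the walk law is exact, couples the conditioned walk with the excursion by a classical invariance principle, and transfers the mating-of-trees functional via Lusin's theorem. This avoids comparing discrete and continuum weights altogether.
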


\begin{proof}
   A version of these claims for a fixed $\e\in(0,0.1)$ and $A$ replaced by a deterministic $a$ can be shown via identical arguments as in  the proof of  \cite[Lemma 6.28]{Tutte}  draft using Lemma~\ref{lem:cZ^n-pack-Q-n} and reweighting the law of $Z|_{[\e a,(1-\e)a]}$,  where $Z$ and $\wt Z^n$ are the processes in claim (i) and claim (ii) conditioned on the fix duration $A=a$, $\|(Z- n^{-1/2}\wt \cZ^n)|_{[a\e,a(1-\e)]}\|_\infty\to 0$ in probability,  and  circle packing converges to mating of trees for  $(\wt\cZ^n|_{[a\e,a(1-\e)]},Z|_{[a\e,a(1-\e)]})_{n\in\bbN}$.
    Conditioned on $A = a$, for $m\in\bbN$, we have $$\bbP[\#B^n = 2m+\ell-1|A=a] = \frac{4^{-2m}\#\cA^\disk_{\ell,2m+\ell}\bbP[\xi_{an} = 2m+\ell-1]}{\sum_{j=0}^\infty 4^{-2j}\#\cA^\disk_{\ell,2j+\ell}\bbP[\xi_{an} = 2j+\ell-1]},$$
    where $\xi_{an}$ is a Poisson$(an)$ random variable. On the other hand, for the walk $\cZ^{\disk,n}$ in Theorem~\ref{thm:pack-disk}, the probability that its duration is equal to $2m+\ell-1$ is given by $$\frac{4^{-2m}\#\cA^\disk_{\ell,2m+\ell}}{\sum_{j=0}^\infty 4^{-2j}\#\cA^\disk_{\ell,2j+\ell}}.$$
    As computed in the proof of \cite[Lemma 6.29]{Tutte}, using Lemma~\ref{lem:walk-excursion}, for $a$ in a compact set $K\subseteq (0,\infty)$, $\bbP[|\#B^n-an|>C\sqrt{an}|A=a] = o_C(1)$ as $C\to\infty$ where $o_C(1)$ is uniform over $a\in K$ and all $n$, and the total variation distance between the law of the total number of jumps of the walk $\wt\cZ^n$ as we further integrate over the density $\rho(a)\,da$ of $A$ and  the law of the total number of jumps of the walk $\cZ^{\disk,n}$ tends to 0 as $n\to\infty$. Since given the number of jumps and $A=a$, the times when $\wt\cZ^n$ jumps are i.i.d.\ uniform in $[0,a]$ and the steps  by $\wt\cZ^n$ are uniform in $\cA_\ell^\disk$, we see that claim (iii) and the first part of claim (vi) hold, while the second part of claim (vi) follows by further sending the above compact set $K\uparrow(0,\infty)$. 
    Claim (iv) follow by constructing the corresponding coupling $(\wt\cZ^n|_{[a\e,a(1-\e)]},Z|_{[a\e,a(1-\e)]})_{n\in\bbN}$ with $\e=\e(n)$ tending to 0 sufficiently slowly, and claim (v) follows from the convergence $\|(Z- n^{-1/2}\wt \cZ^n)|_{[a\e,a(1-\e)]}\|_\infty\to 0$ for fixed $\e>0$ and the tightness of $n^{-1/2}\cZ^n$ near its endpoints \cite[Lemma 6.26]{Tutte}. 
\end{proof}

Let $t\mapsto  \frv_t^n$ be the bijection between the downward jump times for $\wt\cZ^n$ to vertices of $\wt T^{\disk,n}$ that are not on the last exploration edge as  described in Section~\ref{subsubsec:Mullin-finite}. 

\begin{figure}
    \centering
    \includegraphics[scale=0.8]{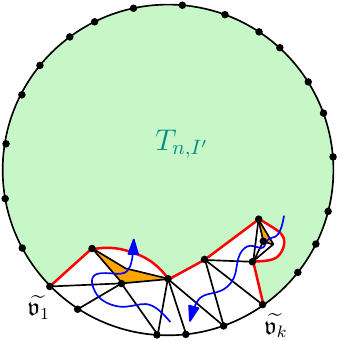}
    \caption{An illustration of the proof of Lemma~\ref{lem:disk-pf-2}.  The disk triangulation $\wt T^{\disk,n}$ where the blue curve is the path $\wt\frtr^{\disk,n}$ on faces. The orange faces are $\wt\frtr^{\disk,n}(A\e)$ and $\wt\frtr^{\disk,n}(A(1-\e))$. The disk triangulation $T_{n,I'}$ is shown in green where its interior vertices and edges are not drawn. The interface $\wt\gamma_{n,I'}$ is shown in red.  }
    \label{fig:disk-pf-1}
\end{figure}

\begin{lemma}\label{lem:disk-pf-2}
    Let $(Z,\wt\cZ^1,\wt\cZ^2,...)$ and $\wt T^{\disk,n}$ be as in Lemma~\ref{lem:disk-pf-0}. Let $\wt \frtr^n:[0,A]\to \cF \wt T^{\disk,n}$ be the path on faces of $\wt T^{\disk,n}$.  Let $\frv^{\mathrm{mid},n}$ be the vertex on $\wt T^{\disk,n}$ for the first downward jump time of  {$\Phi_{\rm simple}(\wt\cZ^n)$} after $A/2$.   Then with probability tending to 1, $\frv^{\mathrm{mid},n}$ is not a boundary vertex. Further consider a univalent  circle packing $\wt\cP^{\disk,n}$ in $\bbD$ for $\wt T^{\disk,n}$ centered at $\frv^{\mathrm{mid},n}$. Then up to a rotation, we have the following:
    \begin{enumerate}[(i)]
        \item The maximal diameter of disks in $\wt\cP^{\disk,n}$ tends to 0 in probability as $n\to\infty$;\label{cond:lem:disk-pf-2-(i)}
        \item Let $\wt\frtr^{\disk,n}:[0,A]\to\ol\bbD$ be the path on faces of $\wt T^{\disk,n}$ under the embedding $\wt\cP^{\disk,n}$. Then %Then up to a rotation,
        \eqb\label{eq:lem:disk-pf-2}\sup_{t\in [0,A]} |\eta^\disk(t)-\wt\frtr^{\disk,n}(t)|\to 0 \ \ \ \text{in probability as } n\to\infty. \eqe 
        and the limit is uniform over all choices of the trace $\wt\frtr^{\disk,n}$ such that $\wt\frtr^{\disk,n}(t)$ is on the embedded face. 
        \label{cond:lem:disk-pf-2-(ii)}
        \item  Let $\wt\nu^{\disk,n}$ be the measure on $\partial\bbD$ that assigns mass $n^{-1/2}$ to each tangent point of disks in $\wt\cP^{\disk,n}$ and $\partial\bbD$. The Prohokov distance between $\wt\nu^{\disk,n}$ and $\nu_{h^\disk}$ tends to 0 in probability as $n\to\infty$. \label{cond:lem:disk-pf-2-(iii)}
    \end{enumerate} 
 %   if we send $\e\to 0$ sufficiently slower than $n\to\infty$, the diameter of the union of the disks for vertices that are adjacent to faces in $\cF \wt T^{\disk,n}\backslash\{\wt\frtr^{\disk,n}(t):t\in [\e,A-\e]\}$   tends to 0 in probability.
\end{lemma}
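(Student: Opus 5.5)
The plan is to combine part~(\ref{cond:lem:disk-pf-0-(iv)}) of Lemma~\ref{lem:disk-pf-0} with Theorem~\ref{thm:He-Schramm-disk}, letting $\e=\e(n)\to0$ slowly. Fix $\e\in(0,1/4)$ and apply Definition~\ref{def:CPtoMOT} to $I=[A\e,A(1-\e)]$ with small $a=b$. This gives an interval $I'$ with endpoints in $\mr\cJ^n$ and a packing $\cP_{n,I'}$ of $T_{n,I'}$ in $\bbD$ with three properties: its maximal disk diameter tends to $0$, its trace is uniformly close to $\eta_{n,I'}$, and its boundary measure is close to $\nu_{h_{n,I'}}$. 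Here $(\bbD,h_{n,I'},\eta_{n,I'})$ is the uniformized piece $\eta^\disk(I')$. A diagonal argument then yields $\e_n\to0$ such that all these errors tend to $0$ in probability with $\e=\e_n$ and $a=b=\e_n$.

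The next step is to realize $\Phi_{\rm simple}(T_{n,I'})$ as a submap of $\Phi_{\rm simple}(\wt T^{\disk,n})$, using Lemma~\ref{lem:A2} and the choice of $I'$. Its complement in $\wt T^{\disk,n}$ consists of faces traced at times in $[0,A]\setminus I'$. The arc $\ell^n$ is the part of $\partial T_{n,I'}$ that lies on $\partial\wt T^{\disk,n}$, and $\gamma^n$ is the rest of $\partial T_{n,I'}$: the interface with $\wt\frtr(s)$ for $s<\min I'$ or $s>\max I'$.

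To verify hypothesis~(ii) of Theorem~\ref{thm:He-Schramm-disk}, I need the disks of $\cP_{n,I'}$ along $\gamma^n$ to have union of small diameter. Under the mating-of-trees embedding of $\eta^\disk(I')$, this interface corresponds to the part of $\partial\eta^\disk(I')$ not on $\partial\bbD$. That part consists of the boundary arcs shared with $\eta^\disk([0,\min I'])$ and $\eta^\disk([\max I',A])$. By continuity of $\eta^\disk$ at $0$ and $A$, both sets have Euclidean diameter $o(1)$ as $\e_n\to0$. After composing with the uniformizing map $\eta^\disk(I')\to\bbD$, which converges to the identity as $\e_n\to0$ by Proposition~\ref{prop:unif-disk}, these arcs map to sets of small diameter. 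Conditions~\ref{cond:def:CPtoMOT-1}--\ref{cond:def:CPtoMOT-2} of Definition~\ref{def:CPtoMOT} then transfer this to the disks of $\gamma^n$, since those disks' centers are within $o(1)$ of the trace, which is near $\partial\eta_{n,I'}(I')$. Making this transfer rigorous is the main obstacle. Part~(\ref{cond:thm:conv-0-cone-pack-Hcell-(iii)}) of Theorem~\ref{thm:conv-0-cone-pack-Hcell} handles exactly this boundary-curve closeness in the cone setting, and I would carry its conclusion along the absolute continuity transfers of Section~\ref{subsec:pack-iid}. I would also check that $\frv^{\mathrm{mid},n}$ agrees with $\frv_{t'_{\rm mid}}$ of Definition~\ref{def:CPtoMOT} (both are the first downward jump time after $A/2$ for large $n$) and is interior with high probability. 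Part~\ref{cond:def:CPtoMOT-0} gives that it is interior in $T_{n,I'}$, hence in $\wt T^{\disk,n}$.

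Theorem~\ref{thm:He-Schramm-disk} then gives four things:
\begin{itemize}
\item the maximal disk diameter of $\wt\cP^{\disk,n}$ tends to $0$;
\item the disks for $\gamma^n\cup(\wt T^{\disk,n}\setminus T_{n,I'})$ have union of vanishing diameter;
\item for a rotation $e^{i\theta_n}$, the map $e^{i\theta_n}f^n$ from $\cP_{n,I'}$ to $\wt\cP^{\disk,n}$ converges uniformly to the identity;
\item in particular, part~(\ref{cond:lem:disk-pf-2-(i)}) follows.
\end{itemize}

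For part~(\ref{cond:lem:disk-pf-2-(ii)}), on $I'$ one has $\wt\frtr^{\disk,n}\approx e^{i\theta_n}\frtr_{n,I'}\approx e^{i\theta_n}\eta_{n,I'}$, and $\eta_{n,I'}\approx\eta^\disk$ uniformly by Proposition~\ref{prop:unif-disk}. Off $I'$, the trace lies in the small-diameter cluster, while $\eta^\disk([0,\min I'])\cup\eta^\disk([\max I',A])$ is small and contains $\eta^\disk(0)$. So both are near the same point, and (ii) follows after fixing the rotation.

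For part~(\ref{cond:lem:disk-pf-2-(iii)}), the tangent points of $\wt\cP^{\disk,n}$ come from the $\ell^n$ disks and the cluster disks. The $\ell^n$ tangent points are mapped by the near-identity map from those of $\cP_{n,I'}$, so they carry mass close to $\nu_{h_{n,I'}}$ restricted to $\partial\bbD$ by Condition~\ref{cond:def:CPtoMOT-3}, and this is close to $\nu_{h^\disk}$. For the cluster, the quantum length of the corresponding boundary is $O(L_{\min I'}+R_{\max I'}+\dots)\to0$ as $\e_n\to0$. By part~(\ref{cond:lem:disk-pf-0-(v)}) of Lemma~\ref{lem:disk-pf-0}, the walk values there are $o(\sqrt n)$, so the cluster carries $o(1)$ mass for $n^{-1/2}$ per tangent point.
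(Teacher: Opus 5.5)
Your architecture matches the paper's: you compare $\cP_{n,I'}$ from part~(\ref{cond:lem:disk-pf-0-(iv)}) of Lemma~\ref{lem:disk-pf-0} with $\wt\cP^{\disk,n}$ through Theorem~\ref{thm:He-Schramm-disk}, letting the cutoff go to $0$ slowly. However, the step you flag, hypothesis~(ii) of Theorem~\ref{thm:He-Schramm-disk}, is a genuine gap, and the one-line justification you give for it is not valid.

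Conditions~\ref{cond:def:CPtoMOT-1}--\ref{cond:def:CPtoMOT-2} of Definition~\ref{def:CPtoMOT} only place the disk of $v\in\gamma^n$ near $\eta_{n,I'}(t)$ for the times $t\in I'$ at which $v$ lies on the traced face. Every boundary disk of $\cP_{n,I'}$ is tangent to $\partial\bbD$ anyway, so this yields only that $\eta_{n,I'}(t)$ is near $\partial\bbD$. Nothing in Definition~\ref{def:CPtoMOT} says it is near the image of the interface arcs of $\partial\eta^\disk(I')$ rather than near the image of the outer boundary. The boundary-point matching of part~(\ref{cond:thm:conv-0-cone-pack-Hcell-(iii)}) of Theorem~\ref{thm:conv-0-cone-pack-Hcell} is used inside Proposition~\ref{prop:cZ^n-pack-E}, but it is not one of the conditions of Definition~\ref{def:CPtoMOT}, so the transfers of Section~\ref{subsec:pack-iid} do not carry it. Your remedy would require enlarging the definition and re-running every transfer, which you only announce.

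There is also a smaller hole. Lemma~\ref{lem:A2} needs non-erased times of the \emph{full} walk $\Phi_{\rm simple}(\wt\cZ^n)$ on $[0,A]$ in both margins of $I_{a,b}$. Knowing that the endpoints of $I'$ lie in $\mr\cJ^n$ of the walk restricted to $I$ does not provide them.

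The missing idea is that no boundary-point matching is needed. In $\cP_{n,I'}$ the boundary disks touch $\partial\bbD$ in the cyclic order of $\partial T_{n,I'}$, and $\gamma^n$ is a contiguous arc. So it suffices to show two things: the disks of its endpoints $\wt\frv_1=\frv^n_{s_1'}$ and $\wt\frv_k=\frv^n_{s_2'}$ are close to each other, and the disk of one vertex $\wt\frv_0\in\partial T_{n,I'}\setminus\gamma^n$ stays macroscopically away. The chain of $\gamma^n$ disks then cannot wrap around and must occupy the short arc.

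To locate these three vertices, the paper takes deterministic $\e_1\ll\e\ll\delta$, with $I=[A\e_1,A(1-\e_1)]$ and $I_{a,b}=[A\e,A(1-\e)]$. It then arranges that, with probability tending to $1$, $\wt\cL^n$ satisfies the following:
\begin{enumerate}
\item it reaches new running minima in $[A\e_1,A\e]$ and $[A\e,A\delta]$, and in the mirror intervals near $A$;
\item it stays above $(1-2\delta')\sqrt n$ on $[0,A\delta]$;
\item its running minimum is below $2\delta'\sqrt n$ by time $A(1-\delta)$.
\end{enumerate}
By Lemma~\ref{lem:hat-minus}, those running-minimum times are non-erased for the full walk, which also settles Lemma~\ref{lem:A2}. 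The sewing procedure then yields $[A\delta,A(1-\delta)]\subseteq I'$, $s_1'\le A\delta$ and $s_2'\ge A(1-\delta)$. Let $s_0'$ be the first time $\wt\cL^n<\frac12\sqrt n$, which lies in $[A\delta,A(1-\delta)]$; then $\wt\frv_0=\frv^n_{s_0'}$ is on $\partial T_{n,I'}$ but not on $\gamma^n$.

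Trace convergence alone now puts the disks of $\wt\frv_1,\wt\frv_k$ near $\varphi_{I'}(\eta^\disk(I'\setminus[A\delta,A(1-\delta)]))$. This set has vanishing diameter, since both pieces lie near $\eta^\disk(0)=\eta^\disk(A)$ and $\varphi_{I'}\to\mathrm{id}$ up to rotation by Proposition~\ref{prop:boundary-perturb}. The disk of $\wt\frv_0$ lies near $\varphi_{I'}(\eta^\disk(s_0'))$, at positive distance.

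Alternatively, within your framework, Condition~\ref{cond:def:CPtoMOT-3} would also force the $\gamma^n$ arc to be short. This combines with two further facts: $\gamma^n$ has only $o(\sqrt n)$ vertices, and the limiting boundary measure charges every arc of $\partial\bbD$.

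Once hypothesis~(ii) is secured, your derivations of (i)--(iii) are fine.
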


\begin{proof}
    %The claim that $\frv^{\mathrm{mid},n}$ is not on the boundary is a direct consequence of part (\ref{cond:lem:disk-pf-0-(iv)}) of Lemma~\ref{lem:disk-pf-0}, since the conditions in Definition~\ref{def:CPtoMOT} guarantees that with probability tending to 1,  there is a univalent circle packing for a submap of $\wt T^{\disk,n}$ in $\bbD$ such that the disk for $\frv^{\mathrm{mid},n}$ is away from $\partial \bbD$. Now for the second claim, 
    Let $\delta\in(0,0.1)$ be small. Fix $\delta'\in(0,1)$. We pick deterministic constants $0<\e_3 <\e_1<\e<\delta$ sufficiently small (depending on $\delta$), such that as $\delta\to0$, with probability tending to 1, we have
    \begin{align*}
       & %\inf_{s\in [\e_2,\e_1]} L_s < \inf_{s\in [0,\e_2]}L_s-\e_3; \ \  
       \inf_{s\in [A\e_1,A\e]} L_s < \inf_{s\in [0,A\e_1]}L_s-\e_3; \ \ \inf_{s\in [A\e,A\delta]} L_s< \inf_{s\in [0,A\e]}L_s-\e_3; \ \ \inf_{s\in [0,A\delta]} L_s> 1-\delta'; \\ &  %\inf_{s\in [0,A-\e_2]} L_s < \inf_{s\in [0,A-\e_1]}L_s-\e_3; \ \  
       \inf_{s\in [0,A(1-\e_1)]} L_s < \inf_{s\in [0,A(1-\e)]}L_s-\e_3; \ \ \inf_{s\in [0,A(1-\e)]} L_s < \inf_{s\in [0,A(1-\delta)]}L_s-\e_3; \ \ \inf_{s\in [0,A(1-\delta)]} L_s< \delta'.
    \end{align*}
    Then using part~(\ref{cond:lem:disk-pf-0-(v)}) from Lemma~\ref{lem:disk-pf-0}, with probability tending to 1 as $n\to\infty$ sufficiently faster than $\delta\to0$, 
    \begin{align}
       & %\inf_{s\in [\e_2,\e_1]} \wt\cL_s^n < \inf_{s\in [0,\e_2]}\wt\cL_s^n; \ \ 
       \inf_{s\in [A\e_1,A\e]} \wt\cL_s^n < \inf_{s\in [0,A\e_1]}\wt\cL_s^n; \ \ \inf_{s\in [A\e,A\delta]} \wt\cL^n_s< \inf_{s\in [0,A\e]}\wt\cL_s^n-\e_3\sqrt n/2; \ \ \inf_{s\in [0,A\delta]} \wt\cL_s^n> (1-2\delta')n^{1/2}; \label{eq:lem:disk-pf-1-1} \\ & % \inf_{s\in [0,A-\e_2]} \wt\cL_s^n < \inf_{s\in [0,A-\e_1]}\wt\cL_s^n; \ \  
       \inf_{s\in [0,A(1-\e_1)]} \wt\cL_s^n < \inf_{s\in [0,A(1-\e)]}\wt\cL_s^n-\e_3\sqrt n/2; \ \ \inf_{s\in [0,A(1-\e)]} \wt\cL_s^n < \inf_{s\in [0,A(1-\delta)]}\wt\cL_s^n<2\delta' n^{1/2}.\label{eq:lem:disk-pf-1-2} 
    \end{align}
 We work in this setting.   Let $\mr B^n$ be the  $B^n$ minus all the intervals $(t^-,t^+]$ such that~\eqref{eq:double-edge-Mullin} holds for $(\wt\cL^n,\wt\cR^n)$ in place of $(\cL,\cR)$.    %Consider the walk $\mr\cZ^n = \Phi_{\rm simple}(\wt\cZ^n)$ and the jump times $\mr\cJ^n$ for $\mr\cZ^n$. 
    Then~\eqref{eq:lem:disk-pf-1-1} and~\eqref{eq:lem:disk-pf-1-2} along with Lemma~\ref{lem:hat-minus} implies that $\mr B^n$ has non-empty intersection with all of   $[A\e_1,A\e]$, $[A\e,A\delta]$, $[A(1-\delta),A(1-\e)]$ and $[A(1-\e),A(1-\e_1)]$. Let $t_\e^- = \inf\{s\geq A\e:s\in\mr B^n\}$, $t_\e^+ = \sup\{s\leq A(1-\e): s\in\mr B^n\}$ and $I'=[t_\e^-,t_\e^+]$. Then $[A\delta,A(1-\delta)]\subseteq I'$. Let $T_{n,I'}$ be the map formed by the faces $\{\wt\frtr^{\disk,n}(t):t\in I'\}\backslash\{\wt\frtr^{\disk,n}(t_\e^-)\}$.  By Lemma~\ref{lem:A2},  the interval $I'$ and the triangulation $T_{n,I'}$  here agrees with the interval $I'$ and the disk triangulation $T_{n,I'}$ in Definition~\ref{def:CPtoMOT} defined for the interval $I_{a,b}=[A\e,A(1-\e)]$ and $I=[A\e_1,A(1-\e_1)]$, and the triangulation $\Phi_{\rm simple}(T_{n,I'})$  is a submap of the triangulation $\Phi_{\rm simple}(\wt T^{\disk,n})$. In particular, by Condition~\ref{cond:def:CPtoMOT-0}-~\ref{cond:def:CPtoMOT-2} of Definition~\ref{def:CPtoMOT}, with probability tending to 1 as $n\to\infty$, $\frv^{\mathrm{mid},n}$ is an interior vertex of $\Phi_{\rm simple}(T_{n,I'})$ and thus an interior vertex of $\Phi_{\rm simple}(\wt T^{\disk,n})$. 
   
    Consider the path $\wt\gamma_{n,I'}$ on $\partial T_{n,I'}$ formed by the vertices $\wt\frv_1,...,\wt\frv_k$, such that $\wt\frv_1$ and $\wt\frv_k$ are also on the boundary of $\wt T^{\disk,n}$ while the vertices on  $\partial T_{n,I'}$ other than $\wt\frv_2,...,\wt\frv_{k-1}$ are all on the boundary of $\wt T^{\disk,n}$. In other words, $\wt\gamma_{n,I'}$ is the interface between $T_{n,I'}$ and the map formed by $(\cF \wt T^{\disk,n}\backslash\{\wt\frtr^{\disk,n}(t):t\in I' \}) \cup\{\wt\frtr^{\disk,n}(t_\e^-)\}$, and $\cF \wt T^{\disk,n}\backslash\{\wt\frtr^{\disk,n}(t):t\in [A\e,A(1-\e)]\}\subseteq(\cF \wt T^{\disk,n}\backslash\{\wt\frtr^{\disk,n}(t):t\in I' \}) \cup\{\wt\frtr^{\disk,n}(t_\e^-)\}$. See   Figure~\ref{fig:disk-pf-1} for an illustration. Assume that $\wt\frv_1 = \frv^n_{s_1'}$ and  $\wt\frv_k = \frv^n_{s_2'}$ such that $s_1'<s_2'$.   Then the last inequalities of~\eqref{eq:lem:disk-pf-1-1} and~\eqref{eq:lem:disk-pf-1-2} along with the sewing procedure imply that $s_1'\leq A\delta$ while $s_2'\geq A(1-\delta)$. Let $s_0'$ be the first time when $\wt\cL^{\disk,n}$ is below $0.5n^{1/2}$, which is in $[A\delta,A(1-\delta)]$ with probability tending to 1 by our choice of $\delta$. In this case, the vertex $\wt\frv_0 = \frv^n_{s_0'}$ is on $\partial T_{n,I'}$ and not on the path $\wt\gamma_{n,I'}$.  Consider an embedding of $(h^\disk,\eta^\disk)$ in $\bbD$ such that $\eta^\disk(A/2)=0$, and a conformal map $\varphi_{I'}:\eta^\disk(I')\to \bbD$ fixing 0. By continuity of $\eta^\disk$ and $[A\delta,A(1-\delta)]\subseteq I'$, it follows from Proposition~\ref{prop:boundary-perturb} that up to a rotation, $\varphi_{I'}$ converges uniformly to the identity as $\delta\to 0$ in probability.  Then as $\delta\to0$, the diameter of $\varphi(\eta^\disk(I'\backslash [A\delta,A(1-\delta)] ))$ tends to 0, and further by part (\ref{cond:lem:disk-pf-0-(iv)}) of Lemma~\ref{lem:disk-pf-0}, if we send $\delta\to 0$ sufficiently slower than $n\to\infty$, in a univalent circle packing $\cP_{n,I'}$ of $T_{n,I'}$ in $\bbD$ centered at $\frv^{\mathrm{mid},n}$, the maximal diameter of disks as well as the distance between the disks for $\wt\frv_1$ and $\wt\frv_k$ tends to 0 in probability as $n\to\infty$, while the distance between the disks for $\wt\frv_1$ and  $\wt\frv_0$ is bounded away from 0 in probability.  Therefore the diameter of the union of the disks for $\wt\frv_1,...,\wt\frv_k$ also tends to 0 in probability since they cannot fill up $\partial \bbD$ due to the positive distance between the disks for $\wt\frv_1$ and  $\wt\frv_0$. This verifies the conditions in Theorem~\ref{thm:He-Schramm-disk} holds for $\cP_{n,I'}$ and $\wt\cP^{\disk,n}$ as we send $\delta\to 0 $ sufficiently slower than $n\to\infty$. %Claim (i) and claim (ii) then follow from 
    By Theorem~\ref{thm:He-Schramm-disk} and  part (\ref{cond:lem:disk-pf-0-(iv)}) of Lemma~\ref{lem:disk-pf-0}, claim (i) holds, and we may further rotate the packing $\wt\cP^{\disk,n}$ such that $\sup_{t\in I'} |\eta^\disk(t)-\wt\frtr^{\disk,n}(t)|\to 0$ in probability as $n\to\infty$. Furthermore,  
    the disks for vertices on $\wt\gamma_{n,I'}\cup(\wt T^{\disk,n}\backslash\{T_{n,I'}\})$ are within an $o_n(1)$ neighborhood of 1 and the diameter of their union tends to 0 in probability as $n\to\infty$. This along with the continuity of $\eta^\disk$ gives claim (ii).  For    claim (iii), by part (\ref{cond:lem:disk-pf-0-(v)}) of Lemma~\ref{lem:disk-pf-0}, if we send $n\to\infty$ then $\delta\to0$,  $$\max\Big\{\sup_{t\in[0,A\delta]}|n^{-1/2}\wt\cL_t -1 |,    \sup_{t\in[0,A\delta]}|n^{-1/2}\wt\cR_t|,  \sup_{t\in[A(1-\delta),A]}|n^{-1/2}\wt\cL_t| , \sup_{t\in[A(1-\delta),A]}|n^{-1/2}\wt\cR_t|\Big\} \to 0 \text{ in probability}, $$
    and the same holds for $(L_t,R_t)$ in place of $(n^{-1/2}\wt\cL_t,n^{-1/2}\wt\cR_t)$. In other words, further using $s_1'\le A\delta$ and $s_2'\geq A(1-\delta)$, the $\wt\nu^{\disk,n}$ measure of tangent points for boundary disks of $\wt\cP^{\disk,n}$ for vertices not in $\partial T_{n,I'}\backslash \wt\gamma_{n,I'}$ and the $\nu_{h^\disk}$ boundary length measure of $\eta^\disk([0,A]\backslash I')$ tend  to 0 in probability, and the $\nu_{n,I'}$-measure as in Condition~\ref{cond:def:CPtoMOT-3} of Definition~\ref{def:CPtoMOT} of the boundary disks for vertices on $\wt\gamma_{n,I'}$ in the packing $\cP_{n,I'}$ also tends to 0 in probability. This together with part (\ref{cond:lem:disk-pf-0-(iv)}) of Lemma~\ref{lem:disk-pf-0}, Condition~\ref{cond:def:CPtoMOT-3} of Definition~\ref{def:CPtoMOT} and Theorem~\ref{thm:He-Schramm-disk} gives the claim (iii). 
    %The lemma then follows from the identical proof of Lemma~\ref{lem:disk-subdomain-A} where one uses Lemma~\ref{lem:He-Schramm} to compare the circle packing $\cP_{n,I'}$ for $T_{n,I'}$ and the circle packing $\wt\cP^{\disk,n}$. See also the middle and right panel of Figure~\ref{fig:disk-pf-1}.
\end{proof}

To transfer the convergence in Lemma~\ref{lem:disk-pf-2} to the convergence in Theorem~\ref{thm:pack-disk}, in light of part (\ref{cond:lem:disk-pf-0-(iii)}) of Lemma~\ref{lem:disk-pf-0}, it remains to reparameterize the trace $\wt\frtr^{\disk,n}$ such that it agrees with the trace $\frtr^{\disk,n}$ such that with high probability. For a vertex $v$ in $\wt T^{\disk,n}$, we write $\wt\frt_v^n$ for its preimage under the map $t\mapsto \frv_t^n$, except that if $v$ is incident to  {last exploration edge} then we write $\frt_v^n = A$. We write $\wt b^n = \# B^n$ where $B^n$ is the set of times when $\wt\cZ^n$ jumps so that $\wt T^{\disk,n}$ has $\wt b^n+1$ faces, and write $\wt a^n = \frac{\wt b^n+\ceil{\sqrt n}-1}{2}+2$ such that $\wt T^{\disk,n}$ has $\wt a^n$ vertices. 

\begin{lemma}\label{lem:disk-pf-3}
    Let $\wt t^n_1\leq\wt t^n_2\leq...\leq \wt t^n_{\wt b^n}$ be the re-ordering of the set of times in $B^n$ from the smallest to largest. Then 
    \eqb\label{eq:lem:disk-pf-3-1}\sup_{1\leq j\leq \wt b^n} |\wt t^n_j - \frac{Aj}{\wt b^n}|\to 0 \text{ in probability as } n\to\infty.\eqe
    Likewise, let $\wt s^n_1\leq\wt s^n_2\leq...\leq \wt s^n_{\wt a^n}$ be the re-ordering of the set of times $\frt^n_v$. Then
     \eqb\label{eq:lem:disk-pf-3-2}\sup_{1\leq j\leq \wt a^n} |\wt s^n_j - \frac{Aj}{\wt a^n}|\to 0 \text{ in probability as } n\to\infty.\eqe
\end{lemma}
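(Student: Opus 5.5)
The first claim \eqref{eq:lem:disk-pf-3-1} should follow from part~(\ref{cond:lem:disk-pf-0-(vi)}) of Lemma~\ref{lem:disk-pf-0}. Conditioned on $A$ and $\wt b^n=\#B^n$, the jump times are $\wt b^n$ i.i.d.\ uniform points in $[0,A]$. Their ordered values $\wt t^n_j$ therefore form the order statistics of uniform samples. By the Glivenko--Cantelli theorem, or the DKW inequality $\bbP[\sup_x|F_m(x)-x|>\e]\le 2e^{-2m\e^2}$ applied to $U_i=t_i/A$, we get $\sup_j|\wt t^n_j/A - j/\wt b^n|\to0$ in probability as $\wt b^n\to\infty$. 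Since $\#B^n/n\to A$ in probability and $A$ is a.s.\ finite, we have $\wt b^n\to\infty$, and multiplying by $A$ (tight) gives \eqref{eq:lem:disk-pf-3-1}.

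For \eqref{eq:lem:disk-pf-3-2} I would use the correspondence $t\mapsto\frv^n_t$ between downward jump times of $\wt\cZ^n$ and the vertices not on the last exploration edge. This makes the multiset $\{\frt^n_v\}$ equal to the set of downward jump times, together with at most two copies of $A$. Writing $N^-_n(t)$ for the number of downward jumps in $[0,t]$, it suffices to show
\[
\sup_{t\in[0,A]}\big|N^-_n(t)/\wt a^n - t/A\big|\to0 \text{ in probability.}
\]
Let $N_n(t)$ be the total number of jumps in $[0,t]$, and decompose $N^-_n(t)=\tfrac12\big(N_n(t)-(\wt\cL^n_t+\wt\cR^n_t-\wt\cL^n_0-\wt\cR^n_0)\big)$, which holds because up and down steps sum to $N_n$ and their difference is the net displacement. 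By part~(\ref{cond:lem:disk-pf-0-(v)}), $\sup_t|\wt\cL^n_t+\wt\cR^n_t|=O_{\bbP}(\sqrt n)$, which is $o(n)$. By the first claim, $\sup_t|N_n(t)/\wt b^n - t/A|\to0$. Finally $\wt a^n=\wt b^n/2+O(\sqrt n)$. Combining these gives the display, and inverting the monotone counting function yields the uniform closeness of the order statistics $\wt s^n_j$ to $Aj/\wt a^n$. The inversion is standard: if the counting function is uniformly close to the line $t\mapsto t/A$, then so is its generalized inverse.

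The only mild obstacle is bookkeeping. We must check that the $O(1)$ exceptional vertices (the endpoints of the last edge, assigned time $A$) and the $\pm1,\pm2$ offsets in $\wt a^n$ do not matter. All of these are $O(\sqrt n)$ and vanish after dividing by $\wt a^n\asymp n$. We must also make sure the conditioning on $A$ is handled. Working on the event $A\in K$ for a compact $K\subseteq(0,\infty)$, which has probability close to $1$, and using the uniformity in $a\in K$ from the proof of Lemma~\ref{lem:disk-pf-0}, handles this.
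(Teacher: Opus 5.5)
Your proposal is correct and follows the paper's proof. Claim \eqref{eq:lem:disk-pf-3-1} comes directly from part~(vi) of Lemma~\ref{lem:disk-pf-0}, and your DKW step just makes that explicit. Claim \eqref{eq:lem:disk-pf-3-2} comes from the identity that the number of downward jumps equals half of (the number of jumps minus the net displacement of $\wt\cL^n+\wt\cR^n$), with the displacement controlled by part~(v). The paper phrases the final inversion as the sandwich $\wt s^n_{\lfloor j/2-n^{3/4}\rfloor}\le \wt t^n_j\le \wt s^n_{\lfloor j/2+n^{3/4}\rfloor}$, which is equivalent to your generalized-inverse argument.

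One small slip: to get $\wt b^n\to\infty$ you need $A>0$ a.s.\ (true since $A$ has a density on $(0,\infty)$), not finiteness of $A$.
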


\begin{proof}
    The convergence in~\eqref{eq:lem:disk-pf-3-1} is a direct consequence of part (\ref{cond:lem:disk-pf-0-(vi)}) of Lemma~\ref{lem:disk-pf-0}. For~\eqref{eq:lem:disk-pf-3-2}, observe that for $1\leq j\leq \wt b^n$, the number of downward jump times between $[0,\wt t^n_j]$ is equal to 
    $\frac{1}{2}({j+\ceil{\sqrt{n}}-1 -\wt\cL^n_{\wt t^n_j}-\wt \cR^n_{\wt t^n_j}})$. By part (\ref{cond:lem:disk-pf-0-(v)}) of Lemma~\ref{lem:disk-pf-0}, with probability tending to 1 as $n\to\infty$, $\sup_{s\in [0,A]}|\wt\cL_s^n|\leq n^{2/3}$ and $\sup_{s\in [0,A]}|\wt\cR_s^n|\leq n^{2/3}$ and therefore  with probability tending to 1 as $n\to\infty$, for every $j=1,...,\wt b^n$, $\wt s^n_{\lfloor j/2-n^{3/4}\rfloor}\leq \wt t_j^n\leq \wt s^n_{\lfloor j/2+n^{3/4}\rfloor}$ (we set $\wt s^n_{-k}=0$ for $k\geq0$ and $\wt s^n_{k}=A$ for $k>\wt a^n$). This along with~\eqref{eq:lem:disk-pf-3-1} implies~\eqref{eq:lem:disk-pf-3-2}.
\end{proof}

\begin{proof}[Proof of Theorem~\ref{thm:pack-disk}]
    The first three claims follows from parts (\ref{cond:lem:disk-pf-0-(iii)}) and (\ref{cond:lem:disk-pf-0-(v)}) of Lemma~\ref{lem:disk-pf-0}, parts (\ref{cond:lem:disk-pf-2-(i)}) and (\ref{cond:lem:disk-pf-2-(ii)}) of Lemma~\ref{lem:disk-pf-2}, where we repameterize the walk $\wt\cZ^n$ the curve $\wt\frtr^{\disk,n}$ in~\eqref{eq:lem:disk-pf-2} using counting measure on faces of $\cF\wt\cT^{\disk,n}$ and apply~\eqref{eq:lem:disk-pf-3-1} along with the continuity of the curve $\eta^\disk$ and the process $Z$. The convergence of the measure $\nu^{\disk,n}$ follows from  part (\ref{cond:lem:disk-pf-0-(iii)}) of Lemma~\ref{lem:disk-pf-0} and part (\ref{cond:lem:disk-pf-2-(iii)}) of Lemma~\ref{lem:disk-pf-2}.  {Note that we used the fact that the process $Z$ encodes the same LQG surface as $Z^\disk$ since we scaled the definition of the LQG length measure by a factor $\frac{1}{2}$ in Section~\ref{subsubsec:MatedCRT}, and the measure $\nu_{h^{\disk}}$ in Theorem~\ref{thm:pack-disk} is 2 times the measure $\nu_{h^\disk}$ in Lemma~\ref{lem:disk-pf-2}.}  To prove the convergence of the measure $\mu^{\disk,n}$, let $\wt\mu^{\disk,n}$ be defined analogously using the circle packing $\wt\cP^{\disk,n}$. Let $\wt\mu^{\disk,n}_{\MOT}$ be the measure on $\ol \bbD$ that assigns mass $\frac{1}{n}$ to each point $\eta^\disk(\frt_v^n)$. Then parts (\ref{cond:lem:disk-pf-2-(i)}) and (\ref{cond:lem:disk-pf-2-(ii)}) of Lemma~\ref{lem:disk-pf-2} implies that the Prokhorov distance between  $\wt\mu^{\disk,n}$ and $\wt\mu^{\disk,n}_{\MOT}$ tends to 0 in probability as $n\to\infty$, while part (\ref{cond:lem:disk-pf-0-(vi)}) of Lemma~\ref{lem:disk-pf-0} and~\eqref{eq:lem:disk-pf-3-2} implies that the Prokhorov distance between $2\wt\mu^{\disk,n}_{\MOT}$ and $\mu_{h^\disk}$ tends to 0 in probability as $n\to\infty$. Therefore the convergence of the measure $\mu^{\disk,n}$ further follows from part (\ref{cond:lem:disk-pf-0-(iii)}) of Lemma~\ref{lem:disk-pf-0}.
\end{proof}

\subsection{The sphere topology case}\label{subsec:pack-sph-pf}
 
In this subsection we prove Theorem~\ref{thm:pack-sph}. Recall the bijection $\Phi_{\mathrm{M}}^\sph$ defined in Proposition~\ref{prop:Mullin-sph} and the operator $\Phi_\rmT^\sph$ defined for walks $\cZ^\sph\in \mathcal{A}_n^\sph$. For $(T^\sph,\frtr^\sph) = \Phi_{\rmT}^\sph(\cZ)$, recall that to define the map $\Phi_{\rm simple}(T^\sph)$, we assume that for each multiple edge $(e_1,...,e_k)$ connecting vertices $v_1$ and $v_2$, there is a connected component of $T^\sph\backslash (e_1\cup...\cup e_k)$ containing at least $3n/2$ faces, and we contract the other connected components. On the walk $(\cZ_k)_{k=0,...,2n}$ side, we require that for each pair $t^-<t^+$ such that 
\eqb\label{eq:sph-double-edge} \cL_{t^-}=\cL_{t^+} = \inf_{t^-\leq s\leq t^+}\cL_s, \ \ \cR_{t^-}=\cR_{t^+} = \inf_{t^-\leq s\leq t^+}\cR_s,\eqe
the interval $[t^-,t^+]$ has length either at least $3n/2$ or at most $n/2$. In the latter case, we erase the walk on $[t^-,t^+]$ by setting  $\cZ_s = \cZ_{t^-}$ for $s\in [t^-,t^+]$, while in the former case, we erase the walk on $[0,2n]\backslash [t^-,t^+]$ by setting $\cZ_s = \cZ_{t^+}$ for $s\in [0,2n]\backslash [t^-,t^+]$. %Also recall that as discussed in Section~\ref{subsubsec:MatedCRT}, according to~\cite{MS-mot-finite}, for $a>0$, a Brownian cone excursion $Z=(L_t,R_t)_{t\in[0,a]}$   on $\bbR_+^2$ of duration $a$ generates a space-filling $\SLE_8$ curve decorated $\sqrt{2}$-quantum sphere of area  $a$. 

The proof of Theorem~\ref{thm:pack-sph} is in a similar fashion as that of Theorem~\ref{thm:pack-disk}. We start with the rate $2n$ walks $\wt\cZ^n:[0,1]\to\bbZ^2$ with Poissonian jumps, and use local absolute continuity to transfer from the setting of Lemma~\ref{lem:cZ^n-pack-Q-n} to the case where $ \wt\cZ^n$ is conditioned to start and end at 0 and stay nonnegative. This is done in  Lemma~\ref{lem:sph-pack-pf-1}, where we construct a coupling $(\wt\cZ^n,Z)$ with $Z$ being the Brownian excursion and  circle packing converges to mating of trees for $(\wt\cZ^n|_{[\delta ,(1-\delta)]},Z|_{[\delta,(1-\delta)]})$ for each fixed $\delta$. Next we use Theorem~\ref{thm:He-Schramm-sphere} to extend the above convergence of circle packing to the whole interval $[0,1]$, which is done in Lemma~\ref{lem:sph-pack-pf-4}. This further implies Theorem~\ref{thm:pack-sph} by further reparameterizing $\wt\cZ^n$ to have fixed jumps.  %We further introduce a random area $A\sim\mathrm{Unif}([1/2,1])$. In Lemma~\ref{lem:sph-pack-pf-2}, we prove that the total variation distance between the walk $\cZ^n$ with $a$ replaced by random $A$ and the walk $\wt\cZ^n$ which makes $2\lfloor An/2\rfloor$ jumps tends to 0 as $n\to\infty$. Then using similar methods as in Section~\ref{subsec:pack-disk-pf}, in Lemma~\ref{lem:sph-pack-pf-4}, we use Theorem~\ref{thm:He-Schramm-sphere} to show that Theorem~\ref{thm:pack-sph} holds for the triangulation encoded by $\wt\cZ^n$, and further in Lemma~\ref{lem:sph-pack-pf-6} we prove that Theorem~\ref{thm:pack-sph} holds $\lfloor An/2\rfloor$ in place of $n$. Since $A\in [1/2,1]$, further using the Brownian scaling, the convergence in  Lemma~\ref{lem:sph-pack-pf-6} implies the convergence in Theorem~\ref{thm:pack-sph}.

\begin{lemma}\label{lem:sph-double-edge-1}
Let   $(\cZ_t^{\sph,n})_{t=0,...,2n}=(\cL_t^{\sph,n},\cR_t^{\sph,n})_{t=0,...,2n}$ be a uniform element chosen from $\mathcal{A}_n^\sph$. Then for some constant $c>0$, with probability at least $1-cn^{-1/4}$, there does not exist any  $[t^-,t^+]$ satisfying~\eqref{eq:sph-double-edge} such that $t^+-t^-\in [n^{3/4},2n-n^{3/4}]$. Furthermore, with probability at least $1-c2^{-c^{-1}n^{1/4}}$, there does not exist any $0\leq j\leq 2n-n^{3/4}$, such that $\cL^{\sph,n}_j\leq \cL_{j+1}^{\sph,n}\leq...\leq \cL^{\sph,n}_{j+\lfloor n^{3/4}\rfloor}$.
\end{lemma}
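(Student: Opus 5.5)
The plan is to reduce both claims to explicit counting of walks in $\cA_n^\sph$, using the exact formula in Lemma~\ref{lem:walk-excursion} and the asymptotics $\#\cA_m^\sph\asymp 4^{2m}m^{-3}$.

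For the first claim, observe that a pair $t^-<t^+$ satisfying~\eqref{eq:sph-double-edge} with $t^+-t^-=2k$ corresponds to cutting the walk $\cZ^{\sph,n}$ into the excursion $(\cZ_s-\cZ_{t^-})_{s\in[t^-,t^+]}$, which lies in $\cA_k^\sph$, and the remaining walk, which is the concatenation of $\cZ|_{[0,t^-]}$ and $\cZ|_{[t^+,2n]}$ and lies in $\cA_{n-k}^\sph$. Conversely, the pair is determined by an element of $\cA_{n-k}^\sph$, an insertion position $t^-\in\{0,\dots,2(n-k)\}$, and an element of $\cA_k^\sph$. Hence the expected number of such pairs with $t^+-t^-=2k$ is at most
\[
\frac{(2(n-k)+1)\,\#\cA_{n-k}^\sph\,\#\cA_k^\sph}{\#\cA_n^\sph}\lesssim \frac{(n-k)\,(n-k)^{-3}k^{-3}}{n^{-3}}=\frac{n^3}{(n-k)^2k^3},
\]
using the asymptotics above; small $k$ or $n-k$ are handled by crude bounds with the same exponents. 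Summing over $k\in[n^{3/4}/2,\,n-n^{3/4}/2]$ and splitting into $k\le n/2$ and $k>n/2$, the first range contributes about $\sum_{k\ge n^{3/4}/2}n\,k^{-3}\lesssim n\cdot n^{-3/2}=n^{-1/2}$. The second range, with $j=n-k\ge n^{3/4}/2$, contributes about $\sum_j j^{-2}\lesssim n^{-3/4}$. Markov's inequality then gives probability $\lesssim n^{-1/2}\le cn^{-1/4}$. The only care needed is the uniform use of Lemma~\ref{lem:walk-excursion} for all $k$, including small values, where $\#\cA_k^\sph\le C4^{2k}(k+1)^{-3}$ holds with an absolute constant.

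For the second claim, fix $j$ and consider the event that $\cL$ is nondecreasing on $[j,j+m]$ with $m=\lfloor n^{3/4}\rfloor$. I would compare with the unconstrained walk via a change-of-measure bound. Among walks in $\cA_n^\sph$, delete the block of $m$ steps on $[j,j+m]$. The block consists of steps $(+1,0)$ and $(0,\pm1)$, so there are at most $3^m$ choices. The remaining walk, after shifting, must still make the full concatenation a valid cone excursion; by conditioning on the prefix and suffix it is bounded by walks with at most $\binom{2n}{\cdot}$-type counts. A cleaner route is to bound directly by the probability for an unconditioned simple random walk, which is $(3/4)^m$, times the reciprocal of the probability that a simple random walk of length $2n$ is a quadrant excursion. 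That reciprocal is $4^{2n}/\#\cA_n^\sph\asymp n^3$. This gives probability $\lesssim n^3(3/4)^{n^{3/4}}$ for each $j$. A union bound over $j\le 2n$ yields $\lesssim n^4(3/4)^{n^{3/4}}\le c2^{-c^{-1}n^{1/4}}$.

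The main obstacle is making the first-moment count rigorous near the endpoints, that is, when $t^-$ is close to $0$ or $t^+$ is close to $2n$, and uniformly in small $k$. This reduces to the explicit formula in Lemma~\ref{lem:walk-excursion} and is routine.
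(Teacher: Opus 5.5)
Your proposal is correct and follows the paper's proof. For the first claim you use a first-moment bound in which each pair $(t^-,t^+)$ contributes $\#\cA_k^\sph\,\#\cA_{n-k}^\sph/\#\cA_n^\sph$, from Lemma~\ref{lem:walk-excursion}. For the second you compare with the unconditioned walk, $\bbP[E\mid \cA_n^\sph]\le (3/4)^{m}\,4^{2n}/\#\cA_n^\sph$, and take a union bound over $j$. The only difference is minor: you weight each $k$ by its $2(n-k)+1$ positions and get $O(n^{-1/2})$, whereas the paper multiplies a uniform per-pair bound $O(n^{-9/4})$ by the $O(n^2)$ pairs to get the stated $O(n^{-1/4})$.
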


\begin{proof}
    Let $0\leq t^-<t^+\leq 2n$ where $t^+-t^-\in [n^{3/4},2n-n^{3/4}]$. Then by Lemma~\ref{lem:walk-excursion}, the probability that~\eqref{eq:sph-double-edge} holds for $t^-,t^+$ is equal to $$\frac{\#\mathcal A^\sph_{(t^+-t^-)/2}\#\mathcal A^\sph_{n-(t^+-t^-)/2}}{\#\mathcal{A}^\sph_n} = C(1+o_n(1))\frac{n^3}{(t^+-t^-)^3(n-(t^+-t^-)/2)^3}\leq 400C(1+o_n(1)) n^{-9/4}.$$
    The first claim then follows by summing over all possible  $0\leq t^-<t^+\leq 2n$ such that $t^+-t^-\in [n^{3/4},2n-n^{3/4}]$. For the second claim, by a union bound, the probability for the event in the claim is at least $1-2n\cdot3^{n^{3/4}}4^{2n+1-n^{3/4}}(\#\cA_n^\sph)^{-1}\geq 1-c2^{-c^{-1}n^{1/4}}$.
\end{proof}

\begin{lemma}\label{lem:417}
    Let $(\cZ_t^{\sph,n})_{t=0,...,2n}$ be a uniform element chosen from $\mathcal{A}_n^\sph$. Let $\wt B^{n}$ be the set of   $2n$ independent points on $[0,1]$ that are sampled   from  the Lebesgue measure and independent from $\cZ^{\sph,n}$,   and for $t\in[0,1]$, let $\wt\cZ^n_t = \cZ_{\#([0,t]\cap \wt B^n)}^{\sph,n}$. Let $\check{\cZ^n}$ be  an unconditioned rate $2n$ continuous-time nearest-neighbor walk on $\bbZ^2$ started from 0.  Fix $a,b>0$ such that $a+b<1$. Then the law of $(\wt\cZ^n_{t+a}-\wt\cZ^n_a)_{0\leq t\leq 1-b-a}$ is absolutely continuous with respect to the law of $(\check{\cZ^n_t})_{0\leq t\leq 1-b-a}$  and the Radon-Nikodym derivative (depending on $a,b$) is bounded uniformly in $n$.
\end{lemma}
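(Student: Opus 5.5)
The plan is to compute the Radon--Nikodym derivative explicitly by conditioning on the jump times, and then to control it with the counting asymptotics of Lemma~\ref{lem:walk-excursion}. Write $I=[a,1-b]$ and $\ell=1-a-b$.

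\textbf{Matching the jump times.} Under the law of $\wt\cZ^n$, the jump set $\wt B^n$ consists of $2n$ i.i.d.\ uniform points in $[0,1]$, independent of the skeleton walk $\cZ^{\sph,n}$. For the unconditioned walk $\check{\cZ^n}$, the jump times form a rate $2n$ Poisson process and the steps are i.i.d.\ uniform. Condition on the number $k$ of jumps in $I$ and on their positions. In both models, given $k$, the positions are i.i.d.\ uniform in $I$, so the derivative depends only on $k$ and on the skeleton increments $(x_1,\dots,x_k)$ of the walk inside $I$. The ratio of the laws of $k$ is
\[
\frac{\binom{2n}{k}\ell^k(1-\ell)^{2n-k}}{e^{-2n\ell}(2n\ell)^k/k!}.
\]
For $|k-2n\ell|\le n^{2/3}$ this ratio is bounded, by Stirling's formula. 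Outside that window, the binomial tail is $e^{-cn^{1/3}}$.

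\textbf{The conditional density of the steps.} Condition further on $k$, on the number $j$ of jumps in $[0,a]$, and on the increment path $x$. The sphere walk then has density
\[
\frac{N_j(\text{start }0,\ \text{end at } y+z_0)\cdot N_{2n-j-k}(\text{from } y+z_0+x_k \text{ back to } 0)}{\#\cA^\sph_n\, 4^{-2n}\cdot 4^{-k}}
\]
relative to the uniform step law $4^{-k}$. Here $N_m$ counts quadrant-constrained walks of $m$ steps, and we sum over the starting point $y=\wt\cZ_a$ (with $z_0$ denoting the starting point). The key input is that the number of quadrant walks of length $m$ from $0$ to a point $w$ is at most $C\,4^m m^{-3}(1+|w|)^{2}$ (the cone exponent for the $\pi/2$ wedge), with the same bound for the reverse walk. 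By Lemma~\ref{lem:walk-excursion}, $\#\cA_n^\sph\asymp 4^{2n}n^{-3}$.

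\textbf{Bounding the density.} With $j\asymp n$ and $2n-j-k\asymp n$, the density is bounded above by
\[
Cn^{3}\sum_y \frac{(1+|y|)^2}{n^3}\,4^{-j}\,\frac{(1+|y+x_k|)^2}{n^3}\cdot(\text{weights}).
\]
This is heavy to do by hand; the cleaner route is to avoid it entirely. Instead, write the density as a product of two factors. The first factor is the probability that a free random walk of $j$ steps, conditioned to stay in the quadrant, lands at $y$. The second is the corresponding quantity for the time-reversed end segment, normalised by $n^{-3}$. Local limit theorems for quadrant-conditioned walks (heat-kernel bounds in the cone) show that each local probability is at most $Cn^{-1}(|y|^2/n+1)\cdot n^{-3/2}\cdot n^{3/2}$. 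After summing over $y$, this gives a bound $C(a,b)$, provided the weight from the walk segment in $I$ is absorbed.

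The main obstacle is exactly this uniform cone local-limit bound: the number of quadrant walks from $u$ to $w$ in $m$ steps is at most $C4^m m^{-1}\min(1,|u|^2/m)\min(1,|w|^2/m)\cdot m^{-1}$. I would try to get it from the reflection principle. The quadrant is a product of two half-lines, and walks with diagonal steps decouple into two independent one-dimensional walks after the rotation $(\cL+\cR,\cL-\cR)$. This is essentially what gives the product formula in Lemma~\ref{lem:walk-excursion}, and the one-dimensional ballot bounds then follow explicitly.

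Finally, on the event $|k-2n\ell|>n^{2/3}$, or when $j$ is not of order $n$, the derivative could be large. Here I would simply restrict attention: the derivative is exactly computed, and on these events it is at most $\binom{2n}{k}$-type quantities times an exponentially small probability. A bounded version follows after modifying on events of vanishing probability. If a genuine uniform $L^\infty$ bound is needed, one instead notes that the cone counting bound holds for all $j,k$, and the factor $n^3$ is compensated by $j^{-3/2}(2n-j-k)^{-3/2}$-type terms, which are bounded once $j\ge$ (the count of jumps in $[0,a]$) is $\ge 1$ and $2n-j-k\ge 1$; the degenerate cases are handled directly.
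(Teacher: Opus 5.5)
Your decomposition (condition on the jump counts in $[0,a]$, $[a,1-b]$ and $[1-b,1]$, then sum over the position at time $a$) produces the right formula for the derivative. However, the estimate that carries the whole lemma is never proved. You need, uniformly in the displacement $\Delta$ of the trace and for $j,m\ge cn$, a bound $\sum_y N_j(0\to y)N_m(y+\Delta\to 0)\le C4^{j+m}n^{-3}$. You defer this to a cone heat-kernel estimate that is neither in the paper nor derived.

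The derivation you sketch also does not work as stated. The change of coordinates $(\cL+\cR,\cL-\cR)$ does turn axis steps into independent diagonal coordinates, but it maps the quadrant onto the wedge $\{u\ge|v|\}$, which is not a product of half-lines. The factorization behind Lemma~\ref{lem:walk-excursion} is instead a sum over interleavings of horizontal and vertical steps.

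The missing idea is that no local limit theorem is needed. Let $q_m(x,y)$ be the probability that simple random walk goes from $x$ to $y$ in $m$ steps while staying in the quadrant. Reversibility, the Markov property and Cauchy--Schwarz give
\[
\sum_x q_i(0,x)\,q_j(x+y,0)\le\Big(\sum_x q_i(0,x)q_i(x,0)\Big)^{1/2}\Big(\sum_z q_j(0,z)q_j(z,0)\Big)^{1/2}=\sqrt{q_{2i}(0,0)\,q_{2j}(0,0)},
\]
and Lemma~\ref{lem:walk-excursion} bounds the right side by $Cn^{-3}$ once $i,j\ge cn$.

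The treatment of atypical jump counts is where the proposal actually breaks. The lemma asserts an $L^\infty$ bound on the derivative itself, so ``modifying on events of vanishing probability'' proves a different statement. Your fallback is false: $n^3 j^{-3/2}(2n-j-k)^{-3/2}$ is of order $n^3$ when $j$ and $2n-j-k$ are bounded. Indeed, the conditional step density genuinely is of order $n^3$ there; for $k=2n$ it equals $4^{2n}/\#\cA^\sph_n$ on excursion traces.

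What rescues the bound is that such configurations carry exponentially small weight in the jump-count law. That weight must stay attached to the step density inside the sum over $j$, rather than the law of $k$ being split off first. Incidentally, the binomial/Poisson ratio for $k$ is bounded for every $k$ (by about $(a+b)^{-1/2}$), but a small tail probability could not have replaced a density bound anyway.

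The paper handles this in one stroke. It realizes $\wt\cZ^n$ as $\check{\cZ^n}$ on $[0,1]$ conditioned on the event $E_n$ of exactly $2n$ jumps and a quadrant excursion from $0$ to $0$, with $\bbP[E_n]=p_{2n}(2n)q_{2n}(0,0)\asymp n^{-7/2}$. It then writes
\[
X_{a,b}(w)=\frac{1}{p_{2n}(2n)q_{2n}(0,0)}\sum_{i+j=2n-r}p_{2na}(i)\,p_{2nb}(j)\sum_x q_i(0,x)\,q_j(x+y,0)\,\mathbb{1}_{\{w+x\ \text{stays in the quadrant}\}}.
\]
Three facts then finish the proof:
\begin{itemize}
\item $\sum_{i+j=2n-r}p_{2na}(i)p_{2nb}(j)=p_{2n(a+b)}(2n-r)\le c_{a,b}n^{-1/2}$ uniformly in $r$.
\item Terms with $i<an$ or $j<bn$ are $O(e^{-c'n})$ by Poisson tails, using the trivial bound $\le 1$ for the inner sum.
\item The remaining terms are at most $c_{a,b}n^{-1/2}\cdot Cn^{-3}$ by the Cauchy--Schwarz bound above.
\end{itemize}
Together these give $X_{a,b}(w)\le C(a,b)$ for every trace, with no window in $k$.
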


\begin{proof}
    Let $(\cX_j,\cY_j)_{j\geq0}$ be a standard simple random random walk on $\bbZ^2$, and write $$q_m(x,y) = \bbP\big[(\cX_m,\cY_m) = y, \cX_j,\cY_j\geq 0\text{ for }j=0,...,m\big|(\cX_0,\cY_0)=x\big].$$
    For the walk $(\check{\cZ^n_t})_{0\leq t\leq 1}$, consider the event $E_n$ that $\check{\cZ^n_0}=\check{\cZ^n_1} = 0$, $\check{\cZ^n_t}\in[0,\infty)^2$ for $t\in [0,1]$ and there are $2n$ jumps in total.  Let $p_\lambda(j) = \frac{\lambda^j}{j!}e^{-\lambda}$. By Lemma~\ref{lem:walk-excursion}, we have $\bbP[E_n]=p_{2n}(2n)q_{2n}(0,0) = p_{2n}(2n)4^{-2n}\#\cA_n^\sph = c(1+o_n(1))n^{-7/2}$. Then conditioned on $E_n$,  $(\check{\cZ^n_t})_{0\leq t\leq 1}$  has the same law as $(\wt\cZ^n_t)_{t\in [0,1]}$. Then for any trace $w:[0,1-b-a]\to \bbZ^2$ with $w(1-b-a)=y\in\bbZ^2$ and $r$ jumps, the Radon-Nikodym derivative $X_{a,b}$ between the laws of  $(\wt\cZ^n_{t+a}-\wt\cZ^n_a)_{0\leq t\leq 1-b-a}$ and the unconditional law of  $(\check{\cZ^n_t})_{0\leq t\leq 1-b-a}$ is
    \begin{equation}\label{eq:pf-417-1}
        X_{a,b}(w) = \frac{1}{p_{2n}(2n)q_{2n}(0,0)}\Big(\sum_{ i+j = 2n-r} p_{2na}(i)p_{2nb}(j) \cdot\big(\sum_{x\in\bbZ^2}q_i(0,x)q_j(x+y,0)\mathds{1}_{w(t)+x\in[0,\infty)^2 \forall t\in [0,1-a-b]} \big)\Big).
    \end{equation}
    We have $\sum_{ i+j = 2n-r} p_{2na}(i)p_{2nb}(j) = p_{2n(a+b)}(2n-r)\leq c_{a,b}n^{-1/2}$.
    By Cauchy-Schwarz inequality, 
    $$ \Big(\sum_{x\in\bbZ^2}q_i(0,x)q_j(x+y,0)\Big)^2\leq \Big(\sum_{x\in\bbZ^2}q_i(0,x)^2\Big)\Big(\sum_{x\in\bbZ^2}q_j(x+y,0)^2\Big) = q_{2i}(0,0)q_{2j}(0,0).$$
    We have $p_{2an}(i)\leq c e^{-c'n}$ if $i<an$ and $p_{2bn}(j)\leq c e^{-c'n}$ if $j<bn$   for some constant $c>0$. For $i\geq an$ and $j\geq bn$, we have $q_{2i}(0,0)q_{2j}(0,0)\leq cn^{-6}$. Therefore the Radon-Nikodym derivative $X_{a,b}(w)$ is bounded from above by some constant $c$ depending only on $a,b$. This finishes the proof.
\end{proof}

\begin{lemma}\label{lem:sph-pack-pf-1}
   There is a coupling of $(\wt\cZ^n,Z)$ such that 
    \begin{enumerate}[(i)]
        \item $\wt\cZ^n:[0,1]\to\bbZ^2$ has the same law as the process $\wt\cZ^n$ in Lemma~\ref{lem:417};\label{cond:lem:sph-pack-pf-0-(i)}
         \item $Z:[0,1]\to\bbR_+^2$ has the law of $\frac{1}{2}$ times a Brownian cone excursion of duration $1$ in the first quadrant from $(0,0)$ to $(0,0)$ of covariance~\eqref{eq:variance};\label{cond:lem:sph-pack-pf-0-(ii)}
        \item for any $\delta\in(0,0.01)$, circle packing converges to mating of trees for $(\wt\cZ^n|_{[\delta ,(1-\delta)]},Z|_{[\delta,(1-\delta)]})$ with $\nu(n)=(2n)^{-1/2}$; \label{cond:lem:sph-pack-pf-0-(iii)}
        \item $\|(Z-(2n)^{-1/2}\wt\cZ^n)\|_\infty\to0 $ in probability as $n\to\infty$. \label{cond:lem:sph-pack-pf-0-(iv)}
    \end{enumerate}
\end{lemma}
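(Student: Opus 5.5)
We build the coupling in the same way as Lemma~\ref{lem:disk-pf-0}: start from the unconditioned coupling of Lemma~\ref{lem:cZ^n-pack-Q-n} and reweight it by local absolute continuity.

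Fix $\delta\in(0,0.01)$. Apply Lemma~\ref{lem:cZ^n-pack-Q-n} with $t_0=1$ and walks of rate $2n$; this is legitimate, since rate $2n$ is covered by the rate-$n$ statement along the even subsequence, up to a deterministic time change. The lemma gives a coupling $(\check Z,\check\cZ^n)$ of an unconditioned Brownian motion of variance $\frac12$ and an unconditioned rate $2n$ walk. In it, $\|\check Z-(2n)^{-1/2}\check\cZ^n\|_\infty\to0$, and circle packing converges to mating of trees uniformly over the subintervals $[a',1-b']$ allowed by~\eqref{eq:conv-dist-check}.

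The conditioned walk is obtained by reweighting. By Lemma~\ref{lem:417}, the law of $(\wt\cZ^n_{t+\delta/2}-\wt\cZ^n_{\delta/2})_{t\in[0,1-\delta]}$ has Radon--Nikodym derivative $X^n_\delta$ with respect to the unconditioned walk. From the explicit formula~\eqref{eq:pf-417-1}, $X^n_\delta$ is a function of the endpoint and the running infima of the increments, and it is bounded uniformly in $n$. The continuum analogue $X_\delta$ is the Radon--Nikodym derivative of the middle piece of $\frac12$ times the Brownian cone excursion of duration one with respect to Brownian motion. It is given by the same expression with heat kernels killed on exiting the quadrant.

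The first key step is to show $X^n_\delta(\check\cZ^n)\to X_\delta(\check Z)$ in $L^1$. Uniform boundedness gives uniform integrability. For convergence in probability, use the local limit theorem for cone-confined random walks together with Lemma~\ref{lem:walk-excursion}, via the same computation as in \cite[Lemma 6.28, 6.29]{Tutte} for the disk.

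Given this, reweight the joint law of $(\check Z,\check\cZ^n)$ by $X^n_\delta$ and compare with reweighting by $X_\delta$. The two reweighted laws are within total variation distance $o_n(1)$. Under the $X_\delta$-weighting the continuum marginal is exactly the middle piece of the excursion. Under the $X^n_\delta$-weighting the discrete marginal is exactly the middle piece of $\wt\cZ^n$. Each event in Definition~\ref{def:CPtoMOT} and each sup-norm error is a measurable function of the pair of processes. The bounded density therefore transfers their vanishing probability from the unconditioned coupling to the reweighted one. This yields claims (iii) and (iv) on $[\delta,1-\delta]$ after shifting time by $\delta/2$; here we use that the events in Definition~\ref{def:CPtoMOT} are shift invariant, as noted in the proof of Proposition~\ref{prop:cZ^n-pack-Q}. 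Finally, complete the pieces outside $[\delta/2,1-\delta/2]$ by sampling from the conditional laws given the middle increments. Both conditional laws are Markov bridges, so this is consistent.

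To obtain a single coupling valid for every $\delta$ and to get claim (iv) on all of $[0,1]$, take $\delta=\delta(n)\downarrow0$ slowly and use a diagonal argument. The uniformity in~\eqref{eq:conv-dist-check} over compact parameter sets keeps the argument valid simultaneously for all fixed $\delta$ once $\delta(n)<\delta$. Near the endpoints, tightness of $(2n)^{-1/2}\wt\cZ^n$ and continuity of the excursion make both processes uniformly small on $[0,\delta(n)]\cup[1-\delta(n),1]$. This gives (iv). Claims (i) and (ii) hold by construction.

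The main obstacle is the $L^1$ convergence of the densities, in particular making the local limit estimate uniform in the endpoint $y$ and in the start and end heights near zero. I would first handle this by restricting to the high-probability event that the increments stay at distance of order $\sqrt n$ from the quadrant boundary, where the local limit theorem is uniform. On the complement I would rely on the uniform bound of Lemma~\ref{lem:417}.
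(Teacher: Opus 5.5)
\textbf{The gap.} The step that fails is your ``first key step''. The obstruction is not uniformity of local limit estimates.

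In \eqref{eq:pf-417-1}, the density $X^n_\delta$ does not depend only on the endpoint $y$ and the running infima of $w$. It also depends on the number $r$ of jumps of $w$ in the window, through the factor $\sum_{i+j=2n-r}p_{2na}(i)p_{2nb}(j)=p_{2n\delta}(2n-r)$ (here $a=b=\delta/2$). This factor records that $\wt\cZ^n$ has exactly $2n$ jumps. So its jump count in the window is $\mathrm{Bin}(2n,1-\delta)$, whereas that of $\check\cZ^n$ is $\mathrm{Poisson}(2n(1-\delta))$.

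Set $\xi_n=(r-2n(1-\delta))/\sqrt{2n}$. Then $p_{2n\delta}(2n-r)/p_{2n}(2n)\approx g(\xi_n):=\delta^{-1/2}e^{-\xi_n^2/(2\delta)}$, and the remaining $q$-factors produce the continuum density. The steps are symmetric, so the count fluctuation is uncorrelated with the walk. Hence under the coupling of Lemma~\ref{lem:cZ^n-pack-Q-n}, $(\check Z,\xi_n)$ converges in law to $(B,\xi)$ with $\xi\sim N(0,1-\delta)$ independent of $B$.

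Consequently $X^n_\delta(\check\cZ^n)$ converges in law to $X_\delta(B)\,g(\xi)$, not to $X_\delta(B)$. Moreover $\bbE|X^n_\delta(\check\cZ^n)-X_\delta(\check Z)|$ tends to $\bbE|g(\xi)-1|>0$, not to $0$. So the $X^n_\delta$- and $X_\delta$-reweighted joint laws stay a positive total variation distance apart. Your maximal coupling therefore does not recover the $X^n_\delta$-reweighted pair with high probability. Without further information on the coupling of Lemma~\ref{lem:cZ^n-pack-Q-n}, you only get weak convergence of the continuum marginal of the $X^n_\delta$-reweighted law to the excursion piece. That is insufficient, because $Z\mapsto\wt F_0^\infty(Z|_{[\delta,1-\delta]},a,b)$ is merely measurable.

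\textbf{How the paper avoids this.} The paper never needs the densities to converge.

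First, it couples $Z$ with $\wt\cZ^n$ directly by the classical invariance principle for quadrant excursions \cite{DIM77}. This gives (iv) and a single coupling independent of $\delta$, so no diagonal argument is needed.

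Second, for each fixed $\delta$ it reweights $(\check Z,\check\cZ^n)$ by the discrete density alone. It then identifies the middle increments of the reweighted walk with those of $\wt\cZ^n$. It uses only $\|X^n_\delta\|_\infty\le C_\delta$, to transfer vanishing error probabilities from Lemma~\ref{lem:cZ^n-pack-Q-n}.

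The reweighted continuum process $Z^n$ is not an excursion. However, its increments on $[\delta,1-\delta]$ are uniformly close to those of $Z$. Its law has density at most $C_\delta$ relative to Brownian motion, uniformly in $n$, and the law of $Z$ is absolutely continuous. Lusin's theorem applied to $\wt F_0^\infty$ then yields
$\mathfrak d\big(\wt F_0^\infty(Z^n|_{[\delta,1-\delta]},a,b),\wt F_0^\infty(Z|_{[\delta,1-\delta]},a,b)\big)\to0$ in probability.

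This continuity-on-a-compact-set argument is the ingredient your total variation comparison would need to be replaced by.
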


\begin{proof}
   Let $\wt \cZ^n$ and $\cZ^{\sph,n}$ be the processes in Lemma~\ref{lem:417}. By classical results (e.g.~\cite{DIM77}), there exists a coupling $(\cZ^{\sph,n},Z)_{n\in\bbN}$ where $\cZ^{\sph,n}$ is  uniform from $\cA_n^\sph$ and $Z$ is the Brownian excursion as in claim (ii) such that $\sup_{t\in[0,1]}|(Z_t-(2n)^{-1/2}\cZ^{\sph,n}_{\lfloor 2nt \rfloor})|\to 0$ in probability, which further implies that $\|(Z-(2n)^{-1/2}\wt\cZ^n)\|_\infty\to0 $ in probability as $n\to\infty$.  Let $(\check{Z}, \check{\cZ^1}, \check{\cZ^2}, \check{\cZ^3},\cdots)$ be the processes $(Z,\cZ^2,\cZ^4,\cZ^6,\cdots)$ as in Lemma~\ref{lem:cZ^n-pack-Q-n}. Fix $\delta>0$.  Let $(Z^n,\cZ^n)$ be $(\check Z,\check{\cZ^n})$ whose law is weighted by the Radon-Nikodym derivative in Lemma~\ref{lem:417} such that $(\cZ^n_t-\cZ_{\delta}^n)_{t\in[\delta,1-\delta]}$ has the same law as $(\wt\cZ^n_t-\wt\cZ_{\delta}^n)_{t\in[\delta,1-\delta]}$. We may further couple $(Z^n,\cZ^n,\wt\cZ^n)_{n\in\bbN}$  such that $(\wt\cZ^n_t-\wt\cZ_{\delta}^n)_{t\in[\delta,1-\delta]} = (\cZ^n_t-\cZ_{\delta}^n)_{t\in[\delta,1-\delta]}$, and by Lemma~\ref{lem:cZ^n-pack-Q-n} and Lemma~\ref{lem:417}, %for each deterministic $\delta\in (0,1)$, 
   $\|\big((2n)^{-1/2}\wt\cZ^n-Z^n - ((2n)^{-1/2}\wt\cZ^n_\delta-Z^n_\delta )\big)|_{[\delta,1-\delta]}\|_\infty\to 0$ in probability. % and circle packing converges to mating of trees for $(\wt\cZ^n|_{[\delta,1-\delta]},Z^n|_{[\delta,1-\delta]})$. 
   
    By Lemma~\ref{lem:417}, for each deterministic $\delta\in(0,1)$, the law of $Z^n|_{[\delta,1-\delta]}$ is absolutely continuous with respect to that of $\check{Z}|_{[\delta,1-\delta]}$ with Radon-Nikodym derivative bounded by a constant $C_{\delta}$, and the claim on $\|\big((2n)^{-1/2}\wt\cZ^n-Z^n - ((2n)^{-1/2}\wt\cZ^n_\delta-Z^n_\delta )\big)|_{[\delta,1-\delta]}\|_\infty\to 0$   and  $\|(Z-(2n)^{-1/2}\wt\cZ^n)\|_\infty\to0 $ in probability imply that $(Z_{t}^n-Z_\delta^n)_{t\in [\delta,1-\delta]}$ converges to $(Z_{t}-Z_\delta)_{t\in [\delta,1-\delta]}$ in probability under the uniform topology.  Now consider the metric $\mathfrak{d}$, the functions  $\wt F^n$, $\wt F^\infty$ and $\wt F_0^\infty$ defined below Definition~\ref{def:CPtoMOT}.  Then the bounded Radon-Nikodym derivative and the convergence of  $(Z_{t}^n-Z_\delta^n)_{t\in [\delta,1-\delta]}$  to $(Z_{t}-Z_\delta)_{t\in [\delta,1-\delta]}$ in probability implies that for each deterministic $a,b>0$ with $a+b<1$, \eqb\label{eq:pf-418-00}
    \mathfrak{d}\big(\wt F_0^\infty(Z^n|_{[\delta,1-\delta]},a,b), \wt F_0^\infty(Z|_{[\delta,1-\delta]},a,b)\big)\to 0 \  \text{ in probability as }n\to\infty. \eqe To see this, by Lusin's theorem, for any $\zeta>0$, $(Z-Z_\delta)|_{[\delta,1-\delta]}\mapsto \wt F_0^\infty(Z |_{[\delta,1-\delta]},a,b)$ is continuous on a compact set $K_\zeta\subseteq C([\delta,1-\delta],\bbR^2)$ such that $\mu(K_\zeta^c)<\zeta$, where $\mu$ is the law of the unconditioned Brownian motion $(\check{Z}-\check{Z}_\delta)|_{[\delta,1-\delta]}$ and the SLE curve $\eta_{n,[\delta,1-\delta]}$ constructed by $Z^n|_{[\delta,1-\delta]}$ is defined to be constant over $[0,\delta]$ and $[1-\delta,1]$. Here we used the fact  that $Z|_{[\delta,1-\delta]}$ and $(Z-Z_\delta)|_{[\delta,1-\delta]}$ encodes the same quantum surface.  For any $\vartheta>0$,  there exists $\vartheta_1>0$ such that if $\|((Z^n-Z^n_\delta)-(Z-Z_\delta))|_{[\delta,1-\delta]}\|_\infty<\vartheta_1$ and  both $(Z-Z_\delta)|_{[\delta,1-\delta]},(Z^n-Z^n_\delta)|_{[\delta,1-\delta]}$ are in $K_\zeta$, then $\mathfrak{d}\big(\wt F_0^\infty(Z^n|_{[\delta,1-\delta]},a,b),\wt F_0^\infty(Z|_{[\delta,1-\delta]},a,b)\big)<\vartheta$.  Then  
    \begin{equation}\label{eq:418-pf-1}
    \begin{split}
              &\bbP\big[\mathfrak{d}\big(\wt F_0^\infty(Z^n|_{[\delta,1-\delta]},a,b),\wt F_0^\infty(Z|_{[\delta,1-\delta]},a,b)\big)\geq\vartheta \big]
              \leq \bbP\big[\|((Z^n-Z^n_\delta)-(Z-Z_\delta))|_{[\delta,1-\delta]}\|_\infty\geq\vartheta_1\big] \\&+ \bbP[(Z^n-Z^n_\delta)|_{[\delta,1-\delta]}\notin K_\zeta]+\bbP[(Z-Z_\delta)|_{[\delta,1-\delta]}\notin K_\zeta].
    \end{split}
    \end{equation} 
    For fixed $\zeta$, the first term on the right hand side of~\eqref{eq:418-pf-1} tends to 0, the second   tends to 0 as $\zeta\to 0$ uniformly over $n$ by the boundedness of Radon-Nikodym derivative from Lemma~\ref{lem:417}, and the third term tends to 0 as $\zeta\to 0$ since the law of $(Z-Z_\delta)|_{[\delta,1-\delta]}$ is absolutely continuous with respect to the unconditioned version. This gives the desired convergence in probability.

    Now using the convergence of circle packing to mating of trees for $(\check{\cZ}^n,\check{Z})$ as in~\eqref{eq:conv-dist-check}, we have 
    $$\mathfrak{d}\big(\wt F^\infty(\check{Z}|_{[\delta,1-\delta]},\check{\cZ^n}|_{[\delta,1-\delta]},a,b),\wt F^n(\check{\cZ^n}|_{[\delta,1-\delta]},a,b)\big)\to 0 \text{ in probability as }n\to\infty. $$
    Further using the continuity of $\eta$ and the convergence of $|I'|\to |I_{a,b}|$ as in Condition~\ref{cond:def:CPtoMOT-0},  $\partial\eta(I')$ is close to $\partial\eta(I_{a,b})$ for large $n$ under the metric~\eqref{eq:curve-metric-0}, and therefore one can further use Proposition~\ref{prop:boundary-perturb} to prove that
     $$\mathfrak{d}\big(\wt F^\infty(\check{Z}|_{[\delta,1-\delta]},\check{\cZ^n}|_{[\delta,1-\delta]},a,b),\wt F^\infty_0(\check{Z}|_{[\delta,1-\delta]}, a,b)\big)\to 0 \text{ in probability as }n\to\infty. $$
     %If we choose $\e(n)\to 0$ sufficiently slowly as $n\to\infty$, then 
     By Lemma~\ref{lem:417}, 
     \begin{equation}\label{eq:pf-418-001}
         \mathfrak{d}\big(\wt F^\infty_0( {Z}^n|_{[\delta,1-\delta]}, a,b), \wt F^n(\wt{\cZ}^n|_{[\delta,1-\delta]},a,b)\big)\to 0 \ \text{ in probability as }n\to\infty. 
     \end{equation}
     Therefore by~\eqref{eq:pf-418-00} and~\eqref{eq:pf-418-001}, 
     \begin{equation}\label{eq:pf-418-002}
         \mathfrak{d}\big(\wt F^\infty_0( {Z}|_{[\delta,1-\delta]}, a,b), \wt F^n(\wt{\cZ}^n|_{[\delta,1-\delta]},a,b)\big)\to 0 \ \text{ in probability as }n\to\infty. 
     \end{equation}
     Further using the continuity of $\eta^\sph$ and the convergence of the interval length coordinate for the $\wt F$-function and the inclusion $I'\subseteq I_{a,b}$ in Condition~\ref{cond:def:CPtoMOT-0} in Definition~\ref{def:CPtoMOT}, one can again use Proposition~\ref{prop:boundary-perturb} to check that $\mathfrak{d}\big(\wt F^\infty_0( {Z}|_{[\delta,1-\delta]}, a,b), \wt F^\infty( {Z}|_{[\delta,1-\delta]},{\wt\cZ^n}|_{[\delta,1-\delta]}, a,b)\big)\to 0$ in probability as $n\to\infty$. This implies that $$ \mathfrak{d}\big(\wt F^\infty( {Z}|_{[\delta,1-\delta]},{\wt\cZ^n}|_{[\delta,1-\delta]},a,b), \wt F^n(\wt{\cZ}^n|_{[\delta,1-\delta]},a,b)\big)\to 0 \ \text{ in probability as }n\to\infty. $$ This gives claim (iii) since the vanishing disk diameter in the packing and  the convergence of one space-filling trace $\frtr$ imply the convergence of all space-filling traces.
\end{proof}

Now we let $(\wt \cZ^n,  Z)$ be coupled as in Lemma~\ref{lem:sph-pack-pf-1}. Let $(\wh\bbC,  h^\sph,  \eta^\sph)$ be the space-filling $\SLE_8$ decorated unit area  quantum sphere generated by $ Z$ and embedded such that $\eta^\sph(1/2)=0$, $\eta^\sph(1/4)=1$ and $ \eta^\sph(0)=\infty$. Let $\wt B^n$ be the set of times at which $\wt \cZ^n$ jumps. Let $\Phi_\rmT^\sph(\wt\cZ^n) = (  T^{\sph,n},\wt\frtr^{\sph,n})$. Let $[\wt t^-_n,\wt t^+_n]$ be the intersection over all $[t^-,t^+]$  where $t^-<1/4$, $t^+>3/4$ such that
\eqb\label{eq:sph-double-edge-wt} \wt\cL^n_{t^-}=\wt\cL^n_{t^+} = \inf_{t^-\leq s\leq t^+}\wt\cL^n_s, \ \ \wt\cR^n_{t^-}=\wt\cR^n_{t^+} = \inf_{t^-\leq s\leq t^+}\wt\cR^n_s, \ \ \ t^-,t^+\in\wt B^n\cup\{0,1\} \eqe 
holds, and let $\mr B^n$ be $\wt B^n\cap (\wt t^-_n,\wt t_n^+]$ minus all possible $(t^-,t^+]\subsetneq  (\wt t^-_n,\wt t_n^+]$ such that~\eqref{eq:sph-double-edge-wt} holds. Let $ \wt B^{n,-}$ be the set of times in $  \wt B^n$ when $\wt \cL^n$ or $\wt\cR^n$ makes a downward jump, and let $\mr B^{n,-} = \wt B^{n,-}\cap \mr B^n$. Let $t\mapsto \frv_t^n$ be the map $B^{n,-}\to \cV   T^{\sph,n}$ as in Section~\ref{subsubsec:Mullin-finite}. Note that by our construction of $\wt \cZ^n$ in Lemma~\ref{lem:417}, $T^{\sph,n}$ has the same law as the triangulation $T^{\sph,n}$ from Theorem~\ref{thm:pack-sph}.

%The following is a straightforward consequence of Lemma~\ref{lem:sph-double-edge-1}.
\begin{lemma}\label{lem:sph-double-edge-2}
     With probability tending to 1 as $n\to\infty$, $[n^{-1/5},1-n^{-1/5}]\subseteq (\wt t^-_n,\wt t_n^+)$, and gaps between consecutive times in $\mr B^{n,-}$ are no larger than $n^{-1/5}$.
\end{lemma}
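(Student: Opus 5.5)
The plan is to reduce both claims to statements about the discrete walk $\cZ^{\sph,n}$ from Lemma~\ref{lem:417}, apply Lemma~\ref{lem:sph-double-edge-1}, and then pass to continuous time via concentration of the uniform jump times $\wt B^n$. Recall $\wt\cZ^n_t=\cZ^{\sph,n}_{\#([0,t]\cap\wt B^n)}$, where $\wt B^n$ consists of $2n$ i.i.d.\ uniform points on $[0,1]$. By the DKW inequality, with probability $1-o(1)$ we have
\[
\sup_t\big|\#([0,t]\cap\wt B^n)-2nt\big|\le n^{0.6}.
\]
In particular, every interval of length $n^{-1/5}$ contains at least $n^{4/5}$ points of $\wt B^n$, and the $k$-th jump time is within $n^{-0.4}$ of $k/(2n)$. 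We work on this event.

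\textbf{Step 1: $[n^{-1/5},1-n^{-1/5}]\subseteq(\wt t^-_n,\wt t^+_n)$.} Any $[t^-,t^+]$ satisfying \eqref{eq:sph-double-edge-wt} with $t^-<1/4<3/4<t^+$ corresponds to discrete indices $j^-<j^+$ satisfying \eqref{eq:sph-double-edge}, with $j^+-j^-\ge n/2$ roughly. By the first part of Lemma~\ref{lem:sph-double-edge-1}, with probability $1-O(n^{-1/4})$ every such excursion has $j^+-j^-\ge 2n-n^{3/4}$. Hence $j^-\le n^{3/4}$ and $j^+\ge 2n-n^{3/4}$. Translating back through the concentration estimate gives $t^-\le n^{3/4}/(2n)+n^{-0.4}<n^{-1/5}$, and symmetrically $t^+>1-n^{-1/5}$. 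Since $[\wt t_n^-,\wt t_n^+]$ is the intersection of all such intervals (the trivial one $[0,1]$ included), the inclusion follows.

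\textbf{Step 2: gap bound.} Inside $(\wt t^-_n,\wt t^+_n]$, the times removed when forming $\mr B^n$ lie in excursions $(t^-,t^+]\subsetneq(\wt t^-_n,\wt t^+_n]$. By Lemma~\ref{lem:sph-double-edge-1}, such a removed excursion has discrete length below $n^{3/4}$; otherwise it would have length at least $2n-n^{3/4}$ and would be nested with the maximal one, and we exclude that case by the same lemma. Nested or overlapping bad excursions merge as in Lemma~\ref{lem:hat}, but their union is again a bad interval, so it still has length $<n^{3/4}$. Therefore every interval of $n^{3/4}$ consecutive discrete steps inside the retained range contains a retained step.

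Among retained steps we need a downward step. If $n^{3/4}$ consecutive retained steps of the simplified walk contained no downstep of $\cL$ or $\cR$, then one coordinate, say $\cL$, would be nondecreasing over a stretch of at least $n^{3/4}/2$ original steps. Here we use that $\mr\cL$ agrees with $\cL$ at retained times and that erased excursions return to their starting level. The second part of Lemma~\ref{lem:sph-double-edge-1}, applied with $n^{3/4}$ replaced by a constant multiple and to both coordinates, rules this out with probability $1-c2^{-c^{-1}n^{1/4}}$. The monotone-run bound there is easily adjusted to this length.

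So every window of about $2n^{3/4}$ discrete steps contains a time in $\mr B^{n,-}$. By the concentration estimate, a window of that many steps has continuous length at most $n^{-1/4}+2n^{-0.4}<n^{-1/5}$. The edge windows near $\wt t^\pm_n$ are handled using Step 1. This gives the gap bound.

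The main obstacle is the bookkeeping in Step 2, specifically transferring the monotone-run estimate from the original walk to the simplified walk. The first thing to try is the observation that erased pieces are $\bbZ_+^2$ excursions returning to their start, so a downstep-free stretch of $\mr\cZ$ lifts to a stretch of $\cZ$ on which $\cL$ has no net decrease. To turn this into the hypothesis of the lemma, one either strengthens its union bound to runs with no net decrease, by a similar counting argument, or bounds the total erased length in the window by $n^{3/4}$.
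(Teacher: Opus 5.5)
Your Step~1 is fine and is essentially the paper's argument. So is the first half of Step~2, where you show that merged erased blocks inside $(\wt t^-_n,\wt t^+_n]$ have discrete length below $n^{3/4}$, so retained times are dense.

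The gap is the downstep argument, exactly where you flag it, and neither proposed repair closes it. The implication ``a downstep-free stretch of $\mr\cZ$ forces one coordinate of $\cZ$ to be nondecreasing over a long stretch of original steps'' is false. The lift of such a stretch consists of retained upsteps interlaced with erased $\bbZ_+^2$ excursions, and these excursions contain downsteps of both coordinates. For instance, take $k$ repetitions of the block $(1,0),(1,0),(-1,0),(0,1),(0,-1)$, with a suitable continuation of the path. This gives $k$ consecutive retained steps all equal to $(1,0)$, because the pairs $(1,0),(-1,0)$ and $(0,1),(0,-1)$ are erased. Yet neither $\cL$ nor $\cR$ is nondecreasing over any five consecutive original steps. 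So the second part of Lemma~\ref{lem:sph-double-edge-1}, which concerns monotone runs of the original walk, gives nothing here.

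Your two repairs also fail:
\begin{itemize}
\item Weakening the event to ``no net decrease'', or even to ``$\cZ$ stays in $\cZ_{\rm start}+\bbZ_+^2$'' (which is what the lift actually gives), leaves only polynomially small probability, of order $m^{-1}$ for $m$ steps. That does not survive a union bound over $\asymp n$ starting times, let alone the factor $\bbP[\cA_n^\sph]^{-1}\asymp n^3$ from conditioning.
\item Bounding the erased length in a window does not help, since even length-two erased excursions destroy monotonicity.
\end{itemize}
The scales also do not match. The density bound only guarantees one retained step per $n^{3/4}$ original steps. A window of continuous length $n^{-1/5}$ is therefore guaranteed only about $n^{1/20}$ retained steps, far fewer than $n^{3/4}$. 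So your conclusion that every window of about $2n^{3/4}$ original steps contains a point of $\mr B^{n,-}$ does not follow.

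What is needed is a bound that decays exponentially in the number of consecutive retained upsteps of the erased walk itself. This is \eqref{eq:pf-lem-Ex-111112} from the proof of Lemma~\ref{lem:number-jump-Q}. For the erased simple random walk of any length $N$, the probability of $m$ consecutive retained times with both coordinates nondecreasing is at most $(15/16)^{m-1}$. The proof is the two-step insertion argument: given the past, the next two steps are $(-1,0),(1,0)$ with conditional probability at least $1/16$, and this forces a retained downstep.

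The paper then proceeds in three steps:
\begin{itemize}
\item It applies this bound with $m=n^{0.01}$ to every sub-walk $(\cX_j,\cY_j)_{i_1\le j\le i_2}$ by a union bound. This covers both the arbitrary location of the run and the fact that here the erasure is performed on the walk restricted to $(\wt t^-_n,\wt t^+_n]$, whose endpoints are random.
\item It pays $\bbP[\cA_n^\sph]^{-1}\asymp n^3$ to pass to the conditioned walk.
\item It combines this with the density of retained times. Each window of length $n^{-1/5}$ contains at least $n^{0.79}$ points of $\wt B^n$, hence at least $n^{0.03}$ points of $\mr B^n$, and therefore at least one point of $\mr B^{n,-}$.
\end{itemize}
Replacing your monotone-run step by this estimate closes the argument.
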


\begin{proof}
    Since $\wt B^n$ are $2n$ i.i.d.\ uniform points on $[0,1]$, $\bbP\big[\# \wt B^n\cap [0,n^{-1/5}) <2n^{3/4}, \ \ \# \wt B^n\cap (1-n^{-1/5},1] <2n^{3/4}\big]\to 0$ as $n\to\infty$, and the first claim follows from Lemma~\ref{lem:sph-double-edge-1}. For the second claim, by~\eqref{eq:pf-lem-Ex-111112} and a union bound, for  the standard nearest neighbhor walk $(\cX_j,\cY_j)_{0\leq j\leq 2n}$ started from 0, with   probability at least  $1-Cn^4e^{-cn^{0.01}}$, for any $0\leq i_1\leq i_2\leq 2n$, there does not exist $n^{0.01}$ consecutive upsteps during $[i_1,i_2]$ after we restrict to $(\cX_j,\cY_j)_{i_1\leq j\leq i_2}$ and perform the $\frac{\pi}{2}$-cone excursion erasure as in ~\eqref{eq:pi/2-excursion-AA}. Further using $\bbP[(\cX,\cY)\in \cA^{\sph}_n] = c_1(1+o_n(1))n^{-3}$ by Lemma~\ref{lem:walk-excursion}, the above claim about  consecutive upsteps  continues to hold with probability at least $1-Cn^7e^{-cn^{0.01}}$ as we condition on $(\cX,\cY)\in \cA^{\sph}_n$. The last claim then holds by Lemma~\ref{lem:sph-double-edge-1} since with probability tending to 1 as $n\to\infty$, each interval  $[x_1,x_2]\subseteq [0,1]$ of length $n^{-1/5}$ contains at least $n^{0.79}$ points in $\wt B^n$ and at least $n^{0.03}$ points in $\mr B^n$.
\end{proof}

 In the setting of Lemma~\ref{lem:sph-double-edge-2}, observe that if $s_1,s_2\in \mr B^n$, then $\Phi_{\rm simple}$ version image of the triangulation from $\Phi_\rmT(\wt\cZ^n|_{[s_1,s_2]})$ can be viewed as a submap of the $\Phi_{\rm simple}$ image of the triangulation $  T^{\sph,n}$ from $\Phi_{\rmT}^\sph(\wt\cZ^n)$. Recall that we work with circle packings for $ T^{\sph,n}$ by looking into the circle packings $\cP$ for $\Phi_{\rm simple}(T^{\sph,n})$ in the sphere $\wh\bbC$ and declaring that the disks for vertices in $\cV T^{\sph,n}\backslash\cV \Phi_{\rm simple}( T^{\sph,n})$ are tangent points of disks in $\cP$. Let  $t\mapsto \frv_t^n$ be the bijection between  $\wt B^{n,-}$ and vertices of $  T^{\sph,n}$ not on the root edge as described in Section~\ref{subsubsec:Mullin-finite}.

\begin{lemma}\label{lem:sph-pack-pf-4}
Let $\frv^{0,n} = \frv^n_{t^{0,n} }$, $\frv^{\mathrm{mid},n} = \frv^n_{t^{\mathrm{mid},n} }$ and     $\frv^{1/4,n} = \frv^n_{t^{1/4,n} }$, where $t^{0,n} = \inf\mr B^{n,-}$, $t^{\mathrm{mid},n}  = \inf\{s\geq 1/2:s\in \mr B^{n,-}\}$ and $t^{1/4,n}  = \inf\{s\geq 1/4:s\in \mr B^{n,-}\}$. Let $ \cP^{\sph,n}$ be a circle packing in $\wh\bbC$ for $ T^{\sph,n}$ such that the disks for  $\frv^{\mathrm{mid},n}$ and $\frv^{1/4,n}$ are centered at  0 and 1, respectively. Let $\wt\frtr^{\sph,n}:[0,1]\to \cF \wt T^{\sph,n}$ be the space-filling path on the faces of $ T^{\sph,n}$, and suppose $\infty$ is contained within the interstice between disks for the face $\mathfrak{f}^{n,0}$ on $\Phi_{\rm simple}(T^{\sph,n})$ containing $\frtr^{\sph,n}(t^{0,n})$.  Then we have the following:
 \begin{enumerate}[(i)]
        \item With probability tending to 1 as $n\to\infty$, $\Phi_{\rm simple}(T^{\sph,n})$ is non-empty and $ \cP^{\sph,n}$ is well-defined, and the maximal spherical diameter of disks in $ \cP^{\sph,n}$ tends to 0 in probability as $n\to\infty$;
        \item Let $\wt\frtr^{\sph,n}:[0,1]\to \wh\bbC$ be the path on faces of $ T^{\sph,n}$ under the embedding $ \cP^{\sph,n}$. Then
        \eqb\label{eq:lem:sph-pack-pf-4}\sup_{t\in [0,1]} d_{\sph}( \eta^\sph(t),\wt\frtr^{\sph,n}(t))\to 0 \ \ \ \text{in probability as } n\to\infty, \eqe
        where the convergence is uniform over all choices of the space-filling path $\wt\frtr^{\sph,n}$.
        %\item  Let $\wt\nu^{\disk,n}$ be the measure on $\partial\bbD$ that assigns mass $n^{-1/2}$ to each tangent point of disks in $\wt\cP^{\disk,n}$ and $\partial\bbD$. The Prohokov distance between $\wt\nu^{\disk,n}$ and $\nu_{h^\disk}$ tends to 0 in probability as $n\to\infty$.
    \end{enumerate}
%if we send $\e\to 0$ sufficiently slower than $n\to\infty$, the spherical diameter of the union of the disks for vertices that are adjacent to faces in $\cF \wt T^{\sph,n}\backslash\{\wt\frtr^{\sph,n}(t):t\in [\e,A-\e]\}$   tends to 0 in probability.
\end{lemma}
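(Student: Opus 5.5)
Fix a small deterministic $\delta\in(0,0.01)$; later we send $\delta\to0$ slowly compared with $n\to\infty$. By Lemma~\ref{lem:sph-double-edge-2}, with probability tending to one, $[n^{-1/5},1-n^{-1/5}]\subseteq(\wt t_n^-,\wt t_n^+)$ and $\mr B^{n,-}$ has gaps at most $n^{-1/5}$. On this event $\Phi_{\rm simple}(T^{\sph,n})$ is nonempty: the only macroscopic double-edge excursion is the trivial complement of $(\wt t_n^-,\wt t_n^+)$, which contains $o(n)$ faces. The packing $\cP^{\sph,n}$ then exists by Koebe--Andreev--Thurston, and we normalize it by the three points $0,1,\infty$.

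Let $I=[\delta,1-\delta]$ and $I_{a,b}$ with $a=b=\delta$. Let $I'=[t_\delta^-,t_\delta^+]$ be as in Definition~\ref{def:CPtoMOT}. By Lemma~\ref{lem:A2} and the gap bound, $\Phi_{\rm simple}(T_{n,I'})$ is a submap of $\Phi_{\rm simple}(T^{\sph,n})$. Let $\wt T^n:=T^{\sph,n}\setminus\{\mathfrak{f}^{n,0}\}$, viewed as a disk triangulation, and let $T_{n,I'}$ be its subtriangulation.

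By part~(\ref{cond:lem:sph-pack-pf-0-(iii)}) of Lemma~\ref{lem:sph-pack-pf-1}, the packing $\cP_{n,I'}$ of $T_{n,I'}$ in $\bbD$ is close to the mating-of-trees embedding of $\eta^\sph(I')$ pulled back by a conformal map $\phi_\delta:\eta^\sph(I')\to\bbD$. Precompose with $\phi_\delta^{-1}$, fixed by sending $\eta^\sph(1/2)\mapsto0$ and $\eta^\sph(1/4)\mapsto 1$. Using Proposition~\ref{prop:boundary-perturb} to replace $I'$ by $I_{a,b}$, and Theorem~\ref{thm:He-Schramm-extension-1} on compact subsets, this gives a circle embedding $\cP^n_\delta$ of $T_{n,I'}$ in $\bbC$ with two properties. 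First, its disks are uniformly small on $\phi_\delta^{-1}(B(0;1-\delta'))$. Second, its trace is uniformly close to $\eta^\sph$ on $I'$ in the spherical metric.

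We now compare $\cP^n_\delta$ with the restriction of $\cP^{\sph,n}$ to $T_{n,I'}$ via Theorem~\ref{thm:He-Schramm-sphere}, after a Möbius normalization fixing $0$ and $1$. Since $\eta^\sph([0,\delta]\cup[1-\delta,1])$ shrinks to $\infty$ as $\delta\to0$, the carrier of $\cP^n_\delta$ contains $B(0;R)$ for any fixed $R$ once $\delta$ is small and $n$ large. This is condition~(\ref{cond:thm:He-Schramm-sphere-(ii)}). Condition~(\ref{cond:thm:He-Schramm-sphere-(i)}) follows from the small-disk property together with the trace convergence, which also controls interstices. Theorem~\ref{thm:He-Schramm-sphere} then says that the restricted $\cP^{\sph,n}$ is uniformly close in $d_{\sph}$ to $\cP^n_\delta$. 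This yields the disk-diameter and trace claims for times in $I'$ and for disks meeting $B(0;R)$.

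For the remaining part, $t\in[0,t_\delta^-]\cup[t_\delta^+,1]$, the disks of vertices outside $T_{n,I'}$ lie in the complement of the carrier of $\cP^{\sph,n}|_{T_{n,I'}}$. By the previous step that carrier contains $B(0;R)$, so these disks lie in a spherical neighbourhood of $\infty$ whose size tends to $0$. The same holds for $\wt\frtr^{\sph,n}(t)$ and $\eta^\sph(t)$. The assumption that $\infty$ lies in the interstice of $\mathfrak{f}^{n,0}$ excludes a large disk covering $\infty$, so all spherical diameters go to $0$. Together these give (i) and~\eqref{eq:lem:sph-pack-pf-4}. Uniformity over traces follows because each embedded face has vanishing diameter.

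The main obstacle is the comparison step. Theorem~\ref{thm:He-Schramm-sphere} requires the two embeddings to agree at $0$ and $1$. This holds only after normalization, using that $o(\frv^{\mathrm{mid},n})$ and $o(\frv^{1/4,n})$ in $\cP^n_\delta$ are close to $\eta^\sph(1/2)=0$ and $\eta^\sph(1/4)=1$. The resulting Möbius correction must tend to the identity, and its effect near $\infty$ needs care. I would argue this through the interstice normalization at $\infty$ combined with the uniform spherical convergence.
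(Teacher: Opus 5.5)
Your overall plan matches the paper's: local convergence on $[\delta,1-\delta]$ from Lemma~\ref{lem:sph-pack-pf-1}, an intermediate circle embedding of $T_{n,I'}$ in $\bbC$ compared with the restriction of $\cP^{\sph,n}$ via Theorem~\ref{thm:He-Schramm-sphere}, and the leftover vertices pushed into a small spherical neighbourhood of $\infty$. The step that fails is the construction of the intermediate embedding $\cP^n_\delta$.

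\textbf{Why the construction fails.} Precomposing the $\bbD$-packing $\cP_{n,I'}$ with $\phi_\delta^{-1}$ does not give a circle embedding, because a conformal map that is not M\"obius does not send circles to circles. Neither result you cite repairs this.
\begin{itemize}
\item Theorem~\ref{thm:He-Schramm-extension-1} needs a second circle embedding of $T_{n,I'}$, now in $\eta^{\sph}(I')$, and you never construct one.
\item Proposition~\ref{prop:boundary-perturb} concerns bounded Jordan domains converging to a fixed limit. Here $\eta^{\sph}(I')$ instead exhausts $\bbC$ as $\delta\to0$.
\end{itemize}
So the two properties you assign to $\cP^n_\delta$ (small disks, trace uniformly close to $\eta^{\sph}$ on $I'$) are unsupported. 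Your checks of conditions (I) and (II) of Theorem~\ref{thm:He-Schramm-sphere} rest on them. In addition, smallness of disks only over $\phi_\delta^{-1}(B(0;1-\delta'))$ would not give condition (I), which concerns all disks and all bounded interstices.

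\textbf{The missing idea.} Take $\cP^n_\delta$ to be a dilation of $\cP_{n,I'}$, and use Proposition~\ref{prop:unif-sph} to justify the dilation.
\begin{itemize}
\item Let $g_\delta=\phi_\delta/\phi_\delta(1)$, with $\phi_\delta(0)=0$. This is a conformal map of $\eta^{\sph}(I')$ onto the disk of radius $|\phi_\delta(1)|^{-1}$ fixing $0$ and $1$.
\item Once $\delta$ is small, $B(0;R)\subseteq\eta^{\sph}(I')$ with high probability, by continuity of $\eta^{\sph}$ at its endpoints. Proposition~\ref{prop:unif-sph} then gives $\phi_\delta(1)\to0$ and $\sup_z d_{\sph}(g_\delta(z),z)\to0$.
\item Hence $\phi_\delta(1)^{-1}\cP_{n,I'}$, after the rotation from Lemma~\ref{lem:sph-pack-pf-1}, is a genuine circle packing. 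Its trace is close to $g_\delta\circ\eta^{\sph}$, and therefore to $\eta^{\sph}$.
\item Its carrier contains $B(0;R)$, because $B(0;1/2)$ lies in the carrier of $\cP_{n,I'}$. This gives condition (II).
\item The dilation factor blows up, so $\delta=\delta_n$ must tend to $0$ slowly enough that the dilated disk diameters and trace errors still vanish. This gives condition (I).
\end{itemize}

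\textbf{The normalization at $0$ and $1$.} With this in place, the step you flag as the main obstacle is harmless. The map placing the disks of $\frv^{\mathrm{mid},n}$ and $\frv^{1/4,n}$ at $0$ and $1$ should be affine, since a general M\"obius map neither preserves centres nor fixes $\infty$. It has the form $z\mapsto az+b$ with $a\to1$ and $b\to0$. Such maps converge to the identity uniformly on $\wh\bbC$ in $d_{\sph}$, so nothing special happens near $\infty$.
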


\begin{proof}
  The claim that  $\Phi_{\rm simple}(T^{\sph,n})$ is non-empty and $ \cP^{\sph,n}$ is well-defined with probability tending to 1 follows from Lemma~\ref{lem:sph-double-edge-2}. Let $\e>0$ be small and $n>\e^{-10}$. By the continuity of the curve $\eta^\sph$, as $\e\to0$, the spherical diameter of $\wh\bbC\backslash\eta^\sph([3\e,1-3\e])$ tends to 0 in probability. Let $t_\e^- = \inf\{s\geq \e:s\in \mr B^n\}$, $t_\e^+ = \sup\{s\leq 1-\e:s\in \mr B^n\}$ and $I_\e'=[t_\e^-,t_\e^+]$.  Let $  T_{n,I_\e'}$ be the map formed by the faces $\{\wt\frtr^{\sph,n}(t):t\in I'\}\backslash\{\wt\frtr^{\sph,n}(t_\e^-)\}$. Let $t_1^{0,n} = \inf\{u>t^{0,n}:u\in\mr B^{n,-}\}$.  By Lemma~\ref{lem:A2}, with probability tending to 1 as $n\to\infty$, $\mr B^{n,-}$ intersects both $[\e/2,\e]$ and $[1-\e,1-\e/2]$, and $t_1^{0,n}<\e/4$.  By Lemma~\ref{lem:sph-double-edge-2},  with probability tending to 1 as $n\to\infty$ for fixed $\e$,   the interval $I'_\e$ and the triangulation $  T_{n,I'_\e}$  here agree with the interval $I'$ and the disk triangulation $T_{n,I'}$ in Definition~\ref{def:CPtoMOT} defined for  the interval $I_{a,b}=[\e,1-\e]$, $I=[\e/2,1-\e/2]$ and $\wt\cZ^n|_I$, and the triangulation $\Phi_{\rm simple}(  T_{n,I_\e'})$  is a submap of the triangulation $\Phi_{\rm simple}(  T^{\sph,n})$ and does not contain $\mathfrak{f}^{n,0}$. Consider a conformal map $\varphi_{\e}: \eta^\sph(I')\to \bbD$ sending $\eta^\sph(1/2)=0$ to 0, and let $g_\e(z) =\varphi_\e(z)/\varphi_\e(1)$.   By endpoint continuity of $\eta^\sph$ and Lemma~\ref{lem:sph-double-edge-2},   for each $R>0$, with probability tending to 1 as $(\e,n)\to(0,\infty)$, $B(0;R)\subseteq \eta(I_\e')$, and therefore by Proposition~\ref{prop:unif-sph}, $\sup_{z\in \eta(I_\e')} d_\sph(g_\e(z),z)\to 0$  and $\varphi_\e(1)\to 0$ in probability as $\e,n\to(0,\infty)$.

    By part (\ref{cond:lem:sph-pack-pf-0-(iii)}) of Lemma~\ref{lem:sph-pack-pf-1}, for fixed $\e\in(0,0.01)$ and the circle packing $\cP_{n,I'_\e}$ of $T_{n,I_\e'}$ as in Definition~\ref{def:CPtoMOT} and its embedded trace $\frtr_{n,I_\e'}$, the maximal diameter of disks in  $\cP_{n,I'_\e}$ tends to 0, and up to a rotation, $\sup_{t\in I_\e'}|\frtr_{n,I_\e'}(t) - \varphi_\e(\eta^\sph(t))|\to 0$ in probability. In particular, $B(0;1/2)\subseteq \mathrm{carrier}(\cP_{n,I'_\e})$. We may further choose $\e = \e_n\downarrow 0$ sufficiently slow, such that $\Phi_{\rm simple}(  T_{n,I_{\e_n'}})$  is a submap of the triangulation $\Phi_{\rm simple}(  T^{\sph,n})$ and does not contain $\mathfrak{f}^{n,0}$, and as we further dilate  by $\varphi_{\e_n}(1)^{-1}$, the maximal Euclidean diameter of disks in  $\varphi_{\e_n}(1)^{-1}\cP_{n,I'_{\e_n}}$ tends to 0, and up to a rotation, $\sup_{t\in I_{\e_n}'}|\varphi_{\e_n}(1)^{-1}\frtr_{n,I_{\e_n}'}(t) - g_{\e_n}(\eta^\sph(t))|\to 0$ in probability as $n\to\infty$. We may further perform an affine transform  to obtain a circle packing $\cP^n$ of $T_{n,I_{\e'_n}}$ such that the disks for the vertices $\frv^{\mathrm{mid},n}$ and $\frv^{1/4,n} $ are centered at 0 and 1, respectively, and the maximal spherical diameter of disks as well as bounded interstices as well as the above convergence continue to hold in spherical metric. Since $B(0;1/2)\subseteq \mathrm{carrier}(\cP_{n,I'_\e})$ and $\varphi_{n_\e}(1)\to 0$ in probability, for each $r>0$, with probability tending to 1, $B(0;r)\subseteq \mathrm{carrier}(\cP^n)$. Let $\wt\cP^n$ be the restriction of the packing $\cP^{\sph,n}$ to vertices in $T_{n,I_{\e_n}'}$, which is a circle embedding for $T_{n,I_{\e_n}'}$ in $\bbC$ since $\Phi_{\rm simple}(T_{n,I_{\e_n}'})$ does not contain the face $\mathfrak{f}^{n,0}$. %By Lemma~\ref{lem:sph-double-edge-2}, with probability tending to 1, 
    By our choice of $\cP^{\sph,n}$, we may further fix a homeomorphism $f^n$ between carriers of $\cP^n$ and $\wt\cP^n$ that corresponds disks and $f^n(0)=0,f^n(1)=1$. Then $(\cP^n,\wt\cP^n)$ satisfies the constraints in Theorem~\ref{thm:He-Schramm-sphere}, the maximal spherical diameter of disks in $\wt\cP^n$ tends to 0 in probability as $n\to\infty$, and $f^n$ converges uniformly to the identity function in probability under the spherical metric. Furthermore, the conclusion that for every $r>0$,  $B(0;r)\subseteq \mathrm{carrier}(\wt\cP^n)$ for large enough $n$ implies that the disks for the vertices in $\cV T^{\sph,n}\backslash\cV T_{n,I_{\e_n}'}$ are located in an $o_n(1)$ spherical neighborhood of $\infty$ in probability as $n\to\infty$. This together with the endpoint continuity of $\eta^\sph$ gives claim (i) and claim (ii).
    %Then as $\e\to 0$, $|\varphi_{n,I'}(1)|\to 0$. Let $\cP_{n,I'}$ be the univalent circle packing in $\bbD$ for $T_{n,I'}$ as in Definition~\ref{def:CPtoMOT} for $(\wt\cZ_{n,I'},Z|_{I'})$, and we scale $\cP_{n,I'}$ to get $\wt\cP_{n,I'}$ such that the disk for $\frv^{1/4,n}$ is centered at 1. Therefore by the convergence of circle packing to mating of trees for $(\wt\cZ^n|_{[A\e,A(1-\e)]},\wt Z|_{[A\e,A(1-\e)]})$ from Lemma~\ref{lem:sph-pack-pf-2}, as we send $\e\to\infty$ sufficiently slower than $n\to\infty$, the spherical diameter of the union of the boundary disks as well as the maximal spherical diameter of disks  in $\wt\cP_{n,I'}$   tends to 0 in probability. This verifies the conditions of Theorem~\ref{thm:He-Schramm-sphere} for the circle packing $\wt\cP_{n,I'}$ and the restriction of the circle packing $\wt\cP^{\sph,n}$ restricted to the vertices in  $T_{n,I'}$. The claim then follows from Theorem~\ref{thm:He-Schramm-sphere}. 
\end{proof}

% \begin{lemma}\label{lem:sph-pack-pf-4}
%     In the setting of Lemma~\ref{lem:sph-pack-pf-3}, we have the following:
%     \begin{enumerate}[(i)]
%         \item The maximal spherical diameter of disks in $\wt\cP^{\sph,n}$ tends to 0 in probability as $n\to\infty$;
%         \item Let $\wt\frtr^{\sph,n}:[0,A]\to \wh\bbC$ be the path on faces of $\wt T^{\sph,n}$ under the embedding $\wt\cP^{\sph,n}$. Then
%         \eqb\label{eq:lem:sph-pack-pf-4}\sup_{t\in [0,A]} d_{\sph}(\wt\eta^\sph(t),\wt\frtr^{\sph,n}(t))\to 0 \ \ \ \text{in probability as } n\to\infty. \eqe
%         %\item  Let $\wt\nu^{\disk,n}$ be the measure on $\partial\bbD$ that assigns mass $n^{-1/2}$ to each tangent point of disks in $\wt\cP^{\disk,n}$ and $\partial\bbD$. The Prohokov distance between $\wt\nu^{\disk,n}$ and $\nu_{h^\disk}$ tends to 0 in probability as $n\to\infty$.
%     \end{enumerate}
% \end{lemma}

% \begin{proof}
%     Let $\wt\cP_{n,I'}$ be the circle packing as in the proof of Lemma~\ref{lem:sph-pack-pf-3}. Using identical arguments as in the proof of Theorem~\ref{thm:circle-QS},  as we send $\e\to 0$ sufficiently slower than $n\to\infty$, for any homeomorphism $f^n$ of $\wh\bbC$ that takes disks in  $\wt\cP_{n,I'}$ to the corresponding disks in $\wt\cP^{\sph,n}$, $f^n$ converges uniformly to the identity on every $B(0;R)$. The claims then follows from Lemma~\ref{lem:sph-pack-pf-3}.
% \end{proof}

Let $v\mapsto \frt_v^n$ be the inverse of the map $t\mapsto \frv^n_t$, and for the two vertices on the root edge of $\wt T^{\sph,n}$, we set $\frt_v^n=1$.

\begin{lemma}\label{lem:sph-pack-pf-5}
    Let $\wt t^n_1\leq\wt t^n_2\leq...\leq \wt t^n_{2n}$ be the re-ordering of the set of times in $\wt B^n$ from the smallest to largest. Then 
    \eqb\label{eq:lem:sph-pf-5-1}\sup_{1\leq j\leq  2n} |\wt t^n_j - \frac{j}{2n}|\to 0 \text{ in probability as } n\to\infty.\eqe
    Likewise, let $\wt s^n_1\leq\wt s^n_2\leq...\leq \wt s^n_{n+2}$ be the re-ordering of the set of times $\frt_v^n$. Then
     \eqb\label{eq:lem:sph-pf-5-2}\sup_{1\leq j\leq n+2} |\wt s^n_j - \frac{j}{ n+2}|\to 0 \text{ in probability as } n\to\infty.\eqe
\end{lemma}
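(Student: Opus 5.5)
The plan follows the proof of Lemma~\ref{lem:disk-pf-3}, and is in fact simpler because the number of jumps is deterministic. For~\eqref{eq:lem:sph-pf-5-1}, recall from Lemma~\ref{lem:417} that $\wt B^n$ consists of exactly $2n$ i.i.d.\ uniform points on $[0,1]$, independent of $\cZ^{\sph,n}$. Hence $\wt t^n_j$ is the $j$-th order statistic of $2n$ uniforms. The quantity $\sup_j|\wt t^n_j - j/(2n)|$ is bounded by $\sup_{t\in[0,1]}|F_{2n}(t)-t| + (2n)^{-1}$, where $F_{2n}$ is the empirical distribution function. By the Glivenko--Cantelli theorem (or the DKW inequality, which gives a quantitative bound of order $n^{-1/2}\log n$ with high probability), this tends to $0$ in probability.

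For~\eqref{eq:lem:sph-pf-5-2}, we count downward jumps. Each of the $2n$ steps of $\cZ^{\sph,n}$ changes $\cL+\cR$ by $\pm1$. Hence, among the first $j$ steps, the number of downward steps is
\[
D_j=\tfrac12\bigl(j-\cL^{\sph,n}_j-\cR^{\sph,n}_j\bigr).
\]
By the bijection $t\mapsto\frv^n_t$ of Section~\ref{subsubsec:Mullin-finite}, the times $\frt^n_v$ are exactly the $n$ downward jump times in $\wt B^{n,-}$, together with the two root-edge vertices, which are assigned time $1$. Therefore $\#\{v:\frt^n_v\le \wt t^n_j\}=D_j$ for $j<2n$, up to an additive error of at most $2$.

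By part~(\ref{cond:lem:sph-pack-pf-0-(iv)}) of Lemma~\ref{lem:sph-pack-pf-1}, or directly from the convergence of the rescaled walk to the Brownian excursion, $\sup_j(|\cL^{\sph,n}_j|+|\cR^{\sph,n}_j|)\le n^{2/3}$ with probability tending to $1$. On this event, $|D_j-j/2|\le n^{2/3}$ for every $j$. This yields the sandwich
\[
\wt s^n_{\lfloor j/2-n^{3/4}\rfloor}\le \wt t^n_j\le \wt s^n_{\lfloor j/2+n^{3/4}\rfloor},
\]
with the convention $\wt s^n_{k}=0$ for $k\le 0$ and $\wt s^n_k=1$ for $k>n+2$.

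Combining the sandwich with~\eqref{eq:lem:sph-pf-5-1}: for each index $i$, pick $j=2i$. Then $\wt s^n_i$ lies between $\wt t^n_{2i-2n^{3/4}-2}$ and $\wt t^n_{2i+2n^{3/4}+2}$, truncated at $0$ and $1$. Both of these are within $o(1)+O(n^{-1/4})$ of $i/n$, and $|i/n-i/(n+2)|=O(1/n)$. This gives~\eqref{eq:lem:sph-pf-5-2}.

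The only mildly delicate step is the bookkeeping in the sandwich inequality: the two root vertices placed at time $1$, and the endpoint conventions near $j=0$ and $j=2n$. These contribute only $O(1)$ index shifts, so they are harmless given the $n^{3/4}$ slack.
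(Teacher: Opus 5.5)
Your proposal is correct and follows the paper's own argument. The first claim is the order-statistics (Glivenko--Cantelli) fact for $2n$ i.i.d.\ uniforms. The second is obtained exactly as in the proof of Lemma~\ref{lem:disk-pf-3}: use the downward-step count $\tfrac12(j-\cL_j-\cR_j)$, now with starting point $(0,0)$ instead of $(\ceil{\sqrt n}-1,0)$, together with the $O(n^{1/2})$ bound on the walk and the resulting sandwich between $\wt s^n$ and $\wt t^n$.
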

\begin{proof}
   The first part follows directly since the points in $\wt B^n$ are i.i.d.\ uniform on $[0,1]$. The proof for the second part is identical to the proof of Lemma~\ref{lem:disk-pf-3} using the description of the law of $\wt\cZ^n$.
\end{proof}

\iffalse
\begin{lemma}\label{lem:sph-pack-pf-6}
Let $A$ and $\wt Z$ be as in Lemma~\ref{lem:sph-pack-pf-2},  $(\wt h^\sph,\wt \eta^\sph)$ be the $\SLE_8$-decorated quantum sphere encoded by $\wt Z$, and $\wt a^n=\lfloor\frac{An}{2}\rfloor$. Let $(\cZ^{\sph,A,n}_t)_{t=0,...,2\wt a^n}$ be a walk chosen uniformly from the set $\mathcal{A}_{\wt a^n}^\sph$. Let $\Phi_\rmT^\sph(\cZ^{\sph,A,n}) = (T^{\sph,A,n},\frtr^{\sph,A,n})$, where we extend $\frtr^{\sph,A,n}$ such that it is defined on $[0,2\wt a^n]$. Then there exists a coupling of $((\cZ^{\sph,A,n})_{n\in\bbN},Z)$ such that there is a circle packing $\cP^{\sph,A,n}$ for $T^{\sph,A,n}$ in $\wh\bbC$ such that the following holds:
\begin{enumerate}
    \item The maximal spherical diameter of disks in $\cP^{\sph,A,n}$ tends to 0 in probability as $n\to\infty$;
    \item $\sup_{t\in [0,A]}|{n}^{-1/2}\cZ_{\lfloor 2\wt a^nt/A\rfloor}^{\sph,A,n}-\wt Z_t|$ converges to 0 in probability as $n\to\infty$;
    \item If we view $\frtr^{\sph,A,n}:[0,A]\to\wh\bbC$ as a path on the embedded faces of $T^{\sph,A,n}$, then
    $$\sup_{t\in[0,A]}d_{\sph}(\wt\eta^\sph(t),\frtr^{\sph,A,n}(2\wt a^nt/A))\to 0 \ \ \text{in probability as }n\to\infty; $$
    \item Let $\mu^{\sph,A,n}$ be the measure that assigns mass $\frac{1}{n}$ to the center of each disk, with duplication counted. Then the Prokhorov distance between  $2\mu^{\sph,A,n}$ and $\mu_{\wt h^\sph}$ tends to 0 in probability as $n\to\infty$. 
\end{enumerate}
\end{lemma}
\fi

\begin{proof}[Proof of Theorem~\ref{thm:pack-sph}]
    The first three claims follows from part (\ref{cond:lem:sph-pack-pf-0-(iv)}) of Lemma~\ref{lem:sph-pack-pf-1}, Lemma~\ref{lem:sph-pack-pf-4}, Lemma~\ref{lem:sph-pack-pf-5} and the continuity of $ \eta^\sph$ and $ Z$ by reparametrizing the walk $\wt\cZ^n$ and the  trace $\wt\frtr^{\sph,n}$ in Lemma~\ref{lem:sph-pack-pf-4} according to the number of jumps made by $\wt\cZ^n$. For the last claim, it follows from Lemma~\ref{lem:sph-pack-pf-5} that if we consider the measure $\wt\mu^{\sph,n}_\MOT$ where we assign mass $\frac{1}{n}$ to each point $\eta^\sph(\frt_v)$ in Lemma~\ref{lem:sph-pack-pf-5}, then the Prokhorov distance between $\wt\mu^{\sph,n}_\MOT$ and $\mu^{\sph,n}$ tends to 0 in probability as $n\to\infty$, while Lemma~\ref{lem:sph-pack-pf-5} along with the continuity of $\eta^\sph$ imply that the Prokhorov distance between $\wt\mu^{\sph,n}_\MOT$ and $\mu_{ h^\sph}$ tends to 0 in probability as $n\to\infty$. This verifies the last claim.
\end{proof}

% \begin{proof}[Proof of Theorem~\ref{thm:pack-sph}]
%     We start with the setting of Lemma~\ref{lem:sph-pack-pf-6}. Let $(Z_t)_{t\in[0,1]} = (A^{-1/2}\wt Z_{At})_{[0,1]}$, $(\wh\bbC,h^\sph,\eta^\sph)$ be the $\SLE_8$-decorated quantum sphere embedded as described in the statement. By our choice of embedding, for the quantum sphere generated by $Z$, we have $h^\sph = \wt h^\sph -\frac{1}{\sqrt 2}\log A$, $\mu_{h^\sph} = A^{-1}\mu_{\wt h^\sph}$ and $\eta^\sph(t) = \wt\eta^\sph(At)$, so $(h^\sph,\eta^\sph)$ is an $\SLE_8$-decorated  unit area quantum sphere as desired. By further removing the scaling of the LQG length measure as in Section~\ref{subsubsec:MatedCRT}, Lemma~\ref{lem:sph-pack-pf-6} implies that Theorem~\ref{thm:pack-sph} holds if one replaces every $n$ in the statement with $\lfloor \frac{An}{2}\rfloor$ where $A$ is a uniform random variable in $[1/2,1]$ independent from everything else. Since $1/2\leq A\leq 1$, we see that the convergence for $\lfloor \frac{An}{2}\rfloor$ in place of $n$ readily implies the convergence for $n$.
% \end{proof}

\subsection{The whole-plane topology case}\label{subsec:pack-cone-pf}

The goal of this subsection is to prove Theorem~\ref{thm:pack-cone}. We first prove the parabolicity in Lemma~\ref{lem:pf-pack-cone-1}. In Lemma~\ref{lem:pf-pack-cone-2}, we use Lemma~\ref{lem:cZ^n-pack-Q-n}, Theorem~\ref{thm:He-Schramm-sphere} along with the scale invariance of the quantum cone to prove  Theorem~\ref{thm:pack-cone} under the setting where $\cZ$ is a rate $n$ continuous-time nearest neighbor walk in $\bbZ^2$. Finally we prove Theorem~\ref{thm:pack-cone} by a reparameterization. 

\begin{lemma}\label{lem:pf-pack-cone-1}
    Let  $T$ be as in Theorem~\ref{thm:pack-cone}. Then $\Phi_{\rm simple}(T)$ is a.s.\ not empty. The simple random walk on $\Phi_{\rm simple}(T)$ is a.s.\ recurrent, and $T$ is circle packing parabolic. 
\end{lemma}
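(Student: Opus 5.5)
The plan is to transfer the statement from the infinite-volume walk $\cZ$ of Theorem~\ref{thm:KMT-0-cone} (under $\bbP_\rmE$), where everything is already known, to the standard walk of Theorem~\ref{thm:pack-cone}. The transfer uses absolute continuity on finite windows together with the fact that the relevant events are tail-type, i.e.\ unchanged by modifying the walk on a bounded time interval.

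\textbf{Non-emptiness.} First note that the triangulation in Theorem~\ref{thm:pack-cone} is $\Phi_\rmT(\check\cZ)$ for a discrete bi-infinite simple random walk; time-rescaling does not change the map. By the Mullin bijection this is the uniform infinite spanning-tree decorated map. $\Phi_{\rm simple}(T)$ is empty only if every time is covered by some $\pi/2$-cone excursion interval~\eqref{eq:double-edge-Mullin}. By Lemma~\ref{lem:hat} and Lemma~\ref{lem:basic-hat} it suffices to find a single time in $\mr\cJ$. For instance, take any $t$ with $\cL_t=\inf_{s\le t}\cL_s$, where the past infimum is attained. This $t$ cannot lie strictly inside an excursion starting before $t$. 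An excursion starting at or after such a time is also excluded once we pick $t$ to be a strict running minimum followed by a downstep. Such times exist a.s.\ by~\eqref{eq:walk-liminf}, and they lie in $\mr\cJ$ by Lemma~\ref{lem:hat-minus}. Hence $\Phi_{\rm simple}(T)$ is a.s.\ nonempty, and by Lemma~\ref{lem:basic-hat} it equals $\Phi_\rmT(\Phi_{\rm simple}(\check\cZ))$, an infinite plane triangulation.

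\textbf{Parabolicity.} Theorem~\ref{thm:CellSystemPack} applies to $\mr\cH^\cell$ by Proposition~\ref{prop:property-CH}, so the simple triangulation $\mr T=\Phi_{\rm simple}(\Phi_\rmT(\cZ))$ under $\bbP_\rmE$ is a.s.\ circle packing parabolic. The task is to pass this to the standard walk. Circle packing parabolicity of a bounded-geometry-free triangulation is equivalent to recurrence-type (VEL-parabolic) properties. I would instead argue directly that parabolicity of $\mr T$ is unchanged if the walk is altered on a bounded interval $[-2^k,2^k]$ and all else is kept. Such an alteration changes only finitely many vertices and faces of $\Phi_{\rm simple}(T)$, by Lemma~\ref{lem:A2} once $\mr\cJ$ meets both sides of the window. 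Parabolicity is invariant under finite modifications of an infinite triangulation, since vertex extremal length to $\infty$ changes by a bounded amount.

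Then use Theorem~\ref{thm:KMT-0-cone}: $\cZ$ under $\bbP_\rmE$ and the Poissonian walk under $\bbP_\rmQ$ are related, and the law of the Poissonian walk's increments after the discrete time change is the standard walk. Part~(\ref{cond:thm:KMT-0-cone-(iv)}) gives $\bbP_\rmQ[A^c]$ small $\Rightarrow \bbP_\rmE[A^c]$ small only in one direction, so I would instead exploit that the laws of $\cZ$ under $\bbP_\rmE$ and under $\bbP_\rmQ$, viewed as walks modulo bounded windows, coincide in the tail. Since I cannot cleanly extract this, the fallback is to prove recurrence of the simple random walk on $\Phi_{\rm simple}(T)$ directly under $\bbP_\rmQ$, via Gurel-Gurevich--Nachmias: $\mr T$ is a Benjamini--Schramm limit, being a local function of the uniform infinite spanning-tree map rooted suitably, with exponential-tailed root degree. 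The degree tail follows from $\deg\le 2\#\mr\cT^{\rmC,1}$ and Lemma~\ref{lem:prop:time-set-A-6}, or directly from Lemma~\ref{lem:331}. Recurrence then gives parabolicity by He--Schramm~\cite{HeSchramm-hy-pa}, and $T$ is parabolic by Definition~\ref{def:pack-non-simple}.

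\textbf{Main obstacle.} The main obstacle is the unimodularity/stationarity needed for Gurel-Gurevich--Nachmias. $\mr T$ with root $\frv_{\mathrm{up}(0,\mr\cJ^-)}$ must be shown to be a stationary (reversible) random rooted graph, or a biasing of one, up to a bounded-degree-type reweighting. I would get this from the translation invariance of the bi-infinite i.i.d.\ walk under $\bbP_\rmQ$ and the locality of $\Phi_{\rm simple}$ given Lemma~\ref{lem:A2}. Shifting the walk root to the next time of $\mr\cJ^-$ gives a measure-preserving re-rooting, and the mass-transport principle follows from the stationarity of the time-indexed walk.
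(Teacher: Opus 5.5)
\textbf{Non-emptiness.} This argument does not work. Under~\eqref{eq:walk-liminf} one has $\inf_{s\le t}\cL_s=-\infty$ for every $t$, so no time attains the past infimum, strictly or otherwise. Invoking Lemma~\ref{lem:hat-minus} is also circular, since it only produces points of $\mr\cJ$ starting from a given $t_0\in\mr\cJ$. In fact no deterministic argument can succeed. A walk can satisfy~\eqref{eq:walk-liminf} while $\bbZ$ is exhausted by nested $\pi/2$-cone excursions $[t_k^-,t_k^+]$ with $t_k^\pm\to\pm\infty$ and $\cZ_{t_k^\pm}\to(-\infty,-\infty)$, and then $\mr\cJ=\emptyset$. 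You need a probabilistic input, for instance the excursion bound~\eqref{eq:pf-gam-hat-J-Q} or Proposition~\ref{lem:gap-hat-J-Q} combined with Borel--Cantelli; this is what the paper uses.

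\textbf{Recurrence.} Your fallback has the right ingredients (\cite{GNrecurrence} followed by \cite{HeSchramm-hy-pa}), but it does not verify the hypotheses of~\cite{GNrecurrence}. There are three problems.
\begin{enumerate}
\item That theorem needs a distributional limit of \emph{finite} planar graphs. Unimodularity, which your last paragraph obtains from stationarity, is not its hypothesis.
\item Your claim that $\mr T$ is a local function of the spanning-tree map is false. Whether a vertex survives $\Phi_{\rm simple}$ depends on arbitrarily long excursions; the paper stresses this non-locality before Lemma~\ref{lem:A2}. Benjamini--Schramm convergence of finite approximations of $\mr T$ would therefore need a separate argument.
\item Your degree-tail justification is inadequate. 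Lemma~\ref{lem:prop:time-set-A-6} is a $\bbP_\rmE$ statement and gives only a tail of order $\exp(-cM^{1/27})$, not an exponential tail. Lemma~\ref{lem:331} bounds $\#\cO(s)$, not degrees.
\end{enumerate}

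\textbf{The missing idea.} Do not apply~\cite{GNrecurrence} to $\Phi_{\rm simple}(T)$ at all.
\begin{itemize}
\item Let $\wh G$ be the simple graph obtained from $T$ by merging each family of parallel edges into one edge, keeping every vertex. This operation is local.
\item Hence $\wh G$ is the Benjamini--Schramm limit of the analogous graphs built from $\Phi_\rmT(\cZ|_{[-n,n]})$, rooted at a uniform vertex of a uniform face. That rooting is comparable to degree-biased rooting, via Lemma~\ref{lem:Euler} and the fact that each face uses two edges at each of its vertices.
\item The exponential tail of the root degree comes from the Mullin-bijection map~\cite{GRRiemann}, since merging edges does not increase degrees. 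So \cite{GNrecurrence} gives recurrence of $\wh G$.
\item $\Phi_{\rm simple}(T)$ is a subgraph of $\wh G$, so Rayleigh monotonicity transfers recurrence to it. Then~\cite{HeSchramm-hy-pa} and Definition~\ref{def:pack-non-simple} give circle packing parabolicity.
\end{itemize}
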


\begin{proof}
  The first claim is a consequence of the Mullin bijection description of $T$ along with Lemma~\ref{lem:walk-excursion} and Proposition~\ref{lem:gap-hat-J-Q} and the Borel-Contelli lemma.   Let $  T^n$ be a disk triangulation defined as follows. Let  $\Phi_\rmT(\cZ|_{[-n,n]}) = (  T^n, \wh{\frtr}^n)$. We uniformly randomly pick a face of $ T^n$, and we uniformly randomly pick a vertex $ {v}^n$ on this face and  declare that $ {v}^n$ is the root of $\wh T^n$. % Let $\check{v}^n$ be a vertex of $\wh T^n$ chosen uniformly. 
  We also uniformly randomly pick a vertex $v$ on the face $\frtr(0)$ of $T$.  By~\cite[Lemma 3.3]{GRRiemann}, the rooted disk triangulations $(  T^n, v^n)$ converges in distribution to $(T,v)$ in Benjamini-Schramm topology, and by~\cite[Lemma 3.4]{GRRiemann}, $\bbP[\deg(v^n)\geq M]\leq 2(3/4)^{M/4}$ for $M\in\bbN$ for every $n$. Now independently for each multiple edge $(e_1,...,e_k)$ connecting to vertices $v_1$ and $v_2$ on $ T^n$, we further uniformly randomly keep one of these edges and remove the remaining $k-1$ of those, which further turns $ T^n$ to a simple graph $\wh G^n$. Let $\wh{G}$ be the simple graph generated by $T$ in the same way. Then the convergence above of $(\wh G^n,\wh v^n)$ to $(\wh G,v)$ continues to hold, and $\bbP[\deg(v)\geq M]\leq 2(3/4)^{M/4}$ on $\wh G$ since $\wh{G}$ is obtained by removing only edges but not vertices from $T$. For a vertex $v_0$ of $T^n$, let $d(v_0)$ be its degree in $\wh G^n$ and $f(v_0)$ be the number of faces in $T^n$ incident to $v_0$ (excluding the outer face). Then Lemma~\ref{lem:Euler} implies that $f(v_0)\leq 2d(v_0)$. On the other hand, each triangular face of $T^n$ incident to $v_0$ uses two edges incident to $v_0$, and therefore $d(v_0)\leq 2f(v_0)$. This further implies that, if we sample a vertex $\check{v}^n$ on $\wh G^n$ with probability proportional to its degree, then the law of  $(\wh G^n,\wh v^n)$ and $(\wh G^n,\check v^n)$ are mutually absolutely continuous with Radon-Nikodym derivative in $[1/4,4]$. Furthermore, the law of $(\wh G^n,\check v^n)$ is tight and  any subsequential limiting rooted graph continues to have exponential tail for the degree of the root vertex, and its law is absolutely continuous with respect to $(\wh G,v)$ with Radon-Nikodym derivative in $[1/4,4]$. 
  Now~\cite[Theorem 1.1]{GNrecurrence} implies that the simple random walk on $\wh G$ is a.s.\ recurrent, and since $\Phi_{\rm simple}(T)$ can be viewed as a subgraph of $\wh G$, the simple  random walk on $\Phi_{\rm simple}(T)$ is a.s.\ recurrent. The last claim follows from~\cite[Theorem 1.1]{HeSchramm-hy-pa}.
\end{proof}

\begin{lemma}\label{lem:pf-pack-cone-2}
    Let $(\wt \cZ^n_t)_{t\in\bbR}$ be a rate $n$ continuous-time nearest-neighbor walk on $\bbZ^2$ such that $\wt \cZ^n_0=0$. Then there is a coupling for $(Z,(\wt\cZ^n)_{n\in\bbN})$ such that Assertions ~\ref{result:thm:pack-cone-1}-\ref{result:thm:pack-cone-3} in Theorem~\ref{thm:pack-cone} holds for $\wt\cZ^n$ in place of $\cZ^n$. Furthermore, for each $R>0$, with probability tending to 1 as $n\to\infty$ then $K\to\infty$, there does not exist any time $t\in \cJ^{n,-}$ such that $|t|\geq K$ and the disk for the vertex $\frv_{t}$ intersects $B(0;R)$.
\end{lemma}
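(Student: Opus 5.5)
We plan to build the coupling from the finite-interval coupling of Lemma~\ref{lem:cZ^n-pack-Q-n} and then pass to the whole plane using Theorem~\ref{thm:He-Schramm-sphere} together with the scale invariance of the $\gamma$-quantum cone (Lemma~\ref{lem:Rb}). First, fix a large $K$ and apply Lemma~\ref{lem:cZ^n-pack-Q-n} with $t_0=2K$ to the time-shifted walk and Brownian motion on $[-K,K]$. This gives a coupling in which $\|Z-n^{-1/2}\wt\cZ^n\|_\infty\to0$ on $[-K,K]$ and circle packing converges to mating of trees for every subinterval $I_{a,b}$. Extending $\wt\cZ^n$ outside $[-K,K]$ by independent rate-$n$ walks gives a bi-infinite walk. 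Letting $K=K_n\to\infty$ slowly, using the uniformity \eqref{eq:conv-dist-check} and a diagonal argument, yields a single coupling in which Assertion~\ref{result:thm:pack-cone-2} holds for every fixed $a$.

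Second, we transfer from the disk embedding of $\eta([-K',K'])$ (for $K'<K$) to the whole-plane embedding with $\eta(0)=0$. Let $\varphi_{K'}:\eta([-K',K'])\to\bbD$ be the conformal map fixing $0$, normalized as in Definition~\ref{def:CPtoMOT}. By Lemma~\ref{lem:SLE-contain-2} and scaling, $\eta([-K',K'])\supseteq B(0;R)$ with probability tending to $1$ as $K'\to\infty$ for each fixed $R$. Hence Proposition~\ref{prop:unif-sph} shows that $z\mapsto\varphi_{K'}(z)/\varphi_{K'}(p)$ tends to the identity in the spherical metric, where $p$ is a reference point such as $\eta(t_1)$ for a fixed time $t_1$ used for normalization. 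Rescale the packing $\cP_{n,I'}$ of Definition~\ref{def:CPtoMOT} by $\varphi_{K'}(p)^{-1}$ and apply an affine map so that the disks for $\frv_{t'_{\rm mid}}$ and for the vertex near time $t_1$ are centred at $0$ and $p$ respectively. The result is a circle embedding $\cP^n$ of the finite triangulation encoded by $\wt\cZ^n|_{I'}$. Its disks have vanishing spherical diameter, and its carrier contains $B(0;R)$ for every $R$ with high probability.

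Third, let $\wt\cP^n$ be the restriction to the same vertices of the whole-plane circle packing of $\Phi_{\rm simple}(\wt T^n)$, normalized in the same way. By Lemma~\ref{lem:pf-pack-cone-1} this packing exists and is parabolic. It is a circle embedding because, by Lemma~\ref{lem:A2} and Proposition~\ref{lem:gap-hat-J^-}, $\Phi_{\rm simple}$ of the finite map is a submap once $I'$ has endpoints in $\mr\cJ$. Theorem~\ref{thm:He-Schramm-sphere} then gives that the correspondence $f^n$ converges to the identity uniformly in the spherical metric. It also shows that the carrier of $\wt\cP^n$ eventually contains every $B(0;R)$. This yields Assertion~\ref{result:thm:pack-cone-1} and, combined with Condition~\ref{cond:def:CPtoMOT-2}, Assertion~\ref{result:thm:pack-cone-3} on $[-a,a]$ for $a<K'$.

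The final claim follows from the same picture. Disks for times with $|t|\ge K$ lie outside the carrier of $\wt\cP^n$, whose image contains $\varphi$-images of $\eta([-K',K'])\supseteq B(0;2R)$. Since $f^n$ is close to the identity, these disks avoid $B(0;R)$ with probability tending to $1$ as $n\to\infty$ and then $K\to\infty$. The main obstacle is making the double limit rigorous. We need uniformity in $K$ of the error bounds in Lemma~\ref{lem:cZ^n-pack-Q-n}, and we must check that the normalization points are interior vertices that are not collapsed by $\Phi_{\rm simple}$. For the first, our plan is to use \eqref{eq:conv-dist-check} on a compact parameter set and choose $K_n\uparrow\infty$ slowly enough; for the second, we will use Lemma~\ref{lem:hat-minus} and Proposition~\ref{lem:gap-hat-J^-}.
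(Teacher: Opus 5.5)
Your argument is correct in outline, but it runs in the opposite direction from the paper's. The paper zooms in. It keeps the fixed window $[-2,2]$, so only one conformal map $\varphi:\eta([-1,1])\to\bbD$ appears, and runs the walk at rate $jn$. Near time $0$ it uses $\varphi(z)=\varphi'(0)z+o(|z|)$ together with tightness of $|\eta(1/j)|^{-1}\diam(\eta([-k/j,k/j]))$. This shows that the disk packing, blown up by $R^{n,j}\approx|\varphi(\eta(1/j))|^{-1}$, approximates $\eta(\cdot)/\eta(1/j)$. The paper then uses the exact scaling $\wh Z^{j,n}_t=j^{1/2}Z_{t/j}$, $\wh\cZ^{j,n}_t=\wt\cZ^{jn}_{t/j}$ from Lemma~\ref{lem:Rb}, plus a resampling, to return to a rate-$n$ walk.

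You instead zoom out. The rate $n$ stays fixed, the window $[-K_n,K_n]$ grows, and the approximate linearity of a single $\varphi$ is replaced by Proposition~\ref{prop:unif-sph} applied to the exhausting domains $\eta(I'_n)\uparrow\bbC$. This avoids the Taylor/tightness step and the time rescaling, and keeps the target cone $(h,\eta)$ fixed. The price is that you need Lemma~\ref{lem:cZ^n-pack-Q-n} for every $t_0$, which it provides, and a diagonal coupling: sample $Z$ once and $\wt\cZ^n$ from the conditional law given $Z|_{[-K_n,K_n]}$. The paper's route needs the finite-window statement only for one $t_0$ and one conformal map, and lands on the quantum cone through exact scale invariance.

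Some points to tighten.

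\emph{The diagonal choice.} ``Uniformity in $K$'' is not what you need, and \eqref{eq:conv-dist-check} does not give it: it is uniform in $(a,b,a',b')$, not in $t_0$. What you need is to choose $K_n\to\infty$ slowly enough that the disk-embedding error (maximal disk diameter and trace error in $\bbD$) times the blow-up factor $|p/\varphi_{K'_n}(p)|\to\infty$ still tends to $0$ in probability. This is the exact analogue of the paper's condition $R_{n,j}\e_{n,j}\to0$. It is achievable because, for fixed $K$, this factor is tight in $n$, by Proposition~\ref{prop:boundary-perturb}.

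\emph{Normalization.} The map $z\mapsto\varphi_{K'}(z)/\varphi_{K'}(p)$ tends to $z\mapsto z/p$, not to the identity; multiply by $p$.

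\emph{The final claim.} Theorem~\ref{thm:He-Schramm-sphere} forces the comparison window to grow, so for fixed $K$ the disks for times $K\le|t|\le K'_n$ lie inside that window, not outside its carrier. For these disks you must use the trace convergence over the whole window in the spherical metric. That places their centres near $\eta(t)$, which lies outside $B(0;2R)$ once $B(0;2R)\subseteq\eta([-K,K])$. This is how the paper uses \eqref{eq:CONE-4-new}.
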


\begin{proof}
    Let $\Phi_\rmT(\wt\cZ^n) = (\wt T^n,\wt\frtr^n)$. %Then $(\wt T^n,\wt\frtr^n)$ and $(T,\frtr)$ agrees in law up to a reparameterization of $\wt\frtr^n$, and we can couple them such that $\wt T^n$ and $T$ are isomorphic. 
    Since the convergence in Lemma~\ref{lem:cZ^n-pack-Q-n} is invariant if we shift and translate the walks, we see that the conclusions in  Lemma~\ref{lem:cZ^n-pack-Q-n} hold for  $(Z,(\wt\cZ^n)_{n\in\bbN})$ for the interval $[-2,2]$ in place of $[0,a]$. Let $\mr\cJ^{n,-}$ be the time set $\mr\cJ^-$ defined for the walk $\wt\cZ^n$ as before Lemma~\ref{lem:A2}.   By Proposition~\ref{lem:gap-hat-J^-} and scaling, for some $\alpha\in (0,1/2)$, with probability tending to 1 as $n\to\infty$, every interval $J\subseteq [-2,2]$ of length at least $n^{-\alpha}$ contains at least one element of $\mr\cJ^{n,-}$.  Let $t_n^- = \inf\{s\geq -1:s\in \mr\cJ^{n}\}$, $t_n^+=\sup\{s\leq 1:s\in \mr\cJ^{n}\}$ and $I_n'=[t_n^-,t_n^+]$. 

   Let $j\in\bbN$. By Lemma~\ref{lem:cZ^n-pack-Q-n}, circle packing converges to mating of trees for $(\wt\cZ^{jn}|_{[-2,2]},Z|_{[-2,2]})$, and let $(\wt T^{n,j},\wt\frtr^{n,j}) = \Phi_\rmT(\wt\cZ^{jn}|_{I_{jn}'})$. Let $\frv^{n,j,0}$ (resp.\ $\frv^{n,j,1}$) be the vertex for the time $\inf(\mr\cJ^{jn,-}\cap\bbR_+)$ (resp.\ $\inf(\mr\cJ^{jn,-}\cap [1/j,\infty))$) in the map $\wt T^{n,j}$, $\cP^{n,j}$ be a circle packing in $\bbD$ for $\wt T^{n,j}$ centered at $\frv^{n,j,0}$ as in Definition~\ref{def:CPtoMOT}.    Let $\varphi_{n,j}:\eta(I_{jn}')\to\bbD$ (resp.\  $\varphi:\eta([-1,1])\to\bbD$) be the conformal map fixing 0 such that $\varphi_{n,j}'(0)>0$ (resp.\ $\varphi'(0)>0$).  Then it follows from Proposition~\ref{lem:gap-hat-J^-}, Definition~\ref{def:CPtoMOT} along with Proposition~\ref{prop:boundary-perturb}, up to a rotation of $\cP^{n,j}$, there exists $\e_{n,j}>0$ such that with probability tending to 1 as $n\to\infty$, the following holds:
   \begin{enumerate}[(i)]
      \item for each fixed $j$, $\e_{n,j}\to 0$ as $n\to\infty$;
       \item  the maximal diameter of disks in $\cP^{n,j}$ is less than $\e_{n,j}$;
       \item  for all choices of the embedded space-filling curve $\wt\frtr^{n,j}$ and $t\in I_{jn}'$,
       \begin{equation}\label{eq:CONE-1-new}
           |\varphi_{n,j}(\eta(t))-\wt\frtr^{n,j}(t)|\leq \e_{n,j} \text{ and } |\varphi_{n,j}(\eta(t))-\varphi(\eta(t))|<\e_{n,j}, % \text{ for $|t|<1/2$}
       \end{equation}
       \item $\sup_{|t|\leq 2}|(jn)^{-1/2}\wt\cZ^{jn}_t - Z_t|\to 0$ \text{in probability} as $n\to\infty$.
   \end{enumerate}
%   Note that the conditions above implies that the center of the disk for the vertex $\frv^{n,j,1}$ tends to $\varphi(\eta(1/j))$ in probability as $n\to\infty$ for fixed $j$.
    Now let $R^{n,j}>0$ and $\theta^{n,j}\in\bbR$, such that in the circle packing  $\wt\cP^{n,j}:= R^{n,j}e^{i\theta^{n,j}}\cP^{n,j}$,    the disks for $\frv^{n,j,0}$ and $\frv^{n,j,1}$ are centered at 0 and 1, respectively. Then following Proposition~\ref{lem:gap-hat-J^-} and the claims above, % for some $\e_{n,j}'$ such that $\e_{n,j}'\to 0$ in probability as $n\to\infty$ for each fixed $j$, 
    \eqb\label{pf:lem:pf-pack-cone-2-N1} |(R^{n,j})^{-1}e^{-i\theta^{n,j}} - \varphi(\eta(1/j))|\to 0 \text{ in probability as }n\to\infty \text{ for fixed }j.  \eqe  Consider a circle packing $\wt\cP^{jn}$ for $\wt T^{jn}$ in $\bbC$ such that the disks for $\frv^{n,j,0}$ and $\frv^{n,j,1}$ are centered at 0 and 1, respectively. Note that by our definition of $I_n'$,  $\Phi_{\rm simple}(\wt T^{n,j})$ can be viewed as a submap of $\Phi_{\rm simple}(\wt T^{jn})$, and let $\check{\cP}^{jn}$ be the restriction of $\wt\cP^{jn}$ to the disks for vertices in $\Phi_{\rm simple}(\wt T^{n,j})$, which can be understood as a circle embedding for  $\Phi_{\rm simple}(\wt T^{n,j})$. Consider a homeomorphism $f^{n,j}$ between carriers of $\wt\cP^{n,j}$ and $\check{\cP}^{jn}$ that corresponds disks  and fixes 0 and 1. Choose a deterministic sequence $j=j(n)\to\infty$ sufficiently slow, such that  $j^{1/2}\sup_{|t|\leq 2}|(jn)^{-1/2}\wt\cZ^{jn}_t - Z_t|\to 0$ \text{in probability} as $n\to\infty$, and\eqb\label{eq:CONE-2-new}R_{n,j}\e_{n,j} \to 0, \ \frac{|(R^{n,j})^{-1}e^{-i\theta^{n,j}} - \varphi(\eta(1/j))|}{|\varphi(\eta(1/j))|}\to 0 \text{ for $j=j(n)$ in probability as }n\to\infty. \eqe
    By the continuity of the curve $\eta$, as $j\to\infty$, $\varphi(\eta(1/j))\to 0$ in probability, which along with~\eqref{pf:lem:pf-pack-cone-2-N1}  further imply that for every $R>0$, with probability tending to 1 as $n\to\infty$, $B(0;R)\subseteq \mathrm{carrier}(\wt\cP^{n,j})$. Therefore % as we send $j\to\infty$ sufficiently slower than $n\to\infty$, 
    the conditions in Theorem~\ref{thm:He-Schramm-sphere} holds for the circle embeddings $\wt\cP^{n,j}$ and  $\check\cP^{jn}$ as well as the map $f^{n,j}$. By Theorem~\ref{thm:He-Schramm-sphere}, for each $R>0$, with probability tending to 1 as $n\to\infty$,  $B(0;R)$ is within the carrier of $\check\cP^{jn}$, $\{f^{n,j}\}$ converges uniformly to the identity in probability under the spherical metric, i.e., we may assume for each $z$ in the carrier of $\wt\cP^{n,j}$, $d_{\sph}(f^{n,j}(z),z)\leq \wt\e_{n,j}$ where $\wt \e_{n,j}\to0 $ in probability as $n\to\infty$ for $j=j(n)$. %sufficiently slower than $n\to\infty$.

   Now we fix $k\in\bbN$. Using the scale invariance of $\eta$ in law as in Lemma~\ref{lem:Rb}, it is not hard to argue that $\{|\eta(1/j)|^{-1}\diam(\eta([-k/j,k/j])):j\in\bbN\}$ is tight. Then using $\varphi(z) =\varphi'(0)z+o(|z|)$ as $z\to 0$, we have
   \eqb\label{eq:CONE-3-new}\sup_{|t|\leq \frac{k}{j}}\big|\frac{\varphi(\eta(t))}{\varphi(\eta(1/j))} - \frac{ \eta(t)}{ \eta(1/j)} \big|\to 0 \ \ \text{ in probability as }j\to \infty.\eqe
% By sending $j\to\infty$ sufficiently slower than $n\to\infty$, it follows that in the circle packing $\wt\cP^{jn}$, the maximal diameter of disks for times in $[-k/j,k/j]$ tends to 0 and $\sup_{t\in [-k/j,k/j]} | \wt\frtr^{jn}(t)-e^{i\theta^{n,j}} R^{n,j}\varphi(\eta(t))|$ tends to 0 in probability. Combined with~\eqref{pf:lem:pf-pack-cone-2-N1} and $\varphi(\eta(t)) = (\varphi'(0)+\delta_j)\eta(t)$ for some $\delta_j\to0$ and $|t|\leq k/j$, we can pick some deterministic sequence $j(n)$ which tend to $\infty$ as $n\to\infty$, such that 
Further gathering~\eqref{eq:CONE-1-new}, ~\eqref{eq:CONE-2-new},~\eqref{eq:CONE-3-new} and the convergence of the map $f^{n,j}$, the maximal diameter of disks for times in $[-k/j,k/j]$ tends to 0 and
   \begin{equation}\label{pf:lem:pf-pack-cone-2-N2}
       \sup_{t\in [-\frac{k}{j(n)},\frac{k}{j(n)}]}|\wt\frtr^{n\cdot j(n)}(t) -  \frac{\eta(t)}{\eta(1/j(n))}| \to 0 \ \ \ \text{in probability as } n\to\infty.
   \end{equation}
  By the same argument, for fixed $R>0$ and each  disk $\check D_{{v}}^{jn}$ in $\check\cP^{jn}$  intersects $B(0;R)$, by Theorem~\ref{thm:He-Schramm-sphere}, its diameter tends to 0 in probability,  and  its corresponding disk $D_{{v}}^{jn}$ in $\wt\cP^{n,j}$ is contained in $B(0;2R)$ with probability tending to 1 as $n\to\infty$. If we write  $\check{o}_{jn}(v)$ for the centers of  $\check D_{{v}}^{jn}$ and $t_v^{jn}$ for the corresponding time in $\cJ^{jn,-}$, then using \eqb\label{eq:428-pf-a}\sup_{|z|\leq R|\eta(1/j)|}\big|\frac{\varphi(z)}{\varphi(\eta(1/j))} - \frac{ z}{ \eta(1/j)} \big|\to 0  \text{ in probability as }j\to \infty,\eqe we also have \eqb\label{eq:CONE-4-new}|\check{o}_{jn}(v) - \frac{\eta(t_v^{jn})}{\eta(1/j)}|\to 0\eqe in probability as $n\to\infty$ uniformly over all choices of $v$ such that $\check D_{{v}}^{jn}$  intersects $B(0;R)$. To see this, let $o_{jn}^0(v)$ be the center of the disk for $v$ in $\cP^{n,j}$. Then
 \begin{equation*}
     \begin{split}
         |\check{o}_{jn}(v) - \frac{\eta(t_v^{jn})}{\eta(1/j)}| &\leq |\check{o}_{jn}(v) - R^{n,j}e^{i\theta^{n,j}} o_{jn}^0(v)|+|(R^{n,j}e^{i\theta^{n,j}} - \varphi(\eta(1/j))^{-1}) o_{jn}^0(v)|\\&+ |\varphi(\eta(1/j))^{-1}({o}_{jn}^0(v)-\varphi(\eta(t_n^{jn})))|+ |\frac{\varphi(\eta(t_n^{jn}))}{\varphi(\eta(1/j))} -\frac{ \eta(t_v^{jn})}{\eta(1/j)}|,
     \end{split}
 \end{equation*}
 and the four terms on the right hand side all  converge to 0 in probability following~\eqref{eq:CONE-1-new},~\eqref{eq:CONE-2-new} and ~\eqref{eq:428-pf-a} and the closeness between $\wt\cP^{n,j}$ and $\check{\cP}^{jn}$.

    Let   $(h^{1/j},\eta^{1/j})$ be defined by~\eqref{eq:hb-etab} for $b=1/j$, which is encoded by $\wh Z^{j,n}:=(j^{1/2}Z_{t/j})_{t\in\bbR}$. Let $\wh \cZ_t^{j,n} = \wt\cZ_{t/j}^{jn}$.  Then $\wh Z^{j,n}\overset{d}{=}Z$ and $\wh \cZ_t^{j,n}$ is a rate $n$ continuous-time nearest-neighbor walk on $\bbZ^2$. Furthermore, 
    $$\sup_{t\in [-k,k]}|\wh Z^{j,n}_t- n^{-1/2}\wh \cZ_t^{j,n}| = j^{1/2}\sup_{t\in [-k/j,k/j]}|Z_t -(nj)^{-1/2} \wt\cZ_t^{jn}|,$$
    which tends to 0 in probability as $n\to\infty$ by our choice of $j(n)\to\infty$. Observe that $\Phi_\rmT(\wh \cZ_t^{j,n}) = (\wt T^{jn},\wh\frtr^{j,n})$ where $\wh\frtr^{j,n}(t) = \wt\frtr^{jn}(t/j)$. Therefore~\eqref{pf:lem:pf-pack-cone-2-N2} implies that under the circle packing $\wt\cP^{jn}$ for $\wt T^{jn}$,
    \begin{equation}\label{pf:lem:pf-pack-cone-2-N3}
       \sup_{t\in [-k,k]}|\wh\frtr^{j,n}(t) -  \eta^{1/j}(1)^{-1}\cdot\eta^{1/j}(t)| \to 0 \ \ \ \text{in probability as } n\to\infty,
   \end{equation}
and the maximal diameter for disks for the vertices for the times in $[-k,k]$ tends to 0 in probability as $n\to\infty$. We further scale the circle packing $\wt\cP^{jn}$ by $\eta^{1/j}(1)$, and this finishes the proof of Assertions ~\ref{result:thm:pack-cone-2}-\ref{result:thm:pack-cone-3} since $k$ is arbitrary and for every $j$, $\wh Z^{j,n}\overset{d}{=} Z$ and $ \wh \cZ^{j,n}\overset{d}{=}\wt\cZ^n$. To be more precise, we may first sample a common two-sided Brownian motion $Z$ and resample $\wt\cZ^n$ according to the conditional law of $\wh \cZ^{j(n),n}$ given $\wh Z^{j(n),n} = Z$. Assertion ~\ref{result:thm:pack-cone-1} and the final claim follow from~\eqref{eq:CONE-4-new}.  %Moreover, for each given $R,\vartheta>0$, we may pick some deterministic $k_R<\infty$ such that $B(0;2R)\subseteq \eta([-k_R,k_R])$ with probability $1-\vartheta/3$. Assertion~\ref{result:thm:pack-cone-3} implies that for sufficiently large $n$, with probability at least $1-\vartheta/3$, 
  \end{proof}

We will also need the following   the analog of Lemma~\ref{lem:disk-pf-3} and Lemma~\ref{lem:sph-pack-pf-5}, which is straightforward since the times at which $\wt\cZ^n$ jumps is a Poisson point process on $\bbR$.
\begin{lemma}\label{lem:pf-pack-cone-3}
    Let $...<\wt t^n_{-2}<\wt t_{-1}^n<0$ and $0<\wt t_1^n<\wt t_2^n<...$ be the times at which $\wt\cZ^n$ jumps, and $...<\wt s^n_{-2}<\wt s_{-1}^n<0$ and $0<\wt s_1^n<\wt s_2^n<...$ be the times at which $\wt\cL^n$ or $\wt\cR^n$ makes a downward jump. Let $\wt t_0^n=\wt s_0^n=0$. Then for every fixed $k\in\bbN$,
    $$\max\big\{\sup_{-kn\leq j\leq kn}|\wt t_j^n-\frac{j}{n}|,\sup_{-kn\leq j\leq kn}|\wt s_j^n-\frac{2j}{n}|\big\} \to 0\text{ in probability as } n\to\infty.$$
\end{lemma}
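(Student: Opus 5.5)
The plan is to reduce both claims to elementary facts about Poisson processes and the Mullin walk structure, and then use a law of large numbers uniformly over a linearly growing window.

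For the jump times $\wt t^n_j$: under the rate-$n$ walk, the jump times form a Poisson point process on $\bbR$ of intensity $n$, independent of the step directions. Hence $(\wt t^n_j)_{j\ge1}$ are partial sums of i.i.d.\ Exponential$(n)$ variables, and likewise on the negative side. For $j\le kn$, the quantity $\wt t^n_j-j/n$ is a centered random walk with step variance $n^{-2}$. Doob's maximal inequality gives
\[
\bbE\Big[\sup_{j\le kn}|\wt t^n_j-j/n|^2\Big]\le 4kn\cdot n^{-2}\to0 .
\]
The negative side is treated symmetrically.

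For the downward jump times $\wt s^n_j$: conditionally on the jump times, the steps are i.i.d.\ uniform among the four directions. Each jump is downward (for $\wt\cL^n$ or $\wt\cR^n$) independently with probability $1/2$, by thinning. So the downward jump times form a Poisson process of rate $n/2$, and the same maximal inequality argument gives $\sup_{|j|\le kn}|\wt s^n_j-2j/n|\to0$ in probability. Here the variance bound is $4kn\cdot 4n^{-2}\to0$.

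Equivalently, one can write $\wt s^n_j=\wt t^n_{N_j}$, where $N_j$ is the index of the $j$-th downward step, and use $\sup_{j\le kn}|N_j-2j|=O_P(\sqrt n)$ together with the first claim. Either way, a union of the two sides and the two claims finishes the proof. There is no real obstacle here; the only point to state explicitly is that the conditionally i.i.d.\ uniform steps make the downward jumps an independent thinning of the jump process.
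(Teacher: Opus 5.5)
Your argument is correct and follows the same route as the paper, which only remarks that the lemma is straightforward because the jump times of $\wt\cZ^n$ form a Poisson point process on $\mathbb{R}$. Your Doob maximal inequality bound, together with the observation that the downward jumps are an independent thinning of rate $n/2$ (memorylessness handles the first inter-arrival on each side of $0$), supplies exactly the details the paper omits.
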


\begin{proof}[Proof of Theorem~\ref{thm:pack-cone}]
    The claim that $T$ is a.s.\ circle packing parabolic follows from Lemma~\ref{lem:pf-pack-cone-1}. The convergence of the circle packing as in Assertions~\ref{result:thm:pack-cone-1}-\ref{result:thm:pack-cone-3} follows the analogous results from the walk $\wt\cZ^n$ in Lemma~\ref{lem:pf-pack-cone-2} and reparameterizing the walk $\wt\cZ^n_t$ and the curve $\wt\frtr^n$ by the number of jumps made by $\wt\cZ^n_t$, along with Lemma~\ref{lem:pf-pack-cone-3} and the continuity of $\eta$ and $Z$. For Assertion~\ref{result:thm:pack-cone-4}, by Lemma~\ref{lem:pf-pack-cone-2}, if we define the measure $\wt\mu^n_\MOT$ on $\bbC$ by assigning mass $\frac{1}{n}$ to each of $\eta(\wt s_j^n)$, then for each $k$, the Prokhorov distance between $\wt\mu^n_\MOT|_{B(0;R)}$ and $\mu^n|_{B(0;R)}$ tends to 0 in probability as $n\to\infty$, while by Lemma~\ref{lem:pf-pack-cone-3}, the  Prokhorov distance between $2\wt\mu^n_\MOT|_{B(0;R)}$ and $\mu_h|_{B(0;R)}$ tends to 0 in probability. This further implies the measure convergence in Assertion~\ref{result:thm:pack-cone-4}.  
\end{proof}

\section{Convergence of   tree-weighted maps under Riemann uniformization embedding}\label{sec:unif-pf}

In this section, we prove the convergence of tree-weighted maps  under the Riemann uniformization embedding. Much of the arguments are parallel to   Section~\ref{subsec:pack-pf} and we will be relatively brief here. %We start with the following results 

In Section~\ref{subsec:iid-unif}, we prove a version of Theorem~\ref{thm:unif-disk} for planar maps encoded by walks of finite steps. In Sections~\ref{subsec:unif-disk}-\ref{subsec:unif-cone}, we further apply Propositions~\ref{prop:boundary-perturb}-\ref{prop:unif-sph} to conclude the proof of Theorems~\ref{thm:unif-disk}-\ref{thm:unif-cone}.

\subsection{The i.i.d.\ walk case}\label{subsec:iid-unif}

  Let $I$ and $(\cZ^n,Z^n)_{n\in\bbN}$  be defined as in the paragraph above Definition~\ref{def:CPtoMOT}. Let $I'\subseteq I$. Let $\cJ^{n,-}_\rmL$ (resp.\ $\cJ^{n,-}_\rmR$) be the set of times when $\cZ^n$ makes a downward jump in its first (resp.\ second) coordinate.  Recall the triangulation $\Phi_\rmT(\cZ^n|_{I'}) = (T_{n,I'},\frtr_{n,I'})$ and the planar map $\Phi_\rmM(\cZ^n|_{I'}) = (M_{n,I'},\frT_{n,I'})$ encoded by $\cZ^n$ as in  Section~\ref{subsubsec:Mullin-finite}. We define a curve  $\check\frtr_{n,I'}$ as follows. Consider an embedding of $T_{n,I'}$. If the triangular face $\frtr_{n,I'}(t)$ on $T_{n,I'}$ has two vertices on the tree $\frT_{n,I'}$, then we put  $\check\frtr_{n,I'}(t)$ to be on the edge connecting these two vertices, whereas if $\frtr_{n,I'}(t)$ has only one vertex on $\frT_{n,I'}$, we put $\check\frtr_{n,I'}(t)$ to be at that vertex. We may further assume that $\check\frtr_{n,I'}:I'\to\bbC$ is continuous. In other words, $\check\frtr_{n,I'}$ is  a contour on the embedded tree $\frT_{n,I'}$ induced by $\frtr_{n,I'}$. Let $t_{\rm mid}$ be the midpoint of $I'$ and $t_{n,\rm mid} = \inf\{u\geq t_{\rm mid}:u\in \cJ^{n,-}_\rmL\}$. Let $\frv_{I'}^{\mathrm{mid},n}$ be the vertex for $t_{n,\rm mid}$ on the triangulation $T_{n,I'}$.

  Recall from Figure~\ref{fig:PHIM} that the map $M_{n,{I'}}$ might not have disk topology. If $t_{n,\rm mid} \in {I'}$ and $\frv_{I'}^{\mathrm{mid},n}$ is an interior vertex of $M_{n,{I'}}$,   let $\wh M_{n,{I'}}$ be the largest submap of $M_{n,{I'}}$ with disk topology and $\frv_{I'}^{\mathrm{mid},n}$ as an interior vertex. In other words, under an embedding of $M_{n,{I'}}$,   $\wh M_{n,{I'}}$ is the closure of the connected component of the interior of $M_{n,{I'}}$ (where we fill all the non-outer faces) containing $\frv_{I'}^{\mathrm{mid},n}$. If  $\frv_{I'}^{\mathrm{mid},n}$ is not well-defined or on the boundary of $M_{n,{I'}}$, we set $\wh M_{n,{I'}}=\emptyset$. For $t\in {I'}$, let $\wh t_{I'}^{n,\min}$ (resp.\ $\wh t_{I'}^{n,\max}$) for the smallest (resp.\ largest) time in ${I'}$ when $\check \frtr_{n,{I'}}(t)$ is on the map $\wh M_{n,{I'}}$. We further set $\wh\frtr_{n,{I'}}(t) = \check\frtr_{n,{I'}}(\wh t^{n,\min})$ for $t\leq \wh t_{I'}^{n,\min}$ and  $\wh\frtr_{n,{I'}}(t) = \check\frtr_{n,{I'}}(t^-)$ where $t^-$ is largest time less or equal to $t$ such that $ \check\frtr_{n,{I'}}(t^-)$ is on the map $\wh M_{n,{I'}}$. Then under an embedding of $M_{n,{I'}}$, $\wh\frtr_{n,{I'}}$ can be viewed as a continuous path moving along edges of $\wh M_{n,{I'}}$. The reason we work with the map  $\wh M_{n,{I'}}$ rather than the triangulation $T_{n,I'}$ is, for triangles adjacent to the right boundary of $T^n$, its geometry is non-local and not completely determined by $\cZ^n|_{I'}$. Note that the left boundary of $T_{n,I'}$ is simple and on $M_{n,I'}$, while we are removing components of $M_{n,I'}$ that are attached to $\wh M_{n,I'}$ at a single vertex to get $\wh M_{n,I'}$. Then provided that $\wh t_{I'}^{n,\min}<\wh t_{I'}^{n,\max}$, the clockwise boundary arc of $\wh M_{n,I'}$ from the vertex $\check\frtr_{n,I'}(\wh t^{n,\min})$ to $\check\frtr_{n,I'}(\wh t^{n,\max})$ agrees with the segment of the left boundary arc of $T_{n,I'}$ between the two vertices.
  
 % As discussed at the end of Section~\ref{subsubsec:Mullin-finite}, for $\Phi_\rmT(\cZ^n) = (T^n,\frtr^n)$ and $\Phi_\rmM(\cZ^n) = (M_{n,I},\frT)$, we can view $\frtr^n:I\to \frM(M_{n,I})$ as a path on the Riemann surface $\frM(M)$ via linear interpolation. Following the sewing procedure, the boundary of $T^n$ consists of a path in $\frT$ (resp.\ $\frT_*$) , which we call the left (resp.\ right)  boundary of $T^n$, and also the two starting and ending edges $\fre_t$. Note that $M_{n,I}$ is not necessarily a planar map of disk topology. Set $t_{I}^{n,-}$ (resp.\ $t_{I}^{n,+}$) to be the largest time before (resp.\ smallest time after) $t_{\rm mid}$ such that the triangular face $\frtr^n(t)$   simultaneously has vertices on the left and right boundaries of $T^n$, and let $\wh I^n = [t_{I}^{n,-},t_{I}^{n,+}]$.  Then provided $t_{I}^{n,-}<t_{I}^{n,+}$, the map  $M_{n,I}$ encoded by $\Phi_\rmM(\cZ^n|_{\wh I^n})$ has disk topology. We consider the path $\frtr_{n,I}:\wh I^n\to \frM(M_{n,I})$   generated from $\Phi_\rmT(\cZ^n|_{\wh I^n})$ as a path on the Riemann surface $\frM(M_{n,I})$, and continue to also write $\frtr_{n,I}$ for its image under a conformal map $\frM(M_{n,I})\to\bbC$.

The following is the analog of Definition~\ref{def:CPtoMOT}.
 \begin{definition}\label{def:uniftoMOT}
   Consider the setting above. We say that \emph{Riemann uniformization converges to mating of trees for $(\cZ^n,Z^n)_{n\in\bbN}$} if the following is
true. Let $a,b\geq 0$ be deterministic such that $a+b<1$, and $I':=I_{a,b}\subseteq I$ be an interval such that the left (resp.\ right) endpoint of $I'$ has distance $a|I|$ (resp.\ $b|I|$) from the left (resp.\ right) endpoint of $I$.  Consider the SLE-decorated LQG surface encoded by $Z^n|_{I'}$ and embed it as $(\bbD,h_{n,I'},\eta_{n,I'})$ such that $\eta_{n, I'}(t_{\rm mid})=0$ and $\eta_{n,I'}$ starts from 1. Let $t_{n,\rm mid}$ be as above. Consider planar map $\wh M_{n,I'}$ encoded by $\Phi_\rmM(\cZ^n|_{I'})$ and the trace $\wh\frtr_{n,I'}$ as above  and $\frM (\wh M_{n,I'})$ be the Riemann surface associated with $\wh M_{n,I'}$. Then with probability tending to 1 as $n\to\infty$, $\wh M_{n,I'}$ is not empty, and for the conformal map $\varphi_{n,I'}:\frM( \wh M_{n,I'})\to \bbD$ sending the vertex $\frv_{I'}^{\mathrm{mid},n}$ to 0 and the   vertex $\check\frtr_{n,I'}(\wh t^{n,\min})$ on $\wh M_{n,I'}$  to 1,  the following holds:
\begin{enumerate} 
\item $|t_{n,\rm mid}-t_{\rm mid}|\to 0$ as $n\to\infty$ in probability; \label{cond:def:uniftoMOT-(i)}
    \item the maximal diameter of   faces of $ \frM(\wh M_{n,I'})$ under the map $\varphi_{n,I'}$ tends to 0 in probability; \label{cond:def:uniftoMOT-(ii)}
    \item we have  $$\sup_{t\in I'}|\eta_{n, I'}(t)-\wh\frtr_{n,I'}(t)| \to 0 \text{ in probability as } n\to\infty;$$ \label{cond:def:uniftoMOT-(iii)}
    \item %let $t_{a,b}^-$ (resp.\ $t_{a,b}^+$) be the smallest (resp.\ largest) time in $I'$ such that $\check \frtr_{n,I'}(t)$ is on an edge on the left boundary of $T_{n,I'}$ which is also a boundary edge of $\wh M_{n,I'}$. 
    we have $|\wh t_{I'}^{n,\max}-\wh t_{I'}^{n,\min}|\to |I'|$ as $n\to\infty$ in probability; %for each deterministic $\lambda\in(0,1)$, with probability tending to 1, the following is true. For any $t$ such that $|t-t_{\rm mid}|<(1-\lambda)|I'|/2$, if $\check \frtr_{n,I'}(t)$ is on an edge on the left boundary of $T_{n,I'}$, then this edge is also a boundary edge of $\wh M_{n,I'}$; 
    \label{cond:def:uniftoMOT-(v)}
    \item let $\nu_{n,I',\rmL}$ be a measure on $\partial\bbD$ that assigns $\nu(n)$ to the image of each edge   of $\frM (\wh M_{n,I'})$ which is also on the left boundary of $T_{n,I'}$ where $\nu(n)>0$ is some constant. Then the Prokhorov distance between $\nu_{n,I',\rmL}$ and  $\nu_{h_{n, I'},\rmL}$, which is the restriction of the LQG boundary length measure $\nu_{h_{n, I'},\rmL}$ to the left boundary of $\eta_{n, I'}$,  tends to 0 in probability as $n\to\infty$. \label{cond:def:uniftoMOT-(iv)}
\end{enumerate}
\end{definition}

Following Definition~\ref{def:CPtoMOT}, in the above setting, if    $\frv_{I'}^{\mathrm{mid},n}$ is an interior vertex of $M_{n,{I'}}$ and all of the embeddings and objects in Definition~\ref{def:uniftoMOT} are well-defined,  define $\wt G^n(\cZ^n,a,b) = (t_{n,\rm mid},|\wh t_{I'}^{n,\max}-\wh t_{I'}^{n,\min}|,\delta_{n,I'},\wh\frtr_{n,I'},\nu_{n,I',\rmL})$, where $\delta_{n,I'}$ is the maximal diameter of embedded faces under $\varphi_{n,I'}$ and $\nu_{n,I'}$ is chosen in a measurable way. If this is not true, then we set $\wt G^n(\cZ^n,a,b) = (-1,-1,0,0,0)$. Let $\wt G^\infty(Z^n,a,b) = (t_{\rm mid},|I'|,0,\eta_{n,I'},\nu_{h_{n, I'},\rmL})$. Recall the metric $\mathfrak{d}$ on   $\bbR^3\times C(I,\ol\bbD)\times \mathrm{Measure}(\partial\bbD)$ using the supremum norm over continuous functions and Prokhorov distances on measures. We further define   $\mathrm{err}(\cZ^n,Z^n,a,b)= \mathfrak{d}(\wt G^n(\cZ^n,a,b),\wt G^{\infty}(Z^n, a,b))$ where the traces $\wh{\mathfrak{tr}}_{n,I'},\eta_{n,I'}$  are set to be constant on $I\backslash I'$.

\begin{proposition}\label{prop:cZ^n-unif-E}
    Let $\cZ^n = (\cL^n,\cR^n)$ be defined as in Lemma~\ref{lem:def-cZ^n}. %Let $a,b\geq 0$ be deterministic such that $a+b<1$. 
   % Let $I\subseteq I(0)$ such that the left (resp.\ right) endpoint of $I$ has distance $a$ (resp.\ $b$) from the left (resp.\ right) endpoint of $I(0)$. 
    Under  $\bbP_\rmE$, $\|(2^{-n/2}\cZ^n-Z)|_{I(0)}\|_\infty$ converges to 0 in probability, and Riemann uniformization converges to mating of trees for  $(\cZ^n|_{I(0)},Z|_{I(0)})$ with $\nu(n)=2^{-n/2}$ in Condition~\ref{cond:def:uniftoMOT-(iv)}. Furthermore, for each compact set $K\subseteq \{(a,b):a,b\geq0,a+b<1\}$ and $\e>0$, we have 
    \begin{equation}\label{eq:q-q-prop:cZ^n-unif-E}
        \sup_{(a,b)\in K}\bbP_\rmE\big[\mathrm{err}(\cZ^n|_{I(0)},Z|_{I(0)},a,b)\geq\e \big] \to 0 \text{ as } n\to\infty.
    \end{equation}
\end{proposition}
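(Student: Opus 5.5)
The plan is to follow the proof of Proposition~\ref{prop:cZ^n-pack-E} line by line, with Theorem~\ref{thm:conv-0-cone-unif-Hcell} in place of Theorem~\ref{thm:conv-0-cone-pack-Hcell} and Proposition~\ref{prop:boundary-perturb} in place of Theorem~\ref{thm:He-Schramm-extension-1}.

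\textbf{Setup.} The first claim, $\|(2^{-n/2}\cZ^n-Z)|_{I(0)}\|_\infty\to0$, is immediate from Lemma~\ref{lem:def-cZ^n} and part~(\ref{cond:thm:KMT-0-cone-(ii)}) of Theorem~\ref{thm:KMT-0-cone}. For the rest, set $\Phi_\rmM(\cZ^n)=(M^n,\frT^n)$ and let $\eta^n(t)=\fR_{2^{-n}}^{-1}\eta(2^{-n}t)$. Apply Theorem~\ref{thm:conv-0-cone-unif-Hcell} at scale $r=\fR_{2^{-n}}^{-1}\diam(\eta([-2,2]))$ and rescale by $\fR_{2^{-n}}$. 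This gives a conformal map $\varphi^n:\frM(M^n)\to\bbC$ with three properties, each holding in probability:
\begin{itemize}
\item[(a)] $\sup\{|\eta(t)-\varphi^n(\frv^n_t)|: t\in\cJ^{n,-}_\rmL\cap[-2,2]\}\to0$;
\item[(b)] the maximal diameter of embedded faces meeting $\eta([-2,2])$ tends to $0$;
\item[(c)] $\sup_{t\in[-2,2]}|\eta(t)-\frtr^n(t)|\to0$.
\end{itemize}
Since $\check\frtr$ lies on the boundary of the face containing $\frtr$, (b) and (c) give the same convergence for $\check\frtr$, and hence for $\wh\frtr$.

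As in Proposition~\ref{prop:cZ^n-pack-E}, fix $(a,b)\in K$ and a deterministic sequence $(a_n,b_n)\to(a,b)$, and write $I'=I_{a_n,b_n}\subseteq I(0)$. The map $M_{n,I'}$ is a submap of $M^n$, so $\frM(\wh M_{n,I'})$ embeds in $\frM(M^n)$. Let $D^n$ be the Jordan domain $\varphi^n(\frM(\wh M_{n,I'}))$ and set $D=\eta(I_{a,b})$. Conditions~\ref{cond:def:uniftoMOT-(i)} and~\ref{cond:def:uniftoMOT-(ii)} follow from Proposition~\ref{prop:Tutte5.8} and (a), (b), once $D^n$ is compared with $\bbD$ through Proposition~\ref{prop:boundary-perturb}, which controls the distortion of face diameters.

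\textbf{Main step: $\partial D^n\to\partial D$ in the metric~\eqref{eq:curve-metric-0}.} This is the step I expect to be hardest, because $\wh M_{n,I'}$ is obtained from $M_{n,I'}$ by discarding the pieces attached at a single vertex. I would proceed in three stages.
\begin{enumerate}
\item Use part~2 of Theorem~\ref{thm:conv-0-cone-unif-Hcell}, i.e.\ claims (iii) and (iv) of Theorem~\ref{thm:conv-0-cone-pack-Hcell} with $\cJ$ in place of $\mr\cJ$, applied to $[s,t]=I'$. This shows that the image under $\varphi^n$ of the boundary of the embedded triangulation $T_{n,I'}$ is uniformly close to $\partial\eta(I')$, as a curve with the points $z_{x_{k,q}}$ tracking the boundary vertices.
\item The left boundary of $T_{n,I'}$ is a simple path in $M_{n,I'}$, and the right boundary consists of faces whose centres lie within $o(1)$ of $\partial\eta(I')$ by (b). Hence $\partial D^n$ differs from the image of $\partial T_{n,I'}$ only along the pieces of $M_{n,I'}$ hanging off at single vertices.
\item Show those pieces have diameter $o(1)$. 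Each piece corresponds to a time excursion whose $\eta$-image is pinched at a point. Because $\eta(I')$ is a Jordan domain (the $\gamma=\sqrt2$ case), a piece of macroscopic size would force $\eta$ to pinch $\partial\eta(I')$ at a macroscopic scale, which does not happen. The near-endpoint pieces are controlled by continuity of $\eta$, as in the $t_{a,b}^\pm$ trimming in Proposition~\ref{prop:cZ^n-pack-E}.
\end{enumerate}
Condition~\ref{cond:def:uniftoMOT-(v)} comes from the same description: $\wh t^{n,\min}$ and $\wh t^{n,\max}$ are within $o(1)$ of the endpoints of $I'$.

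\textbf{Conclusion.} With $\partial D^n\to\partial D$ in hand, and $\partial\eta(I')\to\partial\eta(I_{a,b})$ by continuity of $\eta$, Proposition~\ref{prop:boundary-perturb} shows that $\varphi_{n,I'}\circ(\varphi^n)^{-1}$ converges uniformly, up to rotation, to the conformal map $\eta(I_{a,b})\to\bbD$ fixing $\eta(t_{\rm mid})$. The rotation is fixed by requiring $\check\frtr(\wh t^{n,\min})$ to map to $1$; since $\check\frtr(\wh t^{n,\min})$ is $o(1)$-close to $\eta$ of the left endpoint of $I'$, this is consistent with $\eta_{n,I'}$ starting at $1$. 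This gives Condition~\ref{cond:def:uniftoMOT-(iii)}.

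For Condition~\ref{cond:def:uniftoMOT-(iv)}, the left boundary vertices $\frv_{s_{k,\rmL}}$ of $T_{n,I'}$ sit within $o(1)$ of the points $z_{s_{k,\rmL}}$. By claim (iv) of Theorem~\ref{thm:conv-0-cone-pack-Hcell}, the quantum length up to $z_{s_{k,\rmL}}$ equals $k+O(\log r)$, and $2^{-n/2}\log r\to0$ by~\eqref{eq:Rb-bound}. Since the maps converge uniformly, the measure $\nu_{n,I',\rmL}$ converges to $\nu_{h_{n,I'},\rmL}$.

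Finally, the uniformity~\eqref{eq:q-q-prop:cZ^n-unif-E} over $(a,b)\in K$ follows from the same compactness-contradiction argument used at the end of the proof of Proposition~\ref{prop:cZ^n-pack-E}.
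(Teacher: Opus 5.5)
Your outline follows the paper's proof: rescale Theorem~\ref{thm:conv-0-cone-unif-Hcell}, control the boundary with its second part, then apply Proposition~\ref{prop:boundary-perturb}, fix the rotation, and finish with the compactness argument. However, the step you flag as hardest is not closed by what you wrote.

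The discarded pieces of $M_{n,I'}$ need not correspond to time excursions. Since $\eta(I')$ is Jordan, there is also no continuum pinching to appeal to; the pinching is purely discrete. What you can actually use is that the full, possibly non-simple, boundary loop of $\varphi^n(M_{n,I'})$ is close in~\eqref{eq:curve-metric-0} to the Jordan curve $\partial\eta(I')$. The paper gets this as follows: to each right-boundary edge $e$ of $T_{n,I'}$ it attaches the $M$-vertex $v_{e,\rmL}$ of its triangle. The boundary arc of $M_{n,I'}$ between consecutive such vertices then lies inside a single face of $M^n$, which is small by your (b).

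Even granting this, a sub-loop based at a cut vertex only gives two parameter values of the Jordan curve with nearby images. That yields a dichotomy: one of the two complementary arcs is small. It does not say whether the small arc is the discarded one or the one carrying $\partial D^n$. Your ``macroscopic pinch'' heuristic silently assumes $\wh M_{n,I'}$ is macroscopic, and this is exactly the missing input. The paper supplies it by noting that each discarded component, viewed as a closed loop, has winding number $0$ around $\varphi^n(\frv^{\mathrm{mid},n}_{I'})$. That point converges to $\eta(t_{\rm mid})$, an interior point of $\eta(I')$, whereas a loop close to all of $\partial\eta(I')$ winds once around it. This single argument also covers the pieces near the endpoints of $I'$, so no separate treatment of those is needed.

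Two downstream claims rely on this smallness together with $\wh t^{n,\min}$ and $\wh t^{n,\max}$ converging to the endpoints of $I'$. The first is $\sup_t|\check\frtr_{n,I'}(t)-\wh\frtr_{n,I'}(t)|\to0$, so your ``hence for $\wh\frtr$'' in the setup is premature. The second is the bound $o(2^{n/2})$ on the number of left-boundary edges of $T_{n,I'}$ not lying on $\wh M_{n,I'}$, which you need for Condition~\ref{cond:def:uniftoMOT-(iv)}.
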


\begin{proof}
    We proceed as in the proof of Proposition~\ref{prop:cZ^n-pack-E}. Fix   $(a_n,b_n)\in K$ such that $(a_n,b_n)\to (a,b)$. Let $I' = I_{a,b}$ and $I_n' = I_{a_n,b_n}$ be the interval defined in Definition~\ref{def:uniftoMOT} for $I = I(0)$. Below we verify the convergence in Definition~\ref{def:uniftoMOT} with $I'_n$ in place of $I'$.  Let $t_{\rm mid}$ be the midpoint of $I'$,   $(T_{n,I_n'},\mathfrak{tr}_{n,I_n'}) = \Phi_\rmT(\cZ^n|_{I_n' })$, and $\Phi_\rmM(\cZ^n) = (M^n,e_0^n,\frT^n)$. Let $M_{n,I_n'}$, $t_{n,\rm mid}$, $\frv_{I_n'}^{\mathrm{mid},n}$ and $\wh M_{n,I_n'}$ be defined as above Definition~\ref{def:uniftoMOT} for  $\cZ^n|_{I_n'}$. %We write $\cJ_\rmL^{n,-}$ (resp.\ $\cJ_\rmR^{n,-}$) for the set of times when $\cL^n$ (resp.\ $\cR^n$) makes a downward jump.  
    For $t\in \cJ_\rmL^{n,-}$, we also view $\frv_t^n$ as a point for the corresponding vertex on $\frM(M^n)$. For $t\in \cJ_\rmR^{n,-}$, we write $\mathfrak{f}_t^n$ for the face on $\frM(M^n)$ corresponding to the time $t$, and write $\frv_t^n$ for the center of the face  $\mathfrak{f}_t^n$  on $\frM(M^n)$. By Lemma~\ref{lem:def-cZ^n} and Theorem~\ref{thm:conv-0-cone-unif-Hcell} applied to $r = \diam(\eta^n([-2^{n+1},2^{n+1}])) = \fR_{2^{-n}}^{-1}\diam(\eta([-2,2]))$, as we scale by  $\fR_{2^{-n}}$, there is a conformal map  $\varphi^n:\frM(M^n)\to \bbC$ such that   in probability as $n\to\infty$,
    \begin{align}
    & \lim_{n\to\infty}  \sup\{|\eta(t) - \varphi^n(\frv_t^n) |: t\in \cJ_\rmL^{n,-},t\in [-2,2]\} = 0;  \label{eq:E-unif-0-0}  \\
 &  \lim_{n\to\infty} \sup\big\{ \diam(\varphi^n(\mathfrak{f}_t^n) ):  \mathfrak{f}_t^n \text{ is equal  or adjacent to a face  } \mathfrak{f}_s^n \text{ with } s\in [-2,2]\cap \cJ_\rmR^{n,-}\big\} = 0;  \label{eq:E-unif-0-1}  \\ & 
   \lim_{n\to\infty}  \sup\{ |\eta(t)-{\mathfrak{tr}^n}(t)|: t\in [-2,2]\} = 0.  \label{eq:E-unif-0-2}
\end{align}
where the convergence~\eqref{eq:E-unif-0-2} is uniform over all choices of the path ${\mathfrak{tr}^n}$ on faces of $M^n$.

Using the scaling in Lemma~\ref{lem:def-cZ^n}, Proposition~\ref{prop:Tutte5.8} and  \cite[Lemma 5.36]{Tutte},  $t_{n,\rm mid}\to t_{\rm mid}$ in probability. By the independence between the dyadic system $\fD$ and $\eta$, almost surely, $\eta(t_{\rm mid})$ is in the interior of $\eta(I')$. By the second part of Theorem~\ref{thm:conv-0-cone-unif-Hcell} and~\eqref{eq:E-unif-0-1}, the curve distance~\eqref{eq:curve-metric-0} between $\partial T_{n,I_n'}$ under the embedding $\varphi^n$ and $\partial\eta(I_n')$ tends to 0 in probability as $n\to\infty$. Using $(a_n,b_n)\to (a,b)$,   the two components of $(\eta(I_n')\backslash\eta(I'))\cup(\eta(I')\backslash\eta(I'_n))$ stays in a small neighborhood of starting and ending points of $\eta(I')$ in probability, respectively. In particular, the curve distance~\eqref{eq:curve-metric-0} between $\partial\eta(I_n')$ and $\partial\eta(I')$ converges to 0 in probability.  Therefore~\eqref{eq:E-unif-0-0} and~\eqref{eq:E-unif-0-1}  further imply that with probability tending to 1 as $n\to\infty$, the vertex for $t_{n,\rm mid}$ in $M^n$ is on $T_{n,I_n'}$ and has at least graph distance 3 from the boundary of $T_{n,I_n'}$ and thus an interior vertex of the map $M_{n,I_n'}$. This verifies Condition~\ref{cond:def:uniftoMOT-(i)} of Definition~\ref{def:uniftoMOT}.

For each   edge $e$ on the right boundary of $T_{n,I_n'}$, we associate it with the vertex $v_{e,\rmL}$ on $M_{n,I_n'}$ which is on the same triangular face of $T_{n,I_n'}$ with $e$. Then $v_{e,\rmL}$ is also on the boundary of $M_{n,I_n'}$.  For two clockwise consecutive edges $e_1,e_2$ on the right boundary of $T_{n,I_n'}$ sharing a common vertex $v$, the clockwise boundary arc of $\partial M_{n,I_n'}$ between $v_{e_1,\rmL}$ and $v_{e_2,\rmL}$ is contained within the single face on $M^n$ whose vertex on $T^n$ is $v$. Thus~\eqref{eq:E-unif-0-1} and the second part of Theorem~\ref{thm:conv-0-cone-unif-Hcell} imply that when viewed as a loop that may traverse some edges for twice, $\partial \varphi^n( M_{n,I_n'})$ converges to $\partial\eta(I_n')$ and therefore $\partial\eta(I')$ in probability as $n\to\infty$ under the curve metric~\eqref{eq:curve-metric-0}. Furthermore, (the closure of) each connected component of $\partial \varphi^n(M_{n,I_n'})\backslash \partial \varphi^n(\wh M_{n,I_n'})$, when viewed as a closed loop, has winding number 0 around $\varphi^n(\frv_{I'_n}^{\mathrm{mid},n})$, and $\varphi^n(\frv_{I_n'}^{\mathrm{mid},n})\to \eta(t_{\rm mid})$ in probability as $n\to\infty$. As a consequence, by the curve convergence of  $\partial \varphi^n(M_{n,I_n'})$ to $\partial\eta(I')$, the maximal diameter of the connected component $\partial \varphi^n(M_{n,I_n'})\backslash \partial \varphi^n(\wh M_{n,I_n'})$ tends to 0 in probability.  This implies $\partial \varphi^n(\wh M_{n,I_n'})$ converges to $\partial\eta(I')$ under the metric~\eqref{eq:curve-metric-0} and $\sup_{t\in I_n'}|\frtr_{n,I'_n}(t)-\wh\frtr_{n,I_n'}(t)|\to 0$ in probability. 
Moreover, the vanishing maximal diameter of components of $\partial \varphi^n(M_{n,I_n'})\backslash \partial \varphi^n(\wh M_{n,I_n'})$ and the embedding convergence~\eqref{eq:E-unif-0-2} imply that for $I_n' = [t_{I_n'}^-,t_{I_n'}^+]$ and $I' = [t_{I'}^-,t_{I'}^+]$, we must have $|t_{I_n'}^--\wh t_{I_n'}^{n,\min}|\to 0$ and $|t_{I_n'}^+-\wh t_{I_n'}^{n,\max}|\to 0$ and thus $|t_{I'}^--\wh t_{I_n'}^{n,\min}|\to 0$ and $|t_{I'}^+-\wh t_{I_n'}^{n,\max}|\to 0$ in probability as $n\to\infty$.
%Furthermore, note that the boundary of $M_{n,I_n'}$ consists of the left boundary of $T_{n,I_n'}$, which is a simple path, and a possibly non-simple path on $M^n$. Let $I' = [t_{I'}^-,t_{I'}^+]$ and $I_n' = [t_{I_n'}^-,t_{I_n'}^+]$. Then the  left boundary of $\varphi^n(T_{n,I_n'})$ that is not on  $\varphi(\wh M_{n,I_n'})$ consist of a single initial segment and a final segment, and by  the vanishing diameter of components of $\partial \varphi^n(M_{n,I_n'})\backslash \partial \varphi^n(\wh M_{n,I_n'})$,  the two segments stay in a small neighborhood of $\eta(t_{I_n'}^-)$ and $\eta(t_{I_n'}^+)$ in probability as $n\to\infty$, respectively. Fix $\lambda\in(0,1)$. Since the dyadic system $\fD$ is independent from $(h,\eta)$, for each $\lambda$, $\eta(t^-_{I'})$ and $\eta(t^+_{I'})$   almost surely have positive distance from $\eta(\check{I}_\lambda')$, where $\check{I}_\lambda'$ is the interval concentric to $I'$ of length $(1-\lambda)|I'|$. Then the distance between  $\eta(t_{I_n'}^-)$ or $\eta(t_{I_n'}^+)$ and $\eta(\check{I}_\lambda')$ are bounded away from 0 in probability as $n\to\infty$, and if $t\in \check{I}_\lambda'$ and $\check{\frtr}_{n,I_n'}(t)$ is on an edge on the left boundary of $T_{n,I_n'}$, then this edge is also a boundary edge of $\wh M_{n,I_n'}$.
This gives Condition~\ref{cond:def:uniftoMOT-(v)}.  Let $\tau^n(z) = z- \varphi^n(\frv_{I_n'}^{\mathrm{mid},n})+\eta(t_{\rm mid})$. %$\varphi^n(\frv_I^{\mathrm{mid},n})$ to $\eta(t_{\rm mid})$, 
By Proposition~\ref{prop:boundary-perturb}, up to a rotation, the conformal map $\wh\varphi_{n,I_n'}:  \big(\tau^n\circ \varphi^n(\wh M_{n,I_n'}) \big)\to\bbD$ sending $\eta(t_{\rm mid})$ to 0 converges uniformly to the conformal map $\wh\varphi: \eta(I' )\to \bbD$ sending $\eta(t_{\rm mid})$ to 0 in probability. Then using $\wh t_{I_n'}^{n,\min}\to t_{I_n'}^-$, the distance between $\wh\varphi_{n,I_n'}(\check\frtr_{n,I_n'}(\wh t_{I_n'}^{n,\min}))$ and 1 tends to 0 in probability as $n\to\infty$, and the convergence of the conformal map  $\wh\varphi_{n,I_n'}$ remains true if we further rotate such that the vertex $\check\frtr_{n,I_n'}(\wh t_{I_n'}^{n,\min})$ is embedded at 1. Conditions~\ref{cond:def:uniftoMOT-(ii)}-\ref{cond:def:uniftoMOT-(iii)} in Definition~\ref{def:uniftoMOT} then follow by~\eqref{eq:E-unif-0-1} and~\eqref{eq:E-unif-0-2}   where we  take $\varphi_{n,I_n'} =  \wh\varphi_{n,I_n'}\circ \tau^n\circ\varphi^n$. Condition~\ref{cond:def:uniftoMOT-(iv)} follows from the second part of Theorem~\ref{thm:conv-0-cone-unif-Hcell}, where by Condition~\ref{cond:def:uniftoMOT-(v)}, the number of edges  that is on the left boundary of $T_{n,I_n'}$ but not on the boundary of $\wh M_{n,I_n'}$ divided by $2^{n/2}$ tends to 0 in probability as $n\to\infty$. This gives~\eqref{eq:q-q-prop:cZ^n-unif-E}, which further implies the desired convergence of Riemann uniformization to mating of trees by setting $(a_n,b_n)= (a,b)$.

%Now let $\Phi_\rmT(\cZ^n|_{I(0)}) = ( T^{n,0}, \frtr^{n,0})$, and embed $T^{n,0}$ on $\frM(M^n)$.  By~\eqref{eq:E-unif-0-2} and the second claim of Theorem~\ref{thm:conv-0-cone-unif-Hcell}, $\varphi^n(\partial T^{n,0})$ converges to $\partial\eta(I(0))$ in probability as $n\to\infty$ under the curve metric~\eqref{eq:curve-metric-0}. Since $\partial\eta(I(0))$ is a Jordan curve, these further imply that the times $(t_I^{n,-},t_I^{n,+})$ for $I=I(0)$ converge in probability to the left and right endpoints of $I(0)$, respectively. This verifies Condition~\ref{cond:def:uniftoMOT-(i)} of Definition~\ref{def:uniftoMOT}. Further combined with~\eqref{eq:E-unif-0-1}, we deduce that if we view $\frM(M_I^n)$ as a subsurface of $\frM(M^n)$, then $\varphi^n(\partial \frM(M_I^n))$ converges to $\partial\eta(I(0))$ in probability under the metric~\eqref{eq:curve-metric-0}. The rest of the conditions in Definition~\ref{def:uniftoMOT} can be verified by Theorem~\ref{thm:conv-0-cone-unif-Hcell} together with Proposition~\ref{prop:boundary-perturb}, where Proposition~\ref{prop:boundary-perturb} is  applied to $D^n = \varphi^n( \frM(M_I^n))$, $D = \eta(I(0))$ and $\wt D^n=\wt D=\bbD$.
\end{proof}

Using identical proof as in Proposition~\ref{prop:cZ^n-pack-Q} and Lemma~\ref{lem:cZ^n-pack-Q-n}, we have the following.
\begin{proposition}\label{prop:cZ^n-unif-Q} 
    The conclusions in    Lemma~\ref{lem:cZ^n-pack-Q-n} continue to hold if one replaces ``circle packing" with ``Riemann uniformization" as in Definition~\ref{def:uniftoMOT} in claim  $(\ref{cond:lem:cZ^n-pack-Q-n-(iv)})$ there, and replace the error~\eqref{eq:conv-dist-check} with the Riemann uniformization version as in~\eqref{eq:q-q-prop:cZ^n-unif-E}.
\end{proposition}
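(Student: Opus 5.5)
The plan is to follow the proofs of Proposition~\ref{prop:cZ^n-pack-Q} and Lemma~\ref{lem:cZ^n-pack-Q-n} step by step, with ``circle packing converges to mating of trees'' (Definition~\ref{def:CPtoMOT}) replaced by ``Riemann uniformization converges to mating of trees'' (Definition~\ref{def:uniftoMOT}), and with Proposition~\ref{prop:cZ^n-unif-E} as the input in place of Proposition~\ref{prop:cZ^n-pack-E}.

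\emph{Step 1: from $\bbP_\rmE$ to $\bbP_\rmQ$ along the sequence $2^n$.} First I would prove the analogue of Proposition~\ref{prop:cZ^n-pack-Q}: rate-$2^n$ walks $\cZ^n$ on $[0,1]$ coupled with a variance-$\frac12$ Brownian motion $Z^n$, with $\|Z^n-2^{-n/2}\cZ^n\|_\infty\to0$ and with the uniform error bound~\eqref{eq:q-q-prop:cZ^n-unif-E} over compact sets of parameters. The argument of \cite[Proposition 6.2]{Tutte} uses only two features of the event being transferred:
\begin{itemize}
\item the event $\{\mathrm{err}\ge \e\}$ is measurable with respect to the pair $(Z^n,\cZ^n)$ restricted to the interval;
\item the event is invariant under shifting the time domain, and under adding constants to $Z^n$ and $\cZ^n$.
\end{itemize}
Both hold here. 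The map $\wh M_{n,I'}$, the trace $\wh\frtr_{n,I'}$ and the conformal map $\varphi_{n,I'}$ are all determined by $\cZ^n|_{I'}$. The LQG surface is determined by the increments of $Z^n|_{I'}$. Shifting time does not change any of these objects. So the absolute-continuity transfer (Theorem~\ref{thm:KMT-0-cone} together with the local comparison of the $0$-quantum cone and the $\gamma$-quantum cone in \cite[Section 6.1]{Tutte}) applies verbatim.

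One difference from the circle packing case should make things easier rather than harder. Definition~\ref{def:uniftoMOT} does not involve $\Phi_{\rm simple}$, so there is no non-locality to handle. In particular, the auxiliary truncation parameters $(a',b')$ that appear in Proposition~\ref{prop:cZ^n-pack-E} are not needed.

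\emph{Step 2: from rate $2^n$ to rate $n$.} Then I would repeat the proof of Lemma~\ref{lem:cZ^n-pack-Q-n}.
\begin{itemize}
\item Couple all the Brownian motions to be the same one, $\check Z$.
\item For each $n$, choose $m$ with $2^{m-1}<nt_0\le 2^m$.
\item Set $\wh\cZ^n_t=\check\cZ^m_{tn2^{-m}}$ and $Z^n_t=n^{-1/2}2^{m/2}\check Z_{tn2^{-m}}$.
\item Resample $\cZ^n$ from the conditional law of $\wh\cZ^n$ given $Z^n=Z$, for one fixed Brownian motion $Z$.
\end{itemize}
After time rescaling, the interval $[0,t_0]$ corresponds to a subinterval $[0,nt_02^{-m}]\subseteq[0,1]$ with $nt_02^{-m}\in[1/2,1]$. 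The relevant interval $I'$ inside it is then an interval $I_{a,b}$ of $[0,1]$ with parameters ranging over a compact set. Therefore the uniformity over compact parameter sets from Step~1 yields~\eqref{eq:q-q-prop:cZ^n-unif-E} for the rate-$n$ walks. The normalization $\nu(n)=n^{-1/2}$ in Condition~\ref{cond:def:uniftoMOT-(iv)} comes from Brownian scaling, as before.

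\emph{Main obstacle.} The step that needs some care is checking that the uniform-in-parameters convergence survives Step~2. That requires the map $(a,b)\mapsto\wt G^\infty(Z,a,b)$ to be continuous in probability, which I would check through its components:
\begin{itemize}
\item the boundaries $\partial\eta(I_{a,b})$ vary continuously in the metric~\eqref{eq:curve-metric-0}, by continuity of $\eta$;
\item the conformal maps onto $\bbD$ then vary continuously by Proposition~\ref{prop:boundary-perturb};
\item the left boundary length measures vary continuously because $Z$ is continuous.
\end{itemize}
With this continuity, the compactness argument used at the end of Proposition~\ref{prop:cZ^n-pack-E} carries over directly.
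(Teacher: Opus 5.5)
Your proposal is correct and follows the paper's route. The paper's proof is only the remark that the arguments of Proposition~\ref{prop:cZ^n-pack-Q} (that is, \cite[Proposition 6.2]{Tutte}, which uses only that the error event is measurable in $(Z^n,\cZ^n)$ and shift-invariant) and of Lemma~\ref{lem:cZ^n-pack-Q-n} go through verbatim with Proposition~\ref{prop:cZ^n-unif-E} as input. Your point that the uniformization error depends only on the restriction to $I'$ and allows $a,b\ge0$, so the rescaled subinterval can be absorbed into $(a,b)$ without any $(a',b')$, matches how Proposition~\ref{prop:cZ^n-unif-E} is stated; it also makes the separate continuity check in your last paragraph unnecessary, since the uniformity over compact parameter sets from Step~1 already suffices.
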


\subsection{The disk topology case}\label{subsec:unif-disk}

The goal of this subsection is to prove Theorem~\ref{thm:unif-disk}, where the proof is parallel to that of Theorem~\ref{thm:pack-disk}.

\begin{lemma}\label{lem:disk-unif-pf-0}
      There is a coupling of $(Z,\wt\cZ^1,\wt\cZ^2,...)$ such that
    \begin{enumerate}[(i)]
    \item claims $(\ref{cond:lem:disk-pf-0-(i)}),(\ref{cond:lem:disk-pf-0-(ii)}),(\ref{cond:lem:disk-pf-0-(v)})$ and $(\ref{cond:lem:disk-pf-0-(vi)})$ in Lemma~\ref{lem:disk-pf-0} hold;\label{cond:lem:disk-unif-pf-0-(i)}
        \item  {the total variation distance between the law of  the map $\wt M^{\disk,n}$ and the trace $\wt\frtr^{\disk,n}$ encoded by $\wt\cZ^n$ via $\Phi_\rmM^\disk$ and $\Phi_\rmT^\disk$  and the  law of  $(M^{\disk,n},\frtr^{\disk,n})$ in Theorem~\ref{thm:unif-disk} for $\ell=\ceil{\sqrt n}$ tends to 0 as $n\to\infty$ up to a reparameterization of the trace $\wt\frtr^{\disk,n}$};  \label{cond:lem:disk-unif-pf-0-(ii)}
        \item  {for each $\e\in(0,1/4)$,} Riemann uniformization converges to mating of trees for  {$(\wt\cZ^n|_{[A\e,A(1-\e)]},Z|_{[A\e,A(1-\e)]})_{n\in\bbN}$}  with $\nu(n)=n^{-1/2}$ in Condition~\ref{cond:def:uniftoMOT-(iv)};   \label{cond:lem:disk-unif-pf-0-(iii)}
        \item let $B^n_\rmL = \{t\in [0,A]: \wt\cL_t^n-\lim_{u\uparrow t}\wt\cL_u^n \neq 0 \}$ and  $B^n_\rmR = \{t\in [0,A]: \wt\cR_t^n-\lim_{u\uparrow t}\wt\cR_u^n \neq 0 \}$ denote the set of times at which $\wt\cL^n$ or $\wt\cR^n$  jumps, respectively. Conditioned on $A$ and $\#B_\rmL^n$, the elements of $B_\rmL^n$ have the law of $\#B_\rmL^n$ uniform and independent times in $[0,A]$, and the same holds for $B_\rmR^n$ in place of $B_\rmL^n$.   \label{cond:lem:disk-unif-pf-0-(iv)}
    \end{enumerate}
\end{lemma}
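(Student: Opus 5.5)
The plan is to repeat the construction in the proof of Lemma~\ref{lem:disk-pf-0}, with Proposition~\ref{prop:cZ^n-unif-Q} (the Riemann uniformization analogue of Lemma~\ref{lem:cZ^n-pack-Q-n}) replacing Lemma~\ref{lem:cZ^n-pack-Q-n}. Concretely, fix a deterministic duration $a$ and $\e\in(0,1/4)$. Start from the coupling of Proposition~\ref{prop:cZ^n-unif-Q} on $[0,a]$ between a variance $\frac12$ Brownian motion and rate $n$ walks. Then reweight the law of the restriction to $[a\e,a(1-\e)]$ by the Radon--Nikodym derivative that turns the unconditioned pair into the restriction of the cone excursion $Z$ and the conditioned walk $\wt\cZ^n$. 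This derivative is bounded, by the same local absolute continuity estimate used in \cite[Lemma 6.28]{Tutte} (compare Lemma~\ref{lem:417} in the sphere case). We then couple so that the increments on $[a\e,a(1-\e)]$ agree. The error functional $\mathrm{err}$ of Definition~\ref{def:uniftoMOT} is measurable with respect to $(\cZ^n,Z)$ on the interval and invariant under shifts of the domain. Hence the Lusin-type argument in the proof of Lemma~\ref{lem:sph-pack-pf-1} (the bound \eqref{eq:418-pf-1} together with Proposition~\ref{prop:boundary-perturb}) transfers the vanishing of $\mathrm{err}$ from the unconditioned to the conditioned setting. Claim (\ref{cond:lem:disk-unif-pf-0-(iii)}) follows, first for fixed $a$ and $\e$, then for random $A$ by integrating against $\rho(a)\,da$ over compact sets $K\uparrow(0,\infty)$, and for all $\e$ simultaneously by a diagonal choice $\e=\e(n)\downarrow0$ slow enough.

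Claim (\ref{cond:lem:disk-unif-pf-0-(i)}) is identical to the corresponding parts of Lemma~\ref{lem:disk-pf-0}. Part (\ref{cond:lem:disk-pf-0-(i)}) is built in. Part (\ref{cond:lem:disk-pf-0-(ii)}) holds since, given $A=a$, we take $\wt\cZ^n$ to be the rate $n$ walk conditioned to stay in the quadrant. Part (\ref{cond:lem:disk-pf-0-(v)}) follows from the sup-norm convergence on $[a\e,a(1-\e)]$ plus tightness of $n^{-1/2}\wt\cZ^n$ near the endpoints \cite[Lemma 6.26]{Tutte}. Part (\ref{cond:lem:disk-pf-0-(vi)}) follows from the explicit law of $\#B^n$ computed via Lemma~\ref{lem:walk-excursion}, exactly as in the proof of Lemma~\ref{lem:disk-pf-0}. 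Claim (\ref{cond:lem:disk-unif-pf-0-(ii)}) is the same total variation computation as in part (\ref{cond:lem:disk-pf-0-(iii)}). Conditionally on the number of jumps, the step sequence of $\wt\cZ^n$ is uniform on $\cA_\ell^\disk$ with the weight $4^{-m}$. The jump count law converges in total variation to that of the duration of $\cZ^{\disk,n}$. Since $\Phi_\rmM^\disk$ and $\Phi_\rmT^\disk$ depend only on the step sequence, the laws of $(\wt M^{\disk,n},\wt\frtr^{\disk,n})$ and $(M^{\disk,n},\frtr^{\disk,n})$ agree up to that total variation error and reparameterization.

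For claim (\ref{cond:lem:disk-unif-pf-0-(iv)}), condition on $A=a$ and on the jump times $B^n$. The jump times are i.i.d.\ uniform given their number, and the steps are a uniform element of $\cA^\disk_\ell$, independent of the times. The set $B^n_\rmL$ is the subset of jump times whose step is horizontal. The key point is that the conditional law of the times, given the step sequence, is exchangeable and uniform, so selecting the times indexed by horizontal steps still gives $\#B^n_\rmL$ i.i.d.\ uniform times in $[0,a]$. The same reasoning applies to $B^n_\rmR$.

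I expect the main obstacle to be the transfer of ``Riemann uniformization converges to mating of trees'' through the reweighting. The error functional involves the non-local submap $\wh M_{n,I'}$ and the boundary measure $\nu_{n,I',\rmL}$. One has to check that these are determined by the walk on the interval, since $\Phi_\rmM$ restricted to an interval gives a submap without the extra closing edge. One also has to check that the limiting objects $(\eta_{n,I'},\nu_{h_{n,I'},\rmL})$ depend continuously (off a set of small measure) on the increments of $Z$, which is where Proposition~\ref{prop:boundary-perturb} is used.
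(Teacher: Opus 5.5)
Your overall route is the paper's. Its proof of this lemma is that of Lemma~\ref{lem:disk-pf-0} with Proposition~\ref{prop:cZ^n-unif-Q} in place of Lemma~\ref{lem:cZ^n-pack-Q-n}, and your treatment of claims (\ref{cond:lem:disk-unif-pf-0-(i)})--(\ref{cond:lem:disk-unif-pf-0-(iii)}) follows it. The step that fails as written is your argument for claim (\ref{cond:lem:disk-unif-pf-0-(iv)}).

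Condition on $A=a$, on $\#B^n=N$, and on the step sequence. The horizontal steps then occupy fixed positions $i_1<\dots<i_k$, so $B^n_\rmL$ is the set of order statistics $U_{(i_1)},\dots,U_{(i_k)}$ of $N$ i.i.d.\ uniforms. These are not i.i.d.\ uniform. For example, if only the first step is horizontal, the single element of $B^n_\rmL$ is the minimum of $N$ uniforms. Exchangeability of the unordered set of times does not survive selection by ranks that the steps determine. So the claim is false conditionally on the step sequence. Averaging over step sequences only helps if you know how the horizontal positions are distributed, and your argument supplies no such information.

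The missing idea is that the conditioning factorizes over the two coordinates. Without conditioning, Poisson thinning makes $\wt\cL^n$ and $\wt\cR^n$ independent rate $n/2$ continuous-time simple random walks on $\bbZ$. For the walk started at $(\ell-1,0)$, the conditioning event $\{\wt\cZ^n_a=(0,0),\ \wt\cZ^n_t\in\bbR_+^2 \text{ for all } t\in[0,a]\}$ is the intersection of two events. One depends only on the sign sequence of $\wt\cL^n$, the other only on the sign sequence of $\wt\cR^n$. Hence:
\begin{itemize}
\item after conditioning, the two coordinates stay independent;
\item for each coordinate, the jump times given their number remain i.i.d.\ uniform on $[0,a]$ and independent of the signs;
\item this gives claim (\ref{cond:lem:disk-unif-pf-0-(iv)}) conditionally on $A$ and $\#B^n_\rmL$.
\end{itemize}

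Equivalently, in your language: a step sequence lies in $\cA^\disk_\ell$ if and only if its horizontal and vertical subsequences are each valid one-dimensional paths. So valid sequences are exactly arbitrary interleavings of a valid horizontal and a valid vertical sign sequence. Given $N$ and $\#B^n_\rmL=k$, the set of horizontal positions is therefore a uniform $k$-subset of $\{1,\dots,N\}$, independent of the times. With that input, your selection argument does give $k$ i.i.d.\ uniform times. The resulting law does not depend on $N$, so you may then integrate $N$ out.
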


\begin{proof}
    The proof is identical to that of Lemma~\ref{lem:disk-pf-0}.
\end{proof}

\begin{lemma}\label{lem:disk-unif-pf-1}
    Consider the setting of Lemma~\ref{lem:disk-unif-pf-0}, and let $\frv^{\mathrm{mid},n}$ be the vertex on $\wt M^{\disk,n}$ for the first downward jump time after $A/2$ of the walk $\wt\cZ^n$ in the left coordinate. Let $\frM(\wt M^{\disk,n})$ be the Riemann surface associated with $\wt M^{\disk,n}$. Let $\wt\varphi^{\disk,n}:\frM(\wt M^{\disk,n})\to\bbD$ be  a conformal map that sends $\frv^{\mathrm{mid},n}$ to 0.  Then we have the following:
    \begin{enumerate}[(i)]
        \item The maximal diameter of faces under the map $\wt\varphi^{\disk,n}$ tends to 0 in probability as $n\to\infty$;
        \item Let $\wt\frtr^{\disk,n}:[0,A]\to\ol\bbD$ be the path on faces of $\wt T^{\disk,n}$ under the embedding $\wt\varphi^{\disk,n}(\frM(\wt M^{\disk,n}))$. Then up to a rotation,
        \eqb\label{eq:lem:disk-unif-pf-2}\sup_{t\in [0,A]} |\eta^\disk(t)-\wt\frtr^{\disk,n}(t)|\to 0 \ \ \ \text{in probability as } n\to\infty. \eqe
        \item  Let $\wt\nu^{\disk,n}$ be the measure on $\partial\bbD$ that assigns mass $n^{-1/2}$ to each   edge  $\wt\varphi^{\disk,n}(e)$ where $e\in \frM(\wt M^{\disk,n})$ is on the boundary. The Prohokov distance between $\wt\nu^{\disk,n}$ and $\nu_{h^\disk}$ tends to 0 in probability as $n\to\infty$.
    \end{enumerate}
\end{lemma}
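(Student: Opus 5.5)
I will follow the proof of Lemma~\ref{lem:disk-pf-2} line by line, replacing circle packing inputs with their continuum analogues: Lemma~\ref{lem:disk-unif-pf-0} in place of Lemma~\ref{lem:disk-pf-0}, Definition~\ref{def:uniftoMOT} in place of Definition~\ref{def:CPtoMOT}, and Proposition~\ref{prop:unif-disk} together with Proposition~\ref{prop:boundary-perturb} in place of Theorem~\ref{thm:He-Schramm-disk}.

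\textbf{Step 1: choosing a central submap.} Fix small constants $0<\e_3<\e_1<\e<\delta$ and $\delta'$ as in the proof of Lemma~\ref{lem:disk-pf-2}. Using part~(\ref{cond:lem:disk-pf-0-(v)}) of Lemma~\ref{lem:disk-pf-0}, I get the walk inequalities~\eqref{eq:lem:disk-pf-1-1}--\eqref{eq:lem:disk-pf-1-2} with high probability, with $n\to\infty$ much faster than $\delta\to0$. Let $I'=[A\e,A(1-\e)]$ and consider $\wh M_{n,I'}\subseteq\wt M^{\disk,n}$ as in Definition~\ref{def:uniftoMOT}.

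By part~(\ref{cond:lem:disk-unif-pf-0-(iii)}) of Lemma~\ref{lem:disk-unif-pf-0}, there is a conformal map $\varphi_{n,I'}:\frM(\wh M_{n,I'})\to\bbD$ with the following properties:
\begin{itemize}
\item its faces have vanishing diameter;
\item $\wh\frtr_{n,I'}$ is uniformly close to $\eta_{n,I'}$;
\item the left-boundary edge measure converges.
\end{itemize}
No non-locality issue from $\Phi_{\rm simple}$ arises here, so $\wh M_{n,I'}$ is genuinely a submap of $\wt M^{\disk,n}$. By Condition~\ref{cond:def:uniftoMOT-(i)}, $\frv^{\mathrm{mid},n}$ is an interior vertex with high probability. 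As in Lemma~\ref{lem:disk-pf-2}, Proposition~\ref{prop:boundary-perturb} shows that the conformal map $\eta^\disk(I')\to\bbD$ fixing $0$ tends to the identity as $\delta\to0$. Hence, after a diagonal choice $\e=\e(n)\to0$ slowly, $\varphi_{n,I'}$ is close to $\eta^\disk$ on $I'$.

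\textbf{Step 2: comparing with the whole map (main obstacle).} Consider the inclusion $\frM(\wh M_{n,I'})\subseteq\frM(\wt M^{\disk,n})$ and define
\[
\psi^n:=\varphi_{n,I'}\circ(\wt\varphi^{\disk,n})^{-1}
\]
on the image $D^n:=\wt\varphi^{\disk,n}(\frM(\wh M_{n,I'}))\subseteq\bbD$. Then $(\psi^n)^{-1}:\bbD\to D^n$ is conformal and fixes $0$; I normalize it by a rotation. The boundary of $\frM(\wh M_{n,I'})$ consists of two parts:
\begin{itemize}
\item an arc shared with $\partial\frM(\wt M^{\disk,n})$, namely the left-boundary edges of $T_{n,I'}$ that are also on $\partial\wt M^{\disk,n}$;
\item an interface arc $\gamma^n$ separating it from the complementary pieces coming from times in $[0,A\e]\cup[A(1-\e),A]$ and from the components discarded when passing to $\wh M_{n,I'}$.
\end{itemize}
I will show that the image of $\gamma^n$ under $\varphi_{n,I'}$ is an arc of $\partial\bbD$ of length $\theta_n\to0$. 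This is the continuum counterpart of "the disks for $\wt\frv_1,\dots,\wt\frv_k$ cluster near $1$". It uses the small $\nu$-mass of $\eta^\disk([0,A\delta]\cup[A(1-\delta),A])$, Condition~\ref{cond:def:uniftoMOT-(iv)} (boundary-measure convergence), and Condition~\ref{cond:def:uniftoMOT-(v)}.

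The remaining arc is mapped by $(\psi^n)^{-1}$ into $\partial\bbD$. So Proposition~\ref{prop:unif-disk} applies to $(\psi^n)^{-1}$ with this $\theta_n$. It gives that $\diam(\bbD\setminus D^n)\to0$ and that $\psi^n$ converges uniformly to the identity. The delicate point is that the harmonic measure of $\gamma^n$ seen from $0$ in $\frM(\wh M_{n,I'})$ must be small. I will deduce this from the endpoint behavior of $\wh\frtr_{n,I'}$ near $\eta^\disk(A\e)$ and $\eta^\disk(A(1-\e))$: both points converge to $1$ after the normalization in Definition~\ref{def:uniftoMOT}, and the left boundary between them carries almost all the boundary measure.

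\textbf{Step 3: conclusions.} Claim (i) on $\wh M_{n,I'}$ follows from Condition~\ref{cond:def:uniftoMOT-(ii)} and the uniform convergence of $\psi^n$. Faces outside $\wh M_{n,I'}$ lie in $\bbD\setminus D^n$, which has vanishing diameter. Claim (ii) follows from Condition~\ref{cond:def:uniftoMOT-(iii)}, the uniform convergence of $\psi^n$, and the continuity of $\eta^\disk$ near times $0$ and $A$. Claim (iii) follows from Condition~\ref{cond:def:uniftoMOT-(iv)}. The boundary edges of $\wt M^{\disk,n}$ not counted there number $o(\sqrt n)$, by part~(\ref{cond:lem:disk-pf-0-(v)}) of Lemma~\ref{lem:disk-pf-0} as in the last paragraph of the proof of Lemma~\ref{lem:disk-pf-2}, and the corresponding $\nu_{h^\disk}$-mass also vanishes.
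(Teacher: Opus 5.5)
Your proposal is correct and follows essentially the same route as the paper. Both arguments restrict to $\wh M_{n,I'}$ with $I'=[A\e,A(1-\e)]$, use the Riemann-uniformization-to-mating-of-trees convergence from Lemma~\ref{lem:disk-unif-pf-0}, show that the common boundary arc of $\wh M_{n,I'}$ and $\wt M^{\disk,n}$ has $\varphi_{n,I'}$-image of length tending to $2\pi$, and then apply Proposition~\ref{prop:unif-disk} to $\wt\varphi^{\disk,n}\circ\varphi_{n,I'}^{-1}$.

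The paper carries out your Step~2 concretely. Via Condition~\ref{cond:def:uniftoMOT-(v)} and \eqref{eq:lem:disk-pf-1-1}--\eqref{eq:lem:disk-pf-1-2}, the edges at the hitting times $t_a=\inf\{s>0:\wt\cL^n_s\le a n^{1/2}\}$, $a\in[3\delta',1-3\delta']$, lie on both boundaries. Condition~\ref{cond:def:uniftoMOT-(iii)} and continuity of $\eta^\disk$ at its endpoints then show that these edges sweep out almost all of $\partial\bbD$.
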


\begin{proof}
  Let $0<\e_3<\e_1<\e<\delta$ and $\delta'$ be the constants in the proof of Lemma~\ref{lem:disk-pf-2} such that~\eqref{eq:lem:disk-pf-1-1} and~\eqref{eq:lem:disk-pf-1-2} hold with probability tending to 1 as $n\to\infty$ sufficiently faster than $\delta\to 0$. Consider the map $\wh M_{n,I'}$ and the conformal map $\varphi_{n,I'}$ defined in Definition~\ref{def:uniftoMOT} for $I' = [A\e, A(1-\e)]$. Then following Condition~\ref{cond:def:uniftoMOT-(v)} of Definition~\ref{def:uniftoMOT} and~\eqref{eq:lem:disk-pf-1-1} and~\eqref{eq:lem:disk-pf-1-2}, with probability tending to 1 as $n\to\infty$, for each $a\in [3\delta',1-3\delta']$ and $t_a = \inf\{s>0:\wt\cL^n_s\leq an^{1/2}\}$, the corresponding edge for $t_a$ is both on the boundary of  $\wt M^{\disk,n}$ and $\wh M_{n,I'}$. Similar to the proof of Lemma~\ref{lem:disk-pf-2}, using the continuity of $\eta^\disk$ near its starting and ending points and Condition~\ref{cond:def:uniftoMOT-(iii)} in Definition~\ref{def:uniftoMOT}, by the convergence of  Riemann uniformization   to mating of trees for  {$(\wt\cZ^n|_{[A\e,A(1-\e)]},Z|_{[A\e,A(1-\e)]})_{n\in\bbN}$} as in part (\ref{cond:lem:disk-unif-pf-0-(iii)}) of Lemma~\ref{lem:disk-unif-pf-0},  as we send $\delta\to 0$ sufficiently slower than $n\to\infty$ and $\delta'\to 0$ sufficiently slower than $\delta\to 0$, under the conformal map $\varphi_{n,I'}:\frM(\wh M_{n,I'})\to\bbD$, there is a common boundary arc of $\wh M_{n,I'}$ and $\wt M^{\disk,n}$ whose image has length tending to $2\pi$ in probability. Therefore up to a rotation, Proposition~\ref{prop:unif-disk} is applicable to the map $\wt\varphi^{\disk,n}\circ \varphi_{n,I'}^{-1}$, which now converges uniformly to the identity function in probability. Moreover, the diameter of  $\bbD\backslash \wt\varphi^{\disk,n}\circ \varphi_{n,I'}^{-1}(\wh M_{n,I'})$ tends to 0 and since we are sending $\delta\to 0$ sufficiently slower than $n\to\infty$,  the Porkhorov distance between $\wt\nu^{\disk,n}$ and  $\nu_{n,I',\rmL}$  tends to 0, while  $\nu_{h^\disk}(\eta([0,A]\backslash I'))\to 0$.   This along with the convergence of Riemann uniformization converges to mating of trees for  {$(\wt\cZ^n|_{[A\e,A(1-\e)]},Z|_{[A\e,A(1-\e)]})_{n\in\bbN}$} as in part (\ref{cond:lem:disk-unif-pf-0-(iii)}) of Lemma~\ref{lem:disk-unif-pf-0} proves Lemma~\ref{lem:disk-unif-pf-1}.
    %Let $\e>0$. Consider the map $\wh M_{n,I}$ and the conformal map $\varphi_{n,I}$ defined in Definition~\ref{def:uniftoMOT} for $I = [A\e, A(1-\e)]$.   Then $\frM(\wh M_{n,I})\subseteq \frM(\wt M^{\disk,n})$. Similar as in the proof of Lemma~\ref{lem:disk-pf-2}, using the continuity of $\eta^\disk$ near its starting and ending points and Condition~\ref{cond:def:uniftoMOT-(iii)} in Definition~\ref{def:uniftoMOT}, we see that by sending $\e\to0$ sufficiently slower than $n\to\infty$, one can draw a crosscut separating $\frM(M_I^n)$ from $ \frM(\wt M^{\disk,n})\backslash \frM(M_I^n)$ such that the diameter of its image under $\varphi_{n,I}$ tends to 0 in probability. The lemma then further follows from Proposition~\ref{prop:unif-disk}.
\end{proof}

\begin{proof}[Proof of Theorem~\ref{thm:unif-disk}]
Consider the setting of Lemma~\ref{lem:disk-unif-pf-1}. By Lemma~\ref{lem:disk-pf-0}, we have $\#B^n/n\to A$ in probability. On the other hand, for each given $n,\delta$, in the unconditioned standard simple random walk of $n$ steps, the probability that the total number of $(-1,0)$ steps is less than $(1-\delta)n/4$ is at most $c_1 e^{-n^{c_2}}$ for some $c_1,c_2>0$, while following Lemma~\ref{lem:walk-excursion}, the probability of the event we are conditioning on in the definition of $\cA_\ell^\disk$ where $\ell=\lceil\sqrt n\rceil$ is of order $n^{-c_3}$ for some $c_3$. Therefore after the conditioning, we continue to have  $\frac{\# B^n_\rmL}{\# B^n_\rmR}\to 1$ in probability as $n\to\infty$.  %Following the computations in Lemma~\ref{lem:walk-excursion}, it is not hard to prove that $\frac{\# B^n_\rmL}{\# B^n_\rmR}\to 1$ in probability as $n\to\infty$. 
By an identical proof, Lemma~\ref{lem:disk-pf-3} also holds for the sets of times encoding vertices or faces of $\frM(\wt M^{\disk,n})$, i.e., we can replace the random times encoding the map $\wt M^{\disk,n}$ in Lemma~\ref{lem:disk-unif-pf-1} with equally spaced times. Therefore Theorem~\ref{thm:unif-disk} follows from Lemma~\ref{lem:disk-unif-pf-1} by a further reparameterization as in the proof of Theorem~\ref{thm:pack-disk}. 
\end{proof}

\subsection{The sphere topology case}\label{subsec:unif-sph}

The goal of this subsection is to prove Theorem~\ref{thm:unif-sph}, where the proof is parallel to that of Theorem~\ref{thm:pack-sph}.

\begin{lemma}\label{lem:sph-unif-pf-1}
    The conclusions in Lemma~\ref{lem:sph-pack-pf-1} hold if one replaces ``circle packing" with ``Riemann uniformization".
\end{lemma}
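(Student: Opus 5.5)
The plan is to repeat the proof of Lemma~\ref{lem:sph-pack-pf-1} line by line. The only substantive change is that the input ``circle packing converges to mating of trees'' from Lemma~\ref{lem:cZ^n-pack-Q-n} is replaced by its Riemann uniformization analogue, Proposition~\ref{prop:cZ^n-unif-Q}. Claims (\ref{cond:lem:sph-pack-pf-0-(i)}), (\ref{cond:lem:sph-pack-pf-0-(ii)}) and (\ref{cond:lem:sph-pack-pf-0-(iv)}) do not involve the embedding. I will take the same coupling as before: $\cZ^{\sph,n}$ uniform on $\cA_n^\sph$ is coupled with the Brownian excursion $Z$ via~\cite{DIM77}, and $\wt\cZ^n$ is obtained by placing $2n$ i.i.d.\ uniform jump times as in Lemma~\ref{lem:417}. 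These three claims then hold verbatim.

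For claim (iii), fix $\delta\in(0,0.01)$. Let $(\check Z,\check{\cZ^n})$ be the processes $(Z,\cZ^{2n})$ from Proposition~\ref{prop:cZ^n-unif-Q}, the uniformization version of Lemma~\ref{lem:cZ^n-pack-Q-n}. Reweight their law by the Radon--Nikodym derivative $X_{\delta,\delta}$ of Lemma~\ref{lem:417}. This yields $(Z^n,\cZ^n)$ such that $(\cZ^n_t-\cZ^n_\delta)_{t\in[\delta,1-\delta]}$ has the law of the corresponding increment of $\wt\cZ^n$, and I then couple so that these increments agree.

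Next I reuse the functionals $\wt G^n$ and $\wt G^\infty$ defined after Definition~\ref{def:uniftoMOT}, and introduce $\wt G^\infty_0(Z,a,b)$ in analogy with $\wt F^\infty_0$. One point needs checking: $\wt G^n(\cZ^n|_{[\delta,1-\delta]},a,b)$ depends on $\cZ^n$ only through its increments on $I'=I_{a,b}$. This holds because $\Phi_\rmM(\cZ|_{I'})$, $\wh M_{n,I'}$ and $\wh\frtr_{n,I'}$ are all defined from the restricted walk, and this locality is what lets the absolute continuity transfer apply. The chain of comparisons~\eqref{eq:pf-418-00}--\eqref{eq:pf-418-002} then goes through unchanged. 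The Lusin theorem argument together with the uniformly bounded Radon--Nikodym derivative $X_{\delta,\delta}$ gives continuity of $Z\mapsto\wt G^\infty_0$ on large compacts. The convergence $\mathrm{err}\to0$ for $(\check{\cZ^n},\check Z)$ holds uniformly over compact families of $(a,b)$ by Proposition~\ref{prop:cZ^n-unif-Q}.

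The main obstacle is the step that compares $\wt G^\infty$ with $\wt G^\infty_0$. In the packing case this step required passing from $I'$ to $I_{a,b}$. Here $I'=I_{a,b}$ exactly, so the step is simpler, but two things must still be controlled: the time $t_{n,\rm mid}\to t_{\rm mid}$ (Condition~\ref{cond:def:uniftoMOT-(i)}), and $|\wh t^{n,\max}-\wh t^{n,\min}|\to|I'|$ (Condition~\ref{cond:def:uniftoMOT-(v)}). Both are coordinates of $\wt G$ and transfer directly under the bounded Radon--Nikodym derivative. The embedding itself is normalized by sending the vertex $\frv^{\mathrm{mid},n}_{I'}$ to $0$ and $\check\frtr_{n,I'}(\wh t^{n,\min})$ to $1$. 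To handle the small shift between $t_{n,\rm mid}$ and $t_{\rm mid}$, I will use Proposition~\ref{prop:boundary-perturb} on $\eta(I')$, exactly as in the proof of Proposition~\ref{prop:cZ^n-unif-E}. This shows that the conformal maps, the traces $\eta_{n,I'}$, and the left-boundary measures $\nu_{h_{n,I'},\rmL}$ are continuous under these perturbations.

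Finally, claim (iii) follows with $\nu(n)=(2n)^{-1/2}$. This normalization is obtained by matching the rate-$2n$ walks with $\cZ^{2n}$ from Proposition~\ref{prop:cZ^n-unif-Q} and checking the scaling in Condition~\ref{cond:def:uniftoMOT-(iv)}.
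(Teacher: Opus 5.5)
Your proposal is correct and matches the paper, whose proof simply states that the argument is identical to that of Lemma~\ref{lem:sph-pack-pf-1}, with Proposition~\ref{prop:cZ^n-unif-Q} used as the input in place of Lemma~\ref{lem:cZ^n-pack-Q-n}. Your extra comparison of $\wt G^\infty$ with a $\wt G^\infty_0$ via Proposition~\ref{prop:boundary-perturb} is harmless but unnecessary. In Definition~\ref{def:uniftoMOT} one has $I'=I_{a,b}$ exactly, so $\wt G^\infty$ already depends only on $Z$ and the two functionals coincide.
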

\begin{proof}
    The proof is identical to that of Lemma~\ref{lem:sph-pack-pf-1}.
\end{proof}

\begin{lemma}\label{lem:sph-unif-pf-2}
%Let $\frv^{0,n}$ be an arbitrary endpoint of the root edge of $\wt T^{\sph,n}$, $\frv^{\mathrm{mid},n} = \frv^n_{t^{\mathrm{mid},n} }$ and     $\frv^{1/4,n} = \frv^n_{t^{1/4,n} }$, here $t^{\mathrm{mid},n}  = \inf\{s\geq An/2:s\in \wt B^{n,-}\}$ and $t^{1/4,n}  = \inf\{s\geq An/4:s\in \wt B^{n,-}\}$.
Consider the setting of Lemma~\ref{lem:sph-unif-pf-1}, and let $\wt M^{\sph,n}$ be the planar map encoded by $\wt\cZ^n$ via the map $\Phi_\rmM^\sph$. Let $B^{n,-}_\rmL$ be the set of times when $\wt\cL^n$ makes a downward jump. Let $\frv^{0,n} = \frv^n_{t^{0,n} }$, $\frv^{\mathrm{mid},n} = \frv^n_{t^{\mathrm{mid},n} }$ and     $\frv^{1/4,n} = \frv^n_{t^{1/4,n} }$, where $t^{0,n} = \inf B^{n,-}_\rmL$, $t^{\mathrm{mid},n}  = \inf\{s\geq 1/2:s\in   B^{n,-}_\rmL\}$ and $t^{1/4,n}  = \inf\{s\geq 1/4:s\in   B^{n,-}_\rmL\}$,  also viewed  as points on $\frM(\wt M^{\sph,n})$.
Let $\wt\varphi^{\sph,n}:\frM(\wt M^{\sph,n})\to\wh\bbC$ be the conformal map sending $\frv^{0,n}$, $\frv^{\mathrm{mid},n}$ and $\frv^{1/4,n}$ to $\infty,0,1$, respectively, and we view $\wt\frtr^{\sph,n}:[0,1]\to \wh\bbC$ as the path on the embedded map $\wt M^{\sph,n}$ via the embedding $\wt\varphi^{\sph,n}(\frM(\wt M^{\sph,n}))$. Then with probability tending to 1 as $n\to\infty$, $\frM(\wt M^{\sph,n})$ and $\wt\frtr^{\sph,n}$ are well-defined, and we have the following:
 \begin{enumerate}[(i)]
        \item The maximal spherical diameter of faces in $\wt\varphi^{\sph,n}(\frM(\wt M^{\sph,n}))$ tends to 0 in probability as $n\to\infty$;
        \item  We have \eqb\label{eq:lem:sph-unif-pf-2}\sup_{t\in [0,1]} d_{\sph}(\wt\eta^\sph(t),\wt\frtr^{\sph,n}(t))\to 0 \ \ \ \text{in probability as } n\to\infty. \eqe
    \end{enumerate}
%if we send $\e\to 0$ sufficiently slower than $n\to\infty$, the spherical diameter of the union of the disks for vertices that are adjacent to faces in $\cF \wt T^{\sph,n}\backslash\{\wt\frtr^{\sph,n}(t):t\in [\e,A-\e]\}$   tends to 0 in probability.
\end{lemma}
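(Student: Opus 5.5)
The plan is to mirror the proof of Lemma~\ref{lem:sph-pack-pf-4}, replacing circle packing comparisons by their conformal-map counterparts. Theorem~\ref{thm:He-Schramm-sphere} is replaced by Proposition~\ref{prop:unif-sph}, and Definition~\ref{def:CPtoMOT} by Definition~\ref{def:uniftoMOT}, which is available for the sphere setting by Lemma~\ref{lem:sph-unif-pf-1}.

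First, fix small $\e>0$ and take $I'=[\e,1-\e]$. There are no double-edge erasures in the Riemann uniformization setting, so no modification of the interval is needed. Let $\wh M_{n,I'}$, $\varphi_{n,I'}:\frM(\wh M_{n,I'})\to\bbD$ and $\wh\frtr_{n,I'}$ be as in Definition~\ref{def:uniftoMOT} for the walk $\wt\cZ^n|_{[\e/2,1-\e/2]}$. Note that $\wh M_{n,I'}$ is a submap of $\wt M^{\sph,n}$, since the walk restricted to a subinterval encodes a submap under $\Phi_\rmM$. By part (iii) of Lemma~\ref{lem:sph-unif-pf-1}, with probability tending to one the following hold: the faces under $\varphi_{n,I'}$ have vanishing diameter; $\wh\frtr_{n,I'}$ is uniformly close to $\varphi_\e\circ\eta^\sph|_{I'}$ up to rotation, where $\varphi_\e:\eta^\sph(I')\to\bbD$ is conformal and fixes $0=\eta^\sph(1/2)$; and $\frv^{0,n}\notin\wh M_{n,I'}$, because $t^{0,n}<\e/4$.

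Next, normalize. Set $g_\e=\varphi_\e/\varphi_\e(1)$. By endpoint continuity of $\eta^\sph$, $B(0;R)\subseteq\eta^\sph(I')$ for each $R$ once $\e$ is small. Proposition~\ref{prop:unif-sph} then gives $g_\e\to\mathrm{id}$ in the spherical metric and $\varphi_\e(1)\to0$. Choose $\e=\e_n\downarrow0$ slowly enough that the conclusions of Definition~\ref{def:uniftoMOT} survive the dilation by $\varphi_{\e_n}(1)^{-1}$. Composing with an affine map sending the images of $\frv^{\mathrm{mid},n}$ and $\frv^{1/4,n}$ to $0$ and $1$, this produces a conformal embedding $\psi^n$ of $\frM(\wh M_{n,I'})$ in $\bbC$ with the following properties: faces have vanishing spherical diameter; the trace is uniformly close to $\eta^\sph$ on $I'$; and $B(0;R)\subseteq\psi^n(\frM(\wh M_{n,I'}))$ for each fixed $R$ with high probability. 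The last property uses that the images of $\frv^{\mathrm{mid},n}$ and $\frv^{1/4,n}$ are close to $\varphi_\e(0)$ and $\varphi_\e(\eta^\sph(1/4))$, by condition (iii) of Definition~\ref{def:uniftoMOT}.

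Now apply Proposition~\ref{prop:unif-sph} to $\wt\varphi^{\sph,n}\circ(\psi^n)^{-1}$. This map is conformal from $\psi^n(\frM(\wh M_{n,I'}))$ to $\wt\varphi^{\sph,n}(\frM(\wh M_{n,I'}))\subseteq\wh\bbC\setminus\{\infty\}$, and it fixes $0$ and $1$. Proposition~\ref{prop:unif-sph} shows it is uniformly spherically close to the identity. The image domain therefore contains every $B(0;R)$, so the complementary part $\frM(\wt M^{\sph,n})\setminus\frM(\wh M_{n,I'})$ is mapped into a vanishing spherical neighbourhood of $\infty$. This gives (i). For (ii), on $I'$ it follows by the triangle inequality. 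On $[0,1]\setminus I'$ both $\wt\frtr^{\sph,n}$ and $\eta^\sph$ lie near $\infty$: for $\eta^\sph$ this is endpoint continuity, and for $\wt\frtr^{\sph,n}$ it holds because those faces lie in the complement, up to the few faces attached to $\partial\wh M_{n,I'}$.

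The main obstacle is the step where $\wh M_{n,I'}$ may be strictly smaller than $T_{n,I'}$ because of pinched components. I expect to handle it with condition (iv) of Definition~\ref{def:uniftoMOT}: the trace times $\wh t^{n,\min}$ and $\wh t^{n,\max}$ approach the endpoints of $I'$, so trace points in $I'$ outside $\wh M_{n,I'}$ lie in small pinched pieces, as in the proof of Proposition~\ref{prop:cZ^n-unif-E}. Well-definedness of $\frM(\wt M^{\sph,n})$, that is, having at least $3$ vertices, holds with high probability because $\wh M_{n,I'}$ already has many vertices.
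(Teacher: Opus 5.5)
Your proposal is correct and is essentially the paper's proof. In both, you restrict to $I'=[\e,1-\e]$ via Lemma~\ref{lem:sph-unif-pf-1} and Definition~\ref{def:uniftoMOT}, rescale so that $\frv^{\mathrm{mid},n}\mapsto 0$ and $\frv^{1/4,n}\mapsto 1$, check that for $\e\to0$ slowly the rescaled embedding of $\frM(\wh M_{n,I'})$ has vanishing faces and exhausts $\bbC$, and conclude with Proposition~\ref{prop:unif-sph}.

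The pinched components you flag do not actually need condition~(iv), and (iv) would not control pieces pinched off in the middle of $I'$ anyway. Once $\wt\varphi^{\sph,n}(\frM(\wh M_{n,I'}))$ contains every $B(0;R)$, all of $\frM(\wt M^{\sph,n})\setminus\frM(\wh M_{n,I'})$ is pushed near $\infty$. For $t\in I'$ whose face lies in such a piece, condition~(iii) places $\eta^\sph(t)$ near the attachment vertex on $\partial\wh M_{n,I'}$, hence also near $\infty$.
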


\begin{proof}
By the last claim of Lemma~\ref{lem:sph-double-edge-1}, with probability tending to 1 as $n\to\infty$, $B^{n,-}_\rmL$ intersects all of $(0.05,0.1)$, $(0.3,0.33)$, $(0.55,0.6)$ and $(0.8,0.9)$, so the vertices  $\frv^{0,n},\frv^{\mathrm{mid},n},\frv^{1/4,n}$ are distinct and  $\frM(\wt M^{\sph,n})$ and $\wt\frtr^{\sph,n}$ are well-defined.   Let $\e\in (0,0.01)$, $I' = [\e,1-\e]$,  $\varphi_{n,I'}$ and $\wh M_{n,I'}$ be as in Definition~\ref{def:uniftoMOT}. We scale $\varphi_{n,I}$ to get $\wt\varphi_{n,I'}$ such that  $\wt \varphi_{n,I'}(\frv^{1/4,n})=1$. Similar to the proof of Lemma~\ref{lem:sph-pack-pf-4}, using the convergence of Riemann uniformization to mating of trees for $(\wt\cZ^{\sph,n}|_{[\e,1-\e]},Z|_{[\e,1-\e]})$ along with the end-point continuity of $\wt\eta^\sph$, as we send $\e\to0$ sufficiently slower than $n\to\infty$, the maximal spherical diameter of faces in $\wt\varphi_{n,I'}(\wh M_{n,I'})$ tends to 0 in probability and for every $R>0$, with probability tending to 1, $B(0;R)\subseteq \wt\varphi_{n,I}(\wh M_{n,I'})$. The claims then follows from the conditions in Definition~\ref{def:uniftoMOT} together with Proposition~\ref{prop:unif-sph}.
\end{proof}

\begin{proof}[Proof of Theorem~\ref{thm:unif-sph}]
    Consider the setting of Lemma~\ref{lem:sph-unif-pf-2}. Let $\wt B^n_\rmL$ and $\wt B^n_\rmR$ be the set of times when $\wt\cL^n$ and $\wt\cR^n$ jumps, respectively. Then following the description of the law of $\wt\cZ^n$, the analog of Lemma~\ref{lem:sph-pack-pf-5} holds for $\wt B^n_\rmL$ and $\wt B^n_\rmR$, and one can prove by direct computation from Lemma~\ref{lem:walk-excursion} that $\frac{\#\wt B^n_\rmL}{\#\wt B^n_\rmR}\to 1$ in probability as $n\to\infty$. Theorem~\ref{thm:unif-sph} then follows from Lemma~\ref{lem:sph-unif-pf-2} and the same argument in the proof of Theorem~\ref{thm:pack-sph} via reparameterization.
\end{proof}

\subsection{The whole-plane topology case}\label{subsec:unif-cone}

In this subsection we prove Theorem~\ref{thm:unif-cone}.

\begin{lemma}
   Let $(\cZ_k)_{k\in\bbZ}$ be a standard nearest-neighbor walk on $\bbZ^2$ and $\Phi_\rmM(\cZ) = (M,e_0,\frT)$. Then the Riemann surface $\frM(M)$ is a.s.\ parabolic.
\end{lemma}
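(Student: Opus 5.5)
The plan is to go through recurrence of the simple random walk on a bounded-degree (in the stochastic sense) unimodular-type limit, mirroring Lemma~\ref{lem:pf-pack-cone-1}, and then to pass from recurrence to parabolicity of the Riemann surface $\frM(M)$. Concretely, by the Mullin bijection, $(M,e_0,\frT)$ is the uniform infinite spanning-tree decorated planar map. It is the Benjamini--Schramm limit of the finite maps encoded by $\cZ|_{[-n,n]}$ rooted at a uniformly chosen vertex or face, as in~\cite[Lemma 3.3]{GRRiemann}. The root degree has an exponential tail uniformly in $n$ by~\cite[Lemma 3.4]{GRRiemann}, applied to the triangulation $T$ and hence to $M$, since $\deg_M(v)\le\deg_T(v)$. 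Reweighting the root by its degree changes the law only by a Radon--Nikodym factor bounded above and below, exactly as in the proof of Lemma~\ref{lem:pf-pack-cone-1}. The Gurel-Gurevich--Nachmias theorem~\cite[Theorem 1.1]{GNrecurrence} then yields that the simple random walk on $M$ (and on its triangulation $T$) is a.s.\ recurrent.

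The second step converts recurrence into parabolicity of $\frM(M)$. I would use the criterion already exploited in~\cite{GRRiemann}. The Riemann surface built by gluing regular polygons is parabolic if and only if its Brownian motion is recurrent. By the standard comparison between Brownian motion on $\frM(M)$ and the random walk on the dual/primal graph, this holds whenever the random walk on a suitable locally finite graph quasi-isometric to $\frM(M)$ is recurrent. That comparison requires local bounded geometry, but here face degrees and vertex degrees are unbounded. So instead I would follow~\cite{GRRiemann}, who prove exactly this statement for unimodular random planar maps with exponential degree tails. Their argument is that recurrence of the random walk on the triangulation obtained by adding a vertex in each face (which is $T$, whose faces have degree 3 and whose vertex degrees have exponential tails) implies parabolicity of $\frM(M)$, via extremal length: effective resistance to infinity in $T$ controls the conformal modulus of the curve family to infinity in $\frM(M)$. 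Since $T$ is recurrent by the first step applied to $T$ itself, we conclude that $\frM(M)$ is parabolic.

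The main obstacle is this resistance-to-modulus comparison with unbounded degrees. A face of degree $p$ is a regular $p$-gon of large diameter, and a vertex of large degree is a cone point with large total angle. I would handle it by an explicit test-function argument. Given a unit flow or potential on $T$ with small energy, define a function on $\frM(M)$ that is piecewise linear on each triangle of the barycentric subdivision. Its Dirichlet energy is bounded by $\sum_{e}(\text{energy on } e)\cdot C(\deg)$, where $C(\deg)$ depends polynomially on the degrees of the endpoints. One then needs recurrence for the graph with conductances that are polynomial in the degrees. This should still follow from~\cite{GNrecurrence} after a further bounded-ratio degree reweighting, using the exponential tails of the degrees. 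If that fails, the fallback is to use~\cite[Theorem 1.1]{HeSchramm-hy-pa} on $\Phi_{\rm simple}(T)$ together with the cell-configuration convergence of Theorem~\ref{thm:conv-0-cone-unif-Hcell}. Parabolicity of $\frM(\cH^\cell)$ is already asserted in Theorem~\ref{thm:invariance-uniformization}, and $M=\Phi_\rmM(\cZ)$ has the same law as the associated map of $\cH^\cell$ under $\bbP_\rmE$ up to the measure-preserving relabeling given by Theorem~\ref{thm:KMT-0-cone}. That route would give the claim directly from Proposition~\ref{prop:property-CH}.
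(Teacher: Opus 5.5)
\textbf{Verdict: genuine gap.} Your first step, recurrence of $M$ (or of $T$) via \cite{GNrecurrence}, is fine; recurrence of $M$ is in fact already proved in \cite{Che17}. The gap is in the second step, which is the whole content of the lemma: passing from recurrence to conformal parabolicity of $\frM(M)$ when both the vertex degrees and the face degrees of $M$ are unbounded. None of your three routes closes it.

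\textbf{The direct comparison.} The statement that resistance to infinity in $T$ controls the modulus of curves to infinity in $\frM(M)$ does not hold without degree control. A face of degree $p$ is cut into isosceles triangles with apex angle $2\pi/p$, and a high-degree vertex is a cone point of large angle. So your piecewise-linear test functions have energy $\sum_e (f(u)-f(v))^2 c(e)$, where $c(e)$ grows with the degrees, as you note yourself. But \cite{GNrecurrence} is a statement about simple random walk, with unit conductances, on a distributional limit with uniform root. Reweighting the root law by a bounded Radon--Nikodym factor, as in Lemma~\ref{lem:pf-pack-cone-1}, does nothing to edge conductances. Recurrence of the degree-weighted network therefore does not follow.

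\textbf{The citation to \cite{GRRiemann}.} In this paper it is invoked only for the local limit and the root-degree tail of the triangulations $T^n$. You would need a parabolicity statement for the polygon surface $\frM(M)$ itself. Your account of its mechanism, recurrence of $T$ giving parabolicity via a resistance-versus-modulus bound, is exactly the comparison that fails without degree bounds.

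\textbf{The fallback.} Theorem~\ref{thm:invariance-uniformization} and Proposition~\ref{prop:property-CH} give parabolicity only for the map built from the walk $\cZ$ under $\bbP_\rmE$. That walk is not a standard random walk: only under $\bbP_\rmQ$ are its steps i.i.d. Theorem~\ref{thm:KMT-0-cone} gives no identity in law between the two maps. Part~(\ref{cond:thm:KMT-0-cone-(iv)}) is a one-sided bound of $\bbP_\rmE$ by $\bbP_\rmQ$, for events measurable with respect to $\cZ|_{I(k)}$. You would need the reverse direction, and for a non-local event.

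\textbf{The missing idea is unimodularity.} \cite{Che17} shows that $M$ is ergodic, recurrent, and the Benjamini--Schramm limit of finite maps with root chosen proportionally to its degree. Reweighting the law by $\deg(\rho)^{-1}/\bbE[\deg(\rho)^{-1}]$ makes $(M,\rho)$ unimodular with $\bbE_*[\deg(\rho)]<\infty$. This persists after removing degree-one faces and collapsing degree-two faces, as in the definition of $\frM(M)$. The equivalence theorem \cite[Theorem 1.1]{AHNR18unimodular} for such maps, (5)$\to$(9) there, then gives parabolicity of the glued Riemann surface from recurrence. It needs only a first-moment assumption on the degree, which is precisely what handles the unbounded vertex and face degrees that your direct comparison cannot.
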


\begin{proof}
    It is shown in~\cite{Che17} that $M$ is ergodic, recurrent and can be written as the Benjamini-Schramm limit of uniformly rooted planar maps where the root is chosen proportionally to its degree. Let $\rho$ be the root vertex defined in this way and let $\bbP$ be the law of $(M,\rho)$, which is reversible in the sense of~\cite[Section 2.3]{AHNR18unimodular}/ We write $\bbP_*$ for the law of $(M,\rho)$ weighted by $\deg(\rho)^{-1}/\bbE[\deg(\rho)^{-1}]$. Then $\bbE_*[\deg(\rho)] = 1/\bbE[\deg(\rho)^{-1}]<\infty$ and $(M,\rho)$ is unimodular under $\bbP_*$ as in~\cite[Proposition 2.3]{AHNR18unimodular}.
  %  Furthermore, the root vertex $\rho$ satisfies $\bbE[\deg(\rho)]<\infty$. 
    The same continues to hold if we remove all the faces of $M$ with degree 1 or 2 as in the definition of $\frM(M)$. The claim then follows from~\cite[Theorem 1.1, (5)$\to$(9)]{AHNR18unimodular}.
\end{proof}

\begin{lemma}\label{lem:cone-unif-pf-2}
    Let $(\wt \cZ^n_t)_{t\in\bbR}$ be a rate $n$ continuous-time nearest-neighbor walk on $\bbZ^2$ such that $\wt \cZ^n_0=0$. Then there is a coupling for $(Z,(\wt\cZ^n)_{n\in\bbN})$ such that Assertions \ref{result:thm:unif-cone-1}-\ref{result:thm:unif-cone-3}\ in Theorem~\ref{thm:unif-cone} holds for $\wt\cZ^n$ instead of $\cZ^n$. Furthermore, for each $R>0$, with probability tending to 1 as $n\to\infty$ then $K\to\infty$,  there does not exist any time $t\in \cJ^{n,-}_\rmL$ such that $|t|\geq K$ while the vertex for the time $t$ is embedded inside $B(0;R)$, and  there does not exist any time $t\in \cJ^{n,-}_\rmR$ such that $|t|\geq K$ while the face for the time $t$ intersects $B(0;R)$ under the embedding $\varphi^n$. 
\end{lemma}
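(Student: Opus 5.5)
The plan is to follow the proof of Lemma~\ref{lem:pf-pack-cone-2} step by step, with two substitutions: circle packing convergence is replaced by Riemann uniformization convergence (Proposition~\ref{prop:cZ^n-unif-Q}), and Theorem~\ref{thm:He-Schramm-sphere} is replaced by its continuum analog Proposition~\ref{prop:unif-sph}.

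\textbf{Local input.} By Proposition~\ref{prop:cZ^n-unif-Q} together with shift invariance, Riemann uniformization converges to mating of trees for $(\wt\cZ^{jn}|_{[-2,2]},Z|_{[-2,2]})$. We take $I'=[-1,1]$, the map $\wh M_{n,j}:=\wh M_{jn,I'}$, and the conformal map $\varphi_{n,j}:\frM(\wh M_{n,j})\to\bbD$ from Definition~\ref{def:uniftoMOT}. Proposition~\ref{prop:boundary-perturb} then gives errors $\e_{n,j}\to0$ (for fixed $j$) with the following properties:
\begin{itemize}
\item the faces of $\frM(\wh M_{n,j})$ have diameter at most $\e_{n,j}$ under $\varphi_{n,j}$;
\item $|\varphi_{n,j}^{-1}$-embedded trace$-\varphi(\eta(t))|\le\e_{n,j}$, where $\varphi:\eta([-1,1])\to\bbD$ is the conformal map fixing $0$.
\end{itemize}

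\textbf{Normalization.} Let $\frv^{n,j,0}$ and $\frv^{n,j,1}$ be the vertices for the first times in $\cJ^{jn,-}_\rmL$ after $0$ and after $1/j$. Rescale, $\wt\varphi_{n,j}=R^{n,j}e^{i\theta^{n,j}}\varphi_{n,j}$, so that these two vertices are sent to $0$ and $1$. As in~\eqref{pf:lem:pf-pack-cone-2-N1}, this gives $(R^{n,j}e^{i\theta^{n,j}})^{-1}\approx\varphi(\eta(1/j))$.

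\textbf{Comparison with the full map.} Let $\Phi_\rmM(\wt\cZ^{jn})=\wt M^{jn}$, whose surface is parabolic by the preceding lemma. Take the conformal map $\wt\varphi^{jn}:\frM(\wt M^{jn})\to\bbC$ fixing the same two vertices at $0$ and $1$.

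Since $\frM(\wh M_{n,j})\subseteq\frM(\wt M^{jn})$, the map $g^{n,j}:=\wt\varphi^{jn}\circ\wt\varphi_{n,j}^{-1}$ is conformal on $D^n:=\wt\varphi_{n,j}(\frM(\wh M_{n,j}))$ and fixes $0$ and $1$. Choose $j=j(n)\to\infty$ slowly enough that:
\begin{itemize}
\item $R^{n,j}\e_{n,j}\to0$;
\item $j^{1/2}\sup_{|t|\le2}|(jn)^{-1/2}\wt\cZ^{jn}_t-Z_t|\to0$;
\item the error in $(R^{n,j}e^{i\theta^{n,j}})^{-1}\approx\varphi(\eta(1/j))$ is $o(|\varphi(\eta(1/j))|)$.
\end{itemize}
Because $\varphi(\eta(1/j))\to0$, we get $B(0;R)\subseteq D^n$ for every fixed $R$ with high probability. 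Proposition~\ref{prop:unif-sph} then shows that $g^{n,j}\to\mathrm{id}$ uniformly in the spherical metric. Combining this with~\eqref{eq:CONE-3-new} and~\eqref{eq:428-pf-a}, we obtain the analog of~\eqref{pf:lem:pf-pack-cone-2-N2}: for the embedding $\wt\varphi^{jn}$, faces meeting $B(0;R)$ have vanishing diameter, and the trace is close to $\eta(t)/\eta(1/j)$ on $[-k/j,k/j]$.

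\textbf{Rescaling to rate $n$.} Pass from $Z$ to $(j^{1/2}Z_{t/j})$ and from $\wt\cZ^{jn}$ to $\wt\cZ^{jn}_{t/j}$, which is a rate $n$ walk, and rescale the embedding by $\eta^{1/j}(1)$. This gives Assertions~\ref{result:thm:unif-cone-2}-\ref{result:thm:unif-cone-3}. The final coupling is built by fixing one Brownian motion and resampling the walk from its conditional law, exactly as before.

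\textbf{Main obstacle.} The hardest part is Assertion~\ref{result:thm:unif-cone-1} together with the final claim: we must control vertices and faces of $\wt M^{jn}$ that are not in $\wh M_{n,j}$. For these we use that $D^n\supseteq B(0;R')$ for any $R'$ and that $g^{n,j}\to\mathrm{id}$ spherically. Together these force every vertex outside $\wh M_{n,j}$ into a small spherical neighborhood of $\infty$.

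For faces, a face of $\wt M^{jn}$ that meets $B(0;R)$ but is not contained in $\frM(\wh M_{n,j})$ still has to be controlled. Such a face intersects $\partial\frM(\wh M_{n,j})$, so its part inside $D^n$ contains a boundary edge. Boundary edges lie near $\partial D^n$, which sits outside $B(0;R')$. Therefore such a face cannot meet $B(0;R)$ unless it has huge diameter, and a huge face near $0$ is excluded by the uniform convergence of $g^{n,j}$ on $B(0;2R)$ together with the vanishing face diameters inside $D^n$.

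The same argument also handles times with $|t|\ge K$: in the rescaled picture these correspond to times outside $[-1,1]$, for which the vertices or faces lie outside $\wh M_{n,j}$ or map far from $0$ by the trace convergence.
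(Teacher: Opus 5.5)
Your proposal is correct and is essentially the paper's proof: the paper likewise reruns the argument of Lemma~\ref{lem:pf-pack-cone-2} with Proposition~\ref{prop:cZ^n-unif-Q} as the local input, applies Proposition~\ref{prop:unif-sph} to exactly your map $g^{n,j}=\wt\varphi^{jn}\circ(R^{n,j}e^{i\theta^{n,j}}\varphi_{n,j})^{-1}$, and gets the final claim from the analog of~\eqref{eq:CONE-4-new}. One small correction to your last paragraph: rescaled times $|t|\ge K$ correspond to original times $|s|\ge K/j$, not to times outside $[-1,1]$, so for $|s|\in[K/j,1]$ you should combine the bound of Definition~\ref{def:uniftoMOT}, which is uniform over all of $I'$, with~\eqref{eq:428-pf-a}; and faces outside $\wh M_{n,j}$ miss $B(0;R)$ simply because $\wt\varphi^{jn}$ is injective and $g^{n,j}(D^n)\supseteq B(0;R')$.
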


\begin{proof}
    By Proposition~\ref{prop:cZ^n-unif-Q}, Riemann uniformization converges to mating of trees for $(\wt\cZ^n|_{[-1,1]},Z|_{[-1,1]})$, and we let $\varphi_{n,I'}$ and $\wh M_{n,I'}$ be as in Definition~\ref{def:uniftoMOT}   for  $(\wt\cZ^n|_{[-1,1]},Z|_{[-1,1]})$ and $I'=[-1,1]$. Let $\cJ^{n,-}_\rmL$ be the set of times when $\wt\cL^n$ %or $\wt\cR^n$ 
    makes a downward jump. Let $\varphi:\eta([-1,1])\to\bbD$ be the conformal map fixing 0 and $\varphi'(0)>0$.
    
    Let $j\in\bbN$, $\frv^{n,j,0}$ (resp.\ $\frv^{n,j,1}$) be the vertex for the time $\inf(\cJ^{jn,-}_\rmL\cap\bbR_+)$ (resp.\ $\inf(\cJ^{jn,-}_\rmL\cap [1/j,\infty))$)  viewed as points on $\frM(M^{jn})\cap\frM(\wh M_{jn,I'})$. Then the conclusions as in Definition~\ref{def:uniftoMOT} still hold if we assume that $\varphi_{jn,I}(\frv^{n,j,0})=0$. Let $R^{n,j}>0$ and $\theta^{n,j}\in\bbR$ such that $R^{n,j}e^{i\theta^{n,j}}\varphi_{jn,I}(\frv^{n,j,1})=1$. Let $\varphi_{jn}: \frM(M^{jn})\to\bbC$ be the conformal map sending $\frv^{n,j,0}$ to 0 and $\frv^{n,j,1}$ to 1. The lemma  then follows from a similar argument as in the proof of Lemma~\ref{lem:pf-pack-cone-2} as well as~\eqref{eq:CONE-4-new} where we send $j\to\infty$ sufficiently slower than $n\to\infty$ and apply the conclusions in Definition~\ref{def:uniftoMOT} and   Proposition~\ref{prop:unif-sph} to the conformal map $\varphi_n\circ(R^{n,j}e^{i\theta^{n,j}}\varphi_{jn,I})^{-1}$.
\end{proof}

\begin{proof}[Proof of Theorem~\ref{thm:unif-cone}]
    The proof is identical to that of Theorem~\ref{thm:pack-cone} where we reparameterize the walk $\wt\cZ^n$ in the setting of Lemma~\ref{lem:cone-unif-pf-2}.
\end{proof}

\bibliographystyle{alpha}
\bibliography{circle}

\end{document}